\documentclass[11pt]{amsbook}
\usepackage{amsfonts, amsthm, amssymb, amsmath, stmaryrd, color, enumerate, relsize}
\usepackage[pdftex]{graphicx}
\usepackage{mathrsfs,array}
\usepackage{xy}
\usepackage{hyperref}
\usepackage{tikz}
\input xy
\xyoption{all}

\DeclareMathOperator{\Spa}{Spa}

\DeclareMathOperator{\Spv}{Spv}

\DeclareMathOperator{\Cont}{Cont}

\DeclareMathOperator{\Hom}{Hom}
\DeclareMathOperator{\Ext}{Ext}
\newcommand{\intHom}{\underline{\Hom}}

\DeclareMathOperator{\Mod}{Mod}

\DeclareMathOperator{\Spec}{Spec}

\DeclareMathOperator{\Sets}{Sets}

\newcommand{\AnRing}{{\mathrm{AnRing}}}
\DeclareMathOperator{\AnSpec}{AnSpec}

\DeclareMathOperator{\op}{op}

\DeclareMathOperator{\Cond}{Cond}
\DeclareMathOperator{\Ab}{Ab}

\DeclareMathOperator{\Fun}{Fun}

\DeclareMathOperator{\ProFin}{ProFin}
\DeclareMathOperator{\Pro}{Pro}

\DeclareMathOperator{\Fin}{Fin}

\DeclareMathOperator{\colim}{colim}

\renewcommand{\op}{{\mathrm{op}}}

\newcommand{\proet}{{\mathrm{pro\acute{e}t}}}

\newcommand{\heart}{\heartsuit}

\def\A{\mathbf{A}}

\newcommand{\solid}{{\mathsmaller{\square}}}
\DeclareMathOperator{\Solid}{Solid}

\renewcommand*{\hat}{\widehat}
\renewcommand*{\tilde}{\widetilde}

\numberwithin{equation}{section}
\newtheorem{theorem}{Theorem}
\numberwithin{theorem}{section}
\newtheorem{lemma}[theorem]{Lemma}
\newtheorem{corollary}[theorem]{Corollary}
\newtheorem{proposition}[theorem]{Proposition}

\newtheorem{assumption}[theorem]{Assumption}

\theoremstyle{definition}
\newtheorem{remark}[theorem]{Remark}
\newtheorem{exercise}[theorem]{Exercise}
\newtheorem{warning}[theorem]{Warning}
\newtheorem{definition}[theorem]{Definition}
\newtheorem{question}[theorem]{Question}
\newtheorem{example}[theorem]{Example}

\date{\today}

\title{Lectures on Analytic Geometry}

\author{Peter Scholze}

\begin{document}

\maketitle

\tableofcontents

\chapter*{Lectures on Analytic Geometry}

\section*{Preface}

This is a slightly updated version of lectures notes for a course on analytic geometry taught in the winter term 2019/20 at the University of Bonn. The material presented is part of joint work with Dustin Clausen.\\

This is intended as a stable citable version of the material. In the first half of this course, we develop the basic theory of liquid real vector spaces, which is used in \cite{Complex} to give a new approach to complex-analytic geometry. In the second half, we gave a tentative definition of a category of analytic spaces that contains (for example) adic spaces and complex-analytic spaces. While the precise definition of analytic spaces represents an abandoned stepping stone on our way to define analytic stacks in \cite{AnalyticStacks} and hence should be seen as a historical artifact, much of the surrounding discussion stays very relevant.\\

\hfill{May 2026, Peter Scholze}

\newpage

\section{Lecture I: Introduction}

Mumford writes in \emph{Curves and their Jacobians}: ``[Algebraic geometry] seems to have acquired the reputation of being esoteric, exclusive, and very abstract, with adherents who are secretly plotting to take over all the rest of mathematics. In one respect this last point is accurate.''

For some reason, this secret plot has so far stopped short of taking over analysis. The goal of this course is to launch a new attack, turning functional analysis into a branch of commutative algebra, and various types of analytic geometry (like manifolds) into algebraic geometry. Whether this will make these subjects equally esoteric will be left to the reader's judgement.

What do we mean by analytic geometry? We mean a version of algebraic geometry that
\begin{enumerate}
\item instead of merely allowing polynomial rings as its basic building blocks, allows rings of convergent power series as basic building blocks;
\item instead of being able to define open subsets only by the nonvanishing of functions, one can define open subsets by asking that a function is small, say less than $1$;
\item strictly generalizes algebraic geometry in the sense that the category of schemes, the theory of quasicoherent sheaves over them, etc., all embed fully faithfully into the corresponding analytic category.
\end{enumerate}

The author always had the impression that the highly categorical techniques of algebraic geometry could not possibly be applied in analytic situations; and certainly not over the real numbers.\footnote{In the nonarchimedean case, the theory of adic spaces goes a long way towards fulfilling these goals, but it has its shortcomings: Notably, it lacks a theory of quasicoherent sheaves, and in the nonnoetherian case the general formalism does not work well (e.g.~the structure presheaf fails to be sheaf). Moreover, the language of adic spaces cannot easily be modified to cover complex manifolds. This is possible by using Berkovich spaces, but again there is no theory of quasicoherent sheaves etc.} The goal of this course is to correct this impression.

How can one build algebraic geometry? One perspective is that one starts with the abelian category $\mathrm{Ab}$ of abelian groups, with its symmetric monoidal tensor product. Then one can consider rings $R$ in this category (which are just usual rings), and over any ring $R$ one can consider modules $M$ in that category (which are just usual $R$-modules). Now to any $R$, one associates the space $\Spec R$, essentially by declaring that (basic) open subsets of $\Spec R$ correspond to localizations $R[f^{-1}]$ of $R$. One can then glue these $\Spec R$'s together along open subsets to form schemes, and accordingly the category of $R$-modules glues to form the category of quasicoherent sheaves.

Now how to build analytic geometry? We are basically stuck with the first step: We want an abelian category of some kind of topological abelian groups (together with a symmetric monoidal tensor product that behaves reasonably). Unfortunately, topological abelian groups do not form an abelian category: A map of topological abelian groups that is an isomorphism of underlying abelian groups but merely changes the topology, say
\[
(\mathbb R,\mathrm{discrete\ topology})\to (\mathbb R,\mathrm{natural\ topology}),
\]
has trivial kernel and cokernel, but is not an isomorphism.

This problem was solved in the course on condensed mathematics last semester, \cite{Condensed}, by replacing the category of topological spaces with the much more algebraic category of condensed sets, and accordingly topological abelian groups with abelian group objects in condensed sets, i.e.~condensed abelian groups. Let us recall the definition.

\begin{definition} Consider the pro-\'etale site $\ast_\proet$ of the point $\ast$, i.e.~the category $\ProFin\cong \Pro(\Fin)$ of profinite sets $S$ with covers given by finite families of jointly surjective maps. A condensed set is a sheaf on $\ast_\proet$; similarly, a condensed abelian group, ring, etc.~is a sheaf of abelian groups, rings, etc.~on $\ast_\proet$.\footnote{As discussed last semester, this definition has minor set-theoretic issues. We explained then how to resolve them; we will mostly ignore the issues in these lectures.}
\end{definition}

As for sheaves on any site, it is then true that a condensed abelian group, ring, etc.~is the same as an abelian group object/ring object/etc.~in the category of condensed sets: for example, a condensed abelian group is a condensed set $M$ together with a map $M\times M\to M$ (the addition map) of condensed sets making certain diagrams commute that codify commutativity and associativity, and admits an inverse map.

More concretely, a condensed set is a functor
\[
X: \ProFin^\op\to \Sets
\]
such that
\begin{enumerate}
\item one has $X(\emptyset)=\ast$, and for all profinite sets $S_1$, $S_2$, the map $X(S_1\sqcup S_2)\to X(S_1)\times X(S_2)$ is bijective;
\item for any surjection $S^\prime\to S$ of profinite sets, the map
\[
X(S)\to \{x\in X(S')\mid p_1^\ast(x) = p_2^\ast(x)\in X(S'\times_S S')\}
\]
is bijective, where $p_1,p_2: S'\times_S S'\to S'$ are the two projections.
\end{enumerate}

How to think about a condensed set? The value $X(\ast)$ should be thought of as the underlying set, and intuitively $X(S)$ is the space of continuous maps from $S$ into $X$. For example, if $T$ is a topological space, one can define a condensed $X=\underline{T}$ via
\[
\underline{T}(S) = \Cont(S,T),
\]
the set of continuous maps from $S$ into $T$. One can verify that this defines a condensed set.\footnote{There are some set-theoretic subtleties with this assertion if $T$ does not satisfy the separation axiom $T1$, i.e.~its points are not closed; we will only apply this functor under this assumption.} Part (1) is clear, and for part (2) the key point is that any surjective map of profinite sets is a quotient map.

For example, in analysis a central notion is that of a sequence $x_0,x_1,\ldots$ converging to $x_\infty$. This is codified by maps from the profinite set $S=\{0,1,\ldots,\infty\}$ (the one-point compactification $\mathbb N\cup \{\infty\}$ of $\mathbb N$) into $X$. Allowing more general profinite sets makes it possible to capture more subtle convergence behaviour. For example, if $T$ is a compact Hausdorff space and you have any sequence of points $x_0,x_1,\ldots\in T$, then it is not necessarily the case that it converges in $T$; but one can always find convergent subsequences. More precisely, for each ultrafilter $\mathcal U$ on $\mathbb N$, one can take the limit along the ultrafilter. In fact, this gives a map $\beta \mathbb N\to T$ from the space $\beta \mathbb N$ of ultrafilters on $\mathbb N$. The space $\beta \mathbb N$ is a profinite set, known as the Stone--\v{C}ech compactification of $\mathbb N$; it is the initial compact Hausdorff space to which $\mathbb N$ maps (by an argument along these lines).

We have the following result about the relation between topological spaces and condensed sets, proved last semester except for the last assertion in part (4), which can be found for example in \cite[Lemma 4.3.7]{BhattScholzeProetale}.\footnote{Again, some of these assertions run into minor set-theoretic problems; we refer to the notes from last semester. Importantly, everything is valid on the nose when restricted to topological spaces with closed points, and quasiseparated condensed sets; in particular, points (3) and (4) are true as stated.}

\begin{proposition}\label{prop:condtop} Consider the functor $T\mapsto \underline{T}$ from topological spaces to condensed sets.
\begin{enumerate}
\item The functor has a left adjoint $X\mapsto X(\ast)_{\mathrm{top}}$ sending any condensed set $X$ to its underlying set $X(\ast)$ equipped with the quotient topology arising from the map
\[
\bigsqcup_{S,a\in X(S)} S\to X(\ast).
\]
\item Restricted to compactly generated (e.g., first-countable, e.g., metrizable) topological spaces, the functor is fully faithful.
\item The functor induces an equivalence between the category of compact Hausdorff spaces, and quasicompact quasiseparated condensed sets.
\item The functor induces a fully faithful functor from the category of compactly generated weak Hausdorff spaces (the standard ``convenient category of topological spaces'' in algebraic topology), to quasiseparated condensed sets. The category of quasiseparated condensed sets is equivalent to the category of ind-compact Hausdorff spaces $``\varinjlim_i" T_i$ where all transition maps $T_i\to T_j$ are closed immersions. If $X_0\hookrightarrow X_1\hookrightarrow \ldots$ is a sequence of compact Hausdorff spaces with closed immersions and $X=\varinjlim_n X_n$ as a topological space, the map
\[
\varinjlim_n \underline{X_n}\to \underline{X}
\]
is an isomorphism of condensed sets. In particular, $\varinjlim_n \underline{X_n}$ comes from a topological space.
\end{enumerate}
\end{proposition}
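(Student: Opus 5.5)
We treat the four parts in order, since each builds on the previous. For (1), we construct the adjunction directly. A map of condensed sets $X\to\underline{T}$ amounts to compatible continuous maps $S\to T$, one for each $a\in X(S)$. Evaluating at $\ast$ gives a map of sets $X(\ast)\to T$, and compatibility says the composite $S\to X(\ast)\to T$ is the given continuous map for each $a$. Hence the map $X(\ast)\to T$ is continuous exactly for the quotient topology from $\bigsqcup S\to X(\ast)$. Conversely, a continuous map $X(\ast)_{\mathrm{top}}\to T$ induces a map of condensed sets. To see this, note that each $a\in X(S)$ gives a continuous map $S\to X(\ast)_{\mathrm{top}}$, sending $s$ to the pullback of $a$ along $s:\ast\to S$; continuity holds by the definition of the quotient topology. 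These constructions are mutually inverse because, for $x\in X(S)$ and $\underline{T}(S)=\Cont(S,T)$, an element of $\underline{T}(S)$ is determined by its values on points.

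For (2), the counit $\underline{T}(\ast)_{\mathrm{top}}\to T$ is the identity on sets. Its source carries the topology coinduced by all maps from profinite sets. If $T$ is compactly generated, closedness is tested on compact Hausdorff subsets. Every compact Hausdorff space is a quotient of a profinite set (for instance $\beta$ of its underlying discrete set). Hence the two topologies agree, the counit is a homeomorphism, and full faithfulness follows formally.

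For (3), full faithfulness on compact Hausdorff spaces follows from (2). For essential surjectivity, let $X$ be quasicompact quasiseparated. Quasicompactness gives a surjection $S\to X$ from a single profinite set $S$. Quasiseparatedness makes $R=S\times_X S\subset S\times S$ a quasicompact subobject, hence a closed subset, i.e. a profinite set. Then $X$ is the quotient $S/R$ as a sheaf, and $T=S/R$ is compact Hausdorff because $R$ is a closed equivalence relation on a compact Hausdorff space. We then show $X\cong\underline{T}$. The map $\underline{S}\to\underline{T}$ is a surjection of sheaves, since any profinite set mapping to $T$ lifts after a cover, using $S\times_T S'\to S'$. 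Its kernel pair is $\underline{R}$. Two effective epimorphisms with the same kernel pair have the same quotient.

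For (4), full faithfulness on compactly generated weak Hausdorff spaces is again (2); the weak Hausdorff property ensures the images are quasiseparated. For a quasiseparated $X$, write it as the filtered union of its quasicompact subobjects. Each such subobject is qcqs, hence compact Hausdorff by (3), and the inclusions between them are closed immersions. This gives the equivalence with ind-objects. For the last assertion, we must show that every continuous map $f:S\to X=\varinjlim X_n$ from a profinite set factors through some $X_n$. We expect this to be the main obstacle, and would argue as follows. If $f$ does not factor, pick points $x_k\in f(S)\cap(X_{n_k}\setminus X_{n_k-1})$ with $n_k$ strictly increasing. The set $\{x_k\}$ meets each $X_n$ in a finite set, which is closed since $X_n$ is Hausdorff, so it is closed in the colimit topology. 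The same holds for every subset of it, so it is an infinite closed discrete subset of the compact set $f(S)$, a contradiction. Factorization through $X_n$ is then continuous because $X_n\subset X$ carries the subspace topology, the inclusions being closed. Thus $\varinjlim_n\underline{X_n}(S)\to\underline{X}(S)$ is surjective. It is injective since each $X_n\to X$ is injective, and the colimit is computed pointwise on profinite $S$, these being compact objects.
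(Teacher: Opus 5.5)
Your proposal is correct and follows the standard route: the paper cites rather than proves this result, deferring to \cite{Condensed} for (1)--(3) and the first half of (4), and to \cite[Lemma 4.3.7]{BhattScholzeProetale} for the final claim, whose key point is the factorization of maps from profinite sets through a finite stage that you establish. The points you leave implicit are routine:
\begin{enumerate}
\item $\underline{T}$ is quasicompact quasiseparated for compact Hausdorff $T$, so the functor in (3) lands where claimed.
\item Images of compact Hausdorff spaces in a weak Hausdorff space are Hausdorff, which is what makes $S_1\times_T S_2\subset S_1\times S_2$ closed and $\underline{T}$ quasiseparated.
\item For the ind-equivalence in (4), $\varinjlim_i \underline{T_i}$ along injections is quasiseparated, and any map from a quasicompact $\underline{T}$ into it factors through a single $\underline{T_i}$; this is what gives full faithfulness. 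Both facts follow from the compactness of profinite sets against filtered colimits that you already use at the end.
\end{enumerate}
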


Here the notions of quasicompactness/quasiseparatedness are general notions applying to sheaves on any (coherent) site. In our case, a condensed set $X$ is quasicompact if there is some profinite $S$ with a surjective map $S\to X$. A condensed set $X$ is quasiseparated if for any two profinite sets $S_1,S_2$ with maps to $X$, the fibre product $S_1\times_X S_2$ is quasicompact.

The functor in (4) is close to an equivalence, and in any case (4) asserts that quasiseparated condensed sets can be described in very classical terms. Let us also mention the following related result.

\begin{lemma}\label{lem:compactgenproduct} Let $X_0\hookrightarrow X_1\hookrightarrow \ldots$ and $Y_0\hookrightarrow Y_1\hookrightarrow\ldots$ be two sequences of compact Hausdorff spaces with closed immersions. Then, inside the category of topological spaces, the natural map
\[
\bigcup_n X_n\times Y_n\to (\bigcup_n X_n)\times (\bigcup_n Y_n)
\]
is a homeomorphism; i.e.~the product on the right is equipped with its compactly generated topology.
\end{lemma}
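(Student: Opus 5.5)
The map is a continuous bijection: $\bigcup_n X_n\times Y_n$ carries the colimit topology, and the inclusions $X_n\times Y_n\to X\times Y$ (with $X=\bigcup_n X_n$, $Y=\bigcup_n Y_n$ given the colimit topologies) are continuous. So the content is to show that every set $U\subset X\times Y$ which is open in the colimit topology, i.e.\ with $U\cap (X_n\times Y_n)$ open in $X_n\times Y_n$ for all $n$, is open in the product topology. I will argue directly, building product neighborhoods by an inductive compactness argument in the style of the classical proof that a countable CW complex times a countable CW complex is a CW complex.

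Fix $(x,y)\in U$, say $x\in X_{m}$, $y\in Y_{m}$. I will construct inductively, for $n\ge m$, compact neighborhoods $K_n\subset X_n$ of $x$ and $L_n\subset Y_n$ of $y$ (neighborhoods relative to $X_n$, $Y_n$) with $K_n\subset K_{n+1}$, $L_n\subset L_{n+1}$, and $K_n\times L_n\subset U$. For the base case, $U\cap (X_m\times Y_m)$ is open in the compact Hausdorff space $X_m\times Y_m$, so such $K_m,L_m$ exist. For the inductive step, $K_n\times L_n$ is a compact subset of the open set $U_{n+1}:=U\cap(X_{n+1}\times Y_{n+1})$. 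By the tube lemma (generalized Wallace theorem) there are open $V\supset K_n$ in $X_{n+1}$ and $W\supset L_n$ in $Y_{n+1}$ with $V\times W\subset U_{n+1}$. Since $X_{n+1}$ is compact Hausdorff, hence normal, we can shrink to a compact neighborhood $K_{n+1}\subset V$ of $K_n$ in $X_{n+1}$, and similarly $L_{n+1}\subset W$.

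The key point is that $K_n$ should be a neighborhood of $K_{n-1}$ in $X_n$, meaning it contains the relative interior $V_n^\circ\supset K_{n-1}$. Then set $A=\bigcup_n K_n=\bigcup_n \mathrm{int}_{X_n}K_n$ and $B=\bigcup_n L_n$. I then check that $A$ is open in $X$: for each $N$, $A\cap X_N=\bigcup_{n\ge N}(\mathrm{int}_{X_n}K_n\cap X_N)$ is a union of sets open in $X_N$ (here I use $\mathrm{int}_{X_n}K_n\supset K_{n-1}$ to see that the union of interiors equals the union of the $K_n$). The same holds for $B$. Finally $A\times B=\bigcup_n K_n\times L_n\subset U$, because the sequences are increasing, so $(x,y)\in A\times B\subset U$ with $A\times B$ product-open, which is what we wanted.

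The main obstacle is the inductive step: making the neighborhoods nested while keeping them genuine neighborhoods in each $X_n$, which is what makes $A$ open. This relies on $X_{n}\subset X_{n+1}$ being closed (so that open subsets of $X_{n+1}$ restrict to open subsets of $X_n$) and on normality of compact Hausdorff spaces. Countability of the index set is essential, since it is what allows the induction. The second assertion of the statement is a restatement of the first: by Proposition~\ref{prop:condtop}~(4), both sides are the colimits of compact Hausdorff spaces along closed immersions, and the left-hand side is the compactly generated product.
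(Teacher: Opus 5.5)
Your proof is correct and is essentially the paper's argument: the paper also builds nested neighborhoods $V_n\subset X_n$, $W_n\subset Y_n$ with $\overline{V}_n\times\overline{W}_n\subset U_n$ and $\overline{V}_n\subset V_{n+1}$ (your construction with $K_n=\overline{V}_n$), then takes $V=\bigcup_n V_n$ and $W=\bigcup_n W_n$. You make explicit the tube-lemma and normality steps that the paper leaves inside the phrase ``constructing the $V_n$ and $W_n$ inductively''.
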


\begin{proof} The map is clearly a continuous bijection. In general, for a union like $\bigcup_n X_n$, open subsets $U$ are the subsets of the form $\bigcup_n U_n$ where each $U_n\subset X_n$ is open. Thus, let $U\subset \bigcup_n X_n\times Y_n$ be any open subset, written as a union of open subset $U_n\subset X_n\times Y_n$, and pick any point $(x,y)\in U$. Then for any large enough $n$ (so that $(x,y)\in X_n\times Y_n$), we can find open neighborhoods $V_n\subset X_n$ of $x$ in $X_n$ and $W_n\subset Y_n$ of $y$ in $Y_n$, such that $V_n\times W_n\subset U_n$. In fact, we can ensure that even $\overline{V}_n\times \overline{W}_n\subset U_n$ by shrinking $V_n$ and $W_n$. Constructing the $V_n$ and $W_n$ inductively, we may then moreover ensure $\overline{V}_n\subset V_{n+1}$ and $\overline{W}_n\subset W_{n+1}$. Then $V=\bigcup_n V_n\subset \bigcup_n X_n$ and $W=\bigcup_n W_n\subset \bigcup_n Y_n$ are open, and $V\times W = \bigcup_n V_n\times W_n\subset U$ contains $(x,y)$, showing that $U$ is open in the product topology.
\end{proof}

We asserted above that one should think of $X(\ast)$ as the underlying set of $X$, and about $X(S)$ as the continous maps from $S$ into $X$. Note however that in general the map
\[
X(S)\to \prod_{s\in S} X(\{s\}) = \prod_{s\in S} X(\ast) = \mathrm{Map}(S,X(\ast))
\]
may not be injective, and in fact it may and does happen that $X(\ast)=\ast$ but $X(S)\neq \ast$ for large $S$. However, for quasiseparated condensed sets, this map is always injective (as follows from (4) above), and in particular a quasiseparated condensed set $X$ is trivial as soon as $X(\ast)$ is trivial.

The critical property of condensed sets is however exactly that they can also handle non-Hausdorff situations (i.e., non-quasiseparated situations in the present technical jargon) well. For example, condensed abelian groups, like sheaves of abelian groups on any site, form an abelian category. Considering the example from above and passing to condensed abelian groups, we get a short exact sequence
\[
0\to \underline{(\mathbb R, \mathrm{discrete\ topology})}\to \underline{(\mathbb R,\mathrm{natural\ topology})}\to Q\to 0
\]
of condensed abelian groups, for a condensed abelian group $Q$ satisfying
\[
Q(S) = \Cont(S,\mathbb R)/\{\mathrm{locally\ constant\ maps\ } S\to \mathbb R\}
\]
for any profinite set $S$.\footnote{It is nontrivial that this formula for $Q$ is correct: A priori, this describes the quotient on the level of presheaves, and one might have to sheafify. However, using $H^1(S,M)=0$ for any profinite set $S$ and discrete abelian group $M$, as proved last semester, one gets the result by using the long exact cohomology sequence.} In particular $Q(\ast)=0$ while $Q(S)\neq 0$ for general $S$: There are plenty of non-locally constant maps $S\to \mathbb R$ for profinite sets $S$, say any convergent sequence that is not eventually constant. In particular, $Q$ is not quasiseparated. We see that enlarging topological abelian groups into an abelian category precisely forces us to include non-quasiseparated objects, in such a way that a quotient $M_1/M_2$ still essentially remembers the topology on both $M_1$ and $M_2$.

At this point, we have our desired abelian category, the category $\Cond(\Ab)$ of condensed abelian groups. Again, like for sheaves of abelian groups on any site, it has a symmetric monoidal tensor product (representing bilinear maps in the obvious way). We could then follow the same steps as for schemes. However, the resulting theory does not yet achieve our stated goals:

\begin{enumerate}
\item The basic building blocks in algebraic geometry, the polynomial rings, are exactly the free rings on some set $I$. As sets are generated by finite sets, really the case of a finite set is relevant, giving rise to the polynomial algebras $\mathbb Z[X_1,\ldots,X_n]$. Similarly, the basic building blocks are now the free rings on a condensed set; as these are generated by profinite sets, one could also just take the free rings on a profinite set $S$. But for a profinite set $S$, the corresponding free condensed ring generated by $S$ is not anything like a ring of (convergent) power series. In fact, the underlying ring is simply the free ring on the underlying set of $S$, i.e.~an infinite polynomial algebra.

To further illustrate this point, let us instead consider the free condensed ring $A$ equipped with an element $T\in A$ with the sequence $T,T^2,\ldots,T^n,\ldots$ converging $0$: This is given by $A=\mathbb Z[S]/([\infty]=0)$ for the profinite set $S=\mathbb N\cup \{\infty\}$. The underlying ring of $A$ is then still the polynomial algebra $\mathbb Z[T]$; it is merely equipped with a nonstandard condensed ring structure.
\item One cannot define open subsets by ``putting bounds on functions''. If say over $\mathbb Q_p$ we want to make the locus $\{|T|\leq 1\}$ into an open subset of the affine line, and agree that this closed unit disc should correspond to the algebra of convergent power series
\[
\mathbb Q_p\langle T\rangle = \{\sum_{n\geq 0} a_n T^n\mid a_n\to 0\}
\]
while the affine line corresponds to $\mathbb Q_p[T]$, then being a localization should mean that the multiplication map
\[
\mathbb Q_p\langle T\rangle\otimes_{\mathbb Q_p[T]} \mathbb Q_p\langle T\rangle\to \mathbb Q_p\langle T\rangle
\]
should be an isomorphism. However, this is not an isomorphism of condensed abelian groups: On underlying abelian groups, the tensor product is just the usual algebraic tensor product.
\end{enumerate}

These failures are not unexpected: We did not yet put in any nontrivial analysis. Somewhere we have to specify which kinds of convergent power series we want to use. Concretely, in a condensed ring $R$ with a sequence $T,T^2,\ldots,T^n,\ldots$ converging to $0$, we want to allow certain power series $\sum_{n\geq 0} r_n T^n$ where not almost all of the coefficients $r_n$ are zero. Hopefully, this specification maintains a nice abelian category that acquires its own tensor product, in such a way that now the tensor product computation in point (2) above works out.

Last semester, we developed such a formalism that works very well in nonarchimedean geometry: This is the formalism of solid abelian groups that we will recall in the next lecture. In particular, the solidification of the ring $A$ considered in (1) is exactly the power series ring $\mathbb Z[[T]]$ (with its condensed ring structure coming from the usual topology on the power series ring), and after solidification the tensor product equation in (2) becomes true. Unfortunately, the real numbers $\mathbb R$ are not solid: Concretely, $T=\frac 12\in \mathbb R$ has the property that its powers go to $0$, but not any sum $\sum r_n (\frac 12)^n$ with coefficients $r_n\in \mathbb Z$ converges in $\mathbb R$. Thus, the solid formalism breaks down completely over $\mathbb R$. Now maybe that was expected? After all, the whole formalism is based on the paradigm of resolving nice compact Hausdorff spaces like the interval $[0,1]$ by profinite sets, and this does not seem like a clever thing to do over the reals; over totally disconnected rings like $\mathbb Q_p$ it of course seems perfectly sensible.

However, last semester we promised a theorem on how the formalism might be adapted to cover the reals. The first main goal of this course will be to prove this theorem.

\newpage

\section{Lecture II: Solid modules}

In this lecture, we recall the theory of solid modules that was developed last semester.

Recall from the last lecture that we want to put some ``completeness'' condition on modules in such a way that the free objects behave like some kind of power series. To understand the situation, let us first analyze the structure of free condensed abelian groups.

\begin{proposition}\label{prop:freecondensedgroup} Let $S=\varprojlim_i S_i$ be a profinite set, written as an inverse limit of finite sets $S_i$. For any $n$, let $\mathbb Z[S_i]_{\leq n}\subset \mathbb Z[S_i]$ be the subset of formal sums $\sum_{s\in S_i} n_s [s]$ such that $\sum |n_s|\leq n$; this is a finite set, and the natural transition maps $\mathbb Z[S_j]\to \mathbb Z[S_i]$ preserve these subsets. There is a natural isomorphism of condensed abelian groups
\[
\mathbb Z[S]\cong \bigcup_n \varprojlim_i \mathbb Z[S_i]_{\leq n}\subset \varprojlim_i \mathbb Z[S_i] .
\]
In particular, $\mathbb Z[S]$ is a countable union of the profinite sets $\mathbb Z[S]_{\leq n} := \varprojlim_i \mathbb Z[S_i]_{\leq n}$.
\end{proposition}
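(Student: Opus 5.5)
We construct a map from $\mathbb Z[S]$ to the right-hand side and show it is an isomorphism by reducing to a statement about sections on profinite sets. Recall that $\mathbb Z[S]$ is the sheafification of the presheaf $T\mapsto \mathbb Z[\Cont(T,S)]$, free abelian groups on continuous maps. Let $M:=\bigcup_n \mathbb Z[S]_{\leq n}$, viewed as a condensed abelian group: it is a subgroup of the condensed group $\varprojlim_i \mathbb Z[S_i]$, since addition maps $\mathbb Z[S]_{\leq n}\times \mathbb Z[S]_{\leq m}$ into $\mathbb Z[S]_{\leq n+m}$. By Proposition~\ref{prop:condtop}(4), $M$ is the countable union of the compact Hausdorff, indeed profinite, sets $\mathbb Z[S]_{\leq n}$ along closed immersions, and this union is compatible with the condensed structure. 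The inclusion $S=\varprojlim S_i\to \mathbb Z[S]_{\leq 1}\subset M$, $s\mapsto [s]$, is continuous, so by freeness it induces a map of condensed abelian groups $\phi:\mathbb Z[S]\to M$.

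\emph{Surjectivity.} Consider the continuous map $\psi_n:\bigsqcup_{a+b= n}(S\times\{\pm1\})^{n}\to \mathbb Z[S]_{\leq n}$ sending $((s_k,\epsilon_k))_k$ to $\sum_k\epsilon_k[s_k]$; to hit sums with $\sum|n_s|<n$, allow padding by $0$, e.g.\ use $(S\times\{1,0,-1\})^n$. This map factors through $\phi$ and is surjective onto $\mathbb Z[S]_{\leq n}$. To see this, pass to the limit: each fibre over a point is the inverse limit of the nonempty finite fibres at level $S_i$, hence nonempty. A surjection of profinite sets is a surjection of condensed sets, so $\phi$ is an epimorphism.

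\emph{Injectivity.} This is the main obstacle. Take a profinite $T$ and an element $f\in\mathbb Z[S](T)$ with $\phi(f)=0$. After refining $T$ by a finite disjoint cover, we may represent $f$ as $\sum_{k=1}^m \epsilon_k[g_k]$ with continuous $g_k:T\to S$ and signs $\epsilon_k$. Consider the closed subsets of $T$ on which a prescribed pattern of coincidences $g_k(t)=g_l(t)$ holds. Work with the map $T\to S^m$ and the closed diagonal strata for the equivalence relations on $\{1,\dots,m\}$. These strata can be arranged into a finite partition by locally closed sets, which is not a cover by profinite sets. The fix is to pass instead to the level $S_i$. Since $\phi(f)(t)=0$ in $\varprojlim\mathbb Z[S_i]$, for each $t$ the pairing cancels already at $S$. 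We then use the description of $\mathbb Z[S]$ as $\varinjlim$ over finite sets in a different way: show that $\ker(\mathbb Z[S]\to\varprojlim_i\mathbb Z[S_i])$ vanishes by checking the claim for finite $S$ (trivial) and for $\mathbb Z[S](T)$. Concretely, cover $T$ by the finitely many closed sets $T_\sigma=\{t: \text{cancellation pattern } \sigma \text{ pairs } k \text{ with } l \text{ and } g_k(t)=g_l(t)\}$, where $\sigma$ ranges over perfect matchings of indices with opposite signs. These closed sets cover $T$ because $\sum\epsilon_k[g_k(t)]=0$ in the free group $\mathbb Z[S(\ast)]$ forces such a matching. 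Each $T_\sigma$ is profinite, they form a finite jointly surjective family, and on each $T_\sigma$ the restriction of $f$ is visibly $0$. By the sheaf property, $f=0$.

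Combining the two parts, $\phi$ is an isomorphism, which proves the proposition.
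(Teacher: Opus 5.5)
Your proof is correct and is essentially the paper's argument. Surjectivity comes from the limit of the finite surjections $(S_i\times\{-1,0,1\})^n\to\mathbb Z[S_i]_{\leq n}$, and injectivity from covering $T$ by closed coincidence loci; your single finite cover by the sets $T_\sigma$, indexed by sign-reversing perfect matchings, replaces the paper's induction on the number of distinct terms using the loci $\{g_j=g_{j'}\}$.

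The one step you assert without proof is that $\phi(f)(t)=0$ forces $\sum_k\epsilon_k[g_k(t)]=0$ in the free abelian group on the set $S$. This holds because the finitely many distinct points among the $g_k(t)$ already have distinct images in some $S_i$, as the paper notes, so the image in $\mathbb Z[S_i]$ has the same coefficients. Your detour through locally closed strata and ``passing to the level $S_i$'' can simply be deleted.
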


Note that the right-hand side indeed defines a subgroup: The addition on $\varprojlim_i \mathbb Z[S_i]$ takes
\[
\mathbb Z[S_i]_{\leq n}\times \mathbb Z[S_i]_{\leq n'}
\]
into $\mathbb Z[S_i]_{\leq n+n'}$. We also remark that the bound imposed is as an $\ell^1$-bound, but in fact it is equivalent to an $\ell^0$-bound, as only finitely many $n_s$ (in fact, $n$ of them) can be nonzero.

\begin{proof} By definition, $\mathbb Z[S]$ is the free condensed abelian group on $S$, and this is formally given by the sheafification of the functor $T\mapsto \mathbb Z[\Cont(T,S)]$. First, we check that the map
\[
\mathbb Z[S]\to \varprojlim_i \mathbb Z[S_i]
\]
is an injection. First, we observe that the map of underlying abelian groups is injective. This means that given any finite formal sum $\sum_{j=1}^k n_j [s_j]$ where the $s_j\in S$ are distinct elements and $n_j\neq 0$ are integers, one can find some projection $S\to S_i$ such that the image in $\mathbb Z[S_i]$ is nonzero. But we can arrange that the images of the $s_j$ in $S_i$ are all distinct, giving the result.

Now, assume that $f\in \mathbb Z[\Cont(T,S)]$ maps to $0$ in $\varprojlim_i \mathbb Z[S_i](T)$. In particular, for all $t\in T$, the specialization $f(t)\in \mathbb Z[S]$ is zero by the injectivity on underlying abelian groups. We have to see that there is some finite cover of $T$ by profinite sets $T_m\to T$ such that the preimage of $f$ in each $\mathbb Z[\Cont(T_m,S)]$ is zero. Write $f=\sum_{j=1}^k n_j [g_j]$ where $g_j: T\to S$ are distinct continuous functions and $n_j\neq 0$. We argue by induction on $k$. For each pair $1\leq j<j'\leq k$, let $T_{jj'}\subset T$ be the closed subset where $g_j=g_{j'}$. Then the $T_{jj'}$ cover $T$: Indeed, if $t\in T$ does not lie in any $T_{jj'}$, then all $g_j(t)\in S$ are pairwise distinct, and then $\sum_{j=1}^k n_j [g_j(t)]\in \mathbb Z[S]$ is nontrivial. Thus, we may pass to the cover by the $T_{jj'}$ and thereby assume that $g_j=g_{j'}$ for some $j\neq j'$. But this reduces $k$, so we win by induction.

As observed before, $\bigcup_n \varprojlim_i \mathbb Z[S_i]_{\leq n}$ defines a condensed abelian group, and it admits a map from $S=\varprojlim_i S_i$; in particular, as $\mathbb Z[S]$ is the free condensed abelian group on $S$, the map $\mathbb Z[S]\to \varprojlim_i \mathbb Z[S_i]$ factors over $\bigcup_n \varprojlim_i \mathbb Z[S_i]_{\leq n}$. It remains to see that the induced map
\[
\mathbb Z[S]\to \bigcup_n \varprojlim_i\mathbb Z[S_i]_{\leq n}
\]
is surjective. For this, consider the map
\[
S^n\times \{-1,0,1\}^n=\varprojlim_i S_i^n\times \{-1,0,1\}^n\to \varprojlim_i \mathbb Z[S_i]_{\leq n}
\]
given by $(x_1,\ldots,x_n,a_1,\ldots,a_n)\mapsto a_1[x_1]+\ldots+a_n[x_n]$. This is a cofiltered limit of surjections of finite sets, and so a surjection of profinite sets. This map sits in a commutative diagram
\[\xymatrix{
S^n\times \{-1,0,1\}^n\ar[r]\ar[d] & \varprojlim_i \mathbb Z[S_i]_{\leq n}\ar[d]\\
\mathbb Z[S]\ar[r]& \bigcup_{m} \varprojlim_i \mathbb Z[S_i]_{\leq m}
}\]
and so implies that the lower map contains $\mathbb Z[S_i]_{\leq n}$ in its image. As this works for any $n$, we get the desired result.
\end{proof}

\begin{remark} In particular, we see that the condensed set $\mathbb Z[S]$ is quasiseparated, and in fact comes from a compactly generated weak Hausdorff topological space $\mathbb Z[S]_{\mathrm{top}}$, by Proposition~\ref{prop:condtop}~(4). Moreover, by Lemma~\ref{lem:compactgenproduct}, the addition
\[
\mathbb Z[S]_{\mathrm{top}}\times \mathbb Z[S]_{\mathrm{top}}\to \mathbb Z[S]_{\mathrm{top}}
\]
is continuous (not only when the source is equipped with its compactly generated topology), so $\mathbb Z[S]$ really comes from a topological abelian group $\mathbb Z[S]_{\mathrm{top}}$. This is in fact the free topological abelian group on $S$, cf.~\cite{ArhangelskiiTkachenko}.
\end{remark}

\begin{exercise} Prove that for any compact Hausdorff space $S$, the condensed abelian group $\mathbb Z[S]$ can naturally be written as a countable union $\bigcup_n \mathbb Z[S]_{\leq n}$ of compact Hausdorff spaces $\mathbb Z[S]_{\leq n}$, and comes from a topological abelian group $\mathbb Z[S]_{\mathrm{top}}$.
\end{exercise}

The idea of solid modules is that we would want to enlarge $\mathbb Z[S]$, allowing more sums deemed ``convergent'', and an obvious possibility presents itself:
\[
\mathbb Z[S]^\solid := \varprojlim_i \mathbb Z[S_i] .
\]
In particular, this ensures that
\[
\mathbb Z[\mathbb N\cup\{\infty\}]^\solid/([\infty]=0) = \varprojlim_n \mathbb Z[\{0,1,\ldots,n-1,\infty\}]/([\infty]=0) = \varprojlim_n \mathbb Z[T]/T^n = \mathbb Z[[T]]
\]
is the power series algebra.\footnote{Here, $\mathbb Z[T]$ denotes the polynomial algebra in a variable $T$, not the free condensed module on a profinite set $T$.}

In other words, we want to pass to a subcategory $\Solid\subset \Cond(\Ab)$ with the property that the free solid abelian group on a profinite set $S$ is $\mathbb Z[S]^\solid$. This is codified in the following definition.

\begin{definition}\label{def:solid} A condensed abelian group $M$ is solid if for any profinite set $S$ with a map $f: S\to M$, there is a unique map of condensed abelian groups $\tilde{f}: \mathbb Z[S]^\solid\to M$ such that the composite $S\to \mathbb Z[S]^\solid\to M$ is the given map $f$.
\end{definition}

Let us immediately state the main theorem on solid abelian groups.

\begin{theorem}[{\cite[Theorem 5.8 (i), Theorem 6.2 (i)]{Condensed}}]\label{thm:solid} The category $\Solid$ of solid abelian groups is, as a subcategory of the category $\Cond(\Ab)$ of condensed abelian groups, closed under all limits, colimits, and extensions. The inclusion functor admits a left adjoint $M\mapsto M^\solid$ (called solidification), which, as an endofunctor of $\Cond(\Ab)$, commutes with all colimits and takes $\mathbb Z[S]$ to $\mathbb Z[S]^\solid$. The category of solid abelian groups admits compact projective generators, which are exactly the condensed abelian groups of the form $\prod_I \mathbb Z$ for some set $I$. There is a unique symmetric monoidal tensor product $\otimes^\solid$ on $\Solid$ making $\Cond(\Ab)\to \Solid: M\mapsto M^\solid$ symmetric monoidal.
\end{theorem}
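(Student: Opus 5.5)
The plan is to reduce everything to one computation: the structure of $\mathbb Z[S]^\solid=\varprojlim_i\mathbb Z[S_i]$ for extremally disconnected $S$. The key input is the classical fact (Nöbeling/Specker) that $\mathbb Z[S]^\solid\cong\prod_I\mathbb Z$ for some set $I$. Concretely, $C(S,\mathbb Z)$ is free, say $\bigoplus_I\mathbb Z$, and $\mathbb Z[S]^\solid=\intHom(C(S,\mathbb Z),\mathbb Z)$.

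The crucial claim is then the following: for any set $I$ and any extremally disconnected $T$, we have $R\intHom(\prod_I\mathbb Z,\mathbb Z)=\bigoplus_I\mathbb Z$, concentrated in degree $0$. I would prove it in two steps.

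\begin{enumerate}
\item Write $\prod_I\mathbb Z$ via the solid $\mathbb Z[S]^\solid$ presentation.
\item Compute $\Ext^i_{\Cond(\Ab)}(\prod_I\mathbb Z,\mathbb Z)$ by comparing with $\Ext$ out of $\mathbb Z[S]$, which is $H^i(S,\mathbb Z)=C(S,\mathbb Z)$ in degree $0$ and zero otherwise, using $H^i(S,\mathbb Z)=0$ for $i>0$ from last semester.
\end{enumerate}

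From this I deduce that $M$ is solid iff $R\intHom(\mathbb Z[S]^\solid,M)\to R\intHom(\mathbb Z[S],M)$ is an isomorphism, at least after identifying the right derived version. The main obstacle is this Ext computation. I would attack it via a resolution of $\prod_I\mathbb Z$ by sums of $\mathbb Z[S]$'s, for example a Breen–Deligne type resolution or the explicit $\mathbb Z[S]_{\le n}$ filtration of Proposition~\ref{prop:freecondensedgroup}, and compute the vanishing of higher Ext termwise.

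Granting this, the formal consequences follow.
\begin{itemize}
\item $\Solid$ is the kernel-type subcategory defined by the condition that the map $\mathbb Z[S]\to\mathbb Z[S]^\solid$ becomes an isomorphism on $\Hom(-,M)$ for all $S$, including the $\Ext^1$ condition, which is the meaning of uniqueness plus existence. It is therefore closed under limits automatically. It is closed under colimits because $\mathbb Z[S]$ and $\prod_I\mathbb Z$ are compact: the latter is compact since it is the target of a map whose cone is built from compact objects, or via the computation above. It is closed under extensions by the five lemma applied to the Ext long exact sequences.
\item The left adjoint exists by the adjoint functor theorem, or explicitly as the unique colimit-preserving functor with $\mathbb Z[S]\mapsto\mathbb Z[S]^\solid$ on the generators $\mathbb Z[S]$, $S$ extremally disconnected. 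One must check that $\mathbb Z[S]^\solid$ is itself solid, which again is the Ext computation.
\item Compact projective generators: $\prod_I\mathbb Z=\mathbb Z[S]^\solid$, up to direct summands, is projective in $\Solid$ since $\Hom(\mathbb Z[S]^\solid,-)=\Hom(\mathbb Z[S],-)$ on $\Solid$ and $\mathbb Z[S]$ is projective for extremally disconnected $S$. These objects generate because the $\mathbb Z[S]$ generate $\Cond(\Ab)$. To see that every compact projective has this form, note that a summand of $\prod_I\mathbb Z$ is again a product, by the dual Specker argument on $\bigoplus_I\mathbb Z$.
\item Tensor product: set $\mathbb Z[S]^\solid\otimes^\solid\mathbb Z[T]^\solid:=\mathbb Z[S\times T]^\solid$ and extend by colimits. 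Compatibility with the tensor product on $\Cond(\Ab)$ amounts to the fact that solidification kills the kernel of $M\otimes N\to M^\solid\otimes N^\solid$. This reduces to the generators, where both sides are $\mathbb Z[S\times T]^\solid$. Uniqueness is forced by colimit preservation.
\end{itemize}
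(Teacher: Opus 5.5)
\noindent\textbf{Gap 1: the key computation is not proved.} The paper does not prove this statement: it cites \cite{Condensed} and carries out only the $\mathbb F_p$-analogue. The simplification there is that $\mathbb F_p[S]^\solid$ is profinite, hence compact in $\Cond(\mathbb F_p)$ and easy to map out of. Your skeleton is the standard one: Nöbeling gives $\mathbb Z[S]^\solid\cong\prod_I\mathbb Z$, then a derived comparison, then formal consequences. But the integral difficulty is exactly where you leave a gap. The vanishing of $\underline{\Ext}^i(\prod_I\mathbb Z,\mathbb Z)$ for $i>0$ is never established. Your step (2), ``comparing with $\Ext$ out of $\mathbb Z[S]$'', simply restates the claim. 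The proposed termwise attack also does not go through. In a Breen--Deligne resolution of $\prod_I\mathbb Z$, the terms contribute $R\Gamma((\prod_I\mathbb Z)^n,\mathbb Z)$. This is the condensed cohomology of a non-quasicompact condensed set, which is only a filtered colimit of the profinite sets $\prod\{-f,\dots,f\}$ over the directed set $\mathbb N^{I\times n}$ of coordinatewise bounds; for infinite $I$ this set has uncountable cofinality. So you face higher derived limits over that index set, and nothing available controls them: surjectivity of the transition maps does not force $\lim^1=0$ there. The filtration of Proposition~\ref{prop:freecondensedgroup} is special to $\mathbb Z[S]$ and has no analogue for $\prod_I\mathbb Z$. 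Only the degree-$0$ statement is elementary once Nöbeling is known.

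\noindent\textbf{Gap 2: the formal part rests on a false compactness claim.} You use ``$\prod_I\mathbb Z$ is compact'', which is false in $\Cond(\Ab)$ for infinite $I$. Compact objects are finitely generated, hence countable unions of quasicompact subobjects by Proposition~\ref{prop:freecondensedgroup}. A diagonal argument on coordinatewise bounds shows that $\prod_{\mathbb N}\mathbb Z$ is not such a union. Compactness holds only inside $\Solid$, as an output of the theorem. What the formalism actually needs is the analytic-ring condition from the appendix to Lecture~VI:
\[
R\Hom(\mathbb Z[S]^\solid,K)\cong R\Hom(\mathbb Z[S],K)
\]
for every kernel $K$ of a map between direct sums of $\prod_I\mathbb Z$'s. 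To pass from $K=\mathbb Z$ to such $K$, you must know that $R\Hom(\prod_I\mathbb Z,-)$ commutes with direct sums on that class. Several of your later steps silently use this missing input:
\begin{enumerate}
\item Closure under colimits.
\item Closure under extensions, since the five-lemma step needs the $\Ext^1$-comparison for all solid targets, not only for $\mathbb Z$.
\item The tensor product: $(\mathbb Z[S]^\solid\otimes\mathbb Z[T]^\solid)^\solid\cong\mathbb Z[S\times T]^\solid$ is equivalent to $\intHom(\mathbb Z[T],W)$ being solid for solid $W$, which is not a tautology.
\end{enumerate}

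\noindent\textbf{A route that closes both gaps.} One can argue indirectly via the exact sequence $0\to\prod_I\mathbb Z\to\prod_I\mathbb R\to\prod_I\mathbb R/\mathbb Z\to 0$.
\begin{enumerate}
\item Consider the class of bounded-below $C$ with $R\intHom(\mathbb R,C)=0$. It contains $C=\mathbb Z$, using $R\intHom(\mathbb R/\mathbb Z,\mathbb Z)=\mathbb Z[-1]$, and it is closed under limits and colimits.
\item For such $C$, the $\mathbb R$-linear adjunction gives $R\intHom(\prod_I\mathbb R,C)=R\intHom_{\mathbb R}(\prod_I\mathbb R,R\intHom(\mathbb R,C))=0$. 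Hence $R\intHom(\prod_I\mathbb Z,C)\cong R\intHom(\prod_I\mathbb R/\mathbb Z,C)[1]$.
\item The right-hand side is computed by Breen--Deligne from cohomology of compact Hausdorff spaces. So it commutes with colimits in $C$, which supplies the missing compactness.
\item For $C=\mathbb Z$, continuity of cohomology along $\prod_I\mathbb R/\mathbb Z=\varprojlim_J(\mathbb R/\mathbb Z)^J$ reduces to finite-dimensional tori and yields $\bigoplus_I\mathbb Z$ in degree $0$.
\end{enumerate}
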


The theorem is rather nontrivial. Indeed, a small fraction of it asserts that $\mathbb Z$ is solid. What does this mean? Given a profinite set $S=\varprojlim_i S_i$ and a continuous map $f: S\to \mathbb Z$, there should be a unique map $\tilde{f}: \mathbb Z[S]^\solid\to \mathbb Z$ with given restriction to $S$. Note that
\[
\Cont(S,\mathbb Z)=\varinjlim_i \Cont(S_i,\mathbb Z)=\varinjlim_i\Hom(\mathbb Z[S_i],\mathbb Z)\to \Hom(\varprojlim_i \mathbb Z[S_i],\mathbb Z)=\Hom(\mathbb Z[S]^\solid,\mathbb Z).
\]
In particular, the existence of $f$ is clear, but the uniqueness is not. We need to see that any map $\mathbb Z[S]^\solid\to \mathbb Z$ of condensed abelian groups factors over $\mathbb Z[S_i]$ for some $i$. This seems to be hard to prove by a direct attack (if $S$ is a countable limit of finite sets, it is however possible). We note in particular that we do not know whether the similar result holds with $\mathbb Z$ replaced by any ring $A$ (and in fact the naive translation of the theorem above fails for general rings).

Let us analyze the structure of $\mathbb Z[S]^\solid$. It turns out that it can be regarded as a space of measures. More precisely,
\[
\mathbb Z[S]^\solid = \varprojlim_i \mathbb Z[S_i] = \varprojlim_i \intHom(C(S_i,\mathbb Z),\mathbb Z)=\intHom(\varinjlim_i C(S_i,\mathbb Z),\mathbb Z)=\intHom(C(S,\mathbb Z),\mathbb Z)
\]
is the space dual to the (discrete) abelian group $C(S,\mathbb Z)$ of continuous functions $S\to \mathbb Z$. Accordingly, we will often denote elements of $\mathbb Z[S]^\solid$ as $\mu\in \mathbb Z[S]^\solid$ and refer to them as measures.

Thus, in a solid abelian group, it holds true that whenever $f: S\to M$ is a continuous map, and $\mu\in \mathbb Z[S]^\solid$ is a measure, one can form the integral $\int_S f \mu := \tilde{f}(\mu)\in M$. Again, an important special case is when $S=\mathbb N\cup\{\infty\}$. In that case, $f$ can be thought of as a convergent sequence $m_0,m_1,\ldots,m_n,\ldots,m_\infty$ in $M$, and a measure $\mu$ on $S$ can be characterized by the masses $a_0,a_1,\ldots,a_n,\ldots$ it gives to the finite points (which are isolated in $S$), as well as the mass $a$ it gives to all of whose $S$. Then formally we have
\[
\int_S f \mu = a_0(m_0-m_\infty) + a_1(m_1-m_\infty) + \ldots + a_n(m_n-m_\infty) +\ldots + a m_\infty\ ;
\]
in other words, the infinite sums on the right are defined in $M$. In particular, if $m_\infty=0$, then any sum $\sum a_im_i$ with coefficients $a_i\in \mathbb Z$ is defined in $M$. Note that the sense in which it is defined is a tricky one: It is not directly as any kind of limit of the finite sums; rather, it is characterized by the uniqueness of the map $\mathbb Z[S]^\solid\to M$ with given restriction to $S$. Roughly, this says that there is only one way to consistently define such sums for all measures $\mu$ on $S$ simultaneously; and then one evaluates for any given $\mu$.

Now we want to explain the proof of Theorem~\ref{thm:solid} \emph{when restricted to $\mathbb F_p$-modules} for some prime $p$. (The arguments below could be adapted to $\mathbb Z/n\mathbb Z$ or $\mathbb Z_p$ with minor modifications.) This leads to some important simplifications. Most critically, the underlying condensed set of $\mathbb F_p[S]^\solid$ is profinite.

In other words, we set
\[
\mathbb F_p[S]^\solid := \varprojlim_i \mathbb F_p[S_i] = \mathbb Z[S]^\solid/p
\]
and define the category of solid $\mathbb F_p$-modules $\Solid(\mathbb F_p)\subset \Cond(\mathbb F_p)$ as the full subcategory of all condensed $\mathbb F_p$-modules such that for all profinite sets $S$ with a map $f: S\to M$, there is a unique extension to a map $\mathbb F_p[S]^\solid\to M$ of condensed $\mathbb F_p$-modules. We note that equivalently we are simply considering the $p$-torsion full subcategories inside $\Solid$ resp.~$\Cond(\Ab)$.

In the following, we use two simple facts about condensed sets:
\begin{enumerate}
\item If $M$ is a discrete set, considered as a condensed set, then for any profinite set $S=\varprojlim_i S_i$,
\[
\underline{M}(S) = C(S,M) = \varinjlim_i C(S_i,M) = \varinjlim_i \underline{M}(S_i).
\]
\item If $X_i$, $i\in I$, is any filtered system of condensed sets, then for any profinite set $S$,
\[
(\varinjlim_i X_i)(S) = \varinjlim_i X_i(S).
\]
For the latter, one has to see that the filtered colimit is still a sheaf on $\ast_\proet$, which follows from the commutation of equalizers with filtered colimits.
\end{enumerate}

\begin{proposition} The discrete $\mathbb F_p$-module $\mathbb F_p$ is solid.
\end{proposition}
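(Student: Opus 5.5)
The plan is to reduce the uniqueness in Definition~\ref{def:solid} (for $\mathbb F_p$-modules) to the fact that a continuous map from a profinite set to a discrete set factors through a finite quotient. Existence is already clear from the discussion after Theorem~\ref{thm:solid}. A continuous map $f: S\to \mathbb F_p$ factors through some $S_i$. It therefore gives an $\mathbb F_p$-linear map $\mathbb F_p[S_i]\to \mathbb F_p$, and composing with the projection $\mathbb F_p[S]^\solid=\varprojlim_j \mathbb F_p[S_j]\to \mathbb F_p[S_i]$ gives an extension $\tilde f$.

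For uniqueness, let $g: \mathbb F_p[S]^\solid\to \mathbb F_p$ be any map of condensed $\mathbb F_p$-modules. The key simplification is that $K:=\mathbb F_p[S]^\solid=\varprojlim_i \mathbb F_p[S_i]$ is a profinite set, since each $\mathbb F_p[S_i]$ is finite. By Yoneda, a map of condensed sets $\underline{K}\to \underline{\mathbb F_p}$ is an element of $\underline{\mathbb F_p}(K)=C(K,\mathbb F_p)$. By fact (1) above, $C(K,\mathbb F_p)=\varinjlim_i C(\mathbb F_p[S_i],\mathbb F_p)$. Hence $g$, as a map of sets, factors as $K\to \mathbb F_p[S_i]\xrightarrow{h}\mathbb F_p$ for some $i$ and some function $h$.

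Next we show that $h$ is additive. The projection $\pi_i: K\to \mathbb F_p[S_i]$ is a group homomorphism. It is surjective, because a cofiltered limit of surjections of finite sets is surjective: after refining the index system, all transition maps $\mathbb F_p[S_j]\to\mathbb F_p[S_i]$ are surjective, being induced by the surjections $S_j\to S_i$. Since $g=h\circ\pi_i$ is additive and $\pi_i$ is a surjective homomorphism, $h$ is $\mathbb F_p$-linear.

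Finally, let $g,g'$ be two maps agreeing on $S$. Because the index system is filtered, both factor through a common $\mathbb F_p[S_i]$, as linear maps $h,h'$. Then $h$ and $h'$ agree on the image of $S$ in $\mathbb F_p[S_i]$, which is all of $S_i$ by surjectivity of $S\to S_i$. Since $S_i$ is a basis of $\mathbb F_p[S_i]$, we get $h=h'$, hence $g=g'$.

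The only point needing care is the first step: correctly identifying condensed maps out of the profinite $K$ with continuous maps, which yields the factorization through a finite stage. Once that is in place, the argument is formal. This is precisely where the $\mathbb F_p$ case is easier than the $\mathbb Z$ case, since $\mathbb Z[S]^\solid$ is not profinite.
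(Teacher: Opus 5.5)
Your proposal is correct and follows essentially the paper's argument: since $\varprojlim_i \mathbb F_p[S_i]$ is profinite and $\mathbb F_p$ is discrete, any map factors through some finite stage $\mathbb F_p[S_i]$, and surjective transition maps force that factorization to be linear. You also write out the existence part and the deduction of uniqueness from $S_i$ being a basis of $\mathbb F_p[S_i]$; the paper leaves both implicit in the identity $\Hom(\varprojlim_i \mathbb F_p[S_i],\mathbb F_p)=\varinjlim_i C(S_i,\mathbb F_p)$.
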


\begin{proof} We have to prove that
\[
\Hom(\varprojlim_i \mathbb F_p[S_i],\mathbb F_p)=\varinjlim_i C(S_i,\mathbb F_p).
\]
But any map $\varprojlim_i \mathbb F_p[S_i]\to \mathbb F_p$ of condensed abelian groups can be regarded as a map of condensed sets. As the source is profinite and the target is discrete, it follows that the map factors over $\mathbb F_p[S_i]$ for some $i$. A priori this factorization is only as condensed sets, but if we assume that the transition maps are surjective, it is automatically a factorization as condensed abelian groups. This gives the desired result.
\end{proof}

\begin{corollary} For any set $I$, the profinite $\mathbb F_p$-vector space $\prod_I \mathbb F_p$ is solid.
\end{corollary}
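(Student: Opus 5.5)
The corollary should follow from the preceding proposition once we know that solid $\mathbb F_p$-modules are closed under products. This closure we check directly from Definition~\ref{def:solid}, transported to $\mathbb F_p$-modules, rather than invoking Theorem~\ref{thm:solid}. The key observation is that the solidity condition is a bijectivity condition on Hom-sets, and Hom-sets turn products in the target into products of sets.

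First we fix a profinite set $S$ and write $M=\prod_I \mathbb F_p$. Note that $\prod_I \mathbb F_p$, as a condensed $\mathbb F_p$-module, is the product in $\Cond(\mathbb F_p)$ of copies of the discrete module $\mathbb F_p$. This holds because products of sheaves are computed sectionwise, and $\underline{\prod_I \mathbb F_p}(T)=\Cont(T,\prod_I\mathbb F_p)=\prod_I \Cont(T,\mathbb F_p)$ by the universal property of the product topology. The map we must show is bijective is the restriction
\[
\Hom_{\mathbb F_p}(\mathbb F_p[S]^\solid, M)\to M(S)=\Hom_{\Cond(\Sets)}(S,M),
\]
given by composing with $S\to \mathbb F_p[S]^\solid$. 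Since $M=\prod_I\mathbb F_p$ is a product both in condensed $\mathbb F_p$-modules and in condensed sets, both sides decompose as $\prod_I$ of the corresponding sets with target $\mathbb F_p$. The restriction map is compatible with these decompositions, because composing with the projections $M\to \mathbb F_p$ commutes with precomposition by $S\to\mathbb F_p[S]^\solid$.

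Hence the map is the product over $I$ of the maps $\Hom(\mathbb F_p[S]^\solid,\mathbb F_p)\to \mathbb F_p(S)$. Each of these is a bijection by the proposition that $\mathbb F_p$ is solid, so the product is a bijection. This shows that $M$ is solid.

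There is no real obstacle here. The only point requiring care is the identification of $\underline{\prod_I\mathbb F_p}$ with the categorical product of discrete $\mathbb F_p$'s in condensed sets, which reduces to the universal property of the product topology on continuous maps from $S$.
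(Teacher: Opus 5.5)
Your proposal is correct and takes essentially the same approach as the paper: the paper says solid $\mathbb F_p$-modules are stable under all limits directly from the definition, and combines this with the previous proposition that $\mathbb F_p$ is solid. You have simply written out the product case of that limit argument explicitly, including the identification $\underline{\prod_I \mathbb F_p}(T)=\prod_I \Cont(T,\mathbb F_p)$.
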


\begin{proof} It follows directly from the definition that the class of solid $\mathbb F_p$-modules is stable under all limits.
\end{proof}

Let us say that a condensed $\mathbb F_p$-vector space $V$ is profinite if the underlying condensed set is profinite. One checks easily that any such $V$ can be written as a cofiltered limit of finite-dimensional $\mathbb F_p$-vector spaces, and then that $V\mapsto V^\ast=\Hom(V,\mathbb F_p)$ defines an anti-equivalence between profinite $\mathbb F_p$-vector spaces and discrete $\mathbb F_p$-vector spaces. In particular, any profinite $\mathbb F_p$-vector space is isomorphic to $\prod_I \mathbb F_p$ for some set $I$. From here, it is not hard to verify the following proposition:

\begin{proposition} The class of profinite $\mathbb F_p$-vector spaces forms an abelian subcategory of $\Cond(\mathbb F_p)$ stable under all limits, cokernels, and extensions.
\end{proposition}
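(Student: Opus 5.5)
The plan is to transport everything through the anti-equivalence $V\mapsto V^\ast=\Hom(V,\mathbb F_p)$ between profinite $\mathbb F_p$-vector spaces and discrete $\mathbb F_p$-vector spaces, with inverse $W\mapsto \intHom(W,\mathbb F_p)=\prod$-type duals. The target category of discrete vector spaces is abelian with all colimits, kernels and extensions. The work is to check that the relevant limits, cokernels and extensions, computed in $\Cond(\mathbb F_p)$, match what duality predicts.

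\emph{Limits.} The underlying condensed set of a limit in $\Cond(\mathbb F_p)$ is the limit of the underlying condensed sets. Profinite condensed sets, i.e.\ qcqs condensed sets that are totally disconnected, are closed under all small limits in condensed sets (Proposition~\ref{prop:condtop}~(3)). Hence limits of profinite $\mathbb F_p$-vector spaces are profinite, and the subcategory is closed under limits. In particular kernels are fine.

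\emph{Cokernels.} Let $f:V\to W$ be a map of profinite vector spaces. Its image is a closed subspace, and the topological quotient $W/f(V)$ is a profinite vector space $Q$. The key step, and the main obstacle, is to show that the condensed cokernel of $f$ equals $\underline{Q}$; a priori one must sheafify the presheaf quotient. I would write $f=\varprojlim f_i$ as a cofiltered limit of maps of finite-dimensional spaces, e.g.\ by dualizing the discrete map $W^\ast\to V^\ast$ as a filtered colimit of finite-dimensional maps. Then I would show that the map $W\to Q$ is a surjection of profinite sets and that $V\to \ker(W\to Q)=f(V)$ is surjective. It then suffices to show that $W\to Q$ is a surjection of condensed sets, i.e.\ that it admits local sections after a profinite cover. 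This holds because any surjection of profinite sets is an effective epimorphism in $\ast_\proet$. Thus $0\to f(V)\to W\to Q\to 0$ is exact in $\Cond(\mathbb F_p)$, and $\operatorname{coker} f=Q$. The same reasoning gives image $=$ coimage, so the inclusion is an exact abelian subcategory.

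\emph{Extensions.} Let $0\to V'\to E\to V''\to 0$ be exact in $\Cond(\mathbb F_p)$ with $V',V''$ profinite. Since $V''$ is profinite, it is projective in profinite sets. More simply, write $V''\cong\prod_I\mathbb F_p$ and pick a continuous set-theoretic section. To do this, first lift on $S=V''$ itself: surjectivity of $E\to V''$ gives a profinite cover $S'\to V''$ with a lift $S'\to E$. Then use that $V''\cong \prod_I\mathbb F_p$ is extremally disconnected-free. More concretely, I would use the splitting of linear surjections of profinite vector spaces, which holds by duality since discrete vector spaces are injective. This reduces to showing $E\cong V'\times V''$ as condensed sets. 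I would get this by pulling back along $S'\to V''$: the condensed set $E\times_{V''}S'\cong V'\times S'$ is profinite. Hence $E$ is a quotient of a profinite set by a profinite (qcqs) equivalence relation, so $E$ is compact Hausdorff and totally disconnected, hence profinite. I expect this descent step, namely the quasiseparatedness of $E$ via Proposition~\ref{prop:condtop}~(3), to need the most care.
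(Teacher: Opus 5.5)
Your proof is correct, but the extension paragraph should be pruned. The sentences before the descent step are false or circular, and they are not needed.
\begin{itemize}
\item A profinite set is projective among profinite sets only if it is extremally disconnected (Gleason). So $V''\cong\prod_I\mathbb F_p$ with $I$ infinite (e.g.\ the Cantor set $\prod_{\mathbb N}\mathbb F_2$) is not projective, and this gives you no continuous section of $E\to V''$.
\item Splitting linear surjections of profinite vector spaces by duality presupposes that $E$ is profinite, which is exactly what you are proving.
\item The descent step does not show $E\cong V'\times V''$. It shows directly that $E$ is profinite, which is all you need. (A condensed-set section does exist, since $V'$-torsors over $V''$ are classified by $H^1(V'',V')=\prod_J H^1(V'',\mathbb F_p)=0$ for $V'\cong\prod_J\mathbb F_p$. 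You do not need this.)
\end{itemize}
The descent step itself is sound, and two details finish it. First, with $P=E\times_{V''}S'\cong V'\times S'$, one has $R:=P\times_E P\cong E\times_{V''}(S'\times_{V''}S')\cong V'\times(S'\times_{V''}S')$. This is profinite because $S'\times_{V''}S'$ is closed in $S'\times S'$. Hence $E\cong\underline{P/R}$ is the quotient by a closed equivalence relation, so it is compact Hausdorff. Second, $E$ is totally disconnected: every connected component maps to a point of $V''$, so it lies in a fibre, which is a translate of $V'$.

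The paper argues differently. It writes $f$ as a cofiltered limit of maps $f_i$ of finite-dimensional spaces and takes the kernel and cokernel as limits of the levelwise ones. This implicitly uses that cofiltered limits of surjections of finite sets are surjections of profinite sets, so that $\varprojlim_i\mathrm{coker}(f_i)$ is the condensed cokernel. Extensions are simply declared clear.

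You instead identify the cokernel directly with the topological quotient $W/f(V)$. For this you use only two facts: surjections of profinite sets are covers in $\ast_\proet$, and $T\mapsto\underline{T}$ preserves fibre products because it is a right adjoint by Proposition~\ref{prop:condtop}~(1). Part (1), rather than (3), is also the right justification for your limits step.

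Your route makes the presentation $f=\varprojlim_i f_i$ unnecessary; you mention it but never use it. Your descent argument also spells out what the paper leaves as clear. The duality framing in your opening sentence plays no role in the argument.
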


\begin{proof} Let $f: V\to W$ be a map of profinite $\mathbb F_p$-vector spaces. This can be written as a cofiltered limit of maps $f_i: V_i\to W_i$ of finite-dimensional $\mathbb F_p$-vector spaces. One can take kernels and cokernels of each $f_i$, and then pass to the limit to get the kernel and cokernel of $f$, which will then again be profinite $\mathbb F_p$-vector spaces. For stability under extensions, we only have to check that the middle term is profinite as a condensed set, which is clear.
\end{proof}

\begin{theorem} A condensed $\mathbb F_p$-module $M$ is solid if and only if it is a filtered colimit of profinite $\mathbb F_p$-vector spaces. These form an abelian category stable under all limits and colimits.
\end{theorem}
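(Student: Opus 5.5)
Both directions go through the profinite vector spaces $\mathbb F_p[S]^\solid=\varprojlim_i\mathbb F_p[S_i]$. Write $\mathcal C\subset\Cond(\mathbb F_p)$ for the full subcategory of filtered colimits of profinite $\mathbb F_p$-vector spaces.

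\emph{Every object of $\mathcal C$ is solid.} Profinite vector spaces are solid by the Corollary, since each is isomorphic to some $\prod_I\mathbb F_p$. Solidity is also stable under filtered colimits. Indeed, $\mathbb F_p[S]^\solid$ is profinite, so maps out of it into a filtered colimit $\varinjlim_j M_j$ factor through some $M_j$, by fact (2) applied to condensed sets. Maps out of $S$ behave the same way. Additivity of the factorization can be forced by passing to a later index, because $\mathbb F_p[S]^\solid\times\mathbb F_p[S]^\solid$ is again profinite. Hence
\[
\Hom(\mathbb F_p[S]^\solid,\varinjlim_j M_j)=\varinjlim_j\Hom(\mathbb F_p[S]^\solid,M_j)=\varinjlim_j M_j(S)=(\varinjlim_j M_j)(S),
\]
which is exactly the solidity condition.

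\emph{Every solid $M$ lies in $\mathcal C$.} Consider the category of pairs $(V,f)$ with $V$ profinite and $f\colon V\to M$, and try to show that it is filtered with colimit $M$.
\begin{enumerate}
\item Every map $S\to M$ extends to $\mathbb F_p[S]^\solid\to M$ by solidity. Since $M$ is covered by profinite sets, $\bigoplus\mathbb F_p[S]^\solid\to M$ is surjective, and the induced map $\varinjlim(V,f)\to M$ is surjective.
\item The index category is filtered. Two objects are dominated by their direct sum, since finite sums of profinite spaces are profinite. Two parallel maps $g,h\colon(V,f)\to(W,f')$ are equalized by passing to the cokernel $W/(g-h)(V)$, which is profinite by the preceding Proposition; the map $f'$ descends because $f'g=f'h=f$.
\item The map is injective. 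An element of the colimit over some profinite $T$ is represented by a map $T\to V$, and it can be absorbed into a map $\mathbb F_p[T]^\solid\to V$. If its image in $M$ vanishes, replace $V$ by the cokernel of $\mathbb F_p[T]^\solid\to V$, which is again profinite, so the element dies in the colimit. Here the sheaf condition requires working locally on $T$, but the kernel/cokernel argument handles this.
\end{enumerate}
I expect the main obstacle to be this careful check that the colimit is exactly $M$ as a sheaf, not just on underlying sets.

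\emph{Limits and colimits.} A filtered colimit of objects of $\mathcal C$ is again in $\mathcal C$ by reindexing, or simply because solid objects are closed under filtered colimits and we now know solid $=\mathcal C$. For cokernels, a map of filtered colimits of profinite spaces can be written, after reindexing, as a colimit of maps $V_i\to W_i$ of profinite spaces. Its cokernel is the colimit of the profinite cokernels, since filtered colimits are exact in $\Cond(\mathbb F_p)$. Kernels follow in the same way, using that filtered colimits commute with finite limits. Arbitrary limits preserve solidity directly from the definition, since $\Hom(\mathbb F_p[S]^\solid,-)$ and $(-)(S)$ both commute with limits. Arbitrary direct sums are filtered colimits of finite sums. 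Altogether $\Solid(\mathbb F_p)$ is an abelian subcategory closed under all limits and colimits.
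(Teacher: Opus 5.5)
Your proof is correct, but for the main direction (solid implies a filtered colimit of profinite spaces) you take a different route from the paper.

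\emph{The paper's route.} It argues via a presentation. It takes the surjection $V=\bigoplus_j\mathbb F_p[S_j]^\solid\to M$ provided by solidity and notes that its kernel is again solid, since solid modules are closed under limits. Repeating gives $W\to V\to M\to 0$ with $W,V\in\mathcal C$. It then concludes from the closure of $\mathcal C$ under cokernels, established earlier by writing a map of filtered colimits as a filtered colimit of maps of profinite spaces.

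\emph{Your route.} You show that the canonical diagram of all profinite $V\to M$ is filtered with colimit $M$. This only needs that profinite vector spaces are closed under finite sums and cokernels, together with existence of extensions (for surjectivity) and uniqueness of extensions for $M$ (for injectivity). Make the uniqueness step explicit: $f\circ\tilde x$ restricts to $f\circ x=0$ on $T$, hence vanishes. That is exactly what lets $f$ descend to $\mathrm{coker}(\tilde x)$.

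Your worry about working locally on $T$ is unnecessary. Filtered colimits are computed sectionwise on profinite sets, so the element is represented globally by some $x\in V(T)$ and your argument is complete as written.

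\emph{Comparison.} Your approach gives a canonical, presentation-free description of $M$, which is the usual criterion for lying in an Ind-category. It also makes the main direction independent of the reindexing lemma. The paper's approach is shorter because it reuses the cokernel-stability of $\mathcal C$. You still need that stability in your final part, so both proofs ultimately rest on the same ingredients.
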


One could also prove the rest of Theorem~\ref{thm:solid} in this setting.

\begin{proof} We know that profinite $\mathbb F_p$-vector spaces are solid. As $\Hom(\mathbb F_p[S]^\solid,-)$ (as a functor from $\Cond(\mathbb F_p)$ to sets) commutes with filtered colimits as $\mathbb F_p[S]^\solid$ is profinite, the class of solid $\mathbb F_p$-vector spaces is stable under all filtered colimits, and as observed before it is stable under all limits. Next, we observe that the class of filtered colimits of profinite $\mathbb F_p$-vector spaces forms an abelian subcategory of $\Cond(\mathbb F_p)$. For this, note that any map $f: V\to W$ of such is a filtered colimit of maps $f_i: V_i\to W_i$ of profinite $\mathbb F_p$-vector spaces, and one can form the kernel and cokernel of each $f_i$ and then pass to the filtered colimit again. In particular, they are stable under all colimits (which are generated by cokernels, finite direct sums, and filtered colimits).

It remains to see that any solid $M$ is a filtered colimit of profinite $\mathbb F_p$-vector spaces. Any $M$ admits a surjection $\bigoplus_j \mathbb F_p[S_j]\to M$ for certain profinite sets $S_j\to M$. As $M$ is solid, this gives a surjection $V\to M$ where $V=\bigoplus_j \mathbb F_p[S_j]^\solid$ is a filtered colimit of profinite $\mathbb F_p$-vector spaces. As solid modules are stable under limits, in particular kernels, the kernel of this map is again solid, and so by repeating the argument we find a presentation $W\to V\to M\to 0$ where $W$ and $V$ are filtered colimits of profinite $\mathbb F_p$-vector spaces. As this class is stable under quotients, we see that also $M$ is such a filtered colimit, as desired.
\end{proof}

\newpage

\section{Lecture III: Condensed $\mathbb R$-vector spaces}

The first goal of this course is to define an analogue of solid modules over the real numbers. Roughly, we want to define a notion of liquid $\mathbb R$-vector spaces that is close to the notion of complete locally convex $\mathbb R$-vector spaces, but is a nice abelian category.

To get started, we will translate some of the classical theory of topological vector spaces into the condensed setup. The most familiar kind of topological $\mathbb R$-vector space are the Banach spaces.

\begin{definition} A Banach space is a topological $\mathbb R$-vector space $V$ that admits a norm $||\cdot||$, i.e.~a continuous function
\[
||\cdot||: V\to \mathbb R_{\geq 0}
\]
with the following properties:
\begin{enumerate}
\item For any $v\in V$, the norm $||v||=0$ if and only if $v=0$;
\item For all $v\in V$ and $a\in \mathbb R$, one has $||av|| = |a| ||v||$;
\item For all $v,w\in V$, one has $||v+w||\leq ||v||+||w||$;
\item The sets $\{v\in V\mid ||v||<\epsilon\}$ for varying $\epsilon\in \mathbb R_{>0}$ define a basis of open neighborhoods of $0$;
\item For any sequence $v_0,v_1,\ldots\in V$ with $||v_i-v_j||\to 0$ as $i,j\to\infty$, there exists a (necessarily unique) $v\in V$ with $||v-v_i||\to 0$.
\end{enumerate}
\end{definition}

We note that this notion is very natural from the topological point of view in the sense that it is easy to say what the open subsets of $V$ are -- they are the unions of open balls with respect to the given norm. On the other hand, $V$ is clearly a metrizable topological space (with distance $d(x,y) = ||y-x||$), so in particular first-countable, so in particular compactly generated, so the passage to the condensed vector space $\underline{V}$ does not lose information. Let us try to understand $\underline{V}$; in other words, we must understand how profinite sets map into $V$.

\begin{proposition} Let $V$ be a Banach space, or more generally a complete locally convex topological vector space. Let $S$ be a profinite set, or more generally a compact Hausdorff space, and let $f: S\to V$ be a continuous map. Then $f$ factors over a compact absolutely convex subset $K\subset V$, i.e.~a compact Hausdorff subspace of $V$ such that for all $x,y\in K$ also $ax+by\in K$ whenever $|a|+|b|\leq 1$.
\end{proposition}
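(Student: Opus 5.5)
The natural candidate for $K$ is the closed absolutely convex hull of the image $f(S)$. I will show that this set is compact. Since $S$ is compact and $f$ is continuous, $f(S)\subset V$ is compact. Set
\[
K = \overline{\Gamma(f(S))},\qquad \Gamma(A)=\Big\{\textstyle\sum_{j=1}^k a_j x_j \;\Big|\; x_j\in A,\ \sum_j |a_j|\leq 1\Big\}.
\]
Here $\Gamma(A)$ is the absolutely convex hull of $A$. The closure of an absolutely convex set is again absolutely convex, because addition and scalar multiplication are continuous. So $K$ is closed and absolutely convex, it contains $f(S)$, and $f$ factors through $K$. Compact subsets of a Hausdorff space are compact Hausdorff, so it only remains to prove that $K$ is compact.

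In a complete topological vector space, a closed subset is compact if and only if it is complete and totally bounded. Total boundedness here means: for every open neighborhood $U$ of $0$, the set is covered by finitely many translates $x+U$. The set $K$ is closed in the complete space $V$, hence complete. Since the closure of a totally bounded set is totally bounded, it suffices to show that $\Gamma(f(S))$ is totally bounded. This is the main step, and local convexity enters here.

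Fix an absolutely convex open neighborhood $U$ of $0$. By total boundedness of the compact set $f(S)$, choose $x_1,\ldots,x_n\in f(S)$ with $f(S)\subset\bigcup_j (x_j+\tfrac12 U)$. Take an element $\sum_i a_i y_i$ of $\Gamma(f(S))$, with $\sum_i|a_i|\leq 1$. Write each $y_i=x_{j(i)}+u_i$ with $u_i\in\tfrac12 U$. Then
\[
\sum_i a_i y_i = \sum_i a_i x_{j(i)} + \sum_i a_i u_i \in \Gamma(\{x_1,\ldots,x_n\}) + \tfrac12 U,
\]
because $\tfrac12 U$ is absolutely convex. The set $\Gamma(\{x_1,\ldots,x_n\})$ is the image of the compact $\ell^1$-ball in $\mathbb R^n$ under a continuous linear map, so it is compact. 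Hence it is covered by finitely many translates of $\tfrac12 U$. Combining the two containments, $\Gamma(f(S))$ is covered by finitely many translates of $U$.

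Since absolutely convex neighborhoods form a basis at $0$, $\Gamma(f(S))$ is totally bounded, and therefore $K$ is compact. For a Banach space one can make this concrete with $U$ the open $\epsilon$-ball, replacing nets by sequences. I expect the only real obstacle to be this total-boundedness estimate, in particular the use of absolute convexity of $U$ to absorb the error term $\sum_i a_i u_i$. Without local convexity this step fails; completeness is used only to pass from totally bounded to compact.
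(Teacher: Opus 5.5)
Your proof is correct, but it takes a different route from the paper. You prove the classical fact that in a complete locally convex space the closed absolutely convex hull $\overline{\Gamma(f(S))}$ of the compact set $f(S)$ is compact, using ``complete and totally bounded implies compact''.

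The paper instead deduces the statement from Proposition~\ref{prop:completelocallyconvex}. For profinite $S=\varprojlim_i S_i$ it builds the integration map $\mu\mapsto\int_S f\mu$ on $\mathcal M(S)$ as the limit of the Riemann sums $\sum_{s\in S_i} f(t_i(s))\,\mu(\pi_i^{-1}(s))$. It then takes $K$ to be the image of $\mathcal M(S)_{\leq 1}$. Compactness of $K$ is inherited from $\mathcal M(S)_{\leq 1}=\varprojlim_i \mathcal M(S_i)_{\leq 1}$, and absolute convexity comes from linearity.

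The analytic core of the two arguments is the same. Absolute convexity of $U$ absorbs an $\ell^1$-weighted sum of errors lying in $U$: you use this for total boundedness of $\Gamma(f(S))$, and the paper uses it to show the Riemann sums form a Cauchy net. In both, completeness enters exactly once. The two sets $K$ even coincide, since finitely supported measures are dense in $\mathcal M(S)_{\leq 1}$ and are sent into $\Gamma(f(S))$.

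Your argument is more elementary and handles compact Hausdorff $S$ directly. The paper needs a profinite cover $\tilde S\to S$, or the descent description of measures from the exercise. The paper's argument gives more, namely the continuous linear extension $\tilde f:\mathcal M(S)\to V$ of $f$. This is exactly the $\mathcal M$-completeness property on which the next lecture builds Smith spaces and $\mathcal M$-complete condensed $\mathbb R$-vector spaces.
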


In other words, as a condensed set $\underline{V}$ is the union of its compact absolutely convex subsets.

Recall that a topological vector space $V$ is locally convex if it has a basis of neighborhoods $U$ of $0$ such that for all $x,y\in U$ also $ax+by\in U$ whenever $|a|+|b|\leq 1$. It is complete if every Cauchy net converges, i.e.~for any directed index set $I$ and any map $I\to V: i\mapsto x_i$, if $x_i-x_j$ converges to $0$, then there is a unique $x\in V$ so that $x-x_i$ converges to $0$. (Note that $V$ may fail to be metrizable, so one needs to pass to nets.)

To prove the proposition, we need to find $K$. It should definitely contain all convex combinations of images of points in $S$; and all limit points of such. This quickly leads to the idea of integrating a (suitably bounded) measure on $S$ against $f$. The intuition is that if a profinite set $S$ maps into $V$, then one can also integrate any measure on $S$ against this map, to produce a map from the space of measures on $S$ towards $V$.

More precisely, in case $S=\varprojlim_i S_i$ is profinite, consider the space of (``signed Radon'') measures of norm $\leq 1$,
\[
\mathcal M(S)_{\leq 1} := \varprojlim_i \mathcal M(S_i)_{\leq 1},
\]
where $\mathcal M(S_i)_{\leq 1}\subset \mathbb R[S_i]$ is the subspace of $\ell^1$-norm $\leq 1$. More generally, increasing the norm, we define
\[
\mathcal M(S) = \bigcup_{c>0} \mathcal M(S)_{\leq c},\ \mathcal M(S)_{\leq c} = \varprojlim_i \mathcal M(S_i)_{\leq c}.
\]
This can be defined either in the topological or condensed setting, and is known as the space of ``signed Radon'' measures. We note that we do not regard $\mathcal M(S)$ as a Banach space with the norm given by $c$. We also stress the strong similarity with the description of $\mathbb Z[S]$.

\begin{exercise} For any compact Hausdorff space $S$, there is a notion of signed Radon measure on $S$ --- look up the official definition. Show that the corresponding topological vector space $\mathcal M(S)$, with the weak topology, is a union of compact Hausdorff subspaces $\mathcal M(S)_{\leq c}$ (but beware that the weak topology is not the induced colimit topology). Moreover, show that if $S$ is a profinite set, then a signed Radon measure on $S$ is equivalent to a map $\mu$ assigning to each open and closed subset $U\subset S$ of $S$ a real number $\mu(U)\in \mathbb R$, so that $\mu(U\sqcup V)=\mu(U)+\mu(V)$ for two disjoint $U,V$, and there is some constant $C=C(\mu)$ such that for all disjoint decompositions $S=U_1\sqcup\ldots\sqcup U_n$,
\[
\sum_{i=1}^n |\mu(U_i)|\leq C.
\]

Moreover, for a general compact Hausdorff space $S$, choose a surjection $\tilde{S}\to S$ from a profinite set, so that $S=\tilde{S}/R$ for the equivalence relation $R=\tilde{S}\times_S \tilde{S}\subset \tilde{S}\times \tilde{S}$. Show that $\mathcal M(S)$ is the coequalizer of $\mathcal M(R)\rightrightarrows \mathcal M(\tilde{S})$. Thus, the rather complicated notion of signed Radon measures on general compact Hausdorff spaces is simply a consequence of descent from profinite sets.
\end{exercise}

\begin{proposition}\label{prop:completelocallyconvex} Let $V$ be a complete locally convex $\mathbb R$-vector space. Then any continuous map $f: S\to V$ from a profinite set $S$ extends uniquely to a map of topological $\mathbb R$-vector spaces $\mathcal M(S)\to V: \mu\mapsto \int_S f\mu$. The image of $\mathcal M(S)_{\leq 1}$ is a compact absolutely convex subset of $V$ containing $S$.
\end{proposition}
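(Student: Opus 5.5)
Given $\mu\in\mathcal M(S)$, I will define $\int_S f\mu$ as a limit of Riemann sums. The idea is that a measure on $S=\varprojlim_i S_i$ is the compatible system of its images $\mu_i\in\mathbb R[S_i]$.

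\emph{Construction.} For each $i$ and each $s\in S_i$, pick a point $\tilde s\in S$ in the fibre over $s$. Set
\[
I_i(\mu)=\sum_{s\in S_i}\mu_i(s)\, f(\tilde s)\in V .
\]
Then show that $(I_i(\mu))_i$ is a Cauchy net, using uniform continuity. Fix an absolutely convex open neighbourhood $U$ of $0$. Since $S$ is compact, $f$ is uniformly continuous, so there is an index $i_0$ such that for all $i\geq i_0$, any two points of $S$ with the same image in $S_i$ have $f$-values differing by an element of $U$. For $j\geq i\geq i_0$, the difference $I_j(\mu)-I_i(\mu)$ regroups as
\[
\sum_{t\in S_j}\mu_j(t)\bigl(f(\tilde t)-f(\widetilde{\pi(t)})\bigr),
\]
where $\pi: S_j\to S_i$ is the transition map. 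If $\mu\in\mathcal M(S)_{\leq c}$, then $\sum_t|\mu_j(t)|\leq c$, so absolute convexity places this difference in $cU$. Completeness then gives the limit $\int_S f\mu$. The same estimate shows it does not depend on the choice of the lifts $\tilde s$. Linearity in $\mu$ is clear, and for $\mu=\delta_x$ one recovers $f(x)$.

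\emph{Continuity.} On $\mathcal M(S)_{\leq c}$ the estimate above is uniform in $\mu$. So $\mu\mapsto\int_S f\mu$ is a uniform limit of the maps $\mu\mapsto I_i(\mu)$. Each of these factors through the projection to the compact polytope $\mathcal M(S_i)_{\leq c}$ and is continuous there. Hence the integral is continuous on each $\mathcal M(S)_{\leq c}$, and therefore on $\mathcal M(S)$ with its colimit topology. This continuity is the step I expect to need the most care: one has to keep track of which topology $\mathcal M(S)$ carries (the colimit of the compact pieces) and check that a uniform limit of continuous maps into $V$ is continuous. The latter holds for topological vector spaces via the $3\epsilon$ argument with absolutely convex neighbourhoods.

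\emph{Uniqueness.} Finitely supported measures $\sum a_k\delta_{x_k}$ are dense in each $\mathcal M(S)_{\leq c}$: any $\mu$ is the limit of the measures $\sum_{s}\mu_i(s)\delta_{\tilde s}$, which stay in norm $\leq c$. Any continuous linear extension is forced on these measures by linearity, so it is unique, since $V$ is Hausdorff.

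\emph{The image.} The image of $\mathcal M(S)_{\leq 1}$ is compact, being the continuous image of a compact space; it is Hausdorff as a subspace of $V$. It is absolutely convex because $\mathcal M(S)_{\leq 1}$ is absolutely convex and the map is linear. It contains $f(S)$, as the image of the Dirac measures.
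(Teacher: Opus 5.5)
Your proposal is correct and follows the paper's proof: Riemann sums over lifts $S_i\to S$, a Cauchy-net estimate from uniform continuity of $f$ and absolute convexity of $U$, completeness of $V$, and continuity from the fact that the index $i_0$ depends only on $f$ and $U$, not on $\mu$. Two steps the paper leaves implicit are written out in your version: the regrouping identity behind the Cauchy property, and uniqueness via density of finitely supported measures in each $\mathcal M(S)_{\leq c}$.
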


\begin{remark} The exercise implies that the same holds true for any compact Hausdorff $S$. Also note that maps $\mathcal M(S)\to V$ of topological vector spaces are equivalent to maps between the corresponding condensed vector spaces, as the source is compactly generated.
\end{remark}

\begin{proof} The final sentence is clear as $V$ is Hausdorff and so the image of any compact set is compact; and clearly $\mathcal M(S)_{\leq 1}$ is absolutely convex (hence so is its image), and the image contains $S$ (as $S\subset \mathcal M(S)_{\leq 1}$ as Dirac measures).

We have to construct the map $\mathcal M(S)\to V$, so take $\mu\in \mathcal M(S)$, and by rescaling assume that $\mu\in \mathcal M(S)_{\leq 1}$. Write $S=\varprojlim_i S_i$ as a limit of finite sets $S_i$ with surjective transition maps, and pick any lift $t_i: S_i\to S$ of the projection $\pi: S\to S_i$ (we ask for no compatibilities between different $t_i$). We define a net in $V$ as follows: For each $i$, let
\[
v_i = \sum_{s\in S_i} f(t_i(s)) \mu(\pi_i^{-1}(s)) \in V.
\]
We have to see that this is a Cauchy net, so pick some absolutely convex neighborhood $U$ of $0$. Choosing $i$ large enough, we can (by continuity of $f$) ensure that for any two choices $t_i,t_i': S_i\to S$, one has $f(t_i(s))-f(t_i'(s))\in U$ (in other words, $f$ varies at most within a translate of $U$ on the preimages of $S\to S_i$). As $\sum_{s\in S_i} |\mu(\pi_i^{-1}(s))|\leq 1$, we get a convex combination of such differences, which still lies in $U$. This verifies that each $v_i$ does not depend on the choice of the $t_i$ up to translation by an element in $U$, and a slight refinement then proves that one gets a Cauchy net. By completeness of $V$, we get a unique limit $v\in V$ of the $v_i$. The proof essentially also gives continuity: Note that the choice of $i$ depending on $U$ only depended on $f$, not on $\mu$.
\end{proof}

Thus, from the condensed point of view, the following concepts seem fundamental: Compact absolutely convex sets, and the spaces of measures $\mathcal M(S)$ with their compactly generated topology. Note that the latter are ill-behaved from the topological point of view: It is hard to say what a general open subset of $\mathcal M(S)$ looks like; one cannot do better than naively taking the increasing union $\bigcup_{c>0} \mathcal M(S)_{\leq c}$ of compact Hausdorff spaces.

These concepts lead one to the definition of a Smith space:

\begin{definition} A Smith space is a complete\footnote{One can show that completeness follows from the other assumptionss, basically as compact Hausdorff spaces are always complete.} locally convex topological $\mathbb R$-vector space $V$ that admits a compact absolutely convex subset $K\subset V$ such that $V=\bigcup_{c>0} cK$ with the induced compactly generated topology on $V$.
\end{definition}

One can verify that $\mathcal M(S)$ is a Smith space: One needs to check that it is complete and locally convex. We will redefine Smith spaces in the next lecture, and check that it defines a Smith space in the latter sense.

\begin{corollary} Let $V$ be a complete locally convex topological $\mathbb R$-vector space, and consider the category of Smith spaces $W\subset V$. Then this category is filtered, and as condensed $\mathbb R$-vector spaces
\[
\underline{V} = \varinjlim_{W\subset V} \underline{W}.
\]
\end{corollary}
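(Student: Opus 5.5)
The plan is to prove the two claims in turn: first that the category of Smith subspaces is filtered, then that the induced map of condensed vector spaces is an isomorphism. Throughout, a ``Smith space $W\subset V$'' means a subspace of the form $W=\bigcup_{c>0} cK$ with $K\subset V$ compact absolutely convex, equipped with its compactly generated topology. The inclusion $W\to V$ is continuous: on each $cK$ the topology is the subspace topology, and the colimit topology is finer. The category of such $W$ is a poset under inclusion, so it is nonempty (take $K=\{0\}$) and filtered as soon as any two such subspaces are contained in a third.

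\textbf{Filteredness.} Given $W_1=\bigcup_c cK_1$ and $W_2=\bigcup_c cK_2$, I will take $K$ to be the closed absolutely convex hull of $K_1\cup K_2$. To see that $K$ is compact, form $S=K_1\sqcup K_2$ (a compact Hausdorff space) with its tautological continuous map $f:S\to V$. By Proposition~\ref{prop:completelocallyconvex}, together with the following remark extending it to compact Hausdorff $S$, the image of $\mathcal M(S)_{\leq 1}$ under $\mu\mapsto\int_S f\mu$ is compact and absolutely convex and contains $K_1\cup K_2$. Alternatively, $K$ is the image of $K_1\times K_2\times\{(a,b): |a|+|b|\le1\}$ under $(x,y,a,b)\mapsto ax+by$, which is compact directly. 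Either way we obtain a Smith space $W=\bigcup_c cK$ containing $W_1$ and $W_2$. I must also check that the induced maps $W_i\to W$ are continuous for the compactly generated topologies. This holds because $cK_i\subset cK$ is a continuous inclusion of compact sets.

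\textbf{The isomorphism.} Next I compare condensed sets. The map $\varinjlim_W \underline W\to\underline V$ is injective: filtered colimits of condensed sets are computed on $S$-points by fact~(2), each $\underline W(S)\to\underline V(S)$ is injective, and the transition maps are injections. For surjectivity, take a profinite set $S$ and a continuous map $f:S\to V$. By the preceding proposition, $f$ factors through a compact absolutely convex set $K$, namely the image of $\mathcal M(S)_{\leq 1}$. Since $K$ is a compact subset of the Hausdorff space $V$, the map $S\to K$ is continuous for the subspace topology. Hence $f$ gives a continuous map $S\to W_K=\bigcup_c cK$, so $f$ lies in the image of $\underline{W_K}(S)$. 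By fact~(2), this shows the map is an isomorphism on $S$-points for every $S$.

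\textbf{Remaining detail and main obstacle.} The one point needing care is that $W_K$, with its compactly generated topology, genuinely is a Smith space in the sense of the definition, i.e.\ complete and locally convex. This is the step I expect to be the main obstacle. Since the statement presumably treats these as given, I will invoke the footnote that completeness is automatic. Local convexity should follow as in the argument for $\mathcal M(S)$. Concretely, write $W_K$ as an increasing union of the compact sets $nK$ and build absolutely convex open neighbourhoods inductively, in the same manner as in the proof of Lemma~\ref{lem:compactgenproduct}: choose absolutely convex neighbourhoods $U_n$ of $0$ in $nK$ with $\overline{U_n}\subset U_{n+1}$, and take their union.
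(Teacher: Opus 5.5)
Your proposal is correct and follows the paper's proof. Filteredness comes from enlarging $K_1\cup K_2$ to a compact absolutely convex $K$ via Proposition~\ref{prop:completelocallyconvex} applied to $K_1\sqcup K_2$, and $\underline{V}=\varinjlim_{W}\underline{W}$ from factoring each map $S\to V$ through such a $K$; your injectivity check, the direct hull as the image of $K_1\times K_2\times\{|a|+|b|\leq 1\}$, and the local convexity sketch (whose inductive step can take $U_{n+1}=(\overline{U_n}+U')\cap (n+1)K$ for a small absolutely convex open $U'\ni 0$ in $V$) supply details the paper omits, since it simply asserts that $\bigcup_{c>0}cK$ is a Smith space.
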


\begin{proof} To see that the category is filtered, let $W_1,W_2\subset V$ be two Smith subspaces, with compact generating subsets $K_1,K_2$. Then $K_1\sqcup K_2\to V$ factors over a compact absolutely convex subset $K\subset V$ by the proposition above, and $W=\bigcup_{c>0} cK\subset V$ (with the inductive limit topology) is a Smith space containing both $W_1$ and $W_2$. By the same token, if $f: S\to V$ is any map from a profinite set, then $f$ factors over a compact absolutely convex subset $K\subset V$, and hence over the Smith space $W=\bigcup_{c>0} cK\subset V$, proving that
\[
\underline{V} = \varinjlim_{W\subset V} \underline{W}.\qedhere
\]
\end{proof}

This realizes the idea that from the condensed point of view, Smith spaces are the basic building blocks. On the other hand, it turns out that Smith spaces are closely related to Banach spaces:

\begin{theorem}[Smith, \cite{Smith}] The categories of Smith spaces and Banach spaces are anti-equivalent. More precisely, if $V$ is a Banach space, then $\Hom(V,\mathbb R)$ is a Smith space; and if $W$ is a Smith space, then $\Hom(W,\mathbb R)$ is a Banach space, where in both cases we endow the dual space with the compact-open topology. The corresponding biduality maps are isomorphisms.
\end{theorem}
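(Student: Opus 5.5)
We prove the two directions separately and then check the biduality maps. Throughout, dual spaces carry the compact-open topology, i.e.\ the topology of uniform convergence on compact subsets.

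\emph{Step 1: the dual of a Banach space is a Smith space.} Let $V$ be a Banach space and $V^\ast=\Hom(V,\mathbb R)$. Take $K\subset V^\ast$ to be the unit ball of the dual norm.
\begin{enumerate}
\item By Banach--Alaoglu, $K$ is compact in the weak-$\ast$ topology.
\item On $K$ the weak-$\ast$ topology agrees with the compact-open topology. The functionals in $K$ are $1$-Lipschitz, hence equicontinuous, and by Arzel\`a--Ascoli pointwise convergence on an equicontinuous family is uniform on compacts. So $K$ is compact in the compact-open topology; it is clearly absolutely convex.
\item Every bounded functional lies in some $cK$, so $V^\ast=\bigcup_{c>0} cK$.
\end{enumerate}
It remains to show that the compact-open topology on $V^\ast$ is the colimit topology of the $cK$, which is the main point.

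\emph{Main obstacle: identifying the compact-open topology with the colimit topology.} The compact-open topology is coarser, so we must show that a subset $U\subset V^\ast$ whose intersection with each $cK$ is relatively open is a compact-open neighbourhood of each of its points; it suffices to treat $0\in U$. I would run the Banach--Dieudonn\'e argument. Construct finite sets $F_1,F_2,\ldots\subset V$ with $F_n\subset \{\|v\|\leq 1/n\}$, by induction on $n$, such that
\[
\{\lambda\in nK : |\lambda(F_1\cup\cdots\cup F_n)|<1\}\subset U .
\]
The inductive step is a compactness argument on the compact set $(n+1)K\setminus U$: each of its points is separated by some $v$ of norm $\leq 1/(n+1)$, and finitely many such $v$ suffice. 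Then $C=\{0\}\cup\bigcup_n F_n$ is a null sequence, hence compact, and the compact-open neighbourhood $\{\lambda : |\lambda(C)|<1\}$ is contained in $U$. Completeness of $V^\ast$ and local convexity come for free; alternatively, the footnote says completeness is automatic.

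\emph{Step 2: the dual of a Smith space is a Banach space.} Let $W$ be a Smith space with generating compact $K$. Norm $W^\ast=\Hom(W,\mathbb R)$ by $\|\lambda\|=\sup_K|\lambda|$. This is finite because $K$ is compact, and it is a norm because $W=\bigcup_{c>0} cK$. Completeness holds because a norm-Cauchy sequence converges uniformly on $K$. The limit is linear, and it is continuous on each $cK$, hence continuous on $W$, using the colimit topology on $W$.

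We also need that this norm topology is the compact-open topology. Any compact $C\subset W$ lies in some $cK$; this is the standard fact that a compact subset of such a countable colimit of compact Hausdorff spaces lies in one stage (cf.\ Proposition~\ref{prop:condtop}~(4)). So uniform convergence on compacts is the same as uniform convergence on $K$.

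\emph{Step 3: biduality.} For a Banach space $V$, the evaluation map $V\to (V^\ast)^\ast$ is continuous by construction, and it is an isometry for the sup over the dual unit ball, by Hahn--Banach. It is surjective because a linear functional on $V^\ast$ that is continuous in the colimit topology is weak-$\ast$ continuous on $K$. By the Krein--Smulian / Banach--Dieudonn\'e theorem, such a functional is weak-$\ast$ continuous, hence given by evaluation at some $v\in V$.

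For a Smith space $W$, the map $W\to (W^\ast)^\ast$ is injective by Hahn--Banach applied to the locally convex $W$. Its image of $K$ is the polar of the unit ball of $W^\ast$, by the bipolar theorem applied to the closed absolutely convex $K$. Hence it is a continuous bijection $cK\to c(\text{dual unit ball})$ between compact Hausdorff spaces, so a homeomorphism, and therefore an isomorphism of the colimit topologies.

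Functoriality in both directions is clear, which gives the anti-equivalence.
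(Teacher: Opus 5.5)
Your argument is essentially correct, and it takes a genuinely different route from the paper's.

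\textbf{Comparison with the paper.} The paper (Lecture IV) proves the condensed form of the statement. It computes $\intHom_{\mathbb R}(\mathcal M(S),\mathbb R)=\underline{C(S,\mathbb R)}$ from the universal property of measures (Proposition~\ref{prop:completelocallyconvex}), and $\intHom_{\mathbb R}(\underline{C(S,\mathbb R)},\mathbb R)=\mathcal M(S)$ by bounding operator norms and reducing to finite $S$. It then resolves an arbitrary Smith space as $\mathcal M(S')\to\mathcal M(S)\to W\to 0$, and an arbitrary Banach space as $0\to V\to C(S,\mathbb R)\to C(S',\mathbb R)$ (via Banach--Alaoglu and Hahn--Banach). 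It dualizes these resolutions and reads off biduality. The decisive analytic input there is the injectivity of $C(T,\mathbb R)$ among Banach spaces for extremally disconnected $T$, which keeps the dualized Banach resolution exact.

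Because the paper works only with internal Homs, i.e.\ with how compact Hausdorff spaces map into the dual, it never has to decide whether the compact-open topology on $V^\ast$ is itself compactly generated. That question is exactly the Banach--Dieudonn\'e theorem, which is the heart of your proof, together with Krein--\v{S}mulian and the bipolar theorem. So you obtain the topological statement literally as phrased, with the compact-open topology. The paper obtains the condensed statement it actually uses later (exactness of dualized resolutions, compatibility with $\mathcal M$-completeness), and appeals to Smith's original theorem to compare with the topological picture.

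\textbf{Two repairs.} First, your Banach--Dieudonn\'e induction is mis-indexed.
\begin{itemize}
\item With $F_1$ in the unit ball, the set $\{\lambda\in K: |\lambda(F_1)|<1\}$ contains the whole open unit ball of $V^\ast$. A general colimit-open $U\ni 0$ need not, e.g.\ $\{\lambda: |\lambda(v_0)|<\tfrac12\}$ with $\|v_0\|=1$, so the induction cannot start.
\item A $\lambda\in(n+1)K\setminus U$ with $\|\lambda\|$ just above $n$ has $|\lambda(v)|<1$ for all $\|v\|\leq 1/(n+1)$, so it cannot be cut out at that radius.
\end{itemize}
The standard fix has three parts. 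Begin with an unconstrained finite $F_0$ such that $\{\lambda\in K: |\lambda(F_0)|\leq 1\}\subset U$; this exists because $U\cap K$ is a weak-$\ast$ neighbourhood of $0$ in $K$. When passing from $nK$ to $(n+1)K$, choose $F_n$ in the ball of radius $1/n$: by induction, every $\lambda$ in the compact set $\{\lambda\in(n+1)K: |\lambda(F_0\cup\cdots\cup F_{n-1})|\leq 1\}\setminus U$ has $\|\lambda\|>n$. Finally, use closed polars throughout, so the finite-subcover argument runs on a compact set.

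Second, in the Smith biduality, apply the bipolar theorem to $\iota(K)$ inside $(W^\ast)^\ast$ with its weak-$\ast$ topology. There $\iota(K)$ is closed, because $\iota|_K$ is weak-$\ast$ continuous and $K$ is compact. Its prepolar is the unit ball of $W^\ast$, so $\iota(K)$ is the full unit ball of $(W^\ast)^\ast$, which gives surjectivity. Applied inside $W$, the bipolar theorem only gives $\iota(K)=B\cap\iota(W)$ for that unit ball $B$, which does not show that $W\to(W^\ast)^\ast$ is onto.
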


This will be proved in the next lecture.

\begin{remark} This gives one sense in which Banach spaces are always reflexive, i.e.~isomorphic to their bidual. Note, however, that this does not coincide with the usual notion of reflexivity: It is customary to make the dual of a Banach space itself into a Banach space (by using the norm $||f||=\sup_{v\in V, ||v||\leq 1} |f(v)|$), and then reflexivity asks whether a Banach space is isomorphic to its corresponding bidual. Even if that happens, the two notions of dual are still different: If a topological vector space is both Banach and Smith, it is finite-dimensional.
\end{remark}

\newpage

\section{Lecture IV: $\mathcal M$-complete condensed $\mathbb R$-vector spaces}

The discussion of the last lecture motivates the following definition.

\begin{definition} Let $V$ be a condensed $\mathbb R$-vector space. Then $V$ is $\mathcal M$-complete if (the underlying condensed set of) $V$ is quasiseparated, and for all maps $f: S\to V$ from a profinite set, there is an extension to a map
\[
\tilde{f}: \mathcal M(S)\to V
\]
of condensed $\mathbb R$-vector spaces.
\end{definition}

\begin{definition}[slight return] A Smith space is an $\mathcal M$-complete condensed $\mathbb R$-vector space $V$ such that there exists some compact Hausdorff $K\subset V$ with $V=\bigcup_{c>0} cK$.
\end{definition}

\begin{exercise} Prove that this definition of Smith spaces is equivalent to the previous definition.
\end{exercise}

We note that it is clear that Smith spaces defined in this way form a full subcategory of topological $\mathbb R$-vector spaces (using Proposition~\ref{prop:condtop} and Lemma~\ref{lem:compactgenproduct}). Moreover, Proposition~\ref{prop:completelocallyconvex} implies that any Smith space in the sense of the last lecture is a Smith space in the current sense. It remains to see that for any Smith space in the current sense, the corresponding topological vector space is complete and locally convex. Note that it follows easily that in the current definition one can take $K$ to be absolutely convex.\\

Let us check first that for any profinite set $T$, the condensed $\mathbb R$-vector space $\mathcal M(T)$ is a Smith space. We only need to see that it is $\mathcal M$-complete. Any map $f: S\to \mathcal M(T)$ factors over $\mathcal M(T)_{\leq c}$ for some $c$. Then writing $T=\varprojlim_i T_i$ as an inverse limit of finite sets $T_i$, $f$ is an inverse limit of maps $f_i: S\to \mathcal M(T_i)_{\leq c}$. This reduces us to the case that $T$ is finite (provided we can bound the norm of the extension); but then we can apply Proposition~\ref{prop:completelocallyconvex} (and observing that the extension maps $\mathcal M(S)_{\leq 1}$ into $\mathcal M(T_i)_{\leq c}$).

Our initial hope was that $\mathcal M$-complete condensed $\mathbb R$-vector spaces, without the quasiseparation condition, would behave as well as solid $\mathbb Z$-modules. We will see in the next lecture that this is not so. In this lecture, we however want to show that with the quasiseparation condition, the category behaves as well as it can be hoped for: It is not an abelian category (because cokernels are problematic under quasiseparatedness), but otherwise nice.

First, we check that the extension $\tilde{f}$ is necessarily unique.

\begin{proposition} Let $V$ be a quasiseparated condensed $\mathbb R$-vector space and let $g: \mathcal M(S)\to V$ be a map of condensed $\mathbb R$-vector space for some profinite set $S$. If the restriction of $g$ to $S$ vanishes, then $g=0$.
\end{proposition}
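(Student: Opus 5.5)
Since $\mathcal M(S)=\bigcup_c \mathcal M(S)_{\leq c}$ and rescaling is available, it suffices to show that $g$ vanishes on the compact Hausdorff space $\mathcal M(S)_{\leq 1}$. The plan is to use quasiseparatedness of $V$ to reduce to a statement about points, and then exploit density of finite-support measures in $\mathcal M(S)_{\leq 1}$.

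\emph{Reduction to a closed condition.} Consider the restriction $g|: \mathcal M(S)_{\leq 1}\to V$ and the zero map. Their equalizer is $\mathcal M(S)_{\leq 1}\times_{V\times V} V$, the pullback of the diagonal. Because $V$ is quasiseparated, the diagonal $V\to V\times V$ is quasicompact, and it is a monomorphism. Hence the equalizer is a quasicompact subobject of the compact Hausdorff space $\mathcal M(S)_{\leq 1}$. By Proposition~\ref{prop:condtop}~(3), it is therefore a closed subset $Z\subset \mathcal M(S)_{\leq 1}$.

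\emph{Points.} A map of condensed sets out of a compact Hausdorff space $Z'$ that vanishes on each point need not vanish. However, if $Z$ is a closed subspace containing all points, then $Z=\mathcal M(S)_{\leq 1}$ as condensed sets, since closed subspaces are determined by their underlying points. So it remains to show that $g(\mu)=0$ for every $\mu\in \mathcal M(S)_{\leq 1}(\ast)$.

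\emph{Points in $Z$ and density.} $Z$ contains the finite-support measures $\sum a_j[s_j]$ with $\sum |a_j|\leq 1$. Indeed, $g$ is $\mathbb R$-linear and vanishes on the Dirac measures $[s]$, since $g|_S=0$. Moreover, these measures are dense in $\mathcal M(S)_{\leq 1}=\varprojlim_i \mathcal M(S_i)_{\leq 1}$: given $\mu$, for each $i$ choose a section $t_i: S_i\to S$ and set $\mu_i=\sum_{s\in S_i}\mu(\pi_i^{-1}(s))[t_i(s)]$. Then $\mu_i$ has norm $\leq 1$, has the same image as $\mu$ in $\mathcal M(S_i)$, and hence also in every $\mathcal M(S_{i'})$ with $i'\leq i$. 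Thus the net $\mu_i$ converges to $\mu$ in the limit topology. Since $Z$ is closed, $\mu\in Z$, so $Z$ is everything and $g=0$.

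\emph{Main obstacle.} The step requiring the most care is the first one: justifying that the equalizer is a closed subset, i.e.\ that quasiseparatedness of $V$ makes the pullback of the diagonal quasicompact, and that quasicompact subobjects of compact Hausdorff spaces are closed subspaces (via Proposition~\ref{prop:condtop}~(3)). Once this is in place, linearity and the density argument are routine.
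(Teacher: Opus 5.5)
Your proof is correct and follows the paper's argument. You use quasiseparatedness of $V$ to see that the vanishing locus of $g$ is a quasicompact subobject, hence a closed subspace, and observe that it contains the finite-support measures, which are dense in each $\mathcal M(S)_{\leq c}$; the only addition over the paper is that you make the density explicit via the approximants $\mu_i$ built from sections $t_i\colon S_i\to S$.
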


\begin{proof} The preimage $g^{-1}(0)\subset \mathcal M(S)$ is a quasicompact injection of condensed sets. This means that it is a closed subset in the topological sense, see the appendix to this lecture.

On the other hand, $g^{-1}(0)$ contains the $\mathbb R$-vector space spanned by $S$. This contains a dense subset of $\mathcal M(S)_{\leq c}$ for all $c$, and thus its closure by the preceding.
\end{proof}

\begin{exercise} Show that in the definition of $\mathcal M$-completeness, one can restrict to extremally disconnected $S$.
\end{exercise}

\begin{proposition} Any $\mathcal M$-complete condensed $\mathbb R$-vector space $V$ is the filtered colimit of the Smith spaces $W\subset V$; conversely, any filtered colimit of Smith spaces along injections is quasiseparated and $\mathcal M$-complete. For any map $f: V\to V'$ between $\mathcal M$-complete condensed $\mathbb R$-vector spaces, the kernel and image of $f$ (taken in condensed $\mathbb R$-vector spaces) are $\mathcal M$-complete condensed $\mathbb R$-vector spaces. Moreover, $f$ admits a cokernel in the category of $\mathcal M$-complete condensed $\mathbb R$-vector spaces.
\end{proposition}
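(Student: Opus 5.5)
I would treat the four assertions in turn, using throughout two facts already established: uniqueness of the extension $\tilde f$ (the preceding proposition), and the description of quasiseparated condensed sets as ind-compact Hausdorff spaces along closed immersions (Proposition~\ref{prop:condtop}~(4)).

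\emph{First claim: $V$ is the filtered colimit of its Smith subspaces.} Given $f\colon S\to V$ from a profinite set, extend it to $\tilde f\colon \mathcal M(S)\to V$. Let $K$ be the image of $\mathcal M(S)_{\leq 1}$. It is quasicompact, and since $V$ is quasiseparated, $K$ is a compact Hausdorff subspace; it is also absolutely convex. Set $W=\bigcup_{c>0} cK\subset V$. Then $W$ is $\mathcal M$-complete:
\begin{itemize}
\item It is quasiseparated, being a subobject of $V$.
\item Any map $g\colon S'\to W$ lands in some $cK$. Lift it along the surjection $c\,\mathcal M(S)_{\leq 1}\to cK$; this is possible after a cover, and by the exercise we may take $S'$ extremally disconnected to avoid covers. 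Compose the lift with $\mathcal M(S')\to\mathcal M(S)$, extending the lift via $\mathcal M$-completeness of $\mathcal M(S)$, and then with $\tilde f$.
\item The composite lands in $W$ and restricts to $g$.
\end{itemize}
Hence $W$ is Smith. Filteredness follows as in the corollary of Lecture III: for two Smith subspaces with compact generators $K_1,K_2$, apply the construction to a profinite set surjecting onto $K_1\sqcup K_2$. Every map from a profinite set factors through some such $W$, so $V=\varinjlim W$.

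\emph{Converse.} A filtered colimit along injections of quasiseparated objects is quasiseparated. Indeed, a map from a profinite set lands in a single term, and fibre products can be computed in a single term since the transition maps are injective. Every $S\to\varinjlim W_j$ factors through some $W_j$ by compactness, and extends there.

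\emph{Kernels and images.} The kernel $\ker f$ is quasiseparated as a subobject of $V$. For $S\to\ker f$, the extension $\tilde g\colon\mathcal M(S)\to V$ composed with $f$ vanishes on $S$, so it vanishes by uniqueness; hence $\tilde g$ lands in $\ker f$.

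For the image, take $S\to\operatorname{im} f$. After replacing $S$ by an extremally disconnected cover (allowed by the exercise), it lifts to $S\to V$. Extend this and compose with $f$; the composite restricted to $S$ is the given map, and the result lands in the image. Quasiseparatedness holds because the image is a subobject of $V'$.

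\emph{Cokernel.} This is the step I expect to be hardest. The plan is to take $\overline{\operatorname{im} f}$, the closure of the image in $V'$. Writing $V'=\varinjlim W'$ over its Smith subspaces, define the closure as $\varinjlim_{W'}$ of the closures of $W'\cap\operatorname{im}f$ in each compact piece $cK'$, and then saturate under the $\mathcal M$-extension maps (or iterate closing up). The cokernel is then the quotient $V'/\overline{\operatorname{im} f}$. Two things must be checked:
\begin{itemize}
\item The quotient is quasiseparated. On each $cK'$, the relation given by differences lying in the closed subgroup is a closed equivalence relation, so the quotient of $cK'$ is compact Hausdorff; then apply Proposition~\ref{prop:condtop}~(4).
\item The quotient is $\mathcal M$-complete. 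Lift maps from extremally disconnected $S$ to $V'$ and extend there.
\end{itemize}
The universal property follows from density. Any map $V'\to X$ into an $\mathcal M$-complete $X$ that kills $\operatorname{im} f$ has a kernel that is quasicompactly closed in each compact piece, by the appendix argument used in the uniqueness proof. The kernel is therefore closed and contains the closure.

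The delicate point is that the naive closure may fail to be $\mathcal M$-complete or stable under the colimit. If so, I would instead define the relevant subobject as the smallest $\mathcal M$-complete subspace of $V'$ containing $\operatorname{im} f$ that is closed in every compact piece. This can be constructed by transfinite iteration of closure and $\mathcal M$-hull, and it stabilizes because it is bounded by $V'$.
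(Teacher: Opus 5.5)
Your proposal is correct. The first claim, its converse and the kernel are handled exactly as in the paper; the image and the cokernel go by different routes.

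\textbf{Image.} The paper reduces, via filtered colimits and a surjection $\mathcal M(S)\to V$, to the case $V=\mathcal M(S)$, and then identifies the image with the Smith space $W$ built in the first step. Your direct argument is shorter: lift maps from extremally disconnected $S$ through $V\to \mathrm{im}\, f$, extend in $V$, push forward, and get quasiseparatedness from $\mathrm{im}\, f\subset V'$. The paper's version buys the extra information that the image of a Smith space is again Smith.

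\textbf{Cokernel.} The paper takes the condensed cokernel $Q$ and passes to its maximal quasiseparated quotient $Q^{\mathrm{qs}}$ from the appendix. This is still a vector space because $(-)^{\mathrm{qs}}$ preserves finite products. The universal property against $\mathcal M$-complete targets is then automatic, and $\mathcal M$-completeness follows by lifting. You instead construct the closure $Z$ of the image and form $V'/Z$. This is the same object made explicit, at the cost of checking by hand both quasiseparatedness and the universal property; the latter uses that kernels of maps to quasiseparated targets are closed in compact pieces.

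Your hedging about the naive closure is unnecessary. Define $Z$ as the intersection of all sub-vector spaces of $V'$ that contain $\mathrm{im}\, f$ and meet every compact $K\subset V'$ in a closed subset. This property is stable under intersections, and your argument shows $V'/Z$ is quasiseparated. Then $Z$ is automatically $\mathcal M$-complete: an extension $\mathcal M(S)\to V'$ of a map $S\to Z$ vanishes in $V'/Z$ by uniqueness. So no transfinite iteration or ``$\mathcal M$-hull'' is needed.
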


\begin{proof} Let $V$ be an $\mathcal M$-complete condensed $\mathbb R$-vector space and let $f: S\to V$ be a map from a profinite set. Then $f$ extends uniquely to $\tilde{f}: \mathcal M(S)\to V$. The subset $\mathcal M(S)_{\leq 1}\subset\mathcal M(S)$ is compact Hausdorff (i.e., a quasicompact quasiseparated condensed set); thus, its image in the quasiseparated $V$ is still quasicompact and quasiseparated, i.e.~a compact Hausdorff $K\subset V$. Let $W=\bigcup_{c>0} cK\subset V$. This is a condensed $\mathbb R$-vector space, and it is itself $\mathcal M$-complete. Indeed, assume that $T$ is some profinite set with a map $T\to W$; by rescaling, we can assume that $T$ maps into $K$. We get a unique extension to a map $\mathcal M(T)\to V$, and we need to see that it factors over $W$. We may assume that $T$ is extremally disconnected (as any surjection $T'\to T$ from an extremally disconnected $T'$ induces a surjection $\mathcal M(T')\to \mathcal M(T)$). Then the map $T\to K$ lifts to a map $T\to \mathcal M(S)_{\leq 1}$. Thus, we get a map $\mathcal M(T)\to \mathcal M(S)$ whose composite with $\mathcal M(S)\to V$ must agree with the original map $\mathcal M(T)\to V$ by uniqueness. This shows that the image of $\mathcal M(T)\to V$ is contained in $W$, as desired. It follows that $W$ is a Smith space.

It is now easy to see that the set of such $W$ is filtered, and then $V$ is the filtered colimit, as we have seen that any map from a profinite set $S$ factors over one such $W$. It is clear that any filtered colimit $\varinjlim_i W_i$ of Smith spaces $W_i$ along injections is quasiseparated (as quasiseparatedness is preserved under filtered colimits of injections). It is also $\mathcal M$-complete, as any map $f: S\to \varinjlim_i W_i$ factors over one $W_i$, and then extends to a map $\tilde{f}: \mathcal M(S)\to W_i\to \varinjlim_i W_i$.

For the assertions about a map $f$, the claim about the kernel is clear. For the image, we may assume that $V$ is a Smith space, by writing it as a filtered colimit of such; and then that $V=\mathcal M(S)$ by picking a surjection $S\to K\subset V$ where $K$ is a generating compact Hausdorff subset of $V$. But then the image of $f$ is exactly the Smith space $W\subset V$ constructed in the first paragraph.

For the cokernel of $f$, we may first take the cokernel $Q$ in the category of condensed $\mathbb R$-vector spaces. Now we may pass to the maximal quasiseparated quotient $Q^{\mathrm{qs}}$ of $Q$, which is still a condensed $\mathbb R$-vector space, by the appendix to this lecture. It is then formal that this satisfies the conditon of being $\mathcal M$-complete when tested against extremally disconnected $S$, as any map $S\to Q^{\mathrm{qs}}$ lifts to $S\to W$, and so extends to $\mathcal M(S)\to W\to Q^{\mathrm{qs}}$. The automatic uniqueness of this extension then implies the result for general profinite $S$ (by covering $S$ by an extremally disconnected $\tilde{S}\to S$).
\end{proof}

We will now prove the anti-equivalence between Smith spaces in the current sense, with Banach spaces. (In particular, using the original form of Smith's theorem, this also solves the exercise above.)

\begin{theorem} For any Smith space $W$, the internal dual $\intHom_{\mathbb R}(W,\mathbb R)$ is isomorphic to $\underline{V}$ for a Banach space $V$. Conversely, for any Banach space $V$, the internal dual $\intHom_{\mathbb R}(\underline{V},\mathbb R)$ is a Smith space. The corresponding biduality maps are isomorphisms.
\end{theorem}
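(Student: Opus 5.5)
The plan is to reduce both directions to two model computations, using the structural results of this lecture. First I would check that $\intHom(\mathcal M(S),\mathbb R)=\underline{C(S,\mathbb R)}$ for a profinite set $S$, with $C(S,\mathbb R)$ carrying the sup-norm. Then I would check that $\intHom(\underline{C(S,\mathbb R)},\mathbb R)=\mathcal M(S)$. After that I would pass to general Smith spaces and Banach spaces by presentations.

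\textbf{Smith $\to$ Banach.} Let $W$ be a Smith space with generating absolutely convex compact $K$. I will show that $\intHom(W,\mathbb R)(T)$ is exactly the set of continuous maps $T\times W\to \mathbb R$ that are linear in $W$. Since $W=\bigcup_c cK$ as a compactly generated space, such a map is the same as a continuous map $T\times K\to\mathbb R$ that is "linear" on $K$. Define $V=\{\phi\in C(K,\mathbb R)$ linear$\}$ with the sup-norm over $K$. This is a closed subspace of the Banach space $C(K,\mathbb R)$, hence itself Banach. For compact Hausdorff $T$ and $K$ we have $C(T\times K,\mathbb R)=C(T,C(K,\mathbb R))$ with the sup-norm topology, so $\intHom(W,\mathbb R)=\underline V$. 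I would run this concretely for $W=\mathcal M(S)$ and $K=\mathcal M(S)_{\leq 1}$. There a linear continuous functional is determined by its restriction to $S\subset \mathcal M(S)$, by the uniqueness proposition. Conversely, every $g\in C(S,\mathbb R)$ integrates to a functional by Proposition~\ref{prop:completelocallyconvex}. So the dual is $C(S,\mathbb R)$, and its sup-norm on $\mathcal M(S)_{\leq1}$ equals its sup-norm on $S$.

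\textbf{Banach $\to$ Smith.} Let $V$ be a Banach space with unit ball $B$, and set $K=\{\lambda: V\to\mathbb R$ linear, $|\lambda|\leq 1$ on $B\}$. By Banach--Alaoglu, $K$ is compact Hausdorff in the weak-$*$ topology, and it is absolutely convex. I then claim that $\intHom(\underline V,\mathbb R)=\bigcup_c cK$ as condensed sets. The key point is that a map $T\to \intHom(\underline V,\mathbb R)$ from profinite $T$ is a continuous map $T\times V\to\mathbb R$ linear in $V$. Applying this to sequences $v_n\to0$ with $S=\mathbb N\cup\{\infty\}$, and using compactness of $T$, a Baire / uniform-boundedness argument should show that such a family is uniformly bounded on $B$, so it factors through $cK$. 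For that factorization I still have to show that the continuity $T\to cK$ in weak-$*$ topology matches the condensed structure. The reason is that on norm-bounded sets, joint continuity on $T\times B$ is equivalent to pointwise (weak-$*$) continuity, using equicontinuity of functionals bounded by $c$. $\mathcal M$-completeness then follows: the resulting object is a compactly generated union of compact absolutely convex sets in the locally convex complete space $(V^*,\text{weak-}*)$, so Proposition~\ref{prop:completelocallyconvex} applies. Alternatively, write $V$ as a quotient of $\ell^1$-type spaces and $cK$ as a quotient of some $\mathcal M(S)_{\leq c}$.

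\textbf{Biduality.} For $W$ Smith, evaluation gives $W\to \intHom(V,\mathbb R)=\bigcup_c cK_V$. I will check that it is injective; injectivity follows from Hahn--Banach applied to the locally convex space $W$. I will check surjectivity via the bipolar theorem: $K$ is closed and absolutely convex, so it equals its bipolar, and the weak-$*$ unit ball of $V^*$ is exactly $K$. For $V$ Banach, the map $V\to (\text{Smith dual})^*$ is isometric by Hahn--Banach. It is surjective because a linear functional on $V^*$ continuous on weak-$*$ compact balls is weak-$*$ continuous by Banach--Dieudonné/Krein--Šmulian, hence given by an element of $V$.

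\textbf{Main obstacle.} I expect the hard step to be the identification of the condensed structure of $\intHom(\underline V,\mathbb R)$, namely that $T$-families of functionals are exactly continuous maps $T\to cK$ for some $c$. This requires the uniform-boundedness argument with compact $T$. I would try it first by contradiction: assume there are $t_n\in T$ and $v_n\in B$ with $|\lambda_{t_n}(v_n)|\geq n^2$. The vectors $v_n/n\to 0$ then give a map from $(\mathbb N\cup\{\infty\})\times T$ that is unbounded on a compact set, which is a contradiction.
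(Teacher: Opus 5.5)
Your route differs from the paper's. The paper computes only the model cases $\intHom(\mathcal M(S),\mathbb R)=\underline{C(S,\mathbb R)}$ and $\intHom(\underline{C(S,\mathbb R)},\mathbb R)=\mathcal M(S)$. It then propagates them along resolutions $\mathcal M(S')\to\mathcal M(S)\to W\to 0$ and $0\to V\to C(S,\mathbb R)\to C(S',\mathbb R)$. Its key analytic input is the injectivity of $C(T,\mathbb R)$ among Banach spaces for extremally disconnected $T$. You instead identify both duals explicitly and use Banach--Alaoglu, the bipolar theorem and Banach--Dieudonn\'e, which avoids that injectivity theorem. Most of your steps are fine.

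\textbf{Gap: injectivity in Smith biduality.} There is a genuine gap in the injectivity of $W\to\intHom(\intHom(W,\mathbb R),\mathbb R)$ for a Smith space $W$. You appeal to Hahn--Banach for ``the locally convex space $W$''. But in this lecture a Smith space is the condensed notion, and local convexity of the associated topological vector space is not available; the paper obtains it as a \emph{consequence} of this theorem. This is also the only place, apart from making $K$ absolutely convex, where $\mathcal M$-completeness could enter your argument. Your computation of $\intHom(W,\mathbb R)$ and your bipolar step use only $W=\bigcup_c cK$ with $K$ compact absolutely convex. For such objects continuous functionals need not separate points. For example, take for $K$ the absolutely convex hull of Roberts' compact convex subset of $L^p$, $0<p<1$, which has no extreme points; separating functionals would produce extreme points by Krein--Milman.

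\textbf{How to repair it.} You can close the gap with tools you already use. Choose $S\to K$ surjective, so that $W=\mathcal M(S)/W'$ where each $W'\cap\mathcal M(S)_{\leq c}$ is compact, i.e.~weak-$*$ compact in $\mathcal M(S)=C(S,\mathbb R)^*$. By Krein--\v{S}mulian, $W'$ is weak-$*$ closed. Hahn--Banach for $\sigma(\mathcal M(S),C(S,\mathbb R))$ then gives $g\in C(S,\mathbb R)$ vanishing on $W'$ but not at a given $\mu\notin W'$. This $g$ descends to a functional on $W$.

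\textbf{Two smaller points.}
\begin{enumerate}
\item $V^*$ with $\sigma(V^*,V)$ is not complete when $\dim V=\infty$, so Proposition~\ref{prop:completelocallyconvex} does not apply verbatim. This is harmless, since the approximating nets stay in the compact set $cK$. In fact $\mathcal M$-completeness of $\intHom(\underline V,\mathbb R)$ follows formally from your first model computation: $\Hom_{\mathbb R}(\mathcal M(S),\intHom(\underline V,\mathbb R))=\Hom_{\mathbb R}(\underline V,\underline{C(S,\mathbb R)})=\intHom(\underline V,\mathbb R)(S)$.
\item The step you flag as the main obstacle is easy. A map $T\to\intHom(\underline V,\mathbb R)$ is a map $\underline V\to\underline{C(T,\mathbb R)}$. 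Since $V$ is metrizable, this is a continuous linear map of Banach spaces, hence bounded. Your equicontinuity argument then identifies the condensed structure with $\bigcup_c cK$ as you say.
\end{enumerate}
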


\begin{proof} Assume first that $W=\mathcal M(S,\mathbb R)$ for some profinite set $S$. Then $\Hom_{\mathbb R}(\mathcal M(S,\mathbb R),\mathbb R)=C(S,\mathbb R)$ as we have seen that any continuous map $S\to \mathbb R$ extends uniquely to a map of condensed $\mathbb R$-vector spaces $\mathcal M(S,\mathbb R)\to \mathbb R$. More generally, for any profinite set $T$,
\[
\intHom_{\mathbb R}(\mathcal M(S,\mathbb R),\mathbb R)(T) = \Hom_{\mathbb R}(\mathcal M(S,\mathbb R),\intHom_{\mathbb R}(\mathbb R[T],\mathbb R)),
\]
and $\intHom_{\mathbb R}(\mathbb R[T],\mathbb R)=\underline{C(T,\mathbb R)}$, for the Banach space $C(T,\mathbb R)$ with the sup-norm. Indeed, this Banach space has the property that continuous maps $T'\to C(T,\mathbb R)$ are equivalent to continuous maps $T'\times T\to \mathbb R$. Thus, using that Banach spaces are complete locally convex, we see
\[
\Hom_{\mathbb R}(\mathcal M(S,\mathbb R),\intHom_{\mathbb R}(\mathbb R[T],\mathbb R)) = \Hom_{\mathbb R}(\mathcal M(S,\mathbb R),\underline{C(T,\mathbb R)}) = C(T,\mathbb R)(S) = C(T\times S,\mathbb R),
\]
showing that indeed $\intHom_{\mathbb R}(\mathcal M(S,\mathbb R),\mathbb R)=\underline{C(S,\mathbb R)}$.

Now assume that $V=C(S,\mathbb R)$. We need to see that for any profinite set $T$,
\[
\Hom_{\mathbb R}(\underline{C(S,\mathbb R)},\underline{C(T,\mathbb R)}) = \mathcal M(S)(T).
\]
But the left-hand side can be computed in topological vector spaces, and any map of Banach spaces has bounded norm. It suffices to see that maps $C(S,\mathbb R)\to C(T,\mathbb R)$ of Banach spaces of norm $\leq 1$ are in bijection with maps $T\to \mathcal M(S)_{\leq 1}$. But any such map of Banach spaces is uniquely determined by its restriction to $\varinjlim_i C(S_i,\mathbb R)$, where $S=\varprojlim_i S_i$ is written as a limit of finite sets, where again we assume that the operator norm is bounded by $1$. Similarly, maps $T\to \mathcal M(S)_{\leq 1}$ are limits of maps $T\to \mathcal M(S_i)_{\leq 1}$. We can thus reduce to the case $S$ is finite, where the claim is clear. We see also that the biduality maps are isomorphisms in this case.

Now let $W$ be a general Smith space, with a generating subset $K\subset W$. Pick a profinite set $S$ with a surjection $S\to K$. We get a surjection $\mathcal M(S)\to W$. Its kernel $W'\subset \mathcal M(S)$ is automatically quasiseparated, and in fact a Smith space again: It is generated by $W'\cap \mathcal M(S)_{\leq 1}$, which is compact Hausdorff. Picking a further surjection $\mathcal M(S')\to W'$, we get a resolution
\[
\mathcal M(S')\to \mathcal M(S)\to W\to 0.
\]
Taking $\intHom_{\mathbb R}(-,\mathbb R)$, we get
\[
0\to \intHom_{\mathbb R}(W,\mathbb R)\to \underline{C(S,\mathbb R)}\to \underline{C(S',\mathbb R)} .
\]
The latter map corresponds to a map of Banach space $C(S,\mathbb R)\to C(S',\mathbb R)$, whose kernel is a closed subspace that is itself a Banach space $V$, and we find $\intHom_{\mathbb R}(W,\mathbb R)=\underline{V}$.

Conversely, if $V$ is any Banach space, equipped with a norm, the unit ball $B=\{f\in \Hom(V,\mathbb R)\mid ||f||\leq 1\}$ is compact Hausdorff when equipped with the weak topology.\footnote{This is known as the Banach-Alaoglu theorem, and follows from Tychonoff by using the closed embedding $B\hookrightarrow \prod_{v\in V} [-1,1]$.} Picking a surjection $S\to B$ from a profinite set $S$, we get a closed immersion $V\to C(S,\mathbb R)$ (to check that it is a closed immersion, use that it is an isometric embedding by the Hahn-Banach theorem), whose quotient will then be another Banach space; thus, any Banach space admits a resolution
\[
0\to V\to C(S,\mathbb R)\to C(S',\mathbb R)
\]
We claim that taking $\intHom_{\mathbb R}(\underline{-},\mathbb R)$ gives an exact sequence
\[
\mathcal M(S')\to \mathcal M(S)\to \intHom_{\mathbb R}(\underline{V},\mathbb R)\to 0.
\]
For this, using that we already know about spaces of measures, we have to see that for any extremally disconnected $T$, the sequence
\[
\Hom_{\mathbb R}(C(S',\mathbb R),C(T,\mathbb R))\to \Hom_{\mathbb R}(C(S,\mathbb R),C(T,\mathbb R))\to \Hom_{\mathbb R}(V,C(T,\mathbb R))\to 0
\]
is exact. This follows from the fact that the Banach space $C(T,\mathbb R)$ is injective in the category of Banach spaces, cf.~\cite[Section 1.3]{SepInjBanach}: If $V\subset V'$ is a closed embedding of Banach spaces, then any map $V\to C(T,\mathbb R)$ of Banach spaces extends to a map $V'\to C(T,\mathbb R)$.\footnote{The case $T=\ast$ is the Hahn-Banach theorem. Part of the Hahn-Banach theorem is that the extension can be bounded by the norm of the original map. This implies the injectivity of $l^\infty(S_0,\mathbb R)=C(\beta S_0,\mathbb R)$ for any set $S_0$, and then the result for general $T$ follows by writing $T$ as a retract of $\beta S_0$ for some $S_0$.}

Using these resolutions, it also follows that the biduality maps are isomorphisms in general.
\end{proof}

We remark that if $S$ is extremally disconnected, then $\mathcal M(S)$ is a projective object in the category of Smith spaces (in fact, in the category of $\mathcal M$-complete condensed $\mathbb R$-vector spaces), as $S$ is a projective object in the category of condensed sets. Dually, this means that $C(S,\mathbb R)$ is an injective object in the category of Banach spaces. For solid abelian groups, it turned out that $\mathbb Z[S]^\solid$ is projective in the category of solid abelian groups, for all profinite sets $S$. Could a similar thing happen here? It turns out, that no:

\begin{proposition}\label{prop:notprojective} For ``most'' profinite sets $S$, for example $S=\mathbb N\cup \{\infty\}$, or a product $S=S_1\times S_2$ of two infinite profinite sets $S_1, S_2$, the Banach space $C(S,\mathbb R)$ is not injective; equivalently, the Smith space $\mathcal M(S)$ is not projective.
\end{proposition}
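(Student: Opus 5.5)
The plan is to work on the Banach side, using the duality theorem just proved. By that theorem, a map $\mathcal M(S)\to W$ of Smith spaces is the same as a map $\intHom(W,\mathbb R)\to C(S,\mathbb R)$ of Banach spaces. Surjections of Smith spaces correspond to closed embeddings of Banach spaces; this is exactly what the resolutions in the proof show. So $\mathcal M(S)$ is projective if and only if $C(S,\mathbb R)$ is injective, and it suffices to show that $C(S,\mathbb R)$ is not injective.

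The first reduction is that an injective Banach space $E$ is complemented in every Banach space containing it isometrically: extend the identity $E\to E$ along $E\subset E'$. Injectivity also passes to complemented subspaces. I will use the embedding $C(S,\mathbb R)\hookrightarrow \ell^\infty(S_0)$, restriction to a dense subset $S_0\subset S$. This reduces the claim to the following: if $C(S,\mathbb R)$ contains a \emph{complemented} copy of $c_0$, then $C(S,\mathbb R)$ is not injective. Indeed, otherwise $c_0$ would be injective, hence complemented in $\ell^\infty=\ell^\infty(\mathbb N)$. This contradicts Phillips' lemma.

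For $S=\mathbb N\cup\{\infty\}$ we have $C(S,\mathbb R)=c=c_0\oplus\mathbb R$, so $c_0$ is visibly complemented. For $S=S_1\times S_2$ I need a complemented $c_0$. For this I would use disjoint nonempty clopens $U_n\subset S_1$ and $V_n\subset S_2$, which exist since both factors are infinite. Set $f_n=1_{U_n\times V_n}$; these are disjointly supported, so $(a_n)\mapsto \sum a_n f_n$ is an isometric embedding $c_0\to C(S)$. The remaining task is to find norm-bounded functionals $\varphi_n$ with $\varphi_n(f_m)=\delta_{nm}$ and $\varphi_n(g)\to 0$ for every $g\in C(S)$; then $g\mapsto \sum_n \varphi_n(g) f_n$ is a bounded projection onto this copy of $c_0$.

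Point evaluations alone need not work: for example $\beta\mathbb N$ has no nontrivial convergent sequences. The idea is instead to exploit the product structure. Pick points $u_n,u'_n\in U_n$ and $v_n,v'_n\in V_n$, and modify $f_n$ and $\varphi_n$ so that $\varphi_n$ becomes a ``second difference'' functional of the form
\[
g(u_n,v_n)-g(u_n,v'_n)-g(u'_n,v_n)+g(u'_n,v'_n).
\]
To show $\varphi_n(g)\to 0$, I would use uniform continuity: $g$ is approximated within $\epsilon$ by a finite sum $\sum_k a_k\otimes b_k$ with $a_k\in C(S_1)$ and $b_k\in C(S_2)$ locally constant. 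For such a finite sum, the double difference factors as a product of the single differences of the $a_k$ and of the $b_k$, and one has to arrange that these products tend to $0$. Choosing the points to make this work is the step I expect to be the main obstacle, and this is the content of Cembranos' theorem that $C(K\times L)$ contains a complemented $c_0$ for infinite compact $K,L$. I would first try to choose $u_n,u'_n$ and $v_n,v'_n$ so that every locally constant function on $S_1$, resp.~$S_2$, separates only finitely many of the pairs. If that cannot be arranged directly, I would invoke Cembranos' theorem at this point.

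Finally, I need Phillips' lemma: $c_0$ is not complemented in $\ell^\infty$. Suppose $P:\ell^\infty\to c_0$ were a projection, and set $Q=1-P$. Then $Q$ vanishes on $c_0$ and $\ker Q=c_0$, so the countably many functionals $e_n^\ast\circ Q$ separate the points of $\ell^\infty/c_0$. Now take an uncountable family $(A_i)_{i\in I}$ of infinite subsets of $\mathbb N$ with pairwise finite intersections, for example branches of a binary tree. Any finite signed sum $\sum\pm 1_{A_i}$ differs from a vector of norm $\leq 1$ by an element of $c_0$. Since each $e_n^\ast\circ Q$ is bounded and kills $c_0$, it is nonzero on only countably many $1_{A_i}$. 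Hence some $i$ has $Q(1_{A_i})=0$, although $1_{A_i}\notin c_0$. This is a contradiction.
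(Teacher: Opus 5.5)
Your proposal is correct and follows the paper's route. You exhibit a complemented copy of $c_0$ in $C(S,\mathbb R)$, visible for $S=\mathbb N\cup\{\infty\}$ and taken from Cembranos' theorem for $S_1\times S_2$, and combine it with the non-injectivity of $c_0$, which you prove via Whitley's almost-disjoint-family proof of Phillips' lemma where the paper only cites it.

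One caution: your tentative direct construction with four-point second differences cannot succeed when $S_1=S_2=\beta\mathbb N$, which is exactly the extremally disconnected case behind $\mathcal M(S_1\times S_2)$ failing to be projective. The reason is the Grothendieck property of $\ell^\infty=C(\beta\mathbb N)$: it forces infinitely many members of any sequence of distinct pairs in $\beta\mathbb N$ to be separated by a single clopen. Applying this first in $S_1$, and then in $S_2$ along the resulting indices, gives a clopen rectangle on which your functionals are $\pm 1$ infinitely often. So the appeal to Cembranos is genuinely needed there. Alternatively, you can let the number of points grow, e.g.\ using $n^{-2}\sum_{i,j\leq n}h_{ij}\delta_{(x_i,y_j)}$ for a Hadamard matrix $(h_{ij})$; these functionals have size at most $n^{-1/2}$ on normalized elementary tensors, and this makes the argument work.
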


\begin{proof} Note that $C(\mathbb N\cup\{\infty\},\mathbb R)$ is the product of the Banach space $c_0$ of null-sequences, with $\mathbb R$. One can show that $C(S_1\times S_2,\mathbb R)$ contains $c_0$ as a direct factor, cf.~\cite{Cembranos}, so it is enough to show that $c_0$ is not injective, which is \cite[Theorem 1.25]{SepInjBanach}.
\end{proof}

In fact, cf.~\cite[Section 1.6.1]{SepInjBanach}, there is no known example of an injective Banach space that is not isomorphic to $C(S,\mathbb R)$ where $S$ is extremally disconnected!

Next, let us discuss tensor products. Before making the connection with the existing notions for Banach spaces, let us follow our nose and define a tensor product on the category of $\mathcal M$-complete condensed $\mathbb R$-vector spaces.

\begin{proposition} Let $V$ and $W$ be $\mathcal M$-complete condensed $\mathbb R$-vector spaces. Then there is an $\mathcal M$-complete condensed $\mathbb R$-vector space $V\otimes_\pi W$ equipped with a bilinear map
\[
V\times W\to V\otimes_\pi W
\]
of condensed $\mathbb R$-vector spaces, which is universal for bilinear maps; i.e.~any bilinear map $V\times W\to L$ to a $\mathcal M$-complete condensed $\mathbb R$-vector space extends uniquely to a map $V\otimes_\pi W\to L$ of condensed $\mathbb R$-vector spaces.

The functor $(V,W)\mapsto V\otimes_\pi W$ from pairs of $\mathcal M$-complete condensed $\mathbb R$-vector spaces to $\mathcal M$-complete condensed $\mathbb R$-vector spaces commutes with colimits in each variable, and satisfies
\[
\mathcal M(S)\otimes_\pi \mathcal M(S')\cong \mathcal M(S\times S')
\]
for any profinite sets $S,S'$.
\end{proposition}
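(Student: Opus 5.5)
We first treat the case $V=\mathcal M(S)$, $W=\mathcal M(S')$ and then extend by colimits. The plan is to check directly that $\mathcal M(S\times S')$, together with the map $(\mu,\nu)\mapsto \mu\boxtimes\nu$, is universal among bilinear maps into $\mathcal M$-complete targets. Writing $S=\varprojlim_i S_i$ and $S'=\varprojlim_j S'_j$, the product measure is defined levelwise on $\mathbb R[S_i]\times\mathbb R[S'_j]\to\mathbb R[S_i\times S'_j]$. Since $\ell^1$-norms multiply, this takes $\mathcal M_{\leq c}\times\mathcal M_{\leq c'}$ into $\mathcal M_{\leq cc'}$. Passing to the limit gives a bilinear map of condensed sets $\mathcal M(S)\times\mathcal M(S')\to\mathcal M(S\times S')$, compatible with the filtration by compact Hausdorff pieces.

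Given a bilinear $b:\mathcal M(S)\times\mathcal M(S')\to L$ with $L$ $\mathcal M$-complete, restrict it to $S\times S'$ to get a map of condensed sets $f:S\times S'\to L$. By $\mathcal M$-completeness, $f$ extends to $\tilde f:\mathcal M(S\times S')\to L$, and this extension is unique by the earlier uniqueness proposition. It remains to check that $\tilde f(\mu\boxtimes\nu)=b(\mu,\nu)$. Fix $\mu$ first as a Dirac measure $[s]$. Then $\nu\mapsto \tilde f([s]\boxtimes\nu)$ and $\nu\mapsto b([s],\nu)$ are linear maps $\mathcal M(S')\to L$ that agree on $S'$, so they agree. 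This must be done in families, that is, over a profinite parameter $T$ mapping to $S$. The clean way is through the internal Hom: the maps $\mu\mapsto \tilde f(\mu\boxtimes -)$ and $\mu\mapsto b(\mu,-)$ are linear maps $\mathcal M(S)\to\intHom(\mathcal M(S'),L)$ that agree on $S$. The key point is that $\intHom(\mathcal M(S'),L)$ is again quasiseparated, being a closed subobject of the product over $S'$-points via restriction to $S'$, which is injective by uniqueness. Quasiseparatedness is all that the uniqueness proposition needs, so the two maps agree. Uniqueness of the factorization through $\mathcal M(S\times S')$ holds because any such factorization is determined on $S\times S'$. This establishes $\mathcal M(S)\otimes_\pi\mathcal M(S')\cong\mathcal M(S\times S')$.

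For general $V,W$, resolve them. Choosing extremally disconnected $S$ with maps to $V$, the previous proposition gives a surjection $\bigoplus_k\mathcal M(S_k)\to V$, and we may take a presentation $\bigoplus\mathcal M(S'_l)\to\bigoplus\mathcal M(S_k)\to V\to 0$ in which $V$ is the cokernel in the $\mathcal M$-complete category. One could equally use the filtered colimit of Smith spaces, each a quotient of some $\mathcal M(S)$. Define $V\otimes_\pi W$ as the corresponding colimit in $\mathcal M$-complete spaces, namely the quasiseparated quotient of the condensed colimit, as in the cokernel construction above. The diagram is
\[
\bigoplus_{k,l}\mathcal M(S_k\times T_l)\ \text{modulo the images of the relations in each variable}.
\]
Bilinear maps out of $V\times W$ into $L$ correspond to bilinear maps out of the presenting products that kill the relations in each variable. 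By the free case these are linear maps on the $\mathcal M(S_k\times T_l)$ killing the relations, which is exactly the universal property. Commutation with colimits in each variable is then formal: for fixed $W$, the functor $V\mapsto V\otimes_\pi W$ is left adjoint to $L\mapsto\intHom(W,L)$, provided the latter is $\mathcal M$-complete.

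The main obstacle is that last step. We need $\intHom(W,L)$ to be quasiseparated and $\mathcal M$-complete when $L$ is, which in turn requires that bilinear maps in families extend. I would first try to reduce to $W=\mathcal M(S')$ by resolution and to Smith pieces $L$. There, $\intHom(\mathcal M(S'),L)$ has $T$-points equal to the maps $S'\times T\to L$, and the $\mathcal M$-extension over $T$ comes from the extension over $S'\times T$ combined with the identification $\mathcal M(T)\otimes_\pi\mathcal M(S')=\mathcal M(T\times S')$ already proved. A second subtlety is that the direct sums and quotients must be taken in the $\mathcal M$-complete category; this is handled by the maximal quasiseparated quotient construction.
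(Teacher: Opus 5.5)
Your proposal is correct, and it has the same skeleton as the paper's proof. You reduce to $V=\mathcal M(S)$, $W=\mathcal M(S')$, extend $f=b|_{S\times S'}$ to $\tilde f$ on $\mathcal M(S\times S')$, and show $b=\tilde f\circ\boxtimes$ by a uniqueness argument that uses only quasiseparatedness of $L$.

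The difference is in that last step. The paper argues directly in two variables. The map $b-\tilde f\circ\boxtimes$ vanishes on pairs of finitely supported measures, and these are dense in the compact Hausdorff space $\mathcal M(S)_{\leq c}\times\mathcal M(S')_{\leq c}$. Its zero locus is closed because $L$ is quasiseparated. Since a closed subspace of a compact Hausdorff space is detected on points, the ``families'' issue is handled for free.

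You instead apply the one-variable uniqueness twice, through $\intHom(\mathcal M(S'),L)$. This needs the family version of uniqueness, namely injectivity of $\intHom(\mathcal M(S'),L)(T)\to L(S'\times T)$ for profinite $T$. Your phrase ``injective by uniqueness'' does not yet justify this. It holds once you note that $\intHom(T,L)$ is quasiseparated: write $L$ as a filtered union of compact Hausdorff $L_i$, and observe that $\intHom(T,L_i)$ is the condensed set of the Hausdorff space $C(T,L_i)$. Alternatively, use the paper's density argument on $\mathcal M(S')_{\leq c}\times T$. The word ``closed'' is superfluous, and false if you mean the product over the underlying points of $S'$; any subobject of a quasiseparated object is already quasiseparated.

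The payoff of your route is that you obtain $\intHom(\mathcal M(S'),L)\cong\intHom(S',L)$ as a quasiseparated, indeed $\mathcal M$-complete, object. That is exactly the input needed to make the paper's one-line ``extend by colimits'' rigorous. Your construction of $V\otimes_\pi W$ as the quasiseparated quotient of the presented object (equivalently, the $\mathcal M$-completion of $V\otimes_{\mathbb R}W$) works. So does your reduction to $W=\mathcal M(S')$ by resolution, which exhibits $\intHom(W,L)$ as a kernel of a map between products of $\intHom(\mathcal M(T_l),L)$'s.

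For commutation with colimits, quasiseparatedness of $\intHom(W,L)$ already suffices. This is because colimits in the $\mathcal M$-complete category are the quasiseparated quotients of the condensed colimits. The proposed reduction to Smith pieces of $L$ is not needed.
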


Note that the case of $S_1\times S_2$ in Proposition~\ref{prop:notprojective} means that a tensor product $\mathcal M(S_1\times S_2)$ of two projectives $\mathcal M(S_1)$, $\mathcal M(S_2)$ is not projective anymore. This is a subtlety that will persist, and that we have to live with.

\begin{proof} It is enough to show that $\mathcal M(S\otimes S')$ represents bilinear maps $\mathcal M(S)\times \mathcal M(S')\to L$; indeed, the other assertions will then follow via extending everything by colimits to the general case.

In other words, we have to show that bilinear maps $\mathcal M(S)\times \mathcal M(S')\to L$ are in bijection with maps $S\times S'\to L$. Any map $S\times S'\to L$ extends to $\mathcal M(S\times S')\to L$, and thus gives a bilinear map $\mathcal M(S)\times \mathcal M(S')\to L$, as one can directly define a natural bilinear map $\mathcal M(S)\times \mathcal M(S')\to \mathcal M(S\times S')$. Thus, it is enough to show that any bilinear map $\mathcal M(S)\times \mathcal M(S')\to L$ that vanishes on $S\times S'$ is actually zero. For this, we again observe that it vanishes on a dense subset of $\mathcal M(S)_{\leq c}\times \mathcal M(S')_{\leq c}$ for any $c$, and thus on the whole of it, as the kernel is a closed subspace when $L$ is quasiseparated.
\end{proof}

Tensor products of Banach spaces were defined by Grothendieck, \cite{GrothendieckTensor}, and interestingly he has defined several tensor products. There are two main examples. One is the projective tensor product $V_1\otimes_\pi V_2$; this is a Banach space that represents bilinear maps $V_1\times V_2\to W$.

\begin{proposition} Let $V_1$ and $V_2$ be two Banach spaces. Then $\underline{V_1}\otimes_\pi \underline{V_2}\cong \underline{V_1\otimes_\pi V_2}$.
\end{proposition}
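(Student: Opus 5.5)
The idea is to compare the two universal properties. Both $\underline{V_1}\otimes_\pi\underline{V_2}$ and $\underline{V_1\otimes_\pi V_2}$ should represent the same functor of bilinear maps on $\mathcal M$-complete condensed $\mathbb R$-vector spaces. There is a natural comparison map $\underline{V_1}\otimes_\pi\underline{V_2}\to\underline{V_1\otimes_\pi V_2}$. It is induced by the continuous bilinear map $V_1\times V_2\to V_1\otimes_\pi V_2$, because $\underline{V_1\otimes_\pi V_2}$ is $\mathcal M$-complete: it is quasiseparated, and Proposition~\ref{prop:completelocallyconvex} gives the extensions. Rather than checking the universal property against arbitrary targets $L$, I would reduce to the basic case via resolutions. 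Both sides are right exact in each variable in a suitable sense: the condensed $\otimes_\pi$ commutes with colimits by the preceding proposition, and Grothendieck's projective tensor product preserves quotient maps of Banach spaces, i.e.\ metric surjections go to metric surjections. The basic case is then a statement about $C(S,\mathbb R)$ and its duals.

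I would proceed in three steps.

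\textbf{Step 1: the $\ell^1$ case.} For a set $I$ let $\ell^1(I)$ be the Banach space of summable families. Classically, $\ell^1(I)\hat\otimes_\pi\ell^1(J)\cong\ell^1(I\times J)$ isometrically. On the condensed side I would describe $\underline{\ell^1(I)}$ as a colimit of Smith spaces. Its compact absolutely convex subsets are dominated by sets of the form $\{(a_i): |a_i|\le\epsilon_i\text{-type bounds}\}$, and a compact subset of $\ell^1(I)$ is contained in the closed absolutely convex hull of a null sequence (a Grothendieck-type fact). This expresses $\underline{\ell^1(I)}$ as a filtered colimit of spaces $\mathcal M(S_k)$ with $S_k$ a one-point compactification of a countable discrete set mapping to a null sequence. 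I would then apply $\mathcal M(S)\otimes_\pi\mathcal M(S')\cong\mathcal M(S\times S')$ and compute that the resulting colimit is the corresponding description of $\underline{\ell^1(I\times J)}$.

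\textbf{Step 2: general Banach spaces.} Any Banach space $V$ has a metric surjection $\ell^1(I)\to V$, where $I$ is the unit ball. The kernel is again Banach, so we get a presentation $\ell^1(I')\to\ell^1(I)\to V\to 0$. I must check that $\underline{\ell^1(I)}\to\underline{V}$ is a surjection of condensed sets. By the open mapping theorem, any compact subset of $V$ lifts to a compact subset of $\ell^1(I)$ after scaling; equivalently, every null sequence in $V$ lifts to a null sequence, which together with Step 1's description of compacts suffices. So $\underline{V}$ is the cokernel in $\mathcal M$-complete spaces. Both tensor products are right exact in each variable, so the result for $\ell^1$ propagates: first in $V_1$ with $V_2=\ell^1(J)$, then in $V_2$. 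Care is needed because the condensed cokernel is the maximal quasiseparated quotient, while the target is automatically quasiseparated. The five-lemma style comparison of cokernels then gives the isomorphism.

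\textbf{Main obstacle.} I expect the hard part to be the two surjectivity/description claims about compact subsets. These are that every compact subset of a Banach space lies in the closed absolutely convex hull of a null sequence, and that null sequences lift along metric surjections, i.e.\ the quotient map is surjective on the level of condensed sets. Equivalently, this is showing that $\underline{V}$, not just $V$, behaves well under quotients. I would first try to prove the lifting by hand: choose lifts $\tilde v_n$ with $\|\tilde v_n\|\le 2\|v_n\|$ and use that the resulting map from $\mathbb N\cup\{\infty\}$ is continuous. For a general profinite $S$, I would factor $S\to V$ through the compact hull of a null sequence and use Proposition~\ref{prop:completelocallyconvex}.
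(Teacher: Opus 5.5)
Your overall architecture matches the paper's: resolve by $\ell^1$-spaces, use right exactness of both tensor products, and compute the $\ell^1$ case via colimits indexed by null sequences together with $\mathcal M(S)\otimes_\pi\mathcal M(S')\cong\mathcal M(S\times S')$. The points you flag as the main obstacle are routine. Grothendieck's hull lemma is standard. Lifting null sequences along metric surjections works with $\|\tilde v_n\|\le 2\|v_n\|$ plus extremal disconnectedness. The $\mathcal M$-complete cokernel is the quotient by the closure of the image. The real gap is in Step~1, which carries all the content of the proposition and which you dispose of with ``compute that the resulting colimit is the corresponding description.''

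\textbf{The presentation of $\underline{\ell^1(I)}$.} It is not a filtered colimit of the spaces $\mathcal M(S_k)$ attached to arbitrary null sequences in any evident way. Even granting one, the maps $\mathcal M(S_k)\to\underline{\ell^1(I)}$ are far from injective: they kill $[\infty]$ and all relations among the $v_n$. So you would have to pass to the images $W_k$. Then $W_k\otimes_\pi W'_l$ is an uncontrolled quotient of $\mathcal M(S_k\times S'_l)$, and you have no handle on injectivity of the comparison map. The paper avoids this as follows.
\begin{itemize}
\item It first reduces to countable index sets, writing $\ell^1(I)$ as the $\omega_1$-filtered colimit of $\ell^1(J)$ over countable $J\subset I$.
\item It then uses only the null sequences $\lambda_ne_n$. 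With $W_1=\mathcal M(\mathbb N\cup\{\infty\})/\mathbb R\cdot[\infty]$, one has $\underline{\ell^1(\mathbb N)}=\varinjlim_\lambda W_1$. The colimit runs over null sequences $0<\lambda_n\le 1$, along the injective maps $(x_n)\mapsto(\lambda_nx_n)$.
\item By the $\mathcal M(S\times S')$ formula, $W_1\otimes_\pi W_1$ is the analogous space on $\mathbb N\times\mathbb N$. Injectivity is then automatic, since a filtered colimit of injections is injective.
\end{itemize}

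\textbf{The missing key idea.} Even with this presentation, surjectivity onto $\underline{\ell^1(\mathbb N\times\mathbb N)}=\varinjlim_\Lambda(\ldots)$ requires more. You need weights of product form $\lambda_n\lambda'_m$ to be cofinal among all null families $\Lambda_{n,m}$ on $\mathbb N\times\mathbb N$. Equivalently, every compact subset of $\ell^1(\mathbb N\times\mathbb N)$ must lie in the closed absolutely convex hull of $\{v_n\otimes w_m\}$ for some null sequences $(v_n)$, $(w_m)$; this is the content of the remark after the proposition. It does not follow from the $\mathcal M(S\times S')$ formula. It is the elementary domination lemma: any null family $\Lambda_{n,m}\in(0,1]$ satisfies $\Lambda_{n,m}\le\lambda_n\lambda_m$ with $\lambda_n=\sqrt{\max_{\max(n',m')\ge n}\Lambda_{n',m'}}$. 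This works because the inner maximum is decreasing in $n$ and bounds $\Lambda_{n,m}$ at index $\max(n,m)$. Without this lemma, Step~1 only produces a comparison map $\underline{\ell^1(I)}\otimes_\pi\underline{\ell^1(J)}\to\underline{\ell^1(I\times J)}$, with no reason for it to be an isomorphism.
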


\begin{remark} The proposition implies that any compact absolutely convex subset $K\subset V_1\otimes_\pi V_2$ is contained in the closed convex hull of $K_1\times K_2$ for compact absolutely convex subsets $K_i\subset V_i$.
\end{remark}

\begin{proof} Any Banach space admits a projective resolution by spaces of the form $\ell^1(I)$ for some set $I$; this reduces us formally to that case. One can even reduce to the case that $I$ is countable, by writing $\ell^1(I)$ as the $\omega_1$-filtered colimit of $\ell^1(J)$ over all countable $J\subset I$. Thus, we may assume that $V_1=V_2=\ell^1(\mathbb N)$ is the space of $\ell^1$-sequences. In that case $V_1\otimes_\pi V_2 = \ell^1(\mathbb N\times \mathbb N)$.

In that case,
\[
\underline{\ell^1(\mathbb N)} = \varinjlim_{(\lambda_n)_n} \mathcal M(\mathbb N\cup \{\infty\})/(\mathbb R\cdot [\infty])
\]
where the filtered colimit is over all null-sequences of positive real numbers $0<\lambda_n\leq 1$. Now the result comes down to the observation that any null-sequence $\lambda_{n,m}$ of positive real numbers in $(0,1]$ parametrized by pairs $n,m\in \mathbb N$ can be bounded from above by a sequence of the form $\lambda_n \lambda_m'$ where $\lambda_n$ and $\lambda_m'$ are null-sequences of real numbers in $(0,1]$. Indeed, one can take
\[
\lambda_n=\lambda_n' = \sqrt{\max_{\mathrm{max}(n',m')\geq n} \lambda_{n',m'}}.\qedhere
\]
\end{proof}

On the other hand, there is the injective tensor product $V_1\otimes_\epsilon V_2$; this satisfies $C(S_1,\mathbb R)\otimes_\epsilon C(S_2,\mathbb R)\cong C(S_1\times S_2,\mathbb R)$. There is a map $V_1\otimes_\pi V_2\to V_1\otimes_\epsilon V_2$, that is however far from an isomorphism.

\begin{proposition} Let $V_i$, $i=1,2$, be Banach spaces with dual Smith spaces $W_i$. Let $W=W_1\otimes_\pi W_2$, which is itself a Smith space, and let $V$ be the Banach space dual to $W$. Then there is a natural map
\[
V_1\otimes_\epsilon V_2\to V
\]
that is a closed immersion of Banach spaces. It is an isomorphism if $V_1$ or $V_2$ satisfies the approximation property.
\end{proposition}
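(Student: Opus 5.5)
We identify $V$ concretely. By the anti-equivalence between Smith spaces and Banach spaces, $V=\Hom_{\mathbb R}(W_1\otimes_\pi W_2,\mathbb R)$. By the universal property of $\otimes_\pi$, this is the space of bilinear maps $W_1\times W_2\to\mathbb R$ of condensed $\mathbb R$-vector spaces. By adjunction it is also $\Hom(W_1,\intHom(W_2,\mathbb R))=\Hom(W_1,\underline{V_2})$. Using biduality this is the space of maps of Banach spaces $\Hom(V_2^{\ast}, V_2)$ ``of weak-type'', or more symmetrically the space of bounded operators $W_1\to V_2$, i.e.\ maps $\intHom(\underline{V_1},\mathbb R)\to \underline{V_2}$. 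The norm is the operator norm, where $W_1=\bigcup_c cK_1$ with $K_1$ the dual unit ball. Concretely, an element of $V$ is a separately continuous bilinear form on $K_1\times K_2$, the two weak-$\ast$ compact unit balls, and by the condensed formalism it is jointly continuous on $K_1\times K_2$. So
\[
V\hookrightarrow C(K_1\times K_2,\mathbb R)
\]
isometrically with the sup-norm, as the closed subspace of bilinear continuous functions.

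The map $V_1\otimes_\epsilon V_2\to V$ is then defined by sending $v_1\otimes v_2$ to the function $(\lambda_1,\lambda_2)\mapsto \lambda_1(v_1)\lambda_2(v_2)$ on $K_1\times K_2$. By the definition of the injective norm as $\sup_{\lambda_i\in K_i}|\sum\lambda_1(x_j)\lambda_2(y_j)|$, this map is an isometry on the algebraic tensor product. It therefore extends to an isometric embedding of the completion $V_1\otimes_\epsilon V_2$, which is automatically a closed immersion. Sanity check: for $V_i=C(S_i,\mathbb R)$, $K_i=\mathcal M(S_i)_{\le1}$, we get $W=\mathcal M(S_1\times S_2)$ and hence $V=C(S_1\times S_2,\mathbb R)=C(S_1)\otimes_\epsilon C(S_2)$, matching the stated formula. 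One could alternatively reduce the whole statement to this case via the resolutions $0\to V_i\to C(S_i)\to C(S_i')$, but the isometric description is cleaner for the closed-immersion claim.

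For the isomorphism, the density of the image is the main obstacle. Assume $V_1$ has the approximation property: for each compact $C\subset V_1$ and each $\varepsilon>0$ there is a finite-rank operator $F$ on $V_1$ with $\|Fx-x\|<\varepsilon$ on $C$. Given $\phi\in V$, view it as a continuous map $\Phi:K_2\to V_1$ with $\lambda_1(\Phi(\lambda_2))=\phi(\lambda_1,\lambda_2)$. That $\Phi$ lands in $V_1$ and not merely in $V_1^{\ast\ast}$ uses the identification $\intHom(W_2,\mathbb R)$ duality together with the fact that $\Hom(W_2,\underline{V_1})$ consists of maps that are continuous into $V_1$ with its norm topology. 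Then $C=\Phi(K_2)$ is norm-compact, so we may choose $F=\sum_j \mu_j(\cdot)\,x_j$ of finite rank approximating the identity on $C$. The composite $F\circ\Phi$ corresponds to $\sum_j x_j\otimes(\mu_j\circ\Phi)$, where each $\mu_j\circ\Phi$ is a continuous linear functional on $W_2$, hence an element of $V_2$ by biduality. This element of $V_1\otimes V_2$ is $\varepsilon$-close to $\phi$ in the sup-norm on $K_1\times K_2$. Thus the image is dense, and since it is closed, the map is surjective. The delicate point is to justify carefully that $\Phi$ is norm-continuous into $V_1$, which is exactly where the condensed $\Hom$ (rather than the separately weak-continuous one) enters.
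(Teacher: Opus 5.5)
Your argument is correct. For the isomorphism it follows the paper, but for the closed immersion it takes a different route.

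\textbf{The closed immersion.} The paper never describes $V$ intrinsically. It resolves $0\to V_1\to C(S_1,\mathbb R)\to C(S_1',\mathbb R)$ and computes $V$ from the basic case $\mathcal M(S_1)\otimes_\pi\mathcal M(S_2)\cong\mathcal M(S_1\times S_2)$. This identifies $V$ with the kernel of $C(S_1,\mathbb R)\otimes_\epsilon V_2=C(S_1,V_2)\to C(S_1',V_2)$. Since $\otimes_\epsilon$ preserves closed immersions, $V_1\otimes_\epsilon V_2$ embeds closedly into that kernel.

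You instead identify $V$ directly with the bilinear forms on $K_1\times K_2$ that are jointly continuous, with the sup-norm. Then the isometry is literally the definition of the injective norm. This gives an isometric embedding for this natural norm and makes the naturality of the map obvious.

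The cost is one verification you only assert: that this sup-norm induces the Banach topology of $V$. It is quick. A $T$-point of $\intHom(W_1\otimes_\pi W_2,\mathbb R)$ is a bilinear map $W_1\times W_2\to\underline{C(T,\mathbb R)}$. That is a continuous function on $T\times K_1\times K_2$, bilinear in the last two variables, i.e.\ a continuous map $T\to(V,\|\cdot\|_{\sup})$. Full faithfulness of $X\mapsto\underline{X}$ on metrizable spaces then pins down the topology, but this should be said.

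Drop the phrase ``separately continuous''. A separately continuous form such as the inner product on the weakly compact unit ball of $\ell^2$ is not in $V$: it corresponds to the non-compact identity operator. Joint continuity on $K_1\times K_2$ is exactly the condition.

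\textbf{The isomorphism.} Here your approach is the paper's. The paper uses the same identification $\underline V=\intHom(W_1,\underline{V_2})$ and then cites Grothendieck's theorem that the approximation property makes finite-rank maps dense; you supply the standard argument for that theorem.

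The point you call delicate is immediate. The adjunction gives $\Hom(W_1\otimes_\pi W_2,\mathbb R)=\Hom(W_2,\intHom(W_1,\mathbb R))$, and Lecture IV gives the condensed duality $\intHom(W_1,\mathbb R)\cong\underline{V_1}$. Maps from the compact Hausdorff space $K_2$ into $\underline{V_1}$ are exactly the norm-continuous maps $K_2\to V_1$. So $\Phi(K_2)$ is norm-compact, and the functionals $\mu_j\circ\Phi$ lie in $\Hom(W_2,\mathbb R)=V_2$, as you use.
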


We recall that most natural Banach spaces have the approximation property; in fact, it had been a long-standing open problem whether all Banach spaces have the approximation property, until a counterexample was found by Enflo, \cite{Enflo} (for which he was awarded a live goose by Mazur).

\begin{proof} The injective tensor product has the property that if $V_1\hookrightarrow V_1'$ is a closed immersion of Banach spaces, then $V_1\otimes_\epsilon V_2\hookrightarrow V_1'\otimes_\epsilon V_2$ is a closed immersion. However, it is not in general true that if
\[
0\to V_1\to V_1'\to V_1''
\]
is a resolution, then
\[
0\to V_1\otimes_\epsilon V_2\to V_1'\otimes_\epsilon V_2\to V_1''\otimes_\epsilon V_2
\]
is a resolution: Exactness in the middle may fail. The statement is true when $V_2=C(S_2,\mathbb R)$ is the space of continuous functions on some profinite set $S_2$, as in that case $W\otimes_\epsilon V_2 = C(S_2,W)$ for any Banach space $W$, and $C(S_2,-)$ preserves exact sequences of Banach spaces for profinite sets $S_2$.

In any case, such resolutions prove the desired statement: Note that if $V_i=C(S_i,\mathbb R)$, one has $W_i=\mathcal M(S_i)$, and then $W=\mathcal M(S_1\times S_2)$ and so $V=C(S_1\times S_2,\mathbb R)$, which is indeed $V_1\otimes_\epsilon V_2$; and these identifications are functorial. Now the resolution above gives the same statement as long as $V_2=C(S_2,\mathbb R)$; and in general one sees that $V$ is the kernel of the map $V_1'\otimes_\epsilon V_2\to V_1''\otimes_\epsilon V_2$.

It remains to see that the map $V_1\otimes_\epsilon V_2\to V$ is an isomorphism if one of $V_1$ and $V_2$ has the approximation property. Assume that $V_2$ has the approximation property. Note that
\[
\underline{V}=\intHom_{\mathbb R}(W_1\otimes_{\mathbb R} W_2,\mathbb R) = \intHom_{\mathbb R}(W_1,\intHom_{\mathbb R}(W_2,\mathbb R)) = \intHom_{\mathbb R}(W_1,\underline{V_2}).
\]
In the classical literature, this is known as the weak-$\ast$-to-weak compact operators from $V_1^\ast$ to $V_2$. On the other hand, $V_1\otimes_\epsilon V_2\subset V$ is the closed subspace generated by algebraic tensors $V_1\otimes V_2$; equivalently, this is the closure of the space of maps $W_1\to \underline{V_2}$ of finite rank. The condition that $V_2$ has the approximation property precisely ensures that this is all of $\intHom_{\mathbb R}(W_1,\underline{V_2})$, cf.~\cite[Theorem 1.3.11]{GrothResumeRevisited}.
\end{proof}

In other words, when Banach spaces are covariantly embedded into $\mathcal M$-complete condensed $\mathbb R$-spaces, one gets the projective tensor product; while under the duality with Smith spaces, one (essentially) gets the injective tensor product. Thus, in the condensed setting, there is only one tensor product, but it recovers both tensor products on Banach spaces.

\newpage

\section*{Appendix to Lecture IV: Quasiseparated condensed sets}

In this appendix, we make some remarks about the category of quasiseparated condensed sets. We start with the following critical observation, giving an interpretation of the topological space $X(\ast)_{\mathrm{top}}$ purely in condensed terms.

\begin{proposition} Let $X$ be a quasiseparated condensed set. Then quasicompact injections $i:Z\hookrightarrow X$ are equivalent to closed subspaces $W\subset X(\ast)_{\mathrm{top}}$ via $Z\mapsto Z(\ast)_{\mathrm{top}}$, resp.~sending a closed subspace $W\subset X(\ast)_{\mathrm{top}}$ to the subspace $Z\subset X$ with
\[
Z(S) = X(S)\times_{\mathrm{Map}(S,X(\ast))} \mathrm{Map}(S,W).
\]
\end{proposition}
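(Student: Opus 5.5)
The plan is to reduce everything to the description of quasiseparated condensed sets in Proposition~\ref{prop:condtop}~(4), namely as ind-compact Hausdorff spaces $X=``\varinjlim_i" T_i$ along closed immersions. Under this description $X(\ast)_{\mathrm{top}}$ is the colimit topology, so $W\subset X(\ast)_{\mathrm{top}}$ is closed if and only if $W\cap T_i$ is closed in each $T_i$. I will first treat the case where $X$ is quasicompact, and then pass to the general case by working one compact piece at a time.

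\textbf{Step 1 (compact case).} Suppose $X=\underline{T}$ with $T$ compact Hausdorff, as in Proposition~\ref{prop:condtop}~(3). Let $i:Z\hookrightarrow X$ be a quasicompact injection. Then $Z$ is quasicompact, and it is quasiseparated because it is a subobject of a quasiseparated object. By Proposition~\ref{prop:condtop}~(3), $Z=\underline{W}$ for a compact Hausdorff $W$, and the injective continuous map $W\to T$ is a closed embedding. Conversely, if $W\subset T$ is closed, then $W$ is compact. Moreover, $\underline{W}(S)=\Cont(S,W)$, which is exactly the set of maps $S\to T$ landing in $W$; this matches the displayed fibre product formula. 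The two constructions are inverse to each other because quasiseparated condensed sets are determined by their underlying points together with the maps $X(S)\hookrightarrow \mathrm{Map}(S,X(\ast))$, which are injective.

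\textbf{Step 2 (general case).} Write $X=\varinjlim_i \underline{T_i}$. Given a quasicompact injection $Z\hookrightarrow X$, each $Z_i:=Z\times_X \underline{T_i}\to \underline{T_i}$ is again a quasicompact injection; here quasiseparatedness of $X$ makes the base change of the quasicompact map $\underline{T_i}\to X$ well behaved. By Step 1, $Z_i$ corresponds to a closed subset $W_i\subset T_i$, and these satisfy $W_j\cap T_i=W_i$. Hence $W=\bigcup_i W_i$ is closed in the colimit topology, and $Z=\varinjlim_i Z_i$ agrees with the fibre product formula. To identify $Z$ with this colimit, use that any map $S\to X$ factors through some $T_i$ and that filtered colimits of condensed sets are computed sectionwise.

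Conversely, given $W$ closed, the formula defines a subsheaf $Z$. I then need to check that $Z\to X$ is quasicompact. For any $S\to X$ the map factors through some $T_i$, and then $Z\times_X S$ corresponds to the closed subset of $S$ given by the preimage of $W\cap T_i$. This is profinite, hence quasicompact.

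Finally, I check that $Z(\ast)_{\mathrm{top}}=W$ with the subspace topology, since both carry the colimit topology of the compacta $W\cap T_i$.

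\textbf{Main obstacle.} The main point to handle carefully is the claim that quasicompactness of $Z\to X$ forces $Z\times_X\underline{T_i}$ to be compact Hausdorff inside $T_i$. Equivalently, a quasicompact quasiseparated subobject of $\underline{T}$ is representable by a closed subset. I would deduce this from Proposition~\ref{prop:condtop}~(3) together with the fact that a continuous injection of compact Hausdorff spaces is a closed embedding.
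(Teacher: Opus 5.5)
Your proposal is correct and follows the paper's proof. Both reduce to the quasicompact case by exhausting $X$ by compact Hausdorff pieces, then use the equivalence of quasicompact quasiseparated condensed sets with compact Hausdorff spaces together with the fact that a continuous injection of compact Hausdorff spaces is a closed embedding; you additionally spell out the bookkeeping the paper calls formal (compatibility of the $W_i$, quasicompactness of the formula-defined $Z$, the comparison of topologies), and note only that quasicompact maps are stable under base change in general, so that step does not need quasiseparatedness of $X$.
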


In the following, we will sometimes refer to quasicompact injections $i: Z\hookrightarrow X$ as closed subspaces of $X$, noting that by the proposition this agrees with the topological notion.

\begin{proof} The statement reduces formally to the case that $X$ is quasicompact by writing $X$ as the filtered union of its quasicompact subspaces. In that case, $X$ is equivalent to a compact Hausdorff space. If $i: Z\hookrightarrow X$ is a quasicompact injection, then $Z$ is again quasicompact and quasiseparated, so again (the condensed set associated to) a compact Hausdorff space. But injections of compact Hausdorff spaces are closed immersions. In other words, the statement reduces to the assertion that under the equivalence of quasicompact quasiseparated condensed sets with compact Hausdorff spaces, injections correspond to closed subspaces, which is clear.
\end{proof}

\begin{lemma} The inclusion of the category of quasiseparated condensed sets into all condensed sets admits a left adjoint $X\mapsto X^{\mathrm{qs}}$, with the unit $X\to X^{\mathrm{qs}}$ being a surjection of condensed sets. The functor $X\mapsto X^{\mathrm{qs}}$ preserves finite products. In particular, it defines a similar left adjoint for the inclusion of quasiseparated condensed $A$-modules into all condensed $A$-modules, for any quasiseparated condensed ring $A$.
\end{lemma}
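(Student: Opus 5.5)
The plan is to construct $X^{\mathrm{qs}}$ as the quotient of $X$ by the smallest equivalence relation that is \emph{closed} in a suitable sense.

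Step one: reduce to quasicompact $X$. Any condensed set $X$ is a filtered colimit (union of images) of quasicompact condensed sets $X_i$; these are quotients of profinite sets $S_i\to X$. Since filtered colimits along injections of quasiseparated objects are quasiseparated, as noted in the proof above, it suffices to handle the quasicompact case functorially in injections and then glue. Even better, I would construct $X^{\mathrm{qs}}$ directly as a quotient of $X$.

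Step two: write $X = S/R$ for a profinite surjection $S\to X$ and $R=S\times_X S\subset S\times S$; here $R$ need not be closed. I would let $\bar R\subset S\times S$ be the smallest closed equivalence relation containing $R$. Concretely, take the closure and iterate transitive closure: compositions of closed relations on compact Hausdorff spaces are closed. I would then construct this by intersecting all closed equivalence relations containing $R$, which is again a closed equivalence relation. The quotient $S/\bar R$ is compact Hausdorff, since closed equivalence relations on compact Hausdorff spaces give Hausdorff quotients. Set $X^{\mathrm{qs}} := S/\bar R$. The map $X\to X^{\mathrm{qs}}$ is then surjective by construction.

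Step three: verify the universal property. A map $X\to Y$ with $Y$ quasiseparated has quasicompact image, hence factors through a compact Hausdorff $K\subset Y$. The composite $S\to K$ has kernel relation $S\times_K S$, which is closed and contains $R$, hence contains $\bar R$. So the map factors uniquely through $S/\bar R$. Independence of the choice of $S$ follows from this universal property.

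Step four: for general $X$, write $X=\varinjlim X_i$ over its quasicompact subobjects and set $X^{\mathrm{qs}}$ to be the colimit of the $X_i^{\mathrm{qs}}$. The transition maps here need not be injective, so I would instead take the image of $X$ in $\prod$ or otherwise argue directly. I expect this to be the main obstacle. I would handle it by defining $X^{\mathrm{qs}}$ as the quotient of $X$ by the equivalence relation $\bigcup_i \ker(X_i\to X_i^{\mathrm{qs}})$, suitably saturated, and then repeating the quotient-by-closed-relation step until it stabilizes. Alternatively, I could use the description of quasiseparated condensed sets as ind-compact Hausdorff spaces along closed immersions (Proposition~\ref{prop:condtop}(4)): the images of $X_i^{\mathrm{qs}}$ in the candidate form a filtered system of compact Hausdorff spaces with injective maps, and I would check that each such map is a closed immersion.

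Step five: finite products. For quasicompact $X=S/R$ and $Y=T/R'$, the relation $\bar R\times\bar R'$ is a closed equivalence relation on $S\times T$ containing $R\times R'$, so it contains the minimal one. Conversely, any closed equivalence relation containing $R\times R'$ contains $\bar R\times \Delta$ (slice argument: for fixed $t$, the set $\{(s,s'):((s,t),(s',t))\in E\}$ is a closed equivalence relation containing $R$), and similarly $\Delta\times\bar R'$, hence their composite $\bar R\times\bar R'$. The general case follows because products commute with filtered colimits. The module statement is then formal. For $A$ quasiseparated and $M$ an $A$-module, the maps $A\times M\times M\to M$ induce $(A\times M\times M)^{\mathrm{qs}} = A\times M^{\mathrm{qs}}\times M^{\mathrm{qs}}\to M^{\mathrm{qs}}$, and the axioms hold by uniqueness of factorizations through the surjection $X\to X^{\mathrm{qs}}$.
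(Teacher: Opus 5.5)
Your Steps 2 and 3 are correct for quasicompact $X$, and your slice argument in Step 5 is exactly the right one. The passage to general $X$ in Step 4, however, is a genuine gap, and the general case of Step 5 depends on it.

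As you note, the maps $X_i^{\mathrm{qs}}\to X_j^{\mathrm{qs}}$ between quasicompact subobjects need not be injective. The closed equivalence relation generated on $X_j$ can identify points of $X_i$ that stay distinct in $X_i^{\mathrm{qs}}$. So $\varinjlim_i X_i^{\mathrm{qs}}$ need not be quasiseparated. Neither of your fixes is carried out. ``Repeat until it stabilizes'' would at best become a transfinite iteration over subobjects of $X\times X$, which you have not set up. The closed-immersion check presupposes the candidate $X^{\mathrm{qs}}$ you are trying to build. For the same reason, ``products commute with filtered colimits'' does not give the general case of Step 5: $X^{\mathrm{qs}}$ is not $\varinjlim_i X_i^{\mathrm{qs}}$.

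The missing idea is that you never need to reduce to the quasicompact case.
\begin{enumerate}
\item Choose a surjection $X'=\bigsqcup_i S_i\to X$ from a disjoint union of profinite sets. Then $X'$ is quasiseparated, though not quasicompact. By the appendix's identification of quasicompact injections into a quasiseparated condensed set with closed subsets, the intersection of all closed equivalence relations on $X'$ containing $R=X'\times_X X'$ is again a closed equivalence relation $\overline{R}$.
\item For quasiseparated $Y$, the map $X'\times_Y X'\to X'\times X'$ is a base change of the quasicompact diagonal of $Y$. It is therefore a closed equivalence relation containing $R$, hence containing $\overline{R}$. So maps $X\to Y$ factor uniquely over $X^{\mathrm{qs}}:=X'/\overline{R}$.
\item This quotient is quasiseparated. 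After lifting profinite $T_1,T_2\to X'/\overline{R}$ to $X'$ on profinite covers, their fibre product becomes $(T_1'\times T_2')\times_{X'\times X'}\overline{R}$, which is quasicompact.
\item Your slice argument now applies verbatim to $X'\times Y'$ with $X',Y'$ quasiseparated. It gives $\overline{R\times R'}=\overline{R}\times\overline{R'}$, and hence product preservation without any colimit argument.
\end{enumerate}
This is the paper's proof.
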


\begin{proof} Choose a surjection $X'=\bigsqcup_i S_i\to X$ from a disjoint union of profinite sets $S_i$, giving an equivalence relation $R=X'\times_X X'\subset X'\times X'$. For any map $X\to Y$ with $Y$ quasiseparated, the induced map $X'\to Y$ has the property that $X'\times_Y X'\subset X'\times X'$ is a quasicompact injection (i.e., is a closed subspace) and is an equivalence relation that contains $R$; thus, it contains the minimal closed equivalence relation $\overline{R}\subset X'\times X'$ generated by $R$ (which exists, as any intersection of closed equivalence relations is again a closed equivalence relation). This shows that $X^{\mathrm{qs}} = X'/\overline{R}$ defines the desired adjoint.

To check that it preserves finite products, we need to check that if $R\subset X\times X$ and $R'\subset X'\times X'$ are two equivalence relations on quasiseparated condensed sets $X$, $X'$, then the minimal closed equivalence relation $\overline{R\times R'}$ on $X\times X'$ containing $R\times R'$ is given by $\overline{R}\times \overline{R'}$. To see this, note first that for fixed $x'\in X'$, it must contain $\overline{R}\times (x',x')\subset X\times X\times X'\times X'$. Similarly, for fixed $x\in X$, it must contain $(x,x)\times \overline{R'}$. But now if $(x_1,x_1')$ and $(x_2,x_2')$ are two elements of $X\times X'$ such that $x_1$ is $\overline{R}$-equivalent to $x_2$ and $x_1'$ is $\overline{R'}$-equivalent to $x_2'$, then $(x_1,x_1')$ is $\overline{R\times R'}$-equivalent to $(x_2,x_1')$, which is $\overline{R\times R'}$-equivalent to $(x_2,x_2')$. Thus, $\overline{R}\times \overline{R'}\subset \overline{R\times R'}$, and the reverse inclusion is clear.
\end{proof}

\begin{corollary} The inclusion of the category of $\mathcal M$-complete condensed $\mathbb R$-vector spaces into all condensed $\mathbb R$-vector spaces admits a left adjoint, the ``$\mathcal M$-completion''.
\end{corollary}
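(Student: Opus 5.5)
The plan is to build the $\mathcal M$-completion in two steps: first take the maximal quasiseparated quotient from the preceding lemma, and then enlarge the result by adjoining integrals of measures. Given a condensed $\mathbb R$-vector space $V$, choose a surjection $X'=\bigoplus_i \mathbb R[S_i]\to V$ with $S_i$ extremally disconnected. Replace each $\mathbb R[S_i]$ by $\mathcal M(S_i)$, which is $\mathcal M$-complete. The natural map $\mathbb R[S_i]\to \mathcal M(S_i)$ is universal for maps $\mathbb R[S_i]\to L$ with $L$ $\mathcal M$-complete: extensions from $S_i$ exist by definition and are unique by the earlier proposition on uniqueness. So $\bigoplus_i\mathcal M(S_i)$ is an $\mathcal M$-completion of $\bigoplus_i\mathbb R[S_i]$, since direct sums of $\mathcal M$-complete spaces are filtered colimits along injections of finite sums of Smith spaces, hence $\mathcal M$-complete.

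Next, choose a presentation $\bigoplus_j \mathbb R[T_j]\to \bigoplus_i\mathbb R[S_i]\to V\to 0$. Applying the above gives a map $f:\bigoplus_j\mathcal M(T_j)\to\bigoplus_i\mathcal M(S_i)$ of $\mathcal M$-complete spaces. Define $\hat V$ to be the cokernel of $f$ in the category of $\mathcal M$-complete spaces; by the earlier proposition this is $Q^{\mathrm{qs}}$ for the condensed cokernel $Q$. The universal property is then a formal diagram chase. A map $V\to L$ with $L$ $\mathcal M$-complete corresponds to a map $\bigoplus\mathbb R[S_i]\to L$ killing the image of $\bigoplus\mathbb R[T_j]$. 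This in turn corresponds to a map $\bigoplus\mathcal M(S_i)\to L$ whose composite with $f$ vanishes on each $T_j$, hence vanishes by the uniqueness proposition. Such maps factor through $Q$, and then through $Q^{\mathrm{qs}}$ because $L$ is quasiseparated. This gives natural bijections $\Hom(V,L)\cong\Hom(\hat V,L)$, and the adjoint is well defined because left adjoints are determined objectwise by such universal properties.

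The main subtlety lies in the quasiseparated quotient. I must know that $Q^{\mathrm{qs}}$ is still a condensed $\mathbb R$-vector space, and that a linear map $Q\to L$ to a quasiseparated $L$ factors linearly through $Q^{\mathrm{qs}}$. This is exactly the product-preservation clause of the appendix lemma, applied to the quasiseparated condensed ring $A=\mathbb R$. I must also know that $Q^{\mathrm{qs}}$ is $\mathcal M$-complete, which was checked in the proof of the proposition on cokernels using lifts along extremally disconnected $S$. Both ingredients are already available, so the proof is essentially an assembly of these results.
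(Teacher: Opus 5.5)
Your argument is correct and is the paper's own proof: you present $V$, replace each $\mathbb R[S]$ by its reflection $\mathcal M(S)$, and take the quasiseparation $Q^{\mathrm{qs}}$ of the cokernel of $\bigoplus_j\mathcal M(T_j)\to\bigoplus_i\mathcal M(S_i)$, and the details you fill in (uniqueness of extensions, linearity via product preservation of $(-)^{\mathrm{qs}}$, $\mathcal M$-completeness of $Q^{\mathrm{qs}}$ via lifting along extremally disconnected $S$) are the right ones. Only your opening summary misdescribes this: you do not first pass to $V^{\mathrm{qs}}$ and then adjoin integrals, but pass to measures first and take the quasiseparated quotient last, which is the order the argument requires.
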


\begin{proof} Let $V$ be any condensed $\mathbb R$-vector space, and pick a resolution
\[
\bigoplus_j \mathbb R[S_j']\to \bigoplus_i \mathbb R[S_i]\to V\to 0.
\]
The left adjoint exists for $\mathbb R[S]$, with $S$ profinite, and takes the value $\mathcal M(S)$, essentially by definition of $\mathcal M$-completeness. It follows that the left adjoint for $V$ is given by the quasiseparation of the cokernel of
\[
\bigoplus_j \mathcal M(S_j')\to \bigoplus_i \mathcal M(S_i).\qedhere
\]
\end{proof}

\newpage

\section{Lecture V: Entropy and a real $B_{\mathrm{dR}}^+$}
\begin{center}
{\it in memoriam Jean-Marc Fontaine}
\end{center}

The discussion of the last lecture along with the definition of solid abelian groups raises the following question:

\begin{question} Does the category of condensed $\mathbb R$-vector spaces $V$ such that for all profinite sets $S$ with a map $f: S\to V$, there is a unique extension to a map $\tilde{f}: \mathcal M(S)\to V$ of condensed $\mathbb R$-vector spaces, form an abelian category stable under all kernels, cokernels, and extensions?
\end{question}

We note that stability under kernels is easy to see. We will see that stability under cokernels and stability under extensions fails, so the answer is no.

Note that the condition imposed is some form of local convexity. It is a known result in Banach space theory that there are extensions of Banach spaces that are not themselves Banach spaces, and in fact not locally convex. Let us recall the construction, due to Ribe, \cite{Ribe}.

Let $V=\ell^1(\mathbb N)$ be the Banach space of $\ell^1$-sequences of real numbers $x_0,x_1,\ldots$. We will construct a non-split extension
\[
0\to \mathbb R\to V'\to V\to 0.
\]
The construction is based on the following two lemmas.

\begin{lemma} Let $V$ be a Banach space, let $V_0\subset V$ be a dense subvectorspace and let $\phi: V_0\to \mathbb R$ be a function that is almost linear in the sense that for some constant $C$, we have for all $v,w\in V_0$
\[
|\phi(v+w)-\phi(v)-\phi(w)|\leq C(||v||+||w||);
\]
moreover, $\phi(av)=a\phi(v)$ for all $a\in \mathbb R$ and $v\in V_0$. Then one can turn the abstract $\mathbb R$-vector space $V_0'=V_0\times \mathbb R$ into a topological vector space by declaring a system of open neighborhoods of $0$ to be
\[
\{(v,r)\mid ||v||+|r-\phi(v)|<\epsilon\}.
\]
The completion $V'$ of $V_0'$ defines an extension
\[
0\to \mathbb R\to V'\to V\to 0
\]
of topological vector spaces (which stays exact as a sequence of condensed vector spaces).

This extension of topological vector spaces is split if and only if there is a linear function $f: V_0\to \mathbb R$ such that $|f(v)-\phi(v)|\leq C'||v||$ for all $v\in V_0$ and some constant $C'$.
\end{lemma}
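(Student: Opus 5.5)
The plan is to treat $V_0'$ as a seminormed space, check that its completion is an extension of $V$ by $\mathbb R$, and then read off the splitting criterion from that seminorm. Set $q(v,r)=\|v\|+|r-\phi(v)|$ on $V_0'=V_0\times\mathbb R$.

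\textbf{Step 1: $q$ is a quasi-norm.} $q$ is not subadditive, but almost linearity gives
\[
q((v,r)+(w,s))\le \|v\|+\|w\|+|r-\phi(v)|+|s-\phi(w)|+C(\|v\|+\|w\|)\le (1+C)\bigl(q(v,r)+q(w,s)\bigr).
\]
Homogeneity $q(a x)=|a|q(x)$ follows from $\phi(av)=a\phi(v)$, and $q(v,r)=0$ forces $v=0$ and $r=\phi(0)=0$. So the sets $\{q<\epsilon\}$ form a neighborhood basis of $0$ for a Hausdorff, metrizable (Aoki--Rolewicz) topological vector space structure on $V_0'$. This space need not be locally convex. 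Its completion $V'$ is a complete metrizable topological vector space, the quasi-norm extends to it, and $V'$ is compactly generated, so nothing is lost in passing to $\underline{V'}$.

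\textbf{Step 2: the extension.} The projection $(v,r)\mapsto v$ satisfies $\|v\|\le q(v,r)$, so it is continuous and extends to $p:V'\to V$. The inclusion $r\mapsto(0,r)$ is isometric, since $q(0,r)=|r|$; it is therefore a closed embedding $\mathbb R\hookrightarrow V'$ landing in $\ker p$.

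For the converse, take $x\in\ker p$ as a limit of $(v_n,r_n)$ with $v_n\to0$. Then $q(v_n,\phi(v_n))=\|v_n\|\to0$, so $(v_n,r_n-\phi(v_n))$ is Cauchy with the same limit $x$. Its second coordinate is Cauchy in $\mathbb R$, because $q(0,t)=|t|$ and the quasi-triangle inequality controls differences. Hence $x\in\mathbb R$.

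For surjectivity onto $V$: given $v\in V$, choose $v_n\in V_0$ with $\|v_{n+1}-v_n\|\le 2^{-n}$. The points $(v_n,\phi(v_n))$ form a Cauchy sequence, since
\[
q\bigl((v_n,\phi(v_n))-(v_m,\phi(v_m))\bigr)\le \|v_n-v_m\|+C(\|v_n\|+\|v_m\|),
\]
which is only bounded and not small. So this naive choice fails. Instead use the differences $d_n=v_{n+1}-v_n$, lift each to $(d_n,\phi(d_n))$ of quasi-norm $\|d_n\|$, and sum the series $\sum_n (d_n,\phi(d_n))$. By the Aoki--Rolewicz estimate $q(\sum_n x_n)\le K\sum_n 2^{n\theta}q(x_n)$, or by grouping terms, a geometrically fast series converges. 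In fact the same argument shows $p$ is open: a vector with $\|v\|<\epsilon$ lifts to an element of quasi-norm $\le K'\epsilon$. Then $V'/\mathbb R\cong V$ topologically by the open mapping property, and the sequence is exact as topological vector spaces.

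\textbf{Step 3: condensed exactness.} Exactness as condensed vector spaces means surjectivity on $S$-points locally on $S$: a continuous $S\to V$ should lift locally to $S\to V'$. I expect this to be the main obstacle. The approach I would try first is to use a continuous section of $p$ between metrizable spaces. Michael-type selection needs convexity, so I would instead build the section by hand using the quantitative lifting of Step 2, choosing approximations continuously. Alternatively, since $\mathbb R$ is a complete metrizable group, $p$ is a principal $\mathbb R$-bundle over metrizable $V$, and it admits local continuous sections over compacta, for instance via the Bartle--Graves-type argument for open maps of F-spaces. Kernel exactness and injectivity are immediate.

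\textbf{Step 4: the splitting criterion.} If $f$ is linear with $|f-\phi|\le C'\|\cdot\|$, then $(v,r)\mapsto r-f(v)$ is linear on $V_0'$ and bounded by
\[
|r-\phi(v)|+C'\|v\|\le (1+C')\,q(v,r),
\]
so it extends continuously to a retraction $V'\to\mathbb R$, and the extension splits. Conversely, a continuous splitting gives a linear retraction $\rho:V'\to\mathbb R$ with $|\rho|\le C''q$ on a neighborhood of $0$, hence everywhere by homogeneity. Put $f(v)=-\rho(v,0)$, which is linear on $V_0$. Since $\rho(0,1)=1$, we have $\rho(v,\phi(v))=\phi(v)-f(v)$, and this is bounded by $C''q(v,\phi(v))=C''\|v\|$. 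Taking $C'=C''$ gives $|f(v)-\phi(v)|\le C'\|v\|$.
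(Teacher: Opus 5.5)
Your Steps 1, 2 and 4 are fine. One small point: a quasi-norm need not be continuous for its own topology, so it is the Aoki--Rolewicz $\rho$-norm, not $q$, that extends to $V'$. Since you only evaluate $q$ on $V_0'$, nothing breaks.

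The genuine gap is Step~3, which you flag but do not resolve. You list strategies, and the ones you name do not apply as stated. Michael's selection theorem, and the Bartle--Graves theorem derived from it, glue lifts by partitions of unity. They need averages $\sum_\alpha\lambda_\alpha z_\alpha$ of small lifts to stay small, i.e.\ local convexity of $V'$, which is precisely what fails here. For lifts $z_\alpha=(v_\alpha,\phi(v_\alpha))$ one has $q(\sum_\alpha\lambda_\alpha z_\alpha)=\|\sum_\alpha\lambda_\alpha v_\alpha\|+|\sum_\alpha\lambda_\alpha\phi(v_\alpha)-\phi(\sum_\alpha\lambda_\alpha v_\alpha)|$. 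The second term is controlled only by the failure of additivity of $\phi$ over many summands. That failure is unbounded in the number of summands (for the entropy function it grows like its logarithm), and partitions of unity on the infinite-dimensional $V$ have unbounded multiplicity. ``Choosing approximations continuously'' on all of $V$ runs into the same obstruction. A Bartle--Graves theorem for non-locally convex F-spaces is not something you can cite; you would have to prove it. (A continuous section can in fact be built, e.g.\ from local sections obtained by averaging along the $\mathbb R$-orbits, but that is a separate argument and more than you need.)

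The missing observation is that condensed surjectivity only requires lifting maps from profinite sets, and for those your own Step~2 argument works uniformly in the parameter. Given $f\colon S\to V$ with $S$ profinite, choose locally constant $g_n\colon S\to V_0$ with $\|g_n-f\|_\infty\le 2^{-n}$; this uses finite clopen partitions of small oscillation and density of $V_0$. Put $h_n=g_{n+1}-g_n$, and lift $g_0$ and the $h_n$ by $s\mapsto (g_0(s),\phi(g_0(s)))$ and $s\mapsto (h_n(s),\phi(h_n(s)))$. These lifts are locally constant, hence continuous, and $q(h_n(s),\phi(h_n(s)))=\|h_n(s)\|\le 3\cdot 2^{-n-1}$. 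So the $\rho$-norm estimate makes the series converge uniformly on $S$ to a continuous lift of $f$. This uses neither convexity nor any continuity of $\phi$. Together with your kernel computation it gives exactness of $0\to\underline{\mathbb R}\to\underline{V'}\to\underline{V}\to 0$.

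The paper argues differently. It takes the nonlinear section $v\mapsto(v,\phi(v))$ over $V_0$ and extends it to the completions, so that $V'\to V$ splits topologically and condensed exactness is immediate. Your Step~2 estimate shows why that extension is not automatic from almost linearity alone. It needs $\phi(v)-\phi(w)-\phi(v-w)\to 0$ as $\|v-w\|\to 0$, uniformly on bounded sets. This holds for the entropy function of the corollary but not for every almost linear $\phi$: take $V=V_0=\mathbb R^2$ and $\phi$ a discontinuous function with $\phi(av)=a\phi(v)$ and $|\phi(v)|\le\|v\|$. The profinite argument sidesteps this issue.
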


One can in fact show that all extensions arise in this way.

\begin{proof} All statements are immediately verified. Note that the extension $V'\to V$ splits as topological spaces: Before completion we have the nonlinear splitting $V_0\to V_0': v\mapsto (v,\phi(v))$, and this extends to completions.
\end{proof}

We see that non-split extensions are related to functions that are almost linear locally, but not almost linear globally. An example is given by entropy. (We note that a relation between entropy and real analogues of $p$-adic Hodge-theoretic rings has been first proposed by Connes and Consani, cf.~e.g.~\cite{ConnesConsani1}, \cite{ConnesWitt}. There is some relation between the constructions of this and the next lecture, and their work.) Recall that if $p_1,\ldots,p_n$ are real numbers in $[0,1]$ with sum $1$ (considered as a probability distribution on the finite set $\{1,\ldots,n\}$), its entropy is
\[
H=-\sum_{i=1}^n p_i \log p_i.
\]
The required local almost linearity comes from the following lemma.

\begin{lemma}\label{lem:entropyalmostlinear} For all real numbers $s$ and $t$, one has
\[
|s\log|s|+t\log|t|-(s+t)\log|s+t||\leq 2\log 2(|s|+|t|).
\]
\end{lemma}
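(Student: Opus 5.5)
The plan is to reduce to a one-variable inequality by symmetry and homogeneity. Write $\psi(x)=x\log|x|$, with $\psi(0)=0$. Then $\psi$ is odd and satisfies $\psi(\lambda x)=\lambda\psi(x)+x\lambda\log\lambda$ for $\lambda>0$. The expression
\[
E(s,t)=\psi(s)+\psi(t)-\psi(s+t)
\]
is therefore additive-homogeneous of degree one: since $s+t-(s+t)=0$, the $\log\lambda$ terms cancel and $E(\lambda s,\lambda t)=\lambda E(s,t)$. Both sides of the inequality thus scale linearly in $\lambda>0$, so we may normalize $|s|+|t|=1$. We also have $E(-s,-t)=-E(s,t)$ and $E(s,t)=E(t,s)$. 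Finally, if $s$ or $t$ is $0$ then $E=0$, so this case is trivial.

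We then split into two cases according to signs.

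\emph{Same signs.} Reduce to $s,t\ge 0$ with $s+t=1$. Then $E=s\log s+t\log t$, which is minus the binary entropy, so $|E|\le\log 2\le 2\log 2$.

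\emph{Opposite signs.} Write $s=a>0$ and $t=-b<0$ with $a+b=1$, and by the symmetries assume $a\ge b$. Then $s+t=a-b\ge0$ and
\[
E=a\log a-b\log b-(a-b)\log(a-b).
\]
Set $u=a-b\ge0$, so that $a=u+b$. The identity $E(u+b,-b)=-E(u,b)$ expresses this in terms of the same-sign case: $E=-(\psi(u)+\psi(b)-\psi(u+b))$ with $u,b\ge0$. By the same-sign case applied after rescaling, $|E|\le\log2\,(u+b)=\log 2\cdot a\le\log2\,(|s|+|t|)$.

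So both cases give the bound with constant $\log 2$, and hence a fortiori with $2\log 2$. The only real content is the entropy bound $-p\log p-q\log q\le\log 2$ for $p+q=1$, which follows from concavity of $x\mapsto -x\log x$ (or Jensen's inequality). I expect no real obstacle. The main care needed is in the sign bookkeeping of the opposite-sign case, namely checking the identity $E(u+b,-b)=-E(u,b)$ using oddness of $\psi$, and in handling the degenerate values where some argument is $0$.
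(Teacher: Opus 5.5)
Your argument is correct, and it takes a different route from the paper's.

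The paper normalizes in the sup-norm rather than in $\ell^1$. After rescaling, one of $|s|,|t|$ equals $1$, and by the sign and swap symmetries one may take $t=1$. This leaves the one-variable function $s\log|s|-(s+1)\log|s+1|$ on $[-1,1]$. Continuity on this compact interval already yields \emph{some} constant, and the value $2\log 2$ is extracted from the observation that $s\log|s|$ and $(s+1)\log|s+1|$ have opposite signs there. Since opposite signs make the absolute values of the two terms add, that last step really needs a bit more than bounding each term individually, e.g.\ monotonicity of $-s\log s+(s+1)\log(s+1)$ on $[0,1]$. This does not matter for the applications, which only use the existence of some constant.

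You instead normalize $|s|+|t|=1$ and split by signs. You then use the identity $E(u+b,-b)=-E(u,b)$, a direct consequence of the oddness of $\psi$, to fold the mixed-sign case onto the same-sign case, where $-E$ is exactly a binary entropy. What this buys you:
\begin{itemize}
\item a complete argument with the sharp constant $\log 2$, attained at $s=t$ (e.g.\ $E(1,1)=-2\log 2$);
\item the slightly stronger bound $|E(s,t)|\le \log 2\cdot\max(|s|,|t|)$ in the mixed-sign case;
\item an explicit link with entropy, which is what motivates the lemma in the text.
\end{itemize}
The paper's route is quicker but cruder.
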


We note that $0\log 0 := 0$ extends the function $s\mapsto s\log|s|$ continuously to $0$.

\begin{proof} Rescaling both $s$ and $t$ by a positive scalar $\lambda$ multiplies the left-hand side by $\lambda$ (Check!). We may thus assume that $s,t\in [-1,1]$ and at least one of them has absolute value equal to $1$. Changing sign and permuting, we assume that $t=1$. It then suffices to see that the left-hand side is bounded by $2\log 2$ for all $s\in [-1,1]$. Note that some bound is now clear, as the left-hand side is continuous; to get $2\log 2$, note that $s\log|s|$ and $(s+1)\log|s+1|$ take opposite signs for $s\in[-1,1]$, so it suffices to bound both individually by $2\log 2$, which is easy.
\end{proof}

\begin{corollary} Let $V_0\subset V=\ell^1(\mathbb N)$ be the subspace spanned by sequences with finitely many nonzero terms. The function
\[
H: (x_0,x_1,\ldots)\in V_0\mapsto s\log|s| - \sum_{i\geq 0} x_i\log|x_i| ,\ \mathrm{where}\ s=\sum_{i\geq 0} x_i,
\]
is locally almost linear but not globally almost linear, and so defines a nonsplit extension
\[
0\to \mathbb R\to V'\to V\to 0.
\]
\end{corollary}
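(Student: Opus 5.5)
We need three things: $H(av)=aH(v)$, local almost linearity $|H(v+w)-H(v)-H(w)|\le C(\|v\|+\|w\|)$, and the failure of global almost linearity. The preceding lemma then turns these into a nonsplit extension.

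\emph{Homogeneity.} For $a\in\mathbb R$ we have $(as)\log|as| - \sum ax_i\log|ax_i| = a\bigl(s\log|s|-\sum x_i\log|x_i|\bigr) + a\log|a|\,\bigl(s-\sum x_i\bigr)$. The last term vanishes because $s=\sum x_i$, which is exactly why the correction term $s\log|s|$ is there. The case $a=0$ is trivial.

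\emph{Local almost linearity.} Write $\psi(t)=t\log|t|$ and $v=(x_i)$, $w=(y_i)$ with sums $s,t$. Then
\[
H(v+w)-H(v)-H(w)=\bigl(\psi(s+t)-\psi(s)-\psi(t)\bigr)-\sum_i\bigl(\psi(x_i+y_i)-\psi(x_i)-\psi(y_i)\bigr).
\]
Apply Lemma~\ref{lem:entropyalmostlinear} to each bracket. The first is bounded by $2\log 2(|s|+|t|)\le 2\log2(\|v\|+\|w\|)$, and the sum by $2\log2\sum_i(|x_i|+|y_i|)=2\log 2(\|v\|+\|w\|)$. This gives $C=4\log 2$.

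\emph{Failure of global almost linearity.} This is the main step. Suppose $f:V_0\to\mathbb R$ is linear with $|f(v)-H(v)|\le C'\|v\|$. Let $e_i$ be the basis vectors and note $H(e_i)=0$. Hence $|f(e_i)|\le C'$, so $|f(v)|\le C'\|v\|$ for all $v\in V_0$, since $f(v)=\sum x_if(e_i)$. It follows that $|H(v)|\le 2C'\|v\|$ for all $v\in V_0$.

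Now test on the uniform distribution $v_n=\frac1n(e_0+\dots+e_{n-1})$. Then $\|v_n\|=1$ and $s=1$, so $H(v_n)=-\sum \frac1n\log\frac1n=\log n\to\infty$. This contradicts the bound, so no such $f$ exists.

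By the preceding lemma, the almost-linear function $H$ then defines an extension $0\to\mathbb R\to V'\to V\to 0$, and this extension is nonsplit. All steps are short. The only conceptual point is recognizing that an almost-linear approximant $f$ is forced to be bounded because $H$ vanishes on basis vectors, while the entropy of the uniform distribution grows like $\log n$.
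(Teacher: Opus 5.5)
Your proof is correct and follows the paper's argument. Homogeneity comes from the $s\log|s|$ correction, local almost linearity from applying Lemma~\ref{lem:entropyalmostlinear} to each bracket, and non-splitness from bounding the coefficients $f(e_i)$ via $H(e_i)=0$ and then testing on the uniform vectors, where $H=\log n$; you just make the termwise decomposition and the constant $4\log 2$ explicit where the paper leaves them implicit.
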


\begin{proof} Local almost linearity follows from the lemma (the scaling invariance $H(a x) = a H(x)$ uses the addition of $s\log|s|$). For global non-almost linearity, assume that $H$ was close to $\sum \lambda_i x_i$ for certain $\lambda_i\in \mathbb R$. Looking at the points $(0,\ldots,0,1,0,\ldots)$, one sees that the $\lambda_i$ are bounded (as $H=0$ on such points). On the other hand, looking at $(\frac 1n,\ldots,\frac 1n,0,\ldots)$ with $n$ occurences of $\frac 1n$, global almost linearity requires
\[
|H(n)-\frac 1n \sum_{i=0}^{n-1} \lambda_i|\leq C
\]
for some constant $C$. This would require $H(n)$ to be bounded (as the $\lambda_i$ are), but one computes $H(n)=\log n$.
\end{proof}

Now we translate this extension into the condensed picture. Ideally, we would like to show that there is an extension
\[
0\to \mathcal M(S)\to \widetilde{\mathcal M}(S)\to \mathcal M(S)\to 0
\]
of Smith spaces, functorial in the profinite set $S$. Taking $S=\mathbb N\cup \{\infty\}$ and writing $\mathcal M(S)=W_1\oplus \mathbb R\cdot[\infty]$ by splitting off $\infty$, the space $W_1$ is a Smith space containing $\ell^1(\mathbb N)$ as a subspace with the same underlying $\mathbb R$-vector space (a compact convex generating set of $W$ is the space of sequences $x_0,x_1,\ldots \in [-1,1]$ with $\sum |x_i|\leq 1$). Then we get an extension
\[
0\to W\to \widetilde{W}\to W\to 0
\]
and we can take the pullback along $\ell^1\to W$ and the pushout along $W\to \mathbb R$ (summing all $x_i$)\footnote{Warning: This map is not well-defined, but after pullback to $\ell^1$ the extension can be reduced to a self-extension of $\ell^1\subset W$ by itself, and there is a well-defined map $\ell^1\to \mathbb R$ along which one can take the pushout.} to get an extension
\[
0\to \mathbb R\to\ ?\to \ell^1\to 0.
\]
This will be the extension constructed above.

We would like to make the following definition; this erroneous definition was stated in the lecture.

\begin{definition}[does not work] For a finite set $S$, let
\[
\widetilde{\mathbb R[S]}_{\leq c} = \{(x_s,y_s)\in \mathbb R[S]\times \mathbb R[S]\mid \sum_{s\in S}(|x_s|+|y_s-x_s\log|x_s||)\leq c\}.
\]
For a profinite set $S=\varprojlim_i S_i$, let
\[
\widetilde{\mathcal M(S)} = \bigcup_{c>0} \varprojlim_i \widetilde{\mathbb R[S]}_{\leq c}.
\]
\end{definition}

The problem with this definition is that the transition maps in the limit over $i$ do not preserve the subspaces $_{\leq c}$. It would be enough if there was some universal constant $C$ such that for any map of finite set $S\to T$, the set $\widetilde{\mathbb R[S]}_{\leq c}$ maps into $\widetilde{\mathbb R[T]}_{\leq C c}$. Unfortunately, even this is not true. We will see in the next lecture that replacing the $\ell^1$-norm implicit in the definition of $\widetilde{\mathbb R[S]}_{\leq c}$ with the $\ell^p$-norm for some $p<1$, this problem disappears.

One can salvage the definition for the Smith space $W_1$ (the direct summand of $\mathcal M(\mathbb N\cup \{\infty\})$. In this case, one simply directly build the extension $\widetilde{W_1}$, as follows:
\[
\widetilde{W_1} = \bigcup_{c>0} \{(x_0,x_1,\ldots,y_0,y_1,\ldots)\in \prod_{\mathbb N} [-c,c]\times \prod_{\mathbb N} [-c,c]\mid \sum_n(|x_n|+|y_n-x_n\log|x_n||)\leq c\}.
\]

\begin{proposition} The condensed set $\widetilde{W_1}$ has a natural structure of a condensed $\mathbb R$-vector space, and sits in an exact sequence
\[
0\to W_1\to \widetilde{W_1}\to W_1\to 0.
\]
\end{proposition}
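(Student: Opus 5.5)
The plan is to transport the structure along the nonlinear bijection
\[
\Phi: (x_n,y_n)_n\mapsto (x_n, z_n)_n,\qquad z_n=y_n-x_n\log|x_n|,
\]
which identifies the set underlying $\widetilde{W_1}$ with $W_1\times W_1$. Here $W_1=\bigcup_c\{(x_n)\in\prod[-c,c]\mid \sum|x_n|\le c\}$. Under $\Phi$ the $c$-th piece becomes $\{(x,z)\mid \sum|x_n|+\sum|z_n|\le c\}$. Since $x\mapsto x\log|x|$ is continuous and bounded on $[-c,c]$, $\Phi$ restricts to a homeomorphism between the $c$-th piece of $\widetilde{W_1}$ (closed in a product of intervals, hence compact Hausdorff) and a compact subset of $W_1\times W_1$. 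These compatible homeomorphisms show that, as a condensed set, $\widetilde{W_1}$ is an increasing union of compact Hausdorff spaces, isomorphic to $W_1\times W_1$ via $\Phi$. So the issue is only the vector space structure: we do \emph{not} use the one transported from $W_1\times W_1$, but the naive coordinatewise one $(x,y)+(x',y')=(x+x',y+y')$ and $a(x,y)=(ax,ay)$.

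The main step is to check that these operations preserve the union and are continuous on each compact piece, with a shift of $c$. For scalar multiplication, $y_n-x_n\log|x_n|$ becomes $a(y_n-x_n\log|x_n|)-ax_n\log|a|$. This gives the bound $\sum|ax_n|+\sum|ay_n-ax_n\log|ax_n||\le(|a|+|a\log|a||)c$, locally bounded in $a$, so $\mathbb R\times\widetilde{W_1}\to\widetilde{W_1}$ maps $[-r,r]\times(\le c)$ into $(\le C(r)c)$. For addition, write
\[
y_n+y_n'-(x_n+x_n')\log|x_n+x_n'| = z_n+z_n'+\bigl(x_n\log|x_n|+x_n'\log|x_n'|-(x_n+x_n')\log|x_n+x_n'|\bigr).
\]
Lemma~\ref{lem:entropyalmostlinear} bounds the bracket by $2\log 2(|x_n|+|x_n'|)$. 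Summing gives $(\le c)+(\le c')\subset(\le (1+4\log 2)(c+c'))$. Both maps are coordinatewise continuous, hence continuous on the compact pieces (product topology), and therefore define maps of condensed sets by Proposition~\ref{prop:condtop}; products of the unions are handled as in Lemma~\ref{lem:compactgenproduct}. The vector space axioms hold because they already hold pointwise in $\prod\mathbb R\times\prod\mathbb R$, into which $\widetilde{W_1}$ injects. The step requiring the most care is this addition bound, since the whole point is that it needs the entropy lemma and that the $\ell^1$-control is per-coordinate; here it works because the index set $\mathbb N$ is fixed and there are no transition maps to compare.

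Finally, the exact sequence comes from the maps $W_1\to\widetilde{W_1}$, $y\mapsto(0,y)$, and $\widetilde{W_1}\to W_1$, $(x,y)\mapsto x$. Both are linear, and continuous on compact pieces: $(0,y)$ has norm $\sum|y_n|$, and $\sum|x_n|\le c$. The composite is zero and the kernel of the projection is exactly the image of the first map, which is injective. For surjectivity as condensed sets, we use the continuous section $x\mapsto(x,x\log|x|)$ coordinatewise, which maps $(\le c)$ into $(\le c)$. It is nonlinear, but it gives surjectivity on $S$-valued points for every profinite $S$, so the sequence is exact in condensed $\mathbb R$-vector spaces.
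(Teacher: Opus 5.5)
Your proposal is correct and follows the paper's proof. Both use the coordinatewise operations, Lemma~\ref{lem:entropyalmostlinear} for stability under addition, the analogous estimate for scalars, and the nonlinear section $x\mapsto(x,x\log|x|)$ for surjectivity; you additionally spell out continuity on the compact pieces, the axioms via the injection into $\prod_{\mathbb N}\mathbb R\times\prod_{\mathbb N}\mathbb R$, and the kernel computation. (A harmless imprecision: because of the box condition $y_n\in[-c,c]$ in the definition, the section sends the $c$-piece only into the $c'$-piece with $c'=\max(c,\sup_{|t|\le c}|t\log|t||)$, and $\Phi$ of the $c$-piece is slightly smaller than the $\ell^1$-ball; this changes nothing, since only the union over $c$ matters.)
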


\begin{proof} Surjectivity of $\widetilde{W_1}\to W_1$ is clear by taking $y_n=x_n\log|x_n|$. To see that it is a condensed $\mathbb R$-vector space, note that stability under addition follows from Lemma~\ref{lem:entropyalmostlinear}, and one similarly checks stability under scalar multiplication.
\end{proof}

\begin{exercise} Show that the extension $0\to W_1\to \widetilde{W_1}\to W_1\to 0$ has a Banach analogue: An extension $0\to \ell^1\to \widetilde{\ell^1}\to \ell^1\to 0$, sitting inside the previous sequence; cf.~\cite{KaltonPeck}.
\end{exercise}

The proposition shows that the class of $\mathcal M$-complete condensed $\mathbb R$-vector spaces is not stable under extensions. Indeed, if $\widetilde{W_1}$ were $\mathcal M$-complete, then the map $\mathbb N\cup\{\infty\}\to \widetilde{W_1}$ given by sending $n$ to the element with $x_n=1$ and all other $x_i,y_i=0$ would extend to a map $\mathcal M(\mathbb N\cup\{\infty\})\to \widetilde{W_1}$ vanishing on $\infty$, thus giving a section $W_1\to \widetilde{W_1}$; but the extension is nonsplit.

We can now also show that cokernels are not well-behaved. Indeed, consider also the Smith space version $W_\infty$ of $\ell^\infty$, given by $\bigcup_{c>0}\prod_{\mathbb N} [-c,c]$. There is a natural inclusion $W_1\subset W_\infty$.

\begin{proposition} The map of condensed sets $f: W_1\to W_\infty$ given by
\[
(x_0,x_1,\ldots)\mapsto (x_0\log|x_0|,x_1\log|x_1|,\ldots)
\]
induces, via projection $W_\infty\to W_\infty/W_1$, a nonzero map of condensed $\mathbb R$-vector spaces
\[
W_1\to W_\infty/W_1.
\]
\end{proposition}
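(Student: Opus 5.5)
First I would check that $f$ is a well-defined map of condensed sets. On each compact generating piece $\{x\in \prod_{\mathbb N}[-c,c]: \sum|x_n|\leq c\}$ of $W_1$, the map is coordinatewise given by the continuous function $t\mapsto t\log|t|$ on $[-c,c]$, which is bounded. So it lands in $\prod_{\mathbb N}[-c',c']\subset W_\infty$ with $c'=\max_{|t|\leq c}|t\log|t||$. The map is continuous for the product topology, hence a map of compact Hausdorff spaces. Passing to the union over $c$ gives a map of condensed sets $W_1\to W_\infty$.

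Next I would show that the composite $\bar f: W_1\to W_\infty/W_1$ is additive and $\mathbb R$-linear. Additivity amounts to showing that
\[
(x,y)\mapsto f(x+y)-f(x)-f(y)
\]
factors through $W_1\subset W_\infty$ as a map of condensed sets $W_1\times W_1\to W_1$. Coordinatewise, Lemma~\ref{lem:entropyalmostlinear} bounds the $n$-th entry by $2\log 2(|x_n|+|y_n|)$. Hence on the piece where $x,y$ have $\ell^1$-norm $\leq c$, the difference lies in the compact piece of $W_1$ of norm $\leq 4c\log 2$. Continuity there holds because the subspace topology from $\prod[-c',c']$ agrees with the topology on that piece, which is a closed subset of the product. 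For scalars, $f(ax)-af(x)=(a\log|a|)\,x$ coordinatewise. This is a continuous map $\mathbb R\times W_1\to W_1$ (bounded for $a$ in compacts), so $\bar f(ax)=a\bar f(x)$. Since the quotient is a cokernel of condensed abelian groups and these identities hold as identities of maps of condensed sets, $\bar f$ is a map of condensed $\mathbb R$-vector spaces.

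The main point is nonvanishing. I expect the key obstacle to be that $W_\infty/W_1$ is not quasiseparated, so nonvanishing must be tested on $S$-points, not just on $\ast$-points. Indeed, on underlying sets $W_\infty(\ast)/W_1(\ast)=\ell^\infty/\ell^1$, and $f$ of an $\ell^1$ sequence need not be in $\ell^1$ (take $x_n=1/(n\log^2 n)$, for which $x_n\log|x_n|\sim -1/(n\log n)$ is not summable). So in fact already on $\ast$-points $\bar f(x)\neq 0$, since $W_1(\ast)=\ell^1$ as sets. To be safe I would verify the quotient value $(W_\infty/W_1)(\ast)=\ell^\infty/\ell^1$. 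This holds because the cokernel sheaf evaluated on the point is the cokernel of underlying groups, as $\ast$ is projective (extremally disconnected). So the specific sequence $x_n=1/((n+2)\log^2(n+2))$ gives an element of $W_1(\ast)$ whose image is nonzero, and the proof is complete.
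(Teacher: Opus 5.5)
Your proof is correct. The well-definedness, additivity and $\mathbb R$-linearity steps match the paper's: the defect $f(x+y)-f(x)-f(y)$ is controlled in $\ell^1$ by Lemma~\ref{lem:entropyalmostlinear}, uniformly on the compact pieces, and $f(ax)-af(x)=(a\log|a|)x$. Only your nonvanishing argument takes a genuinely different route.

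You exhibit one vector $x\in\ell^1$ with $f(x)\notin\ell^1$. You then use that the kernel of $W_\infty(\ast)\to(W_\infty/W_1)(\ast)$ is $W_1(\ast)=\ell^1$. This shows the map is already nonzero on underlying abstract groups $\ell^1\to\ell^\infty/\ell^1$.

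The paper instead uses the null-sequence $z_n=(\frac 1n,\ldots,\frac 1n,0,\ldots)$, i.e.\ an $(\mathbb N\cup\{\infty\})$-point of $W_1$. Each $f(z_n)$ is finitely supported, hence lies in $\ell^1$, so nothing is visible pointwise. The obstruction is that $||f(z_n)||_{\ell^1}=\log n$ is unbounded. Hence $f\circ z$ factors through no compact piece of $W_1$, so it is not in $W_1(\mathbb N\cup\{\infty\})$. By left exactness of sections, $W_1(\mathbb N\cup\{\infty\})$ is exactly the kernel of $W_\infty(S)\to(W_\infty/W_1)(S)$ for $S=\mathbb N\cup\{\infty\}$.

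Your route is more elementary and gives the stronger pointwise statement. It needs a specific infinite sequence and the convergence of $\sum\frac{1}{n\log^2 n}$ against the divergence of $\sum\frac{1}{n\log n}$. The paper's route uses only finitely supported vectors and is the same $\log n$ entropy computation that shows the Ribe extension is nonsplit. It thus presents the failure as one of uniform bounds, detected by the condensed structure.

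Your preliminary worry that non-quasiseparatedness forces testing on $S$-points is unnecessary. Non-quasiseparatedness only means that vanishing on $\ast$-points would not prove the map is zero. Nonvanishing on $\ast$-points always suffices, as you then show.
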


\begin{proof} Let us check first that it is a map of condensed abelian groups. For this, we need to see that two maps (of condensed sets) $W_1\times W_1\to W_\infty/W_1$ agree. Their difference is the projection along $W_\infty\to W_\infty/W_1$ of the map $W_1\times W_1\to W_\infty$ sending pairs of $(x_0,x_1,\ldots)$ and $(y_0,y_1,\ldots)$ to
\[
(x_0\log|x_0|+y_0\log|y_0|-(x_0+y_0)\log|x_0+y_0|,x_1\log|x_1|+y_1\log|y_1|-(x_1+y_1)\log|x_1+y_1|,\ldots),
\]
which lies in $W_1$ by Lemma~\ref{lem:entropyalmostlinear}. This gives additivity. One checks $\mathbb R$-linearity similarly.

To see that it is nonzero, it suffices to see that the image of $W_1$ under $f$ is not contained in the subspace $W_1\subset W_\infty$. But the sequence $z_n=(\tfrac 1n,\ldots,\tfrac 1n,0,\ldots)$ for varying $n$ (with $n$ occurences of $\tfrac 1n$) defines a map $\mathbb N\cup\{\infty\}\to W_1$, whose image under $f$ is the sequence $(-\frac{\log n}n,\ldots,-\frac{\log n}n,0,\ldots)$ which does not have bounded $\ell^1$-norm.
\end{proof}

Using this map, one can recover the extension $0\to W_1\to \widetilde{W_1}\to W_1\to 0$ as the pullback of the canonical extension $0\to W_1\to W_\infty\to W_\infty/W_1\to 0$.

Moreover, this finally answers the question at the beginning of this lecture: The map $\mathcal M(\mathbb N\cup \{\infty\})\to W_1\to W_\infty/W_1$ is zero when restricted to $\mathbb N\cup \{\infty\}$, so the cokernel $W_\infty/W_1$ of a map of Smith spaces does not satisfy the property that any map $S\to W_\infty/W_1$ extends uniquely to a map $\mathcal M(S)\to W_\infty/W_1$.

As a final topic of this lecture, we construct a kind of ``real analogue of $B_{\mathrm{dR}}^+$''. We note that the extension $\widetilde{W_1}$ is in fact a flat condensed $\mathbb R[\epsilon]$-module, where $\mathbb R[\epsilon] = \mathbb R[t]/t^2$, via letting $\epsilon$ map sequences $(x_0,x_1,\ldots,y_0,y_1,\ldots)$ to $(0,0,\ldots,x_0,x_1,\ldots)$.

One may wonder whether one can build an infinite self-extension of $W_1$'s, leading to a flat condensed $\mathbb R[[t]]$-module $W_{1,\mathbb R[[t]]}$ with $W_{1,\mathbb R[[t]]}\otimes_{\mathbb R[[t]]} \mathbb R=W_1$. Ideally, one would want to have a similar self-extension of all $\mathcal M(S)$, functorial in the profinite set $S$; as before, this does not actually work, but will work in an analogous setting as studied in the next lecture.

For the construction, consider the multiplicative map
\[
\mathbb R\to \mathbb R[[t]]
\]
sending $x\in \mathbb R$ to the series
\[
[x] := x|x|^t = x+x\log |x| t + \tfrac 12 x \log^2|x| t^2 + \ldots + \tfrac 1{n!} x \log^n |x| t^n + \ldots .
\]
Any element of $\mathbb R[t]/t^n$ can be written in a unique way in the form
\[
[x_0] + [x_1]t + \ldots + [x_{n-1}]t^{n-1}
\]
with $x_i\in \mathbb R$. For $n=2$, this writes $a+bt$ in the form $[a] + [b-a\log|a|]t$. For $n=3$, the formula is already rather complicated:
\[
a+bt+ct^2 = [a] + [b-a\log|a|]t + [c-(b-a\log|a|)\log|b-a\log|a||-\tfrac 12 a\log^2|a|]t^2.
\]
Thus much complexity is hidden in this isomorphism.

\begin{lemma} For any finite set $S$ and real number $c>0$, let
\[
(\mathbb R[t]/t^n)[S]_{\leq c} = \{(\sum_{i=0}^{n-1} [x_{i,s}]t^i)_s\in \mathbb R[t]/t^n[S]\mid \sum_{i,s} |x_{i,s}|\leq c\}.
\]
Then $(\mathbb R[t]/t^n)[S]_{\leq c}$ is stable under multiplication by $[-\epsilon,\epsilon]$ for some $\epsilon=\epsilon(n)$, and there is a constant $C=C(n)$ such that
\[
(\mathbb R[t]/t^n)[S]_{\leq c} + (\mathbb R[t]/t^n)[S]_{\leq c}\subset (\mathbb R[t]/t^n)[S]_{\leq C c}.
\]
\end{lemma}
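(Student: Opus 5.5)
Everything happens coordinatewise in $s\in S$, since both the norm $\sum_{i,s}|x_{i,s}|$ and the operations act separately on each coordinate. So I will work with a single copy of $\mathbb R[t]/t^n$ and the function $\|a\| := \sum_{i=0}^{n-1}|x_i|$, where $a=\sum_i [x_i]t^i$ is the unique Teichm\"uller-type expansion. Both claims then reduce to one-variable statements:
\begin{enumerate}
\item $\|\lambda a\|\leq \|a\|$ for $|\lambda|\leq \epsilon$;
\item $\|a+b\|\leq C(\|a\|+\|b\|)$.
\end{enumerate}
Summing over $s$ gives the lemma. I will also use the homogeneity $[\lambda x]=\lambda|\lambda|^t[x]$: multiplying $a$ by the unit $\lambda|\lambda|^t$ multiplies every $x_i$ by $\lambda$, so $\|\lambda|\lambda|^t a\| = |\lambda|\,\|a\|$. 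This is the analogue of the rescaling step in Lemma~\ref{lem:entropyalmostlinear}.

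\textbf{Additivity.} By this scaling invariance I may normalise $\|a\|+\|b\|=1$, so all $x_i,y_i$ lie in $[-1,1]$. The map sending the coordinates $(x_i),(y_i)$ to the coordinates $(z_i)$ of $a+b$ is given by explicit formulas obtained by iterating the $n=2$ case. For $n=2$:
\[
z_0=x_0+y_0,\qquad z_1 = x_1+y_1 + \bigl(x_0\log|x_0|+y_0\log|y_0|-(x_0+y_0)\log|x_0+y_0|\bigr).
\]
In general $z_i$ is $x_i+y_i$ plus a correction that is a polynomial expression in the lower coordinates and their iterated $u\log|u|$-type terms.

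The key point is that these formulas are continuous functions on the compact set $[-1,1]^{2n}$. Each building block $u\mapsto u\log^k|u|$ extends continuously by $0$ at $u=0$. Hence $\|a+b\|$ is bounded by some constant $C(n)$ on the normalised set. I will make this precise by induction on $n$. Write $a=[x_0]+ta'$ and $b=[y_0]+tb'$ with $a',b'\in \mathbb R[t]/t^{n-1}$. Then
\[
a+b = [x_0+y_0] + t\bigl(a'+b'+\delta\bigr),
\]
where $\delta = ([x_0]+[y_0]-[x_0+y_0])/t$. The element $\delta$ has coefficients
\[
\frac{1}{k!}\bigl(x_0\log^k|x_0|+y_0\log^k|y_0|-(x_0+y_0)\log^k|x_0+y_0|\bigr).
\]
These are continuous and homogeneous of degree $1$ in $(x_0,y_0)$, up to lower-order log terms that are also bounded by compactness. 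So $\|\delta\|$, computed in $\mathbb R[t]/t^{n-1}$ via the coordinate map, is bounded on the compact normalised set. Applying the induction hypothesis twice (to $a'+b'$, and then to adding $\delta$) gives the bound with $C(n)$ depending only on $n$.

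\textbf{Stability under small scalars.} For real $\lambda$ we have
\[
\lambda a = \lambda|\lambda|^t\cdot |\lambda|^{-t} a,
\]
so it suffices to bound multiplication by the unit $|\lambda|^{-t}=\sum_k (-\log|\lambda|)^k t^k/k!$. Using $\|t^k a\|=\|a\|$ and additivity, I get
\[
\|\lambda a\|\leq |\lambda|\cdot C'(n)\bigl(1+|\log|\lambda||\bigr)^{n-1}\|a\| .
\]
Here I bound multiplication of $[x]$ by a real constant $\mu$ via $[\mu x]$, with the same compactness argument. Since $|\lambda|(1+|\log|\lambda||)^{n-1}\to 0$ as $\lambda\to 0$, I can pick $\epsilon(n)$ so that this factor is $\leq 1$ for $|\lambda|\leq\epsilon$.

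\textbf{Main obstacle.} I expect the main obstacle to be honestly checking that the correction terms in the inductive step are bounded purely in terms of $\|a\|+\|b\|$ after normalisation. The issue is that the coordinate map involves nested logarithms, such as $\log|b-a\log|a||$. The point to verify is that each nested expression is of the form $u\cdot(\text{polylog in } u)$ with $u$ bounded, hence continuous on a compact set. Iterating the induction rather than writing closed formulas should keep this manageable.
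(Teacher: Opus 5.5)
Your additivity argument is correct and is the paper's: the Teichm\"uller coordinates of $a+b$ are continuous functions of the $x_i,y_i$, and multiplication by $[\lambda]$ rescales every coordinate by $\lambda$, so compactness of the normalised set yields $C(n)$. The induction on $n$ is unnecessary. Your ``main obstacle'' also disappears once you note that the coordinate map $\mathbb R[t]/t^n\to\mathbb R^n$ is continuous. It inverts the continuous bijection $(x_i)\mapsto\sum_i[x_i]t^i$ and is given recursively by $x_0=a_0$ and $a\mapsto(a-[a_0])/t\in\mathbb R[t]/t^{n-1}$, so no nested logarithm ever has to be examined.

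The gap is in the scalar part. After writing $\lambda a=[\lambda]\sum_k\frac{L^k}{k!}t^ka$ with $L=-\log|\lambda|$, you must bound multiplication by the real constants $L^k/k!$, which are unbounded as $\lambda\to0$.

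\begin{itemize}
\item Rewriting $\mu[x]$ via $[\mu x]$ only reintroduces the factor $|\mu|^{-t}$, which is the same problem again.
\item ``The same compactness argument'' bounds $\|\mu b\|/\|b\|$ only for $\mu$ in a bounded set. It gives no control of how this ratio grows in $\mu$.
\item That growth is not linear. In $\mathbb R[t]/t^2$ one has $\mu[x]=[\mu x]+[-\mu x\log|\mu|]t$, so $\|\mu[x]\|=|\mu x|(1+|\log|\mu||)$. This is exactly what enters the $k=1$ term when $n=3$.
\end{itemize}

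Your displayed estimate is still true. Along your route, however, you need an induction on $n$ with the stronger statement $\|\mu b\|\le C_m|\mu|(1+|\log|\mu||)^{m-1}\|b\|$ for all real $\mu\neq0$ and $b\in\mathbb R[t]/t^m$. For $k\ge1$ you apply it to the image of $a$ in $\mathbb R[t]/t^{n-k}$. Note also that $\|t^ka\|\le\|a\|$, not $=$.

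The paper leaves this case as an exercise. The natural proof, in the spirit of its argument for addition, is to apply compactness directly to $(\lambda,x)\mapsto\|\lambda\cdot\sum_i[x_i]t^i\|$. This function is continuous on $[-1,1]\times\{\sum_i|x_i|=1\}$ and vanishes at $\lambda=0$, hence is $\le1$ for $|\lambda|\le\epsilon$. Real scalars commute with $[c]$, so writing $a=[c]a_1$ with $c=\|a\|$ and $\|a_1\|=1$ gives $\|\lambda a\|\le\|a\|$ for all $a$ and all $|\lambda|\le\epsilon$.
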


\begin{proof} Let us explain the case of addition; we leave the case of scalar multiplication as an exercise. In the given coordinates, there are functions $S_0,\ldots,S_{n-1}$, where $S_i$ is a continuous function of $x_0,\ldots,x_i,y_0,\ldots,y_i$, such that
\[
[x_0]+[x_1]t+\ldots+[x_{n-1}]t^{n-1} + [y_0]+[y_1]t+\ldots+[y_{n-1}]t^{n-1} = [S_0]+[S_1]t+\ldots+[S_{n-1}]t^{n-1}\in \mathbb R[t]/t^n.
\]
Using that multiplication by $[\lambda]$ for any $\lambda\in \mathbb R$ commutes with all operations, one sees that
\[
S_i(\lambda x_0,\ldots,\lambda x_i,\lambda y_0,\ldots,\lambda y_i) = \lambda S_i(x_0,\ldots,x_i,y_0,\ldots,y_i).
\]
This implies that
\[
S_i(x_0,\ldots,x_i,y_0,\ldots,y_i)\leq C_i (|x_0|+\ldots+|x_i|+|y_0|+\ldots+|y_i|)
\]
by taking $C_i>0$ to be the maximum absolute value taken by $S_i$ on the compact set where all $x_j,y_j\in [-1,1]$ and at least one of them is $\pm 1$. This estimate easily implies the desired stability under addition.
\end{proof}

Ideally, one would like to use the lemma to see that for any profinite set $S=\varprojlim_i S_i$, one can define a condensed $\mathbb R[t]/t^n$-module
\[
\mathcal M(S,\mathbb R[t]/t^n) := \bigcup_{c>0} \varprojlim_i (\mathbb R[t]/t^n)[S_i]_{\leq c}.
\]
However, as before this construction does not work because the transition maps do not preserve the desired bounds. However, one can build the corresponding infinite self-extension of $W_1$. We believe this is related to the ``complex interpolation of Banach spaces'' as studied for example in \cite{ComplexInterpolationBanach}, \cite{ComplexInterpolationHilbert}.

\newpage

\section{Lecture VI: Statement of main result}

In the last lecture, we saw that a naive adaptation of solid abelian groups to the case of the real numbers, using the spaces $\mathcal M(S)$ of signed Radon measures, does not work; but its failure gives rise to interesting phenomena.

In this lecture, however, we want to find a variant that does work. We want to find a full subcategory of condensed $\mathbb R$-vector spaces that is abelian, stable under kernels, cokernels, and extensions, and contains all $\mathcal M$-complete objects (but is still enforcing a nontrivial condition). This forces us to include also all the extensions constructed in the previous lecture. These are not $\mathcal M$-complete; in the setting of extensions of Banach spaces, these extensions are not locally convex anymore.

The following weakening of convexity has been studied in the Banach space literature.

\begin{definition}\label{def:pbanach} For $0<p\leq 1$, a $p$-Banach space is a topological $\mathbb R$-vector space $V$ such that there exists a $p$-norm, i.e.~a continuous map
\[
||\cdot||: V\to \mathbb R_{\geq 0}
\]
with the following properties:
\begin{enumerate}
\item For any $v\in V$, the norm $||v||=0$ if and only if $v=0$;
\item For all $v\in V$ and $a\in \mathbb R$, one has $||av|| = |a|^p ||v||$;
\item For all $v,w\in V$, one has $||v+w||\leq ||v||+||w||$;
\item The sets $\{v\in V\mid ||v||<\epsilon\}$ for varying $\epsilon\in \mathbb R_{>0}$ define a basis of open neighborhoods of $0$;
\item For any sequence $v_0,v_1,\ldots\in V$ with $||v_i-v_j||\to 0$ as $i,j\to\infty$, there exists a (necessarily unique) $v\in V$ with $||v-v_i||\to 0$.
\end{enumerate}
\end{definition}

Thus, the only difference is in the scaling behaviour. We note that $V$ is a $p$-Banach then it is a $p'$-Banach for all $p'\leq p$, as if $||\cdot||$ is a $p$-norm, then $||\cdot||^{p'/p}$ is a $p'$-norm. A quasi-Banach space is a topological $\mathbb R$-vector space that is a $p$-Banach for some $p>0$.

A key result is the following.

\begin{theorem}[\cite{KaltonConvexityType}]\label{thm:pBanachExt} Any extension of $p$-Banach spaces is a $p'$-Banach for all $p'<p$.
\end{theorem}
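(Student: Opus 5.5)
We must show: given $0\to V_1\to V\to V_2\to 0$ with $V_1,V_2$ $p$-Banach, $V$ is $p'$-Banach for every $p'<p$. The plan follows Kalton's route: first produce a quasi-norm on $V$, then use a Aoki--Rolewicz type convexification to turn it into a genuine $p'$-norm.

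\textbf{Step 1: a quasi-linear section.} By the open mapping theorem for complete metrizable vector spaces, $V\to V_2$ is open. Hence there is a homogeneous, not necessarily linear, section $s: V_2\to V$ that is bounded on the unit ball. We define it on a Hamel basis of rays: choose $s$ on the unit $p$-sphere and extend by $s(av)=as(v)$.

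For $v,w\in V_2$, the element $s(v+w)-s(v)-s(w)$ lies in $V_1$. Put
\[
F(v,w)=s(v+w)-s(v)-s(w).
\]
The key estimate is
\[
\|F(v,w)\|_1\le K(\|v\|_2+\|w\|_2)
\]
for some constant $K$. By homogeneity, together with $|a|^p$ scaling of both $p$-norms, it suffices to check this when $\|v\|_2+\|w\|_2\le 1$. There $F$ takes values in a bounded subset of $V$ intersected with $V_1$. Since $V_1$ carries the subspace topology, which is given by its own $p$-norm, this set is bounded in $V_1$. So $V$ is identified with $V_1\oplus V_2$ as sets, with twisted addition governed by $F$.

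\textbf{Step 2: a quasi-norm.} Set
\[
\|(x,v)\|=\|x-s(v)\|_1+\|v\|_2.
\]
This is $p$-homogeneous and defines the topology of $V$. By Step 1 it satisfies the quasi-triangle inequality with constant $1+K$, so $V$ is quasi-Banach; completeness comes from the extension. The missing ingredient is a constant-$1$ triangle inequality for some power.

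\textbf{Step 3: convexification (main obstacle).} The usual Aoki--Rolewicz theorem only yields some unspecified exponent, which is not enough. The sharp input is Kalton's refinement: the quasi-linear map $F$ satisfies a stronger bound. For finitely many $v_1,\dots,v_n$,
\[
\Bigl\|s\Bigl(\sum v_i\Bigr)-\sum s(v_i)\Bigr\|_1\le K'\sum_i \|v_i\|_2\,\bigl(1+\log(\cdots)\bigr),
\]
a logarithmic loss exactly as in the entropy example. I would prove this by induction on dyadic splitting. Pair up the terms, apply the two-term estimate at each of the $\log_2 n$ levels, and use the $p$-subadditivity of $\|\cdot\|_1$ to sum the errors. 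Writing $t_i=\|v_i\|_2/\sum_j\|v_j\|_2$, this yields a bound of the form $C\sum_i \|v_i\|_2\,(1+|\log t_i|)$.

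Then define the candidate $p'$-norm
\[
N(u)=\inf\Bigl\{\sum_i \|u_i\|^{p'/p} : u=\sum u_i\Bigr\}.
\]
This is automatically $p'$-subadditive and $p'$-homogeneous. The key point is that $N$ is equivalent to $\|\cdot\|^{p'/p}$, i.e.\ the infimum does not collapse to zero. To show this, apply the logarithmic estimate to a decomposition $u=\sum u_i$. The loss factor $1+|\log t_i|$ is dominated by $C_{\delta}\,t_i^{-\delta}$ for $\delta$ with $(1-\delta)p=p'$. This gives
\[
\|u\|\le C\Bigl(\sum \|u_i\|^{p'/p}\Bigr)^{p/p'},
\]
via Hölder. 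The exponent bookkeeping in this step, i.e.\ absorbing the logarithm into an arbitrarily small power loss, is the delicate part. It is also why the argument only reaches $p'<p$, consistent with the entropy extension, where $p'=p=1$ fails.
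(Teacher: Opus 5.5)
The paper does not prove this theorem; it quotes it from Kalton. Your overall strategy is the standard one: a bounded homogeneous section, the quasi-linear map $F$, a logarithmic estimate for $n$-fold sums, and convexification. Step 2 is fine if you read the quasi-norm as $\|y\|=\|y-s(\pi y)\|_1+\|\pi y\|_2$ for $y\in V$, where $\pi: V\to V_2$ is the projection.

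\textbf{The gap is in how you obtain the weighted estimate in Step 3.} Pairing terms in a balanced binary tree puts every leaf at depth about $\log_2 n$, however small $t_i$ is. It therefore only gives
\[
\Bigl\|s\Bigl(\sum_{i=1}^n v_i\Bigr)-\sum_{i=1}^n s(v_i)\Bigr\|_1\le K\lceil\log_2 n\rceil\sum_i\|v_i\|_2 .
\]
This is strictly weaker than $C\sum_i\|v_i\|_2(1+|\log t_i|)$, and it does not suffice for your lower bound on $N$. Take $u=u_1+\dots+u_n$ with $\|u_1\|=1$ and $\|u_i\|=\epsilon$ for $i\ge 2$, where $(n-1)\epsilon^{p'/p}\le 1$. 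Then $(\sum_i\|u_i\|^{p'/p})^{p/p'}$ stays bounded, while the bound you would derive for $\|u\|$ is of size $K\log_2 n\to\infty$.

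What you need is a tree adapted to the masses. By Kraft's inequality, $\sum_i 2^{-\lceil\log_2(1/t_i)\rceil}\le\sum_i t_i=1$, so there is a full binary tree with leaf $i$ at depth at most $\lceil\log_2(1/t_i)\rceil$. Write $s(\sum v_i)-\sum s(v_i)$ as a telescoping sum $\sum_\nu F(\sigma_{\nu,L},\sigma_{\nu,R})$ over internal nodes $\nu$, where $\sigma_{\nu,L},\sigma_{\nu,R}$ are the partial sums below the two children. Using $p$-subadditivity of both norms, this gives the bound $K\sum_i\|v_i\|_2\,\mathrm{depth}(i)\le K\sum_i\|v_i\|_2(1+\log_2(1/t_i))$. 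Alternatively, group the $v_i$ into dyadic size classes, use balanced trees inside each class, and merge the classes from smallest to largest.

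With this fix your convexification goes through, and no H\"older is needed. Put $r=p'/p$ and $\sum_i t_i=1$. Then $\sum t_i\log(1/t_i)\le C_\delta\sum t_i^{r}\le C_\delta(\sum t_i^{r})^{1/r}$, since $\sum t_i^r\ge 1$. Finish using $\|v_i\|_2\le\|u_i\|$.

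\textbf{A smaller omission is in Step 1.} Openness of $V\to V_2$ does not by itself give a section bounded on the unit ball. You need a bounded neighborhood of $0$ in $V$, i.e.\ that $V$ is locally bounded. The same fact is used when you claim the quasi-norm defines the topology of $V$, via continuity of $s$ at $0$. It follows from a short three-space argument:
\begin{enumerate}
\item Pick a balanced neighborhood $U$ of $0$ with $(U+U)\cap V_1$ bounded in $V_1$ and $\pi(U)$ bounded in $V_2$.
\item Given any neighborhood $W$, choose a balanced $W_1$ with $W_1+W_1\subset W$.
\item Choose $t\le 1$ with $t\pi(U)\subset\pi(W_1\cap U)$, and $\lambda\le 1$ with $\lambda\bigl((U+U)\cap V_1\bigr)\subset W_1$.
\item For $u\in U$, write $tu=w+x$ with $w\in W_1\cap U$ and $x\in(U+U)\cap V_1$. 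Then $\lambda tu=\lambda w+\lambda x\in W$, so $\lambda tU\subset W$.
\end{enumerate}
Also, for homogeneity, choose $s$ at one point of each pair $\pm v$ on the unit sphere and set $s(-v)=-s(v)$.
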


This suggests that the extension problems we encountered disappear if instead of convexity, we ask for $p$-convexity\footnote{Note that $p$-convexity for $p<1$ is weaker than convexity, and might also be called concavity.} for all $p<1$. Here $p$-convexity is the assertion that for all $a_1,\ldots,a_n\in \mathbb R$ with $|a_1|^p+\ldots+|a_n|^p\leq 1$, the ball $\{v\in V\mid ||v||<\epsilon\}$ is stable under $(v_1,\ldots,v_n)\mapsto a_1v_1+\ldots+a_nv_n$.

Let us do the obvious translation into the condensed world.

\begin{definition} Let $0<p\leq 1$ be a real number. For any finite set $S$ and real number $c>0$, let
\[
\mathbb R[S]_{\ell^p\leq c} = \{(a_s)_s\in \mathbb R[S]\mid \sum_{s\in S} |a_s|^p\leq c\},
\]
and for a profinite set $S=\varprojlim_i S_i$, let
\[
\mathcal M_p(S) = \bigcup_{c>0} \varprojlim_i \mathbb R[S_i]_{\ell^p\leq c}.
\]
\end{definition}

For any fixed value of $p$, we run into the same problem with non-split self-extensions of $\mathcal M_p(S)$ as in the last lecture, except that for $p<1$ things are better-behaved, in that one can define these self-extensions functorially in the profinite set $S$: For any finite set $S$, and $n\geq 1$, one can define
\[
(\mathbb R[T]/T^n)[S]_{\ell^p\leq c} := \{(\sum_{i=0}^{n-1} [x_{i,s}]T^i)_s\mid \sum_{i,s} |x_{i,s}|^p\leq c\}.
\]
Then there is a constant $C=C(n,p)<\infty$ such that for any map of finite sets $S\to S'$, the image of $(\mathbb R[T]/T^n)[S]_{\ell^p\leq c}$ is contained in $(\mathbb R[T]/T^n)[S']_{\ell^p\leq Cc}$, by Proposition~\ref{prop:stableimage} below. In particular, the subset $(\mathbb R[T]/T^n)[S]_{\ell^p\leq c,\mathrm{stable}}\subset (\mathbb R[T]/T^n)[S]_{\ell^p\leq c}$ of all elements whose images under any map $S\to S'$ are still in the $\ell^p\leq c$-subspace, is cofinal with the full space, and now is compatible with transition maps. In other words, for a profinite set $S=\varprojlim_i S_i$, we can now functorially define
\[
\mathcal M_p(S,\mathbb R[T]/T^n) = \bigcup_{c>0} \varprojlim_i (\mathbb R[T]/T^n)[S_i]_{\ell^p\leq c,\mathrm{stable}}
\]
and thus, in the limit over $n$, define a $B_{\mathrm{dR}}^+$-theory for any $p<1$. As we will not have direct use for it (and, just like $\mathcal M_p$ itself, it does not define an abelian category of modules), we will not go into more details about this. We will just note that ``the'' real numbers are not well-defined anymore: To define a category of modules over $\mathbb R$, one needs to specify in addition $0<p<1$; and in some sense an infinitesimal deformation of $p$ gives rise to $B_{\mathrm{dR}}^+$, so there is in some sense a $1$-parameter family of versions of the real numbers. We will make this picture more precise later.

We used the following proposition:

\begin{proposition}\label{prop:stableimage} There is a constant $C=C(n,p)<\infty$ such that for all maps $f: S\to S'$ of finite sets,
\[
f((\mathbb R[T]/T^n)[S]_{\ell^p\leq c})\subset (\mathbb R[T]/T^n)[S']_{\ell^p\leq Cc}.
\]
\end{proposition}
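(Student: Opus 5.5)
The plan is to reduce to the case where $S'$ is a point. After that, the statement becomes a uniform bound on sums of arbitrarily many elements of $\mathbb R[T]/T^n$, and I will prove that bound by a homogeneity-plus-compactness argument. The point where $p<1$ enters is that $|x|\log^l|x|$ is dominated by $|x|^p$ near $0$.

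\textbf{Reduction to a point.} For $f: S\to S'$, the image of $(\sum_i [x_{i,s}]T^i)_s$ has, at $s'\in S'$, the component $\sum_{s\in f^{-1}(s')} \sum_i [x_{i,s}]T^i$. The $\ell^p$-quantity on $S'$ is the sum over $s'$ of the corresponding quantity for each fibre. So it suffices to prove the following. For any finite collection of elements $a_j=\sum_{i=0}^{n-1}[x_{i,j}]T^i$, $j=1,\ldots,m$, write $\sum_j a_j=\sum_i [y_i]T^i$. Then
\[
\sum_{i=0}^{n-1}|y_i|^p\leq C\sum_{i,j}|x_{i,j}|^p ,
\]
with $C=C(n,p)$ independent of $m$.

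\textbf{Normalization by homogeneity.} Multiplication by $[\lambda]$ is a ring automorphism-compatible operation: it sends $\sum_i [x_i]T^i$ to $\sum_i[\lambda x_i]T^i$, because $[\cdot]$ is multiplicative. It commutes with addition, and it multiplies both sides of the inequality by $|\lambda|^p$. Hence we may assume $\sum_{i,j}|x_{i,j}|^p=1$ (the case where this sum is $0$ being trivial). In particular all $|x_{i,j}|\leq 1$, and it suffices to show that all $|y_i|$ are bounded by a constant depending only on $n,p$.

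\textbf{Bounding the ordinary coefficients.} I expand in the usual basis $1,T,\ldots,T^{n-1}$. Since $[x]=\sum_l \frac1{l!}x\log^l|x|\,T^l$, the coefficient of $T^k$ in $\sum_j a_j$ is
\[
\sum_{i+l=k}\frac{1}{l!}\sum_j x_{i,j}\log^l|x_{i,j}| .
\]
For $|x|\leq 1$ and $0\leq l<n$, the function $|x|^{1-p}|\log|x||^l$ is bounded on $[-1,1]$; this is where $p<1$ is essential. So $|x\log^l|x||\leq D(n,p)|x|^p$, and each ordinary coefficient of $\sum_j a_j$ is bounded by $D'(n,p)\sum_{i,j}|x_{i,j}|^p=D'$, uniformly in $m$.

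\textbf{Back to Teichmüller coordinates.} The map from ordinary coefficients $(b_0,\ldots,b_{n-1})$ to coordinates $(y_0,\ldots,y_{n-1})$ with $\sum b_kT^k=\sum[y_i]T^i$ is a continuous map $\mathbb R^n\to\mathbb R^n$. Indeed, it is built recursively: $y_0=b_0$, and then we subtract $[y_0]$ (continuous in $y_0$, using $0\log 0=0$), divide by $T$, and iterate, as in the explicit formulas for $n=2,3$ above. By compactness it is therefore bounded on the cube $[-D',D']^n$. This bounds each $|y_i|$, hence $\sum_i|y_i|^p\leq C(n,p)$, as required.

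\textbf{Expected obstacle.} The only delicate point is obtaining a bound independent of the number $m$ of summands. Iterating a two-term estimate $\|a+b\|\leq C(\|a\|+\|b\|)$ would lose a factor for each summand, which is exactly the failure seen for $p=1$ in the previous lecture. Passing through the ordinary coefficients, together with the estimate $|x\log^l|x||\lesssim|x|^p$, is designed to avoid this loss.
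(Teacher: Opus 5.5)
Your proof is correct and follows essentially the same route as the paper: reduce to $S'$ a point, normalize by multiplying by $[\lambda]$, and bound the ordinary coefficients of the sum using $|x\log^l|x||\lesssim |x|^p$ for $|x|\leq 1$, which is where $p<1$ is used. The only differences are cosmetic. You expand every $[x_{i,j}]T^i$ in the monomial basis at once, whereas the paper inducts on $n$ to handle the terms with $i\geq 1$. You also spell out the continuity-and-compactness step that turns a bounded subset of $\mathbb R[T]/T^n$ back into a bound on the Teichm\"uller coordinates, which the paper leaves implicit.
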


\begin{proof} Decomposing into fibres over $S'$, we can assume that $S'$ is a point. Note that multiplication by $[\lambda]$ for $\lambda>0$ maps the $\ell^p\leq c$-subspace isomorphically to the $\ell^p\leq \lambda^p c$-subspace. Thus, by rescaling, we can assume that $c=1$. It remains to see that there is some bounded subset $B\subset \mathbb R[T]/T^n$ such that for all finite sets $S$ and all $(\sum_{i=0}^{n-1} [x_{i,s}]T^i)_s\in (\mathbb R[T]/T^n)[S]_{\ell^p\leq 1}$, one has
\[
\sum_{s\in S} \sum_{i=0}^{n-1} [x_{i,s}]T^i\in B\subset \mathbb R[T]/T^n.
\]
By induction, such a statement holds true for the sum over $i=1,\ldots,n-1$. Thus, it suffices to see that for all integers $m=|S|$ and all $x_1,\ldots,x_m$ with $\sum_{j=1}^m |x_j|^p\leq 1$, one has
\[
\sum_{j=1}^m [x_j]\in B'\subset \mathbb R[T]/T^n
\]
for some bounded subset $B'$. In other words, we need to see that for all $i=0,\ldots,n-1$, one has
\[
\sum_{j=1}^m x_j\log^i|x_j|\leq C_i
\]
for some constant $C_i$. But for all sufficiently small $x$, one has $\log^i |x|\leq |x|^{p-1}$, so neglecting large $x_j$ (which can only give a bounded contribution), one has $\sum_{j=1}^m x_j\log^i|x_j|\leq \sum_{j=1}^m |x_j|^p\leq 1$.
\end{proof}

Without further ado, let us now state the first main theorem of this course. Define
\[
\mathcal M_{<p}(S) = \varinjlim_{p'<p} \mathcal M_{p'}(S).
\]

\begin{theorem}\label{thm:pliquid} Fix any $0<p\leq 1$. For a condensed $\mathbb R$-vector space $V$, the following conditions are equivalent.
\begin{enumerate}
\item For all $p'<p$ and all maps $f: S\to V$ from a profinite set $S$, there is a unique extension to a map
\[
\tilde{f}_{p'}: \mathcal M_{p'}(S)\to V
\]
of condensed $\mathbb R$-vector spaces.
\item For all maps $f: S\to V$ from a profinite set $S$, there is a unique extension to a map
\[
\tilde{f}_{<p}: \mathcal M_{<p}(S)\to V
\]
of condensed $\mathbb R$-vector spaces.
\item One can write $V$ as the cokernel of a map
\[
\bigoplus_i \mathcal M_{<p}(S_i)\to \bigoplus_j \mathcal M_{<p}(S_j').
\]
\end{enumerate}

If $V$ satisfies these conditions, we say that $V$ is $p$-liquid. The class $\mathrm{Liq}_p(\mathbb R)$ of $p$-liquid $\mathbb R$-vector spaces is an abelian subcategory stable under all kernels, cokernels, and extensions. It is generated by the compact projective objects $\mathcal M_{<p}(S)$, $S$ extremally disconnected. Moreover:
\begin{enumerate}
\item The inclusion $\mathrm{Liq}_p(\mathbb R)\to \Cond(\mathbb R)$ has a left adjoint $V\mapsto V^{\mathrm{liq}}_p$, and there is a (necessarily unique) symmetric monoidal tensor product $\otimes^{\mathrm{liq}}_p$ on $\mathrm{Liq}_p(\mathbb R)$ making $V\mapsto V^{\mathrm{liq}}_p$ symmetric monoidal.
\item The functor $D(\mathrm{Liq}_p(\mathbb R))\to D(\Cond(\mathbb R))$ is fully faithful, and admits a left adjoint, which is the left derived functor $C\mapsto C^{L\mathrm{liq}}_p$ of $V\mapsto V^{\mathrm{liq}}_p$. There is a unique symmetric monoidal tensor product $\otimes^{L\mathrm{liq}}_p$ on $D(\mathrm{Liq}_p(\mathbb R))$ making $C\mapsto C^{L\mathrm{liq}}_p$ symmetric monoidal; it is the left derived functor of $\otimes^{\mathrm{liq}}_p$.
\item An object $C\in D(\Cond(\mathbb R))$ lies in $D(\mathrm{Liq}_p(\mathbb R))$ if and only if all $H^i(C)\in \mathrm{Liq}_p(\mathbb R)$; in that case, for all profinite sets $S$ and all $p'<p$, one has
\[
R\intHom_{\mathbb R}(\mathcal M_{p'}(S),C) = R\intHom_{\mathbb R}(\mathbb R[S],C).
\]
\end{enumerate}
\end{theorem}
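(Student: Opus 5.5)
The whole theorem should follow the template of the solid case (Theorem~\ref{thm:solid}) once one analytic input is in hand. That input is the key vanishing statement: for all profinite $S$, extremally disconnected $T$, and $0<p'<p\leq 1$, the map
\[
R\intHom_{\mathbb R}(\mathcal M_{p'}(S),\mathcal M_{<p}(T))\to R\intHom_{\mathbb R}(\mathbb R[S],\mathcal M_{<p}(T))
\]
should be an isomorphism. Equivalently, the cone of $\mathbb R[S]\to\mathcal M_{p'}(S)$ should be right-orthogonal, in all degrees, to the objects $\mathcal M_{<p}(T)$. I would first prove this after replacing $\mathbb R$ by $\mathbb Z$ and measures by their integral, norm-bounded analogues, and then invert $p$-adic-type scalars.

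The reduction from $\mathbb R$ to $\mathbb Z$ runs as follows. Write $\mathbb R[S]$ as a colimit built from $\mathbb Z[S]$. Write $\mathcal M_{p'}(S)$ as a colimit over $c$ of the compact Hausdorff spaces $\mathcal M_{p'}(S)_{\leq c}$. Then resolve each such compact Hausdorff space by the Breen--Deligne resolution, i.e.\ by functorial sums of $\mathbb Z[\mathcal M_{\leq c}^{n}]$. The statement becomes a bound on the cohomology of explicit complexes $\mathrm{Hom}(\mathbb Z[\mathcal M_{p'}(S)_{\leq c}^{n}],\mathcal M_{p''}(T)_{\leq c'})$ with $p'<p''<p$. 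Since $\mathcal M_{p''}(T)$ is profinite at each norm level, these are limits of finite-dimensional $\ell^{p''}$-normed complexes over finite levels $S_i$.

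The core step, and the main obstacle, is a uniform ``almost exactness'' estimate for these normed complexes. Every cocycle of norm $\leq 1$ should be the coboundary of a cochain of norm $\leq C$, with $C$ independent of the finite stage $S_i$, up to passing to a slightly smaller $p$ and a larger radius. Such a uniform bound should be enough to pass to the limit over $i$ and deduce exactness of the condensed complexes. I expect this to need an inductive argument on the homological degree, controlling constants with the gap $p'<p$. The gap is where strictness of the limit $\mathcal M_{<p}$ enters; Proposition~\ref{prop:stableimage}-type estimates show that some slack in $p$ is unavoidable. I would first try to prove it by reduction to $S$ finite and the case of $\ell^{p}$-sums of points, imitating the proof that $\mathbb Z$ is solid. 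I expect a genuinely hard combinatorial-analytic argument here.

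Given the key vanishing, the rest is formal. First, (1)$\Leftrightarrow$(2) holds because $\mathcal M_{<p}(S)=\varinjlim_{p'<p}\mathcal M_{p'}(S)$ and maps out of a colimit are a limit. Next, the objects $\mathcal M_{<p}(T)$ for extremally disconnected $T$ satisfy (2), and the vanishing gives $\mathrm{Ext}^i$-compatibility. Hence the full subcategory of $C\in D(\Cond(\mathbb R))$ with $R\intHom(\mathcal M_{p'}(S),C)\cong R\intHom(\mathbb R[S],C)$ for all $p'<p$ is closed under limits, colimits, and cones. It contains the $\mathcal M_{<p}(T)$, so by the usual argument it consists of complexes whose cohomology lies in $\mathrm{Liq}_p(\mathbb R)$. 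The remaining assertions then follow as in the solid case. These are: closure of $\mathrm{Liq}_p(\mathbb R)$ under kernels, cokernels and extensions; the equivalence with (3), using compact projective generators $\mathcal M_{<p}(T)$ and a resolution obtained by applying (2) to a surjection $\bigoplus\mathbb R[T_j]\to V$; the left adjoint, defined by $\mathbb R[S]\mapsto\mathcal M_{<p}(S)$ and extended by colimits; the fully faithful derived embedding with left adjoint $L\mathrm{liq}_p$; and the tensor product, determined by $\mathcal M_{<p}(S)\otimes\mathcal M_{<p}(S')=\mathcal M_{<p}(S\times S')$.
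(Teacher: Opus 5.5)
Your formal reduction does not go through as stated. The analytic input you isolate concerns only the targets $\mathcal M_{<p}(T)$. The ``usual argument'' (the criterion of \cite[Lemma 5.9]{Condensed} used in the paper) needs the comparison $R\intHom_{\mathbb R}(\mathcal M_{p'}(S),K)\simeq R\intHom_{\mathbb R}(\mathbb R[S],K)$ for every kernel $K$ of a map $g:\bigoplus_i\mathcal M_{<p}(S_i)\to\bigoplus_j\mathcal M_{<p}(S_j')$; this is exactly Theorem~\ref{thm:keyreal}. Knowing the comparison for the source and target of $g$ only gives it for $\mathrm{fib}(g)$, whose cohomology is $\ker g$ in degree $0$ and $\mathrm{coker}\,g$ in degree $1$. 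Nothing formal separates these two, because your class of good complexes is not a priori closed under truncation. So the claim that this class consists of the complexes with cohomology in $\mathrm{Liq}_p(\mathbb R)$ is precisely what has to be proved.

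Two further steps you call formal are not. Closure under direct sums needs pseudocoherence of $\mathcal M_{p'}(S)$, which is nontrivial over $\mathbb R$. And (2)$\Rightarrow$(1) needs injectivity of $\Hom(\mathcal M_{p'}(S),V)\to V(S)$, which does not follow from the limit over $p'$ since $V$ need not be quasiseparated.

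In the paper the kernels are a genuine step. Pseudocoherence over $\mathbb Z[T^{-1}]$ (Proposition~\ref{prop:liquidpseudocoh}) reduces to a single map $\mathcal M(S_1,\mathbb Z((T))_{>r})\to\mathcal M(S_2,\mathbb Z((T))_{>r})$, and a triangle reduces to its image. The image is expressed through its finite-level images, which are free by the PID property (Proposition~\ref{prop:imagefree}). After shrinking the radius these are replaced by Banach completions, leading to Theorem~\ref{thm:banachcomplete}.

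The analytic route you sketch rests on a false premise and misses the key idea. The spaces $\mathcal M_{p''}(T)_{\leq c}$ and $\mathcal M_{p'}(S)_{\leq c}$ are compact Hausdorff, not profinite. So the Breen--Deligne terms $\mathbb Z[\mathcal M_{p'}(S)^n_{\leq c}]$ lead to $R\Gamma(\mathcal M_{p'}(S)^n_{\leq c},V)$, the higher cohomology of infinite-dimensional compact Hausdorff spaces with non-locally-convex coefficients. That is exactly what cannot be controlled, since the partition-of-unity argument behind Theorem~\ref{thm:pbanachcohom} needs convexity. The problem therefore does not reduce to limits of finite-dimensional normed complexes. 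Passing to integral measures does not help either: integer-valued measures of bounded $\ell^p$-norm are just elements of $\mathbb Z[S]$, and no localization of $\mathbb Z$ recovers $\mathbb R$.

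What is missing is the ring $\mathbb Z((T))_r$ over the base $\mathbb Z[T^{-1}]$. Its norm balls are profinite, and $\mathcal M(S,\mathbb Z((T))_r)/(f_{r'})\cong\mathcal M_p(S)$ for $(r')^p=r$ (Theorem~\ref{thm:Ainflike}). The theorem then follows from Theorem~\ref{thm:key} by reducing modulo $2-T^{-1}$ and changing the base ring to $\mathbb R$ (Proposition~\ref{prop:normalizeanalyticring}).

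Finally, your proposal only announces the core estimate. The paper's mechanism consists of:
\begin{itemize}
\item the $Q$-complex homotopy between adding internally and adding externally;
\item the splitting Lemma~\ref{lem:key};
\item the strengthening to polyhedral lattices, so that the induction on the degree closes;
\item Proposition~\ref{prop:key}.
\end{itemize}
Throughout, the gap between $r$ on the target (where $\|Tv\|=r\|v\|$) and $r'$ on the source produces the contraction $r^bN\leq\epsilon$.
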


In other words, one gets an extremely well-behaved theory of $p$-liquid $\mathbb R$-vector spaces. In the language of last semester, the present theory defines an analytic ring \cite[Definition 7.4, Proposition 7.5]{Condensed}. In fact, by \cite[Lemma 5.9, Remark 5.11]{Condensed}, all we need to prove is the following result.

\begin{theorem}\label{thm:keyreal} Let $0<p\leq 1$, let
\[
g: \bigoplus_i \mathcal M_{<p}(S_i)\to \bigoplus_j \mathcal M_{<p}(S_j')
\]
be a map of condensed $\mathbb R$-vector spaces, where all $S_i$ and $S_j'$ are extremally disconnected, and let $V$ be the kernel of $g$. Then for any $p'<p$ and any profinite set $S$, the map
\[
R\intHom_{\mathbb R}(\mathcal M_{p'}(S),V)\to R\intHom_{\mathbb R}(\mathbb R[S],V)
\]
is a quasi-isomorphism in $D(\Cond(\Ab))$.
\end{theorem}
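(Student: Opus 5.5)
This is the Clausen--Scholze liquid main theorem. A complete proof needs a long, delicate argument, so the following is a strategy for it rather than a proof.

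\emph{Reduction to a finite-level statement.} I would dualize and reduce to a quantitative statement about profinite sets and finite-dimensional pieces. Both sides commute with the filtered colimits involved. Writing $\mathcal M_{<p}(S_i)$ as $\varinjlim_{p''<p}\mathcal M_{p''}(S_i)$, and each $\mathcal M_{p''}(S)_{\leq c}$ as a profinite limit of compact pieces, the kernel $V$ becomes a filtered colimit of kernels of maps between (sums of) spaces $\mathcal M_{p''}(S)$. Since $R\intHom(\mathbb R[S],-)$ just evaluates on $S$, and $\mathcal M_{p'}(S)=\bigcup_c\varprojlim_i \mathbb R[S_i]_{\ell^{p'}\leq c}$, the claim says that $\mathbb R[S]\to\mathcal M_{p'}(S)$ induces an isomorphism on $\Ext^\ast(-,V)$. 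I would first reduce to $S$ extremally disconnected, or even to computing against the compact Hausdorff pieces. I would then replace $\mathbb R$-linearity by $\mathbb Z$-linearity, together with a lattice $\mathbb Z((T))_r$-type description of $\mathbb R$: $\mathbb R$ is a quotient of $\mathbb Z((T))_{r}$ by $T=\tfrac12$, and $\ell^p$-bounded measures correspond to power series with $|a_n|r^n$ summable, where $r=2^{-p}$. This turns the problem into one about condensed modules over $\mathbb Z((T))_{>r}$, where $\mathcal M_{p'}(S)$ becomes a bounded-coefficient, solid-like object, and the $\ell^{p}$-convexity replaces absolute values by an ultrametric-like estimate.

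\emph{Key claim.} The central step is a statement of the form $\Ext^i(\mathcal M_{<p}(S),\mathcal M_{p'}(S'))$ versus $\Ext^i(\mathbb Z[S],\mathcal M_{p'}(S'))$, in a uniform, quantitative form. For all $i$ and compact pieces, every cocycle with norm bound $c$ should be a coboundary of something with norm bound $C\cdot c$, with loss only in $p$ (pass from $p$ to some $p''<p$). I would prove this by resolving $\mathbb Z[S]$ by the Breen--Deligne resolution, with terms $\mathbb Z[S^{n_{ij}}]$, and functorial in $S$. I would analyze the resulting complexes of Banach-like spaces $\Hom(\mathbb Z[S^{n}],\mathcal M_{p'}(S'))$, i.e.\ spaces of continuous functions with values in $p'$-Banach spaces. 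The aim is to show that the complex is bounded-exact with constants independent of the profinite set. For that I would need a normed version of the Breen--Deligne comparison and an induction on the cohomological degree, passing through the $T$-adic filtration. The needed uniform estimates come from Theorem~\ref{thm:pBanachExt}-type phenomena: extensions cost a slight decrease of $p$, which is why $\mathcal M_{<p}$ rather than $\mathcal M_p$ appears, and Proposition~\ref{prop:stableimage} illustrates the mechanism.

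\emph{Main obstacle.} The hard step is the uniform bounded-exactness of these complexes, in all degrees and uniformly in $S$. Concretely, one needs to control the norms of the Breen--Deligne homotopies against profinite sets of arbitrary size, and to ensure that the losses in $p$ and in constants stay bounded across the finitely many degrees that matter. I would attempt it by induction on the degree, using a ``profinite-to-finite'' approximation: prove the estimate for finite $S_i$ with constants independent of $|S_i|$, then pass to the limit, using that $\varprojlim$ is exact on compact systems with bounded constants. I expect this quantitative homological algebra to be by far the hardest part. The remaining steps are formal bookkeeping that then yields Theorem~\ref{thm:pliquid} via the cited lemmas.
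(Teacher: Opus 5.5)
Your first reduction points in the right direction. The paper does deduce Theorem~\ref{thm:keyreal} from the arithmetic Theorem~\ref{thm:key}: it takes $r=2^{-p}$ and $r'=2^{-p'}$, kills $2-T^{-1}$ via Theorem~\ref{thm:Ainflike} so that $\mathcal M(S,\mathbb Z((T))_{r'})/(2-T^{-1})\cong\mathcal M_{p'}(S)$, and recovers $\mathbb R$-linearity via Proposition~\ref{prop:normalizeanalyticring}. However, several intermediate claims fail as stated.
\begin{itemize}
\item You say both sides commute with filtered colimits. Commuting $R\intHom(\mathcal M_{p'}(S),-)$ with filtered colimits needs pseudocoherence of the source, which over $\mathbb R$ is true but delicate. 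The paper gets it over the base ring $\mathbb Z[T^{-1}]$ from the two-term resolution by $T^{-1}-[T^{-1}]$ (Proposition~\ref{prop:liquidpseudocoh}). This specific base (not $\mathbb Z((T))_{>r}$, not even $\mathbb Z[T^{\pm1}]$) is used again later, because it makes $\mathcal M(S,\mathbb Z((T))_{r'})/\mathbb Z[T^{-1}][S]$, as a condensed abelian group, the summand of positive powers of $T$.
\item Your key claim has the roles reversed. In $\Ext^i(\mathcal M_{<p}(S),\mathcal M_{p'}(S'))$ the source involves exponents $p''\in(p',p)$ larger than the target's, and the comparison fails already on $\Hom$: for infinite $S$, the Dirac map $S\to\mathcal M_{p'}(S)$ does not extend to $\mathcal M_{p''}(S)$. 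What is needed is a source of exponent $p'$ (radius $r'$) tested against targets that are convex for a strictly larger exponent ($r$-normed with $r<r'$).
\item Breen--Deligne must resolve $\mathcal M_{p'}(S)$, not the already free $\mathbb Z[S]$. Over $\mathbb R$ its terms are free on connected, infinite-dimensional compact Hausdorff spaces. There, cohomology with $p$-Banach coefficients is out of reach for lack of partitions of unity (see the discussion after Theorem~\ref{thm:pbanachcohom}). That, not an ``ultrametric'' effect, is why one passes to $\mathbb Z((T))_{r'}$, whose bounded pieces are profinite.
\end{itemize}

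The genuine gap is the step you defer: uniform bounded exactness is the whole content of the theorem, and the tools you invoke would not yield it. The paper notes that Theorem~\ref{thm:pBanachExt} and its techniques cannot easily be extended to this setting. Nor is the difficulty in passing from finite to profinite $S$, which is a routine Tychonoff and completion step. Once you work $\mathbb Z$- or $\mathbb Z[T^{-1}]$-linearly, the case $S=\ast$ already contains the nontrivial statement $R\intHom_{\mathbb Z}(\mathbb R,V)\simeq V$.

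The paper first reduces to Theorem~\ref{thm:banachcomplete} for arbitrary $r$-normed $\mathbb Z[T^{\pm1}]$-modules $V$. This uses that $\mathbb Z((T))_{>r}$ is a principal ideal domain (so finite-level images are free, Proposition~\ref{prop:imagefree}), a derived-limit description of images, the passage $N\mapsto N^B$, and a compactness trick making constants independent of the target. It then computes via MacLane's $Q$-complex of $\overline{\mathcal M}_{r'}(S)$, as a derived limit over $c$ of normed complexes that must be shown $\le k$-exact.

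The idea you are missing is the mechanism for that exactness, which has three ingredients:
\begin{itemize}
\item the homotopy in $Q(-)$ between adding $N$ elements internally and externally (Lemma~\ref{lem:homotopyNelements});
\item the decomposition of any $x$ of norm $\le c$ into $N$ summands of norm $\le c/N+d$ (Lemma~\ref{lem:key}, via Gordan's lemma);
\item the mismatch of scalings: $T$ multiplies norms by $r$ on $\widehat V$ but by $r'$ on the source, which makes the homotopic map have norm $\le r^bN\le\epsilon$.
\end{itemize}
These feed the ``forward in time'' argument of Proposition~\ref{prop:key}. Closing the induction on degree forces the strengthening to $\Hom(\Lambda,\overline{\mathcal M}_{r'}(S))$ for all polyhedral lattices $\Lambda$, with $k$ independent of $\Lambda$ (Theorem~\ref{thm:explicit3}). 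Your proposal contains neither this mechanism nor an alternative to it, so the core of the theorem remains unaddressed.
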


Unfortunately, at this point, we have to pay a serious prize for going into the non-locally convex setting. More precisely, in order to compute $R\intHom_{\mathbb R}(\mathcal M_{p'}(S),V)$, i.e.~the complex of $\mathbb R$-linear maps $\mathcal M_{p'}(S)\to V$, we first have to understand maps of condensed sets $\mathcal M_{p'}(S)\to V$, i.e.~
\[
R\Gamma(\mathcal M_{p'}(S),V)
\]
for $V$ as above. We may restrict to the compact Hausdorff subspace $\mathcal M_{p'}(S)_{\leq 1}$. In order for this to be helpful, we need to be able to compute the cohomology
\[
H^i(S,V)
\]
for compact Hausdorff $S$ with coefficients in spaces $V$ like above; in particular, for $p$-Banach spaces.

The proof of \cite[Theorem 3.3]{Condensed} gives the following result.

\begin{theorem}\label{thm:pbanachcohom} Let $V$ be a Banach space and let $S$ be a compact Hausdorff space. Then $H^i(S,\underline{V})=0$ for $i>0$. Similarly, if $V$ is a $p$-Banach for any $p\leq 1$ and $S$ is a profinite set, then $H^i(S,\underline{V})=0$ for $i>0$.
\end{theorem}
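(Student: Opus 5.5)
We treat the two cases separately. In each case we reduce the sheaf cohomology of $\underline{V}$ on $S$ to a Čech complex built from continuous functions into $V$, and then show that complex is exact by an explicit approximation argument.

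\emph{Profinite case.} Let $S$ be profinite and $V$ a $p$-Banach space. Write $S=\varprojlim_i S_i$ with the $S_i$ finite. We compute $H^i(S,\underline{V})$ via hypercovers by profinite sets. Since $S$ is quasicompact, any hypercover can be refined by a hypercover $S_\bullet\to S$ in which every $S_n$ is profinite. By the usual Čech-to-derived argument it then suffices to show that, for every surjection $S'\to S$ of profinite sets, the Čech complex
\[
0\to C(S,V)\to C(S',V)\to C(S'\times_S S',V)\to\cdots
\]
is exact. Write $S'\to S$ as a cofiltered limit of surjections of finite sets $S'_j\to S_j$. For finite sets the Čech complex is split exact, because a section exists. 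The key input is then density:
\[
\varinjlim_j C(S_j,V)\subset C(S,V)
\]
is dense for the uniform $p$-norm $\sup_s\|f(s)\|$, since a continuous $f$ into a metric space is uniformly approximated by a locally constant function. So the complex is a completion of a filtered colimit of exact complexes. We upgrade this to exactness of the completed complex with a standard ``exactness with bounds'' argument: at finite level the splitting coming from a section $t_j$ is a pullback map, which has norm $\le 1$ for the sup $p$-norm. Given a cocycle $f$, approximate it by a locally constant $f_0$ within $\epsilon$. Then $f_0$ is nearly a cocycle, and we correct it using the contracting homotopy, which has norm $\le 1$. Iterating on the error with $\epsilon_k=2^{-k}$ and summing gives a preimage; the sum converges because the $p$-norm is subadditive and $V$ is complete. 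This works verbatim for any $p\le 1$, since only the triangle inequality and completeness are used, never convexity.

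\emph{Compact Hausdorff case, $V$ Banach.} Choose a hypercover of $S$ by profinite sets, for instance $\tilde S=\beta S^\delta$ and its iterated fibre products. By the profinite case, $H^i(S,\underline{V})$ is computed by the complex
\[
C(S,V)\to C(\tilde S_0,V)\to C(\tilde S_1,V)\to\cdots .
\]
We must show this complex is exact in positive degrees. Now we can no longer use locally constant approximations on $S$ itself, and this is where convexity enters. Use partitions of unity on $S$: for a finite open cover $\{U_k\}$ of $S$ with subordinate partition of unity $\phi_k$, a function on the Čech nerve can be averaged by $\sum_k\phi_k\cdot(\text{value at a chosen point})$. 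Because the $\phi_k\ge 0$ sum to $1$, these convex combinations have norm $\le 1$ in a Banach space. This gives approximate contracting homotopies of norm $\le 1$, with error controlled by the modulus of continuity relative to the cover. The same iterative correction argument as above then yields exactness.

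The main obstacle is this compact Hausdorff step: making the approximate homotopies precise on the hypercover levels $\tilde S_n$, which are fibre products over $S$, and controlling the error uniformly. This fails for $p<1$, because convex combinations are not contractive there. That is exactly why the statement restricts to profinite $S$ in the $p$-Banach case. I would follow the proof of \cite[Theorem 3.3]{Condensed}, which treats real coefficients, and check that each step only uses completeness together with norm-$1$ convex averaging.
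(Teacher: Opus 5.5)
Your proposal is correct and follows essentially the same route as the paper, which simply invokes the proof of \cite[Theorem 3.3]{Condensed}. In both, split finite covers give contracting homotopies of norm $\le 1$. A completion argument with norm control then handles profinite $S$ using only subadditivity and completeness, so it works for any $p$-norm. Finally, a partition of unity on $S$, which needs genuine convexity, handles compact Hausdorff $S$.

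In that last step, ``value at a chosen point'' should be made precise as follows. For each $s\in S$, take the primitive (of norm $\le\|f\|$) of the restriction of the cocycle $f$ to the fibre of $\tilde S_\bullet\to S$ over $s$, obtained from the splitting at a point of that fibre. Extend it off the fibre, and note it remains an approximate primitive over a neighbourhood of $s$ by continuity and properness. Then glue these with the $\phi_k$, which is legitimate because the differentials are $C(S,\mathbb R)$-linear.
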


However, a key step in the proof is the existence of a partition of unity, the application of which uses that $V$ is locally convex critically. In particular, the first part fails if $V$ is only a $p$-Banach for $p<1$.\footnote{We believe that if $S$ is finite-dimensional, one can prove the same result; but the relevant compact Hausdorff spaces are highly infinite-dimensional.}

This means that for the key computation, we have to further resolve $\mathcal M_{p'}(S)_{\leq 1}$ by profinite sets; and we should do so in a way that keeps the resulting resolution sufficiently explicit. In other words, the condensed formalism now forces us to resolve the real numbers, and vector spaces over it, in terms of profinite sets, explicitly!\footnote{This is a difficulty that is not seen in the classical quasi-Banach space literature; and so Theorem~\ref{thm:pBanachExt} and its techniques of proof cannot easily be extended to the present situation.}

We are now ready for the key turn. We will generalize the present question about the real numbers to an arithmetic ring that is a countable union of profinite subsets.

Let us define this ring. It depends on a real number $0<r<1$. We define the condensed ring $\mathbb Z((T))_r$ whose $S$-valued points are
\[
\mathbb Z((T))_r(S) = \{\sum_{n\in \mathbb Z} a_n T^n\mid a_n\in C(S,\mathbb Z), \sum_{n\in \mathbb Z} |a_n|r^n<\infty\}\subset \mathbb Z((T))(S).
\]
More precisely, this is the increasing union of the subsets
\[
\mathbb Z((T))_{r,\leq c}(S) = \{\sum_{n\in \mathbb Z} a_n T^n\mid a_n\in C(S,\mathbb Z), \sum_{n\in \mathbb Z} |a_n|r^n\leq c\}
\]
for varying reals $c>0$; and here $\sum_{n\in \mathbb Z} |a_n|r^n\leq c$ means that for all $s\in S$, $\sum_{n\in \mathbb Z} |a_n(s)|r^n\leq c$.

\begin{proposition} The condensed set $\mathbb Z((T))_{r,\leq c}$ is a profinite set.
\end{proposition}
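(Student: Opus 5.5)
The plan is to write $\mathbb Z((T))_{r,\leq c}$ explicitly as a countable inverse limit of finite sets, and then check that its $S$-valued points are exactly the continuous maps from $S$ into this limit.

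\textbf{Step 1: finiteness of the coefficient sets.} The constraint $\sum_n |a_n| r^n\leq c$ gives, for each $n$, the bound $|a_n|\leq c r^{-n}$. As $0<r<1$:
\begin{enumerate}
\item for $n$ sufficiently negative, $c r^{-n}<1$, so $a_n=0$; hence there is $N_0=N_0(c)$ with $a_n=0$ for all $n<N_0$;
\item for each fixed $n\geq N_0$, the coefficient $a_n$ ranges over the finite set $\{a\in\mathbb Z : |a|\leq cr^{-n}\}$.
\end{enumerate}

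\textbf{Step 2: the finite approximations.} For $m\geq N_0$, let $X_m$ be the set of tuples $(a_{N_0},\ldots,a_m)$ of integers with $\sum_{n=N_0}^m |a_n|r^n\leq c$. This is a finite set. Truncation $X_{m+1}\to X_m$ preserves the inequality, since all terms are nonnegative. Set
\[
X=\varprojlim_m X_m \subset \prod_{n\geq N_0}\mathbb Z ,
\]
a profinite set.

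\textbf{Step 3: identifying $X$ with the bounded sequences.} A sequence $(a_n)_{n\geq N_0}$ lies in $X$ if and only if every partial sum $\sum_{n=N_0}^m |a_n|r^n$ is $\leq c$. Since the terms are nonnegative, this is equivalent to $\sum_{n\geq N_0}|a_n|r^n\leq c$. So the points of $X$ are exactly the series allowed in $\mathbb Z((T))_{r,\leq c}(\ast)$.

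\textbf{Step 4: comparing $S$-points.} A continuous map $S\to X$ is the same as a compatible family of continuous maps $S\to X_m$. This amounts to a family of locally constant functions $a_n\in C(S,\mathbb Z)$ such that for every $s\in S$ and every $m$, the partial sum $\sum_{n\leq m}|a_n(s)|r^n\leq c$. By Step 3 this is exactly the pointwise condition $\sum_n |a_n(s)|r^n\leq c$ for all $s$, which defines $\mathbb Z((T))_{r,\leq c}(S)$. The identification is functorial in $S$, so
\[
\mathbb Z((T))_{r,\leq c}\cong \underline{X}.
\]

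\textbf{Main obstacle.} The only subtle point is Step 4: the definition imposes the bound pointwise in $s$ with no uniformity across $S$. This causes no trouble, because the inverse-limit description only ever checks the conditions one $s$ at a time, and positivity of the terms lets the partial-sum conditions reconstruct the full condition. The vanishing $a_n=0$ for $n<N_0$ also holds pointwise, so it carries over to $S$-points automatically.
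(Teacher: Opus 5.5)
Your proof is correct and follows essentially the same route as the paper. The paper uses multiplication by $T$ to reduce to $c\leq 1$, so that all negative-degree coefficients vanish, and then writes the set as the inverse limit of the finite sets of truncations $\{\sum_{n=0}^m a_nT^n\mid \sum_{n=0}^m|a_n|r^n\leq c\}$. You instead pick the cutoff $N_0(c)$ directly, and you also spell out the comparison of $S$-valued points, which the paper leaves implicit.
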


\begin{proof} As multiplication by $T$ is an isomorphism between $\mathbb Z((T))_{r,\leq c}$ and $\mathbb Z((T))_{r,\leq rc}$, we can assume that $c\leq 1$. In that case $\mathbb Z((T))_{r,\leq c}\subset \mathbb Z[[T]]=\prod_{n\geq 0} \mathbb Z\cdot T^n$. Now $\mathbb Z((T))_{r,\leq c}$ can be written as the inverse limit of
\[
\{\sum_{n=0}^m a_n T^n\mid \sum_{n=0}^m |a_n|r^n\leq c\}\subset \prod_{n=0}^m \mathbb Z\cdot T^n,
\]
each of which is a finite set.
\end{proof}

For any finite set $S$, we now write the free module $\mathbb Z((T))_r[S]$ as the increasing union of
\[
\mathbb Z((T))_r[S]_{\leq c} := \{\sum_{n\in \mathbb Z,s\in S} a_{n,s} T^n [s]\mid a_{n,s}\in C(S,\mathbb Z), \sum_{n\in \mathbb Z, s\in S} |a_{n,s}|r^n\leq c\};
\]
each of these is again a profinite set. The addition defines maps
\[
\mathbb Z((T))_r[S]_{\leq c} \times \mathbb Z((T))_r[S]_{\leq c'}\to \mathbb Z((T))_r[S]_{\leq c+c'}
\]
and everything is covariantly functorial in $S$. For any profinite set $S=\varprojlim_i S_i$ we define
\[
\mathcal M(S,\mathbb Z((T))_r)_{\leq c} := \varprojlim_i \mathbb Z((T))_r[S_i]_{\leq c}
\]
which is a profinite set, functorial in $S$, equipped with an addition map
\[
\mathcal M(S,\mathbb Z((T))_r)_{\leq c} \times \mathcal M(S,\mathbb Z((T))_r)_{\leq c'}\to \mathcal M(S,\mathbb Z((T))_r)_{\leq c+c'}.
\]
In particular, the colimit
\[
\mathcal M(S,\mathbb Z((T))_r) = \bigcup_{c>0} \mathcal M(S,\mathbb Z((T))_r)_{\leq c}
\]
is a condensed abelian group, and in fact one easily sees that it is a $\mathbb Z((T))_r$-module.

In the next lecture, we will prove the following theorem, relating this theory to the various $\ell^p$-theories over $\mathbb R$. Part (1) is due to Harbater, \cite[Lemma 1.5]{HarbaterConvergentArithmetic}.

\begin{theorem}\label{thm:Ainflike} Let $0<r'<r$ and consider the map
\[
\theta_{r'}: \mathbb Z((T))_r\to \mathbb R: \sum a_n T^n\mapsto \sum a_n (r')^n.
\]
This map is surjective. Moreover:
\begin{enumerate}
\item The kernel of $\theta_{r'}$ is generated by a nonzerodivisor $f_{r'}\in \mathbb Z((T))_r$.
\item For any profinite set $S$, there is a canonical isomorphism
\[
\mathcal M(S,\mathbb Z((T))_r)/(f_{r'})\cong \mathcal M_p(S)
\]
of condensed $\mathbb Z((T))_r/(f_{r'})=\mathbb R$-modules, where $0<p<1$ is chosen so that $(r')^p=r$.
\item More generally, for any profinite set $S$ and any $n\geq 1$, there is a canonical isomorphism
\[
\mathcal M(S,\mathbb Z((T))_r)/(f_{r'})^n\cong \mathcal M_p(S,\mathbb R[X]/X^n)
\]
of condensed $\mathbb Z((T))_r/(f_{r'})^n\cong \mathbb R[X]/X^n$-modules.
\end{enumerate}
\end{theorem}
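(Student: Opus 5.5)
The plan is to reduce everything to the finite-level statement and then pass to inverse limits over $S=\varprojlim_i S_i$.

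\textbf{Surjectivity.} First check that $\theta_{r'}$ converges. For $\sum |a_n| r^n<\infty$ the terms $|a_n| (r')^n$ are eventually tiny for $n\to+\infty$, since $r'<r$ gives $(r')^n\leq r^n$ for $n\geq 0$. For $n\to-\infty$, finiteness of $\sum|a_n|r^n$ forces $a_n=0$ for $n\ll 0$ (because $r^n\to\infty$), so only finitely many negative terms occur.

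For surjectivity, take $x\in\mathbb R$ and use a greedy expansion in base $r'$ with digits bounded by $1/r'$. Choose $n_0$ with $|x|<(r')^{n_0}$. At each step subtract $a_n (r')^n$ with $|a_n|\leq \lceil 1/r'\rceil$ so that the remainder has absolute value $\leq (r')^{n+1}$. The digits are bounded and $\sum_n r^n<\infty$ for $n\geq n_0$, so the series lies in $\mathbb Z((T))_r$.

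This gives more: a continuous section on bounded sets, with bounds. The set $\{|x|\leq 1\}$ receives a surjection from the profinite set $\mathbb Z((T))_{r,\leq c}$ for suitable $c$. I will use this quantitative form throughout.

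\textbf{Part (1).} Follow Harbater. Start by writing down the candidate generator, $f_{r'}=T-r'$, interpreted via an expansion of $r'$: take $f=\sum a_n T^n$ with $\theta_{r'}(f)=0$ and having a simple "zero" at $r'$. The natural choice is $f=T-\sigma(r')$, where $\sigma$ is a digit expansion of $r'$; one needs to check that $\sigma(r')$ converges at radius $r$.

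Then the Weierstrass-type division step. Given $g\in\ker\theta_{r'}$, write $g=fh$ with $h\in\mathbb Z((T))_r$ by dividing and carrying integer digits. Bound $h$ using the fact that the real function $g(T)/(T-r')$ is analytic on the annulus $|T|\leq r$. Prove this uniformly over $S$, i.e.\ as a statement about condensed sets: the bounded pieces map into bounded pieces with controlled constants.

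That $f$ is a nonzerodivisor follows by evaluating at points of $S$ and viewing elements as analytic functions on a punctured disc.

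\textbf{Part (2).} Do it first for finite $S$. There $\mathbb Z((T))_r[S]/f=\mathbb R[S]$, and the key claim concerns the image under $\theta_{r'}$ of the bounded piece $\{\sum_{n,s}|a_{n,s}|r^n\leq c\}$. Using $r^n=((r')^n)^p$, a single term $a(r')^n$ has $|a(r')^n|^p\leq |a|^p r^n\leq |a|r^n$ for $|a|\geq 1$. The $p$-triangle inequality $|\sum x_i|^p\leq\sum|x_i|^p$ then shows this image lies in $\mathbb R[S]_{\ell^p\leq c}$.

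Conversely, the greedy expansion of an element with $|x_s|^p\leq c_s$ starts at $n\approx \log_{r'}|x_s|$, where $r^{n}\approx |x_s|^p$. So $\mathbb R[S]_{\ell^p\leq c}$ is contained in the image of the $\leq Cc$ piece, with $C$ independent of $S$.

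Hence for finite $S_i$ the two filtrations are cofinal, uniformly in $i$. Passing to $\varprojlim_i$ and $\bigcup_c$ gives the isomorphism, since inverse limits of surjections of compact Hausdorff spaces are surjective. One also needs exactness of
\[
0\to\mathcal M(S,\mathbb Z((T))_r)\xrightarrow{f}\mathcal M(S,\mathbb Z((T))_r)\to\mathcal M_p(S)\to 0
\]
in the limit. This uses the bounded division of part (1): kernel elements of bound $c$ are $f$ times elements of bound $C'c$, and compactness then lets the limits commute.

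\textbf{Part (3).} Argue the same way, comparing $[x]$-coordinates. First identify $\mathbb Z((T))_r/f^n\cong\mathbb R[X]/X^n$ via Taylor expansion at $r'$, with $X=f$. Then show that the $\ell^p$ bounds in Teichmüller coordinates match the $r$-bounds, up to constants depending only on $n,p$. This should follow as in Proposition~\ref{prop:stableimage}: the $\log^i$ factors are absorbed because $p<1$ is slack, using $(r')^{p}=r$.

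\textbf{Main obstacle.} I expect the main obstacle to be the uniform quantitative division in part (1): showing that $f\cdot(\cdot)$ has closed image with bounded inverse on bounded pieces, uniformly in $S$. Everything else reduces to this.
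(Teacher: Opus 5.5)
\textbf{Gap in part (1).} The problem is not convergence: your candidate generator $f=T-\sigma(r')$ does not work. With the digit expansion you describe, $r'=1\cdot (r')^1$, so $\sigma(r')=T$ and $f=0$.

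For any other integral expansion, $T-\sigma(r')$ does lie in $\ker\theta_{r'}$. But nothing controls its \emph{other} zeros in the closed punctured disc $\{0<|y|\le r\}$, including on $|y|=r$. If it has one, then elements vanishing only at $r'$ are not multiples of $f$, $g/f$ is not holomorphic, and no constant $C_2$ can exist. (Your heuristic about $g/(T-r')$ concerns the wrong quotient.) You also need $f$ to be a unit of $\mathbb Z((T))$, i.e.\ to have lowest coefficient $\pm1$, for $g/f$ to have integer coefficients. ``Carrying digits'' does not provide this.

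The missing idea is Harbater's construction, as in the proof of Theorem~\ref{thm:principalideal}:
\begin{enumerate}
\item Start from $1-T/r'$ and multiply successively by factors $(1-\tfrac{a}{m}T^k)^m$ with $m>|a|$, whose roots lie outside the unit disc, to kill low-order coefficients. This gives $g\in 1+T^N\mathbb R[T]$ whose only zero in $\{0<|y|\le r\}$ is a simple zero at $r'$, with $N$ so large that $r^N<2(1-r)$.
\item Choose inductively $h\in 1+T^N\mathbb R[[T]]$ with all non-constant coefficients in $[-\tfrac12,\tfrac12]$ such that $f=gh\in 1+T^N\mathbb Z[[T]]$.
\item Since $|h(y)-1|\le\tfrac{r^N}{2(1-r)}<1$ for $|y|\le r$, $h$ has no zeros there. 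Hence $f\in\mathbb Z((T))_{>r}$ is a unit of $\mathbb Z[[T]]$ with a unique, simple zero at $r'$ in the closed punctured disc.
\end{enumerate}

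\textbf{Consequence for the division bound.} With this $f$, what you call the main obstacle becomes short. Since $f$ has no zeros on some annulus $r''\le|y|\le r'''$ with $r'<r''<r<r'''$, its inverse there has a Laurent expansion $\sum b_nT^n$ with $\|f^{-1}\|_r=\sum|b_n|r^n<\infty$. If $G\in\mathbb Z((T))_r$ vanishes at $r'$, then $g=G/f\in\mathbb Z((T))$, and its coefficients agree with those of the Cauchy product $G\cdot f^{-1}$ on $r''<|y|<r$. Submultiplicativity of $\sum|a_n|r^n$ then gives $\|g\|_r\le\|f^{-1}\|_r\|G\|_r$. This estimate is coefficientwise, so it is automatically uniform in $S$ and extends to finite free modules. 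It yields both generation of the kernel and the constant $C_2$.

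Your remaining steps agree with the paper: the bound $|a|^p\le|a|$ for integers giving $C_3=1$, the greedy expansion giving $C_4$, and passage to $\varprojlim_i$.

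\textbf{Two smaller corrections.}
\begin{enumerate}
\item There is no continuous section $\{|x|\le1\}\to\mathbb Z((T))_{r,\le c}$, since the source is connected and the target totally disconnected. Only the bounded surjection holds, and that is all you need.
\item In part (3), normalize the identification $\mathbb Z((T))_r/(f)^n\cong\mathbb R[X]/X^n$ by $T\mapsto[r']=(r')^{1+X}$ rather than $X=\bar f$. Then $\theta_n(aT^m)$ is a sum of $|a|$ Teichm\"uller elements $[\pm(r')^m]$ of $\ell^p$-size $r^m$. The upper bound then follows directly from Proposition~\ref{prop:stableimage}, and the lower bound by successive approximation from the case $n=1$.
\end{enumerate}
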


In other words, the theory over $\mathbb Z((T))_r$ specializes to all different $\mathcal M_p$-theories for $p<1$, and using infinitesimal variations one even recovers the $B_{\mathrm{dR}}^+$-theories for all $p$. In some sense, one could regard $\mathbb Z((T))_r$ as some version of Fontaine's $A_{\mathrm{inf}}$.

On the other hand, this again indicates that for a fixed $r$, the theory will not work, so we pass to a colimit again. Another twist, necessary for the proof, will be to isolate the desired theory within all condensed $\mathbb Z[T^{-1}]$-modules; this is related to the possibility to isolate $p$-liquid $\mathbb R$-vector spaces inside all condensed abelian groups (i.e., the forgetful functor is fully faithful).

\begin{theorem}\label{thm:liquidarithmetic} Fix any $0<r<1$. For a condensed $\mathbb Z[T^{-1}]$-module $M$, the following conditions are equivalent.
\begin{enumerate}
\item For all $1>r'>r$ and all maps $f: S\to M$ from a profinite set $S$, there is a unique extension to a map
\[
\tilde{f}_{r'}: \mathcal M(S,\mathbb Z((T))_{r'})\to M
\]
of condensed $\mathbb Z[T^{-1}]$-modules.
\item For all maps $f: S\to M$ from a profinite set $S$, there is a unique extension to a map
\[
\tilde{f}_{>r}: \mathcal M(S,\mathbb Z((T))_{>r})\to M
\]
of condensed $\mathbb Z[T^{-1}]$-modules.
\item One can write $M$ as the cokernel of a map
\[
\bigoplus_i \mathcal M(S_i,\mathbb Z((T))_{>r})\to \bigoplus_j \mathcal M(S_j',\mathbb Z((T))_{>r}).
\]
\end{enumerate}

The class $\mathrm{Liq}_r(\mathbb Z[T^{-1}])$ of such condensed $\mathbb Z[T^{-1}]$-modules is an abelian subcategory stable under all kernels, cokernels, and extensions. It is generated by the compact projective objects $\mathcal M(S,\mathbb Z((T))_{>r})$, $S$ extremally disconnected. Moreover:
\begin{enumerate}
\item The inclusion $\mathrm{Liq}_r(\mathbb Z[T^{-1}])\to \Cond(\mathbb Z[T^{-1}])$ has a left adjoint $M\mapsto M^{\mathrm{liq}}_r$, and there is a (necessarily unique) symmetric monoidal tensor product $\otimes^{\mathrm{liq}}_r$ on $\mathrm{Liq}_r(\mathbb Z[T^{-1}])$ making $M\mapsto M^{\mathrm{liq}}_r$ symmetric monoidal.
\item The functor $D(\mathrm{Liq}_r(\mathbb Z[T^{-1}]))\to D(\Cond(\mathbb Z[T^{-1}]))$ is fully faithful, and admits a left adjoint, which is the left derived functor $C\mapsto C^{L\mathrm{liq}}_r$ of $M\mapsto M^{\mathrm{liq}}_r$. There is a unique symmetric monoidal tensor product $\otimes^{L\mathrm{liq}}_r$ on $D(\mathrm{Liq}_r(\mathbb Z[T^{-1}]))$ making $C\mapsto C^{L\mathrm{liq}}_r$ symmetric monoidal; it is the left derived functor of $\otimes^{\mathrm{liq}}_r$.
\item An object $C\in D(\Cond(\mathbb Z[T^{-1}]))$ lies in $D(\mathrm{Liq}_r(\mathbb Z[T^{-1}]))$ if and only if all $H^i(C)\in \mathrm{Liq}_r(\mathbb Z[T^{-1}])$; in that case, for all profinite sets $S$ and all $1>r'>r$, one has
\[
R\intHom_{\mathbb Z[T^{-1}]}(\mathcal M(S,\mathbb Z((T))_{r'}),C) = R\intHom_{\mathbb Z[T^{-1}]}(\mathbb Z[T^{-1}][S],C).
\]
\end{enumerate}
\end{theorem}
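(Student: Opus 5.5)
Following the pattern the paper uses for Theorem~\ref{thm:pliquid}, I would deduce everything from the general machinery of analytic rings \cite[Lemma 5.9, Remark 5.11]{Condensed}. Concretely, set $\mathcal M(S,\mathbb Z((T))_{>r}) := \varinjlim_{r'>r}\mathcal M(S,\mathbb Z((T))_{r'})$; this is a functor of profinite sets $S$ with a natural map $\mathbb Z[T^{-1}][S]\to \mathcal M(S,\mathbb Z((T))_{>r})$. Once it is known to satisfy the key vanishing, all the categorical assertions follow formally: abelianness, stability under kernels, cokernels and extensions, the compact projective generators, the left adjoint, the (derived) symmetric monoidal structure, and the characterization in (3). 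So the proof reduces to an analogue of Theorem~\ref{thm:keyreal}:

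for any map $g:\bigoplus_i \mathcal M(S_i,\mathbb Z((T))_{>r})\to \bigoplus_j\mathcal M(S_j',\mathbb Z((T))_{>r})$ with $S_i,S_j'$ extremally disconnected, with kernel $M$, and for any $1>r'>r$ and any profinite $S$, the map
\[
R\intHom_{\mathbb Z[T^{-1}]}(\mathcal M(S,\mathbb Z((T))_{r'}),M)\to R\intHom_{\mathbb Z[T^{-1}]}(\mathbb Z[T^{-1}][S],M)
\]
is a quasi-isomorphism.

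The equivalence of conditions (1)--(3) is proved in the usual way. For (1)$\Leftrightarrow$(2), I pass to the colimit over $r'$ and use that the maps $\mathcal M(S,\mathbb Z((T))_{r''})\to\mathcal M(S,\mathbb Z((T))_{r'})$ for $r''>r'$ have dense image. Condition (3) implies (1) by the vanishing statement applied in degree $0$ and right exactness. Conversely, (1) gives a surjection from the free objects with a liquid kernel, hence a presentation.

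For the key vanishing, I would follow the strategy indicated in Lecture VI. First reduce to $S$ extremally disconnected, since both sides satisfy descent. Then write $M$ as an increasing union (over $c$ and $r''$) of profinite sets $M_{\leq c}$, which is possible because each $\mathcal M(S,\mathbb Z((T))_{r''})_{\leq c}$ is profinite. Then compute $R\intHom$ via a resolution of $\mathbb Z[T^{-1}]$-modules: use a functorial Breen--Deligne-type resolution of $\mathcal M(S,\mathbb Z((T))_{r'})$ by sums of $\mathbb Z[\mathcal M(S,\ldots)^{n}]$. Since all spaces involved are profinite, continuous cohomology of profinite sets with coefficients in such unions of profinite sets is concentrated in degree $0$, so everything becomes explicit cochain complexes of locally constant maps.

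The claim then becomes a quantitative statement, and this is the main obstacle. For cocycles bounded by $c$ on the $\leq 1$ pieces, one must produce primitives bounded by $C\cdot c$, where the constant $C$ is uniform in the (finite) approximations $S_i$ of $S$ and where one is allowed to shrink $r'$ to some $r''\in(r,r')$. I would prove this uniform bounded-exactness by induction over the Breen--Deligne degree, approximating profinite $S$ by finite sets. The key input is a ``normed'' version in which the comparison of $\mathbb Z((T))_{r'}$-measures with $\mathbb Z[T^{-1}]$-linear combinations costs only a loss from $r'$ to $r''$. I expect this uniform estimate, which is essentially the hard combinatorial core of the liquid theory, to require a separate lengthy argument.
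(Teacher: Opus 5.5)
Your formal reduction is the paper's. Every categorical assertion follows from the analytic-ring formalism once Theorem~\ref{thm:key} is known, and it is the strong form of Theorem~\ref{thm:key} (all profinite $S$, each fixed $r'>r$) that gives condition (1). Two corrections in that part:
\begin{itemize}
\item For (2)$\Rightarrow$(1), density of the image of $\mathcal M(S,\mathbb Z((T))_{r''})$ gives uniqueness only for quasiseparated $M$, which is not assumed. Uniqueness has to come from Theorem~\ref{thm:key} itself.
\item For (3)$\Rightarrow$(1), ``degree $0$ plus right exactness'' is not enough. Neither $\Hom(\mathcal M(S,\mathbb Z((T))_{r'}),-)$ nor $M\mapsto M(S)$ is right exact for profinite $S$, so you need the comparison in degrees $0$ and $1$ for the image.
\end{itemize}

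The genuine gap is that Theorem~\ref{thm:key} is not proved. You explicitly defer ``the uniform estimate'' to a separate argument, and that estimate is the entire content of Lectures VIII and IX.

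The route you sketch toward it also would not work as stated.
\begin{itemize}
\item \emph{Target side.} The kernel of a map of infinite direct sums is not just a union of profinite pieces $M_{\le c}$. Moreover, $R\intHom(\mathcal M(S,\mathbb Z((T))_{r'}),-)$ commutes with the needed filtered colimits only because $\mathcal M(S,\mathbb Z((T))_{r'})$ is pseudocoherent over $\mathbb Z[T^{-1}]$ (Proposition~\ref{prop:liquidpseudocoh}). The paper then reduces to images of single maps, uses Theorem~\ref{thm:principalideal} to see that finite-level images are free, and resolves the image by a Bousfield--Kan complex.
\item \emph{Cohomology of profinite sets.} Your claim that it is concentrated in degree $0$ with coefficients in such Smith-type objects is unjustified, and you would in any case need it with norm control. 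The paper deliberately replaces the targets by Banach-type completions $N^B$, losing a little radius, so that Proposition~\ref{prop:normedcompletion} applies. A direct-sum argument then makes the constants independent of the target, reducing everything to Theorem~\ref{thm:banachcomplete} for arbitrary $r$-normed $\mathbb Z[T^{\pm1}]$-modules.
\item \emph{Source side.} The Breen--Deligne terms $\mathbb Z[\mathcal M(S,\mathbb Z((T))_{r'})^n]$ are neither free over $\mathbb Z[T^{-1}]$ nor of the form $\mathbb Z[E]$ with $E$ profinite. One needs the resolution by $T^{-1}-[T^{-1}]$ (Lemma~\ref{lem:Tinv}) and the filtration by $c$. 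The statement to prove is therefore pro-vanishing ($\le k$-exactness) of a system of complexes, not a bound ``primitive $\le C\cdot c$''.
\item \emph{Finite approximation.} Approximating $S$ by finite sets does not touch the difficulty, which is already present for $S=\ast$.
\end{itemize}

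The missing idea is the induction of Theorem~\ref{thm:explicit3} over polyhedral lattices, which has three ingredients:
\begin{itemize}
\item Lemma~\ref{lem:key} splits an element of size $c$ into $N$ pieces of size about $c/N$.
\item Lemma~\ref{lem:homotopyNelements} gives a bounded homotopy between internal and external addition in MacLane's $Q$-complex.
\item The gap $r<r'$ enters because, on $T^{-1}$-invariants, restriction from $\le c$ to $\le (r')^b c$ has norm $r^b$. This makes the homotopic map small enough to apply Proposition~\ref{prop:key}.
\end{itemize}
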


It follows formally that $\mathrm{Liq}_r(\mathbb Z[T^{-1}])$ also sits fully faithfully in $\Cond(\mathbb Z((T))_{>r})$ (note that the tensor unit of $\mathrm{Liq}_r(\mathbb Z[T^{-1}])$ is $\mathbb Z((T))_{>r}$), and that everything holds true with $\mathbb Z[T^{-1}]$ replaced by $\mathbb Z((T))_{>r}$; we will then also write $\mathrm{Liq}_r(\mathbb Z((T))_{>r})$ or simply $\mathrm{Liq}(\mathbb Z((T))_{>r})$ for $\mathrm{Liq}_r(\mathbb Z[T^{-1}])$. However, the proof will require us to prove the finer statement with base ring $\mathbb Z[T^{-1}]$.

Again, it reduces formally to the following assertion.

\begin{theorem}\label{thm:key} Let $K$ be a condensed $\mathbb Z[T^{-1}]$-module that is the kernel of some map
\[
f: \bigoplus_{i\in I} \mathcal M(S_i,\mathbb Z((T))_{>r})\to \bigoplus_{j\in J} \mathcal M(S_j',\mathbb Z((T))_{>r})
\]
where all $S_i$ and $S_j'$ are extremally disconnected. Then for all $1>r'>r$ and all profinite sets $S$, the map
\[
R\intHom_{\mathbb Z[T^{-1}]}(\mathcal M(S,\mathbb Z((T))_{r'}),K)\to R\intHom_{\mathbb Z[T^{-1}]}(\mathbb Z[T^{-1}][S],K)
\]
in $D(\Cond(\Ab))$ is an isomorphism.
\end{theorem}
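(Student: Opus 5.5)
The plan follows the shape of the proof of the analogous statement for solid abelian groups in \cite{Condensed}: convert the question about $\mathbb Z[T^{-1}]$-linear $R\intHom$ into a question about cohomology of compact spaces with coefficients in $K$. Every object involved is a countable union of profinite sets, so that cohomology can be controlled by explicit estimates.

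\emph{Reductions.} Both sides commute with finite products in $S$ and are compatible with pullback along profinite covers. It therefore suffices to prove that the map is an isomorphism on $H^i$ after evaluating at extremally disconnected test sets; by replacing $K$ with $\intHom(\mathbb Z[S'],K)$ we may evaluate at the point. We write $\mathcal M(S,\mathbb Z((T))_{r'})$ as a quotient of the free module on the profinite sets $\mathcal M(S,\mathbb Z((T))_{r'})_{\leq c}$. To compute $R\Hom$ of a condensed abelian group into $K$, we use a functorial Breen--Deligne resolution: a resolution of any abelian group $A$ by modules $\bigoplus_j \mathbb Z[A^{n_{i,j}}]$. This reduces $R\Hom_{\mathbb Z[T^{-1}]}(\mathcal M(S,\mathbb Z((T))_{r'}),K)$ to the cohomology of the compact spaces $\mathcal M(S,\mathbb Z((T))_{r'})^{n}_{\leq c}$ with coefficients in $K$. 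The $T^{-1}$-linearity is handled by a $2$-term complex ($T^{-1}$ minus the action). The right side, $R\Hom(\mathbb Z[T^{-1}][S],K)$, reduces similarly to $R\Gamma(S,K)$. The map is the pullback along $S\to \mathcal M(S,\cdot)$.

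\emph{Explicit description of $K$ and of the cohomology.} The kernel $K$ inherits the filtration $K_{\leq c}=K\cap \bigoplus \mathcal M(S_i,\mathbb Z((T))_{r''})_{\leq c}$ by profinite sets. Since $K$ is the kernel of a map between such unions, we have $R\Gamma(X,K)=\mathrm{colim}_c$ of the corresponding continuous-function complexes for profinite $X$; the right Kan-extension step is legitimate because the spaces are profinite. The whole statement then becomes one about complexes of \emph{pseudo-normed} (filtered by profinite pieces) abelian groups. We must show that a certain explicit double complex, built from the Breen--Deligne resolution and the maps $C(\mathcal M(S)_{\leq c}^n,K_{\leq c'})$, is exact. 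Moreover it must be exact with \emph{bounded loss of norm}: a cocycle of norm $\leq c$ must be a coboundary of an element of norm $\leq k c$, with $k$ depending only on the degree. By a limiting argument, such exactness with controlled constants in the finite-set case $S=\varprojlim S_i$ gives the result.

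\emph{Main step and obstacle.} The heart is this normed almost-exactness for finite $S$, uniformly in $|S|$. I would first dualize so that it becomes a statement about complexes $\Hom(\mathbb Z[\text{Breen--Deligne}](\mathbb Z((T))_{r'}[S]_{\leq c}),\ \widehat V)$ with coefficients in an $r$-normed group. I would then prove it by induction on the cohomological degree. Passing from $r'$ to any larger $r''$, available because we work with $\mathbb Z((T))_{>r}$, gives the slack needed to absorb the constants. The crucial input is the arithmetic splitting: any element of $\mathbb Z((T))_{r'}[S]_{\leq c}$ decomposes as a sum of $N$ elements each of norm roughly $c/N$. This lets one replace the Breen--Deligne maps by homotopic ``averaged'' maps with good norm control. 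I expect this step to be the main obstacle. Getting constants that do not depend on $S$, and making the induction close (each degree needs stronger hypotheses on $r''$ and on $c$ than the previous one), is where all the real difficulty lies. I would first try to formulate a precise inductive statement with explicit dependence of $k_i$ and the required gap $r''/r'$ on $i$, and verify it in low degrees.
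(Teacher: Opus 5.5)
Your outline has the right outer shell: a Breen--Deligne/MacLane-type resolution, the two-term complex for $T^{-1}-[T^{-1}]$, filtering by $c$, norm-controlled exactness, and splitting an element into $N$ pieces of size about $c/N$ together with the homotopy between internal and external addition. But it has two genuine gaps.

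\textbf{The reduction from $K$ to an $r$-normed target.} Passing from the kernel $K$ to an $r$-normed target $\widehat V$ is neither a ``dualization'' nor formal.

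Your assertion that $R\Gamma(X,K)$ is computed by $\colim_c$ of continuous-function complexes for profinite $X$ is unjustified. For ind-compact objects like $K$ no such vanishing is available a priori. The paper passes to Banach-type objects precisely because there Proposition~\ref{prop:normedcompletion} makes hypercover complexes by profinite sets exact, with norm loss only $1+\epsilon$. Also, $K$ is not literally a union of profinite pieces $K_{\leq c}$ when $I,J$ are infinite and $\mathbb Z((T))_{>r}$ is itself a colimit over radii.

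The actual reduction uses the following ingredients.
\begin{enumerate}
\item Pseudocoherence of $\mathcal M(S,\mathbb Z((T))_{r'})$ over $\mathbb Z[T^{-1}]$ (Proposition~\ref{prop:liquidpseudocoh}, where the choice of base ring matters). This reduces to finitely many summands and, via a triangle, to the image $M$ of a single map $\mathcal M(S_1,\mathbb Z((T))_{>r})\to\mathcal M(S_2,\mathbb Z((T))_{>r})$.
\item A norm-controlled Bousfield--Kan description of $M$ through the images $M_{r'',i}\subset\mathbb Z((T))_{r''}^{n_i}$ over finite quotients $S_{2,i}$.
\item Harbater's theorem that $\mathbb Z((T))_{>r}$ is a PID, which gives $M_{r'',i}\cong\mathbb Z((T))_{r''}^{n_i'}$ (Proposition~\ref{prop:imagefree}). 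This is exactly what makes $M_{r'',i}\to M_{r''',i}$ factor through the Banach completion $M_{r''',i}^B$.
\item The sup-norm direct-sum argument, which makes the constants independent of $V$.
\end{enumerate}
In addition, the auxiliary $S'$ has to be carried along. Replacing $K$ by $\intHom(\mathbb Z[S'],K)$ does not work, since that object is no longer of the given form.

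None of this appears in your proposal. Without the freeness input you have no control over the filtration on an image of $\mathcal M(S_1,\ldots)$, and the Banach replacement needs exactly that control.

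\textbf{The main step, which you leave open.} More seriously, the idea that makes the induction close is missing.

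Applying the $N$-fold addition homotopy to $\overline{\mathcal M}_{r'}(S)_{\leq c}$ produces complexes on $\overline{\mathcal M}_{r'}(S)^N_{\leq c/N}$. Iterating produces complexes on fibre products over $\overline{\mathcal M}_{r'}(S)_{\leq c}$. So the inductive statement must be made for $\Hom(\Lambda,\overline{\mathcal M}_{r'}(S))_{\leq c}$ for \emph{all} polyhedral lattices $\Lambda$. The constant $k$ may depend only on the degree $m$, while the threshold $c_0$ is allowed to depend on $\Lambda$. This is forced: the splitting lemma, proved via Gordan's lemma, only gives $x=x_1+\cdots+x_N$ with each $x_i$ of norm $\leq c/N+d(\Lambda,N)$.

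You also need two further pieces of structure.
\begin{enumerate}
\item The notion of $\leq k$-exactness for systems $(C^\bullet_c)_c$ with restriction maps.
\item The spectral-sequence criterion, Proposition~\ref{prop:key}, applied to the double complex from the \v{C}ech conerve of the diagonal $\Lambda\to\Lambda^N$. Its rows are exact by induction. Its columns are exact by the splitting lemma together with Proposition~\ref{prop:normedcompletion}. The homotopic map (the sum of $N$ pullbacks) must factor through a deep restriction and have norm at most $r^bN\leq\epsilon$, where $k'/N\leq (r')^b$.
\end{enumerate}

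That last estimate, obtained via $T^{-1}$-invariance, is where $r<r'$ is used. The gap between $r$ and $r'$ is fixed once and for all, and the degree dependence is absorbed into $k$, $b$ and $N$. You anticipate an induction in which the required gap between radii changes with the degree. Such an induction would not yield the theorem for a fixed $r'>r$ in all degrees.
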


In the next lecture, we will show that $\mathbb Z((T))_{>r}$ is a principal ideal domain, and prove Theorem~\ref{thm:Ainflike}. In the appendix to this lecture, we explain how the rest of the assertions of this lecture reduce to Theorem~\ref{thm:key}.

\newpage

\section*{Appendix to Lecture VI: Recollections on analytic rings}

In \cite[Lecture VII]{Condensed}, we defined analytic rings, abstracting the examples we have seen. For simplicity, we will always work in the commutative case.

\begin{definition}[{\cite[Definition 7.1, 7.4]{Condensed}}] A theory of measures on a condensed ring $A$ is a functor
\[
\mathcal M: \{\mathrm{extremally\ disconnected\ sets}\}\to A\Mod: S\mapsto \mathcal M[S]
\]
to the category $A\Mod$ of $A$-modules in condensed abelian groups, taking finite disjoint unions to finite products, together with a natural transformation $S\to \mathcal M[S]$ of ``Dirac measures''.

An analytic ring is a theory of measures $(A,\mathcal M)$ such that for all index sets $I$, $J$ and extremally disconnected sets $S_i$, $S_j'$, and all maps
\[
f: \bigoplus_i \mathcal M[S_i]\to \bigoplus_j \mathcal M[S_j']
\]
in $A\Mod$, with kernel $K$, the map
\[
R\intHom_A(\mathcal M[S],K)\to R\intHom_A(A[S],K)
\]
is an isomorphism in $D(A\Mod)$, for all extremally disconnected sets $S$.
\end{definition}

\begin{remark} The definition of analytic rings was stated in a slightly different, but equivalent way: the current condition implies the one in \cite[Definition 7.4]{Condensed} by writing $C$ as the limit of its Postnikov truncations; conversely, use the argument of \cite[Lemma 5.10]{Condensed}.
\end{remark}

Thus, Theorem~\ref{thm:key} implies that $\mathbb Z[T^{-1}]$ with $\mathcal M[S]=\mathcal M(S,\mathbb Z((T))_{>r})$ is an analytic ring. (It says slightly more as we allow $S$ profinite there, and make a claim about all $r'<r$; we will explain what extra information this gives.) This immediately gives a lot of information:

\begin{proposition}[{\cite[Proposition 7.5]{Condensed}}] Let $(A,\mathcal M)$ be an analytic ring.
\begin{enumerate}
\item[{\rm (i)}] The full subcategory
\[
(A,\mathcal M)\Mod\subset A\Mod
\]
of all $A$-modules $M$ in condensed abelian groups such that for all extremally disconnected sets $S$, the map
\[
\Hom_A(\mathcal M[S],M)\to M(S)
\]
is an isomorphism, is an abelian category stable under all limits, colimits, and extensions. The objects $\mathcal M[S]$ for $S$ extremally disconnected form a family of compact projective generators. It admits a left adjoint
\[
A\Mod\to (A,\mathcal M)\Mod: M\mapsto M\otimes_A (A,\mathcal M)
\]
that is the unique colimit-preserving extension of $A[S]\mapsto \mathcal M[S]$. There is a unique symmetric monoidal tensor product $\otimes_{(A,\mathcal M)}$ on $(A,\mathcal M)\Mod$ making the functor
\[
A\Mod\to (A,\mathcal M)\Mod: M\mapsto M\otimes_A (A,\mathcal M)
\]
symmetric monoidal.
\item[{\rm (ii)}] The functor
\[
D((A,\mathcal M)\Mod)\to D(A\Mod)
\]
is fully faithful, and its essential image is stable under all limits and colimits and given by those $C\in D(A\Mod)$ such that for all extremally disconnected $S$, the map
\[
R\Hom_A(\mathcal M[S],C)\to R\Hom_A(A[S],C)
\]
is an isomorphism; in that case, also the $R\intHom$'s agree. An object $C\in D(A\Mod)$ lies in $D((A,\mathcal M)\Mod)$ if and only if all $H^i(C)$ are in $(A,\mathcal M)\Mod\subset A\Mod$. The inclusion $D((A,\mathcal M)\Mod)\subset D(A\Mod)$ admits a left adjoint
\[
D(A\Mod)\to D((A,\mathcal M)\Mod): C\mapsto C\otimes^L_A (A,\mathcal M)
\]
that is the left derived functor of $M\mapsto M\otimes_A (A,\mathcal M)$. There is a unique symmetric monoidal tensor product $\otimes^L_{(A, \mathcal M)}$ on $D((A,\mathcal M)\Mod)$ making the functor
\[
D(A\Mod)\to D((A,\mathcal M)\Mod): C\mapsto C\otimes^L_A (A,\mathcal M)
\]
symmetric monoidal.
\end{enumerate}
\end{proposition}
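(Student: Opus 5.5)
The plan is to bootstrap everything from the defining condition on kernels $K$, using long exact sequences in $R\intHom$. Call $C\in D(A\Mod)$ \emph{$\mathcal M$-local} if $R\intHom_A(\mathcal M[S],C)\to R\intHom_A(A[S],C)$ is an isomorphism for all extremally disconnected $S$. $\mathcal M$-local objects form a triangulated subcategory closed under all limits.

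First I show that every cokernel $M$ of a map $f:\bigoplus_i\mathcal M[S_i]\to\bigoplus_j\mathcal M[S_j']$ is $\mathcal M$-local. Taking $J=\emptyset$ in the definition shows that every $\bigoplus_i\mathcal M[S_i]$ is $\mathcal M$-local. Let $K=\ker f$ and $I=\operatorname{im} f$. The short exact sequence $0\to K\to\bigoplus_i\mathcal M[S_i]\to I\to 0$ gives a triangle, so $I$ is $\mathcal M$-local. Then $0\to I\to\bigoplus_j\mathcal M[S_j']\to M\to 0$ shows that $M$ is $\mathcal M$-local.

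Conversely, let $M\in(A,\mathcal M)\Mod$. A surjection $\bigoplus A[S_j']\to M$ extends, by the defining property of $M$, to a surjection $\bigoplus\mathcal M[S_j']\to M$. Its kernel lies in $(A,\mathcal M)\Mod$, since that category is closed under limits: the condition $\Hom_A(\mathcal M[S],-)=(-)(S)$ is limit-compatible. Repeating the step on the kernel presents $M$ as such a cokernel. Hence $(A,\mathcal M)\Mod$ is exactly the class of these cokernels, and each of its objects is $\mathcal M$-local in the full derived sense.

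With this in hand the abelian-category statements are formal. The objects $\mathcal M[S]$ are projective in $(A,\mathcal M)\Mod$, because $\Hom(\mathcal M[S],M)=M(S)$ and $S$ is projective in condensed sets. They are compact because $M\mapsto M(S)$ commutes with filtered colimits. Closure under cokernels and direct sums follows by lifting maps to presentations and taking the induced presentation, so the category is closed under all colimits; closure under kernels was shown above. For extensions $0\to M'\to M\to M''\to0$, the long exact sequences of $R\Hom(\mathcal M[S],-)$ and $R\Hom(A[S],-)$, together with the five lemma and the full Ext-comparison for $M'$ and $M''$, give the condition for $M$. The left adjoint is the colimit-preserving extension of $A[S]\mapsto\mathcal M[S]$, computed on a presentation. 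The tensor product is determined by $\mathcal M[S]\otimes_{(A,\mathcal M)}\mathcal M[T]:=\mathcal M[S\times T]$, or more precisely by the completion of $A[S\times T]$, and extended by colimits. It is forced by requiring the adjoint to be symmetric monoidal, and it is well defined because the kernel of the completion functor is a tensor ideal.

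For (ii), $D((A,\mathcal M)\Mod)$ is generated by the compact projectives $\mathcal M[S]$. For bounded complexes, full faithfulness of $D((A,\mathcal M)\Mod)\to D(A\Mod)$ reduces by dévissage to the equality of Ext groups between the generators $\mathcal M[S]$ and objects of $(A,\mathcal M)\Mod$, which is exactly $\mathcal M$-locality. For unbounded complexes I would pass to the limit over Postnikov truncations and use that $\mathcal M$-local objects are closed under limits. The same argument shows that the essential image consists of the $\mathcal M$-local $C$, equivalently those with all $H^i(C)\in(A,\mathcal M)\Mod$. The left adjoint is the left derived functor, computed on $K$-projective resolutions by sums of $A[S]$, and the derived tensor product is handled as in (i).

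I expect the main obstacle to be this unbounded step: controlling Postnikov limits and hypercompleteness in $D(A\Mod)$ so that cohomologywise membership implies $\mathcal M$-locality. This is where the argument of \cite[Lemma 5.10]{Condensed} is needed.
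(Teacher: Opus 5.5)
Your route is the standard one behind the citation. The paper gives no proof of its own: it cites \cite[Proposition 7.5]{Condensed} and remarks that its kernel-form definition implies the complex-form condition of \cite[Definition 7.4]{Condensed} by writing a complex as the limit of its Postnikov truncations. That is exactly your first step plus your limit argument, and what you say about part (i) is fine.

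The gap is in the unbounded step you flag, and the remedy you propose covers only half of it. Take $C\in D((A,\mathcal M)\Mod)$ with homology unbounded in both directions. Its Postnikov truncations $\tau_{\le n}C$ are still unbounded in negative homological degrees, so d\'evissage does not apply to them. Writing $\tau_{\le n}C=\colim_m\tau_{\ge -m}\tau_{\le n}C$ does not help by itself either: $\mathcal M$-local objects are only evidently closed under limits, since $\mathcal M[S]$ need not be compact in $D(A\Mod)$.

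The missing input is a connectivity observation. Let $X$ be connective, in particular $X=\mathcal M[S]$ or $A[S]$, or $X\otimes\mathbb Z[T]$ when you evaluate $R\intHom$ at an extremally disconnected $T$. Then the map $\Hom(X,(\tau_{\ge -j}C)[j])\to\Hom(X,C[j])$ is bijective, because $\Hom(X,W)=0$ whenever $H_i(W)=0$ for all $i\ge 0$. So locality in each degree depends only on a homologically bounded-below truncation of $C$. For such $C$ one has $C=\lim_n\tau_{\le n}C$ in $D(A\Mod)$, by left-completeness of $D(\Cond(\Ab))$ (countable products are exact). Each $\tau_{\le n}C$ is then bounded, hence local by d\'evissage, and local objects are closed under limits. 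That is the whole content of the step. \cite[Lemma 5.10]{Condensed} is not what is needed here; the paper invokes its argument only for the converse comparison of the two forms of the definition.

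Two smaller points need their own argument; \emph{the same argument} does not cover them.
\begin{enumerate}
\item You must show that an object $C$ satisfying only the $R\Hom$-condition of the statement lies in the essential image. Use the derived left adjoint $L$; its adjointness follows from full faithfulness and $iL(A[S])=\mathcal M[S]$. The map $\Hom(iLX,C)\to\Hom(X,C)$ is bijective for $X=A[S][n]$, hence for all $X$. The usual retraction argument then shows that the unit $C\to iLC$ is an isomorphism. Combined with the fixed step, which gives $R\intHom$-locality for everything in the image, this yields both the characterization of the essential image and the $R\intHom$-agreement.
\item The tensor-ideal property you invoke uses the internal form of the defining condition. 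It amounts to $R\intHom_A(A[T],C)$ being local whenever $C$ is. This follows from $R\intHom_A(\mathcal M[S],R\intHom_A(A[T],C))=R\intHom_A(A[T],R\intHom_A(\mathcal M[S],C))$.
\end{enumerate}
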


This shows that Theorem~\ref{thm:key} implies most of Theorem~\ref{thm:liquidarithmetic}. We note one omission in the statement of the proposition: It is not claimed that $\otimes^L_{(A, \mathcal M)}$ is the left derived functor of $\otimes_{(A, \mathcal M)}$. This is equivalent to the assertion that for all extremally disconnected $S,S'$, the complex
\[
\mathcal M[S]\otimes^L_{(A,\mathcal M)} \mathcal M[T] = A[S\times T]\otimes^L_A (A,\mathcal M)
\]
sits in degree $0$ (here, the displayed isomorphism comes from $\otimes^L_A (A,\mathcal M)$ being symmetric monoidal, and $A[S]\otimes^L_A A[T] = A[S\times T]$). But in the situation of Theorem~\ref{thm:liquidarithmetic}, we defined $\mathcal M[S]$ for all profinite $S$, and Theorem~\ref{thm:key} ensures that
\[
A[S]\otimes^L_A (A,\mathcal M) = \mathcal M[S]
\]
sits in degree $0$, for all profinite sets $S$. Indeed, to check the displayed isomorphism, note that the right hand side lies in $(A,\mathcal M)\Mod$, and the $R\Hom$ into any complex $C\in D((A,\mathcal M)\Mod)$ agrees, by representing $C$ by a complex of direct sums of projectives, taking the limit of Postnikov truncations, and using Theorem~\ref{thm:key}.

In Theorem~\ref{thm:liquidarithmetic}, we used the strange base ring $\mathbb Z[T^{-1}]$. We note that for analytic rings, one can always change the base ring to its natural choice, and maintain an analytic ring:

\begin{proposition}\label{prop:normalizeanalyticring} Let $(A,\mathcal M)$ be an analytic ring. Then $A' = \mathcal M[\ast]$ is naturally a condensed ring with a map of condensed rings $A\to A'$, and the $A$-module structure on $\mathcal M[S]$ naturally refines to an $A'$-module, for all extremally disconnected $S$, defining a theory of measures $(A',\mathcal M)$ with a map $(A,\mathcal M)\to (A',\mathcal M)$.

The theory of measures $(A',\mathcal M)$ is an analytic ring, the map $(A,\mathcal M)\to (A',\mathcal M)$ defines a map of analytic rings, and the forgetful functor $D((A',\mathcal M)\Mod)\to D((A,\mathcal M)\Mod)$ is an equivalence.
\end{proposition}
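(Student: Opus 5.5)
The plan is to first construct the ring structure and module structures, and then verify the analytic ring condition via a change-of-rings argument.

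\emph{Ring and module structures.} For each extremally disconnected $S$, multiplication gives a natural bilinear pairing $\mathcal M[\ast]\times\mathcal M[S]\to\mathcal M[S]$. To get it, we use that $\mathcal M[S]$ and $\mathcal M[\ast]$ are objects of $(A,\mathcal M)\Mod$, and that $\otimes_{(A,\mathcal M)}$ is the symmetric monoidal structure for which $A[S]\mapsto \mathcal M[S]$ is symmetric monoidal. We take
\[
\mathcal M[\ast]\otimes_{(A,\mathcal M)}\mathcal M[S]=(A\otimes_A A[S])\otimes_A(A,\mathcal M)=\mathcal M[S],
\]
and the unit of $\otimes_{(A,\mathcal M)}$ is $A'=\mathcal M[\ast]$. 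Thus $A'$ is a commutative algebra object of $(A,\mathcal M)\Mod$, hence a condensed ring with a ring map $A\to A'$ (the unit). Every object of $(A,\mathcal M)\Mod$ is a module over the unit. Concretely, for $M$ in $(A,\mathcal M)\Mod$ the action map $A'\otimes_A M\to M$ is the unique factorization of $A\otimes_A M\to M$ through the localization $A'\otimes_A M\to A'\otimes_{(A,\mathcal M)}M=M$. In particular, each $\mathcal M[S]$ is naturally an $A'$-module, and the Dirac maps $S\to\mathcal M[S]$ are unchanged. This gives the theory of measures $(A',\mathcal M)$ and the map $(A,\mathcal M)\to(A',\mathcal M)$.

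\emph{Key claim.} The key claim is that $(A',\mathcal M)\Mod$, defined inside $A'\Mod$, is identified with $(A,\mathcal M)\Mod$ via the forgetful functor, and similarly on derived categories. I would prove the following. For $M\in D(A'\Mod)$ whose underlying $A$-complex lies in $D((A,\mathcal M)\Mod)$, and for extremally disconnected $S$, we have
\[
R\Hom_{A'}(\mathcal M[S],M)\cong R\Hom_A(\mathcal M[S],M)\cong R\Hom_A(A[S],M)\cong R\Hom_{A'}(A'[S],M).
\]
The outer identifications need $R\Hom_{A'}(\mathcal M[S],M)=R\Hom_A(\mathcal M[S],M)$ for such $M$. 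Here I would use $\mathcal M[S]=A'\otimes^L_{(A,\mathcal M)}\mathcal M[S]$, which holds since $A'$ is the unit. The $A'$-linear maps out of it are computed through the monoidal structure on $D((A,\mathcal M)\Mod)$: the $A'$-module structure on $M$ is the canonical one, because $A'\otimes^L_{(A,\mathcal M)}A'\simeq A'$, i.e.\ $A\to A'$ is an idempotent algebra ``epimorphism'' after $(A,\mathcal M)$-completion. Concretely, $A'\otimes_A^L A'\to A'$ becomes an isomorphism after applying $-\otimes^L_A(A,\mathcal M)$. Hence $A'$-module structures on objects of $D((A,\mathcal M)\Mod)$ are unique, and maps between them are the $A$-linear maps.

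\emph{Analytic ring condition.} With this, the analytic ring condition for $(A',\mathcal M)$ follows. A map $f:\bigoplus\mathcal M[S_i]\to\bigoplus\mathcal M[S_j']$ of $A'$-modules is in particular $A$-linear. Its kernel $K$ is in $(A,\mathcal M)\Mod$ by the analytic ring property of $(A,\mathcal M)$, which is the hypothesis. The displayed chain then gives $R\intHom_{A'}(\mathcal M[S],K)\cong R\intHom_{A'}(A'[S],K)$, passing to $R\intHom$ by replacing $S$ with $S\times T$. Moreover, $(A',\mathcal M)\Mod$ has the same compact projective generators $\mathcal M[S]$ and the same Hom-complexes as $(A,\mathcal M)\Mod$. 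So the forgetful functor $D((A',\mathcal M)\Mod)\to D((A,\mathcal M)\Mod)$ is fully faithful, and it is essentially surjective because it hits the generators and commutes with colimits.

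\emph{Main obstacle.} I expect the hardest step to be the idempotence $A'\otimes^L_A A'\otimes^L_A(A,\mathcal M)\simeq A'$, together with the ensuing uniqueness of $A'$-module structures in the derived (not just abelian) setting. I would get the idempotence from symmetric monoidality of $C\mapsto C\otimes^L_A(A,\mathcal M)$ and from $A\otimes^L_A(A,\mathcal M)=\mathcal M[\ast]=A'$. This requires $A[\ast]\otimes^L_A(A,\mathcal M)$ to be concentrated in degree $0$, which holds since $\ast$ is extremally disconnected and $A[\ast]$ is projective.
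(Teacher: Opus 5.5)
Your proposal is correct and is essentially the paper's argument. In both, $A'=\mathcal M[\ast]$ is the tensor unit of the complete category, and everything reduces to $R\intHom_{A'}(\mathcal M[S],K)\simeq R\intHom_A(\mathcal M[S],K)$ for complete $K$, which rests on $\mathcal M[S]\otimes^L_A A'\otimes^L_A\cdots\otimes^L_A A'$ completing to $\mathcal M[S]$. The paper simply makes your ``idempotent algebra'' step explicit via the bar resolution $\cdots\rightrightarrows\mathcal M[S]\otimes^L_A A'\to\mathcal M[S]$: it identifies each term's $A'$-linear $R\intHom$ into $K$ with an $A$-linear one by adjunction, and then completes the source.

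One small fix: to pass from $R\Hom$ to $R\intHom$, replace $K$ by $R\intHom_{\mathbb Z}(\mathbb Z[T],K)$, which is still complete and still an $A'$-module. Do not replace $S$ by $S\times T$, since $S\times T$ need not be extremally disconnected.
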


\begin{proof} Note that $A'=\mathcal M[\ast]$ is the tensor unit in $(A,\mathcal M)\Mod$ (as the image of the tensor unit $A[\ast]$ under the symmetric monoidal functor $\otimes_A (A,\mathcal M)$), and hence naturally a condensed $A$-algebra. Moreover, every object of $(A,\mathcal M)\Mod$, in particular $\mathcal M[S]$, is naturally a condensed module over it.

We show that for any $K\in (A,\mathcal M)\Mod$, the natural map
\[
R\intHom_{A'}(\mathcal M[S],K)\to R\intHom_A(\mathcal M[S],K)
\]
is an isomorphism; this will easily prove all assertions. Now we use the bar resolution
\[
\ldots \mathcal M[S]\otimes^L_A A'\otimes^L_A A'\rightrightarrows \mathcal M[S]\otimes^L_A A'\to \mathcal M[S].
\]
It is then sufficient to show that for any $n\geq 1$,
\[
R\intHom_{A'}(\mathcal M[S]\otimes^L_A A'\otimes^L_A\cdots\otimes^L_A A',K)\to R\intHom_A(\mathcal M[S],K)
\]
is an isomorphism, where there are $n$ tensor factors on the left. But the left-hand side agrees with
\[
R\intHom_A(\mathcal M[S]\otimes^L_A A'\otimes^L_A\cdots\otimes^L_A A',K)
\]
with $n-1$ tensor factors on the left. Moreover, as $K\in (A,\mathcal M)\Mod$, we can replace
\[
\mathcal M[S]\otimes^L_A A'\otimes^L_A\cdots\otimes^L_A A'
\]
by $\otimes^L_A (A,\mathcal M)$. But this turns this into
\[
\mathcal M[S]\otimes^L_{(A,\mathcal M)} \mathcal M[\ast]\otimes^L_{(A,\mathcal M)}\cdots\otimes^L_{(A,\mathcal M)} \mathcal M[\ast] = \mathcal M[S],
\]
yielding the desired statement.
\end{proof}

In particular, $A=\mathbb Z((T))_{>r}$ with $\mathcal M[S] = \mathcal M(S,\mathbb Z((T))_{>r})$ defines an analytic ring.

Finally, let us prove Theorem~\ref{thm:keyreal}. Given $0<p'<p\leq 1$, pick $x=\tfrac 12$, $r=x^p$ and $r'=x^{p'}$. Then we can take $f_x=2-T^{-1}$, and Theorem~\ref{thm:Ainflike}~(2) says that
\[
\mathcal M(S,\mathbb Z((T))_{r'})=\mathcal M_{p'}(S)
\]
while (by passage to a colimit over all $r'>r$)
\[
\mathcal M(S,\mathbb Z((T))_{>r}) = \mathcal M_{<p}(S).
\]
We regard $\mathbb R$ everywhere as $\mathbb Z[T^{-1}]$-algebra via $T^{-1}\mapsto 2$. Given $V$ as in Theorem~\ref{thm:keyreal}, it is automatic that $V\in \mathrm{Liq}_r(\mathbb Z[T^{-1}])$. Using Theorem~\ref{thm:key}, this implies that
\[
R\intHom_{\mathbb Z[T^{-1}]}(\mathcal M(S,\mathbb Z((T))_{r'}),V) = R\intHom_{\mathbb Z[T^{-1}]}(\mathbb Z[T^{-1}][S],V)
\]
for all profinite sets $S$. The right-hand side agrees with $R\intHom_{\mathbb Z}(\mathbb Z[S],V)$, while the left-hand side agrees with $R\intHom_{\mathbb Z}(\mathcal M_{p'}(S),V)$ by taking the quotient by $f_x=2-T^{-1}$. Thus, this proves a version of Theorem~\ref{thm:keyreal} where we use the base ring $\mathbb Z$ in place of $\mathbb R$.

From this discussion, by taking the colimit over $p'>p$, one deduces that $\mathbb Z$ with the modules $\mathcal M_{<p}(S)$ defines an analytic ring structure. Using Proposition~\ref{prop:normalizeanalyticring} above, this then also implies that $\mathbb R$ with the same modules $\mathcal M_{<p}(S)$ defines an analytic ring, and that the forgetful functor $D(\mathrm{Liq}_r(\mathbb R))\to D(\Cond(\Ab))$ is fully faithful. In particular, we see that a liquid $\mathbb R$-vector space admits a unique $\mathbb R$-linear structure; more precisely, for all $V\in \mathrm{Liq}_p(\mathbb R)$, the map
\[
R\intHom_{\mathbb Z}(\mathbb R,V)\to V
\]
is an isomorphism.

Finally, the exact statement of Theorem~\ref{thm:keyreal} follows by observing that, using the last displayed isomorphism.
\[
R\intHom_{\mathbb R}(\mathcal M_{p'}(S),V)\cong R\intHom_{\mathbb R}(\mathcal M_{p'}(S),R\intHom_{\mathbb Z}(\mathbb R,V))\cong R\intHom_{\mathbb Z}(\mathcal M_{p'}(S),V),
\]
and the latter has been computed above.

\newpage

\section{Lecture VII: $\mathbb Z((T))_{>r}$ is a principal ideal domain}

In this lecture, we prove the following theorem.

\begin{theorem}[\cite{HarbaterConvergentArithmetic}]\label{thm:principalideal} For any real number $r$ with $0<r<1$, the ring
\[
\mathbb Z((T))_{>r} = \{\sum_{n\gg -\infty} a_n T^n\mid \exists r'>r, |a_n| (r')^n\to 0\}
\]
is a principal ideal domain. The nonzero prime ideals are the following:
\begin{enumerate}
\item For any nonzero complex number $x\in \mathbb C$ with $|x|\leq r$, the kernel of the map
\[
\mathbb Z((T))_{>r}\to \mathbb C: \sum a_n T^n\mapsto \sum a_n x^n,
\]
where this kernel depends only on (and determines) $x$ up to complex conjugation;
\item For any prime number $p$, the ideal $(p)$;
\item For any prime number $p$ and any topologically nilpotent unit $x\in \overline{\mathbb Q}_p$, the kernel of the map
\[
\mathbb Z((T))_{>r}\to \overline{\mathbb Q}_p: \sum a_n T^n\mapsto \sum a_n x^n,
\]
where this kernel depends only on (and determines) the Galois orbit of $x$.
\end{enumerate}
\end{theorem}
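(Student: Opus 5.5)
The plan is a Weierstrass-preparation argument, with the archimedean disc $\{|x|\leq r\}$ and the $p$-adic pictures for each prime $p$ handled simultaneously.

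Write $A=\mathbb Z((T))_{>r}$. The underlying ring has two descriptions.
\begin{itemize}
\item It is the ring of Laurent series $\sum_{n\gg-\infty} a_nT^n$ with integer coefficients.
\item Equivalently, it is the set of functions meromorphic on the open disc $\{|z|<r'\}$ for some $r'>r$, holomorphic away from $z=0$, and with integral Laurent coefficients.
\end{itemize}
First I check that $A$ is a domain. It embeds into $\mathbb Z((T))$, which is a domain.

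Next, given a nonzero $f\in A$, I consider its zeros in the compact punctured disc $0<|x|\leq r$. By holomorphy on a slightly larger disc there are only finitely many of them, and they come in conjugate pairs. The key step is a division statement.
\begin{itemize}
\item \emph{Archimedean part.} Let $P(T)=T^{m}\prod(1-x_i^{-1}T)$, or better an integral version, chosen so that $f=P\cdot u$ where $u$ has no zeros in $|x|\leq r$.
\item \emph{Units.} I show that an element $u\in A$ with no complex zeros in $0<|x|\leq r$ is, up to powers of $T$ and a finite integral polynomial factor, a unit. For this I treat the integrality constraint $p$-adically. Multiplying by a power of $T$, assume $u=\sum_{n\geq 0}a_nT^n$ with $a_0\neq 0$. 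The coefficient $a_0$ is an integer. If $a_0=\pm1$, then $u$ is invertible in $\mathbb Z[[T]]$, and the inverse converges on $|z|<r'$ for some $r'>r$ because $1/u$ is holomorphic there (no zeros). This gives $u\in A^\times$.
\item \emph{Clearing $a_0$.} In general I would use a Harbater-type factorization: write $f=g\cdot h$ with $g\in\mathbb Z[T]$ a polynomial, using the Newton polygon at each prime $p\mid a_0$ together with the $p$-adic Weierstrass preparation in $\mathbb Z_p[[T]]$. Only finitely many primes divide the relevant leading coefficients, and at the others $f$ is already a unit in $\mathbb Z_p[[T]]$.
\item \emph{Gluing.} Glue the $p$-adic distinguished polynomials and the archimedean zero polynomial into a single $g\in\mathbb Z[T,T^{-1}]$ via the product formula / Chinese remainder argument, so that $f/g$ is integral (checked coefficientwise, $p$-adically) and has no zeros in $|x|\leq r$. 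Hence $f/g\in A^\times$.
\end{itemize}

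Once every element of $A$ is, up to a unit, a Laurent polynomial in $\mathbb Z[T,T^{-1}]$ whose irreducible factors are either integers or polynomials with all complex roots in $0<|x|\leq r$ (possibly times roots of unit modulus elsewhere that are units), ideal-theoretic questions reduce to the localization
\[
\mathbb Z[T,T^{-1}][\,S^{-1}],
\]
where $S$ is the multiplicative set of polynomials invertible in $A$. I then have to show finitely generated ideals are principal. Since $\mathbb Z[T]$ is not a PID, the point is that the comaximality relations become better after inverting $S$: for coprime $g_1,g_2$, the resultant argument shows that the ideal $(g_1,g_2)$ contains an integer $N$ and a polynomial without zeros in the disc. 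I plan to show such a polynomial is a unit in $A$ unless it has $p$-adic obstructions, and then use Bezout. I also need that every ideal is finitely generated. I expect to get this from a noetherianity argument: elements have finitely many zeros in the disc and finitely many prime factors $p$ that matter (those dividing the leading coefficient), so ascending chains stabilize.

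The prime classification then follows by reading off the irreducible elements:
\begin{itemize}
\item integers $p$, giving the primes $(p)$;
\item minimal polynomials of algebraic $x$ with $|x|\leq r$ archimedeanly, giving kernels of evaluation at complex $x$;
\item the same polynomials arising instead as $p$-adically distinguished factors, giving evaluation at topologically nilpotent units of $\overline{\mathbb Q}_p$.
\end{itemize}
One also has to show that transcendental complex $x$ in the disc give primes. Their kernels are generated by elements of $A$ that are not polynomials, namely series vanishing exactly at $x$ and $\bar x$ with unit leading data, constructed by the unit/division argument above. That the map $x\mapsto\ker$ is injective up to conjugation/Galois orbit is clear from the factorization.

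The main obstacle I expect is the division/unit step: simultaneously controlling archimedean zeros in the disc and $p$-adic integrality of the quotient, i.e. proving that an element with no complex zeros in $0<|x|\leq r$ and whose $p$-adic content is handled is actually invertible with \emph{integer} coefficients. I would try first Harbater's approach: approximate $f$ by a Laurent polynomial $g$ with the same zeros in the disc and $g\equiv f$ modulo high powers of $T$, then argue that $f/g\in 1+T^N\mathbb Z[[T]]$ has convergent inverse by the Rouché-type estimate on $|z|=r'$.
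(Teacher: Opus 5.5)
\textbf{There is a genuine gap, located in your Gluing step.} Everything after it depends on that step: the reduction to $\mathbb Z[T,T^{-1}][S^{-1}]$, the resultant/Bezout argument, the noetherianity, and your list of irreducibles.

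\textbf{Why the Gluing claim fails.} You claim that every nonzero $f\in A$ is, up to a unit of $A$, a Laurent polynomial in $\mathbb Z[T^{\pm1}]$. This is false.
\begin{enumerate}
\item Evaluation at a complex $x$ with $0<|x|\le r$, or at a topologically nilpotent unit $x\in\overline{\mathbb Q}_p$, is a ring homomorphism on $A$, and units of $A$ do not vanish under it. So if $f=u\cdot g$ with $u\in A^\times$ and $g\in\mathbb Z[T^{\pm1}]$, then $g$ vanishes at every zero of $f$. If some zero of $f$ is transcendental over $\mathbb Q$, this forces $g=0$.
\item Such $f$ exist archimedeanly: the kernels of evaluation at transcendental $x\in(0,r]$ are nonzero.
\item They also exist $p$-adically. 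For $r<2/3$, take $u=2-T-\sum_{n\ge2}\epsilon_nT^n$ with $\epsilon_n\in\{0,1\}$. It has no complex zeros in $|z|\le r$, since $r/(1-r)<2$. It has exactly one $2$-adic zero $x\in 2\mathbb Z_2$. Distinct sequences $(\epsilon_n)$ give distinct $x$, because the first differing term has strictly smallest valuation. Hence for most choices $x$ is transcendental, and $u$ is associate to no Laurent polynomial. Your classification omits these type (3) primes at transcendental points.
\item Even at algebraic points, a minimal polynomial is usually not prime in $A$. For $r=\tfrac12$, the polynomial $5T-2$ lies in two distinct maximal ideals: evaluation at the complex point $2/5$ and at the $2$-adic point $2/5\in2\mathbb Z_2$. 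It factors in $A$ into two primes, and neither is associate to a Laurent polynomial, since any such polynomial would vanish at both points. So the type (1) primes are not minimal polynomials either.
\item Your Rouch\'e fallback does not help. An integral truncation $g$ of $f$ has the same \emph{number} of zeros in the disc, not the same zeros, so $f/g$ is not holomorphic there.
\end{enumerate}

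\textbf{What is missing, and how the paper supplies it.} You need to construct, for each point, a generator of its evaluation kernel that is a genuine power series, and to show the quotients are fields. The paper proceeds as follows.
\begin{enumerate}
\item \emph{Maximality.} Digit expansions show evaluation is surjective onto $\mathbb R$, $\mathbb C$, $\mathbb F_p((T))$, resp.~$K=\mathbb Q_p(x)$. So the listed ideals are maximal.
\item \emph{Archimedean generators.} First find a real polynomial $g\in1+T^n\mathbb R[T]$ whose only zeros in $0<|y|\le r$ are $x$ (and $\bar x$), simple. This is done by iterating $g_{k+1}=g_k(1-\tfrac{a_k}{m}T^k)^m$, with $n$ chosen so that $r^n<2(1-r)$. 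Then multiply by some $h\in1+T^n\mathbb R[[T]]$ with all coefficients of absolute value at most $\tfrac12$, so that $f=gh\in1+T^n\mathbb Z[[T]]$. The bound on $n$ ensures $h$ has no zeros in the disc. Since $f$ is a unit of $\mathbb Z[[T]]$, dividing by $f$ preserves integrality, and holomorphy preserves overconvergence. Hence $f$ generates the kernel.
\item \emph{$p$-adic generators.} Expanding $p^m$ in powers of $x$ gives an element $p^m-\sum_{n>0}a_nT^n$ of the kernel. Dividing off its archimedean prime factors yields a generator, which one checks inside $\mathbb Z_p[[T]]$.
\item \emph{Conclusion.} With these generators in hand, your final step works. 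Normalize a nonzero $f$ so that $a_0>0$. It lies in finitely many of these ideals, with finite multiplicity. Dividing by the corresponding generators leaves an element with $a_0=1$ (otherwise it would have a $p$-adic zero) and no zeros in the closed disc, hence a unit.
\end{enumerate}
Thus $A$ is a UFD whose prime elements generate maximal ideals, hence a PID. No resultant, Bezout or noetherianity argument is needed.
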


\begin{proof} It is clear that the ideals described are prime ideals. First, we check that all of them are principal, and that the respective quotient rings are already fields (so that these ideals are also maximal).

In case (1), assume first that $x$ is real, so $0<x\leq r$ or $-r\leq x<0$. The second case reduces to the first under the involution of $\mathbb Z((T))_{>r}$ taking $T$ to $-T$, so we assume for concreteness that $0<x\leq r$. We start by verifying the surjectivity of
\[
\mathbb Z((T))_{>r}\to \mathbb R: \sum a_n T^n\mapsto \sum a_n x^n.
\]
Fix some integer $N$ such that $x\geq \tfrac 1N$. Then we claim that any element $y\in \mathbb R_{\geq 0}$ is represented by a power series with coefficients in $[0,N-1]$. Note that when $x=\tfrac 1N$, this is precisely an $N$-adic expansion of $y$. For the proof, take the maximal $n$ such that $x^n\leq y$, and the maximal integer $a_n$ such that $a_n x^n\leq y$. Then necessarily $a_n\leq N-1$ (otherwise $x^{n+1}\leq y$). Now pass to $y-a_nx^n$ and repeat, noting that the sequence of $n$'s is strictly decreasing.

To find a generator, we first find a polynomial $g_n\in 1+T^n\mathbb R[T]$ such that inside the disc $\{0<|y|\leq r\}$, the only zero is $x$, with multiplicity $1$. We argue by induction on $n=1$; for $n=1$, we can take $g_1=1-x^{-1}T$. Now write $g_n=1+a_nT^n+\ldots$. Then, taking any integer $m>|a_n|$, we can take
\[
g_{n+1} = g_n(1-\tfrac{a_n}mT^n)^m.
\]

Now choose $n$ large enough so that $r^n<2(1-r)$ and let $g=g_n$. We can find some $h=1+c_nT^n+c_{n+1}T^{n+1}+\ldots\in 1+T^n\mathbb R[[T]]$ such that all $|c_i|\leq \tfrac 12$ and $f=gh\in 1+T^n\mathbb Z[[T]]$. Note that $h$ is invertible on $\{y\mid 0<|y|\leq r\}$, as
\[
|\sum_{i\geq n} c_iy^i|\leq \sum_{i\geq n} \frac 12 r^i = \frac 12 \frac{r^n}{1-r}<1.
\]

We claim that $f$ generates the kernel of evaluation at $x$. Indeed, if $g\in \mathbb Z((T))_{>r}$ vanishes at $x$, then $\frac gf\in \mathbb Z((T))$ (as $f\in 1+T\mathbb Z[[T]]\subset \mathbb Z[[T]]^\times$) and it still defines a holomorphic function on $\{0<|y|<r'\}$ for some $r'>r$; thus, $\frac gf\in \mathbb Z((T))_{>r}$.

Still in case (1), if now $x$ is complex, we argue similarly: First, surjectivity of
\[
\mathbb Z((T))_{>r}\to \mathbb C: \sum a_n T^n\mapsto \sum a_n x^n
\]
can be proved in a similar way, this time allowing coefficients in $[-N,N]$ for $N$ sufficiently large. To find a generator, argue as above by finding first a polynomial $g_n\in 1+T^n\mathbb R[T]$ whose only zeroes in $\{0<|y|\leq r\}$ are $x$ and its complex conjugate $\overline{x}$ with multiplicity $1$, starting with $g_1=(1-x^{-1}T)(1-\overline{x}^{-1}T)$. Multiplying by a power series $h$ as before then produces the desired generator $f=gh\in \mathbb Z((T))_{>r}$.

In case (2), it is clear that the ideals are principal. We claim that the map
\[
\mathbb Z((T))_{>r}/p\to \mathbb F_p((T))
\]
is an isomorphism. It is clearly injective. For surjectivity, we use that there is a set-theoretic section $\mathbb F_p\to \mathbb Z$ with image $\{0,1,\ldots,p-1\}$, so we can always lift to a power series with bounded coefficients $a_n$. But any power series with bounded coefficients lies in $\mathbb Z((T))_{>r}$.

It remains to handle case (3). Let $K\subset \overline{\mathbb Q}_p$ be the field generated by $x$, which is a finite extension of $\mathbb Q_p$; let $d$ be its degree. First, we check that the map
\[
f_x: \mathbb Z((T))_{>r}\to K: \sum a_n T^n\mapsto \sum a_n x^n
\]
is surjective. More precisely, if we denote by $\Lambda\subset \mathcal O_K$ the $\mathbb Z_p$-lattice generated by $x$, we claim that the map
\[
\mathbb Z[[T]]_{>r}\to \Lambda: \sum a_n T^n\mapsto \sum a_n x^n
\]
is surjective. For this, observe that if $x^d=up^m$ for some unit $u\in \mathcal O_K$ and integer $m>0$, then any element of $\Lambda/x^d$ can be written uniquely as a sum $\sum_{j=0}^{d-1} a_j x^j$ with all $a_j\in \{0,\ldots,p^m-1\}$. It follows that one can reach any element of $\Lambda$ uniquely by a power series $\sum a_n T^n$ where all $a_n\in \{0,\ldots,p^m-1\}$. Indeed, the projection to $\Lambda/x^d$ determines the first $d$ coefficients uniquely, and then we can divide by $T^d$ and induct.

Moreover, representing the element $p^m\in \Lambda$ in this way, we get an equation
\[
p^m = f_x(\sum_{n>0} a_n T^n)
\]
where all $a_n\in \{0,\ldots,p^m-1\}$. Consider the element
\[
g_x = p^m - \sum a_n T^n
\]
in the kernel of $f_x$. Note that $g_x$ may have additional archimedean zeroes. However, as we already have generators for them, we can write $g_x=g_{x,1} g_{x,2}$ where $g_{x,1}$ has no archimedean zeroes, while $g_{x,2}\in 1+T\mathbb Z[[T]]_{>r}$. We claim that $g_{x,1}$ generates the kernel of $f_x$. Indeed, it clearly lies in the kernel of $f_x$; we have to see that if $h\in \mathbb Z((T))_{>r}$ is any element in the kernel of $f_x$, then $\frac{h}{g_{x,1}}\in \mathbb Q((T))$ still lies in $\mathbb Z((T))_{>r}$. As $g_{x,1}\in p^m + T\mathbb Z[[T]]$, the inverse of $g_{x,1}$ has only powers of $p$ in the denominator, so $\frac{h}{g_{x,1}}\in \mathbb Z[\tfrac 1p]((T))$. But it also lies in $\mathbb Z_p[[T]]$ as $g_{x,1}$ is a generator of the kernel of $\mathbb Z_p[[T]]\to \mathcal O_K$. It remains to see that the convergence condition is satisfied, but for this we simply note that $g_{x,1}$ has no archimedean zeroes, so the quotient still defines a holomorphic function on $\{0<|y|<r'\}$ for some $r'>r$.

Finally, we can finish the proof that $\mathbb Z((T))_{>r}$ is a principal ideal domain. Take any nonzero element $f\in \mathbb Z((T))_{>r}$. We want to see that, up to a unit, it is a product of the generators of the principal ideals listed in the statement. Up to scaling by a unit, it is of the form
\[
f= a_0+\sum_{n>0} a_n T^n
\]
where $a_0>0$. We can now see that it lies in only finitely many of the principal ideals listed above: This is a standard result in complex analysis for the first points, and for the second and third type of points, only primes $p$ dividing $a_0$ are relevant. There are only finitely many such, and for any $p$ these primes are all contracted from $\mathbb Z_p((T))$, where the relevant finiteness holds. (We also see that it lies in any of these maximal ideals with finite multiplicity.) Dividing by the corresponding generators, we can assume that $f$ does not lie in any of the principal ideals listed in the statement. This implies that $a_0=1$ as otherwise there will be a zero at a $p$-adic place, by the similar result for $\mathbb Z_p((T))$. Then $f^{-1}\in 1+T\mathbb Z[[T]]$, and it still has convergence radius $>r$ as $f$ has no zeroes on the closed disc of radius $r$. Thus, $f$ is invertible, as desired.
\end{proof}

Now we can prove Theorem~\ref{thm:Ainflike} from the last lecture.

Given any $0<r'<r<1$, we consider the map $\mathbb Z((T))_r\to \mathbb R: T\mapsto r'$. Applying the previous lemma to $x=r'$, we get some $f=f_{r'}\in \mathbb Z((T))_{>r}$ that vanishes only at $r'$. We claim that the sequence
\[
0\to \mathbb Z((T))_r\xrightarrow{f_{r'}} \mathbb Z((T))_r\to \mathbb R\to 0
\]
is exact. Surjectivity was proved above (even $\mathbb Z((T))_{>r}$ surjects). On the other hand, if $g\in \mathbb Z((T))_r$ vanishes at $x$, then $gf_{r'}^{-1}\in \mathbb Z((T))$ and it follows from a standard complex analysis consideration that it still lies in $\mathbb Z((T))_r$.

We want to see that the sequence is still exact as condensed abelian groups, and we want to identify the spaces of measures. For this, we prove the following quantitative version:

\begin{proposition} There are constants $C_1,\ldots,C_4$ with the following properties:
\begin{enumerate}
\item If $g\in \mathbb Z((T))_{r,\leq c}$ then $f_{r'}g\in \mathbb Z((T))_{r,\leq C_1c}$.
\item Conversely, if $g\in \mathbb Z((T))_r$ such that $f_{r'}g\in \mathbb Z((T))_{r,\leq c}$, then $g\in \mathbb Z((T))_{r,\leq C_2c}$.
\item Let $0<p<1$ such that $(r')^p=r$. If $g\in \mathbb Z((T))_{r,\leq c}$, then $g(x)\in \mathbb R_{\ell^p\leq C_3c}$.
\item Conversely, for any $z\in \mathbb R_{\ell^p\leq c}$, there is some $g\in \mathbb Z((T))_{r,\leq C_4c}$ such that $g(x)=z$.
\end{enumerate}
\end{proposition}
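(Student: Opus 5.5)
Throughout, write $x=r'$ and $\|\sum a_nT^n\|_r=\sum_n|a_n|r^n$, so that $\mathbb Z((T))_{r,\leq c}$ is the set of elements with $\|\cdot\|_r\leq c$. For real numbers, ``$z\in\mathbb R_{\ell^p\leq c}$'' means $|z|^p\leq c$. Parts (1), (3) and (4) are direct estimates. Part (2) is a division estimate, and I expect it to be the main obstacle.

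\emph{Part (1).} The function $\|\cdot\|_r$ extends to the real Laurent series with $\sum|a_n|r^n<\infty$, forming a weighted $\ell^1$ Banach algebra $\mathbb R((T))_r$. It is submultiplicative: $\|gh\|_r\leq\|g\|_r\|h\|_r$, by the triangle inequality and $r^{n+m}=r^nr^m$. Since $f_{r'}\in\mathbb Z((T))_{>r}$, its norm $C_1:=\|f_{r'}\|_r$ is finite, and (1) follows.

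\emph{Parts (3) and (4).} For (3), use $|a|\leq|a|^p$ for integers $a$, the $p$-subadditivity $|\sum b_n|^p\leq\sum|b_n|^p$, and $x^{np}=r^n$. Together these give
\[
|g(x)|^p\leq\sum_n|a_n|^px^{np}=\sum_n|a_n|^pr^n\leq\sum_n|a_n|r^n\leq c,
\]
so $C_3=1$. For (4), reduce by sign to $z>0$ and reuse the greedy expansion from the proof of Theorem~\ref{thm:principalideal}. Fix an integer $N$ with $x\geq1/N$. Take $n$ maximal with $x^n\leq z$; the expansion then has digits in $[0,N-1]$ and only exponents $\leq n$. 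Hence
\[
\|g\|_r\leq(N-1)\sum_{k\leq n}r^k=\frac{N-1}{1-r}\,r^n=\frac{N-1}{1-r}\,x^{np}\leq\frac{N-1}{1-r}\,z^p,
\]
and we may take $C_4=(N-1)/(1-r)$.

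\emph{Part (2).} I would work entirely in the real Banach algebra $\mathbb R((T))_r$, so integrality of coefficients plays no role. By construction, $f_{r'}=g_nh$, where $g_n\in1+T^n\mathbb R[T]$ has $x$ as its only zero in $\{0<|y|\leq r\}$, and it is simple. Also $h$ has no zeros on the closed disc $|y|\leq r$. So I would factor $f_{r'}=(1-T/x)u$, where $u\in\mathbb R[[T]]$ has radius of convergence $>r$ and no zeros on $|y|\leq r$. By Wiener's lemma for the weighted $\ell^1$ algebra of the closed disc of radius $r$, whose maximal ideal space is that disc, $u$ is invertible in $\mathbb R((T))_r$. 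Then $\|g\|_r\leq\|u^{-1}\|_r\,\|(1-T/x)g\|_r$, which reduces us to dividing by $1-T/x$.

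Write $k=fg$, so that $k':=u^{-1}k=(1-T/x)g$ satisfies $\|k'\|_r\leq\|u^{-1}\|_r\,c$. Then $k'(x)=0$, and comparing coefficients $b_n-b_{n-1}/x=a'_n$ determines $g=\sum b_nT^n$ as the tail sum
\[
b_n=-\sum_{k>n}a'_kx^{n-k}.
\]
This converges because $x<r$ and $\|k'\|_r<\infty$; it is the correct choice because $g\in\mathbb Z((T))_r$ forces $b_n\to0$ at the relevant end. The estimate is then
\[
|b_n|r^n\leq\sum_{k>n}|a'_k|r^k\,(x/r)^{k-n},
\]
and summing the geometric factor over $n$ yields $\|g\|_r\leq\frac{1}{1-x/r}\|k'\|_r$. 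Hence $C_2=\|u^{-1}\|_r/(1-x/r)$.

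The delicate points are justifying Wiener invertibility of $u$ on the closed disc of radius $r$, and checking that the tail formula, rather than the head formula, gives the unique solution in $\mathbb Z((T))_r$. Both should follow from $x<r$ and the location of the zeros of $f_{r'}$ established earlier.
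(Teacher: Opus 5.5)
Your proposal is correct up to a few slips, listed at the end. Parts (1), (3) and (4) are the paper's arguments. For (4) the paper bounds the greedy digits by $(r')^{-1}$ and gets $C_4=\frac{1}{r'(1-r)}$; your use of $N$ instead is immaterial.

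For (2) you take a more hands-on route than the paper, so a comparison is useful.
\begin{itemize}
\item The paper argues in one line. The zero $r'$ of $f_{r'}$ lies strictly inside $|y|=r$, so $f_{r'}$ has no zeros on a thin annulus $r''\le|y|\le r'''$ around that circle. Hence $1/f_{r'}$ has a two-sided Laurent expansion $f_{r'}^{-1}$ lying in the weighted algebra $\ell^1(\mathbb Z,r^n)$ of two-sided series with $\sum_n|a_n|r^n<\infty$. Since $g$ and $f_{r'}g$ also lie there, associativity gives $g=f_{r'}^{-1}\cdot(f_{r'}g)$, and one can take $C_2=\|f_{r'}^{-1}\|_r$.
\item You instead split off the zero, $f_{r'}=(1-T/x)u$, invert $u$ as a power series, and divide by $1-T/x$ by hand. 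Your tail formula is just multiplication by $-\sum_{j\ge1}x^jT^{-j}$, the Laurent expansion of $(1-T/x)^{-1}$ on $|y|>x$. So your inverse $u^{-1}\cdot(-\sum_{j\ge1}x^jT^{-j})$ is exactly the paper's $f_{r'}^{-1}$, written as a product.
\item The paper's version needs only that $f_{r'}$ is zero-free near $|y|=r$, and there is no question of which solution to take. Yours stays inside one-sided Laurent series and gives an explicit constant. The price is using the finer zero structure of $f_{r'}$ and having to justify the tail formula.
\end{itemize}

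Both points you flag as delicate are easy.
\begin{itemize}
\item $u$ is holomorphic on $|y|<1$ and zero-free on $|y|\le r$. So $1/u$ converges on a disc of radius $>r$, and Cauchy estimates give $\|u^{-1}\|_r<\infty$; no Wiener lemma is needed.
\item The unique solution of $(1-T/x)g=k'$ in $\mathbb R((T))$ is the head formula $b_n=\sum_{k\le n}a'_kx^{k-n}$. It coincides with the tail formula because their difference is $x^{-n}k'(x)=0$.
\end{itemize}

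The slips to fix are as follows.
\begin{itemize}
\item The sentence ``$\|g\|_r\le\|u^{-1}\|_r\|(1-T/x)g\|_r$'' should read $\|(1-T/x)g\|_r\le\|u^{-1}\|_r\|f_{r'}g\|_r$, as in your next paragraph.
\item The tail formula should have $x^{k-n}$, not $x^{n-k}$. Your subsequent estimate already uses the correct exponent.
\item In (4), since $x<1$ you must take $n$ \emph{minimal} with $x^n\le z$, and the expansion then uses exponents $k\ge n$. The relevant sum is $\sum_{k\ge n}r^k=r^n/(1-r)$; as written, $\sum_{k\le n}r^k$ diverges. The digit bound $a_n\le N-1$ comes from $z<x^{n-1}$ (by minimality) together with $1/x\le N$.
\end{itemize}
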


The proposition easily implies exactness as condensed abelian groups (think in terms of ind-(compact Hausdorff) sets, and use that for maps of compact Hausdorff spaces, surjectivity can be checked on points).

\begin{proof} Take $C_1$ so that $f_{r'}\in \mathbb Z((T))_{r,\leq C_1}$, then the claim about $C_1$ is clear. For the claim about $C_2$, note that the norm can be expressed in terms of the restriction to a function on $\{y\mid |r''|\leq |y|\leq |r'''|\}$ for $r'<r''<r<r'''$. On that strip, $f_{r'}$ is invertible, so picking a bound of the norm for the inverse gives the claim.

For $C_3$, let $g=\sum a_nT^n$ with $\sum |a_n|r^n\leq c$. Then
\[
|g(x)|^p\leq \sum |a_n|^p (r')^{np} = \sum |a_n|^p r^n\leq \sum |a_n|r^n\leq c,
\]
so we can in fact take $C_3=1$. Here, we used critically that all $a_n$ are integers, which implies that $|a_n|^p\leq |a_n|$: This is true whenever $a_n=0$ or $|a_n|\geq 1$ (when $p<1$).

For $C_4$, take any $z\in \mathbb R$ with $|z|^p\leq c$ and assume $z\neq 0$. Take $n$ minimal so that $(r')^n\leq |z|$, and write $z=a_n(r')^n + z'$ where $a_n\in \mathbb Z$ and $|z'|<(r')^n$. Note that $|a_n|\leq (r')^{-1}$. Continuing, we can write $z=g(x)$ for some $g=a_nT^n+\ldots\in \mathbb Z((T))$ with $|a_i|\leq (r')^{-1}$ for all $i$. Then
\[
\sum |a_i|r^i\leq (r')^{-1} \frac{r^n}{1-r}\leq \frac{1}{r'(1-r)} |z|^p,
\]
so we can take $C_4=\frac{1}{r'(1-r)}$.
\end{proof}

From here, by observing that these bounds immediately generalize to finite free modules, one easily deduces Theorem~\ref{thm:Ainflike}~(2). For part (3), we need a corresponding version. Note first that
\[
\mathbb Z((T))_r/(f_{r'})^m\cong \mathbb R[X]/X^m
\]
where the isomorphism sends $T$ to $[r']=(r')^{1+X}=r'+r'\log r' X + \ldots$. In other words, we have a short exact sequence
\[
0\to \mathbb Z((T))_r\xrightarrow{f_{r'}^m} \mathbb Z((T))_r\xrightarrow{\theta_m} \mathbb R[X]/X^m\to 0.
\]

\begin{proposition} There are constants $C_1,\ldots,C_4$ (depending on $m$) with the following properties:
\begin{enumerate}
\item If $g\in \mathbb Z((T))_{r,\leq c}$ then $f_{r'}^mg\in \mathbb Z((T))_{r,\leq C_1c}$.
\item Conversely, if $g\in \mathbb Z((T))_r$ such that $f_{r'}^mg\in \mathbb Z((T))_{r,\leq c}$, then $g\in \mathbb Z((T))_{r,\leq C_2c}$.
\item If $g\in \mathbb Z((T))_{r,\leq c}$, then $\theta_m(g)\in (\mathbb R[X]/X^m)_{\ell^p\leq C_3c}$.
\item Conversely, for any $z\in (\mathbb R[X]/X^m)_{\ell^p\leq c}$, there is some $g\in \mathbb Z((T))_{r,\leq C_4c}$ such that $\theta_m(g)=z$.
\end{enumerate}
\end{proposition}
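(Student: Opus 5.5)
Parts (1) and (2) go exactly as in the case $m=1$, and (3), (4) follow the case $m=1$ combined with Proposition~\ref{prop:stableimage}.

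For (1), take $C_1=C_1(1)^m$, since $f_{r'}^m\in \mathbb Z((T))_{r,\leq C_1(1)^m}$ by submultiplicativity of the norm $\sum|a_n|r^n$. For (2), iterate the $m=1$ statement: if $f_{r'}^m g$ has norm $\leq c$, then applying (2) for $m=1$ to $f_{r'}^{m-1}g$ gives norm $\leq C_2(1)c$, and so on. This yields $C_2=C_2(1)^m$. Alternatively, reuse the argument that the norm is controlled by sup norms on an annulus $\{r''\leq |y|\leq r'''\}$ on which $f_{r'}^m$ is invertible.

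For (3), write $g=\sum a_nT^n$. Then $\theta_m(g)=\sum_n a_n[r']^n=\sum_n a_n[(r')^n]$, using multiplicativity of $x\mapsto[x]$ and $[r']^n=[(r')^n]$. Since $a_n\in\mathbb Z$, we can write $a_n[(r')^n]$ as a sum of $|a_n|$ copies of $[\pm(r')^n]$. Thus $\theta_m(g)$ is the image, under the summation map from a (possibly countably infinite) set $S$ to a point, of an element of $(\mathbb R[X]/X^m)[S]$ supported in degree $0$ coordinates. That element has $\ell^p$-mass $\sum|a_n|(r')^{np}=\sum|a_n|r^n\leq c$. Proposition~\ref{prop:stableimage} then gives $\theta_m(g)\in(\mathbb R[X]/X^m)_{\ell^p\leq Cc}$. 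To pass from finite to infinite sums, apply the proposition to the finite truncations and use that the bounded set $(\mathbb R[X]/X^m)_{\ell^p\leq Cc}$ is closed in $\mathbb R[X]/X^m$, together with convergence of the partial sums. So one may take $C_3=C(m,p)$.

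For (4), which I expect to be the main obstacle, take $z=\sum_{i<m}[x_i]X^i$ with $\sum|x_i|^p\leq c$. It suffices to treat a single term $[x]X^i$ with $|x|^p\leq c_i$ and then add, at the cost of a factor $m$. The key point is to realize $X$ (and $[x]$) as images of elements with controlled norm. First, by (4) for $m=1$ and the $\lambda$-scaling trick, I would produce a lift of $[x]$ itself. The idea is to use the greedy expansion $x=\sum a_n(r')^n$ with $|a_n|\leq (r')^{-1}$, starting at $n$ with $(r')^n\approx|x|$. Then the lift $g=\sum a_nT^n$ satisfies $\theta_m(g)=\sum a_n[(r')^n]$, which is not $[x]$ but differs from it by an element of $X(\mathbb R[X]/X^m)$. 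By homogeneity under multiplication by $[(r')^k]$, the size of this difference in the $\ell^p$-sense is $\lesssim|x|^p$ times a constant, by the stability estimate above.

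So I would do an induction on the $X$-adic filtration. Lift the degree $0$ part as above, subtract, and get an error lying in $X\cdot(\mathbb R[X]/X^m)$ of $\ell^p$-size $\leq C c$. Then handle the $X^1$ coefficient, and so on. To lift $[y]X$ I need a fixed element $h\in\mathbb Z((T))_r$ with $\theta_m(h)=X\cdot(\text{unit})$. Any element $h$ of the kernel of $\theta_1$ not divisible by $f_{r'}^2$ works, for instance $h=f_{r'}$ itself, since $\theta_m(f_{r'})=uX$ with $u$ a unit. I then multiply the previously constructed lift of $[y]u^{-1}$ by $f_{r'}$, and $u^{-1}$ has bounded size. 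After $m$ steps the error vanishes, and the constants accumulate to some $C_4(m)$. The delicate point is checking that multiplying by a fixed unit of $\mathbb R[X]/X^m$ and the correction steps only enlarge $\ell^p$-bounds by constants. For this I would again invoke Proposition~\ref{prop:stableimage} together with the scaling invariance under $[\lambda]$.
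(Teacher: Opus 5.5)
Your proposal is correct and follows the paper's proof. Parts (1) and (2) come from iterating the case $m=1$. Part (3) comes from writing $\theta_m(g)=\sum_n a_n[(r')^n]$ as a sum of $\sum_n|a_n|$ terms $[\pm(r')^n]$ of total $\ell^p$-mass $\sum_n|a_n|r^n\leq c$, and applying Proposition~\ref{prop:stableimage} to finite truncations, using that the target is closed. Part (4) comes from successive approximation along the $X$-adic filtration, using the $m=1$ lifting together with (1) and (3).

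The point you flag as delicate is harmless. A product $\bigl(\sum_j[v_j]X^j\bigr)\bigl(\sum_i[x_i]X^i\bigr)=\sum_{i,j}[v_jx_i]X^{i+j}$ is a sum of $m^2$ terms of total mass $\bigl(\sum_j|v_j|^p\bigr)\bigl(\sum_i|x_i|^p\bigr)$. Hence Proposition~\ref{prop:stableimage} bounds multiplication by a fixed unit, and also the subtraction steps, by constants depending only on $m$ and $p$.
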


\begin{proof} Parts (1) and (2) follow inductively from parts (1) and (2) of the previous proposition. It remains to handle part (3): Part (4) then follows by successive approximation (using parts (3) and (4) of the previous proposition, and the other claims already established).

Thus let $g=\sum a_n T^n\in \mathbb Z((T))_r$ with $\sum |a_n|r^n\leq c$. Then
\[
\theta_m(g)=\sum a_n [r']^n\in \mathbb R[X]/X^m.
\]
We need to bound this. Recall from Proposition~\ref{prop:stableimage} that there is some constant $C$ such that for all maps $f: S\to S'$ of finite sets, one has
\[
f((\mathbb R[X]/X^m)[S]_{\ell^p\leq c})\subset (\mathbb R[X]/X^m)[S']_{\ell^p\leq Cc}.
\]
We claim that we can take $C_3=C$; equivalently,
\[
\sum a_n [r']^n\in (\mathbb R[X]/X^m)_{\ell^p\leq Cc}.
\]
To check this, we can assume that the sum is finite (as the target is closed). Let $S$ be a finite set of cardinality $\sum |a_n|$. Then we can define an element $\tilde{z}\in (\mathbb R[X]/X^m)[S]$ whose coefficients are of the form $\pm [r']^n=[\pm r']^n$, such that the image of $\tilde{z}$ in $\mathbb R[X]/X^m$ (the sum of the coefficients) is $\sum a_n [r']^n$. Thus, it suffices to see that $\tilde{z}\in (\mathbb R[X]/X^m)[S]_{\ell^p\leq c}$. But this is clear: One has to bound the sum of $(r')^{np}=r^n$, where there are $|a_n|$ occurences of $r^n$, giving in total at most $\sum |a_n|r^n\leq c$.
\end{proof}

The lecturer finds the way in which the varying $\ell^p$-norms, as well as the subtle $B_{\mathrm{dR}}^+$-type deformations, arise from the very simple-minded spaces of measures for $\mathbb Z((T))_r$ quite striking. The discreteness of $\mathbb Z$ is critical for this behaviour: In some sense, the real $B_{\mathrm{dR}}^+$ arises via some kind of discretization.

\newpage

\section{Lecture VIII: Reduction to ``Banach spaces''}

Fix some $0<r<1$. Recall that we want to prove Theorem~\ref{thm:key}:

\begin{theorem}\label{thm:keyrestated} Let $K$ be a condensed $\mathbb Z[T^{-1}]$-module that is the kernel of some map
\[
f: \bigoplus_{i\in I} \mathcal M(S_i,\mathbb Z((T))_{>r})\to \bigoplus_{j\in J} \mathcal M(S_j',\mathbb Z((T))_{>r})
\]
where all $S_i$ and $S_j'$ are extremally disconnected. Then for all $1>r'>r$ and all profinite sets $S$, the map
\[
R\intHom_{\mathbb Z[T^{-1}]}(\mathcal M(S,\mathbb Z((T))_{r'}),K)\to R\intHom_{\mathbb Z[T^{-1}]}(\mathbb Z[T^{-1}][S],K)
\]
in $D(\Cond(\Ab))$ is an isomorphism.
\end{theorem}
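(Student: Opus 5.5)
This is the main theorem of the liquid theory, and its proof is long. The plan follows the lecture title: reduce to a statement about ``Banach spaces'' over $\mathbb Z((T))_{r'}$, and then attack that statement by explicit profinite resolutions.

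\textbf{Step 1: structural reductions on $K$.} Since each $\mathcal M(S_i,\mathbb Z((T))_{>r})$ is a countable union $\bigcup_{r''>r,c}$ of profinite sets $\mathcal M(S_i,\mathbb Z((T))_{r''})_{\leq c}$, the kernel $K$ is a filtered colimit (along injections) of subobjects. Both sides of the map commute with such filtered colimits. For the left side this uses that $\mathcal M(S,\mathbb Z((T))_{r'})$ is a countable union of profinite sets, together with a compactness argument. This reduces to the case where $I$, $J$ are finite and $f$ carries the $r''$-, $c$-bounded pieces into bounded pieces.

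We then write $K$ as an intersection, i.e.\ a limit, of condensed modules of the form ``$\ell^1$-type sequence spaces over $\mathbb Z((T))_{r''}$''. Concretely, $\mathcal M(S_i,\mathbb Z((T))_{r''}) = \varprojlim_k \mathbb Z((T))_{r''}[S_{i,k}]$ with bounds. Passing to the limit over $r''\downarrow r$ and using that $R\intHom$ commutes with limits in the second variable, it suffices to treat a fixed $r''$. This leaves a kernel of a map between ``$r''$-Banach'' modules: quasiseparated condensed $\mathbb Z[T^{-1}]$-modules that are unions of profinite sets $V_{\leq c}$ with $T^{-1}V_{\leq c}\subset V_{\leq c/r''}$, $V_{\leq c}+V_{\leq c}\subset V_{\leq 2c}$, and completeness. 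Kernels of bounded maps between such objects are again of this form, with comparable norms.

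\textbf{Step 2: reformulation as Ext-vanishing.} Using the resolution $0\to \mathbb Z[T^{-1}][S]\to \cdots$ and the fact that $\mathbb Z[T^{-1}][S]$ is projective-like, the claim becomes the following. For a Banach-type $V$ with norm exponent $r''>r'$, every map $\mathbb Z[S]\to V$ extends uniquely to $\mathcal M(S,\mathbb Z((T))_{r'})$, and
\[
R\intHom_{\mathbb Z[T^{-1}]}\big(\mathcal M(S,\mathbb Z((T))_{r'})/\mathbb Z[T^{-1}][S],V\big)=0.
\]
Existence of extensions in degree $0$ is a direct convergence estimate, as in Proposition~\ref{prop:completelocallyconvex}. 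Here integrality of coefficients gives $|a_n|^{q}\le |a_n|$, as in the $C_3$ bound of Lecture VII, which makes $r'<r''$ exactly what is needed.

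\textbf{Step 3: higher vanishing, the main obstacle.} We must show $\Ext^i=0$ for $i>0$, computed in condensed abelian groups. The strategy is to resolve $\mathcal M(S,\mathbb Z((T))_{r'})$ explicitly by free condensed abelian groups on profinite sets; a Breen--Deligne type resolution is the natural choice. This turns the Ext groups into the cohomology of an explicit complex whose terms are $V(\text{profinite sets})$, i.e.\ continuous maps of profinite sets $\mathcal M_{\leq c}^{n}\to V$. Profinite cohomology with coefficients in $V$ vanishes, by the profinite case of Theorem~\ref{thm:pbanachcohom}. The remaining task is a \emph{quantitative} statement: cocycles of norm $\le c$ on bounded pieces are coboundaries of cochains of norm $\le Cc$ on slightly smaller pieces, with constants uniform in the profinite approximation $S_k$.

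This is where I expect all the difficulty to lie. My first attempt is the following. Prove the bounded acyclicity for finite $S_k$ by an induction on the norm filtration, exploiting the gap $r'<r''$ to absorb constants through rescaling by $T^{-1}$. Then pass to the limit over $k$ using the stable-image trick of Proposition~\ref{prop:stableimage}. Finally, deduce exactness by a Mittag-Leffler/compactness argument on ind-compact Hausdorff systems, as in the remark after the $C_1,\dots,C_4$ proposition. It is plausible that uniform constants require replacing $V$ by a profinite-pro system of finite lattices, and that the proof ultimately becomes a difficult combinatorial statement about ``polyhedral'' lattices.
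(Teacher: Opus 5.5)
Your proposal has a genuine gap, and it sits exactly where you place the difficulty. Step~3 is a hope rather than an argument, and the mechanism you sketch would not produce the required bounds.

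Restricting to finite $S_k$ simplifies nothing, since $\overline{\mathcal M}_{r'}(S)_{\leq c}$ is already an infinite profinite set for $S=\ast$. Proposition~\ref{prop:stableimage} concerns $\ell^p$-bounds on $\mathbb R[X]/X^n$ and is irrelevant here: the $\leq c$ pieces over $\mathbb Z((T))_{r'}$ are compatible with pushforward by the triangle inequality. And ``induction on the norm filtration'' gives no way to produce a bounded preimage of a cocycle.

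What is actually needed (Theorem~\ref{thm:explicit3}) is an induction on the cohomological degree $m$. One proves $\leq k$-exactness of the system $C^\bullet_c$ with $k=k(m)$ independent of $S$, of $V$, and of an auxiliary polyhedral lattice $\Lambda$; only $c_0$ may depend on $\Lambda$. Your closing guess about polyhedral lattices points the right way: passing from $\overline{\mathcal M}_{r'}(S)$ to $\Hom(\Lambda,\overline{\mathcal M}_{r'}(S))$ is forced because the inductive step itself produces such spaces. The inductive step then rests on three ideas absent from your plan.
\begin{enumerate}
\item The functorial homotopy in the Breen--Deligne (or MacLane $Q$-) complex between $Q$ of the sum map $M^N\to M$ and the sum of the $N$ projections (Lemma~\ref{lem:homotopyNelements}).
\item The decomposition Lemma~\ref{lem:key}, proved via Gordan's lemma. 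It writes any $x$ of norm $\leq c$ as $x_1+\dots+x_N$ with each $x_i$ of norm $\leq c/N+d$.
\item The normed double-complex argument of Proposition~\ref{prop:key}, applied to the \v{C}ech conerve of $\Lambda\to\Lambda^N$. There the homotopic map has norm at most $Nr^b\leq\epsilon$. This holds because, on $T^{-1}$-invariant cochains, restriction from $\leq c$ to $\leq (r')^b c$ has norm $r^b$: one has $\|Tv\|=r\|v\|$ on $V$, while the measures scale by $r'>r$.
\end{enumerate}
You also need the two-term resolution by $T^{-1}-[T^{-1}]$. Its explicitly bounded inverse (Lemma~\ref{lem:Tinv}) converts $\mathbb Z$-linear cochains into $\mathbb Z[T^{-1}]$-linear ones with norm control.

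Steps~1--2 also have holes.

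\emph{Filtered colimits.} Commuting $R\intHom_{\mathbb Z[T^{-1}]}(\mathcal M(S,\mathbb Z((T))_{r'}),-)$ with filtered colimits is the pseudocoherence statement of Proposition~\ref{prop:liquidpseudocoh}. This includes the passage $r''\downarrow r$, which is a colimit, not a limit. It does not follow from a compactness argument. It comes from the sequence $0\to\mathbb Z[M_{\leq r'}][T^{-1}]\xrightarrow{T^{-1}-[T^{-1}]}\mathbb Z[M_{\leq 1}][T^{-1}]\to\mathbb Z[M]\to 0$ for $M=\mathcal M(S,\mathbb Z((T))_{r'})$, applied termwise to a Breen--Deligne resolution. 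It uses the base ring $\mathbb Z[T^{-1}]$ essentially.

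\emph{Smith type versus Banach type.} The modules $\mathcal M(S_i,\mathbb Z((T))_{r''})$ and their kernels are of Smith type (compact balls), not Banach type. Theorem~\ref{thm:pbanachcohom} says nothing about such unions of profinite sets. The Ext-vanishing one can actually prove (Theorem~\ref{thm:banachcomplete}) is for completions $\widehat V$ of normed groups, where profinite hypercovers are exact with norm control (Proposition~\ref{prop:normedcompletion}). Bridging this requires several steps:
\begin{enumerate}
\item pass from the kernel to images via a triangle;
\item use that $\mathbb Z((T))_{>r}$ is a principal ideal domain (Theorem~\ref{thm:principalideal}) to make the finite-level images finite free (Proposition~\ref{prop:imagefree});
\item resolve the image by a Bousfield--Kan complex with norm-preserving contracting homotopies;
\item replace each term by its Banach completion $M^B$, justified because $M_{r''}\to M_{r'''}$ factors through $M^B_{r'''}$ for $r'''<r''$.
\end{enumerate}
The remaining product over $i$ then forces constants uniform in $V$. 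The paper extracts these from the qualitative statement for all $r$-normed $V$ by a direct-sum argument.

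\emph{Radii.} Your radii in Step~2 are reversed. The target must be $r$-normed with $r<r'$ for the integrals to converge, and this gap $r<r'$ is exactly what the key estimate above exploits.
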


Today, we make some preliminary reductions, and in particular reduce to the case where $K$ is a suitable analogue of a Banach space. First, we note the following simple proposition.

\begin{proposition}\label{prop:liquidabelianlevel} Let $S$ and $S^\prime$ be two profinite sets and let $f: S\to \mathcal M(S',\mathbb Z((T))_r)$ be a map. Then for any $r'\geq r$ less than $1$, there is a unique $\mathbb Z[T^{-1}]$-linear map
\[
\mathcal M(S,\mathbb Z((T))_{r'})\to \mathcal M(S',\mathbb Z((T))_r)
\]
extending the given map $f$.
\end{proposition}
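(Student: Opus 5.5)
Since $f$ factors over some bounded piece, the natural approach is to build the extension levelwise and then pass to the limit. The map $f$ has quasicompact source, so it factors over some $\mathcal M(S',\mathbb Z((T))_r)_{\leq c}$. Write $S'=\varprojlim_j S'_j$ as a limit of finite sets. Then $f$ is the limit of maps $f_j: S\to \mathbb Z((T))_r[S'_j]_{\leq c}$, each of which is continuous from a profinite set into a profinite set. I will construct the extension $\tilde f_j:\mathcal M(S,\mathbb Z((T))_{r'})\to \mathbb Z((T))_r[S'_j]$ for each $j$, with a bound on the image of the $\leq 1$ part that does not depend on $j$. Compatibility in $j$ will follow from uniqueness, and passing to the limit then gives the map into $\mathcal M(S',\mathbb Z((T))_r)$.

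\textbf{Existence for finite $S'$.} Fix $j$. Since $\mathbb Z((T))_r[S'_j]_{\leq c}$ is profinite, write it as the limit of finite sets obtained by truncating coefficients in degrees $\geq N$. Let $T^{N}\mathbb Z[[T]]$-type neighborhoods be the kernels of these truncations. Because $f_j$ is continuous on $S=\varprojlim_i S_i$, for each $N$ the truncation of $f_j$ factors over some $S_{i(N)}$. Equivalently, $f_j=\sum_N g_N$, where $g_N: S\to T^N\mathbb Z[S'_j]$ is locally constant, factors through $S_{i(N)}$, and has coefficients of size $|g_N|\leq c\,r^{-N}$ (coordinate-wise bounded, as coefficients of $f_j$ in degree $N$ are at most $c r^{-N}$).

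For $\mu\in \mathcal M(S,\mathbb Z((T))_{r'})_{\leq 1}$, I define $\tilde f_j(\mu)=\sum_N \langle g_N,\mu\rangle$, where $\langle g_N,\mu\rangle$ is the finite pairing obtained by pushing $\mu$ to $\mathbb Z((T))_{r'}[S_{i(N)}]$. To show convergence, note that the pushforward of $\mu$ has total norm $\leq 1$ in $\mathbb Z((T))_{r'}$, so its coefficient in degree $m$ has size at most $(r')^{-m}$. Summing against $g_N$, which has a single degree $N$ and size $\lesssim c r^{-N}\cdot |S'_j|$, gives a contribution in degree $N+m$ of size $\lesssim c\,r^{-N}(r')^{-m}$. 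Measured in the $r$-norm, this is $\lesssim c\,(r/r')^{m}$ summed over $m$ and $N$.

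I expect this bound to be the main obstacle. The naive estimate does not obviously converge in $N$ at fixed $m$, since there are infinitely many $N$. The fix I would try is to pair differently: group by the total degree, and use that the $r$-norm of $f_j(s)$ is bounded, $\sum_N |a_N(s)|r^N\leq c$, uniformly in $s$. Then the contribution of degree $m$ of $\mu$ is $\sum_N$ of $\pm$ coefficients of $f_j$ at points, weighted by the $\ell^1$-mass $\leq (r')^{-m}$, which gives $r$-norm $\leq c\,(r')^{-m} r^m$. This is summable because $r<r'$ (also for $m$ negative bounded below, since the $r'$-norm constrains negative degrees). Hence $\tilde f_j(\mathcal M_{\leq 1})\subset \mathbb Z((T))_r[S'_j]_{\leq Cc}$ with $C=\sum_m (r/r')^m$-type constant independent of $j$. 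This is the same mechanism as in Proposition~\ref{prop:completelocallyconvex}, and it uses $r'\geq r$; for $r'=r$ one should use instead that the $\leq 1$ ball is profinite and pair directly. Linearity, $\mathbb Z[T^{-1}]$-linearity, and continuity then follow from the construction as a limit of continuous finite-level maps.

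\textbf{Uniqueness.} The target $\mathcal M(S',\mathbb Z((T))_r)$ is quasiseparated, being a countable union of profinite sets. As in the uniqueness proposition of Lecture IV, the kernel of the difference of two extensions is a closed subspace of each compact piece $\mathcal M(S,\mathbb Z((T))_{r'})_{\leq c}$. It contains the $\mathbb Z[T^{-1}]$-span of $S$, and also $T$-multiples of Dirac measures within the bounded piece, since these are reached by $T^{-1}$-linearity and scaling pieces. This span is dense in each piece: finite truncations of measures are finite $\mathbb Z$-combinations of $T^n[s]$, and these can be approximated, because $T^n[s]$ for $n>0$ is the limit of elements of the span in the profinite topology only up to subtleties. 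If density of $\mathbb Z[T^{-1}][S]$ fails, I would instead argue that the finite-level maps are forced by linearity together with continuity in the $T$-adic direction.
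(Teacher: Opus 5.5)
Your route is the paper's: reduce to finite $S'$, let each $T$-adic truncation of $f$ factor through a finite quotient of $S$, define the extension by finite pairings, and get uniqueness from density plus quasiseparatedness. But your norm estimate does not prove the statement. You bound the $\ell^1$-mass of each degree-$m$ coefficient $\mu_m\in\mathbb Z[S]$ of $\mu$ separately by $(r')^{-m}$. That gives the constant $\sum_{m\geq 0}(r/r')^m$, which is fine for $r'>r$ but diverges at $r'=r$. The statement includes $r'=r$, and it is the strongest case: $\mathcal M(S,\mathbb Z((T))_{r'})$ maps to $\mathcal M(S,\mathbb Z((T))_r)$ for $r'\geq r$, which is why the paper reduces to $r'=r$ at once. ``Pair directly'' is not an argument.

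The repair is to keep the masses rather than bounding them one at a time. For $\mu$ in the $\leq 1$ ball, write $\mu=\sum_{m\geq 0}T^m\mu_m$ with $\sum_m \|\mu_m\|_{\ell^1}(r')^m\leq 1$. Then
\[
\Bigl\|\sum_{m\geq 0} T^m\sum_{s}\mu_m(s)f_j(s)\Bigr\|_r\leq c\sum_{m} r^m\|\mu_m\|_{\ell^1}\leq c\sum_m (r')^m\|\mu_m\|_{\ell^1}\leq c .
\]
This is just submultiplicativity of $\sum_n|a_n|r^n$ together with $r^m\leq (r')^m$ for $m\geq 0$. It is exactly the paper's check that, for finite $S$, the map $\sum_s a_s[s]\mapsto \sum_s a_s f(s)$ sends the $\leq 1$ ball into the $\leq 1$ ball, uniformly in $j$ and in $r'\geq r$.

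For uniqueness, the step you only gesture at is the essential one. The target $\mathcal M(S',\mathbb Z((T))_r)$ is a $\mathbb Z((T))_r$-module, so $T^{-1}$ acts invertibly on it. Hence a $\mathbb Z[T^{-1}]$-linear $g$ with $g|_S=0$ satisfies $T^{-n}g(T^n[s])=g([s])=0$, so $g(T^n[s])=0$ for all $n\in\mathbb Z$. Thus $g$ kills $\mathbb Z[T^{\pm 1}][S]$, and this set is dense in each $\mathcal M(S,\mathbb Z((T))_{r'})_{\leq c}$. Indeed, the degree truncations of $\mu$ are finite sums of $T^n[s]$ by Proposition~\ref{prop:freecondensedgroup}, they stay in the $\leq c$ piece, and they converge to $\mu$ in its profinite topology.

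Your fallback is misdirected. $\mathbb Z[T^{-1}][S]$ by itself is \emph{not} dense: it has no positive-degree coefficients, so e.g.~$T[s]$ is not in its closure. Without the invertibility of $T^{-1}$ on the target, the uniqueness argument does not close.
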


\begin{proof} First, we prove uniqueness. Thus assume that some map $g: \mathcal M(S,\mathbb Z((T))_{r'})\to \mathcal M(S',\mathbb Z((T))_r)$ has trivial restriction to $S$. By $\mathbb Z[T^{-1}]$-linearity, it vanishes on $\mathbb Z[T^{\pm 1}][S]$, but this is dense in $\mathcal M(S,\mathbb Z((T))_{r'})$ (and everything is quasiseparated). Thus, the map $g$ vanishes.

For existence, we may assume that $r'=r$, and by rescaling that $f$ maps $S$ into $\mathcal M(S',\mathbb Z((T))_r)_{\leq 1}$. We want to show that there is a map of profinite sets
\[
\mathcal M(S,\mathbb Z((T))_r)_{\leq 1}\to \mathcal M(S',\mathbb Z((T))_r)_{\leq 1}
\]
extending the given map on the dense subset $\mathbb Z[T^{\pm 1}][S]$. Writing $S'$ as an inverse limit of finite sets, this reduces to the case that $S'$ is finite. Then
\[
\mathcal M(S',\mathbb Z((T))_r)_{\leq 1}\subset \prod_{n\geq 0,s'\in S'} \mathbb Z\cdot T^n [s'] = \mathbb Z[[T]][S'].
\]
Moreover, it is defined as an inverse limit of finite subsets of $(\mathbb Z[T]/T^m)[S']$, so we can work modulo $T^m$ for some $m$. Then the map $S\to \mathcal M(S',\mathbb Z((T))_r)_{\leq 1}\to (\mathbb Z[T]/T^m)[S']$ factors through some finite quotient $S\to S_m$, and we can also reduce to the case that $S$ is finite. In that case, the desired map
\[
\mathcal M(S,\mathbb Z((T))_r)\to \mathcal M(S',\mathbb Z((T))_r)
\]
clearly exists, and is given by $\sum_{s\in S} a_s [s]\mapsto \sum_{s\in S} a_s f(s)$. One verifies that this indeed maps $\mathcal M(S,\mathbb Z((T))_r)_{\leq 1}$ into $\mathcal M(S',\mathbb Z((T))_r)_{\leq 1}$.
\end{proof}

The following proposition is easy to prove by using that we choose exactly $\mathbb Z[T^{-1}]$ as the base ring. (It is also true that $\mathcal M_p(S)$ is a pseudocoherent $\mathbb R$-module, but this is somewhat more tricky to prove.)

\begin{proposition}\label{prop:liquidpseudocoh} For any profinite set $S$, the condensed $\mathbb Z[T^{-1}]$-module $\mathcal M(S,\mathbb Z((T))_r)$ is pseudocoherent.
\end{proposition}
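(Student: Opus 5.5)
The plan is to produce an explicit resolution of $\mathcal M(S,\mathbb Z((T))_r)$ by finite free condensed $\mathbb Z[T^{-1}]$-modules on profinite sets, i.e.\ by modules of the form $\mathbb Z[T^{-1}][S_k]$ with $S_k$ profinite. Pseudocoherence means that $\mathcal M(S,\mathbb Z((T))_r)$ admits a resolution $\ldots\to \mathbb Z[T^{-1}][S_1]\to \mathbb Z[T^{-1}][S_0]\to \mathcal M(S,\mathbb Z((T))_r)\to 0$ by such modules, or equivalently that $R\intHom$ out of it commutes with filtered colimits in the appropriate sense.

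The choice of base ring $\mathbb Z[T^{-1}]$ should make a two-term resolution available. The key observation is that $\mathcal M(S,\mathbb Z((T))_r)_{\leq 1}$ is a profinite set. Moreover, $T^{-1}$ maps the $\leq c$ piece into the $\leq c/r$ piece. Hence every element is obtained from the $\leq 1$ part by applying powers of $T^{-1}$ and adding.

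\textbf{Step 1: surjection.} Set $M=\mathcal M(S,\mathbb Z((T))_r)$ and $P=M_{\leq 1}$, a profinite set. Consider the map $\mathbb Z[T^{-1}][P]\to M$. It is surjective because $M=\bigcup_n T^{-n}M_{\leq r^n\cdot c}$, and $M_{\leq c}\subset \bigcup_k (\text{sums of }k\text{ elements of }T^{-m}P)$ for suitable $m,k$.

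\textbf{Step 2: the kernel.} Rather than analysing the kernel directly, I would use a cleverer presentation adapted to $\mathbb Z[T^{-1}]$. The relevant relation is that $P$ is stable under $T$, with $T\cdot P\subset M_{\leq r}\subset P$. Also, addition sends $M_{\leq r}\times M_{\leq 1-r}$ into $P$, which suggests the right generators. Concretely, take $P'=M_{\leq 1}$ and define the map
\[
\mathbb Z[T^{-1}][P']\to \mathbb Z[T^{-1}][P'],\qquad [x]\mapsto [x]-T^{-1}[Tx],
\]
possibly combined with an additivity relation $[x+y]-[x]-[y]$ on a profinite subset of $P\times P$ where the sum stays in $P$. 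The expected outcome is a resolution of the form
\[
0\to \mathbb Z[T^{-1}][Q]\to \mathbb Z[T^{-1}][P]\to M\to 0,
\]
or at worst a finite-length resolution by such terms. Exactness would be checked on each $S'$-valued point, using the concrete coefficient description and the fact that $\mathbb Z[T^{-1}][X]$ is $\bigcup$ of compact pieces (Proposition~\ref{prop:freecondensedgroup}).

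\textbf{Reduction to finite $S$.} To reduce to $S$ finite, write $M_{\leq 1}=\varprojlim_i \mathbb Z((T))_r[S_i]_{\leq 1}$ and note that pseudocoherence is stable under the relevant cofiltered limit structure only if the resolutions are compatible. I would therefore make the construction functorial in finite $S_i$ and pass to the limit termwise, since each term is $\mathbb Z[T^{-1}][\varprojlim]$ of profinite sets. For finite $S$, $M$ is a finite direct sum of copies of $\mathbb Z((T))_r$, so it suffices to treat $\mathbb Z((T))_r$ itself. Its $\leq 1$ part is a profinite set $P$ of power series in $\mathbb Z[[T]]$. Using $T^{-1}$-translates and the relation that every element $\sum a_nT^n$ decomposes via digit-like bounded expansions, I expect a carry description, analogous to the $N$-adic expansion in the proof of Theorem~\ref{thm:principalideal}, to yield an explicit short exact sequence.

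The main obstacle is Step 2, namely identifying the kernel of $\mathbb Z[T^{-1}][P]\to M$ as again generated, compatibly in $S$, by a free module on a profinite set. Non-linear addition with norm bounds makes relations messy, and one must ensure that carries stay within compact pieces. I would first try to choose $P$ consisting of series with coefficients in a fixed finite digit set, so that addition is encoded by bounded carries in a profinite set of carry data.
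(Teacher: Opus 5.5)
Your Step 2 is where the proof actually lives, and as written it does not go through. The relation you propose, $[x]\mapsto[x]-T^{-1}[Tx]$ on $\mathbb Z[T^{-1}][M_{\leq 1}]$, is exactly the right one. After reparametrizing $M_{\leq 1}\cong M_{\leq r}$ via $T$, it is the map $T^{-1}-[T^{-1}]$ on $\mathbb Z[T^{-1}][M_{\leq r}]$.

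But what it resolves is $\mathbb Z[M]$, the free condensed abelian group on the condensed set $M$, with $T^{-1}$ acting through $[T^{-1}]$. It does not resolve $M$. Exactness follows from $\mathbb Z[M]=\varinjlim_n \mathbb Z[T^{-n}M_{\leq 1}]$ together with $T^{-1}\colon M_{\leq r}\cong M_{\leq 1}$.

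Once you add the relations $[x+y]-[x]-[y]$, you only have a presentation of $M$. Its kernel carries further syzygies (associativity, commutativity, relations among these, and so on), and nothing in your plan controls them. Nothing shows they assemble into a free module on a profinite set, which is what $0\to\mathbb Z[T^{-1}][Q]\to\mathbb Z[T^{-1}][P]\to M\to 0$ would need. That short exact sequence, and the digit/carry mechanism meant to produce it, are unsupported.

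The reduction to finite $S$ is also not valid as stated. We have $\mathbb Z[T^{-1}][\varprojlim_i X_i]\neq \varprojlim_i \mathbb Z[T^{-1}][X_i]$, and cofiltered limits of resolutions need not be resolutions. It is also unnecessary: $M_{\leq 1}$ is profinite for every profinite $S$, and for finite $S$ the additive problem is no easier.

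The missing idea is to separate the additive structure from the $T^{-1}$-structure using the Breen--Deligne resolution
\[
\ldots\to \bigoplus_{j=1}^{n_i}\mathbb Z[M^{a_{ij}}]\to\ldots\to \mathbb Z[M^2]\to\mathbb Z[M]\to M\to 0 .
\]
It is functorial in $M$, so it is automatically a resolution of condensed $\mathbb Z[T^{-1}]$-modules. It has only finitely many terms in each degree. Hence it suffices to show that each $\mathbb Z[M^a]$ is pseudocoherent over $\mathbb Z[T^{-1}]$.

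Since $M^a=\mathcal M(S\sqcup\ldots\sqcup S,\mathbb Z((T))_r)$ with diagonal $T^{-1}$-action, one reduces to $a=1$. There your relation gives precisely the two-term resolution
\[
0\to \mathbb Z[T^{-1}][M_{\leq r}]\xrightarrow{T^{-1}-[T^{-1}]}\mathbb Z[T^{-1}][M_{\leq 1}]\to \mathbb Z[M]\to 0
\]
by free $\mathbb Z[T^{-1}]$-modules on profinite sets. So you found the key local computation, but applied it to $M$ instead of to the terms $\mathbb Z[M^a]$. Without a functorial resolution bridging the two, the argument is incomplete.
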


\begin{proof} Let $M=\mathcal M(S,\mathbb Z((T))_r)$. We have the Breen--Deligne resolution
\[
\ldots\to \bigoplus_{j=1}^{n_i} \mathbb Z[M^{a_{ij}}]\to\ldots\to \mathbb Z[M^2]\to \mathbb Z[M]\to M\to 0.
\]
Here $n_i$ and all $a_{ij}$ are nonnegative integers. Note that $M$ has an endomorphism (multiplication by $T^{-1}$), and by functoriality in $M$, the whole sequence is a sequence of condensed $\mathbb Z[T^{-1}]$-modules. Thus, it is enough to see that each $\mathbb Z[M^a]$ is pseudocoherent as $\mathbb Z[T^{-1}]$-module. Replacing $S$ by $a$ copies of itself, we can assume that $a=1$. Now $M$ has a profinite subset $M_{\leq 1}\subset M$ and $M=\bigcup_{n\geq 0} T^{-n} M_{\leq 1}$. In fact, we claim that the sequence
\[
0\to \mathbb Z[M_{\leq r}][T^{-1}]\xrightarrow{T^{-1}-[T^{-1}]}\mathbb Z[M_{\leq 1}][T^{-1}]\to \mathbb Z[M]\to 0
\]
is an exact sequence of $\mathbb Z[T^{-1}]$-modules. This will give the desired result as the other two terms are pseudocoherent $\mathbb Z[T^{-1}]$-modules. But the sequence is the filtered colimit of the sequences
\[
0\to \bigoplus_{i=0}^{n-1} \mathbb Z[M_{\leq r}]\cdot T^{-i}\xrightarrow{T^{-1}-[T^{-1}]}\bigoplus_{i=0}^{n} \mathbb Z[M_{\leq 1}]\cdot T^{-i}\to \mathbb Z[T^{-n} M_{\leq 1}]\to 0
\]
which are exact by induction on $n$: For $n=0$, the first term is zero and the second map is an equality, and for $n>0$ the quotient by the sequence for $n-1$ (multiplied by $T^{-1}$) is an isomorphism $\mathbb Z[M_{\leq r}]\cong \mathbb Z[M_{\leq 1}]$ between the terms of degree $0$ (given by multiplication by $T^{-1}: M_{\leq r}\cong M_{\leq 1}$).
\end{proof}

It follows that $R\intHom_{\mathbb Z[T^{-1}]}(\mathcal M(S,\mathbb Z((T))_{r'}),K)$ commutes with filtered colimits in $K$. In particular, we can assume that $K$ is the kernel of some map
\[
\mathcal M(S_1,\mathbb Z((T))_{>r})\to \bigoplus_{j\in J} \mathcal M(S_j',\mathbb Z((T))_{>r}).
\]
Note that by Proposition~\ref{prop:liquidabelianlevel} and Proposition~\ref{prop:liquidpseudocoh}, for any $r'>r$ one has
\[
\Hom(\mathcal M(S_1,\mathbb Z((T))_{r'}),\bigoplus_{j\in J} \mathcal M(S_j',\mathbb Z((T))_{>r})) = \bigoplus_{j\in J} \mathcal M(S_j',\mathbb Z((T))_{>r})(S_1).
\]
Passing to the limit $r'\to r$, this shows that also
\[
\Hom(\mathcal M(S_1,\mathbb Z((T))_{>r}),\bigoplus_{j\in J} \mathcal M(S_j',\mathbb Z((T))_{>r})) = \bigoplus_{j\in J} \mathcal M(S_j',\mathbb Z((T))_{>r})(S_1).
\]
In particular, the given map factors over a finite direct sum, and we may assume that $K$ is the kernel of some map
\[
\mathcal M(S_1,\mathbb Z((T))_{>r})\to \mathcal M(S_2,\mathbb Z((T))_{>r}),
\]
and we recall that any such map is uniquely induced by a map $S_1\to \mathcal M(S_2,\mathbb Z((T))_{>r})$. In fact, by a triangle (and noting that $\mathcal M(S_1,\mathbb Z((T))_{>r})$ itself is a special case of such an image, when taking the identity map), it is enough to prove the similar assertion for the image of the map
\[
\mathcal M(S_1,\mathbb Z((T))_{>r})\to \mathcal M(S_2,\mathbb Z((T))_{>r}).
\]
In other words, we are reduced to the following assertion. (We could assume that $S_1$ and $S_2$ are extremally disconnected instead of merely profinite, but this will be of no use.)

\begin{theorem} Let $S_1$ and $S_2$ be profinite sets and let $f: \mathcal M(S_1,\mathbb Z((T))_{>r})\to \mathcal M(S_2,\mathbb Z((T))_{>r})$ be a map of condensed $\mathbb Z[T^{-1}]$-modules; these are in bijection with maps $f|_{S_1}: S_1\to \mathbb \mathcal M(S_2,\mathbb Z((T))_{>r})$. Let $M$ be the image of $f$. Then for any $r'>r$, the map
\[
R\intHom_{\mathbb Z[T^{-1}]}(\mathcal M(S,\mathbb Z((T))_{r'}),M)\to R\intHom_{\mathbb Z[T^{-1}]}(\mathbb Z[T^{-1}][S],M)
\]
in $D(\Cond(\Ab))$ is an isomorphism.
\end{theorem}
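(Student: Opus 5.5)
Since $M$ is the image of $f$, it sits inside $\mathcal M(S_2,\mathbb Z((T))_{>r})$ as a quasiseparated condensed $\mathbb Z[T^{-1}]$-module. My plan is to first reduce to a fixed radius and then to an explicit ``Banach-type'' object. The map $f|_{S_1}$ lands in some $\mathcal M(S_2,\mathbb Z((T))_{r''})_{\leq c}$ for a fixed $r''>r$, because $S_1$ is quasicompact. So for every $r<r_1\leq r''$ the map $f$ restricts to a map $f_{r_1}: \mathcal M(S_1,\mathbb Z((T))_{r_1})\to \mathcal M(S_2,\mathbb Z((T))_{r_1})$ by Proposition~\ref{prop:liquidabelianlevel}. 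Its image is $M_{r_1}$, and $M=\varinjlim_{r_1\to r} M_{r_1}$. By Proposition~\ref{prop:liquidpseudocoh} both $R\intHom$'s commute with this filtered colimit. It therefore suffices to show that for $r'>r_1>r$ the map
\[
R\intHom_{\mathbb Z[T^{-1}]}(\mathcal M(S,\mathbb Z((T))_{r'}),M_{r_1})\to R\intHom_{\mathbb Z[T^{-1}]}(\mathbb Z[T^{-1}][S],M_{r_1})
\]
is an isomorphism, possibly after passing to the colimit over $r_1$. Each $M_{r_1}$ is the increasing union $\bigcup_n T^{-n} M_{r_1,\leq 1}$, where $M_{r_1,\leq c}$ is the image of $\mathcal M(S_1,\mathbb Z((T))_{r_1})_{\leq c}$ and is a profinite set. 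This is the analogue of a $p$-Banach space that Lecture VIII aims for.

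Next I compute both sides by explicit resolutions. The right-hand side is $R\Gamma(S,M_{r_1})$, up to the $\mathbb Z[T^{-1}]$-linear description. The left-hand side I attack with the pseudocoherent resolution from the proof of Proposition~\ref{prop:liquidpseudocoh}, combined with the Breen--Deligne resolution of $N=\mathcal M(S,\mathbb Z((T))_{r'})$. This expresses $R\intHom(N,M_{r_1})$ through the complexes $R\Gamma(N_{\leq 1}^a, M_{r_1})$ and $R\Gamma(N_{\leq r'}^a, M_{r_1})$, which are cohomologies of profinite sets with coefficients in $M_{r_1}$. Because $M_{r_1}$ is a countable union of profinite sets, these are colimits of locally constant functions. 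So the comparison becomes a statement about explicit complexes of abelian groups of functions $N^a_{\leq c}\to M_{r_1,\leq c'}$, with the norm bounds tracked.

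The key step is a uniform vanishing statement, a ``Banach'' version of the theorem. The Breen--Deligne complex for $N$ with coefficients in $M_{r_1}$ is quasi-isomorphic to the one for $\mathbb Z[S]$, because each cocycle on the complex for $N$ with norm $\leq c$ is a coboundary of norm $\leq Kc$, with $K$ independent of $S$. Here I use that the norm of $M$ behaves like an $r_1$-norm while the sources carry the $r'$-norm with $r'>r_1$. Concretely I would approximate a measure in $N_{\leq 1}$ by its images in finite levels $S\to S_i$, giving a telescoping sum whose $i$th term has $r'$-norm small. After multiplying by $T^{-k}$, the gain between $r'$ and $r_1$ makes these corrections summable in $M_{r_1}$. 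This is an inductive homotopy construction across the degrees of the Breen--Deligne complex.

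I expect this last step to be the main obstacle: producing the homotopies with bounded norm, uniformly in the profinite sets involved, while losing only a controlled amount of radius at each of the finitely many stages. The plan is to allow the radius to shrink slightly at each step, from $r'$ toward $r_1$, which is possible because there is strict room $r'>r_1$. I would prove the needed estimate first for finitely many Breen--Deligne degrees and then pass to the limit using the Postnikov argument.
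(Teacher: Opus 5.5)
Your opening reduction matches the paper's. You choose $r_0>r$ through which $f|_{S_1}$ factors, write $M=\varinjlim_{r''}M_{r''}$, and commute both sides with this colimit via Proposition~\ref{prop:liquidpseudocoh}. Your explicit resolution, Breen--Deligne (or $Q$) combined with the two-term $T^{-1}-[T^{-1}]$ resolution, is also the paper's starting point in Lecture IX.

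\textbf{The gap: vanishing of higher Ext.} The real content is $\mathrm{Ext}^i_{\mathbb Z[T^{-1}]}(\overline{\mathcal M}_{r'}(S),M)=0$ for $i>0$, and your proposal gives no argument for it.
\begin{enumerate}
\item Approximating a measure by finite quotients $S\to S_i$ aims at the wrong difficulty. The problem is already fully present for $S=\ast$, where $\overline{\mathcal M}_{r'}(\ast)\cong T\mathbb Z[[T]]_{r'}$ is a countable union of infinite profinite sets.
\item The telescoping terms do not have small $r'$-norm: with chosen sections $S_i\to S$, $[s_\infty]$ minus its approximation is $[s_\infty]-[s_{n_i}]$, of norm $2$. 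Integration arguments of this kind give only the degree-$0$ statement, as in Propositions~\ref{prop:completelocallyconvex} and~\ref{prop:liquidabelianlevel}.
\item ``An inductive homotopy construction'' in positive degrees is left unspecified. The gap $r'>r$ is what the paper exploits, but only after knowing that the obstruction attached to an element of size $c$ grows at most linearly in $c$. In positive degrees that is the whole difficulty.
\end{enumerate}
The paper gets linear growth from four ingredients with no counterpart in your plan:
\begin{enumerate}
\item the homotopy built into the $Q$-complex between $Q$ of the $N$-fold sum map and the sum of $Q$ of the $N$ projections (Lemma~\ref{lem:homotopyNelements});
\item Lemma~\ref{lem:key}, which uses Gordan's lemma to split any $x$ of size $\leq c$ as $x_1+\dots+x_N$ with each $x_j$ of size $\leq c/N+d$, so the sum map is surjective up to a small change of level and its \v{C}ech nerve gives exact columns;
\item the estimate that the homotopic map has norm $\leq r^bN\leq\epsilon$ on $T^{-1}$-invariants, because $T$ scales $\widehat V$ by $r$ but the source filtration by $r'$ (this is the only place the radius gap is used, not by shrinking radii stage by stage);
\item the normed spectral-sequence argument of Proposition~\ref{prop:key}, run by induction on the degree.
\end{enumerate}
The induction only closes after generalizing to $\Hom(\Lambda,\overline{\mathcal M}_{r'}(S))$ for arbitrary polyhedral lattices $\Lambda$.

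Two further points on the source side. The terms $\mathbb Z[N^a]$ of the explicit resolution are not compact, so you must also track a filtration level $c$ on the source. What the paper proves there is only pro-exactness ($\leq k$-exactness, restricting from level $kc$ to level $c$). A uniform bound of the form ``a cocycle of norm $\leq c$ is a coboundary of norm $\leq Kc$'' holds for a \emph{fixed} projective resolution (Theorem~\ref{thm:constants}). The paper deduces it from the qualitative statement for all $V$ by a direct-sum trick; it is not the engine of the proof.

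\textbf{The target side.} You keep working with $M_{r_1}$, a union of profinite sets, and assert that maps from $N^a_{\leq c}$ into it are colimits of locally constant functions. That is false: maps into a profinite set are \emph{limits} of such. It also silently replaces $R\Gamma(N^a_{\leq c},M_{r_1})$ by its $H^0$, and you never show $H^i(X,M_{r_1})=0$ for profinite $X$ and $i>0$.

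The paper avoids this by changing the target:
\begin{enumerate}
\item it writes $M_{r''}$ as a derived limit over the finite quotients $S_{2,i}$ of $S_2$, via the Bousfield--Kan complex with norm control;
\item it uses Proposition~\ref{prop:imagefree}, which rests on $\mathbb Z((T))_{>r}$ being a PID (Theorem~\ref{thm:principalideal}), to see that the finite-level images are free of finite rank;
\item it uses that $M_{r'',i}\to M_{r''',i}$ factors through $M^B_{r''',i}$ to pass to Banach-type targets $\widehat V$.
\end{enumerate}
For such $\widehat V$, Proposition~\ref{prop:normedcompletion} gives acyclicity of profinite hypercovers with norm control. That is what legitimizes computing with the profinite sets $\overline{\mathcal M}_{r'}(S)^a_{\leq c}$. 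The uniform constants needed for the product over $i$ then follow from Theorem~\ref{thm:banachcomplete} applied to a direct sum of rescaled $V_j$'s. Without this reduction, and without the mechanism above, your proposal is not a proof.
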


We wish to reduce further. The datum is the map $S_1\to \mathcal M(S_2,\mathbb Z((T))_{>r})$. This factors over $\mathcal M(S_2,\mathbb Z((T))_{>r_0})$ for some $r_0>r$, and fixing such a factorization, we get modules $M_{r''}$ for all $r_0\geq r''>r$ as the image of $\mathcal M(S_1,\mathbb Z((T))_{r''})\to \mathcal M(S_2,\mathbb Z((T))_{r''})$, and $M$ is the filtered colimit of the $M_{r''}$. Note that $M_{r''}$ can be endowed with subspaces $M_{r'',\leq c}$ by taking the image of $\mathcal M(S_1,\mathbb Z((T))_{r''})_{\leq c}$. These are the inverse limits of the images $M_{r'',i,\leq c}$ in $\mathcal M(S_{2,i},\mathbb Z((T))_{r''})$, where we write $S_2$ as an inverse limit of finite sets $S_{2,i}$. We need the following proposition.

One can actually identify the module $M_{r'',i}$ that appears, by applying the following proposition with $r''$ in place of $r$.

\begin{proposition}\label{prop:imagefree} Let $S_1$ be a profinite set and let $S_1\to \mathbb Z((T))_{>r}^n$ be a map for some integer $n$. Then the image $M$ of
\[
\mathcal M(S_1,\mathbb Z((T))_r)\to \mathbb Z((T))_r^n
\]
is isomorphic (as condensed $\mathbb Z[T^{-1}]$-module) to $\mathbb Z((T))_r^{n'}$ for some $n'\leq n$.
\end{proposition}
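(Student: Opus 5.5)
The plan is to identify $M$ algebraically first, and then upgrade to a condensed statement. Let $N\subset \mathbb Z((T))_{>r}^n$ be the $\mathbb Z((T))_{>r}$-submodule generated by the image of $S_1$. By Theorem~\ref{thm:principalideal}, $\mathbb Z((T))_{>r}$ is a principal ideal domain. The submodule of $\mathbb Z((T))_r^n$ generated by the image of $S_1$ lies inside $N\otimes \mathbb Z((T))_r$. However, we want the full image of measures, which should be the saturation-type closure. Concretely, I would consider the $\mathbb Q((T))$-span (or $\mathrm{Frac}$-span) $L$ of $f(S_1)$ inside $\mathbb Q((T))^n$. Since $f(S_1)$ is compact, finitely many points $f(s_1),\dots,f(s_{n'})$ already span $L$, with $n'=\dim L\le n$.

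\textbf{Reducing to the saturated lattice.} Inside $\mathbb Z((T))_{>r}^n$, the saturation $N^{\mathrm{sat}}=L\cap \mathbb Z((T))_{>r}^n$ is a direct summand, since the quotient is torsion-free and finitely generated over a PID, hence free. So after a change of basis of $\mathbb Z((T))_{>r}^n$ (which induces an automorphism of $\mathbb Z((T))_r^n$ preserving all norm-balls up to constants), we may assume $L$ is the span of the first $n'$ coordinates and $n=n'$. Now $f(S_1)$ spans $\mathbb Q((T))^{n}$, and the image $M$ contains the finite-rank sublattice $N_0$ generated by $f(s_1),\dots,f(s_n)$. The determinant $d\in \mathbb Z((T))_{>r}$ of these vectors is nonzero. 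Factor $d$ into the primes of Theorem~\ref{thm:principalideal}.

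\textbf{Candidate target.} The candidate answer is $M=\bigoplus_i \mathbb Z((T))_r\cdot e_i'$ for a suitable basis $e_i'$ of the $\mathbb Z((T))_{>r}$-module $N$, possibly after enlarging $N$ to account for limits. The key observation is that $M$ is a $\mathbb Z((T))_r$-submodule: the image of a $\mathbb Z((T))_r$-module map is one, since $\mathcal M(S_1,\mathbb Z((T))_r)$ is a $\mathbb Z((T))_r$-module by Proposition~\ref{prop:liquidabelianlevel}-type linearity. It is also sandwiched as $N_0\otimes\mathbb Z((T))_r\subset M\subset d^{-1}N_0\otimes \mathbb Z((T))_r$. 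The quotient $d^{-1}N_0/N_0$ over $\mathbb Z((T))_r$ is a finite-length module over the PID $\mathbb Z((T))_r$; I will assume this ring, too, is a PID with the same kind of primes, by the same proof as Theorem~\ref{thm:principalideal}, with closed disc replaced by open. Hence $M$, as an abstract module, is a lattice between two free modules of rank $n$, and is therefore free of rank $n$. Choose a basis $e_1',\dots,e_n'$, lying in $M$. Since each $e_j'$ is a finite $\mathbb Z((T))_r$-combination of images of measures, the map $\mathbb Z((T))_r^n\to M$ sending the standard basis to $(e_j')$ is a bijection of condensed sets onto $M(\ast)$ at the level of points.

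\textbf{Condensed isomorphism, the main obstacle.} It remains to show this bijection is an isomorphism of condensed modules, i.e.\ that the norm filtrations match up to constants: each $M_{\le c}$ (the image of $\mathcal M(S_1,\mathbb Z((T))_r)_{\le c}$) lands in the image of a ball of $\mathbb Z((T))_r^n$ in the basis $e'$, and conversely. One direction is easy: balls in the $e'$-basis map into $M_{\le Cc}$, after writing $e_j'$ as images of bounded measures. For the other direction, the map $\mathbb Z((T))_r^n\to\mathbb Z((T))_r^n$ given by the matrix of $e'$ is injective with determinant $\delta\ne 0$. Bounding $x$ in terms of $\delta x$ uses the quantitative division estimate of the Lecture VII proposition (part (2)), applied to $\delta$ factored into finitely many primes. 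This requires that the $T$-adic/$p$-adic zeros cause no loss: an element of $M$ must be divisible exactly where the basis dictates. That divisibility is automatic, since $M$ consists of honest elements, and the estimate for the quotient then follows by compactness. Finally, I would use that maps between compact Hausdorff spaces that are bijective are homeomorphisms, which gives the isomorphism on each ball.
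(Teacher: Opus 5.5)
\textbf{There is a genuine gap.} Your first reduction is fine: you pass to the saturation, which is a direct summand of $\mathbb Z((T))_{>r}^n$, and kill the complementary coordinates by quasiseparatedness. The argument breaks down once you leave $\mathbb Z((T))_{>r}$. Write $\|\sum a_nT^n\|=\sum|a_n|r^n$ below.

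\textbf{First problem: the PID assumption on $\mathbb Z((T))_r$.} You assume $\mathbb Z((T))_r$ is a principal ideal domain ``by the same proof'' as Theorem~\ref{thm:principalideal}. That proof uses overconvergence essentially: there are only finitely many zeros on the compact disc $|y|\le r$, and a zero-free element has an inverse converging on a slightly larger disc. $\mathbb Z((T))_r$ has no such margin, and points of the circle $|y|=r$ cause real trouble. For example, take $r=\tfrac12$. The element $1-2T$ vanishes at $\tfrac12$. But if $g=\sum a_jT^j\in\mathbb Z((T))_{1/2}$ satisfies $g(\tfrac12)=0$, then $g/(1-2T)=-\sum_k 2^k\bigl(\sum_{j>k}a_j2^{-j}\bigr)T^k$. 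This fails to lie in $\mathbb Z((T))_{1/2}$ whenever the tails are not summable in $k$, and such $g$ exist. So freeness of $M(\ast)$ is not established.

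\textbf{Second and more serious problem: the condensed comparison.} You bound the coordinates $x$ of $\phi(\mu)$ in the basis $e'$ through $\delta x=\mathrm{adj}(E)\phi(\mu)$ and a division estimate for $\delta$. The estimate in Lecture VII is proved using that $f_{r'}$ (with $r'<r$) is invertible on an annulus around $|y|=r$. It fails when $\delta$ vanishes on that circle, which can happen. Take $r=\tfrac12$, $S_1=\ast$, $f(\ast)=1-2T$. Every basis of $M=(1-2T)\mathbb Z((T))_{1/2}$ has $\delta=(1-2T)u$ with $u$ a unit. The elements $a_N=\sum_{k=0}^N2^kT^k$ satisfy $\|a_N\|=N+1$ but $\|(1-2T)a_N\|=\|1-2^{N+1}T^{N+1}\|=2$. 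So no bound $\|x\|\le C\|\delta x\|$ holds. The proposition is trivially true in this example, but your method cannot prove it. Passing through the ambient norm of $\phi(\mu)$ discards exactly the information you need, and ``divisibility is automatic since $M$ consists of honest elements'' is precisely the step that requires an argument.

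\textbf{The missing idea.} You need to produce the coordinate map $\mu\mapsto x$ directly as a condensed map. This can be done entirely over $\mathbb Z((T))_{>r}$, where the hypothesis on $f$ lets you use Theorem~\ref{thm:principalideal}. This is the paper's route.
\begin{itemize}
\item Let $N\subset\mathbb Z((T))_{>r}^n$ be the submodule generated by $f(S_1)$. It is free, which gives an injection $\iota:\mathbb Z((T))_{>r}^{n'}\to\mathbb Z((T))_{>r}^n$ whose basis vectors are images of finite measures.
\item The quotient $\mathbb Z((T))_{>r}^n/N$ is a finitely generated module over this PID, hence quasiseparated. The composite $S_1\to\mathbb Z((T))_{>r}^n/N$ vanishes on points, so it vanishes. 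Thus $f$ factors through $\iota$ as a map of condensed sets.
\item By Proposition~\ref{prop:liquidabelianlevel}, the map $\mathcal M(S_1,\mathbb Z((T))_r)\to\mathbb Z((T))_r^n$ then factors through $\iota\otimes\mathbb Z((T))_r$. This map is still injective, since $\mathbb Z((T))_r$ is torsion-free over the PID and everything is quasiseparated.
\item The resulting map $\mathcal M(S_1,\mathbb Z((T))_r)\to\mathbb Z((T))_r^{n'}$ is surjective as condensed modules, because it is $\mathbb Z((T))_r$-linear and hits a basis.
\end{itemize}
This argument needs neither a PID property of $\mathbb Z((T))_r$ nor any norm estimates.
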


\begin{proof} By Theorem~\ref{thm:principalideal}, the image of
\[
\mathcal M(S_1,\mathbb Z((T))_{>r})\to \mathbb Z((T))_{>r}^n
\]
on underlying modules is isomorphic to $\mathbb Z((T))_{>r}^{n'}$ for some $n'\leq n$. Let $M'=\mathbb Z((T))_{>r}^{n'}$ as a condensed $\mathbb Z[T^{-1}]$-module, which comes equipped with a map to $\mathbb Z((T))_{>r}^n$. This map is injective as both are quasiseparated as condensed sets and the underlying map is injective. Moreover, the quotient of condensed $\mathbb Z[T^{-1}]$-modules
\[
\mathbb Z((T))_{>r}^n/M'
\]
is still quasiseparated, by the structure of finitely generated $\mathbb Z((T))_{>r}$-modules (using Theorem~\ref{thm:principalideal}). It follows that the map $S_1\to \mathbb Z((T))_{>r}^n$ factors over $M'$, and replacing $\mathbb Z((T))_{>r}^n$ by $M'$, we may assume that $\mathcal M(S_1,\mathbb Z((T))_{>r})\to \mathbb Z((T))_{>r}^n$ is surjective (on underlying modules). In particular, the image contains a basis, which implies that also
\[
\mathcal M(S,\mathbb Z((T))_r)\to \mathbb Z((T))_r^n
\]
is surjective as condensed modules, giving the desired result.
\end{proof}

The next proposition expresses $M_{r''}$ in terms of the $M_{r'',i}$.

\begin{proposition} For any $r''$, the complex
\[
0\to M_{r''}\to \bigcup_{c>0}\prod_i M_{r'',i,\leq c}\to \bigcup_{c>0} \prod_{i_0\to i_1} M_{r'',i_1,\leq c}\to \ldots\to \bigcup_{c>0} \prod_{i_0\to \ldots\to i_m} M_{r'',i_m,\leq c}\to \ldots
\]
of condensed $\mathbb Z[T^{-1}]$-modules is exact. More precisely, taking $E$-valued points for some extremally disconnected set $E$, it is exact, and if
\[
g\in \ker(\prod_{i_0\to\ldots\to i_m} M_{r'',i_m,\leq c}(E)\to \prod_{i_0\to\ldots\to i_{m+1}} M_{r'',i_{m+1}}(E))
\]
then there exists some $h\in \prod_{i_0\to\ldots\to i_{m-1}} M_{r'',i_{m-1},\leq c}(E)$ with $d(h)=g$.
\end{proposition}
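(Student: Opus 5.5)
The plan is to treat the displayed complex as the standard (Roos) cochain complex computing $R\varprojlim_i$ of the cofiltered diagram $i\mapsto M_{r'',i}$, and to prove the bounded exactness statement. I would reduce it in two steps: first to a pointwise statement over $e\in E$, and then, by compactness, to finite subdiagrams. On a finite subdiagram an explicit contracting homotopy preserves the bound $c$ exactly.

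\emph{Setup.} By construction $M_{r'',\leq c}=\varprojlim_i M_{r'',i,\leq c}$. Each $M_{r'',i,\leq c}$ is the image of the compact Hausdorff space $\mathcal M(S_1,\mathbb Z((T))_{r''})_{\leq c}$ in the quasiseparated module $M_{r'',i}\subset \mathcal M(S_{2,i},\mathbb Z((T))_{r''})$, so it is compact Hausdorff. (By Proposition~\ref{prop:imagefree}, $M_{r'',i}$ is even a finite free $\mathbb Z((T))_{r''}$-module.) The transition maps between the $M_{r'',i,\leq c}$ are surjective, since all of them are images of the same space. Exactness at $M_{r''}$ and at the first product is then exactly the statement $M_{r''}=\bigcup_c\varprojlim_i M_{r'',i,\leq c}$: an element of $\prod_i M_{r'',i,\leq c}(E)$ killed by $d$ is a compatible family, hence an $E$-point of $M_{r'',\leq c}$. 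So the content is in degrees $m\geq 1$.

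\emph{Finite subdiagrams.} Let $J\subset I$ be a finite subcategory. Because $I$ is cofiltered, I would enlarge $J$ to a finite $J'$ containing an object $i^\ast$ with a chosen map to each object of $J$; after adding compositions, $i^\ast$ becomes initial in $J'$. For a diagram with an initial object, the Roos complex has the standard contraction
\[
h(g)_{i_0\to\cdots\to i_{m-1}} = g_{i^\ast\to i_0\to\cdots\to i_{m-1}}.
\]
This $h$ is literally a component of $g$, so it carries cochains with values in $M_{i,\leq c}$ to cochains with values in $M_{i,\leq c}$ with the same $c$. Hence every bounded cocycle on $J'$ is $d$ of a $c$-bounded cochain, and restricting from $J'$ to $J$ gives the same conclusion on $J$.

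\emph{Compactness and lifting over $E$.} Fix $g$ as in the statement and a point $e\in E$. Let $X_e$ be the set of $h\in\prod_{i_0\to\cdots\to i_{m-1}} M_{r'',i_{m-1},\leq c}$ with $d(h)=g(e)$. This is a closed subset of a product of compact Hausdorff spaces, hence compact. For each finite $J$, the set of $h$ satisfying the equations indexed by $J$ is closed, and it is nonempty by the previous step. These sets form a directed system of nonempty closed subsets, so by compactness their intersection $X_e$ is nonempty. Globally, $X=\{(e,h)\mid d(h)=g(e)\}$ is a closed subset of $E\times\prod M_{r'',i_{m-1},\leq c}$, using that $g$ is continuous. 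So $X$ is compact Hausdorff and surjects onto $E$. Since $E$ is extremally disconnected, this surjection admits a continuous section, and the section gives the required $h\in\prod M_{r'',i_{m-1},\leq c}(E)$ with $d(h)=g$. Exactness of the unbounded complex follows by taking the union over $c$.

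The step I expect to need the most care is the compactness argument. One has to check that the equation $d(h)=g$ involves, for each chain, only finitely many components, so that the constraints really are closed and the finite-subdiagram reduction is legitimate. One also has to check that the alternating sums in $d$ are computed inside the group $M_{r'',i}$ while only the output $h$, not any intermediate sums, is required to satisfy the bound $c$. The contraction formula avoids sums entirely, which is why the sharp constant $c$ (rather than some multiple $Cc$) survives.
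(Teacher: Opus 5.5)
Your proposal is correct and follows the paper's proof. The paper likewise reduces to surjectivity of a map of compact spaces and writes that map as a cofiltered limit of the analogous maps for finite subcategories with an initial object. It then uses the contracting homotopy $g\mapsto g_{i^\ast\to i_0\to\cdots}$, which preserves the $\leq c$-bound; your finite-intersection argument and the lift over $E$ by projectivity just unwind that limit step.

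One small point: to make $i^\ast$ initial in $J'$, choose the maps $i^\ast\to j$ to form a cone over $J$. Cofilteredness allows this, and it is automatic if the index category is the poset of finite quotients of $S_2$; merely adding composites of arbitrary choices could create parallel maps out of $i^\ast$.
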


We refer to the appendix for the construction of the complex.

\begin{proof} The claim is equivalent to the surjectivity of some map of profinite sets, more precisely that of
\[\begin{aligned}
\prod_{i_0\to\ldots\to i_{m-1}} M_{r'',i_{m-1},\leq c}&\times_{\prod_{i_0\to\ldots\to i_m} M_{r'',i_m}} \prod_{i_0\to\ldots\to i_m} M_{r'',i_m,\leq c}\\
&\to \ker(\prod_{i_0\to\ldots\to i_m} M_{r'',i_m,\leq c}\to \prod_{i_0\to\ldots\to i_{m+1}} M_{r'',i_{m+1}}).
\end{aligned}\]
We can write the index category $\mathcal I$ of $i$'s as a filtered union of finite categories $\mathcal I_j$ that admit an initial object. The corresponding map is a cofiltered limit of similar maps for $\mathcal I_j$ in place of $\mathcal I$. Therefore it suffices to prove the similar assertion for each $\mathcal I_j$. In that case, the presence of an initial object gives a contracting homotopy, whose explicit description as given in the appendix to this lecture shows that it preserves the $\leq c$-subspace.
\end{proof}

In particular, it is enough to prove the following statement.

\begin{theorem} Fix radii $1>r'>r_0>r>0$. Let $S_1$ be a profinite set and let $f_i: S_1\to \mathbb Z((T))_{>r_0}^{n_i}$ be maps for some index set $i\in I$. For $r_0\geq r''>r$, let $M_{r'',i}\subset \mathbb Z((T))_{r''}^{n_i}$ be the image of the induced maps $\mathcal M(S_1,\mathbb Z((T))_{r''})\to \mathbb Z((T))_{r''}^{n_i}$, with the subspace $M_{r'',i,\leq c}\subset M_{r'',i}$ defined as the image of $\mathcal M(S_1,\mathbb Z((T))_{r''})_{\leq c}$.

Then, letting
\[
M=\varinjlim_{r''>r} \bigcup_{c>0} \prod_i M_{r'',i,\leq c},
\]
the map
\[
R\intHom_{\mathbb Z[T^{-1}]}(\mathcal M(S,\mathbb Z((T))_{r'}),M)\to R\intHom_{\mathbb Z[T^{-1}]}(\mathbb Z[T^{-1}][S],M)
\]
is an isomorphism in $D(\Cond(\Ab))$ for all profinite sets $S$.
\end{theorem}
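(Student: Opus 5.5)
The plan is to make the statement quantitative, so that each $\mathcal M(S,\mathbb Z((T))_{r'})$ is handled through its profinite pieces and each term of $M$ is a finite free module with explicit norms. First, both sides of the map commute with the product over $i\in I$ and with the colimit over $r''$: the source is pseudocoherent by Proposition~\ref{prop:liquidpseudocoh}, and the target is built out of compact pieces. Also, $\prod_i$ with the $\bigcup_c$ bound is exactly the kind of limit a uniform estimate survives. So it suffices to prove a version for a single $i$ in which \emph{all constants are independent of $i$, of $n_i$, and of the map $f_i$}; the product and colimit can then be reassembled. By Proposition~\ref{prop:imagefree}, each $M_{r'',i}$ is a finite free $\mathbb Z((T))_{r''}$-module with norm filtration $M_{r'',i,\leq c}$. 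So each factor is a ``Banach space'' over $\mathbb Z((T))_{r''}$. It is not simply the standard normed one, but a quotient normed lattice $\Lambda\otimes\mathbb Z((T))_{r''}$, where $\Lambda$ is the (polyhedral) lattice generated by the images of the finitely many relevant points of $S_1$ after passing to finite quotients.

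Next I would compute both sides via the Breen--Deligne resolution, as in the proof of Proposition~\ref{prop:liquidpseudocoh}. Applying $\mathbb Z[T^{-1}]$-linear Hom out of the resolution of $\mathcal M(S,\mathbb Z((T))_{r'})$ reduces both sides to cohomology of the profinite sets $\mathcal M(S,\mathbb Z((T))_{r'})_{\leq c}^{a}$ with coefficients in $M$. The $T^{-1}$-twist is the two-term complex $T^{-1}-[T^{-1}]$ from that proof. Since the coefficients are ind-profinite, with $\bigcup_c$ of profinite sets, and $H^{>0}$ of profinite sets with such coefficients reduces to locally constant functions (cf.\ Theorem~\ref{thm:pbanachcohom}), everything becomes explicit. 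The statement that the cone of
\[
R\intHom(\mathcal M(S,\mathbb Z((T))_{r'}),M)\to R\intHom(\mathbb Z[T^{-1}][S],M)
\]
is acyclic becomes a statement about a double complex of normed abelian groups. That double complex is a colimit over finite quotients $S_j$ of $S$ of complexes of the form $\widehat{\mathrm{Hom}}(\mathbb Z[\Lambda_j^{a}],M)$. The goal is a \emph{normed} exactness statement, uniform in $j$: for each degree $m$ there are constants $k,K$ such that every cocycle of norm $\leq c$ at level $k c$ becomes the coboundary of an element of norm $\leq Kc$ after restriction. These bounded-exactness statements then pass to the cofiltered limit over $j$ and the union over $c$, because at each step we are dealing with surjectivity of maps of compact Hausdorff spaces. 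This is the same mechanism as in the proposition on the \v{C}ech-type complex above.

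Finally, I would prove the uniform normed exactness by induction on the cohomological degree $m$. The inputs are twofold. The first is the integrality used in Theorem~\ref{thm:Ainflike}: $|a_n|^p\leq|a_n|$, and elements of bounded norm in a polyhedral lattice decompose as sums of boundedly many elements of controlled norm, in the spirit of Gordan's lemma. The second is the freedom to pass from $r'$ to an intermediate radius between $r$ and $r'$, which buys a geometric factor $r/r'<1$ at each step of successive approximation. Working modulo $T^m$ and then summing a geometric series would promote ``approximately exact'' to ``exact with bounds''. I expect this step to be the main obstacle: obtaining constants independent of the lattice $\Lambda_j$ and of the finite quotient. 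It likely forces a careful choice of Breen--Deligne data whose maps are norm-nonincreasing, together with an induction in which the constants $k,K$ for degree $m$ are allowed to depend on those for degree $m-1$ but never on $S$, $S_1$ or $f_i$.
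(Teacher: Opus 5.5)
The reduction to a single $i$ with constants independent of $i$, $n_i$, $f_i$, and the two-term resolution by $T^{-1}-[T^{-1}]$, match the paper. There are, however, two genuine gaps.

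\textbf{The target is not a Banach space.} Each $M_{r'',i,\leq c}$ is profinite, so $M_{r'',i}$ is of Smith type. A continuous map from a profinite set into it need not be a norm limit of locally constant maps. Already $k\mapsto\lfloor (r'')^{-k}\rfloor T^k$, $\infty\mapsto 0$, from $\mathbb N\cup\{\infty\}$ to $\mathbb Z((T))_{r''}$ is not such a limit. Two consequences follow:
\begin{enumerate}
\item Theorem~\ref{thm:pbanachcohom} is not available for these coefficients.
\item The compactness mechanism of the \v{C}ech-type proposition gives surjectivity only on $E$-points for extremally disconnected $E$. It says nothing about functions on the profinite sets $\mathcal M(S,\mathbb Z((T))_{r'})^a_{\leq c}$, which is where your cochains live.
\end{enumerate}
The colimit over $r''$, which you only commute past via pseudocoherence, is what repairs this. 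For $r''>r'''$ the map $M_{r'',i}\to M_{r''',i}$ factors through the Banach version $M^B_{r''',i}$: a tail $\sum_{n\geq m}a_nT^n$ of $r''$-norm $\leq 1$ has $r'''$-norm $\leq (r'''/r'')^m$. Hence $M=\varinjlim_{r''}\bigcup_c\prod_i M^B_{r'',i,\leq c}$. Only then can you fix one radius, use hypercovers by profinite sets (Proposition~\ref{prop:normedcompletion}), and pass bounds through completions (Proposition~\ref{prop:completeexact}).

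At that point the specific target can be forgotten. One proves Theorem~\ref{thm:banachcomplete} for every $r$-normed $\mathbb Z[T^{\pm 1}]$-module $V$. Uniformity in $V$ comes for free: apply the qualitative statement to $V_1\oplus V_2\oplus\cdots$ with the sup norm and the element $(Tg_1,T^2g_2,\ldots)$. So the polyhedral lattice you place on the target is unjustified and unnecessary, and uniformity in it is not the real obstacle.

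\textbf{The final step is missing its key mechanism.} Your last step is the whole content of Lecture IX, and the ingredients you list do not make an induction on $m$ close. ``Working modulo $T^m$ and summing a geometric series'' gives no way to use additivity of the source inside complexes built from the non-additive terms $\mathbb Z[\overline{\mathcal M}_{r'}(S)^a]$. The paper's argument has the following pieces.
\begin{enumerate}
\item The functorial homotopy of Lemma~\ref{lem:homotopyNelements} in MacLane's $Q$-complex, between $Q$ of the $N$-fold sum map and the sum of $Q$ of the $N$ projections.
\item Lemma~\ref{lem:key}: any $x\in\Hom(\Lambda,\overline{\mathcal M}_{r'}(S))$ of norm $\leq c$ splits as $x_1+\cdots+x_N$ with each $x_j$ of norm $\leq c/N+d$. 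This is where Gordan's lemma enters, on the source side.
\item The resulting homotopic map is a sum of $N$ pullbacks to arguments of size $\leq (r')^bc$. On $T^{-1}$-invariant cochains (Lemma~\ref{lem:Tinv}), restricting from size $c$ to size $(r')^bc$ has norm $r^b$. So the homotopic map has norm $\leq r^bN\leq 2k'(r/r')^b\leq\epsilon$; this is how $r<r'$ is really used.
\item Proposition~\ref{prop:key}, a normed spectral-sequence argument, turns this small homotopic map into exactness of the first row.
\item The statement must be strengthened to all polyhedral lattices $\Lambda$ on the \emph{source}, with $k$ depending only on $m$ and $c_0$ allowed to depend on $\Lambda$ (Theorem~\ref{thm:explicit3}). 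This is forced because the induction hypothesis is applied to the rows, which are the complexes for the new lattices in the \v{C}ech conerve of $\Lambda\to\Lambda^N$.
\end{enumerate}
None of this, nor a substitute for it, appears in your proposal.
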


At this point, we will make a change from Smith to Banach spaces: The $M_{r'',i}$ are some kind of Smith spaces, but their norm allows one to define canonical Banach spaces inside them.

\begin{definition} Let $N$ be a condensed abelian group written as an increasing union of compact Hausdorff subsets $N_{\leq c}$ for $c\geq 0$, satisfying the following conditions:
\begin{enumerate}
\item One has $N_{\leq 0} = 0$;
\item Each $N_{\leq c}$ is symmetric, i.e.~$-N_{\leq c}=N_{\leq c}$;
\item It is exhaustive: $N=\bigcup_{c>0} N_{\leq c}$;
\item It satisfies the triangle inequality: $N_{\leq c}+N_{\leq c'}\subset N_{\leq c+c'}$;
\item It is continuous: $N_{\leq c}=\bigcap_{c'>c} N_{\leq c'}$.
\end{enumerate}
In that situation, we let $N^B$ be the condensed abelian group which takes any profinite set $S$ to the completion of the normed abelian group of locally constant maps from $S$ to $N$.
\end{definition}

In other words, as discussed in the appendix, $N^B$ is the condensed abelian group associated to the normed abstract abelian group $N(\ast)$.

\begin{proposition}\label{prop:passingtobanach} There is a unique map $N^B\to N$ of condensed sets extending the identity on underlying abelian groups. It is an injective map of condensed abelian groups.
\end{proposition}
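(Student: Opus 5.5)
We need to construct a map $N^B\to N$ of condensed sets extending the identity on $N(\ast)$, and show it is unique, additive and injective.

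\emph{Description of $N^B$.} Write $\|x\|=\inf\{c : x\in N_{\leq c}\}$ for $x\in N(\ast)$. By (1)--(4) this is a (possibly degenerate a priori) seminorm on the abstract group $N(\ast)$. By continuity (5) and (1), $\|x\|=0$ forces $x\in\bigcap_{c>0}N_{\leq c}=N_{\leq 0}=0$, so it is a genuine norm. For a profinite $S$, an element of $N^B(S)$ is then a uniformly Cauchy sequence of locally constant maps $g_k : S\to N(\ast)$ modulo null sequences. Equivalently, a continuous map $S\to \widehat{N(\ast)}$ into the completion with its metric topology.

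\emph{Construction.} Given such a class, I pass to a subsequence with $\|g_{k+1}-g_k\|_\infty\le 2^{-k}$, so that $g=g_0+\sum_k (g_{k+1}-g_k)$.
\begin{itemize}
\item Each $g_k$ is locally constant and takes finitely many values, so it defines an element of $N(S)$ factoring through some $N_{\leq c}$.
\item Each difference $h_k=g_{k+1}-g_k$ lands pointwise in $N_{\leq 2^{-k}}$. Since $N_{\le c}$ is compact Hausdorff, hence a closed subset in the sense of the appendix, $h_k$ defines a point of $N_{\leq 2^{-k}}(S)$.
\item The partial sums $g_0+h_0+\dots+h_{k-1}$ lie in $N_{\leq C}(S)$ for a fixed $C$, by the triangle inequality (4).
\end{itemize}
I then need these partial sums to converge in the compact Hausdorff space $N_{\le C}$ uniformly on $S$. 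The key point is that the sets $N_{\le \epsilon}$ for $\epsilon\to 0$ form a neighbourhood basis of $0$ in each $N_{\le C}$. This holds because they are closed with intersection $\{0\}$ by (1) and (5), and a decreasing family of compact sets intersecting in $\{0\}$ is eventually inside any given open neighbourhood of $0$. Combined with uniform continuity of addition on the compact set $N_{\le C}\times N_{\le 1}$ (addition is continuous as a map of compact Hausdorff spaces, being a map of condensed sets), the sums $\sum h_k$ converge uniformly. This gives a continuous map $S\to N_{\le C}$, that is, an element of $N(S)$. Independence of choices (subsequence, representative) follows the same way, since null sequences map to $0$. Functoriality in $S$ and compatibility with covers are clear, so this defines a map of condensed sets $N^B\to N$. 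On $S=\ast$ it is the identity.

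\emph{Uniqueness.} $N$ is quasiseparated, being a union of compact Hausdorff subsets along closed immersions. Hence any map $N^B(S)\to N(S)$ compatible with evaluation at points is determined pointwise, because $N(S)\to \mathrm{Map}(S,N(\ast))$ is injective. The constructed map is compatible with specialization at points of $S$, so it is the unique one.

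\emph{Additivity and injectivity.} Additivity can be checked after composing to $\mathrm{Map}(S,N(\ast))$, using the same injectivity, where it is the identity on points. Injectivity holds because the composite $N^B(S)\to N(S)\to \mathrm{Map}(S,N(\ast))$ is injective: $N^B$ is the condensed set of a metric space, whose $S$-points are continuous maps.

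\emph{Main obstacle.} The delicate step is the convergence argument: showing that the condensed-set topology on $N_{\le C}$ is coarser than the norm topology in the uniform sense needed. This is exactly where continuity (5) and compactness of the $N_{\le c}$ enter.
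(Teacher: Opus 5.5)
Your route is sound, and it differs from the paper's only in how the limit is produced. The paper takes the closure $\overline{\Gamma}$ of the graph of $S\times\mathbb N\to N_{\le c}$ inside $S\times(\mathbb N\cup\{\infty\})\times N_{\le c}$. It then shows that the projection to $S\times(\mathbb N\cup\{\infty\})$ is a continuous bijection of compact Hausdorff spaces, hence a homeomorphism, so continuity of the limit comes for free. You instead use the uniform structure on $N_{\le C}$, the fact that the compact sets $N_{\le\epsilon}$ shrink into every neighbourhood of $0$, and uniform continuity of addition. This makes the convergence mechanism more explicit.

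One terminological point: strictly speaking the $N_{\le\epsilon}$ need not be neighbourhoods of $0$; for small $\epsilon$ they are not, in $\mathbb Z((T))_{r,\le 1}$. But you only use that they eventually lie inside every neighbourhood, which is what you actually prove. Both arguments rest on the same inputs: closedness of $N_{\le\epsilon}$, continuity of addition, and $\bigcap_{c>0}N_{\le c}=0$.

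There is, however, a gap in the injectivity step. You reduce injectivity to the claim that the map is the identity at $S=\ast$. But $N^B(\ast)$ is the norm completion of $N(\ast)$, so this claim presupposes two things you have not shown: that $N(\ast)$ is norm-complete, and that the limit you construct in the compact topology of $N_{\le C}$ is also the norm limit. Convergence in the compact topology is a priori weaker than norm convergence, so uniform convergence in $N_{\le C}$ does not give this by itself.

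The missing line is as follows. Let $l\to\infty$ in $g_l-g_k\in N_{\le 2^{1-k}}$. Since subtraction $N_{\le C}\times N_{\le C}\to N_{\le 2C}$ is continuous and $N_{\le 2^{1-k}}$ is closed in $N_{\le 2C}$, you get $g_\infty(s)-g_k(s)\in N_{\le 2^{1-k}}$ for all $s\in S$, i.e.\ $\|g_\infty-g_k\|_\infty\le 2^{1-k}$. For $S=\ast$ this shows that every norm-Cauchy sequence in $N(\ast)$ converges in norm to its compact limit. Hence $N^B(\ast)=N(\ast)$, and your map really is the identity there. For general $S$, the same estimate shows that if $g_\infty=0$ then $(g_k)$ is a null sequence, which is exactly injectivity.

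In the paper, this estimate is essentially what the uniqueness of the point of $\overline{\Gamma}$ over $(s,\infty)$ encodes. With it added, your uniqueness and additivity arguments via quasiseparatedness of $N$ go through as written.
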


\begin{proof} Given a map $f: S\to N^B$, choose a sequence of locally constant maps $f_0,f_1,\ldots: S\to N$ with limit $f$ (by definition $N^B(S)$ is the space of Cauchy sequences modulo the space of null sequences). This defines a map $S\times \mathbb N\to N$, and it is easy to see that it takes image in $N_{\leq c}$ for some $c$. Let $\Gamma\subset (S\times \mathbb N)\times N_{\leq c}$ be its graph, and let $\overline{\Gamma}\subset (S\times (\mathbb N\cup \{\infty\}))\times N_{\leq c}$ be the closure. We claim that $\overline{\Gamma}\to S\times (\mathbb N\cup\{\infty\})$ is an isomorphism. As it is a map of compact Hausdorff spaces, it is enough to check bijectivity, so pick some $s\in S$. We need to see that there is a unique preimage of $(s,\infty)$. Note that existence is clear, as $\overline{\Gamma}$ has dense image, and thus full image. On the other hand, uniqueness is a simple consequence of the Cauchy property and the separatedness $0=\bigcap_{c>0} N_{\leq c}$ (which follows from condition (1) and (5)).

In particular, restricting to $\infty$, we get a map $S\to N_{\leq c}$, thus defining a map $S\to N$. It is clear that this is functorial, additive, and independent of the choice of the $f_i$ (as nullsequences will converge to zero), and defines a map of condensed sets $N^B\to N$ (and is the only possible choice). The rest of the assertions are also easy to verify.
\end{proof}

In our situation, the following proposition ensures that replacing $M_{r'',i}$ by $M_{r'',i}^B$ is harmless.

\begin{proposition} For any $r_0\geq r''>r'''>r$, the map $M_{r'',i}\to M_{r''',i}$ factors over $M_{r''',i}^B$.
\end{proposition}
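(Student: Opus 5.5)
The plan is to make the norm on $M_{r''',i}$ explicit using Proposition~\ref{prop:imagefree}, and then show that every map $S\to M_{r'',i}$, once pushed to $M_{r''',i}$, is a uniform limit of locally constant maps for the norm $M_{r''',i,\leq c}$. This is exactly what it means to factor through $M_{r''',i}^B$, because $M_{r''',i}^B\to M_{r''',i}$ is injective by Proposition~\ref{prop:passingtobanach}.

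\emph{Reductions.} Since $M_{r''',i}^B\to M_{r''',i}$ is injective and $M_{r''',i}^B$ is a sheaf, it is enough to show the factorization after pulling back along a surjection $\tilde S\to S$. The map $\tilde S\to M_{r''',i}^B$ then automatically descends to $S$: its two pullbacks to $\tilde S\times_S\tilde S$ agree after composing into $M_{r''',i}$, hence already agree in $M_{r''',i}^B$. We may therefore assume $S$ is extremally disconnected. Next, Proposition~\ref{prop:imagefree}, applied with $r''$ in place of $r$, says that $M_{r'',i}\cong\mathbb Z((T))_{r''}^{n'}$; fix a basis $e_1,\ldots,e_{n'}$. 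Each $e_k$ lies in $M_{r'',i,\leq c_0}$ for some $c_0$, so it also lies in $M_{r''',i,\leq c_0'}$, since measures of $r''$-norm $\leq c_0$ have bounded $r'''$-norm.

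The key quantitative input is that the filtration $M_{r'',i,\leq c}$ is comparable to the standard $r''$-norm ball in the coordinates $(b_k)$. That is, I want a constant $C$ with $M_{r'',i,\leq c}\subset\{\sum b_ke_k : \|b_k\|_{r''}\leq Cc\}$. I expect to get this from a Baire-type argument for countable increasing unions of compact Hausdorff spaces. Under $M_{r'',i}\cong\mathbb Z((T))_{r''}^{n'}$, the compact set $M_{r'',i,\leq 1}$ maps into some ball of the exhaustion of the free module. Rescaling by $T$, which multiplies both norms by $r''$, then gives linear comparability up to a constant.

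\emph{Approximation.} Given a map $g: S\to M_{r'',i,\leq c}$, write $g=\sum_k b_ke_k$ with $b_k=\sum_n a_{k,n}T^n$. Here the $a_{k,n}\in C(S,\mathbb Z)$ satisfy $\sum_n|a_{k,n}|(r'')^n\leq Cc$ pointwise, so $a_{k,n}=0$ for $n$ below a uniform bound. Truncate at degree $N$, setting $g_N=\sum_k\bigl(\sum_{n<N}a_{k,n}T^n\bigr)e_k$. This involves finitely many integer-valued continuous functions, so $g_N$ is locally constant. The tail $b_k-b_{k,<N}$ has $r'''$-norm at most $Cc\,(r'''/r'')^N$, which tends to $0$. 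Multiplying an element of $M_{r''',i,\leq c_0'}$ by a Laurent series with integer coefficients and $r'''$-norm $\epsilon$ lands in $M_{r''',i,\leq c_0'\epsilon}$, by the triangle inequality together with $T^nM_{\leq c}=M_{\leq (r''')^nc}$. Hence $g-g_N\in M_{r''',i,\leq n'c_0'Cc(r'''/r'')^N}$. So $(g_N)$ is Cauchy for the $M_{r''',i}$-norm, defines an element of $M_{r''',i}^B(S)$, and by Proposition~\ref{prop:passingtobanach} its image in $M_{r''',i}$ is the limit, namely $g$.

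\emph{Main obstacle.} The hard step is the comparison constant $C$ between the image filtration and the coordinate norm. I would first try the argument sketched above: a profinite set mapping into a countable increasing union of compacts lands in one of them, and the scaling by $T$ makes the bound linear. If that is delicate, an alternative is to lift $g$ to $\mathcal M(S_1,\mathbb Z((T))_{r''})_{\leq c}$, which is possible since $S$ is extremally disconnected. One would then truncate the measure itself in $T$-degree: the tail has $r'''$-norm at most $c(r'''/r'')^N$. The truncated part is a measure with bounded integer mass in finitely many degrees, and would have to be further approximated by locally constant families. That second route, however, needs the continuity of $S_1\to M_{r''',i}$ in the norm, and that again comes back to the comparison via the free structure.
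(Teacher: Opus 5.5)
Your proposal is correct and is essentially the paper's argument: Proposition~\ref{prop:imagefree} gives the free structure, and truncating the $T$-expansion $\sum a_nT^n$ (with $a_n\in C(S,\mathbb Z)$) of an $S$-point gives locally constant approximations whose tails are small in the $r'''$-norm, thanks to the factor $(r'''/r'')^N$. The comparison you flag as the main obstacle is harmless and is only needed qualitatively: each coordinate $b_k\in\mathbb Z((T))_{r''}(S)$ already lies in some $\mathbb Z((T))_{r'',\leq c'}$ by definition, and this alone makes the tails tend to $0$; meanwhile, your explicit check that the coordinate norm dominates the filtration $M_{r''',i,\leq c}$ (via $e_k\in M_{r''',i,\leq c_0'}$ and submultiplicativity) fills in a step the paper leaves implicit when it identifies $M_{r''',i}$ with $\mathbb Z((T))_{r'''}^{n_i'}$.
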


\begin{proof} By Proposition~\ref{prop:imagefree}, one has $M_{r'',i}\cong \mathbb Z((T))_{r''}^{n_i'}$ for some $n_i'$, and then
\[
M_{r''',i}=M_{r'',i}\otimes_{\mathbb Z((T))_{r''}} \mathbb Z((T))_{r'''}\cong \mathbb Z((T))_{r'''}^{n_i'}.
\]
Thus, it suffices to see that $\mathbb Z((T))_{r''}\to \mathbb Z((T))_{r'''}$ factors over $\mathbb Z((T))_{r'''}^B$. It suffices to check this on $\mathbb Z((T))_{r'',\leq 1}$, and then it follows from the observation that $\mathbb Z((T))_{r'',\leq 1}\cap T^m\mathbb Z[[T]]$ maps into $\mathbb Z((T))_{r''',\leq (\frac{r'''}{r''})^m}$, so writing any $S$-valued section of $\mathbb Z((T))_{r'',\leq 1}$ as $\sum_{n\geq 0} a_n T^n$ with $a_n\in C(S,\mathbb Z)$, this sum is actually convergent in $\mathbb Z((T))_{r'''}^B$.
\end{proof}

This implies that
\[
\varinjlim_{r''>r} \bigcup_{c>0} \prod_i M_{r'',i,\leq c} = \varinjlim_{r''>r} \bigcup_{c>0} \prod_i (M_{r'',i,\leq c}^B).
\]
After this change, we can actually prove a statement for an individual $r''$ (which gives the previous statement by passage to the filtered colimit over all $r''>r$, using Proposition~\ref{prop:liquidpseudocoh} again). We are reduced to the following statement (where the $r$ in the present statement is any of the $r''$ in the previous statement).

\begin{theorem} Fix radii $1>r'>r>0$. Let $S_1$ be a profinite set and let $f_i: S_1\to \mathbb Z((T))_{>r}^{n_i}$ be maps for some index set $i\in I$. Let $M_i\subset \mathbb Z((T))_r^{n_i}$ be the image of the induced maps $\mathcal M(S_1,\mathbb Z((T))_r)\to \mathbb Z((T))_r^{n_i}$, with the subspace $M_{i,\leq c}\subset M_i$ defined as the image of $\mathcal M(S_1,\mathbb Z((T))_r)_{\leq c}$.

Then, letting
\[
M=\bigcup_{c>0} \prod_i M_{i,\leq c}^B,
\]
the map
\[
R\intHom_{\mathbb Z[T^{-1}]}(\mathcal M(S,\mathbb Z((T))_{r'}),M)\to R\intHom_{\mathbb Z[T^{-1}]}(\mathbb Z[T^{-1}][S],M)
\]
is an isomorphism in $D(\Cond(\Ab))$ for all profinite sets $S$.
\end{theorem}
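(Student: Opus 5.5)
Both sides of the map are $R\intHom$ out of a pseudocoherent object: for $\mathcal M(S,\mathbb Z((T))_{r'})$ this is Proposition~\ref{prop:liquidpseudocoh}, and $\mathbb Z[T^{-1}][S]$ is free. So the statement is a comparison of derived mapping complexes into $M$, and we may test it on $E$-valued points for extremally disconnected $E$. Replacing $S$ by $S\times E$, it suffices to show that
\[
R\Hom_{\mathbb Z[T^{-1}]}(\mathcal M(S,\mathbb Z((T))_{r'}),M)\to R\Hom_{\mathbb Z[T^{-1}]}(\mathbb Z[T^{-1}][S],M)=R\Gamma(S,M)
\]
is a quasi-isomorphism.

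The first step is to compute the left-hand side via the resolution from the proof of Proposition~\ref{prop:liquidpseudocoh}. Write $P=\mathcal M(S,\mathbb Z((T))_{r'})$ and take its Breen--Deligne resolution. Each $\mathbb Z[P^a]$ is resolved by the two-term complex
\[
\mathbb Z[P^a_{\leq r'}][T^{-1}]\xrightarrow{T^{-1}-[T^{-1}]}\mathbb Z[P^a_{\leq 1}][T^{-1}].
\]
This reduces the left-hand side to a double complex built from the groups
\[
R\Gamma(P^a_{\leq c},M)=\varinjlim_{c'} R\Gamma\Bigl(P^a_{\leq c},\prod_i M^B_{i,\leq c'}\Bigr).
\]
The quasi-compact sources commute with the union over $c'$. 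Each $P^a_{\leq c}$ is profinite, and $\prod_i M^B_{i,\leq c'}$ sits inside $\prod_i M_i^B$, which is a product of Banach-type, i.e.\ $r$-normed complete, groups. So Theorem~\ref{thm:pbanachcohom}, in its profinite form, kills higher cohomology, and everything becomes a complex of explicit groups of continuous functions with norm bounds.

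The second step is to show that the resulting complex computes $\Hom(P,M)=\Hom(\mathbb Z[T^{-1}][S],M)=M(S)$ in degree $0$ and vanishes elsewhere. For degree $0$ I would use the extension argument of Proposition~\ref{prop:liquidabelianlevel}: a map $S\to M$ lands in $\prod_i M^B_{i,\leq c}$ and extends by density and completeness. The estimate needed is that passing from radius $r$ to $r'>r$ gives a contraction factor $r'/r$-type, in the same way as in the proposition showing that $M_{r''}\to M_{r'''}$ factors through the Banach version. For the higher degrees, the plan is to treat the Breen--Deligne complex for the discrete-like group $M$ as an ``almost exact'' complex with bounds. We need: whenever a cocycle in the explicit complex has norm $\leq c$, it is the coboundary of a cochain of norm $\leq Cc$, with a constant $C$ that depends only on the degree and on $r,r'$, and not on $S$, $i$, or $c$. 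Working in products over $i$ then requires this uniformity in $i$, and normalising by $M_{i,\leq c}$ ensures it.

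The main obstacle is exactly this uniform bounded-homotopy statement. It is a quantitative vanishing result for $\Ext$ between $\mathcal M(S,\mathbb Z((T))_{r'})$ and normed groups, with constants independent of $S$. My first attempt would be induction on the cohomological degree. I would approximate $S$ by finite sets $S_j$, where everything is a finite-dimensional lattice computation in $\mathbb Z((T))_r^{n}$, and then use the contraction from $r'>r$ to make the successive corrections converge geometrically in the Banach completion $M^B$. The remaining difficulty is controlling the constants through the Breen--Deligne maps, which are only piecewise polynomial. I would handle this by proving a bounded-exactness statement for each finite stage of the resolution separately, and then passing to the limit.
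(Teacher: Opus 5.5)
\textbf{Verdict: there is a genuine gap.} Your reduction to a uniform bounded-exactness statement has the right shape. It parallels the paper's passage to Theorem~\ref{thm:constants} and then to Theorem~\ref{thm:banachcomplete}. (Uniformity there does not come from ``normalising by $M_{i,\le c}$''. It comes from a contradiction argument: form $\bigoplus_j V_j$ with the sup norm and rescale by powers of $T$.) But the step you call the main obstacle is the entire content of the theorem, and the strategy you sketch for it would not work.

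\textbf{Why the sketched strategy fails.} Approximating $S$ by finite sets does not turn the problem into a finite-dimensional lattice computation. For finite $S$ one has $\mathcal M(S,\mathbb Z((T))_{r'})\cong \mathbb Z((T))_{r'}^{S}$. So already $S=\ast$ requires
\[
\Ext^i_{\mathbb Z[T^{-1}]}\bigl(\mathbb Z((T))_{r'},\widehat V\bigr)=0 \quad\text{for } i>0,
\]
where $\mathbb Z((T))_{r'}$ is a countable union of infinite profinite sets. This computation runs through the cohomology of the profinite sets $\mathbb Z((T))_{r',\le c}^a$ with coefficients in $\widehat V$, and it is no easier than the case of general $S$. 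Moreover, the Breen--Deligne maps do not preserve the pieces $P^a_{\le c}$. So there is no complex at a fixed radius $c$ in which cocycles of norm $\le c$ are coboundaries with controlled norm. What is true, and what the paper proves, is weaker: the pro-system in $c$ of cohomology groups is pro-zero. This is the notion of $\le k$-exactness, where one is allowed to restrict from radius $kc$ to radius $c$.

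\textbf{The missing mechanism.} The paper proceeds as follows.
\begin{enumerate}
\item Replace Breen--Deligne by MacLane's $Q$-complex, filtered by radii $2^{-n}c$, and dispose of $T^{-1}-[T^{-1}]$ using Lemma~\ref{lem:Tinv}.
\item Induct on the degree $m$ via the normed spectral-sequence argument of Proposition~\ref{prop:key}. This is fed by two inputs: the universal homotopy between adding $N$ elements internally and externally (Lemma~\ref{lem:homotopyNelements}), and the splitting Lemma~\ref{lem:key}, which writes any element of norm $\le c$ as a sum of $N$ elements of norm $\le c/N+d$ using Gordan's lemma.
\item The splitting forces the induction hypothesis to be strengthened to all polyhedral lattices $\Lambda$, with $k$ independent of $\Lambda$ (Theorem~\ref{thm:explicit3}, using the \v{C}ech conerve of $\Lambda\to\Lambda^N$).
\end{enumerate}
The hypothesis $r'>r$ enters only through the estimate $r^bN\le\epsilon$ for the homotopic map. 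Restricting a $T^{-1}$-invariant function from radius $c$ to radius $(r')^bc$ multiplies norms by $r^b$, while summing the $N$ pullbacks costs a factor $N\approx k'(r')^{-b}$. ``Successive corrections converging geometrically'' does not capture this mechanism. (Also, the contraction factor is $r/r'$, not $r'/r$.)

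\textbf{A secondary error in your first reduction.} $R\intHom(\mathcal M(S,\mathbb Z((T))_{r'}),M)(E)$ is $R\Hom$ out of $\mathcal M(S,\mathbb Z((T))_{r'})\otimes\mathbb Z[E]$, and this is not $\mathcal M(S\times E,\mathbb Z((T))_{r'})$. So you cannot replace $S$ by $S\times E$. There are two correct fixes. You can absorb $E$ into the target instead. Or, as the paper does, tensor the source with $\mathbb Z[S']$ and use Proposition~\ref{prop:normedcompletion} to handle the terms $E_i\times S'$, which are not extremally disconnected.
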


Note that we are doing something slightly funny here as we are using the procedure $N\mapsto N^B$ inside the product over $i$: While $\bigcup_{c>0} \prod_i M_{i,\leq c}$ is of Smith-type (countable union of compact Hausdorff), the present construction is a strange mixture.

We will now get rid of the product over $i$ by formulating a quantitative version of the statement for a single $i$. To show that the internal Hom agrees, we have to see that for any profinite set $S'$, one has
\[
R\Hom_{\mathbb Z[T^{-1}]}(\mathcal M(S,\mathbb Z((T))_{r'})[S']/\mathbb Z[T^{-1}][S\times S'],M) = 0.
\]
The source is a pseudocoherent $\mathbb Z[T^{-1}]$-module, so we may pick a projective resolution $P_\bullet = P_\bullet(r',S,S')$ of $\mathcal M(S,\mathbb Z((T))_{r'})[S']/\mathbb Z[T^{-1}][S\times S']$, where each $P_i = \mathbb Z[T^{-1}][E_i]$ is the free condensed $\mathbb Z[T^{-1}]$-module on some extremally disconnected set $E_i$. Then we have to see that the corresponding complex
\[
0\to M(E_0)\to M(E_1)\to \ldots
\]
is acyclic.

Now we observe that $M$ is naturally the union of the subsets $M_{\leq c}=\prod_i M_{i,\leq c}^B$ over all $c>0$. Using this observation, it is enough to prove the following quantitative result.

\begin{theorem}\label{thm:constants} Fix profinite sets $S$ and $S'$ and radii $1>r'>r>0$, as well as a projective resolution $P_\bullet$ of $\mathcal M(S,\mathbb Z((T))_{r'})[S']/\mathbb Z[T^{-1}][S\times S']$, where each $P_i = \mathbb Z[T^{-1}][E_i]$ is the free condensed $\mathbb Z[T^{-1}]$-module on some extremally disconnected set $E_i$.

Then for any $m$ there is some $C_m>0$ with the following property. For any profinite set $S_1$ with a map $f: S_1\to \mathbb Z((T))_{>r}^n$, letting $M$ be the image of the induced map $\mathcal M(S_1,\mathbb Z((T))_r)\to \mathbb Z((T))_r^n$ with its subspaces $M_{\leq c}$, the complex
\[
0\to M^B(E_0)\to M^B(E_1)\to \ldots
\]
computing $R\Hom(\mathcal M(S,\mathbb Z((T))_{r'})[S']/\mathbb Z[T^{-1}][S\times S'],M^B)$ is exact, and if $g\in \ker(M^B(E_m)\to M^B(E_{m+1}))$ with $||g||\leq c$, then there is some $h\in M^B(E_{m-1})$ with $||h||\leq C_mc$ such that $d(h)=g$.
\end{theorem}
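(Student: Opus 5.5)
The plan is to separate two issues: first prove plain exactness with \emph{some} bound for a single $M$, then extract uniformity of $C_m$ in $S_1$ and $n$ by a compactness/open-mapping argument applied to one ``universal'' object.

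\emph{Step 1: reduce to abstract normed groups.} By the description after Proposition~\ref{prop:passingtobanach}, $M^B(E)$ is the completion of the normed group of locally constant maps $E\to M$. Each such map factors over a finite quotient $E\to E_j$. So the complex $M^B(E_\bullet)$ is the completion of the filtered colimit over finite quotients of finite products of the normed abstract group $M(\ast)$. A standard lemma then lets me replace the completed complex by the dense uncompleted one, at the cost of enlarging each $C_m$ by an arbitrary factor $>1$: exactness with bounds on a dense subcomplex passes to the completion via a geometric-series approximation. The problem becomes a statement about the $\mathbb Z[T^{-1}]$-module $M(\ast)$ with its norm $\|x\|=\inf\{c: x\in M_{\leq c}\}$. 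Here $M_{\leq c}$ is the image of the profinite set $\mathcal M(S_1,\mathbb Z((T))_r)_{\leq c}$. This norm is a quotient norm, and multiplication by $T^{-1}$ scales it by $r^{-1}$.

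\emph{Step 2: qualitative exactness.} Unwinding $P_\bullet$, the vanishing of $R\Hom$ into $M^B$ says that any ``bounded'' map from $\mathbb Z[T^{-1}][S\times S']$ into $M$ extends uniquely over $\mathcal M(S,\mathbb Z((T))_{r'})[S']$, with no higher obstructions. At the level of $\Hom$ this is Proposition~\ref{prop:liquidabelianlevel}: by Proposition~\ref{prop:imagefree}, $M\cong\mathbb Z((T))_r^{n'}$. The gap between $r'$ and $r$ is the source of the contraction. Measures of $r'$-norm $\leq 1$ have tails $\sum_{k\geq m}a_kT^k$ whose $r$-norm is bounded by $(r/r')^m$. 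This is exactly the mechanism already used to show that $\mathbb Z((T))_{r''}\to\mathbb Z((T))_{r'''}$ factors over $\mathbb Z((T))_{r'''}^B$. For higher $m$, I would follow the proof that $\mathbb Z[S]^\solid$ is projective for solid modules. Write $\mathcal M(S,\mathbb Z((T))_{r'})$ as a limit over finite quotients $S\to S_i$, and compare it with $\mathbb Z[T^{-1}][S]$ using the Breen--Deligne style resolution of Proposition~\ref{prop:liquidpseudocoh}. Then solve the lifting problem degree by degree, approximating modulo $T^k$, where everything factors through finite sets and finite free $\mathbb Z[T^{-1}]$-modules.

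\emph{Step 3: uniform constants; this is the main obstacle.} Step 2 yields constants that a priori depend on $S_1$, $f$ and $n$, whereas the theorem requires $C_m$ depending only on $S,S',r,r',P_\bullet$ and $m$. My first attempt would be to assemble all possible data into a single object. Consider $\widetilde M=\bigcup_{c}\prod_{(S_1,f)}M_{(S_1,f),\leq c}^B$, with the product over a set of representatives of all bounded data. Then prove merely qualitative exactness of $\widetilde M(E_\bullet)$, noting that $\widetilde M$ is an increasing union of compact Hausdorff pieces. A Baire-category/open-mapping argument on these compact pieces should then produce a single $c\mapsto C_mc$ bound valid for every factor. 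The catch is circularity: qualitative exactness for $\widetilde M$ is essentially the statement being proved. The risk is that Step 2 itself needs a genuinely quantitative input, namely a norm estimate for lifts depending only on the finitely many structure constants of $P_\bullet$ in degrees $\leq m+1$. If so, I would aim to prove directly a finite-dimensional statement. For a finitely generated free $\mathbb Z[T^{-1}]$-module with a norm given as a quotient of an $\ell^1$-type norm on $\mathbb Z[T^{\pm1}]$-lattices, solutions to linear systems with fixed integer-polynomial coefficients from $P_\bullet$ can be chosen with norm bounded in terms of those coefficients alone. This is a Gordan-type lemma for polyhedral lattices, and I expect it to be where the real work lies.
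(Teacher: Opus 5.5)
\paragraph{The gap: Step~2 is the theorem itself.} As your Step~1 essentially observes, $M^B=\widehat V$ for the $r$-normed $\mathbb Z[T^{\pm1}]$-module $V=M(\ast)$, and the factor $S'$ is harmless. So the claim in positive degrees is the vanishing of $\Ext^i_{\mathbb Z[T^{-1}]}(\mathcal M(S,\mathbb Z((T))_{r'}),\widehat V)$ for $i>0$. That is Theorem~\ref{thm:banachcomplete}, the whole non-formal content of the liquid theory. ``Follow the solid projectivity proof and approximate modulo $T^k$'' gives no mechanism for it.

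The solid arguments use discrete targets like $\mathbb Z$ or $\mathbb F_p$. There, maps out of profinite sets factor through finite quotients and no estimates arise. Here the target is Banach-like and $T$ is only norm-contracting. Reducing the source to finite levels does not help by itself; the paper does this too, e.g.\ in Lemma~\ref{lem:key}. $\Ext$ into $\widehat V$ is still computed via a Breen--Deligne/$Q$-type resolution by free modules on the \emph{sets} $\overline{\mathcal M}_{r'}(S)^a_{\leq c}$. One must then prove that the resulting complex of $\widehat V$-valued functions is exact with bounds independent of $S$ and $V$.

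That is Theorem~\ref{thm:explicit3}. The paper proves it by induction on $m$ over all polyhedral lattices $\Lambda$, using:
\begin{itemize}
\item the two-term resolution by $T^{-1}-[T^{-1}]$ (Lemma~\ref{lem:Tinv});
\item the homotopy between internal and external addition (Lemma~\ref{lem:homotopyNelements});
\item the splitting of elements of $\Hom(\Lambda,\overline{\mathcal M}_{r'}(S))_{\leq c}$ into $N$ pieces of size $\leq c/N+d$ (Lemma~\ref{lem:key});
\item the normed spectral-sequence argument of Proposition~\ref{prop:key}.
\end{itemize}
The gap $r<r'$ is used precisely to make the homotopic map have norm $\leq r^bN\leq\epsilon$. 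Your tail estimate $(r/r')^m$ only addresses the $\Hom$-level statement.

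\paragraph{Step~3.} Your circularity worry disappears once the qualitative statement is formulated for \emph{all} $r$-normed $\mathbb Z[T^{\pm1}]$-modules. This class is closed under sup-normed direct sums. Suppose the constants failed along $V_1,V_2,\ldots$ with witnesses $g_i$ of norm $\leq c$. Apply exactness for $\widehat V$, with $V=\bigoplus_i V_i$ under the sup norm, to $g=(Tg_1,T^2g_2,\ldots)$; the rescaling is what puts $g$ in the completion. The resulting preimage has rescaled components that contradict the assumption. This is how the paper obtains uniform $C_m$, with $S'$ handled by Propositions~\ref{prop:completeexact} and~\ref{prop:normedcompletion}.

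Your $\bigcup_c\prod M^B_{\leq c}$ is the wrong universal object. Its factors carry the norm topology, so its pieces are not compact, and your Baire/open-mapping step has nothing to act on.

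\paragraph{The finite-dimensional fallback is misdirected.} The differentials of $P_\bullet$ are maps $E_{i+1}\to\mathbb Z[T^{-1}][E_i]$ between free modules on huge extremally disconnected sets. They are not finitely many integer-polynomial coefficients. The difficulty lies in the source, not in the target $M\cong\mathbb Z((T))_r^{n'}$; the paper notes that $\widehat V=\mathbb Z((T))_r^B$ already carries all of it. Gordan's lemma does enter the paper's proof, but on the source side, in the lemma preceding Lemma~\ref{lem:key}.
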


Now $M^B$ with its norm is an example of the following structure:

\begin{definition} An $r$-normed $\mathbb Z[T^{\pm 1}]$-module is a normed $\mathbb Z[T^{\pm 1}]$-module $V$ satisfying $||T v||=r||v||$ for all $v\in V$.
\end{definition}

To an $r$-normed $\mathbb Z[T^{\pm 1}]$-module $V$, one can associate a condensed $\mathbb Z[T^{\pm 1}]$-module $\widehat{V}$, as in the appendix. One main reason that we reduced to this ``Banach space'' setting is that one can use resolutions by profinite sets, instead of extremally disconnected sets, by Proposition~\ref{prop:normedcompletion}. We end this lecture by reducing to the following statement.

\begin{theorem}\label{thm:banachcomplete} Fix radii $1>r'>r>0$. Then for all $r$-normed $\mathbb Z[T^{\pm 1}]$-modules $V$ and all profinite sets $S$, the map
\[
R\Hom_{\mathbb Z[T^{-1}]}(\mathcal M(S,\mathbb Z((T))_{r'}),\widehat{V})\to \widehat{V}(S)
\]
is a quasi-isomorphism.
\end{theorem}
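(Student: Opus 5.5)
The plan is to compute both sides with explicit resolutions by free modules on profinite sets, and to reduce the quasi-isomorphism to a \emph{bounded} exactness statement about complexes of normed groups of the form $C(X,V)^\wedge$ with $X$ profinite. Write $\mathcal M=\mathcal M(S,\mathbb Z((T))_{r'})$ with its profinite filtration $\mathcal M_{\leq c}$. First I settle degree $0$. A map $f\colon S\to\widehat V$ extends to $\mathcal M\to\widehat V$ by the same recipe as in Proposition~\ref{prop:liquidabelianlevel}: define it on $\mathbb Z[T^{\pm1}][S]$ and extend by continuity. The extension converges because a measure in $\mathcal M_{\leq 1}$ is a sum $\sum a_n T^n[s]$ with $\sum|a_n|(r')^n\leq 1$. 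Its image therefore has $\widehat V$-norm bounded by $\sum |a_n| r^n\,\|f\|$, and the tails are controlled by $(r/r')^n$, so they tend to zero. Uniqueness follows from density of $\mathbb Z[T^{\pm1}][S]$ and quasiseparatedness of $\widehat V$, as in the uniqueness half of Proposition~\ref{prop:liquidabelianlevel}.

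For higher degrees I use the Breen--Deligne resolution $\bigoplus_j\mathbb Z[\mathcal M^{a_{ij}}]$ of $\mathcal M$, made $\mathbb Z[T^{-1}]$-linear as in the proof of Proposition~\ref{prop:liquidpseudocoh}. I replace each $\mathbb Z[\mathcal M^a]$ by the two-term complex $\mathbb Z[\mathcal M_{\leq r'}^a][T^{-1}]\to\mathbb Z[\mathcal M_{\leq 1}^a][T^{-1}]$ from that proof. This gives a double complex whose terms are free $\mathbb Z[T^{-1}]$-modules on profinite sets. Applying $R\Hom_{\mathbb Z[T^{-1}]}(-,\widehat V)$, the terms become $R\Gamma(X,\widehat V)$ for profinite $X$. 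These sit in degree $0$, because $\widehat V(X)$ is the completion of $V$-valued locally constant functions, so the argument of Theorem~\ref{thm:pbanachcohom} applies (Proposition~\ref{prop:normedcompletion}). The left side is thus computed by an explicit complex of normed groups $\widehat V(\mathcal M^{a}_{\leq c})$ with differentials built from the Breen--Deligne maps and from $T^{-1}-[T^{-1}]$. The right side is $\widehat V(S)$ in degree $0$. The claim becomes exactness of the cone, i.e.\ exactness of the complex with $\mathbb Z[S]$ replaced by the quotient $\mathcal M/\mathbb Z[T^{-1}][S]$.

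To prove this exactness, I pass to finite levels. I write $S=\varprojlim S_k$, so that $\mathcal M_{\leq c}=\varprojlim \mathcal M(S_k)_{\leq c}$, where each $\mathcal M(S_k)_{\leq c}$ is a compact polyhedral subset of a finite free $\mathbb Z((T))_{r'}$-module truncated modulo $T^N$. Since $\widehat V(\varprojlim X_k)$ is the completion of $\varinjlim \widehat V(X_k)$, it suffices to prove exactness for finite $S$ \emph{with bounds}. The required form is: a cocycle of norm $\leq c$ is the coboundary of a cochain of norm $\leq C_m c$, where $C_m$ is independent of $S$, $V$, and the truncation. The constants must absorb the fact that Breen--Deligne maps send $\leq c$ into $\leq kc$; this is compensated by $T$ scaling norms by $r$ on $V$ but by $r'$ on $\mathcal M$. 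Concretely, composing with $T^{-N}$ gains a factor $(r/r')^N$, which beats any fixed $k$. I expect this to be organized by induction on $m$, using the rescaled filtrations $c\mapsto kc$ and the ``$[2]$-multiplication'' endomorphism of the Breen--Deligne resolution (homotopic to $2$) to split off contributions.

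I expect the main obstacle to be exactly this uniform bound. Exactness for each fixed finite $S$ is fairly formal, but the constants must not depend on $|S_k|$ or on the truncation level, otherwise the passage to the profinite limit and completion fails. My first attempt would be to prove a statement of the form ``the complex of normed groups $C(\mathcal M(S_k)_{\leq c}^{a},V)$ is $(k_m,C_m)$-exact in degrees $\leq m$,'' and to proceed by induction on $m$. At each step I would use the polyhedral structure of $\mathcal M(S_k)_{\leq c}$ (a lattice-point set cut out by an $\ell^1$-type norm) to choose $\mathbb Z$-linear approximate sections with controlled norm growth. The ratio $r/r'<1$ then provides the geometric contraction needed to sum the resulting corrections.
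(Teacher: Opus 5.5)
Your set-up matches the paper's, except that it uses MacLane's $Q$-complex where you use Breen--Deligne, which is inessential. That set-up is: resolve $\mathcal M$ functorially, resolve each $\mathbb Z[\mathcal M^a]$ by the two-term $T^{-1}-[T^{-1}]$ complex, evaluate $\widehat V$ on profinite sets via Proposition~\ref{prop:normedcompletion}, and pass to $\overline{\mathcal M}_{r'}(S)$. Your degree-$0$ argument is also fine. But the proposal stops exactly where the content of the theorem begins, and it misdiagnoses the difficulty.

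\textbf{The obstacle is misidentified.} Exactness for a fixed finite $S$ is not formal. Already for $S=\ast$ one needs $R\Hom_{\mathbb Z[T^{-1}]}(\overline{\mathcal M}_{r'}(\ast),\widehat V)=0$, where $\overline{\mathcal M}_{r'}(\ast)_{\le c}$ is an infinite profinite set; in the paper $S$ plays essentially no role (it enters only through Tychonoff in Lemma~\ref{lem:key}).

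\textbf{Truncation modulo $T^N$ is not available.} $T^{-1}$ shifts truncation levels, and a $T^{-1}$-equivariant cochain that factors through a finite truncation is already zero. So at finite level you lose exactly the structure that produces the $(r/r')$-gain. That structure is indispensable: over $\mathbb Z$ alone, $\sum_{n\ge 1}a_nT^n\mapsto a_1v$ is a nonzero homomorphism.

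\textbf{The target statement is the wrong shape.} The Breen--Deligne differentials only send $\le c$ to $\le kc$, so you get a filtered colimit over $c$ of double complexes, and $R\Hom$ becomes a derived limit over $c$. What must be proved is pro-exactness in the paper's $\le k$-exact sense, which involves $\mathrm{res}_{kc,c}$. Your formulation (``a cocycle of norm $\le c$ is a coboundary of a cochain of norm $\le C_mc$'') conflates the sup-norm of $\widehat V$-valued cochains with the filtration index on the source.

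\textbf{The ingredients that make the induction on $m$ work are missing.} The paper uses four of them.
\begin{enumerate}
\item The homotopy of Lemma~\ref{lem:homotopyNelements}, between pullback along the sum map $\Sigma\colon\overline{\mathcal M}_{r'}(S)^N_{\le c/N}\to\overline{\mathcal M}_{r'}(S)_{\le c}$ and the sum of the $N$ pullbacks along the projections. The latter factors through restriction from $\le c$ to $\le (r')^bc$, which on $T^{-1}$-invariant cochains has norm $r^b$. Hence the homotopic map has norm $\le Nr^b\le\epsilon$, and this is where $r<r'$ enters.
\item Lemma~\ref{lem:key}: a Gordan's-lemma decomposition $x=x_1+\dots+x_N$ with every $x_i$ in the $\le c/N+d$ part. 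A ``$[2]$ homotopic to $2$'' trick cannot replace this, since $[2]$ is far from surjective and over $\mathbb Z$ one cannot divide by $N$. There are also no $\mathbb Z$-linear sections with the right norms. Instead, the \v{C}ech nerve of $\Sigma$ is a hypercover of profinite sets, so Proposition~\ref{prop:normedcompletion} and Lemma~\ref{lem:Tinv} give bounded exactness of the columns.
\item Proposition~\ref{prop:key}. It takes as input exactness of the rows in degrees $\le m-1$, exactness of the columns in degrees $\le m$, and the fact that the homotopic map, after a deep restriction, has norm $\le\epsilon$. It outputs exactness of the first row in degrees $\le m$.
\item The generalization to polyhedral lattices (Theorem~\ref{thm:explicit3}). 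The other rows are the complexes for $\Hom(\Lambda'^{(j)},\overline{\mathcal M}_{r'}(S))$, where $\Lambda'^{(\bullet)}$ is the \v{C}ech conerve of the diagonal $\Lambda\to\Lambda^N$. So the inductive hypothesis must hold for all $\Lambda$, with $k$ depending only on $m$ and $c_0$ allowed to depend on $\Lambda$; for $\Lambda=\mathbb Z$ alone the induction does not close.
\end{enumerate}
Without these ingredients, your final paragraph is a programme rather than a proof.
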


\begin{proof}[Theorem~\ref{thm:banachcomplete} implies Theorem~\ref{thm:constants}] Fix the profinite set $S$ and take a projective resolution $P_\bullet$ of $\mathcal M(S,\mathbb Z((T))_{r'})/\mathbb Z[T^{-1}][S]$, so that each $P_i=\mathbb Z[T^{-1}][E_i]$ is the free condensed $\mathbb Z[T^{-1}]$-module on an extremally disconnected set $E_i$.

As $R\Gamma(S,\widehat{V})=\widehat{V}(S)$ is concentrated in degree $0$, Theorem~\ref{thm:banachcomplete} implies that the complex
\[
0\to \widehat{V}(E_0)\to \widehat{V}(E_1)\to \ldots
\]
computing $R\Hom_{\mathbb Z[T^{-1}]}(\mathcal M(S,\mathbb Z((T))_{r'})/\mathbb Z[T^{-1}][S],V)$, is acyclic. We claim that this implies that for all $m\geq 0$, there is some constant $C_m$ (independent of $V$) such that for all $g\in \ker(\widehat{V}(E_m)\to \widehat{V}(E_{m+1}))$ with $||g||\leq c$, there is some $h\in \widehat{V}(E_{m-1})$ with $||h||\leq C_mc$ such that $d(h)=g$.

Indeed, assume the contrary. Then we can find a sequence $V_1,V_2,\ldots$ of $r$-normed $\mathbb Z[T^{\pm 1}]$-modules, equipped with norms $||\cdot||_i$ on $V_i$, together with elements $g_i\in \ker(\widehat{V}_i(E_m)\to \widehat{V}_i(E_{m+1}))$ with $||g_i||\leq c$ for some fixed $c$, such that for each $i=1,2,\ldots$ there is no $h\in V_i(E_{m-1})$ with $||h_i||\leq r^{-2i}c$ such that $d(h_i)=g_i$.

Let $V$ be the direct sum $V_1\oplus V_2\oplus \ldots$ equipped with the supremum norm; this is again an $r$-normed $\mathbb Z[T^{\pm 1}]$-module. Moreover, $g=(Tg_1,T^2g_2,\ldots)$ defines an element of $\widehat{V}(E_m)$. Using the exactness of the complex for $V$, we get some $h\in V$ with $d(h)=g$. Then there is some $i$ such that $||h||\leq r^{-i}c$, and then the image $T^{-i}h_i\in V_i$ of $h$ satisfies $||h_i||\leq r^{-2i}c$ and $d(h_i)=g_i$, which contradicts our assumption.

Now, we also see that for all profinite sets $S'$, the complex
\[
0\to \widehat{V}(E_0\times S')\to \widehat{V}(E_1\times S')\to \ldots
\]
is exact, and for all $m\geq 0$, there is some constant $C_m$ (independent of $V$) such that for all $g\in \ker(\widehat{V}(E_m\times S')\to \widehat{V}(E_{m+1}\times S'))$ with $||g||\leq c$, there is some $h\in \widehat{V}(E_{m-1}\times S')$ with $||h||\leq C_mc$ such that $d(h)=g$. Indeed, this complex is obtained by completing the complex of locally constant maps from $S'$ into the previous one, which gives the desired bounds by Proposition~\ref{prop:completeexact}.

But this complex computes $R\Hom_{\mathbb Z[T^{-1}]}(\mathcal M(S,\mathbb Z((T))_{r'})[S']/\mathbb Z[T^{-1}][S\times S'],V)$, via the resolution $P_\bullet[S']$. This is not quite a resolution by extremally disconnected sets, but one can refine it by one (resolving each term), and use Proposition~\ref{prop:normedcompletion}. Applying this to $V=M^B$ with its norm, we get the result.
\end{proof}

\begin{remark} Reductions very similar to the ones in this lecture could be done directly in the setting of liquid $\mathbb R$-vector spaces, reducing Theorem~\ref{thm:keyreal} to the following statement: For any $0<p'<p\leq 1$ and any $p$-Banach space $V$ and profinite set $S$, the map
\[
R\Hom_{\mathbb R}(\mathcal M_{p'}(S),V)\to V(S)
\]
is a quasi-isomorphism. Equivalently, for all $i>0$,
\[
\mathrm{Ext}^i_{\mathbb R}(\mathcal M_{p'}(S),V)=0.
\]
It is in the proof of this statement that one runs into the issue pointed out after Theorem~\ref{thm:pbanachcohom}.
\end{remark}

\newpage

\section*{Appendix to Lecture VIII: Completions of normed abelian groups}

\begin{definition}\label{def:normedabelian}\leavevmode
\begin{enumerate}
\item A normed abelian group is an abelian group $M$ equipped with a map
\[
||\cdot||: M\to \mathbb R_{\geq 0}
\]
satisfying $||0||=0$, $||-m||=||m||$ and $||m+n||\leq ||m||+||n||$ for all $m,n\in M$.
\item A normed abelian group is separated if $||m||=0$ implies $m=0$.
\item A normed abelian group is complete if it is separated and for every Cauchy sequence $(m_0,m_1,\ldots)$ (i.e., $||m_i-m_j||\to 0$ as $i,j\to \infty$) there is some $m\in M$ with $||m-m_i||\to 0$ as $i\to \infty$.
\end{enumerate}
\end{definition}

As is well-known, the inclusions of complete into separated into all normed abelian groups admit left adjoints: Separation takes $M$ to the quotient of $M$ by $\{m\in M\mid ||m||=0\}$; and completion takes $M$ to the quotient of the space of Cauchy sequences $(m_0,m_1,\ldots)$ ($||m_i-m_j||\to 0$ as $i,j\to \infty$) by the space of nullsequences $(m_0,m_1,\ldots)$ ($||m_i||\to 0$ as $i\to \infty$), with the norm defined by declaring $||(m_0,m_1,\ldots)||$ to be the limit of $||m_i||$ as $i\to \infty$.

We often use the following exactness property:

\begin{proposition}\label{prop:completeexact} Let $M_0\xrightarrow{d_0} M_1\xrightarrow{d_1} M_2\xrightarrow{d_2} M_3$ be a four-term complex of bounded maps of normed abelian groups. Assume that, for some positive constants $C$ and $D$, for all $y\in \ker(d_1: M_1\to M_2)$ there is some $x\in M_0$ with $d_0(x)=y$ and $||x||\leq C||y||$, and similarly for all $z\in \ker(d_2: M_2\to M_3)$, there is some $y\in M_1$ with $d_1(y)=z$ and $||y||\leq D||z||$.

Then $\widehat{M}_0\xrightarrow{\widehat{d}_0} \widehat{M}_1\xrightarrow{\widehat{d}_1} \widehat{M}_2\xrightarrow{\widehat{d}_2} \widehat{M}_3$ is a complex, and for all $\widehat{y}\in \widehat{M}_1$ and all $\epsilon>0$ there is some $\widehat{x}\in \widehat{M}_0$ with $\widehat{d}_1(\widehat{x})=\widehat{y}$ and $||\widehat{x}||\leq (C+\epsilon)||\widehat{y}||$.
\end{proposition}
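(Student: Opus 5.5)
The plan is to approximate $\widehat y$ by elements of $M_1$, correct these approximants so that they lie in $\ker(d_1)$, and then lift successive differences with controlled norms. The correction step is where the hypothesis on $d_1$ (the constant $D$) enters. I read the conclusion as: for every $\widehat y\in\ker(\widehat d_1)$ there is some $\widehat x$ with $\widehat d_0(\widehat x)=\widehat y$ and $\|\widehat x\|\leq (C+\epsilon)\|\widehat y\|$. The displayed statement writes $\widehat d_1$ where $\widehat d_0$ is meant.

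First, the complex property. Bounded homomorphisms extend uniquely to bounded maps of completions, by applying them termwise to Cauchy sequences. Hence $\widehat d_1\circ\widehat d_0$ is the extension of $d_1\circ d_0=0$, so it vanishes; the same holds for $\widehat d_2\circ\widehat d_1$.

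Next, the approximation. Fix $\widehat y\in\ker(\widehat d_1)$ with $\widehat y\neq 0$; the completion is separated, so $\|\widehat y\|>0$. Choose a small $\delta>0$ and elements $y_n\in M_1$ with $\|\widehat y-y_n\|\leq \delta 2^{-n}$. Since $\widehat d_1\widehat y=0$, we get $\|d_1 y_n\|\leq \|d_1\|\,\delta 2^{-n}$. Moreover $d_1y_n\in\ker(d_2)$, so the hypothesis with constant $D$ gives $w_n\in M_1$ with
\[ d_1w_n=d_1y_n \quad\text{and}\quad \|w_n\|\leq D\|d_1\|\delta 2^{-n}. \]
Put $y_n'=y_n-w_n$. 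Then $y_n'\in\ker(d_1)$ and $\|\widehat y-y_n'\|\leq (1+D\|d_1\|)\delta 2^{-n}$. I expect this correction into the kernel to be the main point of the proof; everything else is bookkeeping.

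Now lift. The element $y_0'$ lies in $\ker(d_1)$, so there is $x_0\in M_0$ with $d_0x_0=y_0'$ and
\[ \|x_0\|\leq C\|y_0'\|\leq C\|\widehat y\|+C(1+D\|d_1\|)\delta. \]
Each difference $y_{n+1}'-y_n'$ also lies in $\ker(d_1)$ and has norm at most $3(1+D\|d_1\|)\delta 2^{-n-1}$. So there is $x_{n+1}\in M_0$ with $d_0x_{n+1}=y'_{n+1}-y'_n$ and $\|x_{n+1}\|\leq 3C(1+D\|d_1\|)\delta 2^{-n-1}$.

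The partial sums $x_0+\cdots+x_n$ form a Cauchy sequence in $M_0$, so they define an element $\widehat x\in\widehat M_0$. By continuity, $\widehat d_0\widehat x=\lim_n y_n'=\widehat y$. Its norm satisfies
\[ \|\widehat x\|\leq C\|\widehat y\|+K\delta, \]
where $K$ depends only on $C$, $D$ and $\|d_1\|$. Choosing $\delta$ with $K\delta\leq\epsilon\|\widehat y\|$ gives the claimed bound $\|\widehat x\|\leq (C+\epsilon)\|\widehat y\|$.
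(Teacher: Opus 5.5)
Your proposal is correct and follows essentially the same route as the paper: the constant $D$ is used to push approximants of $\widehat{y}$ into $\ker(d_1)$, so $\ker(d_1)$ is dense in $\ker(\widehat{d}_1)$ (your correction $y_n'=y_n-w_n$ is exactly the paper's first step). Then $\widehat{y}$ is written as a fast-converging telescoping sum of elements of $\ker(d_1)$, each term is lifted with the constant $C$, and you correctly read the conclusion as the intended one, with $\widehat{d}_0$ and $\widehat{y}\in\ker(\widehat{d}_1)$.
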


\begin{proof} First, we claim that $\ker(d_1: M_1\to M_2)$ is dense in $\ker(\widehat{d}_1: \widehat{M}_1\to \widehat{M}_2)$. Pick any $\widehat{y}\in \ker(\widehat{d}_1)$ and $\delta>0$ and take $y\in M_1$ such that $||\widehat{y}-y||\leq \delta$. Let $z=d_1(y)\in M_2$, which has norm $||z||=||d_1(y)||=||d_1(y-\widehat{y})||$ bounded by $C_{d_1}\delta$, where $C_{d_1}$ is the norm of $d_1$. We can thus find some $y'\in M_1$ with $||y'||\leq DC_{d_1}\delta$ and $d_1(y')=z$. Replacing $y$ by $y-y'$, we can thus find $y\in \ker(d_1: M_1\to M_2)$ such that still $||\widehat{y}-y||\leq (1+DC_{d_1})\delta$; as $\delta$ was arbitrary, this gives the desired density.

This implies that one can write $\widehat{y}$ as a sum $y_0+y_1+\ldots$ with $y_i\in \ker(d_1)$ and $||y_i||\leq \epsilon_i$ for $i>0$ for any given sequence of positive numbers $\epsilon_1\geq \epsilon_2\geq \ldots$. Indeed, we can inductively choose the $y_i$ so that $||\widehat{y}-y_0-\ldots-y_i||\leq \tfrac 12 \epsilon_{i+1}$, in which case $||y_i||\leq \tfrac 12(\epsilon_i+\epsilon_{i+1})\leq \epsilon_i$. Taking the sequence of $\epsilon_i$'s sufficiently small so that $\sum_{i>0} \epsilon_i\leq \tfrac {||\widehat{y}||}{2C} \epsilon$, we can lift all $y_i$ to $x_i$ with $||x_i||\leq C||y_i||$, and then $\widehat{x}=x_0+x_1+\ldots$ maps to $\widehat{y}$ and satisfies
\[
||\widehat{x}||\leq ||x_0||+C\sum_{i>0} \epsilon_i\leq C||y_0||+C\sum_{i>0} \epsilon_i\leq C||\widehat{y}||+2C\sum_{i>0} \epsilon_i\leq (C+\epsilon)||\widehat{y}||.\qedhere
\]
\end{proof}

\begin{definition} Let $M$ be a normed abelian group. Let $\widehat{M}$ be the condensed abelian group taking any profinite $S$ to the completion of the normed abelian group of locally constant maps from $S$ to $M$ (equipped with the supremum norm).
\end{definition}

We note that implicit here is that this actually is a condensed abelian group.

\begin{proposition}\label{prop:normedcompletion} The condensed abelian group $\widehat{M}$ is canonically identified with the condensed abelian group associated to the topological abelian group $\widehat{M}_{\mathrm{top}}$ given by the completion of $M$ equipped with the topology induced by the norm. The norm defines a natural map of condensed sets
\[
||\cdot||: \widehat{M}\to \mathbb R_{\geq 0}.
\]

Moreover, for any hypercover $S_\bullet\to S$ of a profinite set $S$ by profinite sets $S_i$, the complex
\[
0\to \widehat{M}(S)\to \widehat{M}(S_0)\to \widehat{M}(S_1)\to \ldots
\]
is exact, and whenever $f\in \ker(\widehat{M}(S_m)\to \widehat{M}(S_{m+1}))$ with $||f||\leq c$, then for any $\epsilon>0$ there is some $g\in \widehat{M}(S_{m-1})$ with $||g||\leq (1+\epsilon)c$ such that $d(g)=f$.
\end{proposition}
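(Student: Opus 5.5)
The plan is to work directly from the definition of $\widehat{M}$ and compare it with the condensed set $\underline{\widehat{M}_{\mathrm{top}}}$. Write $L(S)$ for the normed abelian group of locally constant maps $S\to M$ with the sup-norm, so that $\widehat{M}(S)=\widehat{L(S)}$.

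\emph{Step 1: the identification with $\widehat{M}_{\mathrm{top}}$.} There is a natural isometric map $\widehat{L(S)}\to C(S,\widehat{M}_{\mathrm{top}})$: a Cauchy sequence of locally constant maps converges uniformly to a continuous map. It is injective, since the sup-norm is separated on uniform limits. For surjectivity, take a continuous $f\colon S\to \widehat{M}_{\mathrm{top}}$ and $\epsilon>0$.
\begin{enumerate}
\item By compactness of $S$ and continuity of $f$, there is a finite disjoint clopen cover $S=\bigsqcup U_k$ on each piece of which $f$ varies by less than $\epsilon$.
\item Pick points $s_k\in U_k$ and elements $m_k\in M$ with $\|m_k-f(s_k)\|<\epsilon$, which is possible since $M$ is dense in its completion.
\item The resulting locally constant map is within $2\epsilon$ of $f$.
\end{enumerate}
So $C(S,\widehat{M}_{\mathrm{top}})$ is the completion of $L(S)$. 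Since $\underline{T}$ is a condensed set for any topological space $T$ (Lecture I), $\widehat{M}$ really is a condensed abelian group, and this proves the identification. The norm $\widehat{M}_{\mathrm{top}}\to\mathbb R_{\geq 0}$ is continuous, so it induces the map $\|\cdot\|\colon \widehat{M}\to\mathbb R_{\geq 0}$.

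\emph{Step 2: the hypercover statement before completion.} I first prove the statement for the complex
\[
0\to L(S)\to L(S_0)\to L(S_1)\to\ldots,
\]
with the sharper constant $1$. Write the hypercover as a cofiltered limit of hypercovers $S_{\bullet,j}\to S_j$ of finite sets; one can arrange this by writing everything compatibly as limits, which is standard for simplicial profinite sets. Then $L(S_m)=\varinjlim_j \mathrm{Map}(S_{m,j},M)$. For a hypercover of finite sets, every surjection has a section, so the augmented simplicial set $S_{\bullet,j}\to S_j$ admits an extra degeneracy, that is, a simplicial contraction. This is the step where surjectivity levels must be handled carefully: I would build the splitting inductively through the coskeleton description. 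The resulting contracting homotopy on $\mathrm{Map}(S_{\bullet,j},M)$ is given by pullback along maps of sets, so it does not increase the sup-norm.

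Given a cocycle $f\in L(S_m)$, it comes from some level $j$, and it is a cocycle there after enlarging $j$, because the complex is a filtered colimit. Applying the homotopy at that level gives a primitive $g$ with $\|g\|\leq\|f\|$. The same argument gives exactness at $\widehat{M}(S)$.

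\emph{Step 3: passing to completions.} This is the main technical step. I apply Proposition~\ref{prop:completeexact} to four-term windows of the complex of $L$'s. Its hypotheses hold with constants $C=D=1$ by Step~2, and the face maps are bounded of norm at most $m+2$. It gives, for every cocycle $\widehat f\in\widehat{L(S_m)}=\widehat{M}(S_m)$, a primitive $\widehat g$ with $\|\widehat g\|\leq(1+\epsilon)\|\widehat f\|$. The boundary cases at the start of the complex are handled by padding with zeros.

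One subtlety is that the proof of Proposition~\ref{prop:completeexact} first shows density of $\ker d$ in $\ker\widehat d$ using the next stage of the complex; this is exactly why the four-term hypothesis is needed, and our complex is infinite, so it is available. Exactness then follows in all degrees.
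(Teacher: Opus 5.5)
Your proposal is correct and follows the paper's proof essentially step for step. Hypercovers of finite sets split, so the augmented complex of locally constant maps has a norm-nonincreasing contracting homotopy (constant $1$). One then passes to the filtered colimit over a presentation of $S_\bullet\to S$ as a cofiltered limit of finite hypercovers, and Proposition~\ref{prop:completeexact} carries the bound over to completions with constant $1+\epsilon$. The identification with $\underline{\widehat{M}_{\mathrm{top}}}$ comes, as in the paper, from uniform approximation by locally constant maps together with completeness of $C(S,\widehat{M}_{\mathrm{top}})$ in the sup-norm. The only detail worth adding in Step~2 is that, after enlarging $j$, the sup-norm of the level-$j$ representative of $f$ equals $\|f\|$, because the images of $S_{m,j'}\to S_{m,j}$ stabilize to the image of $S_m$.
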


\begin{proof} For the final assertion, follow the proof of \cite[Theorem 3.3]{Condensed}: When $S$ and all $S_i$ are finite, the hypercover splits, so a contracting homotopy gives the result with constant $1$. In general, write the hypercover as a cofiltered limit of hypercovers of finite sets by finite sets, pass to the filtered colimit, and complete, using Proposition~\ref{prop:completeexact}.

For the identification with the condensed abelian group associated to the topological abelian group $\widehat{M}_{\mathrm{top}}$, note that in the supremum norm any continuous function from $S$ to $\widehat{M}_{\mathrm{top}}$ can be approximated by locally constant functions arbitrarily well, and that the space of continuous functions from $S$ to $\widehat{M}_{\mathrm{top}}$ is complete with respect to the supremum norm. That $||\cdot||$ defines a map of condensed sets $\widehat{M}\to \mathbb R_{\geq 0}$ follows for example from this identification with $\underline{\widehat{M}_{\mathrm{top}}}$, as the norm is by definition a continuous map $\widehat{M}_{\mathrm{top}}\to \mathbb R_{\geq 0}$.
\end{proof}

\newpage

\section*{Appendix to Lecture VIII: Derived inverse limits}

We recall the following construction of derived inverse limits. Let $\mathcal I$ be some index category, and let $F: \mathcal I\to \mathcal A$ be a functor to some abelian category $\mathcal A$ that admits all colimits and compact projective generators (in particular, infinite products are exact in $\mathcal A$).

We build a cosimplicial object $L_F^\bullet$ of $\mathcal A$ whose $n$-th term is
\[
L_F^n = \prod_{i_0\to\ldots\to i_n} F(i_n)
\]
where the product is over all chains of $n$ composable morphisms $i_0\to i_1\to\ldots \to i_n$. We note that the index set of the product is equivalently the set of maps $\Delta^n_{\mathrm{cat}}\to \mathcal I$, where $\Delta^n_{\mathrm{cat}}$ is the category associated to the ordered set $\{0,\ldots,n\}$. Moreover, $F(i_n)$ can be understood as the colimit of $F$ restricted to $\Delta^n_{\mathrm{cat}}$. With this description, it is clear that a map of simplices $\Delta^n\to \Delta^m$ induces a map $L_F^n\to L_F^m$.

Before going on, let us analyze the case that $\mathcal I$ has an initial object $i\in \mathcal I$. In that case, $L_F^\bullet$ first of all extends to an augmented cosimplicial object $F(i)\to L_F^\bullet$, and this augmented cosimplicial object has terms
\[
\prod_{i=i_{-1}\to i_0\to \ldots\to i_n} F(i_n)
\]
for $n=-1,0,\ldots$, which are even functorial in maps $\Delta^{n+1}\to \Delta^{m+1}$ preserving the initial vertex. Formally (cf.~e.g.~(the dual of) \cite[Lemma 6.1.3.16]{LurieHTT}), this implies that the complex
\[
0\to F(i)\to L_F^0\to L_F^1\to \ldots
\]
associated to the augmented cosimplicial object is acyclic. More concretely, this is a direct verification: the contracting homotopy is given by the map $L_F^0\to F(i)$ taking the component at $i$, and the maps $L_F^n\to L_F^{n-1}$ given by restricting to components with $i=i_0$.

The following proposition is known as the ``Bousfield--Kan formula'', cf.~\cite[Chapter XI]{BousfieldKan}.

\begin{proposition} The complex associated to $L_F^\bullet$ by Dold-Kan computes $R\lim_{\mathcal I} F$.
\end{proposition}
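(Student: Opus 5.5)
We reduce the Bousfield--Kan formula to a comparison of two functors that are both right adjoint to the constant-diagram functor in a derived sense. First we handle the underived statement. The degree-$0$ cohomology of the Dold--Kan complex of $L_F^\bullet$ is the kernel of $d^0-d^1\colon \prod_{i_0}F(i_0)\to\prod_{i_0\to i_1}F(i_1)$. This kernel consists exactly of compatible families, so it equals $\lim_{\mathcal I}F$. The construction $F\mapsto L_F^\bullet$ is exact in $F$, since products are exact in $\mathcal A$. Hence $F\mapsto \mathrm{Tot}(L_F^\bullet)$ is a $\delta$-functor from $\Fun(\mathcal I,\mathcal A)$ to $D(\mathcal A)$ whose $H^0$ is $\lim$. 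It therefore suffices to show that $H^n(L_F^\bullet)=0$ for $n>0$ whenever $F$ is injective, or at least acyclic for $\lim$, in $\Fun(\mathcal I,\mathcal A)$. Then $\mathrm{Tot}(L_F^\bullet)$ is a universal $\delta$-functor agreeing with $R\lim$. To make this rigorous at the level of $D(\mathcal A)$ rather than only for cohomology, we would apply the construction termwise to an injective resolution $F\to J^\bullet$ and compare via the double complex.

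Next we show that $L_J^\bullet$ is acyclic for a suitable class of $J$. The key observation is that each term $L^n_F$ is a product of functors of the form $\prod_{i_0\to\cdots\to i_n} F(i_n)$. We use the "right Kan extension" functors: for an object $j$ and $A\in\mathcal A$, let $G_{j,A}(i)=\prod_{\Hom(i,j)}A$, so that $\Hom(F,G_{j,A})=\Hom(F(j),A)$. These are injective whenever $A$ is injective, and every injective of $\Fun(\mathcal I,\mathcal A)$ is a retract of a product of such $G_{j,A}$. Since $L^\bullet$ commutes with products and retracts, it suffices to treat $G=G_{j,A}$ with $A$ arbitrary.

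For $G=G_{j,A}$ we compute directly
\[ L^n_G=\prod_{i_0\to\cdots\to i_n\to j}A. \]
This is $A$ raised to the power of the set of $n$-simplices of the nerve of the slice category $\mathcal I_{/j}$. So $L^\bullet_G$ is the cochain complex computing the cohomology of the nerve of $\mathcal I_{/j}$ with constant coefficients $A$. The slice $\mathcal I_{/j}$ has a terminal object $\mathrm{id}_j$, so its nerve is contractible. Dually to the initial-object argument in the paragraph preceding the statement, there is an explicit contracting homotopy: append $\to j$ as the last vertex. Thus $H^n=0$ for $n>0$ and $H^0=A=\lim G$, as required.

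The main obstacle I anticipate is the existence of enough injectives of this form in $\Fun(\mathcal I,\mathcal A)$. The category $\mathcal A$ has compact projective generators but need not have enough injectives in a convenient form, and for condensed abelian groups the existence of injectives raises set-theoretic issues. If needed, I would avoid injectives altogether and argue $\infty$-categorically. Both $R\lim$ and $\mathrm{Tot}(L^\bullet)$ are right adjoint to the constant functor $D(\mathcal A)\to \Fun(\mathcal I,D(\mathcal A))$. This can be seen by writing any diagram as the geometric realization of the simplicial bar resolution $\bigoplus_{i_0\to\cdots\to i_n}h_{i_n}$-type free diagrams, i.e. the dual Bousfield--Kan resolution, and checking the adjunction on free diagrams $A\otimes\mathbb Z[\Hom(i,-)]$. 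For these free diagrams, both sides reduce to evaluation at $i$, again by the slice-contractibility computation above.
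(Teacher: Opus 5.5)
Your proof is correct, and it takes a different and more general route than the paper.

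\textbf{The paper's route.} The paper proves the statement only for cofiltered $\mathcal I$. It writes $\mathcal I$ as a filtered union of finite subcategories $\mathcal I_j$ with initial objects. It asserts that both $R\lim_{\mathcal I} F$ and the Bousfield--Kan complex are the derived limit over $j$ of their analogues for the $\mathcal I_j$. It then concludes using two facts: $R\lim_{\mathcal I_j}$ is evaluation at the initial object $i$, and the augmented complex $0\to F(i)\to L^0_F\to L^1_F\to\cdots$ has an explicit contracting homotopy (restriction to chains with $i_0=i$).

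\textbf{Your route.} You keep $\mathcal I$ arbitrary and build the same comparison map from an injective resolution. You then prove it is an equivalence by testing on the cogenerators $G_{j,A}$, the right Kan extensions from the object $j$. For these, $L^\bullet_{G_{j,A}}$ is the complex of $A$-valued cochains on the nerve of $\mathcal I_{/j}$, which is contracted by appending the terminal object $\mathrm{id}_j$. So the contraction moves from the index category to the coefficients, using a terminal object of a slice rather than an initial object of $\mathcal I_j$.

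\textbf{What each approach buys.} Yours gives the full Bousfield--Kan formula for any small $\mathcal I$. It also avoids identifying $R\lim_{\mathcal I}$ with an iterated derived limit over the $\mathcal I_j$. The paper's gives an explicit, elementary homotopy on each finite $\mathcal I_j$. That homotopy is exactly what Lecture VIII reuses, to see that the contraction preserves the $\leq c$-subspaces.

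Your worry about injectives is no extra cost relative to the paper, which also picks an injective resolution in $\Fun(\mathcal I,\mathcal A)$. So the $\infty$-categorical fallback is not needed.
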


We will prove this only when $\mathcal I$ is cofiltered, which is the argument that is used in the main text.

\begin{proof} It is clear that $\lim_{\mathcal I} F$ is the kernel of $L_F^0\to L_F^1$. Picking an injective resolution of $F$ (in the category of functors $\mathcal I\to \mathcal A$) and using exactness of infinite products, we get a natural map from $R\lim_{\mathcal I} F$ to $L_F^\bullet$. We claim that this is an equivalence.

If $\mathcal I$ is cofiltered, we can write $\mathcal I$ as a filtered union of finite subcategories $\mathcal I_j$ that contain an initial object. Then both sides are equal to the derived cofiltered limit of their variants for the $\mathcal I_j$. Thus, we can assume that $\mathcal I$ is finite and has an initial object $i$. But in this case, we have proved the exactness above.
\end{proof}

\newpage

\section{Lecture IX: End of proof}

Recall that in the last lecture, we reduced Theorem~\ref{thm:key} to the following result.

\begin{theorem}\label{thm:explicit1} Fix radii $1>r'>r>0$. Then for all $r$-normed $\mathbb Z[T^{\pm 1}]$-modules $V$ and all profinite sets $S$, the map
\[
R\Hom_{\mathbb Z[T^{-1}]}(\mathcal M(S,\mathbb Z((T))_{r'}),\widehat{V})\to \widehat{V}(S)
\]
is a quasi-isomorphism.
\end{theorem}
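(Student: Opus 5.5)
We propose to reduce the derived statement to an explicit, quantitative vanishing statement about complexes of normed abelian groups, and then prove it by a ``discretization'' argument in the spirit of the Breen--Deligne resolution.

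\emph{Reformulation.} By Proposition~\ref{prop:liquidpseudocoh}, $\mathcal M(S,\mathbb Z((T))_{r'})$ is pseudocoherent. Using the Breen--Deligne resolution as in its proof, $R\Hom_{\mathbb Z[T^{-1}]}(\mathcal M(S,\mathbb Z((T))_{r'}),\widehat V)$ is computed by a double complex. Its terms are of the form $R\Hom_{\mathbb Z[T^{-1}]}(\mathbb Z[M_{\leq c}^{a}][T^{-1}],\widehat V)=R\Gamma(M_{\leq c}^a,\widehat V)$, where $M=\mathcal M(S,\mathbb Z((T))_{r'})$. The sets $M_{\leq c}$ are profinite, so Proposition~\ref{prop:normedcompletion} (or Theorem~\ref{thm:pbanachcohom}) kills higher cohomology, and each term is just $\widehat V(M_{\leq c}^a)$. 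This is the key payoff of working over $\mathbb Z((T))_r$ instead of $\mathbb R$: every space in sight is profinite, so no partition of unity is needed. We thus obtain an explicit complex of normed groups of the shape
\[
0\to \widehat V(S)\to \widehat V(M_{\leq c_0})\to \widehat V(M_{\leq c_1}^{a_1}\sqcup\ldots)\to\cdots,
\]
where $T^{-1}-[T^{-1}]$ relates the levels $M_{\leq r c}$ and $M_{\leq c}$. We must show this complex is exact in positive degrees, with the correct $H^0$.

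\emph{Reduction to finite levels.} Write $S=\varprojlim_i S_i$, so that $M_{\leq c}=\varprojlim_i \mathcal M(S_i,\mathbb Z((T))_{r'})_{\leq c}$, and truncate modulo $T^m$ to get finite sets. Then $\widehat V(M_{\leq c}^a)$ is the completion of a filtered colimit of $V^{(\text{finite set})}$. By Proposition~\ref{prop:completeexact}, it suffices to prove exactness of the corresponding complexes with finite sets $S_i$ and truncations, with \emph{uniform bounds}: every cocycle of norm $\leq c$ should be a coboundary of an element of norm $\leq C_m c$, with $C_m$ independent of $i$, of the truncation, and of $V$. We may also allow the radii $c_j$ to grow from step to step in a controlled way, using the slack between $r$ and $r'$. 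Since $\|T v\|=r\|v\|$, the tail $T^m\mathbb Z[[T]]$ contributes terms of norm $\lesssim (r/r')^m$, and these tails can be absorbed.

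\emph{Main obstacle.} The hardest step is the core quantitative statement: for finite $S$, and hence $\mathcal M(S,\mathbb Z((T))_{r'})$ a finite free module $\mathbb Z((T))_{r'}[S]$, we need a bounded homotopy for the Breen--Deligne-type complex with coefficients in a normed $V$. The constants must be uniform in $|S|$, since uniformity is exactly what fails over $\mathbb R$ for $p=1$. Our first attempt will be as follows. The complex attached to $\mathbb Z[T^{-1}]$-linear maps from a free module is acyclic algebraically, by the standard Breen--Deligne property. We then try to construct the contracting homotopy explicitly, by induction on the degree $m$, using an ``integer-coefficient'' decomposition: every element of $\mathbb Z((T))_{r'}[S]_{\leq c}$ is a sum $\sum_n T^n x_n$ with $x_n\in\mathbb Z[S]$ and $\sum\|x_n\|_{\ell^1}r'^n\leq c$. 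Writing $x_n$ as a sum of $\pm[s]$ gives a decomposition into at most $c\,r'^{-n}$ Dirac terms at level $n$. Since $r<r'$, the resulting homotopy has norm bounded by $\sum_n (r/r')^n<\infty$, independently of $S$.

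If this direct construction proves too messy in higher degrees, we will fall back on a compactness/contradiction argument like the one used earlier to extract uniform constants. In that argument we would take $\ell^\infty$-type direct sums of counterexamples $V_k$ and rescale by $T^k$. We expect the genuine difficulty to lie in controlling these bounds through all stages of the Breen--Deligne resolution simultaneously, choosing the radii $c_j$ (and intermediate radii between $r$ and $r'$) so that every step loses only a bounded factor.
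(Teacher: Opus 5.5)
There is a genuine gap. Everything before your ``main obstacle'' is setup that the paper also does: an explicit resolution whose pieces are free on profinite sets $M^a_{\le c}$, vanishing of $H^i(M^a_{\le c},\widehat V)$ by Proposition~\ref{prop:normedcompletion}, and constants uniform in $V$. But the actual content of the theorem, vanishing in positive degrees, is never supplied.

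Your assertion that the complex for a free module is ``acyclic algebraically, by the standard Breen--Deligne property'' does not hold up. The Breen--Deligne complex of $M$ mapped into $\widehat V$ computes $R\Hom(M,\widehat V)$, so its acyclicity is the statement to be proved, not a formal property. For finite $S$, $\mathcal M(S,\mathbb Z((T))_{r'})$ is a finite sum of copies of $\mathbb Z((T))_{r'}$. That is free over $\mathbb Z((T))_{r'}$, but not of the form $\mathbb Z[T^{-1}][X]$. For genuinely free modules $\mathbb Z[T^{-1}][S]$ the statement is a tautology; the whole point is that enlarging to convergent power series does not change $R\Hom$. So your reduction to finite $S$ gains nothing: $S=\ast$ already carries the full difficulty.

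Your Dirac decomposition with the bound $\sum_n (r/r')^n$ only produces the $T^{-1}$-linear extension of a map $S\to\widehat V$, i.e.\ degree $0$. Positive degrees are exactly where the strict inequality $r<r'$ must be exploited (compare the non-split extensions of Lecture~V in the borderline case $p'=p=1$). Your fallback is circular. The $\ell^\infty$-sum and $T^k$-rescaling argument turns qualitative exactness for all $V$ into uniform constants, but it presupposes that qualitative exactness.

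Here is what the paper uses instead, and what your plan needs some substitute for. It works with MacLane's $Q$-complex and splits off the $T^{-1}-[T^{-1}]$ direction using Lemma~\ref{lem:Tinv}. It then proves only \emph{pro}-exactness in the radius $c$: every $x$ at radius $kc$ restricts, at radius $c$, to a coboundary up to an error $k\|dx\|$. This is also what kills the $R\varprojlim_c$ coming from $\mathbb Z[M^a]=\varinjlim_c\mathbb Z[M^a_{\le c}]$, which your fixed-radius formulation does not address. The positive-degree input consists of three pieces:
\begin{enumerate}
\item the functorial homotopy between the maps $Q(M^N)\to Q(M)$ induced by addition and by the sum of the $N$ projections (Lemma~\ref{lem:homotopyNelements});
\item the splitting Lemma~\ref{lem:key}, proved via Gordan's lemma, which writes an element of radius $c$ as a sum of $N$ elements of radius $c/N+d$; summation is then a surjection of profinite sets, so the columns are exact by Proposition~\ref{prop:normedcompletion};
\item the scaling estimate: on $T^{-1}$-equivariant functions, restriction from radius $c$ to radius $(r')^bc$ has norm $r^b$, so the homotopic map has norm $\le Nr^b\le\epsilon$. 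This is where $r<r'$ enters in every degree.
\end{enumerate}
These feed the normed spectral-sequence Proposition~\ref{prop:key} in an induction on $m$. The rows of the resulting double complex are the complexes for the \v{C}ech conerve of $\Lambda\to\Lambda^N$. So the induction closes only after generalizing to all polyhedral lattices $\Lambda$, with $k$ independent of $\Lambda$ (Theorem~\ref{thm:explicit3}). None of these ingredients, or a replacement for them, appears in your proposal.
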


We note that actually the case of $\widehat{V}=\mathbb Z((T))_r^B$ itself seems to carry all essential difficulty: We believe that any argument one can give in that case will work in general.

In that sense, we have simplified the target of the $R\Hom$ as far as possible. It is time to understand the source. In the last lecture, we observed that the theorem automatically implies a more precise version bounding the norms of preimages of differentials. Our proof will actually go back to such explicit bounds, but for explicit resolutions of the source, which we will now construct. Write
\[
\overline{\mathcal M}_{r'}(S) := \mathcal M(S,\mathbb Z((T))_{r'})/\mathbb Z[T^{-1}][S].
\]
This is, as a condensed abelian group (but not as $\mathbb Z[T^{-1}]$-module), a direct summand $\mathcal M(S,T\mathbb Z[[T]]_{r'})$ of $\mathcal M(S,\mathbb Z((T))_{r'})$, allowing only positive powers of $T$.\footnote{At this point, we critically use that we chose $\mathbb Z[T^{-1}]$ as our base ring; already taking $\mathbb Z[T^{\pm 1}]$ would destroy this argument.} To see this, use Proposition~\ref{prop:freecondensedgroup} to see that the contribution to $\mathcal M(S,\mathbb Z((T))_{r'})$ from nonpositive powers of $T$ is exactly $\mathbb Z[T^{-1}][S]$. In particular, we can write
\[
\overline{\mathcal M}_{r'}(S) = \bigcup_{c>0} \overline{\mathcal M}_{r'}(S)_{\leq c},
\]
where
\[
\overline{\mathcal M}_{r'}(S)_{\leq c} = \varprojlim_i \overline{\mathcal M}_{r'}(S_i)_{\leq c}
\]
when writing $S$ as an inverse limit of finite sets $S_i$, and for finite $S$
\[
\overline{\mathcal M}_{r'}(S)_{\leq c} = \{(\sum_{n\geq 1} a_{n,s} T^n)_s\mid \sum_{n\geq 1,s\in S} |a_{n,s}|(r')^n\leq c\}.
\]

We need to resolve this explicitly as a condensed $\mathbb Z[T^{-1}]$-module. The Breen--Deligne resolution gives us the resolution
\[
\ldots\to \mathbb Z[\overline{\mathcal M}_{r'}(S)^2]\to \mathbb Z[\overline{\mathcal M}_{r'}(S)]\to \overline{\mathcal M}_{r'}(S)\to 0
\]
as condensed $\mathbb Z[T^{-1}]$-modules. Following a suggestion of Commelin found during the ``Liquid Tensor Experiment'' formalization of the results of this lecture in Lean, we will actually use a slightly different explicit complex that is not quite a resolution, but just as good for our purposes, cf.~\cite{CommelinLTE}. More precisely, we use MacLane's $Q$-complex \cite{MacLaneQ},\footnote{Sometimes also denoted $Q'$ while $Q$ is reserved for a closely related but slightly different complex. To keep notation light, we simply write $Q$ here.} which is an explicit complex
\[
Q(A): \ldots \to \mathbb Z[A^{2^n}]\to \ldots\to \mathbb Z[A^2]\to \mathbb Z[A]
\]
such that for torsion-free $A$, one has a natural isomorphism
\[
H_i(Q(A))\cong H_i(Q(\mathbb Z))\otimes A
\]
and moreover $H_0(Q(A))=A$; we refer to the appendix for its construction. It follows formally that
\[
R\Hom_{\mathbb Z[T^{-1}]}(\overline{\mathcal M}_{r'}(S),\hat{V})=0
\]
if and only if
\[
R\Hom_{\mathbb Z[T^{-1}]}(Q(\overline{\mathcal M}_{r'}(S)),\hat{V})=0.
\]
Indeed, the lowest nonzero cohomology group of these complexes agrees, by looking at the spectral sequence
\[
\mathrm{Ext}^i_{\mathbb Z[T^{-1}]}(H_j(Q(\mathbb Z))\otimes \overline{\mathcal M}_{r'}(S),\hat{V})\Rightarrow \mathrm{Ext}^{i+j}_{\mathbb Z[T^{-1}]}(Q(\overline{\mathcal M}_{r'}(S)),\hat{V}).
\]

Each term $\mathbb Z[\overline{\mathcal M}_{r'}(S)^a]$ (where $a=2^n$) admits a two-term resolution
\[
0\to \mathbb Z[T^{-1}][\overline{\mathcal M}_{r'}(S)^a]\xrightarrow{T^{-1}-[T^{-1}]} \mathbb Z[T^{-1}][\overline{\mathcal M}_{r'}(S)^a]\to \mathbb Z[\overline{\mathcal M}_{r'}(S)^a]\to 0
\]
that is functorial in the $\mathbb Z[T^{-1}]$-module $\mathbb Z[\overline{\mathcal M}_{r'}(S)^a]$. This gives a representation of $Q(\overline{\mathcal M}_{r'}(S))$ by the double complex
\[\xymatrix{
\ldots\ar[r] &\mathbb Z[T^{-1}][\overline{\mathcal M}_{r'}(S)^2]\ar[d]^{T^{-1}-[T^{-1}]}\ar[r] & \mathbb Z[T^{-1}][\overline{\mathcal M}_{r'}(S)]\ar[d]^{T^{-1}-[T^{-1}]}\\
\ldots\ar[r] &\mathbb Z[T^{-1}][\overline{\mathcal M}_{r'}(S)^2]\ar[r] & \mathbb Z[T^{-1}][\overline{\mathcal M}_{r'}(S)].
}\]
Unfortunately, the terms are not of the form $\mathbb Z[T^{-1}][E_i]$ with profinite $E_i$. To remedy this, we write the double complex as the filtered colimit of the double complexes
\begin{equation}\label{eq:doublecomplex}
\xymatrix{
\ldots\ar[r] &\mathbb Z[T^{-1}][\overline{\mathcal M}_{r'}(S)^2_{\leq 2^{-1} r'c}]\ar[d]^{T^{-1}-[T^{-1}]}\ar[r] & \mathbb Z[T^{-1}][\overline{\mathcal M}_{r'}(S)_{\leq r'c}]\ar[d]^{T^{-1}-[T^{-1}]}\\
\ldots\ar[r] &\mathbb Z[T^{-1}][\overline{\mathcal M}_{r'}(S)^2_{\leq 2^{-1} c}]\ar[r] & \mathbb Z[T^{-1}][\overline{\mathcal M}_{r'}(S)_{\leq c}]
}\end{equation}
for varying $c>0$, cf.~Lemma~\ref{lem:basehomotopy} in the appendix to this lecture.

It is easy to understand what happens to the vertical part under mapping to $V$:

\begin{lemma}\label{lem:Tinv} For any $r$-normed $\mathbb Z[T^{\pm 1}]$-module $V$, any $c>0$ and any $a$, the map
\[
\widehat{V}(\overline{\mathcal M}_{r'}(S)_{\leq c}^a)\xrightarrow{T^{-1}-[T^{-1}]^\ast} \widehat{V}(\overline{\mathcal M}_{r'}(S)_{\leq r'c}^a)
\]
is surjective, has norm bounded by $r^{-1}+1$, and for any $f\in \widehat{V}(\overline{\mathcal M}_{r'}(S)_{\leq r'c}^a)$ and $\epsilon>0$ there is some $g\in \widehat{V}(\overline{\mathcal M}_{r'}(S)_{\leq c}^a)$ with $f(x)=T^{-1}g(x)-g(T^{-1}x)$ and $||g||\leq \frac{r}{1-r}(1+\epsilon) ||f||$.
\end{lemma}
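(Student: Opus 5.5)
The strategy is to solve the equation $f(x)=T^{-1}g(x)-g(T^{-1}x)$ by a geometric series, using that $T$ acts on $V$ by scaling norms by $r$ while $x\mapsto T^{-1}x$ maps $\overline{\mathcal M}_{r'}(S)_{\leq r'c}^a$ into $\overline{\mathcal M}_{r'}(S)_{\leq c}^a$. First I check that the map is well defined. For $x\in \overline{\mathcal M}_{r'}(S)^a_{\leq r'c}$ we have $T^{-1}x\in \overline{\mathcal M}_{r'}(S)^a_{\leq c}$: multiplication by $T^{-1}$ shifts exponents down, rescaling the norm by $(r')^{-1}$. This is well defined on $\overline{\mathcal M}_{r'}(S)$ because the term in $T^0$ that appears is killed in the quotient by $\mathbb Z[T^{-1}][S]$, i.e.\ we compose with the projection onto the summand of positive powers. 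Hence $[T^{-1}]^\ast g\colon x\mapsto g(T^{-1}x)$ is pullback along a continuous map of profinite sets. Pullback does not increase the sup norm on $\widehat V$, and $\|T^{-1}v\|=r^{-1}\|v\|$, so the map has norm at most $r^{-1}+1$. Both properties pass to the completion $\widehat V$ by density of locally constant maps, using Proposition~\ref{prop:normedcompletion}.

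For surjectivity, rewrite the equation as $g(x)=Tf(x)+Tg(T^{-1}x)$ and iterate. This suggests
\[
g(x)=\sum_{k\geq 0} T^{k+1} f(T^{-k}x),
\]
where $T^{-k}x$ denotes the iterated shift-and-truncate map applied to $x\in\overline{\mathcal M}_{r'}(S)^a_{\leq c}$. The difficulty is that $f$ is only defined on the smaller set $\overline{\mathcal M}_{r'}(S)^a_{\leq r'c}$, while $T^{-k}x$ leaves this set. The fix is to note that for $x$ in the $\leq c$ part, the $k\geq 1$ terms should be evaluated at points of the form $T^{-(k-1)}(T^{-1}x)$, and this is where I must be careful.

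Instead I would define $g$ on $\overline{\mathcal M}_{r'}(S)^a_{\leq c}$ by choosing a continuous retraction-like decomposition. Concretely, split $x=x'+x''$, where $x'$ consists of the $T^1$-coefficients and $x''\in T^2\mathbb Z[[T]]$-part, so $x''=T\cdot y$ with $y\in\leq r'c$ after shift. Then define $g(x)=\sum_{k\geq 0}T^{k+1}f(\cdot)$ along the orbit $y,\,T^{-1}y,\dots$. Each point in this orbit has norm $\leq$ the previous one times $(r')^{-1}$ but also loses its lowest coefficient, and after enough shifts it becomes eventually $0$ on finite levels. I expect this bookkeeping to be the main obstacle: producing, for every $x$ in the $\leq c$ set, points of the $\leq r'c$ set on which to evaluate $f$, such that the telescoping identity holds exactly. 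Roughly, the identity reduces to the equation restricted to the image of $T^{-1}$.

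Granting the formula, convergence is immediate. Since $\|T^{k+1}f(\cdot)\|\leq r^{k+1}\|f\|$, we get $\|g\|\leq \sum_{k\geq 0} r^{k+1}\|f\|=\frac{r}{1-r}\|f\|$. The series converges in the complete $\widehat V(\overline{\mathcal M}_{r'}(S)^a_{\leq c})$. The $(1+\epsilon)$ slack absorbs working first with locally constant approximants and then completing, via Proposition~\ref{prop:completeexact}. The identity $T^{-1}g(x)-g(T^{-1}x)=f(x)$ then follows by telescoping the series.
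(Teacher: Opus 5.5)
Your formula $g(x)=\sum_{k\geq 0}T^{k+1}f(T^{-k}x)$ and your norm estimates are the right ones. There is, however, a genuine gap at exactly the step you defer. The function $g$ has to be defined on all of $\overline{\mathcal M}_{r'}(S)^a_{\leq c}$, and the series evaluates $f$ at $x$ itself and at the points $T^{-k}x$. Those points only lie in $\overline{\mathcal M}_{r'}(S)^a_{\leq c(r')^{-k}}$ and leave every bounded piece (take $x=aT^N$ with $|a|(r')^N=c$). But $f$ is given only on $\overline{\mathcal M}_{r'}(S)^a_{\leq r'c}$.

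Your proposed repair does not close this. For $x$ in the $\leq c$ part, the shifted tail $y=T^{-1}x''$ has norm $(r')^{-1}\|x''\|$. So it lies in the $\leq c/r'$ part, not the $\leq r'c$ part, and following its $T^{-1}$-orbit only moves further out of the domain of $f$. No choice of evaluation points inside $\overline{\mathcal M}_{r'}(S)^a_{\leq r'c}$ can make the telescoping work, because the telescoping identity uses the values at $x$ and at all $T^{-k}x$ simultaneously.

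The missing idea is a norm-controlled extension. First extend $f$ to $\tilde f$ on all of $\overline{\mathcal M}_{r'}(S)^a=\bigcup_{n}\overline{\mathcal M}_{r'}(S)^a_{\leq c(r')^{-n}}$ with $\|\tilde f\|\leq(1+\epsilon)\|f\|$. Then set $g(x)=\sum_{k\geq 0}T^{k+1}\tilde f(T^{-k}x)$ on the $\leq c$ part. This gives $\|g\|\leq\frac{r}{1-r}(1+\epsilon)\|f\|$, and telescoping gives $T^{-1}g(x)-g(T^{-1}x)=\tilde f(x)=f(x)$ for $x$ in the $\leq r'c$ part.

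The extension reduces, by induction along the increasing union with constants satisfying $\prod_n(1+\epsilon_n)\leq 1+\epsilon$, to a Tietze-type fact. If $A\subset B$ is a closed immersion of profinite sets and $f_A\colon A\to\widehat V$, then $f_A$ extends to $f_B\colon B\to\widehat V$ with $\|f_B\|\leq(1+\epsilon)\|f_A\|$. To see this, write $f_A$ as a fast-converging sum of maps factoring over finite quotients of $A$. Each of these extends to $B$ without increasing the sup norm, since any map from $A$ to a finite set extends to $B$ taking only the same values.

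This extension is the source of the factor $1+\epsilon$ in the statement. It does not come from an approximation-and-completion argument via Proposition~\ref{prop:completeexact}, which is about exactness of complexes and plays no role here.
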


\begin{proof} Given $f: \overline{\mathcal M}_{r'}(S)_{\leq r'c}^a\to \widehat{V}$, choose an extension to a map $\tilde{f}: \overline{\mathcal M}_{r'}(S)^a\to \widehat{V}$ with $||\tilde{f}||\leq (1+\epsilon)||f||$. Such an extension exists: By induction (and using a sequence of $\epsilon_n$'s with $\prod_n (1+\epsilon_n)\leq 1+\epsilon$), it suffices to see that for any closed immersion $A\subset B$ of profinite sets and a map $f_A: A\to \widehat{V}$, there is an extension $f_B: B\to \widehat{V}$ of $f_A$ with $||f_B||\leq (1+\epsilon)||f_A||$. To see this, write $f_A$ as a (fast) convergent sum of maps that factor over a finite quotient of $A$; for maps factoring over a finite quotient of $A$, the extension is clear (and can be done in a norm-preserving way), as any map from $A$ to a finite set can be extended to a map from $B$ to the same finite set.

Given $\tilde{f}$, we can now define $g: \overline{\mathcal M}_{r'}(S)_{\leq c}^a$ by
\[
g(x) = T\tilde{f}(x)+T^2\tilde{f}(T^{-1}x)+\ldots+T^{n+1}\tilde{f}(T^{-n}x)+\ldots\in \widehat{V};
\]
then $||g||\leq \frac r{1-r}||\tilde{f}||\leq \frac r{1-r}(1+\epsilon)||f||$.
\end{proof}

Let $\widehat{V}(\overline{\mathcal M}(S)_{\leq c})^{T^{-1}}\subset \widehat{V}(\overline{\mathcal M}(S)_{\leq c})$ be the kernel of this map, with the subspace norm. For varying $c$, we now get varying normed complexes
\[
C_c^\bullet: \widehat{V}(\overline{\mathcal M}_{r'}(S)_{\leq c})^{T^{-1}}\to \widehat{V}(\overline{\mathcal M}_{r'}(S)_{\leq 2^{-1} c}^2)^{T^{-1}}\to \ldots .
\]
We will need to use the following qualitative notion of exactness.

\begin{definition} For each sufficiently large $c$ (i.e.~all $c\geq c_0$ for some $c_0>0$), let
\[
C_c^\bullet: C_c^0\to C_c^1\to\ldots
\]
be a complex of complete normed abelian groups, and for $c'>c$, let $\mathrm{res}_{c',c}^\bullet: C_{c'}^\bullet\to C_c^\bullet$ be a map of complexes, satisfying the obvious associativity condition. This datum is admissible if all differentials and maps $\mathrm{res}_{c',c}^i$ are norm-nonincreasing.

For integers $m\geq 0$ and constants $k \geq 1$, $c_0'>0$, the datum $(C_c^\bullet)_c$ is $\leq k$-exact in degrees $\leq m$ and for $c\geq c_0'$ if the following condition is satisfied. For all $c\geq c_0'$ and all $x\in C_{kc}^i$ with $i\leq m$ there is some $y\in C_c^{i-1}$ (which is defined to be $0$ when $i=0$) such that
\[
||\mathrm{res}_{kc,c}^i(x)-d_c^{i-1}(y)||_{C_c^i}\leq k||d_{kc}^i(x)||_{C_{kc}^{i+1}}.
\]
\end{definition}

We apply this to
\[
C_c^\bullet: \widehat{V}(\overline{\mathcal M}_{r'}(S)_{\leq c})^{T^{-1}}\to \widehat{V}(\overline{\mathcal M}_{r'}(S)_{\leq 2^{-1} c}^2)^{T^{-1}}\to \ldots
\]
given by mapping \eqref{eq:doublecomplex} into $\hat{V}$ and using Lemma~\ref{lem:Tinv}. Now we state the following result.

\begin{theorem}\label{thm:explicit2} Fix radii $1>r'>r>0$. For any $m$ there is some $k$ and $c_0$ such that for all profinite sets $S$ and all $r$-normed $\mathbb Z[T^{\pm 1}]$-modules $V$, the system of complexes
\[
C_c^\bullet: \widehat{V}(\overline{\mathcal M}_{r'}(S)_{\leq c})^{T^{-1}}\to \widehat{V}(\overline{\mathcal M}_{r'}(S)_{\leq 2^{-1} c}^2)^{T^{-1}}\to \ldots
\]
is $\leq k$-exact in degrees $\leq m$ for $c\geq c_0$.
\end{theorem}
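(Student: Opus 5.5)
The plan is an induction on $m$, run simultaneously over all profinite $S$ and all $r$-normed $V$, with the bounds kept uniform by strengthening the induction hypothesis. The guiding picture is that $\overline{\mathcal M}_{r'}(S)_{\leq c}$ behaves like a ball of radius $c$ in an $r'$-normed module, with addition sending $\leq c\times\leq c'$ into $\leq c+c'$. The complex $C_c^\bullet$ is then a normed version of the cochain complex computing $\mathrm{Ext}$ from the $Q$-complex of this ball into $\widehat V$, cut down to $T^{-1}$-equivariant cochains. I will first dispose of the base case $m=0$: an element $x\in C^0_{kc}$ with small coboundary is a map $\overline{\mathcal M}_{r'}(S)_{\leq kc}\to\widehat V$ that is almost additive and exactly $T^{-1}$-equivariant. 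Restricting to $\leq c$, I must show it is close to $0$. For this I use equivariance, $x(y)=T^{-n}x(T^ny)$, together with $\|T^{-n}v\|=r^{-n}\|v\|$. Rescaling an element of norm $\leq c$ by $T^n$ lands in norm $\leq (r')^nc$. Decomposing an element $y=\sum a_{n,s}T^n[s]$ into its $T$-adic pieces and using almost additivity to compare $x(y)$ with a sum of pieces then gives a geometric series. That series converges precisely because $r<r'$, each piece contributing roughly $(r/r')^n$-type factors. The outcome is a bound $\|x|_{\leq c}\|\leq k\|dx\|$.

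For higher $m$, I will set up a self-similar structure. First, $\overline{\mathcal M}_{r'}(S)_{\leq c}$ decomposes, up to controlled distortion, as a product over the $T$-adic filtration. Second, the $Q$-complex construction is functorial in maps of such balls, in particular in the sum maps $\Sigma:\overline{\mathcal M}(S)_{\leq c}\times\overline{\mathcal M}(S)_{\leq c}\to \overline{\mathcal M}(S)_{\leq 2c}$ and in the projections. Since $Q(A\oplus B)$ is homotopy equivalent to $Q(A)\oplus Q(B)$ for torsion-free $A,B$ (the additivity noted in the appendix), the maps $\Sigma^*$ and $\mathrm{pr}_1^*+\mathrm{pr}_2^*$ on $C^\bullet$ are homotopic through explicit homotopies that change the radius by a bounded factor (a power of $2$ coming from the $2^{-n}$ scalings). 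Combining this with the rescaling $T:\leq c\to\leq r'c$, whose pullback has norm factor $r^{-1}$ on $\widehat V$ after equivariance, I get an operator identity on $C^\bullet$. It has the shape ``identity $=$ (sum of many rescaled copies, each contracted by a factor $<1$) $+$ $dh+hd$ with bounded $h$''. Iterating this identity $N$ times with $N$ depending on $m$ contracts any cocycle in degree $\leq m$ to something small, up to a coboundary. Completeness, via Proposition~\ref{prop:completeexact}-style successive approximation, then upgrades ``approximately exact'' to $\leq k$-exact, with $k$ depending only on $m,r,r'$.

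Uniformity in $S$ and $V$ comes for free: every homotopy is built from maps of profinite sets between balls, pulled back along $\widehat V$, and Lemma~\ref{lem:Tinv} controls the passage to $T^{-1}$-invariants with constants depending only on $r$. The main obstacle is the bookkeeping of radii in the homotopies. Each homotopy and each use of $\Sigma$ shrinks the admissible radius, so after $N$ iterations one needs $k$ to absorb a factor like $(2^{m}/r')^{N}$. Meanwhile, the gain from equivariance must beat this; that gain is like $(r/r')^{N}$ times combinatorial constants. If the raw comparison fails, I will first try replacing the $T$-scaling by a large power $T^{\ell}$ so that $(r/r')^{\ell}$ dominates all polynomial-in-$2^m$ losses. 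This is where the strict inequality $r<r'$ and the discreteness of the coefficients, which gave the $\ell^p$ gain in Lecture VII, must enter in an essential way.
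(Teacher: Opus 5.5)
Your central step does not exist in the form you state. The homotopy of Lemma~\ref{lem:basehomotopy} (or Lemma~\ref{lem:homotopyNelements} for $N$ summands) relates two maps \emph{out of} $C^\bullet_c$ into a \emph{different} complex, the one built on $(\overline{\mathcal M}_{r'}(S)_{\leq c/N})^N$. The two maps are the pullback $\Sigma^\ast$ along the sum map and $\sum_j \mathrm{pr}_j^\ast$. This is not an identity on $C^\bullet_c$, so there is nothing to iterate. Knowing that $\Sigma^\ast$ is homotopic to a small map says nothing about $C^\bullet_c$ until you also know that $\Sigma^\ast$ is injective on cohomology in degrees $\leq m$, with norm control. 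That information has to come from the surjectivity of $\Sigma$.

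The paper extracts it from the double complex given by the \v{C}ech nerve of $\Sigma$:
\begin{itemize}
\item its columns are exact, because the restriction maps between them factor through the complex of an honest hypercover of profinite sets (Proposition~\ref{prop:normedcompletion}, Lemma~\ref{lem:Tinv});
\item its rows are exact in degrees $\leq m-1$ by induction;
\item the normed spectral-sequence argument of Proposition~\ref{prop:key} turns ``$\Sigma^\ast$ is homotopic to a map that factors through a deep restriction and has norm $\leq\epsilon$'' into exactness of the first row in degrees $\leq m$.
\end{itemize}
That proposition only needs \emph{some} bound $H$ on the homotopy; $H$ enters the final constant but need not be small. This is why no compounding of radius losses occurs.

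However, the rows beyond the first are complexes on iterated fibre products of $(\overline{\mathcal M}_{r'}(S)_{\leq c/N})^N$ over $\overline{\mathcal M}_{r'}(S)_{\leq c}$, not on balls $\overline{\mathcal M}_{r'}(S)_{\leq c}$. So the induction does not close for the statement as given. You must strengthen it to a class stable under this construction. The paper uses $\Hom(\Lambda,\overline{\mathcal M}_{r'}(S))_{\leq c}$ for all polyhedral lattices $\Lambda$ (Theorem~\ref{thm:explicit3}), with $k$ depending only on $m$ but $c_0$ allowed to depend on $\Lambda$. Uniformity in $S$ and $V$, the only strengthening you name, is already part of the statement.

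The second missing ingredient is Lemma~\ref{lem:key}: every element of norm $\leq c$ is a sum of $N$ elements of norm $\leq c/N+d$. Both the column exactness and the smallness estimate rest on it. Choose $N$ with $k'/N\leq (r')^b$. Then each summand lies in the ball of radius $(r')^bc$, on which $T^{-1}$-invariance contracts norms by $r^b$. So the homotopic map has norm $\leq Nr^b\leq 2k'(r/r')^b$, which is small for $b$ large. This is the correct form of your ``replace $T$ by $T^\ell$'' idea. Discreteness enters through this integral splitting (Gordan's lemma and the same-sign decomposition), not through the $\ell^p$ estimates of Lecture VII.

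Your base case via the $T$-adic decomposition $y=\sum_n T^ny_n$ fails for exactly this reason: the pieces have uncontrolled size. To use equivariance on $T^ny_n$ you would have to evaluate $x$ at $Ty_n$, whose norm can be as large as $c(r')^{1-n}$, far outside the domain $\leq kc$. Otherwise you must add up $\sim c(r')^{-n}$ Dirac terms one at a time, and the additivity errors grow with $c$ instead of being bounded by a constant times $\|dx\|$. (Similarly, $\overline{\mathcal M}_{r'}(S)_{\leq c}$ is an $\ell^1$-type ball in the $T$-adic direction, not a product up to bounded distortion.)

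The correct $m=0$ argument is as follows. Split $y=y_1+\dots+y_N$ as above, which gives $\|x(y)\|\leq Nr^b\sup_z\|x(z)\|+(N-1)\|dx\|$ with $z$ ranging over $\overline{\mathcal M}_{r'}(S)_{\leq c}$. Then absorb the first term once $Nr^b\leq\tfrac12$.
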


Let us first check that this implies Theorem~\ref{thm:explicit1}.

\begin{proof}[Theorem~\ref{thm:explicit2} implies Theorem~\ref{thm:explicit1}] By the preceding discussion, one can compute
\[
R\Hom_{\mathbb Z[T^{-1}]}(Q(\overline{\mathcal M}_{r'}(S)),\widehat{V})
\]
as the derived inverse limit of $C_c^\bullet$ over all $c>0$; equivalently, all $c\geq c_0$. Theorem~\ref{thm:explicit2} implies that for any $m\geq 0$ the pro-system of cohomology groups $H^m(C_c^\bullet)$ is pro-zero (as $H^m(C_{kc}^\bullet)\to H^m(C_c^\bullet)$ is zero). Thus, the derived inverse limit vanishes, as desired.
\end{proof}

We will prove Theorem~\ref{thm:explicit2} by induction on $m$. The basic idea of the proof is to use the homotopy between adding internally and adding externally in $Q(-)$, codified in Lemma~\ref{lem:homotopyNelements}, and the different scaling behaviour of the norm on source and target. To make use of the different scaling behaviour, we need to write an element of size $C$ as a sum of $N$ terms each of size roughly $C/N$. As we work with $\mathbb Z$-modules, we cannot simply divide by $N$ (and even if we would work over $\mathbb R$, this would not have the desired effect on norms), but Lemma~\ref{lem:key} shows that it can be done. We note that Lemma~\ref{lem:key} is essentially an application of convex geometry. In the end, we are solving a problem about non-convex real vector spaces by using convex geometry, but in a discretized setting, for $\mathbb Z$-lattices!

Unfortunately, the induction requires us to prove a stronger statement. Namely, the homotopy will naturally introduce versions of the complex in Theorem~\ref{thm:explicit2} that involve not $\overline{\mathcal M}_{r'}(S)_{\leq c}$ but $\overline{\mathcal M}_{r'}(S)_{\leq c/N}^N$. In fact, further applications of the homotopy will lead to further modifications of this space. The following generalization turns out to be stable under the required modifications done in the induction step.

Consider any polyhedral lattice $\Lambda$, by which we mean a finite free abelian group equipped with a norm $||\cdot||_\Lambda: \Lambda\otimes \mathbb R\to \mathbb R$ (so $\Lambda\otimes \mathbb R$ is a Banach space) that is given by the supremum of finitely many linear functions on $\Lambda$ with rational coefficients; equivalently, the ``unit ball'' $\{\lambda\in \Lambda\otimes \mathbb R\mid ||\lambda||_\Lambda\leq 1\}$ is a rational polyhedron.

Endow $\Hom(\Lambda,\overline{\mathcal M}_{r'}(S))$ with the subspaces
\[
\Hom(\Lambda,\overline{\mathcal M}_{r'}(S))_{\leq c} = \{f: \Lambda\to \overline{\mathcal M}_{r'}(S)\mid \forall x\in \Lambda, f(x)\in \overline{\mathcal M}_{r'}(S)_{\leq c||x||}\}.
\]
As $\Lambda$ is polyhedral, it is enough to check the given condition for finitely many $x$.

For example, if $\Lambda=\mathbb Z^N$ with norm
\[
||(x_1,\ldots,x_n)||_\Lambda=\tfrac 1N(|x_1|+\ldots+|x_N|),
\]
then
\[
\Hom(\Lambda,\overline{\mathcal M}_{r'}(S))_{\leq c} = \overline{\mathcal M}_{r'}(S)_{\leq c/N}^N,
\]
as in the motivation above.

We can then define double complexes like \eqref{eq:doublecomplex} in this $\Lambda$-variant. Lemma~\ref{lem:Tinv} stays true with the same constants. Now we claim the following generalization of Theorem~\ref{thm:explicit2}. Critically, $k$ depends only on $m$, while $c_0$ is allowed to depend on $\Lambda$; this is required to make the induction work.

\begin{theorem}\label{thm:explicit3} Fix radii $1>r'>r>0$. For any $m$ there is some $k$ such that for all polyhedral lattices $\Lambda$ there is a constant $c_0(\Lambda)>0$ such that for all profinite sets $S$ and all $r$-normed $\mathbb Z[T^{\pm 1}]$-modules $V$, the system of complexes
\[
C_{\Lambda,c}^\bullet: \widehat{V}(\Hom(\Lambda,\overline{\mathcal M}_{r'}(S))_{\leq c})^{T^{-1}}\to \widehat{V}(\Hom(\Lambda,\overline{\mathcal M}_{r'}(S))_{\leq 2^{-1} c}^2)^{T^{-1}}\to \ldots
\]
is $\leq k$-exact in degrees $\leq m$ for $c\geq c_0(\Lambda)$.
\end{theorem}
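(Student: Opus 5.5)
We argue by induction on $m$, proving for each $m$ the existence of a universal $k=k(m)$ simultaneously for all polyhedral lattices $\Lambda$. The case $m=0$ amounts to showing that an element of $C^0_{\Lambda,kc}$ restricts to something of norm at most $k$ times the norm of its differential in $C^0_{\Lambda,c}$. For $m=0$ (and as the model for the inductive step) we use the idea sketched before the statement. The differential in degree $0$ compares $x\mapsto f(x_1+x_2)$ with $f(x_1)+f(x_2)$. If $d f$ is small, then $f$ is close to additive.

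The plan for the inductive step is as follows. Given $x\in C^i_{\Lambda,kc}$ with $i\leq m$, pick a large integer $N$ (depending on $m$, $r$, $r'$ only). Use the homotopy of Lemma~\ref{lem:homotopyNelements} in MacLane's $Q$-complex between ``adding $N$ elements internally'' and ``adding them externally''. The internal sum gives the map induced by $\Lambda'\to\Lambda$ with $\Lambda'=\Lambda^N$ (suitably normed), while the external sum gives the $N$-fold sum of $x$ pulled back along the diagonal-type maps. The key input making the scaling work is the convex-geometry Lemma~\ref{lem:key}. Applied to the lattice $\Hom(\Lambda,\mathbb Z)$ and its rational polyhedral norm, it lets us write every $f\in\Hom(\Lambda,\overline{\mathcal M}_{r'}(S))_{\leq c}$ as a sum $f_1+\cdots+f_N$ with $f_j\in\Hom(\Lambda,\overline{\mathcal M}_{r'}(S))_{\leq c/N+\text{const}(\Lambda)}$, uniformly in $S$. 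This constant depends on $\Lambda$, which is why $c_0(\Lambda)$ must be allowed to depend on $\Lambda$. It defines maps of profinite sets $\Hom(\Lambda,\overline{\mathcal M})_{\leq c}\to\Hom(\Lambda',\overline{\mathcal M})_{\leq c}$ for a new polyhedral lattice $\Lambda'$ (a quotient of $\Lambda^N$ or similar) and a splitting of the identity up to the homotopy.

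Pulling back along these maps, $x$ is expressed as $d(\text{homotopy term})$ plus a term living on $\Lambda'$ with domain radius scaled down by $N$. Since $V$ is $r$-normed while $\overline{\mathcal M}_{r'}$ scales with $r'$, evaluating on a radius $c/N$ pieces costs a factor of order $N^{\log r/\log r'}$-type, with exponent $<1$. The $N$ summands thus contribute a total of order $N^{1-\alpha}$ with $\alpha>0$ of the old norm, after rescaling by powers of $T$. Choosing $N$ large makes the remaining term contract by a factor $<1$. We then iterate: the remainder is handled by applying the induction hypothesis (for degree $i-1$, on the new lattice $\Lambda'$) to the homotopy terms, plus a geometric series. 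This yields $y$ with $\|\mathrm{res}(x)-dy\|\leq k\|dx\|$, where $k$ depends only on $m$, $N$, and the constants of the lower degrees. Completeness of the $\widehat V$ and Lemma~\ref{lem:Tinv} are used to stay inside the $T^{-1}$-invariant subspaces and to pass to limits in the geometric series.

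The main obstacle is making the bookkeeping of norms uniform. The homotopy of Lemma~\ref{lem:homotopyNelements} involves terms at all $Q$-degrees up to $m$ with various factors of $2^{-j}$ in the radii, and the induction hypothesis must be applied to new lattices $\Lambda'$ produced by Lemma~\ref{lem:key}. One must check that $k$ never depends on $\Lambda$, only on $m$, and that the loss $c\mapsto kc$ in the radius is absorbed. I would first treat $V=\mathbb Z((T))_r^B$ and $m=0,1$ explicitly to fix the choice of $N$ and the shape of $\Lambda'$, then set up the general inductive estimate.
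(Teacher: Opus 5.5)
\textbf{The gap: Lemma~\ref{lem:key} gives a surjection, not a section.} Your inductive step treats Lemma~\ref{lem:key} as producing maps of profinite sets $\Hom(\Lambda,\overline{\mathcal M}_{r'}(S))_{\leq c}\to \Hom(\Lambda',\overline{\mathcal M}_{r'}(S))_{\leq c}$, i.e.\ a section of the sum map. You then pull the homotopy identity back along it. Lemma~\ref{lem:key} provides no such maps. It only says that every element of $\Hom(\Lambda,\overline{\mathcal M}_{r'}(S))_{\leq c}$ lies in the image of the sum map from $\Hom(\Lambda,\overline{\mathcal M}_{r'}(S))^N_{\leq c/N+d}$. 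Its proof uses a noncanonical balancing of coefficients for finite $S$ and then Tychonoff. Those choices are not compatible with the transition maps, and surjections of profinite sets need not have continuous sections.

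Worse, even a continuous section would not be additive. Its powers would therefore not commute with the differentials of the $Q$-complex, so you get no chain map splitting $\Sigma^\ast$. You cannot transport the relation $\Sigma^\ast\simeq\sum_j \mathrm{pr}_j^\ast$ back to the $\Lambda$-row in degrees $\geq 1$. In degree $0$ no section is needed, since pullback along a surjection is isometric for the sup norm. The whole difficulty sits in the higher degrees, and a genuine chain-level splitting would make the induction hypothesis superfluous.

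\textbf{The missing idea: use the surjection through descent.} Take $\Lambda'=\Lambda^N$ with norm $\tfrac1N\sum_j\|\lambda_j\|_\Lambda$ and the diagonal $\Lambda\hookrightarrow\Lambda'$, and form its \v{C}ech conerve $\Lambda'^{(\bullet)}$. Then $\Hom(\Lambda'^{(j)},\overline{\mathcal M}_{r'}(S))_{\leq c}$ is the $j$-fold fibre product of $\Hom(\Lambda,\overline{\mathcal M}_{r'}(S))^N_{\leq c/N}$ over $\Hom(\Lambda,\overline{\mathcal M}_{r'}(S))_{\leq c}$. The double complex $C^\bullet_{\Lambda'^{(\bullet)},c}$ then has the following structure.
\begin{enumerate}
\item Its rows are the complexes of the theorem for the lattices $\Lambda'^{(j)}$. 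They are $\leq k$-exact in degrees $\leq m-1$ by induction; this is exactly where uniformity of $k$ in $\Lambda$ is used.
\item Its columns are \v{C}ech complexes of the surjection from Lemma~\ref{lem:key}. They are $\leq k$-exact in degrees $\leq m$ by Proposition~\ref{prop:normedcompletion}, Lemma~\ref{lem:Tinv} and Proposition~\ref{prop:snakelemma}, once $c_0(\Lambda)$ is large enough to absorb $d$.
\item The homotopy of Lemma~\ref{lem:homotopyNelements} supplies condition (3) of Proposition~\ref{prop:key}. Its homotopic map factors through the deep restriction from $k'^2c$ to $c$.
\end{enumerate}
Proposition~\ref{prop:key}, a normed spectral-sequence argument, is what then yields exactness of the first row. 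Your ``iterate plus a geometric series'' stands in for this step and cannot work without the exact columns.

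\textbf{A minor point: your scaling exponent.} The homotopic map factors through the restriction from radius $c$ to $(r')^b c$. On $T^{-1}$-invariants this restriction has norm $r^b$. With $N\approx k'(r')^{-b}$ the total is $r^bN\leq 2k'(r/r')^b$, which up to constants is $N^{1-\alpha}$ with $\alpha=\log r/\log r'>1$. You need $\alpha>1$, which holds precisely because $r<r'$; your ``$\alpha>0$'' does not give contraction.
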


We note that it can be ensured that the transition maps in the complex are norm-nonincreasing. Indeed, the maps
\[
\widehat{V}(\Hom(\Lambda,\overline{\mathcal M}_{r'}(S))_{\leq 2^{-i} c}^{2^i})\to \widehat{V}(\Hom(\Lambda,\overline{\mathcal M}_{r'}(S))_{\leq 2^{-i-1} c}^{2^{i+1}})
\]
have bounded norm, depending only on $i$ and $r$ (as they are a certain universal finite sum of maps induced by maps between the profinite sets in paranthesis, each of which induces a map of norm bounded by $1$), so up to rescaling the norm on the $i$-th term by some universal constant (depending only on $i$ and $r$) this can be arranged. We make and fix this choice of norms in the statement of Theorem~\ref{thm:explicit3}, and the rest of the proof.

The homotopy constructed in this way has to be used to leverage exactness at the next step. This is a spectral sequence argument, abstracted in the following proposition.

\begin{proposition}\label{prop:key} Fix an integer $m\geq 0$ and a constant $k$. Then there exists an $\epsilon>0$ and a constant $k_0\geq k$, depending (only) on $k$ and $m$, with the following property.

Consider an admissible system of double complexes $M^{p,q}_c$ (where $p,q\geq 0$, $c\geq c_0$) of complete normed abelian groups as well as some $k'\geq k_0$ and some $H>0$, such that
\[\xymatrix{
M^{0,0}_c\ar[r]^{d'^{0,0}_c}\ar[d]^{d^{0,0}_c} & M^{0,1}_c\ar[r]^{d'^{0,1}_c}\ar[d]^{d^{0,1}_c} & M^{0,2}_c\ar[r]^{d'^{0,2}_c}\ar[d]^{d^{0,2}_c} & \ldots\\
M^{1,0}_c\ar[r]^{d'^{1,0}_c}\ar[d]^{d^{1,0}_c} & M^{1,1}_c\ar[r]^{d'^{1,1}_c}\ar[d]^{d^{1,1}_c} & M^{1,2}_c\ar[r]^{d'^{1,2}_c}\ar[d]^{d^{1,2}_c} & \ldots\\
M^{2,0}_c\ar[r]^{d'^{2,0}_c}\ar[d]^{d^{2,0}_c} & M^{2,1}_c\ar[r]^{d'^{2,1}_c}\ar[d]^{d^{2,1}_c} & \ddots\\
\vdots & \vdots
}\]
\begin{enumerate}
\item for $i=0,\ldots,m+1$, the rows $M^{i,\bullet}_c$ are $\leq k$-exact in degrees $\leq m-1$ for $c\geq c_0$;
\item for $j=0,\ldots,m$, the columns $M^{\bullet,j}_c$ are $\leq k$-exact in degrees $\leq m$ for $c\geq c_0$;
\item for $q=0,\ldots,m$ and $c\geq c_0$, there is a map $h^q_{k'c}: M^{0,q+1}_{k'c}\to M^{1,q}_c$ with
\[
||h^q_{k'c}(x)||_{M^{1,q}_c}\leq H||x||_{M^{0,q+1}_{k'c}}
\]
for all $x\in M^{0,q+1}_{k'c}$, and such that for all $c\geq c_0$ and $q=0,\ldots,m$ the ``homotopic'' map
\[
\mathrm{res}_{k'^2c,k'c}^{1,q}\circ d^{0,q} + h^q_{k'^2c}\circ d'^{0,q}_{k'^2c} + d'^{1,q-1}_{k'c}\circ h^{q-1}_{k'^2c}: M^{0,q}_{k'^2c}\to M^{1,q}_{k'c}
\]
factors as a composite of the restriction $\mathrm{res}_{k'^2c,c}^{0,q}$ and a map
\[
\delta^{0,q}_c: M^{0,q}_c\to M^{1,q}_{k'c}
\]
that is a map of complexes (in degrees $\leq m$), and satisfies the estimate
\begin{equation}\label{eq:homotopicmapsmall}
||\delta^{0,q}_c(x)||_{M^{1,q}_{k'c}}\leq \epsilon ||x||_{M^{0,q}_c}
\end{equation}
for all $x\in M^{0,q}_c$.
\end{enumerate}
Then the first row is $\leq \max(k'^2,2k_0H)$-exact in degrees $\leq m$ for $c\geq c_0$.
\end{proposition}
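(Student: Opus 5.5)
The proof will be a normed version of the diagram chase behind the spectral sequence of a double complex, with every step carrying an explicit norm bound. Throughout, the parameter $c$ is enlarged by a fixed factor ($k$ or $k'$) at each step, and these factors are then absorbed into the constant. Fix $q\le m$, $c\ge c_0$, and $x\in M^{0,q}_{Kc}$ with $K=\max(k'^2,2k_0H)$. We must find $y\in M^{0,q-1}_c$ with $\|\mathrm{res}(x)-d'y\|\le K\|d'x\|$.

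The first ingredient is the column hypothesis (2) in degree $0$. For any $u\in M^{0,q}$ it gives $\|\mathrm{res}(u)\|\le k\|d u\|$, since there is no $(-1)$-st term. So the vertical differential $d\colon M^{0,q}\to M^{1,q}$ is ``injective with bounded inverse'' up to restriction, and it suffices to control $d$ of the relevant element in row $1$. Applying the homotopy identity (3) to $x$ gives
\[ d x = \delta(x) - h(d'x) - d'(h(x)) \]
after restriction. Here $\|h(d'x)\|\le H\|d'x\|$ and $\|\delta(x)\|\le\epsilon\|x\|$. The remaining term $d'(h(x))$ is a row-$1$ boundary of $z_1:=-h^{q-1}(x)\in M^{1,q-1}$.

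The plan is to lift $z_1$, up to boundaries and small errors, to row $0$ through a staircase chase.

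First, $d'(dz_1) = \pm d(dx - d'z_1)$ is small. Hence $dz_1\in M^{2,q-1}$ is a near-cocycle for $d'$ in degree $q-1\le m-1$. The row-exactness (1) of row $2$ then writes it as $d'w_2$ plus a small error.

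Next, replace $z_1$ by $z_1-dw_2$ and use column exactness (2) to push down.

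Continue in this way through rows $3,\dots,m+1$. This is exactly why rows $0,\dots,m+1$ with exactness in degrees $\le m-1$, and columns $0,\dots,m$ with exactness in degrees $\le m$, are needed: the staircase starting at bidegree $(1,q-1)$ with $q\le m$ ends within this range.

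At the end, column exactness in column $q-1$ produces $y\in M^{0,q-1}$ with $dy\approx z_1$. Each step multiplies norms by at most $k$ and rescales $c$ by $k$, so after $O(m)$ steps all bounds involve only a constant $k_0=k_0(k,m)$. The output is then
\[ \|d(\mathrm{res}(x)-d'y)\|\le k_0\bigl(H\|d'x\|+\epsilon\|x'\|\bigr), \]
where $x'$ is the element to which $\delta$ is applied. By column injectivity the same bound holds for $\|\mathrm{res}(x)-d'y\|$.

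To absorb the $\epsilon$ term, I would run the identity (3) not on $x$ but on $x'=\mathrm{res}(x)-d'y$. This is legitimate because $\delta$ is a map of complexes and $h$ is linear. The $d'$-part of $x'$ equals that of $x$, and the boundary correction $d'y$ only contributes a term $d'h(d'y)$ that is again a row-$1$ boundary handled by the same chase. Doing so gives
\[ \|x'\|\le k_0\epsilon\|x'\|+k_0H\|d'x\|. \]
Choosing $\epsilon$ with $k_0\epsilon\le\frac12$ yields $\|x'\|\le 2k_0H\|d'x\|$. The $k'^2$ in the final constant accounts for the two uses of $h$ at levels $k'c$ and $k'^2c$ in (3).

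The main obstacle I expect is the bookkeeping in the staircase chase. Each intermediate error term, namely the $\epsilon$-terms, the $H$-terms, and the residuals of approximate exactness, must be bounded linearly in $\|d'x\|$ or $\|x'\|$ and never in $\|x\|$ itself. At the same time the restriction levels $c\mapsto kc$ must compose correctly, so that $k_0$ really depends only on $(k,m)$ and not on $k'$ or $H$; this independence is what lets $\epsilon$ be chosen before $k'$ and $H$. I would first write out the cases $m=0$ and $m=1$ explicitly to fix the pattern of constants, and then induct on the length of the staircase.
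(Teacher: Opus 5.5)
Your staircase chase is a reasonable direct substitute for the paper's induction. However, the step that absorbs the $\epsilon$-term, which is the heart of the proposition, does not work as stated.

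Running (3) and the chase again on $x'=\mathrm{res}(x)-d'y$ does not give $\|x'\|\le k_0\epsilon\|x'\|+k_0H\|d'x\|$. It gives a new correction $y'$ and a bound on $\|x'-d'y'\|$ in terms of $\epsilon\|x'\|$, so you are back where you started. Iterating would need infinitely many rounds. Each round needs its input at $k'^2$ times its output level, so the levels run off and no fixed constant $K$ survives. For $q\ge 1$, any bound in terms of $\epsilon\|x\|$ is genuinely useless, since $x$ may be a boundary with $d'x=0$ and $\|x\|$ huge.

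Separately, the intermediate step ``replace $z_1$ by $z_1-dw_2$'' does not type-check, because $w_2\in M^{2,q-2}$. The chase has to descend through rows $2,\dots,q+1$ to column $0$. It then climbs back up using column exactness in columns $0,\dots,q-1$, correcting at each stage by $d'$ of the column preimage.

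The missing idea is to absorb with respect to the quotient norm $I=\inf_{y_0}\|x_c-d'y_0\|$, over $y_0\in M^{0,q-1}_c$, at the single bottom level $c$.
\begin{enumerate}
\item Because $\delta$ is a map of complexes, $\delta(x_c)=\delta(x_c-d'y_0)+d'\delta(y_0)$. So (3) gives $d(x_{k'c})=e+d'z_1$ in $M^{1,q}_{k'c}$, with $z_1=\delta(y_0)-h(x)$ and $\|e\|\le\epsilon\|x_c-d'y_0\|+H\|d'x\|$.
\item Run the chase on $z_1$. All its errors are then bounded by multiples of $\|e\|$.
\item Since $\delta$ lands at level $k'c$ (``forward in time''), the $2q+1$ exactness steps, each costing a factor $k$ in level, end at level $\ge c$ once $k'\ge k^{2m+1}$. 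This is what forces $k_0$ to be large in terms of $k$ and $m$.
\item For every $y_0$ you get some $y$ with $\|x_c-d'y\|\le C(\epsilon\|x_c-d'y_0\|+H\|d'x\|)$, where $C=C(k,m)$. Hence $I\le C\epsilon I+CH\|d'x\|$, and choosing $C\epsilon\le\frac12$ closes the argument.
\item When $d'x=0$, this only puts $x_c$ in the closure of the image. You still need a Cauchy-sequence argument via exactness in degree $q-1$, at the cost of one more restriction.
\end{enumerate}

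The paper builds this in structurally. It treats $m=0$ exactly as your absorption, with no $y$ present. It then inducts on $m$ by passing to the double complex $N^{p,0}=M^{p,1}/\overline{M^{p,0}}$, $N^{p,q}=M^{p,q+1}$ with quotient norms. It verifies the hypotheses for $m-1$ via the normed snake lemma (Proposition~\ref{prop:snakelemma}) and observes that the $\delta$-estimate descends to quotient norms. With the fix above, your direct chase would be a legitimate alternative with explicit constants. The paper's induction avoids writing out the chase, at the price of the snake-lemma bookkeeping.
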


We note that the homotopy in (3) is bounded in two ways: On the one hand, the homotopic map factors over a deep restriction, and on the other hand its norm is bounded by $\epsilon$. If we imagine $c$ as a time parameter and $M^{p,q}_c$ as knowing about ``history up to time $c$'' so that the restriction maps $M^{p,q}_{c'}\to M^{p,q}_c$ correspond to forgetting the recent history, then the homotopic map $\delta^{0,q}_c$ has the effect of ``travelling forward in time''. It is also important that while the required bound on the homotopic map is very strong, one does not need strong control on the homotopy $h$: There must be some bound $H$ on its norm, but $H$ can be arbitrary.

\begin{proof} First, we treat the case $m=0$. If $m=0$, we claim that one can take $\epsilon=\tfrac 1{2k}$ and $k_0=k$. We have to prove exactness at the first step. Let $x_{k'^2c}\in M^{0,0}_{k'^2c}$ and denote $x_{k'c}=\mathrm{res}_{k'^2c,k'c}^{0,0}(x)$ and $x_c=\mathrm{res}_{k'^2c,c}^{0,0}(x)$. Then by assumption (2) (and $k'\geq k$), we have
\[
||x_c||_{M^{0,0}_c}\leq k||d^{0,0}_{k'c}(x_{k'c})||_{M^{1,0}_{k'c}}.
\]
On the other hand, by (3),
\[
||\mathrm{res}_{k'^2c,k'c}^{1,0}(d^{0,0}_{k'^2c}(x))+ h^0_{k'^2c}(d'^{0,0}_{k'^2c}(x))||_{M^{1,0}_{k'c}}\leq \epsilon ||x_c||_{M^{0,0}_c},
\]
noting that the left-hand side agrees with $\delta^{0,0}_c(x_c)$ by assumption. In particular, noting that $\mathrm{res}_{k'^2c,k'c}^{1,0}(d^{0,0}_{k'^2c}(x)) = d^{0,0}_{k'c}(x_{k'c})$, we get
\[
||x_c||_{M^{0,0}_c}\leq k||d^{0,0}_{k'c}(x_{k'c})||_{M^{1,0}_{k'c}}\leq k\epsilon ||x_c||_{M^{0,0}_c} + kH ||d'^{0,0}_{k'^2c}(x)||_{M^{0,1}_{k'^2c}}.
\]
Thus, taking $\epsilon=\tfrac 1{2k}$ as promised, and bringing $2^{-1}||x_c||_{M^{0,0}_c}$ to the left-hand side, this implies
\[
||x_c||_{M^{0,0}_c}\leq 2kH ||d'^{0,0}_{k'^2c}(x)||_{M^{0,1}_{k'^2c}}.
\]
This gives the desired $\leq \max(k'^2,2k_0H)$-exactness in degrees $\leq m$ for $c\geq c_0$.

Now we argue by induction on $m$. Consider the complex $N^{p,q}$ given by $M^{p,q+1}$ for $q\geq 1$ and $N^{p,0} = M^{p,1}/\overline{M^{p,0}}$ (the quotient by the closure of the image, which is also the completion of $M^{p,1}/M^{p,0}$), equipped with the quotient norm. Using the normed version of the snake lemma, Proposition~\ref{prop:snakelemma} in the appendix to this lecture, one checks that this satisfies the assumptions for $m-1$, with $k$ replaced by $\max(k^4,k^3+k+1)$. To verify condition (3), note that the maps $\delta^{0,q}_c$ induce similar maps after passing to this quotient complex. To verify the estimate \eqref{eq:homotopicmapsmall}, note that it is nontrivial only for $N^{0,0} = M^{0,1}/\overline{M^{0,0}}$. In that case, for any given $a>0$ one can lift $x\in N^{0,0}_c$ to $\tilde{x}\in M^{0,1}_c$ with $||\tilde{x}||_{N^{0,0}_c}\leq ||x||_{M^{0,1}_c}+a$. This implies
\[
||\delta^{0,q}_c(x)||_{N^{1,0}_{k'c}}\leq ||\delta^{0,q}_c(\tilde{x})||_{M^{1,1}_{k'c}}\leq \epsilon ||\tilde{x}||_{M^{0,1}_c}\leq \epsilon ||x||_{M^{0,1}_c} + \epsilon a
\]
for all $a>0$, and hence the desired inequality by taking the infimum over all $a$.
\end{proof}

Finally, we can prove the key combinatorial lemma, ensuring that any element of $\Hom(\Lambda,\overline{\mathcal M}_{r'}(S))$ can be decomposed into $N$ elements whose norm is roughly $\tfrac 1N$ of the original element. As preparation, we have the following simple result.

\begin{lemma} Let $\Lambda$ be a finite free abelian group, let $N$ be a positive integer, and let $\lambda_1,\ldots,\lambda_m\in \Lambda$ be elements. Then there is a finite subset $A\subset \Lambda^\vee$ such that for all $x\in \Lambda^\vee=\Hom(\Lambda,\mathbb Z)$ there is some $x'\in A$ such that $x-x'\in N\Lambda^\vee$ and for all $i=1,\ldots,m$, the numbers $x'(\lambda_i)$ and $(x-x')(\lambda_i)$ have the same sign, i.e.~are both nonnegative or both nonpositive.
\end{lemma}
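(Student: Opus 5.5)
The statement is a finiteness assertion about lattice points in finitely many "sign chambers", so I would prove it by a compactness/finiteness argument, essentially Gordan's lemma (or Dickson's lemma). Consider the finitely many sign patterns $\sigma\in\{+,-\}^m$. For each $\sigma$ let
\[
\Lambda^\vee_\sigma=\{x\in \Lambda^\vee\mid \sigma_i x(\lambda_i)\geq 0 \text{ for all } i\},
\]
the set of lattice points of a rational polyhedral cone. These finitely many cones cover $\Lambda^\vee$. It then suffices to find, for each $\sigma$ and each residue class $a\in \Lambda^\vee/N\Lambda^\vee$, a finite set $A_{\sigma,a}\subset \Lambda^\vee_\sigma\cap(a+N\Lambda^\vee)$ with the following property: every $x\in \Lambda^\vee_\sigma\cap(a+N\Lambda^\vee)$ has some $x'\in A_{\sigma,a}$ with $x-x'\in \Lambda^\vee_\sigma$. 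Then $x'$ and $x-x'$ both lie in the same closed cone, so $x'(\lambda_i)$ and $(x-x')(\lambda_i)$ have the same sign weakly for every $i$. Also $x-x'\in N\Lambda^\vee$ automatically, since $x$ and $x'$ lie in the same coset. We take $A$ to be the union of all the $A_{\sigma,a}$.

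To find $A_{\sigma,a}$, I would use Dickson's lemma on the partial order $y\preceq x$, defined to mean $x-y\in \Lambda^\vee_\sigma$, restricted to the set $P=\Lambda^\vee_\sigma\cap(a+N\Lambda^\vee)$. We need $P$ to have finitely many minimal elements and every element to dominate a minimal one. This requires some care, since the cone may not be pointed; for instance, if the $\lambda_i$ do not span, there is a lineality space $L=\{x\mid x(\lambda_i)=0\ \forall i\}$. I would handle this concretely with the map
\[
\phi:\Lambda^\vee\to \mathbb Z^m\times \Lambda^\vee/N\Lambda^\vee,\qquad x\mapsto \bigl((\sigma_i x(\lambda_i))_i,\ x \bmod N\bigr).
\]
On $P$ the first component lies in $\mathbb Z_{\geq 0}^m$. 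By Dickson's lemma, the image $\phi(P)\subset \mathbb Z^m_{\geq 0}$ (with fixed second coordinate $a$) has finitely many minimal elements under the componentwise order. For each such minimal value, pick one preimage $x'\in P$, and let $A_{\sigma,a}$ be this finite set.

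Given $x\in P$, choose $x'\in A_{\sigma,a}$ with $\phi(x')\leq\phi(x)$ componentwise. Then $\sigma_i(x-x')(\lambda_i)\geq 0$ for all $i$, and $x-x'\in N\Lambda^\vee$, which is what we want. The lineality space causes no trouble, because the conditions only involve the values $x(\lambda_i)$ and the residue mod $N$. The main point to get right is exactly this: well-quasi-ordering is applied to the finitely many coordinates $x(\lambda_i)$ together with the finite residue data, not to $x$ itself. Once that is arranged, there is no real obstacle, and everything reduces to Dickson's lemma for $\mathbb Z^m_{\geq 0}$.
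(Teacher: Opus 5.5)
Your argument is correct. It shares the paper's first reduction, splitting $\Lambda^\vee$ into the finitely many closed sign cones (equivalently, replacing the $\lambda_i$ by $\pm\lambda_i$), but the finiteness input is different.

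\emph{The paper's route.} It applies Gordan's lemma to the monoid $\Lambda^\vee_+=\{x\mid x(\lambda_i)\geq 0\ \forall i\}$ and picks generators $y_1,\ldots,y_M$. It then takes $A=\{\sum_j n_jy_j\mid 0\leq n_j\leq N-1\}$. Writing $x=\sum_j m_jy_j$ and reducing each $m_j$ modulo $N$ produces $x'\in A$ with $x-x'=N\sum_j q_jy_j$. This element lies in the cone and in $N\Lambda^\vee$ at once, so the congruence and the sign condition are handled in one stroke by the additive structure.

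\emph{Your route.} You separate the two requirements. The congruence $x\equiv x'$ modulo $N$ is forced by splitting into residue classes. The sign condition comes from Dickson's lemma applied to the vectors $(\sigma_ix(\lambda_i))_i\in\mathbb Z^m_{\geq 0}$. This only uses Dickson's lemma, which is the more elementary input; Gordan's lemma can itself be derived from it. As you observe, it also sidesteps the lineality space, since only the values $x(\lambda_i)$ and the residue class enter.

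\emph{Trade-off.} The paper's version buys an explicit $A$ built from a Hilbert basis. Yours yields a non-explicit $A$, namely chosen preimages of minimal elements in each of the $2^m\cdot|\Lambda^\vee/N\Lambda^\vee|$ pieces. For the application in Lemma~\ref{lem:key}, only the finiteness of $A$ matters, through the constant $d=\sup_i\sum_{a\in A}|a(\lambda_i)|$, so either version suffices.
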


\begin{proof} It suffices to prove the statement for all $x$ such that $\lambda_i(x)\geq 0$ for all $i$; indeed, applying this variant to all $\pm \lambda_i$, one gets the full statement.

Thus, consider the submonoid $\Lambda^\vee_+\subset \Lambda^\vee$ of all $x$ that pair nonnegatively with all $\lambda_i$. This is a finitely generated monoid by Gordan's lemma; let $y_1,\ldots,y_M$ be a set of generators. Then we can take for $A$ all sums $n_1y_1+\ldots+n_My_M$ where all $n_j\in \{0,\ldots,N-1\}$.
\end{proof}

Now we have the key lemma:

\begin{lemma}\label{lem:key} Let $\Lambda$ be a polyhedral lattice. Then for all positive integers $N$ there is a constant $d$ such that for all $c>0$ and all profinite sets $S$ one can write any $x\in \Hom(\Lambda,\overline{\mathcal M}_{r'}(S))_{\leq c}$ as
\[
x=x_1+\ldots+x_N
\]
where all $x_i\in \Hom(\Lambda,\overline{\mathcal M}_{r'}(S))_{\leq c/N+d}$.
\end{lemma}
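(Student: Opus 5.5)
The plan is to reduce to finite $S$ by a compactness argument, and then build the decomposition explicitly, coordinate by coordinate, using the preceding Gordan-type lemma together with a round-robin distribution of the remaining bounded pieces.

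\emph{Reduction to finite $S$.} Write $S=\varprojlim_i S_i$ with finite $S_i$. Pushforward along $S_j\to S_i$ is additive and maps $\overline{\mathcal M}_{r'}(S_j)_{\leq c}$ into $\overline{\mathcal M}_{r'}(S_i)_{\leq c}$. Hence it carries decompositions into decompositions, and the $\Lambda$-bounds are preserved. For a fixed $x$ with images $x^{(i)}$, let $D_i$ be the set of tuples $(x_1,\ldots,x_N)\in\Hom(\Lambda,\overline{\mathcal M}_{r'}(S_i))_{\leq c/N+d}^N$ with sum $x^{(i)}$. This is a closed subset of a profinite set, and the $D_i$ form an inverse system. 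If each $D_i$ is nonempty, the limit is nonempty, and any point of the limit gives the desired decomposition for $S$. So it suffices to treat finite $S$, with $d$ independent of $S$ and $c$.

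\emph{Finite $S$: splitting off an $N$-divisible part.} Now $\overline{\mathcal M}_{r'}(S)=\prod_{n\geq1,s\in S}\mathbb Z T^n[s]$ with the weighted $\ell^1$-condition. Thus $x$ amounts to elements $x_{n,s}\in\Lambda^\vee$.
\begin{itemize}
\item The function $\lambda\mapsto\sum_{n,s}|x_{n,s}(\lambda)|(r')^n$ is convex and homogeneous. Since the unit ball of $\Lambda$ is a rational polyhedron, the condition $x\in\Hom(\Lambda,\overline{\mathcal M}_{r'}(S))_{\leq c}$ only needs to be checked at finitely many lattice points $\lambda_1,\ldots,\lambda_m$ (suitable multiples of the vertices). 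It reads $\sum_{n,s}|x_{n,s}(\lambda_j)|(r')^n\leq c\|\lambda_j\|$.
\item Apply the previous lemma to $\lambda_1,\ldots,\lambda_m$ and $N$, obtaining a finite set $A\subset\Lambda^\vee$. Write $x_{n,s}=x'_{n,s}+Ny_{n,s}$ with $x'_{n,s}\in A$ and with $x'_{n,s}(\lambda_j)$ and $Ny_{n,s}(\lambda_j)$ of the same sign for all $j$.
\item Because the signs agree, $|x_{n,s}(\lambda_j)|=|x'_{n,s}(\lambda_j)|+N|y_{n,s}(\lambda_j)|$. Hence, for every $j$,
\[
N\sum|y_{n,s}(\lambda_j)|(r')^n+\sum|x'_{n,s}(\lambda_j)|(r')^n\leq c\|\lambda_j\|.
\]
\end{itemize}

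\emph{Distributing the bounded remainder.} Each $x_k$ will be $y$ plus a sub-collection of the terms $x'_{n,s}T^n[s]$, each such term assigned to exactly one of the $N$ summands. For each $a\in A$, list the pairs $(n,s)$ with $x'_{n,s}=a$ in order of increasing $n$. This makes the weights $|a(\lambda_j)|(r')^n$ nonincreasing, simultaneously for all $j$. Assign these pairs to the summands $1,2,\ldots,N,1,2,\ldots$ in round-robin fashion. For a nonincreasing sequence, round robin gives each summand at most $\frac1N$ of the total plus the first term. The first term is at most $|a(\lambda_j)|r'$, and this holds even for infinite sequences.

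Summing over $a\in A$ and using the displayed inequality, each $x_k$ satisfies
\[
\sum_{n,s}|x_{k,n,s}(\lambda_j)|(r')^n\leq \frac{c\|\lambda_j\|}{N}+\sum_{a\in A}|a(\lambda_j)|r'.
\]
So one may take $d=\max_j\|\lambda_j\|^{-1}\sum_{a\in A}|a(\lambda_j)|$, which depends only on $\Lambda$ and $N$.

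\emph{Main obstacle.} The main difficulty is the sign-compatible splitting. Naive rounding $x=N\lfloor x/N\rfloor+(\text{rest})$ would destroy the additivity of absolute values under the several functionals $\lambda_j$ at once, and the previous lemma is exactly what repairs this. The remaining points are to check that the finitely many $\lambda_j$ suffice, which uses polyhedrality, and to run the limit argument cleanly.
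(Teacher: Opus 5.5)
Your proposal is correct and follows the paper's proof essentially step by step. You reduce to finite $S$ by compactness, and you use the Gordan-type splitting $x_{n,s}=x'_{n,s}+Ny_{n,s}$ with the same-sign property, which makes the norms at the generating $\lambda_j$ additive; your explicit round-robin, with terms ordered by increasing $n$, is a concrete implementation of the paper's reduction to $\Lambda=\mathbb Z$ with $d=1$ and yields essentially the same constant $d$.
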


\begin{proof} The desired statement is equivalent to the surjectivity of the map of profinite sets
\[
\Hom(\Lambda,\overline{\mathcal M}_{r'}(S))_{\leq c/N+d}^N\times_{\Hom(\Lambda,\overline{\mathcal M}_{r'}(S))_{\leq c+Nd}} \Hom(\Lambda,\overline{\mathcal M}_{r'}(S))_{\leq c}\to \Hom(\Lambda,\overline{\mathcal M}_{r'}(S))_{\leq c}.
\]
Note that, as a functor of $S$, both sides commute with cofiltered limits, so it is enough to handle finite $S$, by Tychonoff.

Pick $\lambda_1,\ldots,\lambda_m\in \Lambda$ generating the norm. We fix a finite subset $A\subset \Lambda^\vee$ satisfying the conclusion of the previous lemma. Write, for finite $S$,
\[
x=\sum_{n\geq 1, s\in S} x_{n,s} T^n [s]
\]
with $x_{n,s}\in \Lambda^\vee$. Then we can decompose
\[
x_{n,s} = N x_{n,s}^0 + x_{n,s}^1
\]
where $x_{n,s}^1\in A$ and we have the same-sign property of the last lemma. Letting $x^0 = \sum_{n\geq 1, s\in S} x_{n,s}^0 T^n [s]$, we get a decomposition
\[
x = Nx^0 + \sum_{a\in A} a x_a
\]
with $x_a\in \overline{\mathcal M}_{r'}(S)$ (with the property that in the
basis given by the $T^n [s]$, all coefficients are $0$ or $1$). Crucially,
we know that for all $i=1,\ldots,m$, we have
\[
||x(\lambda_i)|| = N ||x^0(\lambda_i)|| + \sum_{a\in A} |a(\lambda_i)| ||x_a||
\]
by using the same sign property of the decomposition.

Using this decomposition of $x$, we decompose each term into $N$ summands. This is trivial for the first term $Nx^0$, and each summand of the second term reduces to the similar problem for $\Lambda=\mathbb Z$. In that case, one can take $d=1$, as follows by decomposing any sum with terms of size at most $1$ into $N$ such partial sums whose sums differ by at most $1$. (It follows that in general one can take for $d$ the supremum over all $i$ of $\sum_{a\in A} |a(\lambda_i)|$.)
\end{proof}

Finally, we can give the main argument.

\begin{proof}[Proof of Theorem~\ref{thm:explicit3}] We argue by induction on $m$, so assume the result for $m-1$ (this is no assumption for $m=0$, so we do not need an induction start). This gives us some $k\geq 2$ for which the statement of Theorem~\ref{thm:explicit3} holds true for $m-1$; if $m=0$, simply take any $k\geq 2$. Moreover, we assume $k$ is at least some constant that depends only on $m$ and $r$. (More precisely, we will need to invoke Lemma~\ref{lem:Tinv} below --- its constants depend only on $m$ and $r$.) After this choice of $k$, we fix $\epsilon$ and $k_0$ as provided by Proposition~\ref{prop:key}; take any $k'\geq k_0$, for example $k'=k_0$. Finally, choose a positive integer $b$ so that $2k'(\tfrac r{r'})^b\leq \epsilon$, and let $N$ be the minimal power of $2$ that satisfies
\[
k'/N\leq (r')^b.
\]
Then in particular $r^bN\leq 2k'(\tfrac{r}{r'})^b\leq \epsilon$.

We consider the diagonal embedding
\[
\Lambda\hookrightarrow \Lambda' = \Lambda^N,
\]
where we endow $\Lambda'$ with the norm
\[
||(\lambda_1,\ldots,\lambda_N)||_{\Lambda'} = \tfrac 1N(||\lambda_1||_\Lambda+\ldots+||\lambda_N||_\Lambda).
\]
For any $m\geq 1$, let $\Lambda'^{(m)}$ be given by $\Lambda'^m / \Lambda\otimes (\mathbb Z^m)_{\sum=0}$; then $\Lambda'^{(\bullet)}$ is a cosimplicial polyhedral lattice, the \v{C}ech conerve of $\Lambda\to \Lambda'$. For $m=0$, we set $\Lambda'^{(0)} = \Lambda$. It is clear that all of these are polyhedral lattices.

In particular, for any $c>0$, we have
\[
\Hom(\Lambda'^{(m)},\overline{\mathcal M}_{r'}(S))_{\leq c} = \Hom(\Lambda',\overline{\mathcal M}_{r'}(S))_{\leq c}^{m/\Hom(\Lambda,\overline{\mathcal M}_{r'}(S))_{\leq c}},
\]
the $m$-fold fibre product of $\Hom(\Lambda',\overline{\mathcal M}_{r'}(S))_{\leq c}$ over $\Hom(\Lambda,\overline{\mathcal M}_{r'}(S))_{\leq c}$; and
\[
\Hom(\Lambda',\overline{\mathcal M}_{r'}(S))_{\leq c} = \Hom(\Lambda,\overline{\mathcal M}_{r'}(S))_{\leq c/N}^N,
\]
with the map to $\Hom(\Lambda,\overline{\mathcal M}_{r'}(S))_{\leq c}$ given by the sum map.

Consider the collection of double complexes $C_{\Lambda'^{(\bullet)},c}^\bullet$ associated to this cosimplicial polyhedral lattice by Dold--Kan. Up to rescaling the norms in the complex for $\Lambda'^{(m)}$ by a universal constant (something like $(m+2)!$), the differentials are norm-nonincreasing (as they are an alternating sum of $m+1$ face maps, all of which are of norm $\leq 1$ for the standard norm), so this collection of normed double complexes is admissible. By induction, the first condition of Proposition~\ref{prop:key} is satisfied for all $c\geq c_0$ with $c_0$ large enough (depending on $\Lambda$ but not $V$ or $S$). By Lemma~\ref{lem:key}, and noting that $\Hom(\Lambda'^{(\bullet)},\overline{\mathcal M}_{r'}(S))_{\leq c}$ is the \v{C}ech nerve of
\[
\Hom(\Lambda,\overline{\mathcal M}_{r'}(S))_{\leq c/N}^N\xrightarrow{\sum} \Hom(\Lambda,\overline{\mathcal M}_{r'}(S))_{\leq c},
\]
also the second condition is satisfied, provided we take $c_0$ large enough so that $2^{-m}(k-1)r'c_0/N$ is at least the $d$ of Lemma~\ref{lem:key} (so this choice of $c_0$ again depends on $\Lambda$). Indeed, then one can, for all $i=0,\ldots,m$, splice a surjection of profinite sets between the maps
\[
\Hom(\Lambda,\overline{\mathcal M}_{r'}(S))_{\leq 2^{-i}c/N}^{2^iN}\to\Hom(\Lambda,\overline{\mathcal M}_{r'}(S))_{\leq 2^{-i}c}^{2^i}
\]
and
\[
\Hom(\Lambda,\overline{\mathcal M}_{r'}(S))_{\leq k2^{-i}c/N}^{2^iN}\to \Hom(\Lambda,\overline{\mathcal M}_{r'}(S))_{\leq k2^{-i}c}^{2^i},
\]
and so the transition map between the columns of that double complex factors over a similar complex arising from a simplicial hypercover of profinite sets, so the constants are bounded by Proposition~\ref{prop:normedcompletion}, Lemma~\ref{lem:Tinv}, and (the version for kernels instead of cokernels of) Proposition~\ref{prop:snakelemma}. This is the argument invoking Lemma~\ref{lem:Tinv} alluded to in the first paragraph of the proof, regarding the choice of $k$.

Finally, to check the third condition, we use Lemma~\ref{lem:homotopyNelements} to find, in degrees $\leq m$, a homotopy between the two maps from the first row
\[
\widehat{V}(\Hom(\Lambda,\overline{\mathcal M}_{r'}(S))_{\leq c})^{T^{-1}}\to \widehat{V}(\Hom(\Lambda,\overline{\mathcal M}_{r'}(S))_{\leq 2^{-1} c}^2)^{T^{-1}}\to \ldots
\]
to the second row
\[
\widehat{V}(\Hom(\Lambda,\overline{\mathcal M}_{r'}(S))_{\leq c/N}^N)^{T^{-1}}\to \widehat{V}(\Hom(\Lambda,\overline{\mathcal M}_{r'}(S))_{\leq 2^{-1} c/N}^{2N})^{T^{-1}}\to \ldots
\]
respectively induced by the addition $\Hom(\Lambda,\overline{\mathcal M}_{r'}(S))_{\leq c/N}^N\to \Hom(\Lambda,\overline{\mathcal M}_{r'}(S))_{\leq c}$ (which is the map that forms part of the double complex), and the map that is the sum of the $N$ maps induced by the $N$ projection maps
\[
\Hom(\Lambda,\overline{\mathcal M}_{r'}(S))_{\leq c/N}^N\to \Hom(\Lambda,\overline{\mathcal M}_{r'}(S))_{\leq c/N}\subset \Hom(\Lambda,\overline{\mathcal M}_{r'}(S))_{\leq c}.
\]
This homotopy is induced from the universal homotopy from Lemma~\ref{lem:homotopyNelements}, and in particular its norm is bounded by some constant $H$ (depending only on $N$ and $r$).

Finally, it remains to establish the estimate \eqref{eq:homotopicmapsmall} on the homotopic map. We note that this takes $x\in \widehat{V}(\Hom(\Lambda,\overline{\mathcal M}_{r'}(S))_{\leq k'^2 2^{-i} c}^{2^i})^{T^{-1}}$ (with $i=q$ in the notation of \eqref{eq:homotopicmapsmall}) to the element
\[
y\in \widehat{V}(\Hom(\Lambda,\overline{\mathcal M}_{r'}(S))_{\leq k'2^{-i}c/N}^{2^i N})^{T^{-1}}
\]
that is the sum of the $N$ pullbacks along the $N$ projection maps $\Hom(\Lambda,\overline{\mathcal M}_{r'}(S))_{\leq k'2^{-i}c/N}^{2^i N}\to \Hom(\Lambda,\overline{\mathcal M}_{r'}(S))_{\leq k'^2 2^{-i}c}^{2^i}$. We note that these actually take image in $\Hom(\Lambda,\overline{\mathcal M}_{r'}(S))_{\leq 2^{-i}c}^{2^i}$ as $N\geq k'$, so this actually gives a well-defined map
\[
\widehat{V}(\Hom(\Lambda,\overline{\mathcal M}_{r'}(S))_{\leq 2^{-i}c}^{2^i})^{T^{-1}}\to \widehat{V}(\Hom(\Lambda,\overline{\mathcal M}_{r'}(S))_{\leq k'2^{-i}c/N}^{2^i N})^{T^{-1}}.
\]
We need to see that this map is of norm $\leq \epsilon$. Now note that by our choice of $N$, we actually have $k'2^{-i}c/N\leq (r')^b 2^{-i}c$, so this can be written as the composite of the restriction map
\[
\widehat{V}(\Hom(\Lambda,\overline{\mathcal M}_{r'}(S))_{\leq 2^{-i}c}^{2^i})^{T^{-1}}\to \widehat{V}(\Hom(\Lambda,\overline{\mathcal M}_{r'}(S))_{\leq (r')^b 2^{-i}c}^{2^i})^{T^{-1}}
\]
and
\[
\widehat{V}(\Hom(\Lambda,\overline{\mathcal M}_{r'}(S))_{\leq (r')^b 2^{-i}c}^{2^i})^{T^{-1}}\to \widehat{V}(\Hom(\Lambda,\overline{\mathcal M}_{r'}(S))_{\leq k'2^{-i}c/N}^{2^i N})^{T^{-1}}.
\]
The first map has norm exactly $r^b$, by $T^{-1}$-invariance, and as multiplication by $T$ scales the norm with a factor of $r$ on $\widehat{V}$.\footnote{Here is where we use $r'>r$, ensuring different scaling behaviour of the norm on source and target.} The second map has norm at most $N$ (as it is a sum of $N$ maps of norm $\leq 1$). Thus, the total map has norm $\leq r^bN$. But by our choice of $N$, we have $r^bN\leq \epsilon$, giving the result.

Thus, we can apply Proposition~\ref{prop:key}, and get the desired $\leq \max(k'^2,2k_0H)$-exactness in degrees $\leq m$ for $c\geq c_0$, where $k'$, $k_0$ and $H$ were defined only in terms of $k$, $m$, $r'$ and $r$, while $c_0$ depends on $\Lambda$ (but not on $V$ or $S$). This proves the inductive step.
\end{proof}

\begin{remark} It has been formally verified that the proof yields constants $k=k(m)$ that grow doubly-exponentially in $m$.
\end{remark}

This completes the proof of all results announced so far.

\newpage

\section*{Appendix to Lecture IX: Some normed homological algebra}

In this appendix, we gather a few results about homological algebra with normed abelian groups, the proofs of which are just obtained by keeping track of constants in the standard proofs.

\begin{proposition}\label{prop:snakelemma} Let $M^\bullet_c$ and $M'^\bullet_c$ be two admissible collections of complexes of complete normed abelian groups, where $c\geq c_0$. Let $f^\bullet_c: M^\bullet_c\to M'^\bullet_c$ be a collection of maps between these collections of complexes that is norm-nonincreasing and commutes with restriction maps, and assume that it satisfies
\[
||\mathrm{res}^i_{kc,c}(x)||_{M^i_c}\leq k||f^i_{kc}(x)||_{M'^i_{kc}}
\]
for all $i=0,\ldots,m+1$ and all $x\in M^i_{kc}$. Let $N^\bullet_c=M'^\bullet_c/\overline{M^\bullet_c}$ (which equals the completion of $M'^{\bullet}_c/M^\bullet_c$) be the collection of quotient complexes, with the quotient norm; this is again an admissible collection of complexes.

Assume that $M^\bullet_c$ and $M'^\bullet_c$ are $\leq k$-exact in degrees $\leq m$ for $c\geq c_0$. Then $N^\bullet_c$ is $\leq \max(k^4,k^3+k+1)$-exact in degrees $\leq m-1$ for $c\geq c_0$.
\end{proposition}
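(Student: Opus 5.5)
The plan is to run the usual diagram chase behind the long exact sequence of $0\to M\to M'\to N\to 0$, but in ``delayed'' form. Every use of exactness costs a factor $k$ in the radius parameter $c$, and the quotient norm forces an approximate lift at each step. Fix $i\le m-1$, $c\ge c_0$, and $\bar x\in N^i_{Kc}$, where $K$ will be a suitable power of $k$, at most $k^4$ in all. We must produce $\bar y\in N^{i-1}_c$ with
\[
||\mathrm{res}(\bar x)-d\bar y||\le K\,||d\bar x||.
\]
Because $N$ carries the quotient norm and $M'^\bullet_c/M^\bullet_c$ is dense in $N^\bullet_c$, we may for any $\delta>0$ lift $\bar x$ to $x\in M'^i_{Kc}$ with $||x||\le ||\bar x||+\delta$. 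Likewise $d\bar x$ lifts: there is $w\in M^{i+1}_{Kc}$ with $||dx-f(w)||\le ||d\bar x||+\delta$. The $\delta$'s are absorbed at the end, using the same infimum trick as in Proposition~\ref{prop:key}.

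The chase goes as follows.
\begin{enumerate}
\item Set $x'=x$ and $z=dx-f(w)\in M'^{i+1}$, so $||z||\lesssim ||d\bar x||$.
\item $dz=-f(dw)$, and $||dz||\le ||z||$ since the differentials are norm-nonincreasing. The hypothesis on $f$ (a bounded-below condition after one restriction) then gives $||\mathrm{res}(dw)||\le k||dz||\le k||z||$ at level $K c/k$.
\item Apply $\le k$-exactness of $M$ in degree $i+1\le m$ to $\mathrm{res}(w)$. This gives $u\in M^{i}$ with $||\mathrm{res}(w)-du||\le k||dw||\lesssim k^2||z||$ at level $Kc/k^2$.
\item Replace $x$ by $x-f(u)$, which is another lift of $\bar x$. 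Now $d(x-f(u))=z+f(w-du)$ has norm $\lesssim (1+k^2)||z||$ after restriction.
\item Apply $\le k$-exactness of $M'$ in degree $i$ to $x-f(u)$. This gives $v\in M'^{i-1}$ with $||\mathrm{res}(x-f(u))-dv||\le k(1+k^2)||z||$ at level $Kc/k^3$.
\item Project $v$ to $\bar y\in N^{i-1}_c$. Since $f(u)$ maps to zero in $N$ and the projection is norm-nonincreasing, $||\mathrm{res}(\bar x)-d\bar y||\le (k^3+k)||z||$.
\end{enumerate}
Collecting constants gives a total level drop of at most $k^4$ and a norm constant of about $k^3+k+1$; the extra $+1$ comes from the $\delta$-slack and the direct term $z$. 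Taking the maximum gives $\max(k^4,k^3+k+1)$. When $i=0$, the $M'$-exactness step simply says the restricted element is small, with $v=0$.

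The main obstacle is bookkeeping rather than ideas. We must check that each step only uses exactness of $M$ in degrees $\le m$ and of $M'$ in degrees $\le m-1$ (exactness in $M$ is used in degree $i+1\le m$, which is why the conclusion drops by one degree), and that the hypothesis on $f$ is needed only in degrees $\le m+1$. The other point to check is that restriction maps commute with everything and are norm-nonincreasing, so the successive level drops $k,k,k,\dots$ compose to at most $k^4$. Finally, admissibility of $N$ holds because quotient norms inherit the norm-nonincreasing property.
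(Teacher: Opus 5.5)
Your chase is the same as the paper's: approximately lift $d\bar x$, use the bound on $f$ to control $dw$, correct the lift of $\bar x$ by exactness of $M$ in degree $i+1$, then apply exactness of $M'$ in degree $i$. The gap is in how you dispose of the slack. You end with $\|\mathrm{res}(\bar x)-d\bar y_\delta\|\le (k^3+k)(\|d\bar x\|+\delta)$, and the witness $\bar y_\delta$ depends on $\delta$, so you cannot take an infimum over $\delta$. The analogy with Proposition~\ref{prop:key} fails: there the estimated quantity $\delta^{0,q}_c(x)$ is independent of the chosen lift, while here it is not.

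When $\|d\bar x\|>0$ the problem is harmless: take $\delta=\epsilon\|d\bar x\|$, which is where $(k^3+k)(1+\epsilon)\le k^3+k+1$ comes from. When $\|d\bar x\|=0$, however, you need $\mathrm{res}(\bar x)=d\bar y$ exactly. Your argument only shows that $\mathrm{res}(\bar x)$ lies in the closure of the image of $d$. In this case $d\bar x=0$ only says $dx\in\overline{f(M^{i+1}_{Kc})}$, and since $f$ need not have closed image, you cannot choose $w$ with $z=0$.

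The missing ingredient is the paper's preliminary observation. Choose $w_j\in M^{i+1}_{Kc}$ with $f(w_j)\to dx$. The hypothesis $\|\mathrm{res}(y)\|\le k\|f(y)\|$ (in degree $i+1\le m$) makes $\mathrm{res}(w_j)$ a Cauchy sequence in $M^{i+1}_{Kc/k}$. By completeness its limit $w$ satisfies $f(w)=\mathrm{res}(dx)$. More generally, this produces one level down a lift with the multiplicative bound $\|\mathrm{res}(dx)-f(w)\|\le(1+\epsilon)\|d\bar x\|$, valid including when $\|d\bar x\|=0$.

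This costs one extra restriction by $k$, and that is exactly why the level drop is $k^4$. Your chase as written uses only three restrictions (steps 2, 3 and 5), so your ``at most $k^4$'' does not correspond to anything in your argument. Insert this step at the start, passing from level $k^4c$ to $k^3c$. The rest of your chase then goes through verbatim, with levels $k^3c\to k^2c\to kc\to c$, and yields the stated constant $\max(k^4,k^3+k+1)$.
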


\begin{proof} We make the following preliminary observation. Take any $i=0,\ldots,m+1$ and $m'_{kc}\in M'^i_{kc}$ with image $n_{kc}\in N^i_{kc}$. By the definition of the quotient norm, for any $\epsilon>0$ we can find some $m_{kc}\in M^i_{kc}$ such that $||m'_{kc}-f^i_{kc}(m_{kc})||\leq ||n_{kc}||+\epsilon$. We would like to replace this by the stronger assertion that we can find $m_{kc}\in M^i_{kc}$ such that
\[
||m'_{kc}-f^i_{kc}(m_{kc})||\leq (1+\epsilon)||n_{kc}||.
\]
This is obviously possible as long as $||n_{kc}||>0$, but in case $||n_{kc}||=0$, it may not be possible, because $M^\bullet_c\to M'^\bullet_c$ may not have closed image.

However, we claim that, letting $m'_c\in M'^i_c$ be the restriction of $m'_{kc}\in M'^i_{kc}$, with image $n_c\in N^i_c$, we can always find some $m_c\in M^i_c$ such that
\[
||m'_c-f^i_c(m_c)||\leq (1+\epsilon)||n_{kc}||.
\]
By the above, we only need to prove this when $||n_{kc}||=0$. Choose a sequence $m_{kc,0},m_{kc,1},\ldots$ in $M^i_{kc}$ such that $||m'_{kc}-f^i_{kc}(m_{kc,j})||\to 0$ for $j\to \infty$. In particular, $||f^i_{kc}(m_{kc,j}-m_{kc,j'})||\to 0$ for $j,j'\to \infty$. By the displayed bound in the statement of the proposition, this ensures that $||m_{c,j}-m_{c,j'}||\to 0$ where $m_{c,j}\in M^i_c$ is the image of $m_{kc,j}$. Thus, we get a Cauchy sequence in $M^i_c$ whose limit $m_c\in M^i_c$ will satisfy $||m'_c-f^i_c(m_c)||=0$ (i.e.~$m'_c=f^i_c(m_c)$).

Now we start the proof of the proposition. Let $n^i_{k^4c}\in N^i_{k^4c}$ for $i\leq m-1$, with image $n^{i+1}_{k^4c}\in N^{i+1}_{k^4c}$, and let $C:=||n^{i+1}_{k^4c}||_{N^{i+1}_{k^4c}}$. We need to find an element $n^i_c\in N^{i-1}_c$ such that
\[
||n^i_c - d^{i-1}_{N,c}(n^{i-1}_c)||_{N^i_c}\leq (k^3+k+1)C,
\]
where we change the subscript when applying restriction maps.

Pick any preimage $m'^i_{k^4c}\in M'^i_{k^4c}$ of $n^i_{k^4c}$, and let $m'^{i+1}_{k^4c}\in M'^{i+1}_{k^4c}$ be its image. By the preliminary observation, we can find $m^{i+1}_{k^3c}\in M^{i+1}_{k^3c}$ such that
\[
m'^{i+1}_{k^3c} = f^{i+1}_{k^3c}(m^{i+1}_{k^3c}) + m''^{i+1}_{k^3c}
\]
with $||m''^{i+1}_{k^3c}||_{M'^{i+1}_{k^3c}}\leq (1+\epsilon)C$, where we choose $\epsilon$ so that $(k^3+k)(1+\epsilon)\leq k^3+k+1$.

Let $m^{i+2}_{k^3c}\in M^{i+2}_{k^3c}$ be the image of $m^{i+1}_{k^3c}$. Applying the differential to the last displayed equation, and using that this kills $m'^{i+1}_{k^3c}$, and that $f^\bullet_{k^3c}$ is a map of complexes, we see that
\[
f^{i+2}_{k^3c}(m^{i+2}_{k^3c}) = -m''^{i+2}_{k^3c},
\]
where similarly $m''^{i+2}_{k^3c}$ is the differential of $m''^{i+1}_{k^3c}$. We get
\[\begin{aligned}
||m^{i+2}_{k^2c}||_{M^{i+2}_{k^2c}}&\leq k||f^{i+2}_{k^3c}(m^{i+2}_{k^3c})||_{M'^{i+2}_{k^3c}} = k||m''^{i+2}_{k^3c}||_{M'^{i+2}_{k^3c}}\\
&\leq k||m''^{i+1}_{k^3c}||_{M'^{i+1}_{k^3c}}\leq k(1+\epsilon)C.
\end{aligned}\]
On the other hand, we can find some $m^i_{kc}\in M^i_{kc}$ such that
\[
||m^{i+1}_{kc}-d^i_{kc}(m^i_{kc})||\leq k||m^{i+2}_{k^2c}||_{M^{i+2}_{k^2c}}\leq k^2(1+\epsilon)C.
\]
Now let $m'^i_{kc,\mathrm{new}} = m'^i_{kc}-f^i_{kc}(m^i_{kc})\in M'^i_{kc}$; this is a lift of $n^i_{kc}$. Then the image $m'^{i+1}_{kc,\mathrm{new}}$ in $M'^{i+1}_{kc}$ satisfies
\[
m'^{i+1}_{kc,\mathrm{new}} = m'^{i+1}_{kc}-f^{i+1}_{kc}(m^{i+1}_{kc}) + f^{i+1}_{kc}(m^{i+1}_{kc}-d^i_{kc}(m^i_{kc})) = m''^{i+1}_{kc} + f^{i+1}_{kc}(m^{i+1}_{kc}-d^i_{kc}(m^i_{kc})).
\]
In particular,
\[
||m'^{i+1}_{kc,\mathrm{new}}||_{M'^{i+1}_{kc}}\leq (1+\epsilon)C+ k^2(1+\epsilon)C.
\]
Now we can find $m'^{i-1}_c\in M'^{i-1}_c$ such that
\[
||m'^i_{c,\mathrm{new}} - d'^{i-1}_c(m'^{i-1}_c)||_{M'^i_c}\leq k||m'^{i+1}_{kc,\mathrm{new}}||_{M'^{i+1}_{kc}}\leq (k^3+k)(1+\epsilon)C.
\]
In particular, letting $n^{i-1}_c\in N^{i-1}_c$ be the image of $m'^{i-1}_c$, we get
\[
||n^i_c - d^{i-1}_{N,c}(n^{i-1}_c)||_{N^i_c}\leq (k^3+k)(1+\epsilon)C,
\]
so by our choice of $\epsilon$ this gives the desired result.
\end{proof}

We need the some results about the Breen--Deligne resolution for normed abelian groups. Let us consider here abelian groups $M$ (in any topos) equipped with an increasing filtration $M_{\leq c}\subset M$ by subobjects indexed by the positive real numbers, such that $0\in M_{\leq c}$, $-M_{\leq c} = M_{\leq c}$ and $M_{\leq c}+M_{\leq c'}\subset M_{\leq c+c'}$; we need no further conditions. Let us call these pseudo-normed abelian groups.

Actually, following a suggestion of Commelin, we will work with MacLane's ``cubical'' $Q$-complex
\[
Q(M):\ldots\to \mathbb Z[M^{2^n}]\ldots \to \mathbb Z[M^2]\to \mathbb Z[M].
\]
We will define the differentials momentarily, but we first we stress that this is not an actually a resolution of $M$. It is however close to a resolution, as we will discuss below, and this suffices for our applications.

The differentials are given as follows:
\[\begin{aligned}
\mathbb Z[M^2]&\to \mathbb Z[M]\\
[(m_1,m_2)]&\mapsto [m_1+m_2]-[m_1]-[m_2]
\end{aligned}\]
and
\[\begin{aligned}
\mathbb Z[M^4]\to &\ \mathbb Z[M^2]\\
[(m_{11},m_{12},m_{21},m_{22})]\mapsto &\ [(m_{11}+m_{21},m_{12}+m_{22})]-[(m_{11},m_{12})]-[(m_{21},m_{22})]\\
-&\ [(m_{11}+m_{12},m_{21}+m_{22})]+[(m_{11},m_{21})]+[(m_{12},m_{22})]
\end{aligned}\]
and in general by the obvious generalization involving $3n$ summands (in each of the $n$ directions, take the difference between summing opposite faces internally, and summing opposite faces externally). Rather than writing out the differentials, we note that they are uniquely determined by the following condition:

Consider the following two maps of complexes $\sigma_1,\sigma_2: Q(M^2)\to Q(M)$. The map $\sigma_1$ is $Q$ applied to the sum map $M^2\to M$, and $\sigma_2$ is the sum of the two maps induced by applying $Q$ to the two projections $M^2\to M$. Then these $\sigma_1$ and $\sigma_2$ are homotopic via the chain homotopy that is the identity map $\mathbb Z[M^{2^n}]\to \mathbb Z[M^{2^n}]$ in all degrees. This is the content of the following lemma.

\begin{lemma}\label{lem:basehomotopy} There is a unique definition of a functorial complex
\[
Q(M): \ldots\to  \mathbb Z[M^{2^n}]\to\ldots\to\mathbb Z[M^2]\to\mathbb Z[M]
\]
with the following property. For an abelian group $M$, the maps $\sigma_1,\sigma_2$ from
\[
Q(M^2): \ldots \to \mathbb Z[M^{2^{n+1}}]\to\ldots\to\mathbb Z[M^4]\to\mathbb Z[M^2]
\]
to
\[
Q(M): \ldots \to \mathbb Z[M^{2^n}]\to\ldots\to\mathbb Z[M^2]\to\mathbb Z[M],
\]
induced by addition $M^2\to M$, respectively the sum of the two maps induced by two projections $M^2\to M$, are homotopic, via the homotopy
\[
h_n: \mathbb Z[M^{2^{n+1}}]\to \mathbb Z[M^{2^{n+1}}]
\]
that is $h_n=\mathrm{id}$ for all $n\geq 0$.

If $M$ is a pseudo-normed abelian group object in any topos, then
\[
Q(M)_{\leq c}: \ldots\to \mathbb Z[M^{2^n}_{\leq 2^{-n} c}]\to \ldots\to \mathbb Z[M^2_{\leq 2^{-1} c}]\to \mathbb Z[M_{\leq c}]
\]
is a well-defined complex, and $\sigma_1$ and $\sigma_2$ are well-defined as maps of complexes from
\[
Q(M^2)_{\leq c/2}: \ldots \to \mathbb Z[M^{2^{n+1}}_{\leq 2^{-n-1}c}]\to\ldots\to\mathbb Z[M^4_{\leq 2^{-2} c}]\to\mathbb Z[M^2_{\leq 2^{-1} c}]
\]
to
\[
Q(M)_{\leq c}: \ldots \to \mathbb Z[M^{2^n}_{\leq 2^{-n}c}]\to\ldots\to\mathbb Z[M^2_{\leq 2^{-1} c}]\to\mathbb Z[M_{\leq c}]
\]
for all $c>0$. Moreover, $h_\bullet$ is also defined as a homotopy from $Q(M^2)_{\leq c/2}$ to $Q(M)_{\leq c}$.
\end{lemma}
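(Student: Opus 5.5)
Since $h_n=\mathrm{id}$ is prescribed, the homotopy identity $\sigma_1-\sigma_2 = d h + h d$ pins down the differentials recursively in the degree $n$. I therefore plan to construct the complex by induction on $n$, defining $d_{n+1}$ from $d_n$ and then checking functoriality, $d^2=0$, and the norm bounds.

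\textbf{Existence and uniqueness.} I index so that $Q(M)_n=\mathbb Z[M^{2^n}]$ and $Q(M^2)_n=\mathbb Z[(M^2)^{2^n}]=\mathbb Z[M^{2^{n+1}}]=Q(M)_{n+1}$. A homotopy $h$ from $Q(M^2)$ to $Q(M)$ has components $Q(M^2)_n\to Q(M)_{n+1}$, i.e.\ maps $\mathbb Z[M^{2^{n+1}}]\to\mathbb Z[M^{2^{n+1}}]$, and taking these to be the identity is meaningful. In degree $n$ the homotopy identity reads
\[
\sigma_{1,n}-\sigma_{2,n} = d^{Q(M)}_{n+1}\circ\mathrm{id} + \mathrm{id}\circ d^{Q(M^2)}_{n}.
\]
Here $d^{Q(M^2)}_n$ is the already defined differential $d_n$ for the group $M^2$, with $d_0=0$. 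Hence
\[
d_{n+1}:=\sigma_{1,n}-\sigma_{2,n}-d_n^{(M^2)}
\]
is forced, which gives uniqueness. For existence I take this formula as the definition. For $n=0$ it gives $[(m_1,m_2)]\mapsto[m_1+m_2]-[m_1]-[m_2]$, and it reproduces the displayed degree-$2$ formula.

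\textbf{Complex and chain maps.} Functoriality in $M$ is immediate by induction, since $\sigma_1,\sigma_2$ and $d^{(M^2)}$ are all natural. I will prove by simultaneous induction on $n$ that $d_{n}\circ d_{n+1}=0$ and that $\sigma_1,\sigma_2$ commute with the differentials up to degree $n$. For the chain map property, $\sigma_{1}$ is $Q$ applied to a homomorphism, and $\sigma_2$ is a sum of two such maps, so by naturality of $d$ (already established in lower degrees) they commute with $d$. Then
\[
d_n d_{n+1}=d_n\sigma_1-d_n\sigma_2-d_nd^{(M^2)}_n=\sigma_1 d^{(M^2)}_{n-1}-\sigma_2 d^{(M^2)}_{n-1}-d_nd^{(M^2)}_n .
\]
Substituting the defining relation at level $n-1$ for the group $M^2$, namely $\sigma_1-\sigma_2=d_n+d^{(M^2)}_{n-1}$, this becomes
\[
(d_n+d^{(M^2)}_{n-1})d^{(M^2)}_{n-1}-d_nd^{(M^2)}_n .
\]
I expect this to collapse by induction. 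The bookkeeping of which group ($M$, $M^2$, $M^4$) each $\sigma$ and $d$ lives on is the main obstacle. To handle it I would prove a slightly stronger inductive statement: for every abelian group $A$ (in particular $A=M^{2^j}$), the relation holds and $d^2=0$. The point is that the two $\sigma$'s applied to $Q(M^4)\to Q(M^2)$ commute with $\sigma$'s on $Q(M^2)\to Q(M)$ in the appropriate sense. If this direct computation gets messy, the fallback is to observe that $d_n$ is the explicit cubical formula (in each of the $n$ directions, internal face sum minus the two external faces) and verify $d^2=0$ combinatorially by cancellation of pairs of directions.

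\textbf{Pseudo-normed version.} Every term appearing in $d_{n+1}$ is $\mathbb Z$-linear in maps $M^{2^{n+1}}\to M^{2^n}$ obtained by summing pairs of coordinates or by projecting. A coordinate sum takes $M^{2}_{\leq 2^{-n-1}c}$ into $M_{\leq 2^{-n}c}$ by the triangle inequality. A projection takes $M_{\leq 2^{-n-1}c}$ into $M_{\leq 2^{-n}c}$ because the filtration is increasing. So each term respects the filtration, and by induction (using the explicit form, or the recursion applied to $M^2$ with filtration $(M^2)_{\leq c}=M_{\leq c}^2$) $d$ maps $\mathbb Z[M^{2^{n+1}}_{\leq 2^{-n-1}c}]$ to $\mathbb Z[M^{2^n}_{\leq 2^{-n}c}]$. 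The same two observations show that $\sigma_1,\sigma_2$ take $Q(M^2)_{\leq c/2}$ into $Q(M)_{\leq c}$. Finally, $h=\mathrm{id}$ maps $\mathbb Z[M^{2^{n+1}}_{\leq 2^{-n-1}c}]$, degree $n$ of $Q(M^2)_{\leq c/2}$, identically onto degree $n+1$ of $Q(M)_{\leq c}$, so the homotopy restricts as well.
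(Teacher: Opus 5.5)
Your proposal is correct and follows the paper's route: the identity homotopy forces $d_{n+1}=\sigma_{1,n}-\sigma_{2,n}-d^{(M^2)}_n$ recursively, and the filtration bounds follow by applying the inductive hypothesis to the pseudo-normed group $M^2$ with $c/2$ (the paper just asserts both points as clear). In your $d^2=0$ computation there is an index slip: the chain-map relation is $d_n\sigma_{i,n}=\sigma_{i,n-1}d^{(M^2)}_n$, and the relation to substitute is the definition of $d_n$ for $M$ itself, after which $d_nd_{n+1}=(d_n+d^{(M^2)}_{n-1})d^{(M^2)}_n-d_nd^{(M^2)}_n=d^{(M^2)}_{n-1}d^{(M^2)}_n$, which vanishes by induction on $n$ run simultaneously over all abelian groups, so neither a compatibility of the $\sigma$'s on $Q(M^4)$ nor the combinatorial fallback is needed.
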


\begin{proof} It is clear that the defining property gives an inductive definition of the differentials, and their inductive definition makes it clear that they map $Q(M^2)_{\leq c/2}$ to $Q(M)_{\leq c}$.
\end{proof}

\begin{lemma} The functor $M\mapsto H_i(Q(M))$ preserves direct sums and filtered colimits. In particular, for any torsion-free abelian group, there is a natural isomorphism
\[
H_i(Q(M))\cong H_i(Q(\mathbb Z))\otimes M.
\]
\end{lemma}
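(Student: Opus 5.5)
The first assertion will come from the fact that $Q$ is built from the functors $M\mapsto \mathbb Z[M^{2^n}]$, which commute with filtered colimits in $M$, together with exactness of filtered colimits. Direct sums then reduce to finite direct sums, since an infinite direct sum is the filtered colimit of its finite subsums. For a finite sum $M\oplus M'$, I would show that the natural map
\[
H_i(Q(M))\oplus H_i(Q(M'))\to H_i(Q(M\oplus M'))
\]
is an isomorphism, induced by the inclusions $M,M'\to M\oplus M'$. Its inverse should be induced by the two projections $M\oplus M'\to M$ and $M\oplus M'\to M'$.

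One composite is clearly the identity. For the other, I need that $Q(\mathrm{id}_{M\oplus M'})$ is homotopic to $Q(\iota\pi)+Q(\iota'\pi')$, where $\iota\pi$ and $\iota'\pi'$ are the two idempotents. To get this, write $\mathrm{id}=\mathrm{add}\circ(\iota\pi,\iota'\pi')$, where $(\iota\pi,\iota'\pi')\colon M\oplus M'\to (M\oplus M')^2$. Then Lemma~\ref{lem:basehomotopy}, applied to $M\oplus M'$ and precomposed with $Q((\iota\pi,\iota'\pi'))$, shows that $\sigma_1\circ Q((\iota\pi,\iota'\pi'))=Q(\mathrm{id})$ is homotopic to $\sigma_2\circ Q((\iota\pi,\iota'\pi'))=Q(\iota\pi)+Q(\iota'\pi')$. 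This homotopy is the crux of the argument. Everything else is formal, because the homotopy from Lemma~\ref{lem:basehomotopy} is natural in $M$, so precomposing with a map of complexes $Q(f)$ is legitimate. I expect this naturality check to be the only point needing care.

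For the second assertion, take $M$ torsion-free. It is the filtered colimit of its finitely generated subgroups, and each of these is free of finite rank, so $M\cong\varinjlim \mathbb Z^{n_j}$. Additivity gives $H_i(Q(\mathbb Z^n))\cong H_i(Q(\mathbb Z))^n=H_i(Q(\mathbb Z))\otimes\mathbb Z^n$, but this identification must be natural in arbitrary maps $\mathbb Z^n\to\mathbb Z^{n'}$, not only in coordinate inclusions.

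To get that naturality, I would show that $H_i(Q(-))$ is an additive functor: for maps $f,g\colon A\to B$,
\[
H_i(Q(f+g))=H_i(Q(f))+H_i(Q(g)).
\]
This follows from the same lemma, since $f+g=\mathrm{add}\circ(f,g)$, and so $Q(f+g)$ is homotopic to $Q(f)+Q(g)$. An additive functor from finite free abelian groups is determined by its value on $\mathbb Z$, so the natural transformation
\[
H_i(Q(\mathbb Z))\otimes A\to H_i(Q(A)),\qquad x\otimes a\mapsto Q(a\colon\mathbb Z\to A)_*(x),
\]
is an isomorphism for $A$ finite free. It is defined for all $A$, natural in $A$, and both sides commute with filtered colimits, so passing to the colimit gives the isomorphism for all torsion-free $M$.
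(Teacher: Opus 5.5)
Your proof is correct and is essentially the paper's argument: commutation with filtered colimits is formal, and the homotopy of Lemma~\ref{lem:basehomotopy} makes $H_i(Q(-))$ additive on Hom groups (this is what the paper means by ``enriched in abelian groups''); additivity gives compatibility with direct sums, and the torsion-free case follows by writing $M$ as a filtered colimit of finite free groups. Two small remarks: the composite you call ``clearly the identity'' has off-diagonal entries $H_i(Q(0))$, and their vanishing needs your own additivity argument (e.g.\ $0=0+0$), while precomposing the homotopy with the chain map $Q(f)$ requires no naturality of $h$ at all.
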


\begin{proof} Commutation with filtered colimits is clear. For additivity, note that the functor $M\mapsto H_i(Q(M))$ is enriched in abelian groups, by using the homotopy defining $Q$. Thus, it preserves direct sums.
\end{proof}

Now we need the following generalization of Lemma~\ref{lem:basehomotopy} to adding $N$ elements.

\begin{lemma}\label{lem:homotopyNelements} Let $N$ be a power of $2$. The maps of complexes $\sigma_{1,N},\sigma_{2,N}$ from
\[
Q(M^N): \ldots \to \mathbb Z[M^{2^n N}]\to\ldots\to\mathbb Z[M^{2N}]\to\mathbb Z[M^{N}]
\]
to
\[
Q(M): \ldots \to \mathbb Z[M^{2^n}]\to\ldots\to\mathbb Z[M^2]\to\mathbb Z[M],
\]
induced by addition $M^N\to M$, respectively the sum of the $N$ maps induced by the $N$ projections $M^N\to M$, are homotopic, via some functorial homotopy
\[
h_n^N: \mathbb Z[M^{2^n N}]\to \mathbb Z[M^{2^{n+1}}]
\]
which moreover satisfies the following bound:

If $M$ is a pseudo-normed abelian group object in any topos, then $\sigma_{1,N}$ and $\sigma_{2,N}$ are well-defined as maps of complexes from
\[
Q(M^N)_{\leq c/N}: \ldots \to \mathbb Z[M^{2^n N}_{\leq 2^{-n}c/N}]\to\ldots\to\mathbb Z[M^{2N}_{\leq 2^{-1} c/N}]\to\mathbb Z[M^N_{\leq c/N}]
\]
to
\[
Q(M)_{\leq c}: \ldots \to \mathbb Z[M^{2^n}_{\leq 2^{-n}c}]\to\ldots\to\mathbb Z[M^2_{\leq 2^{-1} c}]\to\mathbb Z[M_{\leq c}]
\]
for all $c>0$. In that case, $h_n^N$ defines well-defined maps
\[
\mathbb Z[M^{2^n N}_{\leq 2^{-n} c/N}]\to \mathbb Z[M^{2^{n+1}}_{\leq 2^{-n-1} c}]
\]
for all $c>0$.
\end{lemma}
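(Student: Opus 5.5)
We construct $h^N$ by induction on $N$ (a power of $2$), iterating the basic homotopy of Lemma~\ref{lem:basehomotopy}. For $N=1$ we take $h^1=0$, since $\sigma_{1,1}=\sigma_{2,1}$ is the identity. For $N=2$ we take $h^2_n=h_n=\mathrm{id}$, as provided by Lemma~\ref{lem:basehomotopy}.

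For the inductive step, write $N=2N'$ and $M^N=(M^{N'})^2$. The addition map $M^N\to M$ factors as the addition map $(M^{N'})^2\to M^{N'}$ (coordinatewise in the $N'$ factors) followed by addition $M^{N'}\to M$. Thus $\sigma_{1,N}=\sigma_{1,N'}\circ Q(\mathrm{add}_{M^{N'}})$. Lemma~\ref{lem:basehomotopy}, applied to the abelian group $M^{N'}$, gives a homotopy (the identity) between $Q(\mathrm{add})$ and $Q(\mathrm{pr}_1)+Q(\mathrm{pr}_2)$ as maps $Q((M^{N'})^2)\to Q(M^{N'})$. Composing with $\sigma_{1,N'}$ gives a homotopy between $\sigma_{1,N}$ and $\sigma_{1,N'}\circ Q(\mathrm{pr}_1)+\sigma_{1,N'}\circ Q(\mathrm{pr}_2)$. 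Next, the inductive homotopy $h^{N'}$, precomposed with each $Q(\mathrm{pr}_j)$, connects $\sigma_{1,N'}\circ Q(\mathrm{pr}_j)$ with $\sigma_{2,N'}\circ Q(\mathrm{pr}_j)$. Summing over $j=1,2$ gives exactly $\sigma_{2,N}$, since the $N$ projections of $M^N$ are the $N'$ projections composed with $\mathrm{pr}_1$ and $\mathrm{pr}_2$. We therefore set
\[
h^N_n=\sigma_{1,N'}\circ \mathrm{id}_{\mathbb Z[(M^{N'})^{2^{n+1}}]}+h^{N'}_n\circ Q(\mathrm{pr}_1)+h^{N'}_n\circ Q(\mathrm{pr}_2).
\]
Here $\sigma_{1,N'}$ is applied in degree $n+1$, so the first term has the target $\mathbb Z[M^{2^{n+1}}]$. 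Since $\sigma$, the projections and the homotopies are all induced by maps of sets $M^a\to M^b$ built from additions and projections, each $h^N_n$ is natural in $M$ and makes sense in any topos. Homotopies compose additively, so this $h^N$ is a homotopy from $\sigma_{1,N}$ to $\sigma_{2,N}$.

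For the bounds, we first check that $\sigma_{1,N}$ and $\sigma_{2,N}$ respect the filtrations. Summing $N$ elements of $M_{\leq 2^{-n}c/N}$ lands in $M_{\leq 2^{-n}c}$ by the triangle inequality, and each projection lands in $M_{\leq 2^{-n}c/N}\subset M_{\leq 2^{-n}c}$. Hence $\mathbb Z[M^{2^nN}_{\leq 2^{-n}c/N}]$ maps to $\mathbb Z[M^{2^n}_{\leq 2^{-n}c}]$, and these are maps of the filtered complexes because $Q(-)_{\leq c}$ is a complex by Lemma~\ref{lem:basehomotopy}.

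For $h^N_n$ we check each of the three terms. In the first term, the identity sends $(M^{N'})^{2^{n+1}}_{\leq 2^{-n-1}(c/N')}$ to itself; this is the inclusion of $Q((M^{N'})^2)_{\leq c/(2N')}$ given by Lemma~\ref{lem:basehomotopy}, with $c/N'$ in place of $c$. Then $\sigma_{1,N'}$ in degree $n+1$ maps this into $M^{2^{n+1}}_{\leq 2^{-n-1}c}$. In the other two terms, $Q(\mathrm{pr}_j)$ maps $M^{2^nN}_{\leq 2^{-n}c/N}$ into $M^{2^nN'}_{\leq 2^{-n}c/N}\subset M^{2^nN'}_{\leq 2^{-n}(c/2)/N'}$. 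The inductive bound for $h^{N'}$, applied with $c/2$, then gives the target $M^{2^{n+1}}_{\leq 2^{-n-1}c/2}\subset M^{2^{n+1}}_{\leq 2^{-n-1}c}$.

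The main obstacle is bookkeeping: we must check that the indices of the $\leq$-filtration match in each degree, in particular that the factor $2^{-n-1}$ coming from the homotopy degree shift is compatible with the rescaling $c/N'$. Once the recursive formula is fixed, the rest is formal, and all the constants needed later, such as $H$, depend only on $N$.
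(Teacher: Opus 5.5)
Your proposal is correct and follows the paper's proof. Like the paper, you iterate the basic homotopy of Lemma~\ref{lem:basehomotopy} along $M^N=(M^{N'})^2$ and add up the pre- and post-composed homotopies. Your recursive formula for $h^N_n$ and the index check ($2^{-n}c/N=2^{-n-1}c/N'=2^{-n}(c/2)/N'$) spell out what the paper leaves as ``a certain sum'' and ``the bound follows from this formula.''
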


\begin{proof} Let $N=2^m$. For each $j=0,\ldots,m-1$, the two maps from $C(M^{2^{j+1}})$ to $C(M^{2^j})$ from the previous lemma are homotopic, and we use the homotopy from that lemma. Composing homotopies (which amounts concretely to a certain sum) we get the desired homotopy from $C(M^{2^m})$ to $C(M)$. The bound on the homotopy follows from this formula.
\end{proof}

\newpage

\section{Lecture X: Some computations with liquid modules}

Now we have done the hard work to prove that there is a well-behaved category of liquid real vector spaces. In the rest of the course, we want to define analytic spaces, and show that various classical objects, like complex-analytic spaces, or rigid-analytic varieties, give examples.

In this lecture, we some basic computations in the category of liquid real vector spaces, or liquid $\mathbb Z((T))_{>r}$-modules; some of these computations will also be necessary for the discussion of complex-analytic spaces.

\begin{proposition}\label{prop:computeliquidification} Fix any radius $0<r<1$.
\begin{enumerate}
\item For any compact Hausdorff space $S$, let
\[
\mathcal M(S,\mathbb Z((T))_r)\subset \prod_{n\in \mathbb Z}\mathbb Z[S]\cdot T^n
\]
be the union over all $c>0$ of the (compact Hausdorff) subspace $\mathcal M(S,\mathbb Z((T))_r)_{\leq c}$ whose $S'$-valued points are all maps $S'\to \prod_{n\in \mathbb Z}\mathbb Z[S]\cdot T^n$ such that for all $s'\in S'$, the induced element $(\mu_n\cdot T^n)_n$, $\mu_n\in \mathbb Z[S]$ satisfies $\mu_n\in \mathbb Z[S]_{\leq a_n}$ for some nonnegative integers $a_n$ such that $\sum a_n r^n\leq c$.

This definition agrees with the previous one for profinite sets $S$, and in general if $S_\bullet\to S$ is a hypercover of a compact Hausdorff $S$ by profinite sets $S_i$, then
\[
\mathcal M(S_\bullet,\mathbb Z((T))_r)\to \mathcal M(S,\mathbb Z((T))_r)
\]
is a resolution. The derived $r$-liquidification of $\mathbb Z[T^{-1}][S]$ is given by
\[
\mathcal M(S,\mathbb Z((T))_{>r}) = \varinjlim_{r'>r} \mathcal M(S,\mathbb Z((T))_{r'}).
\]
\item For any compact abelian group $A$, the derived $r$-liquidification of $A[T^{-1}]$ is given by $A((T))$.
\end{enumerate}
\end{proposition}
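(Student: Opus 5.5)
For part (1), the first step is to check that the definition for compact Hausdorff $S$ agrees with the earlier one when $S$ is profinite. In the profinite case, Proposition~\ref{prop:freecondensedgroup} identifies $\mathbb Z[S]_{\leq a}$ with $\varprojlim_i \mathbb Z[S_i]_{\leq a}$. The condition $\mu_n\in\mathbb Z[S]_{\leq a_n}$ with $\sum a_nr^n\leq c$ can then be checked at all finite levels $S_i$ simultaneously, and a compactness argument lets one choose the $a_n$ uniformly. This recovers $\varprojlim_i \mathbb Z((T))_r[S_i]_{\leq c}$.

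For the resolution claim, I would reduce to checking surjectivity and exactness levelwise on the compact Hausdorff pieces $\leq c$. Concretely, for a surjection of compact Hausdorff spaces $S'\to S$, each map $\mathbb Z[S']_{\leq a}\to \mathbb Z[S]_{\leq a}$ is surjective, since formal sums lift term by term. Hence $\mathcal M(S',\mathbb Z((T))_r)_{\leq c}\to \mathcal M(S,\mathbb Z((T))_r)_{\leq c}$ is surjective, and it suffices to check this on points. For exactness of the whole hypercover complex, I would imitate the argument used for the complex of the $M_{r'',i}$ in Lecture VIII. First establish bounded exactness for hypercovers of a finite set by finite sets; these split, so contracting homotopies preserve the $\leq c$ pieces. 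Then pass to cofiltered limits, which are exact on profinite sets. The general compact Hausdorff $S$ is more delicate, since the hypercover is not a limit of finite ones. Here I would use that $\mathbb Z[S_\bullet]\to\mathbb Z[S]$ is a resolution of condensed abelian groups, together with a quantitative form. The quantitative input is that each $\mathbb Z[-]_{\leq a}$-level kernel is the image of the corresponding bounded piece, because the Breen--Deligne-type filtration by $\ell^1$ norm of formal sums is preserved. Doing this coefficientwise in $n$ then gives exactness.

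For the derived liquidification, I would use that liquidification commutes with colimits and is symmetric monoidal. Resolving the compact Hausdorff $S$ by profinite $S_\bullet$ gives $\mathbb Z[T^{-1}][S_\bullet]\to \mathbb Z[T^{-1}][S]$. Liquidifying, the terms become $\mathcal M(S_i,\mathbb Z((T))_{>r})$ by Theorem~\ref{thm:liquidarithmetic} and the appendix to Lecture VI, which say the derived liquidification of $\mathbb Z[T^{-1}][S_i]$ is concentrated in degree $0$. Passing to the colimit over $r'>r$, the resolution statement then shows that the liquidified complex computes $\mathcal M(S,\mathbb Z((T))_{>r})$.

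For part (2), I would write the compact abelian group $A$ via its Breen--Deligne resolution, with terms $\mathbb Z[A^{a}]$ for compact Hausdorff $A^a$. I would then apply part (1) to each term and identify the resulting complex with $A((T))$. More precisely, the resolution of $A[T^{-1}]$ gives a complex of the $\mathcal M(A^a,\mathbb Z((T))_{>r})$. For a compact group, the bounded formal sums map to $A$ via summation, so $\mathcal M(A^a,\mathbb Z((T))_{>r})\to A^{a}((T))$ makes sense, since $\sum_n \mu_nT^n$ with $\mu_n$ bounded maps to a Laurent series with coefficients in the compact group. Coefficientwise in $n$, the complex then looks like the (completed) Breen--Deligne complex of $A$, which resolves $A$.

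I expect the main obstacle to be making the coefficientwise exactness rigorous with norm control, that is, showing that the convergence condition $\sum a_nr^n$ is compatible with exactness in each coefficient. Alternatively, one could use that $A$ is $\varprojlim$ of compact Lie groups and reduce to tori and finite groups, but the torus $\mathbb R/\mathbb Z$ already carries the real difficulty. For $\mathbb R/\mathbb Z$, I would use $0\to\mathbb Z\to\mathbb R\to\mathbb R/\mathbb Z$ together with the identification of the liquidification of $\mathbb R$ to conclude.
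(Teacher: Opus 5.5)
Your comparison with the profinite definition and the final liquidification step of part (1) are fine. However, both places where real work is needed have gaps.

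In part (1), you correctly isolate the needed claim: a cycle in $\mathbb Z[S_i]_{\leq n}$ is the boundary of an element of $\mathbb Z[S_{i+1}]_{\leq n}$. But ``the $\ell^1$-filtration is preserved'' does not prove it. Knowing that $\mathbb Z[S_\bullet]\to\mathbb Z[S]$ is a condensed resolution only gives, by compactness, \emph{some} bound $m(n)$. Since the $a_n$ are unbounded (they may grow like $r^{-n}/n^2$), assembling lifts coefficientwise under $\sum_n a_n r^n\leq c$ requires a linear bound. The detour through finite hypercovers is also unnecessary. The point is that the claim is a surjectivity statement about maps of compact Hausdorff spaces, so it can be checked on points. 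On underlying sets the hypercover $S_\bullet\to S$ splits, and the resulting contracting homotopy sends points to points. It therefore preserves $\mathbb Z[-]_{\leq n}$ with constant $1$. Applied coefficientwise in $T^n$, it gives exactness of $\mathcal M(S_\bullet,\mathbb Z((T))_r)_{\leq c}$ with the same $c$. This is the pointwise lifting you already used in degree $0$, pushed to all degrees.

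Part (2) is where the key idea is missing. Your plan is to liquidify a Breen--Deligne resolution of $A$ termwise and then show that $\mathcal M(A^\bullet,\mathbb Z((T))_{>r})$ resolves $A((T))$. That would need exactness of the Breen--Deligne complex with uniform linear norm bounds in every degree, which you leave open and nothing in the paper supplies. The fallback also fails. There is no reason for derived liquidification, a left adjoint, to commute with the cofiltered limit $A=\varprojlim A_i$. Passing from $\mathbb R/\mathbb Z$ to $\mathbb R$ just trades one unknown liquidification (that of $\mathbb R[T^{-1}]$) for another.

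The paper instead uses the exact sequence $0\to A[T^{-1}]\to A((T))\to B\to 0$, where $B=A((T))/A[T^{-1}]\cong\prod_{n\geq 1}A\cdot T^n$.
\begin{itemize}
\item The middle term is already $r$-liquid. A map $f=\sum_n f_nT^n: S\to A((T))$ extends to $\mathcal M(S,\mathbb Z((T))_{r'})$ by evaluating the coefficients of a measure, which are finite formal sums, on the $f_n$ and collecting terms. Uniqueness is automatic by quasiseparatedness.
\item So it suffices that the derived liquidification of $B$ vanishes. Now $B$ is a compact abelian group on which $T^{-1}$ acts by the shift. Hence its Breen--Deligne resolution consists of $\mathbb Z[T^{-1}]$-modules $\mathbb Z[B^a]$, with $T^{-1}$ acting through a self-map $\sigma$ of the compact Hausdorff space $B^a$.
\item Such a $\mathbb Z[S]$ has the two-term resolution $\mathbb Z[T^{-1}][S]\xrightarrow{T^{-1}-[\sigma]}\mathbb Z[T^{-1}][S]$. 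By part (1), its derived liquidification is the cone of $T^{-1}-[\sigma]$ on $\mathcal M(S,\mathbb Z((T))_{>r})$.
\item This map is invertible, with inverse $\sum_{k\geq 0}T^{k+1}[\sigma]^k$.
\end{itemize}
Thus every term liquidifies to zero, Breen--Deligne is used only as a resolution, and no norm control is ever needed. This is also why part (1) is stated for compact Hausdorff rather than profinite $S$.
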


By the first part, we can also define $\mathcal M_p(S)$ via base change to $\mathbb R$ for general compact Hausdorff $S$; equivalently, via resolving $S$ by profinite sets $S_i$ and forming the complex $\mathcal M_p(S_\bullet)$, which will be quasi-isomorphic to $\mathcal M_p(S)$ in degree $0$. Then the derived $p$-liquidification of $\mathbb R[S]$ is equal to $\mathcal M_{<p}(S)$, and in particular sits in degree $0$. We note that it seems slightly tricky to do these things directly over $\mathbb R$.

\begin{proof} For the first part, note that for profinite $S$, this is essentially the definition, with Proposition~\ref{prop:freecondensedgroup}. For general compact Hausdorff $S$, consider a simplicial resolution of $S$ by profinite sets $S_i$, and the corresponding resolution
\[
\ldots\to \mathbb Z[S_1]\to \mathbb Z[S_0]\to \mathbb Z[S]\to 0.
\]
We consider this as a complex of pseudonormed abelian groups, with the subgroups $\mathbb Z[S]_{\leq n}$. We claim that whenever $x\in \mathbb Z[S_i]_{\leq n}$ with $d(x)=0$ then there is some $y\in \mathbb Z[S_{i+1}]_{\leq n}$ with $x=d(y)$. Note that this statement depends only on the underlying sets of $S$ and the $S_i$, and the hypercover $S_\bullet\to S$ of sets splits; this produces a contracting homotopy which easily gives the statement (the contracting homotopy is like in the second appendix to Lecture VIII). From here it is easy to see that also
\[
\mathcal M(S_\bullet,\mathbb Z((T))_r)\to \mathcal M(S,\mathbb Z((T))_r)
\]
is a resolution. Passing to the colimit over $r'>r$ gives the result (as $\mathcal M(S_\bullet,\mathbb Z((T))_{>r})$ is the derived $r$-liquidification of $\mathbb Z[T^{-1}][S_i]$, and thus the complex $\mathcal M(S_\bullet,\mathbb Z((T))_{>r})$ computes the derived $r$-liquidification of $\mathbb Z[T^{-1}][S]$).

For the second part, we need to see that $A((T))$ is $r$-liquid and that derived $r$-liquidification of $B=A((T))/A[T^{-1}]$ vanishes. It is easy to see that any map $S\to A((T))$ extends to a map $\tilde{f}: \mathcal M(S,\mathbb Z((T))_r)\to A((T))$; as $A((T))$ is quasiseparated, uniqueness is automatic, so $A((T))$ is $r$-liquid.

Now note that $B=A((T))/A[T^{-1}]$ is a compact abelian group with $\mathbb Z[T^{-1}]$-module structure. It can be resolved via the Breen--Deligne resolution, so it suffices to see that for any compact Hausdorff space $S$ with an endomorphism $T^{-1}: S\to S$, the derived $r$-liquidification of $\mathbb Z[S]$ (considered as $\mathbb Z[T^{-1}]$-module) vanishes. This can be described as the cone of $T^{-1}-[T^{-1}]$ on the derived $r$-liquidification of $\mathbb Z[T^{-1}][S]$, the latter of which is described by the first part. Now $T^{-1}-[T^{-1}]$ has the inverse $T+[T^{-1}] T^2 + [T^{-1}]^2 T^3 + \ldots$, giving the result.
\end{proof}

The following corollary generalizes \cite[Theorem 4.3 (ii)]{Condensed}.

\begin{corollary} Let $V$ be a $p$-liquid $\mathbb R$-vector space and let $A$ be a compact abelian group. Then $R\intHom(A,V)=0$. Equivalently, the derived $p$-liquidification of $A$ vanishes.
\end{corollary}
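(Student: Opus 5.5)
The plan is to deduce this from Proposition~\ref{prop:computeliquidification}~(2) by passing from $\mathbb Z[T^{-1}]$ to $\mathbb R$ via the analytic ring formalism explained in the appendix to Lecture VI. Fix $0<p'<p$ and set $x=\tfrac12$, $r=x^p$, so that $\mathbb R$ is a $\mathbb Z[T^{-1}]$-algebra via $T^{-1}\mapsto 2$. Under this map $\mathcal M(S,\mathbb Z((T))_{>r})\otimes \mathbb R=\mathcal M_{<p}(S)$. By the appendix, $p$-liquid $\mathbb R$-vector spaces are exactly the objects of $\mathrm{Liq}_r(\mathbb Z[T^{-1}])$ on which $f_x=2-T^{-1}$ acts invertibly (equivalently, they are $\mathbb R$-modules). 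Moreover, $D(\mathrm{Liq}_p(\mathbb R))\to D(\Cond(\Ab))$ is fully faithful, and a liquid $V$ satisfies $R\intHom_{\mathbb Z}(\mathbb R,V)=V$.

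The equivalence of the two formulations is formal. We have
\[
R\intHom_{\mathbb Z}(A,V)=R\intHom_{\mathbb R}(A\otimes^L_{\mathbb Z}\mathbb R,V)=R\intHom_{\mathbb R}\bigl((A\otimes^L_{\mathbb Z}\mathbb R)^{L\mathrm{liq}}_p,V\bigr).
\]
So if the derived liquidification of $A\otimes\mathbb R$ vanishes, then all these $R\intHom$ groups vanish. Conversely, testing against $V$ equal to the liquidification itself gives vanishing. Here ``derived $p$-liquidification of $A$'' means that of $A\otimes^L_{\mathbb Z}\mathbb R$. Note also that $R\intHom_{\mathbb Z}(A,V)=R\intHom_{\mathbb R}(A\otimes^L \mathbb R, V)$ agrees with the condensed-abelian-group $R\intHom(A,V)$, by the full faithfulness recalled above.

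To prove the vanishing, regard $A$ as a condensed $\mathbb Z[T^{-1}]$-module $A[T^{-1}]=A\otimes\mathbb Z[T^{-1}]$. Then
\[
A\otimes^L_{\mathbb Z}\mathbb R=A[T^{-1}]\otimes^L_{\mathbb Z[T^{-1}]}\mathbb R .
\]
Derived $r$-liquidification is symmetric monoidal. By the appendix to Lecture VI, the liquidification over $\mathbb R$ is the base change of the $r$-liquidification over $\mathbb Z[T^{-1}]$, because $D(\mathrm{Liq}_p(\mathbb R))$ sits inside $D(\mathrm{Liq}_r(\mathbb Z[T^{-1}]))$. Hence
\[
(A\otimes^L\mathbb R)^{L\mathrm{liq}}_p=(A[T^{-1}])^{L\mathrm{liq}}_r\otimes^{L\mathrm{liq}}_r\mathbb R .
\]
By Proposition~\ref{prop:computeliquidification}~(2), $(A[T^{-1}])^{L\mathrm{liq}}_r=A((T))$. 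The factor $\mathbb R$ is the cone of $f_x=2-T^{-1}$ on $\mathbb Z((T))_{>r}$, by Theorem~\ref{thm:Ainflike}. So the result is the cone of $2-T^{-1}$ acting on $A((T))$, and it suffices to show this map is an isomorphism of condensed groups.

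This last step is the crux. On $A((T))$ one can write down an inverse explicitly:
\[
(2-T^{-1})^{-1}=-T(1-2T)^{-1}=-\sum_{n\ge0}2^nT^{n+1}.
\]
This series makes sense on $A((T))$ because coefficients $a_n\in A$ and $\sum_n 2^n a_{m-n}$ is a finite sum in each degree. It is continuous in families since $A((T))=\bigcup_m T^{-m}\prod_{n\geq 0}A\cdot T^n$ and each coefficient depends on finitely many inputs. The geometric series is unconstrained here, unlike over $\mathbb Z((T))_r$, precisely because $A((T))$ carries no growth condition. Hence $2-T^{-1}$ is invertible on $A((T))$, its cone vanishes, and both assertions follow.
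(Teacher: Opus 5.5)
Your proposal is correct and is essentially the paper's own argument. With $r=2^{-p}$, you write $\mathbb R$ as the cone of $2-T^{-1}$ on $\mathbb Z((T))_{>r}$, so the derived $p$-liquidification of $A$ is the cone of $2-T^{-1}$ on $(A[T^{-1}])^{L\mathrm{liq}}_r=A((T))$, where $2-T^{-1}$ is invertible with inverse $-T-2T^2-4T^3-\cdots$. One slip in your setup paragraph, which is never used later: $2-T^{-1}$ acts by zero, not invertibly, on $p$-liquid $\mathbb R$-vector spaces, since these are modules over $\mathbb Z((T))_{>r}/(2-T^{-1})=\mathbb R$.
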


\begin{proof} Write $\mathbb R=\mathbb Z((T))_{>r}/(2-T^{-1})$ where $r=2^{-p}$. Then the derived $p$-liquidification of $A$ is the cone of $2-T^{-1}$ on the derived $r$-liquidification of $A[T^{-1}]$. The latter is $A((T))$ by part (2) of the proposition, and $2-T^{-1}$ is invertible on this (with inverse $-T-2T^2-4T^3-\ldots$).
\end{proof}

\begin{remark} In the proof of \cite[Theorem 4.3 (ii)]{Condensed}, we also used the action of $2-[2]$ on a Breen--Deligne resolution; this seems closely related to writing $\mathbb R=\mathbb Z((T))_{>r}/(2-T^{-1})$.
\end{remark}

The corollary implies formally that if $V$ is a $p$-liquid $\mathbb R$-vector space considered as a trivial representation of a compact abelian group $A$, then the group cohomology $H^i(A,V)$ vanishes for $i>0$. (Indeed, by general nonsense there is a spectral sequence starting with $\Ext^i(\bigwedge^j A,V)$ converging to $H^{i+j}(A,V)$.) The proof uses commutativity critically.

\begin{question} Let $G$ be a compact nonabelian group, say $G=\mathrm{SU}(2)$, and let $V$ be a $p$-liquid $\mathbb R$-vector space, e.g.~a $p$-Banach. Does $H^i(G,V)$ vanish for $i>0$?
\end{question}

Here is a related question.

\begin{question} Consider $\mathbb C$ as a condensed ring, and correspondingly the algebraic $K$-theory $K(\mathbb C)$ as a condensed spectrum (sending an extremally disconnected $S$ to $K(C(S,\mathbb C))$). What is the $p$-liquidification of $K(\mathbb C)$? What is the $r$-liquidification of $K(\mathbb C)[T^{-1}]$?\footnote{For the latter, we remark that one can define liquidification also over $\mathbb S((T))_{>r}$ where $\mathbb S$ is the sphere spectrum.}
\end{question}

We note that the solidification can be computed, cf.~\cite{AokiSolidK}:\footnote{Again, solidification can be defined over $\mathbb S$.}

\begin{proposition} The solidification of $K(\mathbb C)$ is $\mathrm{ku}$, connective topological $K$-theory.
\end{proposition}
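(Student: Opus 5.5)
The statement is that the solidification of $K(\mathbb C)$ (condensed algebraic $K$-theory, $S\mapsto K(C(S,\mathbb C))$ on extremally disconnected $S$) is $\mathrm{ku}$. The plan is to compute the solidification homotopy group by homotopy group after a rigidity reduction, and then identify the answer with $\mathrm{ku}$. Solidification (over $\mathbb S$) is a left adjoint, so it commutes with colimits, in particular with Postnikov-type filtrations and cofibre sequences. It also kills certain ``continuous'' pieces. The analogous fact over $\mathbb Z$ is \cite[Theorem 4.3]{Condensed}: $\mathbb R$ and compact abelian Lie groups have explicitly computable solidifications, $\mathbb R^\solid=0$, and the solidification of $\mathbb R/\mathbb Z$ is $\mathbb Z[1]$.

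\textbf{Step 1.} Construct the comparison map. For each $S$ there is a natural map $K(C(S,\mathbb C))\to \mathrm{ku}(S)$, where $\mathrm{ku}(S)$ denotes connective topological $K$-theory of the compact Hausdorff space $S$. Concretely, $\mathrm{ku}(S)$ is the $K$-theory of $C(S,\mathbb C)$ viewed as a Banach algebra; equivalently it is the condensed spectrum $\underline{\mathrm{ku}}$ evaluated on $S$, which for profinite $S$ is locally constant. So the target is the discrete spectrum $\mathrm{ku}$, and it is solid. The map comes from the comparison between algebraic and topological $K$-theory of Banach algebras. The claim is then that this map is a solidification, i.e.\ that its fibre $F$ has vanishing solidification.

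\textbf{Step 2.} Describe the fibre $F$. Filter $K(C(S,\mathbb C))$ so that the ``continuous part'' becomes visible. The key input is Suslin/Karoubi-type rigidity. With finite coefficients, algebraic $K$-theory of $C(S,\mathbb C)$ agrees with topological $K$-theory (Suslin rigidity, extended to $C^\ast$-algebras by Gabber--Suslin/Rosenberg). Hence $F/n=0$ for all $n$, so $F$ is a condensed spectrum over $\mathbb Q$ (uniquely divisible homotopy sheaves). Rationally, $K(C(S,\mathbb C))_{\mathbb Q}$ should be computed via cyclic homology / the Goodwillie-type comparison. The fibre is then expressed through relative terms built from the condensed $\mathbb C$-vector spaces $C(S,\mathbb C)$, and from exterior and tensor constructions on them (Hochschild-type terms) and on $\mathbb C^\times$.

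\textbf{Step 3.} Show each such graded piece has zero solidification, using that solidification is symmetric monoidal. Over $\mathbb Z$, $\underline{\mathbb R}^{L\solid}=0$ by \cite[Theorem 4.3]{Condensed}, and hence any module over $\mathbb R$ or $\mathbb C$ in $D(\Cond(\Ab))$ has vanishing solidification. Also, $\mathbb C^\times\simeq \mathbb R_{>0}\times S^1$, whose solidification is $\mathbb Z[1]$, and this matches $\pi_1\mathrm{ku}=\mathbb Z$ via Bott periodicity. Every graded piece of $F$ is an $\mathbb R$-module spectrum, so $F^\solid=0$. The same conclusion applies to $\mathrm{HZ}$-modules upgraded to $\mathbb S$ via the Postnikov tower, which is convergent since the pieces are connective.

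The main obstacle is Step 2. The map $K(\mathbb C^\delta\text{-like})\to \mathrm{ku}$ is far from an equivalence, and its fibre involves hard objects such as $K_\ast(\mathbb C)_{\mathbb Q}$, with huge indecomposable parts like $K_3^{ind}$ and Milnor $K$-groups that are $\mathbb Q$-vector spaces. The difficulty is exhibiting them as modules over a condensed $\mathbb R$-algebra naturally in $S$, not merely as rational vector spaces; rational discrete groups are solid and would survive solidification. I would first try the approach of \cite{AokiSolidK}: prove that the fibre is a module over $K(C(S,\mathbb R))$-relative terms that are continuous in $S$. Condensed $\mathbb Q$-vector spaces with no continuity are discrete and survive, so one must use that the fibre is genuinely ``continuous''. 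This comes from the homotopy invariance failure of algebraic $K$-theory, measured by the nil/Karoubi-Villamayor fibre. That fibre is a module over $C(S,\mathbb R)$ through cyclic homology with respect to the continuous functions on $\Delta^n$, so it dies under solidification.
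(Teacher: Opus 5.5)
Your route is genuinely different from the paper's: you compare $K(C(-,\mathbb C))$ with $\mathrm{ku}$, use rigidity to make the fibre $F$ rational, and then try to kill $F$. It has a genuine gap exactly where you suspect. Granting the rigidity input, everything rests on $F^{\solid}=0$, and the proposal gives no argument for it.

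The Step~3 claim that every graded piece of $F$ is an $\mathbb R$-module spectrum comes with no filtration realizing it. The tools you invoke cannot supply one. Goodwillie's theorem relates $K$-theory and cyclic homology only for nilpotent extensions, and says nothing about $K_\ast(\mathbb C)\otimes\mathbb Q$. The Karoubi--Villamayor-type fibre has no evident $C(S,\mathbb R)$-module structure. Worse, your own Step~2 description of the pieces includes tensor constructions on $\underline{\mathbb C^\times}$, and these do not die: $(\underline{\mathbb C^\times}^{\otimes^L n})^{L\solid}\simeq\mathbb Z[1]^{\otimes n}=\mathbb Z[n]$.

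Solidification is not $t$-exact, so a homotopy-sheaf-by-homotopy-sheaf analysis is delicate. For instance, $\pi_1\mathrm{ku}=0$, not $\mathbb Z$ as you write. It is $\pi_1K(\mathbb C)=\underline{\mathbb C^\times}$, solidifying to $\mathbb Z[1]$, that accounts for the Bott class in $\pi_2\mathrm{ku}$.

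Concretely, degree $1$ works out: $\pi_1F$ is the exponential extension $\underline{\mathbb C}$ of $\underline{\mathbb C^\times}$ by the discrete $\pi_2\mathrm{ku}$. Degree $3$ is different. Since $K_4(\mathbb C)$ is divisible, the discrete group $\pi_4\mathrm{ku}=\mathbb Z$ injects into $\pi_3F$ with quotient $\pi_3K(\mathbb C)$. So $(\pi_3F)^{L\solid}=0$ is equivalent to the boundary map $(\pi_3K(\mathbb C))^{L\solid}\to\mathbb Z[1]$ being an isomorphism. That is a statement about the condensed structure of $S\mapsto K_3(C(S,\mathbb C))$, and your plan has no access to it. The necessary cancellations may well happen only across different degrees, not piece by piece.

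The missing idea is to feed in the topology before forming $K$-theory, at the level of group rings, where it is visible. The paper's key input is $\mathbb S[\mathrm{GL}_n(\mathbb C)]^{\solid}\simeq\mathbb S[|\mathrm{GL}_n(\mathbb C)|]$ (\cite[Example 6.5]{Condensed}). Solidification of such condensed spaces retains only their homotopy type; compare $\mathbb Z[I]^{L\solid}\simeq\mathbb Z$ for the interval $I=[0,1]$.

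Solidification is symmetric monoidal and commutes with colimits. Hence it commutes with the bar construction, giving $\mathbb S[B\mathrm{GL}_n(\mathbb C)]^{\solid}\simeq\mathbb S[BU(n)]$. It also commutes with the group-completion description of $K(\mathbb C)$ in terms of $\bigsqcup_n B\mathrm{GL}_n(\mathbb C)$; this step is carried out in \cite{AokiSolidK}. The group completion of $\bigsqcup_n BU(n)$ is $\mathrm{ku}$.

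This argument needs neither Suslin rigidity nor any knowledge of $K_\ast(\mathbb C)\otimes\mathbb Q$. Those huge rational groups are artefacts of treating $\mathrm{GL}_n(\mathbb C)$ as discrete, and they never have to be analyzed.
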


\begin{proof} The essential point is that $\mathbb S[\mathrm{GL}_n(\mathbb C)]^\solid\cong \mathbb S[|\mathrm{GL}_n(\mathbb C)|]$ where $|\mathrm{GL}_n(\mathbb C)|$ is the corresponding anima (cf.~\cite[Example 6.5]{Condensed}).
\end{proof}

Of course, solidification forgets all information about real vector spaces, and we want to recover those via $p$-liquidification. This has the slightly annoying feature that, at least a priori, it depends on $p$. The $r$-liquidification of $K(\mathbb C)[T^{-1}]$ has the virtue that it specializes to all possible $p$-liquidifications via specializing $T$ to different values; and it remembers the integral information. We note that for $K_1(\mathbb C)=\mathbb C^\times$, the $r$-liquidification of $\mathbb C^\times[T^{-1}]$ can be computed: Using the decomposition $\mathbb C^\times\cong \mathbb R\times \mathbb R/\mathbb Z$, it is
\[
\mathbb C^\times((T))_{>r} = \mathbb R((T))_{>r}\times \mathbb R/\mathbb Z((T)).
\]

On the other hand, let us try to understand the structure of $\mathcal M(S,\mathbb Z((T))_r)$ and $\mathcal M_p(S)$ better.

\begin{proposition} Let $S$ be a compact Hausdorff space and $0<p<1$. Then for any $\mu\in \mathcal M_{p}(S)$ (element of the underlying set) there is a sequence $s_0,s_1,\ldots$ in $S$ and real numbers $x_0,x_1,\ldots$ with $\sum |x_n|^{p}<\infty$, such that
\[
\mu=\sum_{n=0}^\infty x_n [s_n].
\]

Similarly, for $0<r<1$ and any $\mu\in \mathcal M(S,\mathbb Z((T))_r)$, there is a sequence $s_0,s_1,\ldots\in S$ and elements $x_0,x_1,\ldots\in \mathbb Z((T))_r$ such that $x_n\in \mathbb Z((T))_{r,\leq c_n}$ with $\sum c_n<\infty$ and
\[
\mu=\sum_{n=0}^\infty x_n [s_n].
\]
\end{proposition}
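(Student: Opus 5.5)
I would prove the $\mathbb Z((T))_r$ statement first and deduce the $\mathcal M_p$ statement from it. The deduction uses Theorem~\ref{thm:Ainflike}~(2) (extended to compact Hausdorff $S$ by Proposition~\ref{prop:computeliquidification}) together with the surjection $\mathcal M(S,\mathbb Z((T))_r)\to \mathcal M_p(S)$, $T\mapsto r'$ with $(r')^p=r$. Write a lift as $\sum x_n[s_n]$ with $x_n\in \mathbb Z((T))_{r,\leq c_n}$ and $\sum c_n<\infty$. By part (3) of the quantitative proposition of Lecture VII (with $C_3=1$) we get $\theta_{r'}(x_n)\in \mathbb R_{\ell^p\leq c_n}$. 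Hence the images $y_n=\theta_{r'}(x_n)$ satisfy $\sum|y_n|^p\leq\sum c_n<\infty$, and $\mu=\sum y_n[s_n]$. The series converges in the compact Hausdorff pieces $\mathcal M_p(S)_{\leq c}$ because the tails have small $\ell^p$-mass.

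For the $\mathbb Z((T))_r$ case, I first reduce to profinite $S$. Choose a surjection $\tilde S\to S$ from a profinite set. By Proposition~\ref{prop:computeliquidification}, $\mathcal M(\tilde S,\mathbb Z((T))_r)_{\leq c}\to\mathcal M(S,\mathbb Z((T))_r)_{\leq c}$ is surjective; if not on the nose, then up to a constant in $c$, which I would check using the contracting-homotopy argument at level $0$ given there. So any $\mu$ lifts, and a decomposition upstairs pushes forward to one downstairs.

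Now let $S=\varprojlim_i S_i$ be profinite and $\mu\in\mathcal M(S,\mathbb Z((T))_r)_{\leq c}$. Multiplying by a power of $T$, I may assume $\mu=\sum_{n\geq0}\mu_nT^n$ with $\mu_n\in\mathbb Z[S]^\solid$. The key point is that each $\mu_n$ is actually an element of $\mathbb Z[S]_{\leq a_n}$, a finite integer combination of Dirac measures with $\sum_n a_nr^n\leq c$. Indeed, the images of $\mu_n$ in $\mathbb Z[S_i]$ have $\ell^1$-norm at most $a_n:=\lfloor cr^{-n}\rfloor$. By Proposition~\ref{prop:freecondensedgroup}, $\varprojlim_i\mathbb Z[S_i]_{\leq a}=\mathbb Z[S]_{\leq a}$, and points of this are finite sums $\sum_{j=1}^{a}\epsilon_j[t_j]$, via the surjection from $S^a\times\{-1,0,1\}^a$. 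There is one subtlety: the $\ell^1$-bound holds jointly, $\sum_n\|\mu_n\|r^n\leq c$ at each finite level. Since the norms $\|\mu_n\|_{S_i}$ are integers and nondecreasing in $i$, they stabilize, so in the limit $\sum_n\|\mu_n\|r^n\leq c$ with $\|\mu_n\|$ the number of Dirac terms. Thus $\mu=\sum_n\sum_{j\leq a_n}\pm T^n[t_{n,j}]$. Enumerate all pairs $(n,j)$ as a single sequence, with $s_k=t_{n,j}$ and $x_k=\pm T^n$, so $c_k=r^n$. Then $\sum c_k=\sum_n a_nr^n\leq c$.

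It remains to show that this series converges to $\mu$ in the sense of the condensed structure. Its partial sums lie in the compact set $\mathcal M(S,\mathbb Z((T))_r)_{\leq c}$ and converge coefficientwise in $\prod_n\mathbb Z[S]T^n$, so they converge in that compact Hausdorff space. The main obstacle is the reduction to profinite $S$, namely the bound on lifting along $\tilde S\to S$. If the first attempt fails, I would directly use the compact Hausdorff description in Proposition~\ref{prop:computeliquidification} together with the compact Hausdorff analogue of Proposition~\ref{prop:freecondensedgroup} (the exercise in Lecture II), and run the same argument there.
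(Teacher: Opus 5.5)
Your proposal is correct and is essentially the paper's argument: reduce to $\mathbb Z((T))_r$ by base change, note that each $T^n$-coefficient of $\mu$ is a finite signed sum of Dirac measures, and collect the countably many points that occur. The paper differs only in bookkeeping. It works directly in the compact Hausdorff model of Proposition~\ref{prop:computeliquidification}, where the joint bound $\sum_n a_nr^n\leq c$ is part of the definition, so your reduction to profinite $S$ and your stabilization argument are unnecessary detours (and the lift needs no norm control anyway). One slip: in the final count you must enumerate only the $\|\mu_n\|$ actual Dirac terms and use $\sum_n\|\mu_n\|r^n\leq c$, because with $a_n=\lfloor cr^{-n}\rfloor$ the sum $\sum_n a_nr^n$ diverges.
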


Thus, all $p$-measures are just countable sums of Dirac measures. In particular, Haar measures which are ``equidistributed'' over the whole space, will never lie in $\mathcal M_{p}(S)$ for $p<1$ (except for finite groups). This phenomenon, of measures being countable sums of Dirac measures, actually happens more generally for measures ``of bounded entropy'', and this is the minimal condition that the Ribe extension forces on us.

We note that this is only a description of the underlying set: As a condensed set, $\mathcal M_{p}(S)$ is not a filtered colimit over its subspaces for (closures of) countable subspaces of $S$. Indeed, the map $S\to \mathcal M_{p}(S)$ does not factor over such a subspace in general.

\begin{proof} It suffices to handle the assertion over $\mathbb Z((T))_r$; the case of $\mathcal M_p(S)$ follows via base change. But then there is some $c<\infty$ such that
\[
\mu\in \mathcal M(S,\mathbb Z((T))_r)_{\leq c}\subset \prod_n \mathbb Z[S]\cdot T^n,
\]
and each coefficient of $T^n$ involves only finitely many elements of $S$. It follows that one can write $\mu = \sum_{n=0}^\infty x_n [s_n]$ for some countable sequence of elements $s_n\in S$, which we assume has no repetitions. The condition $\mu\in \mathcal M(S,\mathbb Z((T))_r)$ then amounts to the condition that if $x_n\in \mathbb Z((T))_{r,\leq c_n}$ with $c_n$ chosen minimal, then $\sum c_n\leq c$. This gives the result.
\end{proof}

Finally, we use the formalism in the simplest situation of relevance to complex-analytic geometry.

\begin{proposition} Let $V$ be the condensed $\mathbb C$-vector space of overconvergent holomorphic functions on the closed unit disc $\{|z|\leq 1\}$. Equivalently,
\[
V=\varinjlim_{r>1} \{\sum a_n z^n\in \mathbb C[[z]]\mid \sum |a_n|r^n<\infty\}
\]
where each term in the filtered colimit is regarded as a Banach space. Then for any $0<p<1$, the tensor product
\[
V\otimes^{L\mathrm{liq}}_{\mathbb C,p} V
\]
is given by
\[
\varinjlim_{r>1} \{\sum a_n b_m z^n w^m\in \mathbb C[[z,w]]\mid \sum |a_nb_m|r^{n+m}<\infty\}
\]
of overconvergent holomorphic functions on the $2$-dimensional disc $\{|z|,|w|\leq 1\}$.
\end{proposition}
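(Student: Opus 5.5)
The plan is to reduce, via the tensor product commuting with colimits, to a computation with measures on compact Hausdorff spaces, which Proposition~\ref{prop:computeliquidification} lets us handle.

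\textbf{Step 1: reduce to a single Banach space.} Write $V_r=\{\sum a_nz^n\mid \sum|a_n|r^n<\infty\}$. Since $\otimes^{L\mathrm{liq}}_{\mathbb C,p}$ commutes with filtered colimits in each variable, we get
\[
V\otimes^{L\mathrm{liq}}_{\mathbb C,p}V=\varinjlim_{r>1}V_r\otimes^{L\mathrm{liq}}_{\mathbb C,p}V_r .
\]
By cofinality of the diagonal, it then suffices to compute the colimit over $r>r'>1$ of the terms $V_r\otimes V_r$ mapping to $V_{r'}\otimes V_{r'}$. The individual terms $V_r\otimes V_r$ need not be nice. Only the transition maps need to be understood.

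\textbf{Step 2: factor the transition maps through measure spaces.} For $r>r'$, I will show that the restriction $V_r\to V_{r'}$ factors as
\[
V_r\to \mathcal M_{<p}(\mathbb N\cup\{\infty\})_{\mathbb C}\to V_{r'} .
\]
The first map comes from the following observation: the map of sets $\mathbb N\cup\{\infty\}\to V_r$, $n\mapsto (r'')^{-n}z^n$, is a null sequence for $r>r''>r'$. An element $\sum a_nz^n$ of the $\ell^1$-weighted space has coefficients $b_n=a_n(r'')^n$ that are $\ell^1$-summable with geometric decay relative to $r$. A geometrically decaying sequence lies in $\ell^{p'}$ for every $p'$, which gives the map into $\mathcal M_{p'}$.

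More cleanly, I would use the unconditional factorization
\[
V_r\to \ell^{q}\text{-type space}\to V_{r'}, \qquad q<p .
\]
Here the first map is $\sum a_nz^n\mapsto (a_n r_1^n)_n$ with $r>r_1>r'$. Since $|a_n|r_1^n\le \|f\|_r (r_1/r)^n$, this sequence is even $\ell^{q}$-summable, with bound linear in $\|f\|_r$. The second map is integration against $n\mapsto (z/r_1)^n$, viewed as a null sequence in $V_{r'}$. Thus the ind-system $(V_r)_r$ is isomorphic to the ind-system of spaces $\mathcal M_{<p}(\mathbb N\cup\{\infty\})/[\infty]$, tensored with $\mathbb C$. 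These are liquidifications of free modules on a profinite set, so they are compact projective.

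\textbf{Step 3: compute on the projectives.} On these objects the tensor product is computed by Proposition~\ref{prop:computeliquidification}(1) and symmetric monoidality:
\[
\mathcal M_{<p}(S)\otimes^{L\mathrm{liq}}\mathcal M_{<p}(S')=\mathcal M_{<p}(S\times S') .
\]
This is concentrated in degree $0$ because $\mathbb R[S]\otimes\mathbb R[S']=\mathbb R[S\times S']$. Taking $S=S'=\mathbb N\cup\{\infty\}$ and killing the $\infty$ parts, we obtain $p$-summable double sequences $(c_{n,m})$.

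Running the same interleaving argument in two variables shows that the colimit of these spaces matches the colimit over $r>1$ of the weighted double-sequence spaces $\{\sum|c_{nm}|r^{n+m}<\infty\}$. Here the geometric decay again absorbs the difference between $\ell^p$ and $\ell^1$, and integration against $(z/r_1)^n(w/r_1)^m$ gives the maps back. This produces the overconvergent functions on the bidisc, with all higher cohomology vanishing since a filtered colimit of objects in degree $0$ stays in degree $0$.

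\textbf{Main obstacle.} I expect the hardest step to be making the interleaving in Steps~2 and~3 compatible as ind-objects, so that composites of the factorizations are the transition maps. This comes down to uniqueness of extensions from $S$ to $\mathcal M_{p'}(S)$ into a quasiseparated target. I would also need to check carefully that the two-variable weighted $\ell^1$ and $\ell^{p}$ colimits coincide, using a bound of the form $\sum|c|^{q}r_1^{q(n+m)}\lesssim$ a geometric series.
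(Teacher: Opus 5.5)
Your proposal is correct and is essentially the paper's argument. The paper also identifies $V$ with $\varinjlim W_{\mathbb C}$, where $W$ is the summand of $\mathcal M_{<p}(\mathbb N\cup\{\infty\})$ complementary to $\mathbb R\cdot[\infty]$ and the transition maps are multiplication by geometric sequences; your interleaving $V_r\to W_{\mathbb C}\to V_{r'}$ is exactly the ``simple verification'' it leaves to the reader, and it then reads off $W\otimes^{L\mathrm{liq}}_p W$ as a direct factor of $\mathcal M_{<p}((\mathbb N\cup\{\infty\})^2)$ before passing to the colimit.

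Two small corrections. The paper only asserts projectivity of $\mathcal M_{<p}(S)$ for extremally disconnected $S$, not for $S=\mathbb N\cup\{\infty\}$; your Step~3 never uses it, only that derived liquidification sends $\mathbb R[S]$ to $\mathcal M_{<p}(S)$ and is symmetric monoidal. Also discard the first version of Step~2, since $(r'')^{-n}z^n$ has norm $(r/r'')^n\to\infty$ in $V_r$ when $r''<r$, so it is not a null sequence there.
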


\begin{proof} Decompose $\mathcal M_{<p}(\mathbb N\cup\{\infty\}) = W\oplus \mathbb R\cdot [\infty]$. Then $W$ is the union of the spaces $W_{p'}$ for $p'<p$ whose compact subspaces are given by the space of sequences $(x_0,x_1,\ldots)\in \mathbb R$ with $\sum |x_i|^{p'}\leq c$. The key observation is now that
\[
V = \varinjlim_{r>1} W_{\mathbb C}
\]
along the maps $f_r: W_{\mathbb C}\to V$ sending $(x_0,x_1,\ldots)$ to $\sum x_n r^{-n} z^n$; this is a simple verification, based on the observation that multiplication by the sequence $(1,t,t^2,\ldots)$ with $t<1$ takes bounded sequences to $\ell^p$-summable sequences for all $0<p<1$.

But now the liquid tensor product of $\mathcal M_{<p}(\mathbb N\cup \{\infty\})$ with itself is
\[
\mathcal M_{<p}((\mathbb N\cup \{\infty\})\times (\mathbb N\cup \{\infty\}))
\]
of which $W\otimes^{L\mathrm{liq}}_p W$ is the direct factor whose compact subspaces are the space of sequences $(x_{nm})_{n,m\geq 0}\in \mathbb R$ with $\sum |x_{n,m}|^{p'}\leq c$ for some $p'<p$ and $c>0$. This gives the result by passing to the colimit over $r$.
\end{proof}

\begin{corollary} Let $D_1,D_2\subset \mathbb C$ be two closed discs with empty intersection. Let $\mathcal O(D_1)$ and $\mathcal O(D_2)$ be the spaces of overconvergent holomorphic functions, considered as condensed $\mathbb C[z]$-modules. Then
\[
\mathcal O(D_1)\otimes^{L\mathrm{liq}}_{\mathbb C[z],p} \mathcal O(D_2) = 0.
\]
\end{corollary}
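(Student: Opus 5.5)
We need $\mathcal O(D_1)\otimes^{L\mathrm{liq}}_{\mathbb C[z],p}\mathcal O(D_2)=0$. The plan is to reduce to the previous proposition, which computes the liquid tensor product over $\mathbb C$ of two overconvergent disc algebras. First we rewrite the tensor product over $\mathbb C[z]$ as a derived tensor product over $\mathbb C$ followed by base change along the multiplication map. Concretely,
\[
\mathcal O(D_1)\otimes^{L\mathrm{liq}}_{\mathbb C[z],p}\mathcal O(D_2)\simeq \bigl(\mathcal O(D_1)\otimes^{L\mathrm{liq}}_{\mathbb C,p}\mathcal O(D_2)\bigr)\otimes^{L}_{\mathbb C[z,w]}\mathbb C[z],
\]
where $\mathbb C[z,w]\to\mathbb C[z]$ sends $w\mapsto z$. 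Since $\mathbb C[z]=\mathbb C[z,w]/(z-w)$ has the Koszul resolution, this base change is the cone of multiplication by $z-w$. Everything here is formal from the symmetric monoidal structure on $D(\mathrm{Liq}_p)$ of Theorem~\ref{thm:pliquid}.

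Second, we identify $\mathcal O(D_1)\otimes^{L\mathrm{liq}}_{\mathbb C,p}\mathcal O(D_2)$. After affine changes of coordinates $z\mapsto (z-a_i)/\rho_i$, each $\mathcal O(D_i)$ is the algebra $V$ of the previous proposition. That proposition then says the tensor product sits in degree $0$ and equals $\mathcal O(D_1\times D_2)$, the overconvergent holomorphic functions on the bidisc $D_1\times D_2\subset\mathbb C^2$. The change of coordinates must be made compatibly with the $\mathbb C[z]$- and $\mathbb C[w]$-actions, but this only transports the structure.

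Third, it remains to show that $z-w$ acts invertibly on $\mathcal O(D_1\times D_2)$, as a map of condensed modules. Since $D_1\cap D_2=\emptyset$, the function $z-w$ has no zero on $D_1\times D_2$, and hence none on a neighborhood $U_1\times U_2$ with $U_i$ open discs slightly larger than $D_i$ that are still disjoint. So $(z-w)^{-1}$ is holomorphic on $U_1\times U_2$, and hence is overconvergent on the bidisc. Multiplication by it preserves each Banach (or Smith) stage of the colimit $\varinjlim_{r>1}$ after shrinking $r$ slightly, so it defines an inverse in condensed $\mathbb C$-vector spaces. The cone of $z-w$ therefore vanishes, which gives the corollary.

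The main obstacle is the bookkeeping in the second step. The colimit presentation $\varinjlim_r$ must be compatible with the condensed structure, and the inverse of $z-w$ must be shown to act continuously on the colimit rather than just on underlying sets. I would handle this by noting that multiplication by a holomorphic function on a bigger bidisc maps each weighted $\ell^1$-stage of radius $r$ into the stage of radius $r'<r$ by a bounded operator, using Cauchy estimates.
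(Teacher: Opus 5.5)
Your proposal is correct and is the paper's argument: identify the liquid tensor product over $\mathbb C$ with $\mathcal O(D_1\times D_2)$ via the preceding proposition, then note that $z-w$ is invertible there, so the base change along $\mathbb C[z,w]\to\mathbb C[z]$ (the cone of $z-w$) vanishes. You make explicit two steps the paper leaves implicit, the Koszul base-change step and the condensed continuity of $(z-w)^{-1}$; the latter is in fact automatic, since multiplication by any element of the condensed ring $\mathcal O(D_1\times D_2)$ is a map of condensed modules.
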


\begin{proof} Forming the liquid tensor product over $\mathbb C$, one gets $\mathcal O(D_1\times D_2)$ by the previous proposition. On this space, $z_1-z_2$ is invertible, so the liquid tensor product over $\mathbb C[z]$ vanishes.
\end{proof}

We see that for these questions, the choice of $p$ is largely irrelevant, and that one gets the expected tensor products. In \cite{Complex} we develop much of the basic theory of complex-analytic geometry from scratch, using the foundation of liquid vector spaces.

\newpage

\section{Lecture XI: Animation}

In the remaining lectures, we will define a category of analytic spaces, and give various examples.\footnote{A historical comment: With Clausen, we first attempted to find an analytic version of the category of schemes before finding an analytic version of the category of stacks. In the end, we were never happy with any of our attempts to define such a category of analytic spaces, but are happy with the definition of analytic stacks in \cite{AnalyticStacks}. Also, many of the applications of the formalism really make use of analytic stacks. Thus, the theory developed in the remaining lectures should be seen as a historical artifact.} Let us at this point remark that the resulting theory of analytis spaces will, notably, be an absolute theory: Usually, one defines complex-analytic spaces or rigid-analytic varieties over some nonarchimedean field $K$, so works over a fixed base field (and works with $K$-algebras satisfying some topological finiteness condition). Here, just like with schemes, it will not be necessary to specify any base in advance.\footnote{Let us mention that Ben-Bassat, Kremnitzer, and coauthors, have also recently undertaken a general study of analytic spaces, including some notion of quasicoherent sheaves, cf.~e.g.~\cite{BenBassatKremnizer}. Their theory is based on working in the symmetric monoidal category of ind-Banach spaces, roughly in the manner paraphrased in the first lecture (that is the ``relative algebraic geometry'' of To\"en--Vaqui\'e--Vezzosi).}

There are two ways to define schemes: As locally ringed topological spaces, or as certain functors on the category of rings. In the latter approach, the essential non-formal input is just the notion of a localization $A\to A[f^{-1}]$ of rings; from the category of rings with this class of morphisms, one can recover the category of schemes. Indeed, schemes will be a full subcategory of the category of functors from rings to sets; in particular, we ask that they are Zariski sheaves (where a finite family of localizations $A\to A[f_i^{-1}]$ is a cover if the base change functor from $A$-modules to $\prod_i A[f_i^{-1}]$-modules is faithful). An affine scheme is a functor of the form $F_A: B\mapsto \Hom(A,B)$ for some ring $A$. A map $F\to G$ of Zariski sheaves is an open immersion if it is injective and for every ring $B$ with a section $s\in G(B)$, corresponding to a map $F_B\to G$, the fibre product $F\times_G F_B$ is the colimit of $F_A$ over all localizations $B\to A$ for which $F_A\to F_B$ factors over $F\times_G F_B$. Finally, a scheme is a functor $F$ that admits an open cover by $F_A$'s.

We will follow this route for the definition of analytic spaces. Thus, the next goal is to define a notion of localization $(A,\mathcal M)\to (B,\mathcal N)$ of analytic rings. The notion is supposed to have the following properties:
\begin{enumerate}
\item It is stable under base change.
\item It is stable under composition.
\item It is stable under filtered colimits.
\end{enumerate}
The first condition actually requires some explanation; we will come to this later. Regarding the third condition, note that this means that in the usual algebraic context we allow general localizations $A\to A[S^{-1}]$ at multiplicative subsets. It turns out that in the analytic category, there is no reasonable ``finiteness'' one can a priori impose on the localizations, so we will allow general ones.

\begin{exercise} Verify that the above formal definition of schemes also gives a well-defined notion when one allows general localizations $A\to A[S^{-1}]$ of rings.
\end{exercise}

Let us first give several examples.

\begin{example}\leavevmode
\begin{enumerate}
\item If $A\to B=A[S^{-1}]$ is a usual localization of (discrete) rings, with trivial analytic ring structure, then $A\to B$ will be allowed.
\item The map $(\mathbb Z[T],\mathbb Z)_\solid\to (\mathbb Z[T],\mathbb Z[T])_\solid$, corresponding to localization to the subset $\{|T|\leq 1\}$ of the adic space $\Spa(\mathbb Z[T],\mathbb Z)$.
\item More generally, if $X=\Spa(A,A^+)$ is a (reasonable) adic space and $U\subset X$ is a rational subset, the map
\[
(A,A^+)_\solid\to (\mathcal O_X(U),\mathcal O_X^+(U))_\solid .
\]
\item In the context of the last lecture, if $D_1\subset D_2$ is an inclusion of closed discs in $\mathbb C$, then $\mathcal O(D_2)\to \mathcal O(D_1)$ (endowed with the analytic ring structure induced from the $p$-liquid structure on $\mathbb C$ for some chosen $0<p\leq 1$) is a localization.
\end{enumerate}
\end{example}

The parenthetical word ``reasonable'' occurs here as general adic spaces are not well-behaved: For example, the structure presheaf may fail to be a sheaf. This failure will actually be corrected by the formalism of analytic spaces, and the correction will be by slightly changing the localizations in general.

For example, consider the case $U=\{|f|\leq 1\}$ for some $f\in A$. Then $\mathcal O_X(U)$ is by definition the quotient of the convergent power series algebra $A\langle T\rangle$ by the closure of the ideal generated by $T-f$. On the other hand, in practice this ideal is already closed and moreover $T-f$ is a non-zerodivisor (in fact, by \cite[Lemma 2.4.10]{KedlayaLiu2}, this is automatic if $(A,A^+)$ is sheafy and $A$ has a topologically nilpotent unit, for example is an algebra over a nonarchimedean field). Thus, in this case
\[
\mathcal O_X(U) = A\langle T\rangle/(T-f) = A\langle T\rangle/^{\mathbb L} (T-f) = [A\langle T\rangle\xrightarrow{T-f} A\langle T\rangle].
\]

On the other hand, in general both of these properties can fail. Note that algebraically, the most natural object is the complex on the right, and indeed this will be the localization in the world of analytic rings. Working with topological rings, it is impossible to work with the quotient $A\langle T\rangle / (T-f)$ if the ideal $(T-f)$ is not closed. However, passing to the condensed world, the quotient $\underline{A\langle T\rangle}/(T-f)$ is perfectly well-behaved. If $T-f$ is a zerodivisor, it however becomes necessary to work even with the derived reduction $\underline{A\langle T\rangle}/^{\mathbb L} (T-f)$: A condensed \emph{animated} ring.

We had seen last semester that localizations will in general not be flat, and turn modules concentrated in degree $0$ into complexes (in nonnegative homological degrees). The same will happen in general for localizations of analytic rings, so we need to also derive our rings. This is the subject of derived algebraic geometry, which replaces the usual category of rings by its animation, the $\infty$-category of animated rings.\footnote{We refer to the work of To\"en--Vezzosi, \cite{ToenVezzosi}, and Lurie, \cite{LurieSAG}, for developments of derived algebraic geometry. Note that these sources say ``simplicial ring'' when they mean the concept of what we call an ``animated ring''.} In the rest of this lecture, we will recall the basic theory of ``animation'' and apply it in condensed mathematics.

\subsection{Animation} Let us briefly recall the notion of animation.\footnote{In classical language, this is a non-abelian derived category in the sense of Quillen. It has also been studied by Rosicky, \cite{Rosicky}, and we follow Lurie, \cite[Section 5.5.8]{LurieHTT}, with language that has been coined by Clausen inspired by Beilinson's \cite{BeilinsonTopEpsilon}, and used first in \cite{CesnaviciusScholze}.} Let $\mathcal C$ be a category that admits all small colimits. Recall that an object $X\in \mathcal C$ is compact (also called finitely presented) if $\Hom(X,-)$ commutes with filtered colimits. An object $X\in\mathcal C$ is projective if $\Hom(X,-)$ commutes with reflexive coequalizers, i.e.~colimits along $\Delta_{\leq 1}^{\mathrm{op}}$ (coequalizers of parallel arrows $Y\rightrightarrows Z$ with a simultaneous section $Z\to Y$ of both maps). Taken together, an object $X\in\mathcal C$ is compact projective if $\Hom(X,-)$ commutes with all filtered colimits and reflexive coequalizers; equivalently, it commutes with all (so-called) $1$-sifted colimits.

Let $\mathcal C^{\mathrm{cp}}\subset \mathcal C$ be the full subcategory of compact projective objects. There is a fully faithful embedding $\mathrm{sInd}(\mathcal C^{\mathrm{cp}})\to \mathcal C$ from the $1$-sifted Ind-category of $\mathcal C^{\mathrm{cp}}$ (the full subcategory of $\Fun((\mathcal C^{\mathrm{cp}})^{\mathrm{op}},\mathrm{Set})$ generated under small $1$-sifted colimits by the Yoneda image; equivalently, the category freely generated under small $1$-sifted colimits by $\mathcal C^{\mathrm{cp}}$). If $\mathcal C$ is generated under small colimits by $\mathcal C^{\mathrm{cp}}$, then this functor is an equivalence
\[
\mathrm{sInd}(\mathcal C^{\mathrm{cp}})\cong \mathcal C.
\]
If $\mathcal C^{\mathrm{cp}}$ is small, then
\[
\mathrm{sInd}(\mathcal C^{\mathrm{cp}})\subset \Fun((\mathcal C^{\mathrm{cp}})^{\mathrm{op}},\mathrm{Set})
\]
is exactly the full subcategory of functors that take finite coproducts in $\mathcal C^{\mathrm{cp}}$ to products in $\mathrm{Set}$.

\begin{example}\leavevmode
\begin{enumerate}
\item If $\mathcal C=\mathrm{Set}$ is the category of sets, then $\mathcal C^{\mathrm{cp}}$ is the category of finite sets, which generates $\mathcal C$ under small colimits.
\item If $\mathcal C=\mathrm{Ab}$ is the category of abelian groups, then $\mathcal C^{\mathrm{cp}}$ is the category of finite free abelian groups, which generates $\mathcal C$ under small colimits.
\item If $\mathcal C=\mathrm{Ring}$ is the category of commutative rings, then $\mathcal C^{\mathrm{cp}}$ is the category of retracts of polynomial rings $\mathbb Z[X_1,\ldots,X_n]$, which generates $\mathcal C$ under small colimits.
\item If $\mathcal C=\mathrm{Cond}(\mathrm{Set})$ is the category of condensed sets, then $\mathcal C^{\mathrm{cp}}$ is the category of extremally disconnected profinite sets, which generates $\mathcal C$ under small colimits.\footnote{In this example (and the ones to follow), $\mathcal C^{\mathrm{cp}}$ is itself a large category.}
\item If $\mathcal C=\mathrm{Cond}(\mathrm{Ab})$ is the category of condensed abelian groups, then $\mathcal C^{\mathrm{cp}}$ is the category of direct summands of $\mathbb Z[S]$ for extremally disconnected $S$, which generates $\mathcal C$ under small colimits.
\item If $\mathcal C=\mathrm{Cond}(\mathrm{Ring})$ is the category of condensed rings, then $\mathcal C^{\mathrm{cp}}$ is the category of retracts of $\mathbb Z[\mathbb N[S]]$ for extremally disconnected $S$, where $\mathbb N[S]$ is free condensed abelian monoid on $S$ and thus $\mathbb Z[\mathbb N[S]]$ is the free condensed ring on $S$. Again, $\mathcal C^{\mathrm{cp}}$ generates $\mathcal C$ under small colimits.
\end{enumerate}
\end{example}

\begin{definition} Let $\mathcal C$ be a category that admits all small colimits and is generated under small colimits by $\mathcal C^{\mathrm{cp}}$. The animation of $\mathcal C$ is the $\infty$-category $\mathrm{Ani}(\mathcal C)$ freely generated under sifted colimits by $\mathcal C^{\mathrm{cp}}$.
\end{definition}

\begin{example} If $\mathcal C=\mathrm{Set}$, then $\mathrm{Ani}(\mathcal C)=\mathrm{Ani}(\mathrm{Set})=:\mathrm{Ani}$ is the $\infty$-category of animated sets, or anima for brevity. In standard language, this is the $\infty$-category of ``spaces''.

Let us describe the nature of the $\infty$-category of anima. Any anima has a set of connected components, giving a functor $\pi_0:\mathrm{Ani}\to \mathrm{Set}$ (in fact, $\pi_0$ is simply given by the universal property of $\mathrm{Ani}$, as $\mathrm{Set}$ is a category with all sifted colimits with a functor from finite sets), which has a fully faithful right adjoint $\mathrm{Set}\hookrightarrow \mathrm{Ani}$. Given an anima $A$ with a point $a\in A$ (meaning a map $a: \ast\to A$), one can define groups $\pi_i(A,a)$ for $i\geq 1$, which are abelian for $i\geq 2$. The map $a: \ast\to A$ is an equivalence if and only if $\pi_0 A$ is a point and $\pi_i(A,a)=0$ for all $i\geq 1$. An anima is defined to be $i$-truncated if $\pi_j(A,a)=0$ for all $a\in A$ and $j>i$. Then $A$ is $0$-truncated if and only if it is in the essential image of $\mathrm{Set}\hookrightarrow \mathrm{Ani}$. The inclusion of $i$-truncated anima into all anima has a left adjoint $\tau_{\leq i}$. For all anima $A$, the natural map
\[
A\to \lim_i \tau_{\leq i} A
\]
is an equivalence; this is the ``convergence of the Postnikov tower''. Picking any $a\in A$ and $i\geq 1$, the fibre of $\tau_{\leq i} A\to \tau_{\leq i-1} A$ over the image of $a$ is an Eilenberg-MacLane anima $K(\pi_i(A,a),i)$. Here, an Eilenberg-MacLane anima $K(\pi,i)$, with $i\geq 1$ an integer and $\pi$ a group that is abelian if $i>1$, is a pointed connected anima with $\pi_j=0$ for $j\neq i$ and $\pi_i = \pi$. It is unique up to unique isomorphism. In fact, the $\infty$-category of pointed connected anima $(A,a)$ with $\pi_j(A,a)=0$ for $j\neq i$ is equivalent to the category of groups when $i=1$, and to the category of abelian groups when $i\geq 2$.
\end{example}

There are several ways to construct $\mathrm{Ani}(\mathcal C)$. It can be defined as the full sub-$\infty$-category of
\[
\mathrm{Fun}((\mathcal C^{\mathrm{cp}})^{\mathrm{op}},\mathrm{Ani})
\]
generated under sifted colimits by the Yoneda image. If $\mathcal C^{\mathrm{cp}}$ is small, this agrees with the full sub-$\infty$-category of all contravariant functors $F: \mathcal C^{\mathrm{cp}}\to \mathrm{Ani}$ that take finite coproducts to products. On the other hand, recall that all sifted colimits are generated by filtered colimits and geometric realizations of simplicial objects. This makes simplicial objects central. In fact, $\mathrm{Ani}$ can be defined as the $\infty$-category obtained from the category of simplicial sets $\mathrm{sSet}$ by inverting weak equivalences. Similarly, for any $\infty$-category $\mathcal C$ generated under small colimits by $\mathcal C^{\mathrm{cp}}$, one can describe $\mathrm{Ani}(\mathcal C)$ as the $\infty$-category obtained from simplicial objects in $\mathcal C$ by inverting weak equivalences.\footnote{We regard it thus as a misnomer to call objects of $\mathrm{Ani}(\mathrm{Ring})$ simplicial rings: One has changed the morphisms drastically.}

\begin{example}\leavevmode
\begin{enumerate}
\item As indicated above, for $\mathcal C=\mathrm{Set}$, one gets the $\infty$-category of anima (a.k.a.~spaces).
\item For $\mathcal C=\mathrm{Ab}$, the $\infty$-category $\mathrm{Ani}(\mathrm{Ab})$ of animated abelian groups is, by the Dold-Kan equivalence, equivalent to the $\infty$-derived category of abelian groups $\mathcal D_{\geq 0}(\mathrm{Ab})$ in nonnegative homological degrees. This motivates the term ``nonabelian derived category'' for the general construction of animation.
\item For $\mathcal C=\mathrm{Ring}$, one gets the $\infty$-category of animated rings.
\item For $\mathcal C=\mathrm{Cond}(\mathrm{Set})$, one gets the $\infty$-category $\mathrm{Ani}(\mathrm{Cond}(\mathrm{Set}))$ of animated condensed sets. The two operations $\mathrm{Ani}(-)$ and $\mathrm{Cond}(-)$ actually commute by Lemma~\ref{lem:condensedanimation} below, so this is equivalently the $\infty$-category $\mathrm{Cond}(\mathrm{Ani}(\mathrm{Set}))=\mathrm{Cond}(\mathrm{Ani})$ of condensed anima.
\item For $\mathcal C=\mathrm{Cond}(\mathrm{Ab})$, one gets, by Dold-Kan again, the $\infty$-derived category $\mathcal D_{\geq 0}(\mathrm{Cond}(\mathrm{Ab}))$ of condensed abelian groups in nonnegative homological degrees.
\item For $\mathcal C=\mathrm{Cond}(\mathrm{Ring})$, one gets the $\infty$-category of animated condensed rings, or equivalently of condensed animated rings.
\end{enumerate}
\end{example}

The operations of animation and passage to condensed objects commute:

\begin{definition} Let $\mathcal C$ be an $\infty$-category that admits all small colimits. For any uncountable strong limit cardinal $\kappa$, the $\infty$-category $\mathrm{Cond}_\kappa(\mathcal C)$ of $\kappa$-condensed objects of $\mathcal C$ is the category of contravariant functors from $\kappa$-small extremally disconnected profinite sets $S$ to $\mathcal C$ that take finite coproducts to products.

Moreover, using the fully faithful left adjoints to the forgetful functors,
\[
\Cond(\mathcal C):=\varinjlim_\kappa \mathrm{Cond}_\kappa(\mathcal C).
\]
\end{definition}

\begin{lemma}\label{lem:condensedanimation} Let $\mathcal C$ be a category that is generated under small colimits by $\mathcal C^{\mathrm{cp}}$. Then $\mathrm{Cond}(\mathcal C)$ is still generated under small colimits by its compact projective objects, and there is a natural equivalence of $\infty$-categories
\[
\mathrm{Cond}(\mathrm{Ani}(\mathcal C))\cong \mathrm{Ani}(\mathrm{Cond}(\mathcal C)).
\]
\end{lemma}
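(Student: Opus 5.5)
The plan is to work at a fixed cutoff cardinal $\kappa$ and then pass to the colimit over $\kappa$. Let $\mathcal E_\kappa$ be the (small) category of $\kappa$-small extremally disconnected sets. Then $\mathrm{Cond}_\kappa(\mathcal C)$ is the category of functors $\mathcal E_\kappa^{\op}\to\mathcal C$ that take finite coproducts to products.

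The first step is to identify compact projective objects. For $S\in\mathcal E_\kappa$ and $X\in\mathcal C^{\mathrm{cp}}$, consider the object $X\otimes S$, defined as the left adjoint of evaluation at $S$ applied to $X$. Concretely, it is the sheafification of $T\mapsto\coprod_{\Hom(T,S)}X$, which for extremally disconnected $S$ can be described directly. It satisfies
\[
\Hom(X\otimes S,F)=\Hom_{\mathcal C}(X,F(S)).
\]
Sifted colimits in $\mathrm{Cond}_\kappa(\mathcal C)$ are computed pointwise, since finite products commute with sifted colimits in $\mathcal C$. This is where the hypothesis that $\mathcal C$ is generated by $\mathcal C^{\mathrm{cp}}$ is used: then $\mathcal C\cong\mathrm{sInd}(\mathcal C^{\mathrm{cp}})$, which behaves like functor categories in this respect. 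Hence evaluation at $S$ commutes with filtered colimits and reflexive coequalizers. Since $X$ is compact projective in $\mathcal C$, it follows that $X\otimes S$ is compact projective. These objects generate under colimits: any $F$ is the colimit of the objects $X\otimes S$ mapping to it, and this can be checked pointwise after evaluating at each $S$, using that $F(S)$ is a colimit of compact projectives of $\mathcal C$. Finally, the fully faithful left adjoints $\mathrm{Cond}_\kappa\to\mathrm{Cond}_{\kappa'}$ preserve these generators, and compactness and projectivity persist in the colimit. This proves the first claim.

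The second step is to compare $\mathrm{Ani}(\mathrm{Cond}_\kappa(\mathcal C))$ with $\mathrm{Cond}_\kappa(\mathrm{Ani}(\mathcal C))$, the latter being the category of finite-product-preserving functors $\mathcal E_\kappa^{\op}\to\mathrm{Ani}(\mathcal C)$. Write $\mathcal D=\mathrm{Cond}_\kappa(\mathcal C)^{\mathrm{cp}}$; up to retracts it consists of finite coproducts of the $X\otimes S$. Then $\mathrm{Ani}(\mathrm{Cond}_\kappa(\mathcal C))$ is the category of finite-product-preserving functors $\mathcal D^{\op}\to\mathrm{Ani}$. Similarly, $\mathrm{Cond}_\kappa(\mathrm{Ani}(\mathcal C))$ is the category of functors $\mathcal E_\kappa^{\op}\times(\mathcal C^{\mathrm{cp}})^{\op}\to\mathrm{Ani}$ preserving finite products separately in each variable.

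One then checks that $\mathcal D$ is the finite-coproduct completion, up to idempotents, of the ``bi-coproduct-preserving'' tensor $\mathcal C^{\mathrm{cp}}\otimes\mathcal E_\kappa$. The relevant identities are $(X\sqcup Y)\otimes S=(X\otimes S)\sqcup(Y\otimes S)$ and $X\otimes(S\sqcup T)=(X\otimes S)\sqcup(X\otimes T)$. The first is formal; the second holds because $F(S\sqcup T)=F(S)\times F(T)$. The Hom sets between the generators are
\[
\Hom(X\otimes S,Y\otimes T)=\Hom_{\mathcal C}(X,(Y\otimes T)(S)).
\]
Because of this, the two descriptions of $\mathcal D$ only match after passing to finite-product-preserving presheaves. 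So rather than matching categories of generators, the cleaner route is the universal property.

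Both sides are generated under sifted colimits by the objects $X\otimes S$. On the condensed-anima side, $X\otimes S$ is the left Kan extension of the representable $X\in\mathrm{Ani}(\mathcal C)$ at $S$; it is compact projective there because evaluation at $S$ commutes with sifted colimits. There is a natural functor
\[
\mathrm{Ani}(\mathrm{Cond}_\kappa(\mathcal C))\to\mathrm{Cond}_\kappa(\mathrm{Ani}(\mathcal C)),
\]
the sifted-colimit extension of the inclusion of $\mathcal D$. It is fully faithful provided that mapping anima between generators on the right are discrete and agree with $\Hom_{\mathcal D}$. Indeed, $\mathrm{Map}(X\otimes S,G)=\mathrm{Map}(X,G(S))$. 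For $G=Y\otimes T$, the value $G(S)$ lies in the heart: it is computed by a sifted colimit of objects in $\mathcal C^{\mathrm{cp}}$, and in the condensed setting such colimits of discrete objects evaluated on extremally disconnected sets stay $0$-truncated. Essential surjectivity then follows because the generators generate the right side under sifted colimits. Passing to the colimit over $\kappa$ finishes the proof.

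I expect the main obstacle to be exactly this discreteness check. The formula for $(Y\otimes T)(S)$ is a sheafification, and one must see that on extremally disconnected $S$ no higher homotopy appears, i.e.\ that the left Kan extension computed in $\mathrm{Ani}(\mathcal C)$ agrees with the $1$-categorical one. I would try to handle this by writing $Y\otimes T$ via the profinite limit $T=\varprojlim T_i$ and using projectivity of $S$: this gives
\[
(Y\otimes T)(S)=\varinjlim\coprod_{\Hom(S,T_i)}Y
\]
suitably interpreted, which is a filtered colimit and hence discrete.
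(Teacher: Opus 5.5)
Your architecture is the same as the paper's, whose proof is only a three-sentence sketch. You exhibit the objects $X\otimes S$, show they are compact projective generators of $\mathrm{Cond}(\mathcal C)$ and remain so in $\mathrm{Cond}(\mathrm{Ani}(\mathcal C))$, and conclude from the universal property of animation. Your first step is fine. You also correctly isolate the one point with real content, which the paper leaves implicit: the $\infty$-categorical free object on $(Y,T)$ in $\mathrm{Cond}_\kappa(\mathrm{Ani}(\mathcal C))$ must be $0$-truncated and agree with the $1$-categorical $Y\otimes T$.

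Both justifications you offer for this point fail as written. First, the assertion that sifted colimits of discrete objects, evaluated on extremally disconnected sets, stay $0$-truncated is false. Geometric realizations of levelwise discrete simplicial objects are generally not discrete: the bar construction of a discrete group realizes to $BG$. Only filtered colimits and finite products preserve $0$-truncatedness in $\mathrm{Ani}(\mathcal C)=\mathcal P_\Sigma(\mathcal C^{\mathrm{cp}})$. Second, the formula $(Y\otimes T)(S)=\varinjlim\coprod_{\Hom(S,T_i)}Y$ does not make sense, because the sets $\Hom(S,T_i)$ form an inverse system in $i$. Already for $\mathcal C=\mathrm{Set}$ and $Y=\ast$, the correct value is $\mathrm{Cont}(S,T)=\varprojlim_i\Hom(S,T_i)$. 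The profinite presentation of $T$ is not the relevant structure.

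What works is projectivity of the \emph{source} $S$. Every cover of an extremally disconnected set splits into a finite clopen decomposition. Hence the reflection of $\mathrm{Fun}(\mathcal E_\kappa^{\mathrm{op}},\mathcal V)$ onto finite-product-preserving functors, for $\mathcal V=\mathcal C$ or $\mathrm{Ani}(\mathcal C)$ (in both, finite products commute with filtered colimits), is a single plus construction
\[
LF(S)=\varinjlim_{S=S_1\sqcup\ldots\sqcup S_n}\ \prod_{k=1}^n F(S_k).
\]
This is a filtered colimit over finite clopen decompositions ordered by refinement. One checks directly that $LF$ takes finite coproducts to products and that $F\to LF$ is universal for maps into such functors.

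Apply this to $F(S')=\coprod_{\Hom(S',T)}Y$. In $\mathcal P_\Sigma(\mathcal C^{\mathrm{cp}})$, an infinite coproduct of copies of $Y$ is a filtered colimit of finite coproducts, and those are representable. So $F$ takes $0$-truncated values, and so does $LF$, since filtered colimits and finite products are computed pointwise in anima. Hence the $\infty$-categorical free object is $0$-truncated and equals the $1$-categorical $Y\otimes T$. The mapping anima $\mathrm{Map}(X\otimes S,Y\otimes T)=\mathrm{Map}(X,(Y\otimes T)(S))$ are then discrete and agree with $\Hom_{\mathcal D}$.

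The rest of your argument then goes through: full faithfulness of the sifted-colimit extension, generation via joint conservativity of the functors $\mathrm{Map}(X,(-)(S))$, and passage to the colimit over $\kappa$.
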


\begin{proof} A family of compact projective generators of $\mathrm{Cond}(\mathcal C)$ is parametrized by pairs $(S,X)$ of an extremally disconnected $S$ and a compact projective object $X\in \mathcal C^{\mathrm{cp}}$, given by the sheafification of sending $T$ to the colimit over $\Hom(T,S)$ many copies of $X$. These objects still define compact projective objects of the $\infty$-category $\mathrm{Cond}(\mathrm{Ani}(\mathcal C))$, and generate the latter under small colimits. This implies the desired result.
\end{proof}

We will be interested in condensed animated rings. However, let us make some remarks about the nature of the $\infty$-category of condensed anima, as this combines two different flavours of topology.

The traditional approach to homotopy theory is to define the $\infty$-category of anima, or ``spaces'', by taking the category of CW complexes, and inverting weak equivalences. This becomes potentially very confusing when one is also considering condensed sets. Namely, CW complexes also embed fully faithfully into condensed sets.

Thus, there are two natural functors from CW complexes to condensed anima: A fully faithful functor, via the full subcategory of condensed sets; and another non-faithful functor, factoring over the full-$\infty$-subcategory of anima. Let us (abusively) denote the first functor by $X\mapsto X$, and the second by $X\mapsto |X|$. Then $S^1$ is a physical circle, while $|S^1|$ is some ghostly appearance of a point with an internal automorphism (the anima $\ast/\mathbb Z$).

\begin{lemma} Let $X$ be a CW complex (more generally, any condensed set that is a filtered colimit of condensed sets that are built out of pushouts of $S^{n-1}=\partial D^n\hookrightarrow D^n$). There is a universal anima $Y$ with a map $X\to Y$ of condensed anima. There is a functorial identification $Y\cong|X|$.
\end{lemma}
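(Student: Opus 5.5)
Since $\mathrm{Ani}$ is a full subcategory of $\Cond(\mathrm{Ani})$ (via constant sheaves), the universal anima $Y$ is the value at $X$ of a left adjoint to the inclusion $\mathrm{Ani}\hookrightarrow\Cond(\mathrm{Ani})$, whenever that left adjoint is defined at $X$. The plan is to show this left adjoint $L$ is defined on all condensed sets built out of the cells $D^n$ and $S^{n-1}$, that it preserves the relevant colimits, and that $L(D^n)=\ast$. Then $L(X)$ is obtained from points by the same pushouts and filtered colimits as $X$, i.e.\ by the homotopy pushouts of the form $|S^{n-1}|\to|D^n|=\ast$ and filtered colimits. That is exactly the standard presentation of $|X|$ as an anima.

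\textbf{Step 1 (colimits).} Wherever $L$ is defined it automatically commutes with colimits, since it is a partially defined left adjoint: $\Hom(\varinjlim X_i,Y)=\varprojlim\Hom(X_i,Y)$. The caveat is that the colimits taken in condensed sets must also be colimits in condensed anima.

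\begin{itemize}
\item For filtered colimits this holds, because filtered colimits of sheaves of sets are computed pointwise on extremally disconnected sets, and the inclusion of sets into anima preserves filtered colimits.
\item For the cell attachments $S^{n-1}\hookrightarrow D^n$, the pushout along an injection of condensed sets is also a homotopy pushout in $\Cond(\mathrm{Ani})$. Evaluating on extremally disconnected $T$, pushouts along injections of sets are homotopy pushouts in $\mathrm{Ani}$, and sheafification is exact.
\end{itemize}

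One point needs checking: evaluating on extremally disconnected $T$ should be compatible with the pushout without sheafifying, or, if sheafification is needed, it preserves homotopy pushouts as a left exact left adjoint. So it suffices to show that $L(S)$ exists for the compact Hausdorff spaces $S=D^n,S^{n-1}$ and to compute it.

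\textbf{Step 2 (the disc).} I expect this to be the main obstacle: showing $L(D^n)=\ast$, i.e.\ that for every anima $Y$ the map $Y\to\Map_{\Cond(\mathrm{Ani})}(D^n,Y)$ is an equivalence. Write the constant condensed anima $Y$ as the limit of its Postnikov tower; the constant-sheaf functor commutes with this limit, at least for hypercomplete objects, because sheafification preserves the relevant limits here. This reduces the claim, via fibre sequences, to $K(\pi,i)$ and discrete sets. For those it becomes a cohomology computation on the compact Hausdorff space $D^n$: $H^i(D^n,\pi)$, with coefficients a constant sheaf (nonabelian for $i\le 1$), must agree with the value on a point.

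\begin{itemize}
\item By the comparison of condensed cohomology of compact Hausdorff spaces with sheaf cohomology (from \cite{Condensed}), this is sheaf cohomology of the contractible space $D^n$ with constant coefficients, which is trivial by homotopy invariance.
\item In the nonabelian cases, $\pi_0$ corresponds to continuous maps to a discrete set, which are constant on connected $D^n$. For $i=1$, torsors under the constant sheaf $\pi$ on $D^n$ are trivial, again by homotopy invariance.
\end{itemize}

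This gives $L(D^n)=\ast$, and the same computation yields $L(S^{n-1})\simeq|S^{n-1}|$; alternatively, this follows inductively from Step 1, since $S^{n-1}$ is a pushout of two discs along $S^{n-2}$.

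\textbf{Step 3 (identification).} By induction along the CW filtration, $L(X)$ is the homotopy colimit of the diagram of anima obtained by replacing each disc by a point. That colimit is the classical anima $|X|$: the standard fact that the homotopy type of a CW complex is computed by its cell decomposition. Naturality of all the constructions makes the identification functorial in $X$.
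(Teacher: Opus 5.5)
Your proposal is correct and takes essentially the paper's route: reduce to $X=D^n$ using that the partially defined left adjoint commutes with the cell-attaching colimits, then prove $Y\to\Hom(D^n,Y)$ is an equivalence by going up the Postnikov tower to the discrete, $BG$ and $K(\pi,i)$ cases via comparison with sheaf/singular cohomology, and finally identify the result with $|X|$ because both sides commute with those colimits. The paper shortcuts your fibre-sequence induction by reducing to a $\pi_0$-bijection and applying it to $\Hom(Z,Y)$ for all anima $Z$. The one step to tighten is $Y=B\pi$ with $\pi$ nonabelian, since the comparison theorem you cite from \cite{Condensed} is for abelian coefficients; the paper instead shows that condensed $\pi$-torsors on $D^n$ are locally trivial on the topological space (a trivialization at a point spreads to a closed neighbourhood, by descent from an extremally disconnected cover), hence are classical torsors, which split.
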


In particular, there is a natural map $X\to|X|$ of condensed anima. For $S^1$, we get a cartesian square (of condensed anima)
\[\xymatrix{
\mathbb R\ar[r]\ar[d] & \ast\ar[d]\\
S^1\ar[r] & |S^1|
}\]
involving the universal cover $\mathbb R\to S^1$. If $X$ is a connected CW complex, then $X\times_{|X|} \tau_{\geq 2} |X|\to X$ is the universal cover of $X$. When $X$ has higher homotopy groups, then $X\times_{|X|} \ast$ is an actual condensed anima in that it is nontrivial both in the condensed and in the animated direction. We invite the reader to get a mental image of $S^2\times_{|S^2|} \ast$.

\begin{proof} As $X$ is an iterated pushout of $S^{n-1}=\partial D^n\hookrightarrow D^n$ (and these pushouts are compatible with the fully faithful functors from compactly generated topological spaces to condensed sets to condensed anima), it suffices to prove the existence of $Y$ in those cases; and this reduces, moreover, inductively to the case $X=D^n$. In that case, the natural map $Y\to \Hom(X,Y)$ is an equivalence for any anima $Y$. To check this, it suffices to see that it induces a bijection on $\pi_0$: Indeed, applying the statement on the level of $\pi_0$ to $\Hom(Z,Y)$ for any other anima $Z$ will then give $\pi_0\Hom(Z,Y)\cong \pi_0 \Hom(Z,\Hom(X,Y))$ for all $Z$ (as $\Hom(Z,\Hom(X,Y))\cong \Hom(X,\Hom(Z,Y))$), which means that $Y\to \Hom(X,Y)$ is an equivalence.

To check that $\pi_0 Y\to \pi_0 \Hom(D^n,Y)$ is an isomorphism, we pass up the Postnikov tower. If $Y$ is discrete, this follows from connectedness of $D^n$. If $Y=BG$ for some discrete group $G$, then $\Hom(X,Y)$ classifies $G$-torsors over $X$ (as a condensed set). It is not hard to see that these are locally on $X$ trivial, as any trivialization at a point spreads uniquely to small closed neighborhoods (after a resolution by extremally disconnected sets, and thus by descent on $X$). Thus, they are classified by usual $G$-torsors on the topological space $X=D^n$, and these are all split. If $Y=K(\pi,n)$ for $n>1$, then $\Hom(X,Y)$ is given by $H^n_{\mathrm{cond}}(X,\pi)$. By last semester, this agrees with singular cohomology, and so it vanishes for $X=D^n$.

We see that for $X=D^n$, the map $X\to |X|=\ast$ is the universal map to an anima. As the functor sending $X$ to this initial anima commutes with colimits, as does $X\to |X|$, this gives the identification in general.
\end{proof}

\newpage

\section{Lecture XII: Localizations}

We need to generalize the theory of analytic rings from condensed rings to condensed animated rings. As this works in the context of (still unital) associative rings, let us temporarily work in that setting. It has the advantage that the $\infty$-category of animated condensed associative rings can also be described as the $\infty$-category of $\mathbb E_1$-algebras in the symmetric monoidal $\infty$-category $\mathcal D_{\geq 0}(\mathrm{Cond}(\mathrm{Ab}))=\mathrm{Ani}(\mathrm{Cond}(\mathrm{Ab}))$ of animated condensed abelian groups. Any animated condensed ring $A$ thus has an $\infty$-category $\mathcal D_{\geq 0}(A)$ of modules in animated condensed abelian groups. This is a prestable $\infty$-category in the sense of Lurie (cf.~\cite[Appendix C]{LurieSAG}); in particular, it embeds fully faithfully into a stable $\infty$-category $\mathcal D(A)$, as the positive part of a $t$-structure. The heart is equivalent to the category of condensed modules over the condensed ring $\pi_0 A$. (The preceding discussion applies to sheaves on any site.)

The following is a direct generalization of \cite[Definition 7.1, 7.4]{Condensed}.

\begin{definition}\label{def:analyticanimated} An analytic animated associative ring is a pair $(A,\mathcal M)$ consisting of a condensed animated associative ring $A$ together with a covariant functor
\[
\mathcal M: S\mapsto \mathcal M[S]
\]
from extremally disconnected profinite sets $S$ to condensed animated $A$-modules, taking finite coproducts to finite direct sums, together with a natural transformation $S\to \mathcal M[S]$ of condensed anima, with the following property. For any object $C\in \mathcal D_{\geq 0}(A)$ that is a sifted colimit of objects of the form $\mathcal M[S_i]$ for varying $S_i$, the natural map
\[
\intHom_{\mathcal D_{\geq 0}(A)}(\mathcal M[S],C)\to \intHom_{\mathcal D_{\geq 0}(A)}(A[S],C)
\]
of condensed anima is an equivalence for all extremally disconnected profinite sets $S$.
\end{definition}

\begin{remark} The letter $\mathcal M$ is in reference to spaces of {\it m}easures, but it can also be thought of as denoting extra data on {\it m}odules, singling out the correct category of modules inside all of $\mathcal D_{\geq 0}(A)$.
\end{remark}

We note that in the condition, the map is actually a map of condensed animated abelian groups, and by embedding into $\mathcal D(A)$, we could also use the $R\intHom$ and ask for an isomorphism in $\mathcal D(\mathrm{Cond}(\mathrm{Ab}))$, as in \cite[Definition 7.4]{Condensed}. (A priori, the condition on $R\intHom$ is stronger as it involves negative homotopy groups, but it follows from the given condition applied to all shifts $C[i]$, $i\geq 0$.)

The only difference with the situation of \cite[Definition 7.1, 7.4]{Condensed} is that we allow both $A$ and $\mathcal M[S]$ to be complexes, living in nonnegative homological degrees.

\begin{definition} Let $(A,\mathcal M)$ be an analytic animated associative ring. The $\infty$-category $\mathcal D_{\geq 0}(A,\mathcal M)$ is defined to be the full $\infty$-subcategory
\[
\mathcal D_{\geq 0}(A,\mathcal M)\subset \mathcal D_{\geq 0}(A)
\]
spanned by all $C\in \mathcal D_{\geq 0}(A)$ such that for all extremally disconnected profinite sets $S$, the map
\[
\intHom_{\mathcal D_{\geq 0}(A)}(\mathcal M[S],C)\to \intHom_{\mathcal D_{\geq 0}(A)}(A[S],C)
\]
of condensed anima is an equivalence.
\end{definition}

\begin{proposition} Let $(A,\mathcal M)$ be an analytic animated associative ring. The $\infty$-category $\mathcal D_{\geq 0}(\A,\mathcal M)$ is generated under sifted colimits by the objects $\mathcal M[S]$ for varying extremally disconnected profinite sets $S$, which are compact projective objects of $\mathcal D_{\geq 0}(A,\mathcal M)$. The full $\infty$-subcategory
\[
\mathcal D_{\geq 0}(A,\mathcal M)\subset \mathcal D_{\geq 0}(A)
\]
is stable under all limits and colimits and admits a left adjoint
\[
-\otimes_A (A,\mathcal M): \mathcal D_{\geq 0}(A)\to \mathcal D_{\geq 0}(A,\mathcal M)
\]
sending $A[S]$ to $\mathcal M[S]$.

The $\infty$-category $\mathcal D_{\geq 0}(A,\mathcal M)$ is prestable. Its heart is the abelian category $\mathcal D^\heart(A,\mathcal M)$ that is the full subcategory of condensed $\pi_0 A$-modules generated under colimits by $\pi_0 \mathcal M[S]$ for varying $S$. An object $C\in \mathcal D_{\geq 0}(A)$ lies in $\mathcal D_{\geq 0}(A,\mathcal M)$ if and only if all $H_i(C)$ lie in $\mathcal D^\heart(A,\mathcal M)$.

If $A$ has the structure of a condensed animated commutative ring so that $\mathcal D_{\geq 0}(A)$ is naturally a symmetric monoidal $\infty$-category, there is a unique symmetric monoidal structure on $\mathcal D_{\geq 0}(A,\mathcal M)$ making $-\otimes_A (A,\mathcal M)$ symmetric monoidal.
\end{proposition}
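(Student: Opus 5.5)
The plan is to follow the proof of \cite[Proposition 7.5]{Condensed}, replacing abelian categories by prestable $\infty$-categories and Hom-sets by mapping anima. Let $\mathcal D_{\geq 0}(A,\mathcal M)$ be defined as in the statement, and let $\mathcal C\subset \mathcal D_{\geq 0}(A)$ be the full subcategory generated under sifted colimits by the $\mathcal M[S]$.

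\emph{Step one.} By the analyticity axiom in Definition~\ref{def:analyticanimated}, every object of $\mathcal C$ lies in $\mathcal D_{\geq 0}(A,\mathcal M)$. For $C\in \mathcal D_{\geq 0}(A,\mathcal M)$ we have $\Hom(\mathcal M[S],C)=\Hom(A[S],C)=C(S)$. The functor $C\mapsto C(S)$ commutes with sifted colimits in $\mathcal D_{\geq 0}(A)$, because $A[S]$ is compact projective there ($S$ being extremally disconnected). Hence $\mathcal M[S]$ is compact projective in $\mathcal C$, and $\mathcal C$ is closed under sifted colimits in $\mathcal D_{\geq 0}(A)$.

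\emph{Step two.} Since $\mathcal D_{\geq 0}(A,\mathcal M)$ is defined by equivalences of mapping objects, it is visibly closed under all limits. It is closed under sifted colimits because both $\intHom(\mathcal M[S],-)$ and $\intHom(A[S],-)$ commute with them. For finite coproducts, which are direct sums in the prestable setting, closure is clear. So it is closed under all colimits. The left adjoint is then constructed on generators by sending $A[S]$ to $\mathcal M[S]$ and extending by sifted colimits; this is legitimate because $\mathcal D_{\geq 0}(A)$ is freely generated under sifted colimits by the $A[S]$ and their retracts. The adjunction identity $\Hom(\mathcal M[S],C)=\Hom(A[S],C)$ for $C\in\mathcal D_{\geq 0}(A,\mathcal M)$ is exactly the defining condition.

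\emph{Step three (main obstacle).} We must show $\mathcal C=\mathcal D_{\geq 0}(A,\mathcal M)$. Take $C\in \mathcal D_{\geq 0}(A,\mathcal M)$ and let $C'=C\otimes_A(A,\mathcal M)\in \mathcal C$, with the unit map $C\to C'$ and the counit map $C'\to C$. We check that $C'\to C$ induces equivalences on $\Hom(\mathcal M[S],-)$:
\[
\Hom(\mathcal M[S],C')=\Hom(A[S],C)=\Hom(\mathcal M[S],C).
\]
Here the first identification uses Step one (maps out of $\mathcal M[S]$ into objects of $\mathcal C$) together with the left adjoint property, and the second uses $C\in\mathcal D_{\geq 0}(A,\mathcal M)$. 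Since both $C'$ and $C$ lie in $\mathcal D_{\geq 0}(A,\mathcal M)$, these mapping anima agree with their values on $A[S]$, so $C'\to C$ is an equivalence on $S$-points for all extremally disconnected $S$. Hence $C'\simeq C$, and it follows that $\mathcal D_{\geq 0}(A,\mathcal M)$ is generated under sifted colimits by the compact projectives $\mathcal M[S]$.

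\emph{Step four.} Prestability follows because $\mathcal D_{\geq 0}(A,\mathcal M)$ is closed under colimits and finite limits in the prestable $\mathcal D_{\geq 0}(A)$, and fibres of maps $C\to D$ with $\pi_0$-surjective target are again in the subcategory. The heart is identified, via $\pi_0$ of the projective generators, with the subcategory generated by $\pi_0\mathcal M[S]$; this is where one checks that $\pi_0$ of the condition is \cite[Proposition 7.5]{Condensed} for the analytic ring $(\pi_0A,\pi_0\mathcal M)$. The criterion via the $H_i(C)$ then follows by Postnikov towers: the subcategory is closed under limits and under the shifts and fibres that occur, and conversely truncations of objects stay inside. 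For the symmetric monoidal structure, we must check that the kernel of localization is a $\otimes$-ideal. It suffices to show that $\mathcal M[S]\otimes A[T]\to\mathcal M[S]\otimes\mathcal M[T]$ becomes an equivalence after localization. Testing against $C\in\mathcal D_{\geq 0}(A,\mathcal M)$, this reduces to $\intHom(A[T],C)$ again lying in $\mathcal D_{\geq 0}(A,\mathcal M)$, which holds since the category is closed under limits and $\intHom(A[T],C)(S)=C(S\times T)$.
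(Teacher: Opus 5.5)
Your architecture matches the paper's: analyticity puts sifted colimits of the $\mathcal M[S]$ into the subcategory, $L$ extends $A[S]\mapsto\mathcal M[S]$ by sifted colimits, prestability comes from \cite[Corollary C.1.2.3]{LurieSAG}, and the symmetric monoidal structure comes from the $\otimes$-ideal criterion. The central step, however, is circular as written.

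In Step two you deduce that $\mathcal D_{\geq 0}(A,\mathcal M)$ is closed under sifted colimits from $\intHom(\mathcal M[S],-)$ commuting with them. That is not available. First, $\mathcal M[S]$ is not compact projective in $\mathcal D_{\geq 0}(A)$: already in the solid case, $\prod_I\mathbb Z$ is not a compact projective condensed abelian group. Second, Step one gives the commutation only for sifted diagrams in $\mathcal C$, not for sifted diagrams in $\mathcal D_{\geq 0}(A,\mathcal M)$. Extending it to the latter is equivalent to the very closure you are trying to prove.

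Step three, which is meant to establish $\mathcal D_{\geq 0}(A,\mathcal M)=\mathcal C$, is also circular. Via Step one, your first identification $\Hom(\mathcal M[S],C')\simeq\Hom(A[S],C)$ amounts to $C'(S)\simeq C(S)$, which is the claim itself. The adjunction only controls maps \emph{out of} $C'$, not maps into it.

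The repair is short, and it is what the paper's proof does implicitly.
\begin{enumerate}
\item Construct $L$ without the faulty closure claim. The candidate value on $\colim_i A[S_i]$ is $\colim_i\mathcal M[S_i]$. This lies in $\mathcal D_{\geq 0}(A,\mathcal M)$ by analyticity and corepresents $\lim_i\Hom(A[S_i],-)$ there. Phrasing this as a partial left adjoint, defined at each $A[S]$ with value $\mathcal M[S]$, also gives functoriality in all $A$-linear maps $A[S]\to A[S']$, not just those coming from $S\to S'$.
\item For $C\in\mathcal D_{\geq 0}(A,\mathcal M)$, the unit $C\to LC$ induces $\Hom(LC,D)\simeq\Hom(C,D)$ for all $D$ in the subcategory. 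Since $C$ and $LC$ both lie there, Yoneda gives $C\simeq LC\in\mathcal C$.
\item Only now does closure under sifted colimits follow, from that of $\mathcal C$. Together with finite direct sums, which are finite limits, this gives closure under all colimits.
\end{enumerate}

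There are also some smaller points.
\begin{enumerate}
\item The claim that truncations stay inside needs $\tau_{\geq 1}C\simeq\Sigma\Omega C$, so that $H_0(C)[0]$ lies in the subcategory.
\item The appeal to \cite[Proposition 7.5]{Condensed} for $(\pi_0A,\pi_0\mathcal M)$ is unjustified. Nothing guarantees this pair is an analytic ring in the classical sense; that would amount to the induced structure on $\pi_0A$ having discrete free objects. It is also unnecessary, since the $\pi_0\mathcal M[S]$ are directly projective generators of the heart.
\item In the $\otimes$-ideal step, the object to test is $\mathrm{cone}(A[S]\to\mathcal M[S])\otimes_{\mathbb Z}\mathbb Z[T]$. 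Tensoring with $\mathcal M[S]$ instead does not suffice, since the $\mathcal M[S]$ do not generate $\mathcal D_{\geq 0}(A)$. What keeps $\intHom(\mathbb Z[T],C)$ in the subcategory is that the defining condition is an equivalence of \emph{internal} Homs, evaluated at $T$, not closure under limits.
\end{enumerate}
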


\begin{remark} Passing to ``spectrum objects'' (a formal procedure that in the current situation recovers $\mathcal D(-)$ from $\mathcal D_{\geq 0}(-)$), similar statements hold true on the level of $\mathcal D(A,\mathcal M)\subset \mathcal D(A)$.
\end{remark}

A consequence of the proposition is that $\mathcal M[S]$ is determined by the full $\infty$-subcategory
\[
\mathcal D_{\geq 0}(A,\mathcal M)\subset \mathcal D_{\geq 0}(A)
\]
as the image of $A[S]$ under the left adjoint to the full inclusion. Moreover, this $\infty$-subcategory is determined already by the collection $\{\pi_0 \mathcal M[S]\}_S$ of condensed $\pi_0 A$-modules. In other words, analytic ring structures on $A$ are completely determined by data on the level of usual abelian categories. In the first appendix to this lecture, we will characterize analytic ring structures from this perspective.

\begin{proof} By the definition of analytic animated rings, all sifted colimits of objects $\mathcal M[S]$ are in $\mathcal D_{\geq 0}(A,\mathcal M)$. On the other hand, by the definition of $\mathcal D_{\geq 0}(A,\mathcal M)$, the objects $\mathcal M[S]$ are compact projective generators.

By definition of $\mathcal D_{\geq 0}(A,\mathcal M)$, it is stable under all limits. For the left adjoint, it is clear that it exists on $A[S]$ with value $\mathcal M[S]$ by definition of $\mathcal D_{\geq 0}(A,\mathcal M)$, and then it also exists for all of their sifted colimits by the assumption that $(A,\mathcal M)$ is an analytic ring; but these sifted colimits exhaust $\mathcal D_{\geq 0}(A)$. Now the existence of general colimits follows, by forming them first in $\mathcal D_{\geq 0}(A)$, and then applying the left adjoint.

It is now formal that $\mathcal D_{\geq 0}(A,\mathcal M)$ is prestable, cf.~\cite[Corollary C.1.2.3]{LurieSAG}. If $C\in \mathcal D_{\geq 0}(A,\mathcal M)$, then also $\tau_{\geq 1} C\in \mathcal D_{\geq 0}(A,\mathcal M)$ as this is the suspension of the loops of $C$ and $\mathcal D_{\geq 0}(A,\mathcal M)$ is stable under all limits and colimits. Thus, $H_0(C)[0]\in \mathcal D_{\geq 0}(A,\mathcal M)$. The statement about the heart is then formal.

Regarding the symmetric monoidal structure, one possible reference is \cite[Theorem I.3.6]{NikolausScholze}, noting that the condition of being an analytic ring ensures that the kernel of $-\otimes_A (A,\mathcal M)$ is a $\otimes$-ideal (as the kernel is generated under colimits by the cones of $A[S]\to \mathcal M[S]$, and tensoring them over $\mathbb Z$ with $\mathbb Z[T]$ for varying $T$ still lies in the kernel, by evaluating the internal Hom at $T$).
\end{proof}

The definition of analytic animated associative rings suggests to define a map $(A,\mathcal M)\to (B,\mathcal N)$ of such to be a map of condensed animated associative rings $A\to B$ together with a natural transformation $\mathcal M[S]\to \mathcal N[S]$ linear over this map and commuting with the maps from $S$.

Following the discussion in \cite[Lecture 7]{Condensed}, we instead define a map $(A,\mathcal M)\to (B,\mathcal N)$ to be a map $A\to B$ of condensed animated associative rings such that the forgetful functor $\mathcal D_{\geq 0}(B,\mathcal N)\to \mathcal D_{\geq 0}(A)$ takes image in $\mathcal D_{\geq 0}(A,\mathcal M)$. It is enough to check this condition for $\mathcal N[S]$, and in fact for $\pi_0 \mathcal N[S]$, as these generate the abelian heart, and the categories are determined by the heart.

Given a map $(A,\mathcal M)\to (B,\mathcal N)$ of analytic animated associative rings, the maps $S\to \mathcal N[S]$ extend uniquely to maps $\mathcal M[S]\to \mathcal N[S]$ linear over $A\to B$, so one gets a necessarily unique map in the naive sense. The converse is also close to being true: If one has functorial maps $\mathcal M[S]\to \mathcal N[S]$ linear over $A\to B$ and commuting with the map from $S$, then this defines a map $(A,\mathcal M)\to (B,\mathcal N)$ under a mild condition. In fact, it is enough to give functorial maps $\pi_0 \mathcal M[S]\to \pi_0 \mathcal N[S]$ linear over $\pi_0 A\to \pi_0 B$ such that for all maps $S\to \pi_0 A$ from some extremally disconnected profinite set $S$, an induced diagram
\[\xymatrix{
\pi_0 \mathcal M[S]\ar[r]\ar[d] & \pi_0 \mathcal M[S]\ar[d]\\
\pi_0 \mathcal N[S]\ar[r] & \pi_0 \mathcal N[S]
}\]
commutes, cf.~\cite[Proposition 7.14]{Condensed} (whose proof immediately passes to the present situation).

\begin{proposition} Let $f: (A,\mathcal M)\to (B,\mathcal N)$ be a map of analytic animated associative rings. The forgetful functor $\mathcal D_{\geq 0}(B,\mathcal N)\to \mathcal D_{\geq 0}(A,\mathcal M)$ admits a left adjoint
\[
-\otimes_{(A,\mathcal M)} (B,\mathcal N): \mathcal D_{\geq 0}(A,\mathcal M)\to \mathcal D_{\geq 0}(B,\mathcal N)
\]
sending $\mathcal M[S]$ to $\mathcal N[S]$. This functor is naturally symmetric monoidal when $A$ and $B$ and $f: A\to B$ have the structure of condensed animated commutative rings (resp.~of a map between such).
\end{proposition}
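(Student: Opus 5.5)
The left adjoint will be built in two steps: base change along $A\to B$, then the left adjoint $-\otimes_B(B,\mathcal N)$ from the earlier proposition. Concretely, for $C\in\mathcal D_{\geq 0}(A,\mathcal M)$ I set
\[
C\otimes_{(A,\mathcal M)}(B,\mathcal N) := (B\otimes^L_A C)\otimes_B (B,\mathcal N).
\]
Here $B\otimes^L_A-:\mathcal D_{\geq 0}(A)\to\mathcal D_{\geq 0}(B)$ is left adjoint to restriction of scalars.

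The adjunction is checked by composing adjunctions. For $D\in\mathcal D_{\geq 0}(B,\mathcal N)$ we have
\[
\Hom_{(B,\mathcal N)}((B\otimes^L_A C)\otimes_B(B,\mathcal N),D)=\Hom_B(B\otimes^L_A C,D)=\Hom_A(C,D).
\]
By the definition of a map of analytic rings, the forgetful image of $D$ lies in $\mathcal D_{\geq 0}(A,\mathcal M)$. Since $\mathcal D_{\geq 0}(A,\mathcal M)\subset\mathcal D_{\geq 0}(A)$ is full, the last term is $\Hom$ in $\mathcal D_{\geq 0}(A,\mathcal M)$, which is the adjunction we want.

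To evaluate the adjoint on $\mathcal M[S]$, I compare corepresented functors. For $D$ as above,
\[
\Hom(\mathcal M[S]\otimes_{(A,\mathcal M)}(B,\mathcal N),D)=\Hom_A(\mathcal M[S],D)=\Hom_A(A[S],D)=D(S),
\]
where the middle equality holds because $D\in\mathcal D_{\geq 0}(A,\mathcal M)$. The right-hand side is also $\Hom_B(B[S],D)=\Hom_{(B,\mathcal N)}(\mathcal N[S],D)$. Yoneda then gives the identification with $\mathcal N[S]$, and it is compatible with the maps from $S$. A convenient consequence is that the adjoint also equals the restriction to $\mathcal D_{\geq 0}(A,\mathcal M)$ of the colimit-preserving composite $\mathcal D_{\geq 0}(A)\to\mathcal D_{\geq 0}(B,\mathcal N)$.

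For the symmetric monoidal claim in the commutative case, the composite $F=(-\otimes_B(B,\mathcal N))\circ(B\otimes^L_A-)$ from $\mathcal D_{\geq 0}(A)$ is symmetric monoidal, being a composite of symmetric monoidal functors. By the argument of the previous proposition, $F$ kills the kernel of $-\otimes_A(A,\mathcal M)$, which is the $\otimes$-ideal generated by the cones of $A[S]\to\mathcal M[S]$: indeed $F$ sends each such cone to the cone of an isomorphism $\mathcal N[S]\to\mathcal N[S]$. So $F$ factors through the symmetric monoidal localization $\mathcal D_{\geq 0}(A)\to\mathcal D_{\geq 0}(A,\mathcal M)$, and the induced functor is symmetric monoidal, for instance by the universal property in \cite[Theorem I.3.6]{NikolausScholze}.

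The main obstacle is this last step: one must make sure the factorization inherits a symmetric monoidal structure rather than only a lax one. This comes down to $\mathcal D_{\geq 0}(A,\mathcal M)$ being a Dwyer–Kan localization of $\mathcal D_{\geq 0}(A)$ at a $\otimes$-ideal, which is exactly what the previous proposition established.
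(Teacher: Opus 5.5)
Your proposal is correct and follows essentially the same route as the paper. The paper simply declares the existence and description of the left adjoint clear, and your composite $(B\otimes^L_A -)\otimes_B(B,\mathcal N)$ with the Yoneda identification $\mathcal M[S]\mapsto\mathcal N[S]$ spells that out. Your symmetric monoidal argument is also the paper's: factor through the localization at the $\otimes$-ideal and apply \cite[Theorem I.3.6]{NikolausScholze}.
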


\begin{remark} We should really encode more functoriality, especially regarding composition. This is best done by defining the relevant (co)Cartesian fibrations and straightforward in the present situation.
\end{remark}

\begin{proof} The existence and description of the left adjoint is clear. The symmetric monoidal structure follows from \cite[Theorem I.3.6]{NikolausScholze} again.
\end{proof}

One can base change analytic ring structures along maps of condensed rings, as follows. We emphasize that this proposition works only in the animated setting -- the relevant base changes will in general be derived, and we have to keep track of the derived structure. We will from now on consider only the commutative setting.\footnote{In the original notes, several claims were also made in the associative setting, but the story is more complicated there.}

\begin{proposition}\label{prop:inducedanalytic} Let $(A,\mathcal M)$ be an analytic animated commutative ring and let $g: A\to B$ be a map of condensed animated commutative rings. Then the functor
\[
S\mapsto \mathcal N[S]:=B[S]\otimes_A (A,\mathcal M)
\]
defines an analytic animated commutative ring $(B,\mathcal N)$.
\end{proposition}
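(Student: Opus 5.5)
The plan is to identify the candidate category of $(B,\mathcal N)$-modules concretely, and then check the analytic ring condition of Definition~\ref{def:analyticanimated} by adjunction. Let $\mathcal D_{\geq 0}(B,\mathcal N)'\subset \mathcal D_{\geq 0}(B)$ denote the full subcategory of those $C$ whose underlying $A$-module (via $g$) lies in $\mathcal D_{\geq 0}(A,\mathcal M)$. The main claim is that $\mathcal N[S]=B[S]\otimes_A(A,\mathcal M)$ is naturally a $B$-module, lies in $\mathcal D_{\geq 0}(B,\mathcal N)'$, and that for every $C\in \mathcal D_{\geq 0}(B,\mathcal N)'$ the map
\[
\intHom_B(\mathcal N[S],C)\to \intHom_B(B[S],C)
\]
is an equivalence.

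\textbf{Step 1: the $B$-module structure on $\mathcal N[S]$.} Since $A$ is commutative, $-\otimes_A(A,\mathcal M)$ is symmetric monoidal by the proposition above. Hence it sends the commutative $A$-algebra $B$ to a commutative algebra $B\otimes_A(A,\mathcal M)$ in $\mathcal D_{\geq 0}(A,\mathcal M)$. It also sends the $B$-module $B[S]=B\otimes_A A[S]$ to
\[
(B\otimes_A(A,\mathcal M))\otimes_{(A,\mathcal M)}\mathcal M[S],
\]
which is a module over that algebra. Restricting scalars along $B\to B\otimes_A(A,\mathcal M)$ gives the desired structure, and the unit $B[S]\to \mathcal N[S]$ is $B$-linear. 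The same argument shows that for any $B$-module $D$, the object $D\otimes_A(A,\mathcal M)$ is a $B$-module. It is therefore the left adjoint to the inclusion of $\mathcal D_{\geq 0}(B,\mathcal N)'$ into $\mathcal D_{\geq 0}(B)$: one checks this using the bar resolution $B^{\otimes_A\bullet+1}\otimes_A D\to D$ and the fact that $-\otimes_A(A,\mathcal M)$ commutes with colimits and is monoidal.

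\textbf{Step 2: the universal property.} For $C\in\mathcal D_{\geq 0}(B,\mathcal N)'$ I compute
\[
\intHom_B(\mathcal N[S],C)\cong \intHom_A\bigl(\mathcal M[S]\otimes_{(A,\mathcal M)} B_{\mathcal M},C\bigr)_{B\text{-lin}},
\]
where $B_{\mathcal M}:=B\otimes_A(A,\mathcal M)$. I would phrase this using the adjunction from Step 1: $B$-linear maps $D\otimes_A(A,\mathcal M)\to C$ agree with $B$-linear maps $D\to C$ whenever $C$ is $\mathcal M$-complete as an $A$-module. Taking $D=B[S]$, this is exactly the required equivalence. Applying the same argument to $C\otimes\intHom(\mathbb Z[T],-)$, or equivalently evaluating at $T$, upgrades the equivalence to internal Hom.

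\textbf{Step 3: sifted colimits.} The analytic ring condition requires the equivalence for every sifted colimit $C$ of objects $\mathcal N[S_i]$. Each $\mathcal N[S_i]$ lies in $\mathcal D_{\geq 0}(B,\mathcal N)'$. This subcategory is closed under sifted colimits, because the forgetful functor to $A$-modules preserves colimits and $\mathcal D_{\geq 0}(A,\mathcal M)$ is closed under colimits in $\mathcal D_{\geq 0}(A)$. So Step 2 applies to such $C$.

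\textbf{Main obstacle.} The step I expect to be hardest is Step 1, specifically verifying that $D\mapsto D\otimes_A(A,\mathcal M)$, computed on underlying $A$-modules, really lands in $B$-modules and is left adjoint at the level of $B$-modules. This needs the $\otimes$-ideal property of the kernel of $-\otimes_A(A,\mathcal M)$, which was used above for the symmetric monoidal structure. Concretely, for $K$ in that kernel and any $B$-module $D$, one must check that $B\otimes_A K$ is again killed by $-\otimes_A(A,\mathcal M)$. I would argue this by writing $B$ as a sifted colimit of free $A[T]$'s and using that the kernel is generated by cones of $A[S]\to\mathcal M[S]$, which are stable under $\otimes_{\mathbb Z}\mathbb Z[T]$. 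This is also exactly where working in the animated setting is essential: the relevant tensor products are derived.
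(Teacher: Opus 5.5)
Your proof is correct, and its skeleton matches the paper's. In both, $\mathcal N[S]$ is a $B$-module whose underlying $A$-module is the $\mathcal M$-completion of $B[S]$. Sifted colimits of the $\mathcal N[S_i]$ remain $\mathcal M$-complete over $A$; your Step 3 is the paper's first sentence. One then upgrades an $A$-linear Hom comparison to a $B$-linear one.

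The difference is in how the upgrade is done. The paper uses the single identity
\[
\intHom_{\mathcal D_{\geq 0}(B)}(N_1,N_2)=\intHom_{\mathcal D_{\geq 0}(B\otimes_A B)}\bigl(B,\intHom_{\mathcal D_{\geq 0}(A)}(N_1,N_2)\bigr),
\]
applied only to the $B$-linear map $B[S]\to\mathcal N[S]$. So it never needs to know anything about $D\otimes_A(A,\mathcal M)$ for a general $B$-module $D$. You instead prove that $D\mapsto D\otimes_A(A,\mathcal M)$ is left adjoint to the inclusion of those $B$-modules that are $\mathcal M$-complete over $A$. This costs a little more, but it gives extra information: $\mathcal D_{\geq 0}(B,\mathcal N)$ is exactly that subcategory, with an explicit completion functor. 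In the associative appendix the paper has to build this functor by an iterated pushout.

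Two points of execution.
\begin{itemize}
\item Run the bar resolution on $K=\mathrm{cofib}(D\to D\otimes_A(A,\mathcal M))$ rather than on $D$. Then $\Hom_B(K,C)$ is the totalization of the terms $\Hom_A(B^{\otimes_A n}\otimes_A K,C)$, each of which vanishes for $C$ complete over $A$. Resolving $D$ itself only reduces you to free $B$-modules, where the same question reappears.
\item Your \emph{main obstacle} is already settled by the symmetric monoidality you cite in Step 1. Indeed, $(B\otimes_A K)\otimes_A(A,\mathcal M)\simeq (B\otimes_A(A,\mathcal M))\otimes_{(A,\mathcal M)}(K\otimes_A(A,\mathcal M))\simeq 0$, so no separate sifted-colimit argument is needed there.
\end{itemize}
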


\begin{proof} Note that all $\mathcal N[S]$ are, as $A$-modules, by definition in $\mathcal D_{\geq 0}(A,\mathcal M)$; thus, for any sifted colimit $N\in \mathcal D_{\geq 0}(B)$ of these, forgetting to $\mathcal D_{\geq 0}(A)$ defines an object of $\mathcal D_{\geq 0}(A,\mathcal M)$.

We want to prove that for all such $N$, the map
\[
\intHom_{\mathcal D_{\geq 0}(B)}(\mathcal N[S],N)\to \intHom_{\mathcal D_{\geq 0}(B)}(B[S],N)
\]
is an isomorphism. From $N\in \mathcal D_{\geq 0}(A,\mathcal M)$ and the definition of $\mathcal N[S]$, we know that
\[
\intHom_{\mathcal D_{\geq 0}(A)}(\mathcal N[S],N) = \intHom_{\mathcal D_{\geq 0}(A)}(B[S],N).
\]
This implies the desired result formally. Indeed, for any $N_1,N_2\in \mathcal D_{\geq 0}(B)$, one has
\[
\intHom_{\mathcal D_{\geq 0}(B)}(N_1,N_2) = \intHom_{\mathcal D_{\geq 0}(B\otimes_A B)}(B,\intHom_{\mathcal D_{\geq 0}(A)}(N_1,N_2)),
\]
so the agreement of $\intHom_{\mathcal D_{\geq 0}(A)}$'s implies the agreement of $\intHom{\mathcal D_{\geq 0}(B)}$'s.
\end{proof}

We note that in our definition of analytic rings, we never enforced that $\mathcal M[\ast]=A$, and in particular $A$ need not be the endomorphism ring of the unit of $\mathcal D_{\geq 0}(A,\mathcal M)$ (the latter is $\mathcal M[\ast]$).

\begin{definition} An analytic animated associative ring $(A,\mathcal M)$ is complete if the map $A\to \mathcal M[\ast]$ is an isomorphism.
\end{definition}

In the associative case, we do not whether we can enforce this. In the commutative case, this can be enforced, except for a small issue: It is not in general clear that $\mathcal M[\ast]$ acquires the structure of an animated condensed commutative ring. As analyzed in the third appendix, this requires that the $\mathrm{Sym}^n$-operations on $\mathcal D_{\geq 0}(A)$ descend to $\mathcal D_{\geq 0}(A,\mathcal M)$. As proved there, this is implied by insisting on the following property:

\begin{definition} An analytic animated commutative ring is a condensed animated commutative ring $A$ together with a functor $\mathcal M: S\mapsto \mathcal M[S]$ as above so that $(A,\mathcal M)$ is an analytic animated associative ring, and such that for all primes $p$, the Frobenius map $\phi_p: A\to A/^{\mathbb L} p$ induces a map of analytic animated associative rings $\phi_p: (A,\mathcal M)\to (A/^{\mathbb L} p,\mathcal M/^{\mathbb L} p)$. A map of analytic animated commutative rings $(A,\mathcal M)\to (B,\mathcal N)$ is a map of condensed animated commutative rings $A\to B$ that induces a map of analytic animated associative rings.
\end{definition}

This condition on Frobenius is analyzed in the second appendix, and shown there to be automatic in a number of situations; it may well be the case that it is automatic in general.\footnote{In \cite{AnalyticStacks}, we work in the setting of light condensed rings, and the argument is strong enough to show that it is automatic in that setting.} By the results of the third appendix, one can complete analytic animated commutative rings.\footnote{If one works in the context of connective $\mathbb E_\infty$-rings, this subtlety about Frobenius does not come up, as then it is clear that completions exist (as the tensor unit in a symmetric monoidal $\infty$-category is an $\mathbb E_\infty$-ring); also all the rest of the discussion of these lectures immediately adapts to that context.} In fact, $\mathcal M[\ast]$ is initial in the $\infty$-category of condensed animated commutative rings $B$ under $A$ with the property that $B\in \mathcal D_{\geq 0}(A,\mathcal M)\subset \mathcal D_{\geq 0}(A)$. Moreover, all $\mathcal M[S]$ are automatically $\mathcal M[\ast]$-modules, making $(\mathcal M[\ast],\mathcal M)$ into an analytic animated commutative ring, such that the forgetful functor
\[
\mathcal D_{\geq 0}(\mathcal M[\ast],\mathcal M)\to \mathcal D_{\geq 0}(A,\mathcal M)
\]
is an equivalence, cf.~Proposition~\ref{prop:normalizeanalyticring} (and its proof, which also applies in the present setting).

In the following, we will only work with complete analytic rings.

\begin{definition} Let $\AnRing$ be the $\infty$-category of complete analytic animated commutative rings.
\end{definition}

Let us analyze this $\infty$-category. For the following proposition, it is again critical to work in the animated context.

\begin{proposition} The $\infty$-category $\AnRing$ admits all small colimits. The initial object is $\mathbb Z$ with $S\mapsto \mathbb Z[S]$. Sifted colimits commute with the functor $(A,\mathcal M)\mapsto \mathcal M[S]$ to $\mathcal D_{\geq 0}(\Cond(\Ab))$. Pushouts are computed as follows:

Let $(B,\mathcal M_B)\leftarrow (A,\mathcal M_A)\to (C,\mathcal M_C)$ be a diagram in $\AnRing$. The pushout $(B,\mathcal M_B)\otimes_{(A,\mathcal M_A)}(C,\mathcal M_C)=(E,\mathcal M_E)$ of this diagram in $\AnRing$ exists. It can be defined as the completion of an analytic ring structure on 
\[
B\otimes_A C.
\]
The corresponding forgetful functor
\[
\mathcal D_{\geq 0}(E,\mathcal M_E)\to \mathcal D_{\geq 0}(B\otimes_A C)
\]
is fully faithful, with essential image given by all $C\in \mathcal D_{\geq 0}(B\otimes_A C)$ whose images in $\mathcal D_{\geq 0}(B)$ and $\mathcal D_{\geq 0}(C)$ lie in $\mathcal D_{\geq 0}(B,\mathcal M_B)$ and $\mathcal D_{\geq 0}(C,\mathcal M_C)$.

The left adjoint
\[
-\otimes_{B\otimes_A C} (E,\mathcal M_E): \mathcal D_{\geq 0}(B\otimes_A C)\to \mathcal D_{\geq 0}(E,\mathcal M_E)
\]
is given by the sequential colimit
\[
-\to -\otimes_B (B,\mathcal M_B)\to (-\otimes_B (B,\mathcal M_B))\otimes_C (C,\mathcal M_C)\to ((-\otimes_B (B,\mathcal M_B))\otimes_C (C,\mathcal M_C))\otimes_B (B,\mathcal M_B) \to \ldots
\]
(where each functor is considered as an endofunctor of $\mathcal D_{\geq 0}(B\otimes_A C)$).
\end{proposition}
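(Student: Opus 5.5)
The plan is to construct the pushout directly as an analytic ring structure on $D:=B\otimes_A C$ and then complete it. Define $\mathcal D_{\geq 0}(E)\subset\mathcal D_{\geq 0}(D)$ as the full subcategory of objects whose restrictions to $B$ and to $C$ lie in $\mathcal D_{\geq 0}(B,\mathcal M_B)$ and $\mathcal D_{\geq 0}(C,\mathcal M_C)$. Both conditions are closed under all limits and colimits, because each subcategory $\mathcal D_{\geq 0}(B,\mathcal M_B)\subset\mathcal D_{\geq 0}(B)$ is, and the forgetful functors preserve limits and colimits. Set $F_B=-\otimes_B(B,\mathcal M_B)$ and $F_C=-\otimes_C(C,\mathcal M_C)$, viewed as endofunctors of $\mathcal D_{\geq 0}(D)$. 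This makes sense: for $N\in\mathcal D_{\geq 0}(D)$, the object $N\otimes_B(B,\mathcal M_B)$ carries a $C$-action by functoriality, since the $C$-action on $N$ commutes with the $B$-action. Then define $L(N)=\colim(N\to F_BN\to F_CF_BN\to\cdots)$. Along the odd terms the colimit is a sequential colimit of objects of $\mathcal D_{\geq 0}(B,\mathcal M_B)$, hence lies there, and along the even terms it lies in $\mathcal D_{\geq 0}(C,\mathcal M_C)$. So $L(N)\in\mathcal D_{\geq 0}(E)$.

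Next I show that $L$ is left adjoint to the inclusion. For $P\in\mathcal D_{\geq 0}(E)$, each map $N\to F_BN$ induces an equivalence on $\Hom(-,P)$: $P$ is $(B,\mathcal M_B)$-complete, so maps out of the unit $N\to F_BN$ into $P$ are computed in $B$-modules, and the $C$-linearity is carried along because the $C$-action comes by functoriality. The same holds for $F_C$. Hence $\Hom(L N,P)=\lim\Hom(\text{terms},P)=\Hom(N,P)$. Define $\mathcal M_E[S]:=L(D[S])$. Compact projectivity of $\mathcal M_E[S]$ in $\mathcal D_{\geq 0}(E)$ and the analytic ring axiom then follow formally from the adjunction. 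The axiom needs $\intHom(\mathcal M_E[S],N)\simeq\intHom(D[S],N)$ for sifted colimits $N$ of the $\mathcal M_E[S_i]$, and every such $N$ lies in $\mathcal D_{\geq 0}(E)$ by the closure property above. The internal version follows by applying the adjunction after tensoring with $\mathbb Z[T]$, using that $L$ commutes with $-\otimes\mathbb Z[T]$ as in the proof that the kernel is a $\otimes$-ideal. With this setup, $\mathcal D_{\geq 0}(E)=\mathcal D_{\geq 0}(D,\mathcal M_E)$, which gives the description of the essential image.

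The Frobenius condition is inherited: $\phi_p$ on $D$ is the tensor product of the Frobenii on $B$ and $C$, and checking it amounts to checking that the $\phi_p$-restriction preserves the two completeness conditions separately, which holds by the hypotheses on $B$ and $C$. We can then complete $(D,\mathcal M_E)$ to $(E,\mathcal M_E)$ with $E=\mathcal M_E[\ast]$, using the third-appendix results and Proposition~\ref{prop:normalizeanalyticring}, without changing the module category. For the universal property, a map to $(R,\mathcal M_R)$ amounts to compatible maps from $B$ and $C$, i.e.\ a map $D\to R$ under which $\mathcal D_{\geq 0}(R,\mathcal M_R)$ lands in both complete subcategories, hence in $\mathcal D_{\geq 0}(E)$; completeness of $R$ then factors the map through $E$. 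For general colimits it suffices to also handle sifted colimits and the initial object. For the initial object, $\mathbb Z[S]$ is trivially analytic, and every analytic ring receives a unique map from it. For a sifted colimit $(A_i,\mathcal M_i)$, take $A=\colim A_i$ with $\mathcal M[S]=\colim\mathcal M_i[S]$, and check the analytic axiom using that sifted colimits commute with the relevant $\intHom$ out of the compact projectives $\mathcal M_i[S]$. This also proves the claimed commutation of sifted colimits with $(A,\mathcal M)\mapsto \mathcal M[S]$.

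The main obstacle I expect is the adjunction step for the alternating colimit, specifically making $F_B$ a functor on $D$-modules that still computes the left adjoint relative to the $D$-linear structure. I would first write $\mathcal D_{\geq 0}(D)$ as $C$-modules in $\mathcal D_{\geq 0}(B)$ with $A$-compatibility, i.e.\ $\mathcal D_{\geq 0}(B)\otimes_{\mathcal D_{\geq 0}(A)}\mathcal D_{\geq 0}(C)$, and use that $F_B$ is $\mathcal D_{\geq 0}(A)$-linear there. That linearity is exactly the $\otimes$-ideal property of the kernel of $F_B$, which the proof of the symmetric monoidal structure via Nikolaus--Scholze already supplies.
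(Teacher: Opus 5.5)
\textbf{The gap: sifted colimits.} The one non-formal point in that step is that the naive colimit $\mathcal M[S]=\colim_j\mathcal M_j[S]$ lies in $\mathcal D_{\geq 0}(A_i,\mathcal M_i)$ for every $i$, when regarded as an $A_i$-module via $A_i\to A=\colim_j A_j$. Your proposal does not establish this.

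\emph{Why the axiom depends on it.} By siftedness, $\mathcal M[S]\simeq\colim_i \mathcal M_i[S]\otimes_{A_i}A$. Hence $\intHom_A(\mathcal M[S],C)\simeq\lim_i\intHom_{A_i}(\mathcal M_i[S],C)$. To identify each term with the constant value $\intHom_{A_i}(A_i[S],C)=\intHom_A(A[S],C)$, you need $C|_{A_i}$ to be $\mathcal M_i$-complete. Since $C$ is a sifted colimit of the $\mathcal M[S_k]$, this is exactly the point above.

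\emph{Why your argument does not supply it.} Compact projectivity of $\mathcal M_i[S]$ is a statement inside $\mathcal D_{\geq 0}(A_i,\mathcal M_i)$. Invoking it therefore presupposes the point rather than proving it.

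\emph{Why it is not automatic.} For filtered diagrams it holds, because the indices $j$ under $i$ are cofinal. In a geometric realization, however, $A_i$ does not map to every $A_j$.

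\emph{How the paper handles it.} It reduces to filtered colimits and geometric realizations. For simplicial diagrams it uses that $[0]$ is initial in $\Delta^{\mathrm{op}}$:
\begin{itemize}
\item the degeneracies exhibit the whole diagram as analytic rings under $(A_0,\mathcal M_0)$, so every $\mathcal M_j[S]$, hence the realization, lies in $\mathcal D_{\geq 0}(A_0,\mathcal M_0)$;
\item $A_i\to|A_\bullet|$ factors through a face map $A_i\to A_0$, which is a map of analytic rings.
\end{itemize}

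\emph{What else depends on it.} The same fact identifies the complete $(A,\mathcal M)$-modules with the $A$-modules whose restriction to each $A_i$ is $\mathcal M_i$-complete. You need that description for the universal property of the naive colimit in $\AnRing$ and for its Frobenius condition. You did not address either.

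\textbf{Pushouts.} Apart from this, your pushout argument is the paper's. The paper checks the three conditions of Proposition~\ref{prop:characterizeanalyticstructure} for the subcategory of $D$-modules with $B$- and $C$-complete restrictions, then completes. Your alternating colimit $L$ is an explicit witness for condition (3), and your Frobenius and universal-property checks are fine. Two claims in it are false as stated, though easily repaired.

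First, the endofunctor $F_B$ of $\mathcal D_{\geq 0}(B)$ is not $\mathcal D_{\geq 0}(A)$-linear. Take $A=\mathbb Z$ with the trivial structure and $(B,\mathcal M_B)=\mathbb Z_\solid$. Linearity would make $\mathbb Z[S']\otimes\mathbb Z[S]^\solid\simeq F_B(\mathbb Z[S'\times S])$ solid. But for infinite $S'$ it has the non-solid $\mathbb Z[S']$ as a retract. $F_B$ is only lax linear; that transports the $C$-action, but leaves your adjunction step unproved as written.

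The clean fix is to take for $F_B$ the completion functor of the analytic ring structure on $D$ induced from $(B,\mathcal M_B)$ (Proposition~\ref{prop:inducedanalytic}):
\begin{itemize}
\item its complete objects are exactly the $D$-modules with $B$-complete restriction;
\item its underlying $B$-module is $N\otimes_B(B,\mathcal M_B)$ (compare on $D[S]$ and extend by colimits).
\end{itemize}
Then $\Hom_D(F_BN,P)\simeq\Hom_D(N,P)$ for your $P$ is immediate.

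Second, $L$ does not commute with $-\otimes\mathbb Z[T]$, since $L(N)\otimes\mathbb Z[T]$ is not complete in general. What the condensed form of the axiom needs is that your subcategory is stable under $\intHom(\mathbb Z[T],-)$. This holds because it does for $(B,\mathcal M_B)$ and $(C,\mathcal M_C)$; it is condition (2) of Proposition~\ref{prop:characterizeanalyticstructure}.
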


In particular, pushouts in $\AnRing$ are in general subtle, as the operations of ``$\mathcal M_B$-completion'' $-\otimes_B (B,\mathcal M_B)$ and ``$\mathcal M_C$-completion'' $-\otimes_C (C,\mathcal M_C)$ do not in general commute.

\begin{proof} The statement about the initial object is clear. For the statement about sifted colimits, let $(A_\bullet,\mathcal M_\bullet)$ be any sifted diagram in $\AnRing$. Let $(B,\mathcal M_B)$ denote their naively computed sifted colimit; then $B$ is an animated condensed commutative ring and $\mathcal M_B$ is a theory of measures. Moreover, the spaces of measures $\mathcal M_B[S]=\colim \mathcal M_\bullet[S]$ are objects of $\mathcal D_{\geq 0}(B)$ whose restriction to $\mathcal D_{\geq 0}(A_i)$ lies in $\mathcal D_{\geq 0}(A_i,\mathcal M_i)$, for all $i$.\footnote{This is clear for filtered colimits, and thus to see it for general sifted colimits, it suffices to check it for geometric realizations. Then any map $A_i\to \colim A_\bullet$ factors over $A_0$, so it suffices to see it for $i=0$. But degeneracies make the $A_i$ compatibly into $A_0$-algebras and hence all $\mathcal M_i[S]$ are in $\mathcal D_{\geq 0}(A_0,\mathcal M_0)$, and the same is true for their geometric realization.} Thus for any sifted colimit $C$ of geometric realizations of $\mathcal M_B[S]$'s,
\[
\intHom_{\mathcal D_{\geq 0}(A_i)}(\mathcal M_i[S],C) = \intHom_{\mathcal D_{\geq 0}(A_i)}(A_i[S],C)
\]
or equivalently
\[
\intHom_{\mathcal D_{\geq 0}(B)}(\mathcal M_i[S]\otimes_{A_i} B,C) = \intHom_{\mathcal D_{\geq 0}(B)}(B[S],C).
\]
Taking the limit over $i$ of this statement and using that the colimit of $\mathcal M_i[S]\otimes_{A_i} B$ is just $\mathcal M_B[S]$ as the colimit is sifted, we get the desired result.

For pushouts, we note that any object in $(E',\mathcal M_{E'})\in \AnRing$ with compatible maps from $(B,\mathcal M_B)\leftarrow (A,\mathcal M_A)\to (C,\mathcal M_C)$ will admit a natural map $\mathcal D_{\geq 0}(E',\mathcal M_{E'})\to \mathcal D_{\geq 0}(B\otimes_A C)$ whose essential image lies in the full $\infty$-subcategory of all objects whose restrictions to the two factors lie in $\mathcal D_{\geq 0}(B,\mathcal M_B)$ and $\mathcal D_{\geq 0}(C,\mathcal M_C)$. Thus, we see that it is enough to prove that the recipee in the statement defines an analytic ring structure on $B\otimes_A C$; but this is easy to see (using, for example, Proposition~\ref{prop:characterizeanalyticstructure}). Passing to the completion gives the desired pushout.
\end{proof}

One instance where the base change in $\AnRing$ is simple is for induced analytic ring structures as in Proposition~\ref{prop:inducedanalytic}.

Another such instance is the following class of maps.

\begin{definition} A map $f: (A,\mathcal M_A)\to (B,\mathcal M_B)$ in $\AnRing$ is steady if for all maps $g: (A,\mathcal M_A)\to (C,\mathcal M_C)$ in $\AnRing$, the functor $M\mapsto M\otimes_B (B,\mathcal M_B)$ preserves the full $\infty$-subcategory of $\mathcal D_{\geq 0}(C\otimes_A B)$ of all objects whose restriction to $C$ lies in $\mathcal D_{\geq 0}(C,\mathcal M_C)$.
\end{definition}

Equivalently, for all $M\in \mathcal D_{\geq 0}(C,\mathcal M_C)$, the object
\[
M\otimes_A (B,\mathcal M_B) = M\otimes_{(A,\mathcal M_A)} (B,\mathcal M_B),
\]
which is a priori an object of $\mathcal D_{\geq 0}(C\otimes_A B)$, lies in $\mathcal D_{\geq 0}(C,\mathcal M_C)$ when restricted to $C$. As its restriction to $B$ lies in $\mathcal D_{\geq 0}(B,\mathcal M_B)$ by construction, this will then actually define an object of
\[
\mathcal D_{\geq 0}((B,\mathcal M_B)\otimes_{(A,\mathcal M_A)}(C,\mathcal M_C)).
\]
In fact, the condition that $f$ is steady means precisely that for all $g$ the colimit
\[
-\to -\otimes_C (C,\mathcal M_C)\to (-\otimes_C (C,\mathcal M_C))\otimes_B (B,\mathcal M_B)\to ((-\otimes_C (C,\mathcal M_C))\otimes_B (B,\mathcal M_B))\otimes_C (C,\mathcal M_C) \to \ldots
\]
stabilizes at $(-\otimes_C (C,\mathcal M_C))\otimes_B (B,\mathcal M_B)$. The pushout $(E,\mathcal M_E)$ of $(B,\mathcal M_B)\leftarrow (A,\mathcal M_A)\to (C,\mathcal M_C)$ is then given by the functor
\[
\mathcal M_E: S\mapsto \mathcal M_C[S]\otimes_{(A,\mathcal M_A)}(B,\mathcal M_B).
\]

Yet another equivalent characterization is the following; it is really just a tautological reformulation of the definition.

\begin{proposition} A map $f: (A,\mathcal M_A)\to (B,\mathcal M_B)$ in $\AnRing$ is steady if and only if for all pushout diagrams
\[\xymatrix{
(E,\mathcal M_E) & (C,\mathcal M_C)\ar[l]_{\tilde{f}}\\
(B,\mathcal M_B)\ar[u]_{\tilde{g}} & (A,\mathcal M_A)\ar[l]_f\ar[u]_g
}\]
and all $M\in \mathcal D(C,\mathcal M_C)$, the base change map
\[
M|_A\otimes_{(A,\mathcal M_A)} (B,\mathcal M_B)\to (M\otimes_{(C,\mathcal M_C)} (E,\mathcal M_E))|_B
\]
is an isomorphism.
\end{proposition}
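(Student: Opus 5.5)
The plan is to unwind both sides using the explicit description of pushouts and of the left adjoint $-\otimes_{(A,\mathcal M_A)}(B,\mathcal M_B)$ given above. Both directions compare two objects of $\mathcal D_{\geq 0}(C\otimes_A B)$, or of its stabilization. Since everything commutes with shifts, we may work with $M\in \mathcal D_{\geq 0}(C,\mathcal M_C)$ and then pass to $\mathcal D$.

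\emph{Steady implies the base change isomorphism.} Let $M\in\mathcal D_{\geq 0}(C,\mathcal M_C)$ and put $N := M|_A\otimes_{(A,\mathcal M_A)}(B,\mathcal M_B)$. Computed in $\mathcal D_{\geq 0}(A)$, this is $M\otimes_A(B,\mathcal M_B)$, which is naturally an object of $\mathcal D_{\geq 0}(C\otimes_A B)$. Its restriction to $B$ lies in $\mathcal D_{\geq 0}(B,\mathcal M_B)$ by construction, and steadiness says its restriction to $C$ lies in $\mathcal D_{\geq 0}(C,\mathcal M_C)$. By the description of $\mathcal D_{\geq 0}(E,\mathcal M_E)$ as a full subcategory of $\mathcal D_{\geq 0}(B\otimes_A C)$, we conclude $N\in\mathcal D_{\geq 0}(E,\mathcal M_E)$.

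Next, $M\otimes_{(C,\mathcal M_C)}(E,\mathcal M_E)$ is the value on $M\otimes_C(C\otimes_A B)=M\otimes_A B$ of the sequential colimit of alternating completions. Since $M\otimes_A B$ already has $C$-restriction in $\mathcal D_{\geq 0}(C,\mathcal M_C)$, the first $C$-completion does nothing. After one $B$-completion we obtain $N$, which lies in both subcategories, so the colimit stabilizes at $N$. The base change map is the canonical comparison between these two descriptions, hence an isomorphism. The only check needed is that this comparison map is the natural one, and that follows from the universal properties of the left adjoints.

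\emph{The base change isomorphism implies steady.} Given $g\colon (A,\mathcal M_A)\to (C,\mathcal M_C)$, form the pushout $(E,\mathcal M_E)$ and take $M\in\mathcal D_{\geq 0}(C,\mathcal M_C)$. By hypothesis, $M\otimes_A(B,\mathcal M_B)$ is isomorphic, as a $B$-module and compatibly with the $C\otimes_A B$-structure, to the restriction of an object of $\mathcal D(E,\mathcal M_E)$. Such an object restricts to $C$ inside $\mathcal D(C,\mathcal M_C)$, and its connectivity is preserved. This is exactly the reformulated steadiness condition: $M\otimes_A(B,\mathcal M_B)$ restricted to $C$ lies in $\mathcal D_{\geq 0}(C,\mathcal M_C)$.

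\emph{Main obstacle.} The delicate point is bookkeeping. We must check that the isomorphism asserted in the base change map is compatible with the $C$-module structure on the source, which arises because $-\otimes_A(B,\mathcal M_B)$ is a functor applied to a $C\otimes_A B$-module. We also must check that the completion of $E$ does not alter $\mathcal D(E,\mathcal M_E)$, since the forgetful functor to the uncompleted version is an equivalence. I would handle the first by noting that $-\otimes_A(B,\mathcal M_B)$ is $\mathcal D_{\geq 0}(A)$-linear, so it lifts to modules over the algebra $C$. The second follows from the normalization result, Proposition~\ref{prop:normalizeanalyticring}.
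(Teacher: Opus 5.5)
Your overall plan is the right one: unwind the reformulated steadiness condition (for all $M\in\mathcal D_{\geq 0}(C,\mathcal M_C)$, the object $M\otimes_A(B,\mathcal M_B)$ has $\mathcal M_C$-complete restriction to $C$) in both directions. This is what the paper means by calling the statement a tautological reformulation. Your converse direction is fine: the base change map is $C\otimes_A B$-linear and restriction to $B$ is conservative.

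The forward direction, however, rests on a false step. You assert that $X:=M\otimes_A B$, the \emph{uncompleted} tensor product formed in $\mathcal D_{\geq 0}(A)$, already has $\mathcal M_C$-complete restriction to $C$, so that the first $C$-completion in the alternating colimit does nothing. Steadiness says nothing about $X$ itself, only about its $B$-completion $N=X\otimes_B(B,\mathcal M_B)$, and the claim fails in general. Take $A=(\mathbb Q_p,\mathbb Z_p)_\solid$, let $B=C=\mathbb Q_p\langle T\rangle$ with the analytic ring structure induced from $A$ (steady by Proposition~\ref{prop:nuclearsteady}), and take $f=g$ and $M=C$. Then $\pi_0X=\mathbb Q_p\langle T\rangle\otimes_{\mathbb Q_p}\mathbb Q_p\langle T\rangle$ is the condensed tensor product, whose underlying module is the algebraic tensor product. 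It is not solid, since its solidification is the strictly larger $\mathbb Q_p\langle T_1,T_2\rangle$. With your ordering, the second stage of the colimit is $(X\otimes_C(C,\mathcal M_C))\otimes_B(B,\mathcal M_B)$, and nothing in your argument identifies it with $N$.

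The repair is short, and there are two ways to do it.
\begin{enumerate}
\item Run the sequential colimit in the order the paper uses for pushouts, with the $B$-completion first. The first stage is then already $N$, which is $C$-complete by steadiness and $B$-complete by construction, so all later maps are equivalences.
\item Argue by universal property. The map $X\to N$ is initial among maps from $X$ to objects of $\mathcal D_{\geq 0}(C\otimes_A B)$ with $\mathcal M_B$-complete restriction; the $C\otimes_A B$-structure survives $B$-completion because its kernel is a $\otimes$-ideal. Every $(E,\mathcal M_E)$-module is such an object, and $N$ is itself an $(E,\mathcal M_E)$-module. Hence $X\to N$ is the $(E,\mathcal M_E)$-completion of $X=M\otimes_C(C\otimes_A B)$, i.e.\ $N\simeq M\otimes_{(C,\mathcal M_C)}(E,\mathcal M_E)$, and under this identification the base change map is the identity.
\end{enumerate}
This universal-property argument is also what lies behind the paper's formula $\mathcal M_E[S]=\mathcal M_C[S]\otimes_{(A,\mathcal M_A)}(B,\mathcal M_B)$.

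Finally, shifts alone do not reduce an unbounded $M\in\mathcal D(C,\mathcal M_C)$ to the connective case. Write $M=\colim_n\tau_{\geq -n}M$ and use that both sides of the base change map commute with colimits in $M$.
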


In the geometric language of the next lecture, this means that in the cartesian diagram
\[\xymatrix{
\AnSpec(E,\mathcal M_E)\ar[r]^{\tilde{f}}\ar[d]^{\tilde{g}} & \AnSpec(C,\mathcal M_C)\ar[d]^g\\
\AnSpec(B,\mathcal M_B)\ar[r]^f & \AnSpec(A,\mathcal M_A),
}\]
the map
\[
f^\ast g_\ast C\to \tilde{g}_\ast \tilde{f}^\ast C
\]
is an isomorphism (using the usual notation for pullbacks and pushforwards of quasicoherent sheaves). Thus, pushforward commutes with base change along steady maps, and this property characterizes steady maps.

The class of steady maps has good properties.

\begin{proposition} The class of steady maps is stable under base change and composition. Moreover, the class of steady maps is closed under all colimits (in $\mathrm{Fun}(\Delta^1,\AnRing)$).
\end{proposition}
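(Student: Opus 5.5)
The plan is to work with the tautological reformulation just given: $f$ is steady iff for every pushout square as above and every $M\in \mathcal D(C,\mathcal M_C)$, the base change map $M|_A\otimes_{(A,\mathcal M_A)}(B,\mathcal M_B)\to (M\otimes_{(C,\mathcal M_C)}(E,\mathcal M_E))|_B$ is an isomorphism. Equivalently, for every $g$ the object $M\otimes_A(B,\mathcal M_B)\in\mathcal D_{\geq 0}(C\otimes_A B)$ already lies in $\mathcal D_{\geq 0}(C,\mathcal M_C)$ after restriction to $C$. In that case the pushout is computed by $\mathcal M_E[S]=\mathcal M_C[S]\otimes_{(A,\mathcal M_A)}(B,\mathcal M_B)$, and $\mathcal D_{\geq 0}(E,\mathcal M_E)$ is the full subcategory of $\mathcal D_{\geq 0}(C\otimes_A B)$ described in the pushout proposition.

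\emph{Base change.} Let $\tilde f:(C,\mathcal M_C)\to(E,\mathcal M_E)$ be the base change of a steady $f$ along $g$. Let $h:(C,\mathcal M_C)\to(D,\mathcal M_D)$ be arbitrary. By pasting of pushouts, $(D,\mathcal M_D)\otimes_{(C,\mathcal M_C)}(E,\mathcal M_E)$ equals $(D,\mathcal M_D)\otimes_{(A,\mathcal M_A)}(B,\mathcal M_B)$, and its underlying ring is $D\otimes_C(C\otimes_A B)=D\otimes_A B$. For $M\in\mathcal D_{\geq 0}(D,\mathcal M_D)$, I would identify $M\otimes_C(E,\mathcal M_E)$ with $M\otimes_A(B,\mathcal M_B)$. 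To see this, note that $M\otimes_C(C\otimes_A B)=M\otimes_A B$. Also, because $f$ is steady (tested against $C$), the left adjoint to $\mathcal D_{\geq 0}(E,\mathcal M_E)\subset\mathcal D_{\geq 0}(C\otimes_A B)$ on objects whose $C$-restriction is $\mathcal M_C$-complete is just $-\otimes_B(B,\mathcal M_B)$. So the $(E,\mathcal M_E)$-completion of $M\otimes_A B$ is $M\otimes_A(B,\mathcal M_B)$. Applying steadiness of $f$ now against $g\circ h$, this object's restriction to $D$ lies in $\mathcal D_{\geq 0}(D,\mathcal M_D)$, which is what we need. The one point to check carefully is that the $\mathcal M_B$-completion on $D\otimes_A B$-modules agrees with the one on $C\otimes_A B$-modules after forgetting. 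This holds because both are computed as $-\otimes_B(B,\mathcal M_B)$ on underlying $B$-modules.

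\emph{Composition.} Let $f_1:(A,\mathcal M_A)\to(B,\mathcal M_B)$ and $f_2:(B,\mathcal M_B)\to(B',\mathcal M_{B'})$ be steady, and let $g:(A,\mathcal M_A)\to(C,\mathcal M_C)$ with $M\in\mathcal D_{\geq 0}(C,\mathcal M_C)$. Then $M\otimes_A(B',\mathcal M_{B'})=(M\otimes_A(B,\mathcal M_B))\otimes_B(B',\mathcal M_{B'})$. By steadiness of $f_1$, the inner object is a module over the pushout $E=(C,\mathcal M_C)\otimes_A(B,\mathcal M_B)$. Steadiness of $f_2$ tested against $B\to E$ shows the outer object is $\mathcal M_E$-complete after restriction to $E$, and hence $\mathcal M_C$-complete after restriction to $C$.

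\emph{Colimits.} A colimit in $\mathrm{Fun}(\Delta^1,\AnRing)$ is a diagram $f_i:(A_i,\mathcal M_{A_i})\to(B_i,\mathcal M_{B_i})$ with colimit $f:(A,\mathcal M_A)\to(B,\mathcal M_B)$. I would split into sifted colimits and pushouts (together with the initial object $\mathrm{id}_{\mathbb Z}$, which is trivially steady).
\begin{itemize}
\item \emph{Sifted colimits.} $\mathcal M$ commutes with sifted colimits, so $-\otimes_A(B,\mathcal M_B)$ is the colimit of the $-\otimes_{A_i}(B_i,\mathcal M_{B_i})$ applied to restrictions. Given $g:A\to C$, precompose to get $A_i\to C$. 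Each term $M\otimes_{A_i}(B_i,\mathcal M_{B_i})$ is $\mathcal M_C$-complete over $C$, and $\mathcal D_{\geq 0}(C,\mathcal M_C)$ is closed under colimits, so the colimit is too.
\item \emph{Pushouts.} For a pushout of arrows $f_1\leftarrow f_0\to f_2$, write $f$ as a composite of base changes of the $f_i$ along the induced maps, in the style of the $3\times3$ pushout lemma, and then use the first two parts.
\end{itemize}

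I expect the main obstacle to be this pushout-of-arrows step. The relevant identification of completions is subtle because general pushouts in $\AnRing$ involve the non-stabilizing alternating colimit. The approach is to reduce to pushouts along steady maps, where that colimit stabilizes, so the explicit formula applies.
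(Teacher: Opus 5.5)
Your base change, composition, and sifted-colimit steps are essentially sound, but the pushout step has a genuine gap. A pushout of arrows $f_1\leftarrow f_0\to f_2$ (with $f_j\colon A_j\to B_j$) is computed pointwise, so it is $f\colon A_1\otimes_{A_0}A_2\to B_1\otimes_{B_0}B_2$. This is in general \emph{not} a composite of base changes of the $f_j$. The natural factorization
\[
A_1\otimes_{A_0}A_2\to B_1\otimes_{A_0}A_2\to B_1\otimes_{A_0}B_2\to B_1\otimes_{B_0}B_2
\]
consists of a base change of $f_1$, a base change of $f_2$, and a base change of the codiagonal $\mu_0\colon B_0\otimes_{A_0}B_0\to B_0$ along $B_0\otimes_{A_0}B_0\to B_1\otimes_{A_0}B_2$. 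The last map is not a base change of any $f_j$.

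No formal $3\times 3$ argument can avoid this map. Flat ring maps are stable under base change and composition. Yet the pushout of $\mathrm{id}_k\leftarrow(k\to k[x])\to\mathrm{id}_{k[x]}$, with maps $x\mapsto 0$ and $\mathrm{id}_{k[x]}$ on targets, is $k[x]\to k$, which is not flat. Your remark that the alternating colimit stabilizes concerns how pushouts along steady maps are computed. It says nothing about whether $\mu_0$ is steady, so it does not close the hole.

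The missing input is that any map $\mu\colon(P,\mathcal M_P)\to(B,\mathcal M_B)$ in $\AnRing$ admitting a section $s$ is steady. This follows directly from the definition. Take $g\colon(P,\mathcal M_P)\to(C,\mathcal M_C)$ and $N\in\mathcal D_{\geq 0}(C\otimes_P B)$ whose restriction to $C$ is $\mathcal M_C$-complete.
\begin{enumerate}
\item Because $\mu s\simeq\mathrm{id}$, the structure map $B\to C\otimes_P B$ agrees with $B\xrightarrow{s}P\xrightarrow{g}C\to C\otimes_P B$.
\item So $N|_B$ is the restriction of $N|_C$ along the map of analytic rings $g\circ s$, hence is already $\mathcal M_B$-complete.
\item Therefore $N\otimes_B(B,\mathcal M_B)\simeq N$, which lies in the required subcategory.
\end{enumerate}
Apply this to $\mu_0$ with section $\iota_1$. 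The factorization above then writes $f$ as a composite of base changes of steady maps; in fact only $f_1$ and $f_2$ need to be steady. Together with your sifted step and the initial arrow $\mathrm{id}_{\mathbb Z}$, this gives closure under all colimits.

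One smaller slip is in your base change step. $M\otimes_A B$ need not have $\mathcal M_C$-complete restriction to $C$, so the reason you give for its $(E,\mathcal M_E)$-completion being $M\otimes_A(B,\mathcal M_B)$ does not apply. Argue instead that $M\otimes_A(B,\mathcal M_B)$ is $\mathcal M_B$-complete by construction. It is also $\mathcal M_C$-complete, by steadiness of $f$ applied to $M|_C$. Hence it is the $(E,\mathcal M_E)$-completion of $M\otimes_A B$.
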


\begin{proof} Exercise.
\end{proof}

\begin{definition} A map $f: (A,\mathcal M)\to (B,\mathcal N)$ in $\AnRing$ is a localization if the forgetful functor $\mathcal D_{\geq 0}(B,\mathcal N)\to \mathcal D_{\geq 0}(A,\mathcal M)$ is fully faithful. The map $f$ is a steady localization if it is a localization and steady.
\end{definition}

\begin{exercise} Show that if a map $f: (A,\mathcal M)\to (B,\mathcal N)$ in $\AnRing$ is a localization, then the induced map $(B,\mathcal N)\to (B,\mathcal N)\otimes_{(A,\mathcal M)}(B,\mathcal N)$ is an isomorphism. (Hint: First establish that any base change of a localization is again a localization, by noting that $(B,\mathcal N)$ can be regarded as the completion of a non-complete analytic ring structure on $A$.) If $f$ is steady, prove the converse. (Hint: The condition of fully faithfulness is equivalent to the statement that for all $M\in \mathcal D_{\geq 0}(B,\mathcal N)$, the map $M\to M\otimes_{(A,\mathcal M)}(B,\mathcal N)$ is an equivalence. Rewrite the latter as $M\otimes_{(B,\mathcal N)} ((B,\mathcal N)\otimes_{(A,\mathcal M)}(B,\mathcal N))$.)
\end{exercise}

With this definition, we have the following descent of modules.

\begin{proposition}\label{prop:descent} Let $f_i: (A,\mathcal M)\to (A_i,\mathcal M_i)$, $i\in I$, be a finite family of steady localizations in $\AnRing$ such that the functor $\mathcal D(A,\mathcal M)\to \prod_i \mathcal D(A_i,\mathcal M_i)$ is conservative. Let $\mathcal C_I$ be the category of nonempty subsets of $I$; we get a functor $\mathcal C_I\to \AnRing$ sending any $J\subset I$ to $(A_J,\mathcal M_J) := \bigotimes_{i\in J,/(A,\mathcal M)} (A_i,\mathcal M_i)$.

Then for any $M\in \mathcal D(A,\mathcal M)$ the natural map
\[
M\to \lim_{J\in C_I} (M\otimes_{(A,\mathcal M)} (A_J,\mathcal M_J))
\]
is an isomorphism, and
\[
\mathcal D(A,\mathcal M)\to \lim_{J\in C_I} \mathcal D(A_J,\mathcal M_J)
\]
is an equivalence.
\end{proposition}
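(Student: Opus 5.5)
The plan is to reduce the descent statement to a formal argument using three ingredients: each $f_J:(A,\mathcal M)\to(A_J,\mathcal M_J)$ is again a steady localization, base change along it is symmetric monoidal, and projection-type formulas hold. First I record that steady localizations are stable under base change and composition (the proposition above), so each $(A_J,\mathcal M_J)$ is a steady localization of $(A,\mathcal M)$. By the exercise, for $J\subset J'$ we get $(A_J,\mathcal M_J)\otimes_{(A,\mathcal M)}(A_{J'},\mathcal M_{J'})\cong(A_{J'},\mathcal M_{J'})$. In particular, each forgetful functor $\mathcal D(A_J,\mathcal M_J)\to\mathcal D(A,\mathcal M)$ is fully faithful, and the functor $L_J:=-\otimes_{(A,\mathcal M)}(A_J,\mathcal M_J)$ is a localization endofunctor of $\mathcal D(A,\mathcal M)$. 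For composites, the steadiness of $f_i$ gives $L_i L_j\cong L_{\{i,j\}}$, and this formula commutes with the other composites.

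Second, I prove that the unit $M\to\lim_J L_J M$ is an isomorphism. Write $F=\lim_{J\in\mathcal C_I}L_J M$, which is a finite limit, hence commutes with the exact functors $L_i$. For each $i$, I apply $L_i$ to $M\to F$ and get $L_iM\to\lim_J L_{J\cup\{i\}}M$. The diagram $J\mapsto L_{J\cup\{i\}}M$ on $\mathcal C_I$ is the right Kan extension, that is, the \v{C}ech-type limit, of a diagram with a terminal contribution at $J=\{i\}$. Equivalently, the subposet $\{J\ni i\}$ has the initial object $\{i\}$, and the inclusion of this subposet is cofinal for limits of functors that factor through $J\mapsto J\cup\{i\}$. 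The limit therefore equals $L_iM$, and $L_i(M\to F)$ is an isomorphism for every $i$. Conservativity of $\mathcal D(A,\mathcal M)\to\prod_i\mathcal D(A_i,\mathcal M_i)$, applied to the cone, then shows $M\to F$ is an isomorphism. I expect the main obstacle to be this cofinality and limit bookkeeping over $\mathcal C_I$; the rest is formal. If the abstract argument becomes murky, I would instead prove it by induction on $|I|$, using the square $L_i\leftarrow\mathrm{id}\to L_{I\setminus i}$ with corner $L_iL_{I\setminus i}$.

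Third, for the equivalence of categories, I show that the right adjoint $(M_J)_J\mapsto\lim_J M_J$ to the functor $\mathcal D(A,\mathcal M)\to\lim_J\mathcal D(A_J,\mathcal M_J)$ has an isomorphism as counit. Full faithfulness of the left adjoint is exactly the unit statement proved above. For the counit, given a compatible family $(M_J)$ with $L_{J'}M_J\cong M_{J'}$ for $J\subset J'$, I compute $L_{J_0}\lim_J M_J=\lim_J L_{J_0}M_J=\lim_J M_{J\cup J_0}$, using that finite limits commute with $L_{J_0}$ and that the $L$'s compose by union. The same cofinality argument, now using the initial object $J_0$ of $\{J\supseteq J_0\}$, identifies this with $M_{J_0}$. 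Together these give an adjoint equivalence.

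Throughout I work in the stable categories $\mathcal D(-)$, so that finite limits are also colimits and are preserved by the exact functors $L_J$. This is exactly where using $\mathcal D$ rather than $\mathcal D_{\geq 0}$ matters, and it justifies stating the proposition on $\mathcal D$.
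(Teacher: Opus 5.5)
Your proposal is correct and follows the paper's route. The paper's proof just says the statement is formal, as in the proof of \cite[Proposition 10.5]{Condensed}, with two inputs: steady localizations commute with base change, which gives your $L_iL_J\simeq L_{J\cup\{i\}}$, and on the stable $\mathcal D$ the localizations commute with the finite limit over $\mathcal C_I$. Your cofinality step fills in the bookkeeping the paper leaves implicit, and it is right: $J\mapsto J\cup\{i\}$ is left adjoint to the inclusion of $\{J\ni i\}$, so it is limit-cofinal.
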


\begin{proof} This is formal, cf.~(proof of) \cite[Proposition 10.5]{Condensed}. The key point is that steady localizations commute with any base change. We also use that localizations commute with finite limits, for which we have to pass from the prestable $\infty$-category $\mathcal D_{\geq 0}$ to the stable $\infty$-category $\mathcal D$ (where they can be reinterpreted as finite colimits).
\end{proof}

In the next lecture, we discuss some examples. In particular, we will see that for the analytic rings corresponding to Huber pairs $(A,A^+)$, the condition of $(A,A^+)_\solid\to (B,B^+)_\solid$ being steady is closely related to the condition that $A\to B$ is adic in Huber's sense, i.e.~given compatible rings of definition $A_0\subset A$, $B_0\subset B$, $A_0\to B_0$, if $I\subset A_0$ is an ideal of definition, then $IB_0\subset B_0$ is an ideal of definition. Thus the present discussion mirrors a standard discussion on Huber pairs: In that situation, general pushouts do not exist, but they do exist when one of the maps is adic.

\newpage

\section*{Appendix to Lecture XII: Topological invariance of analytic ring structures}

In this appendix, we characterize analytic ring structures in terms of the full sub-$\infty$-category $\mathcal D_{\geq 0}(A,\mathcal M)\subset \mathcal D_{\geq 0}(A)$. As an application, we prove that analytic ring structures are invariant under nilpotent thickenings. First, we have the following result comparing subcategories of connective derived categories, and of abelian categories.

\begin{proposition}\label{prop:derivedvsabelian} Let $A$ be a condensed animated associative ring. The collection of full sub-$\infty$-categories $\mathcal D\subset \mathcal D_{\geq 0}(A)$ stable under all limits and colimits is in natural bijection with the collection of all full subcategories $\mathcal C\subset \pi_0 A\Mod$ stable under all limits, colimits, and extensions, via sending $\mathcal D$ to the intersection with $\pi_0 A\Mod$, and $\mathcal C$ to the full sub-$\infty$-category $\mathcal D$ of all $C\in \mathcal D_{\geq 0}(A)$ such that all $H_i(C)\in\mathcal C$ for $i\geq 0$.
\end{proposition}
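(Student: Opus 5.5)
We check that the two assignments $\mathcal D\mapsto \mathcal D\cap \pi_0A\Mod$ and $\mathcal C\mapsto \mathcal D_{\mathcal C}:=\{C\mid H_i(C)\in\mathcal C\ \forall i\}$ land where claimed and are mutually inverse. Here $\pi_0A\Mod$ is identified with the heart $\mathcal D_{\geq 0}(A)^\heartsuit$ via $M\mapsto M[0]$, with $A$ acting through $A\to\pi_0A$.

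\emph{From $\mathcal D$ to $\mathcal C$.} Let $\mathcal D$ be closed under limits and colimits, and put $\mathcal C=\mathcal D\cap\pi_0A\Mod$.
\begin{itemize}
\item Closure of $\mathcal D$ under loops and suspensions gives $\tau_{\geq 1}C=\Sigma\Omega C\in\mathcal D$ for $C\in\mathcal D$. Hence $H_0(C)[0]=\mathrm{cofib}(\tau_{\geq1}C\to C)\in\mathcal D$, and iterating, every $H_i(C)[0]\in\mathcal D$.
\item Kernels and cokernels in the heart are $\pi_0$ of fibres and $\pi_0$ of cofibres. By the previous point they stay in $\mathcal C$.
\item Products in the heart are computed as in $\mathcal D_{\geq 0}(A)$, since products are exact in condensed modules. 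Direct sums are $\pi_0$ of colimits. So $\mathcal C$ is closed under limits and colimits.
\item An extension $0\to M'\to M\to M''\to 0$ in the heart gives $M=\mathrm{fib}(M''\to \Sigma M')$, a limit of objects of $\mathcal D$; so $M\in\mathcal C$.
\end{itemize}

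\emph{From $\mathcal C$ to $\mathcal D_{\mathcal C}$.} Let $\mathcal C$ be closed under limits, colimits and extensions.
\begin{itemize}
\item \emph{Finite limits and colimits.} A cofibre or fibre sequence gives a long exact sequence on $H_i$. Each $H_i$ of the middle term is then an extension of a cokernel by a kernel of maps between objects of $\mathcal C$, hence lies in $\mathcal C$. For fibres in the connective category, $\tau_{\geq0}$ only truncates $H_0$ to a kernel, which is still in $\mathcal C$.
\item \emph{Filtered colimits and products.} These are exact, so $H_i$ commutes with them.
\item \emph{General limits.} Every limit is built from products and fibres, so $\mathcal D_{\mathcal C}$ is closed under all limits.
\item \emph{Colimits.} Colimits are generated by finite colimits and filtered colimits. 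Equivalently, geometric realizations are filtered colimits of their skeleta, each an iterated pushout, and all of these are covered above.
\end{itemize}
This closure under colimits, especially geometric realizations, is the main point. I would phrase it through skeletal filtrations to avoid spectral sequences.

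\emph{Mutual inverse.} Clearly $\mathcal D_{\mathcal C}\cap\pi_0A\Mod=\mathcal C$. It remains to show $\mathcal D=\mathcal D_{\mathcal C}$ when $\mathcal C=\mathcal D\cap\pi_0A\Mod$.
\begin{itemize}
\item The inclusion $\mathcal D\subset\mathcal D_{\mathcal C}$ is the first bullet of the first part.
\item For the converse, take $C\in\mathcal D_{\mathcal C}$. Then $\tau_{\leq n}C$ is built from $\tau_{\leq n-1}C$ by the fibre sequence $\tau_{\leq n}C\to\tau_{\leq n-1}C\to \Sigma^{n+1}H_n(C)[0]$. Here $\Sigma^{n+1}H_n(C)[0]\in\mathcal D$ by closure under suspension, so by induction $\tau_{\leq n}C\in\mathcal D$.
\item Finally $C=\lim_n\tau_{\leq n}C$ by Postnikov convergence in $\mathcal D_{\geq 0}(A)$. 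This holds because products of condensed modules are exact, so the $\lim^1$ terms vanish on $H_i$. Closure of $\mathcal D$ under limits then gives $C\in\mathcal D$.
\end{itemize}

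The main subtle step is Postnikov convergence in condensed animated modules. I would justify it by evaluating on extremally disconnected $S$, where the statement becomes Postnikov convergence in ordinary module spectra.
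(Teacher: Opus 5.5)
Your proposal is correct and is essentially the paper's argument: you extract $H_i(C)[0]\in\mathcal D$ via $\tau_{\geq 1}C=\Sigma\Omega C$, rebuild any $C$ with all $H_i(C)\in\mathcal C$ by fibres up the Postnikov tower plus Postnikov convergence, and conversely get closure of $\mathcal D_{\mathcal C}$ under fibres, cofibres, products and filtered colimits from long exact sequences and exactness of products and filtered colimits (your choice to realize extensions as fibres is the right formulation in the connective setting). One small slip: for a fibre in $\mathcal D_{\geq 0}(A)$, the truncation $\tau_{\geq 0}$ only discards the $H_{-1}$ term $\mathrm{coker}(H_0X\to H_0Y)$, so $H_0$ of the fibre is an extension of $\ker(H_0X\to H_0Y)$ by $\mathrm{coker}(H_1X\to H_1Y)$ rather than just a kernel, but it still lies in $\mathcal C$.
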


\begin{proof} If $\mathcal D\subset \mathcal D_{\geq 0}(A)$ is stable under all limits and colimits, then in particular for any $C\in \mathcal D$, also $\tau_{\geq 1} C\in \mathcal D$ as the suspension of the loops of $C$, and thus $H_0(C)[0]\in \mathcal D$. It follows that defining $\mathcal C$ to be the intersection of $\mathcal D$ with $\pi_0 A\Mod$, one has $C\in\mathcal D$ if and only if all $H_i(C)\in \mathcal C$. Indeed, the forward direction follows by inductively applying this argument, and the converse follows as $\mathcal D$ is stable under extensions (as these can be written as cofibers) and (Postnikov) limits. Moreover, stability of $\mathcal D$ under limits and colimits implies stability of $\mathcal C$ under limits, colimits, and extensions.

Conversely, if $\mathcal C\subset \pi_0 A\Mod$ is stable under limits, colimits, and extensions, then defining $\mathcal D$ as in the statement of the proposition, one sees that $\mathcal D$ is stable under fibres and cofibres, as well as infinite direct sums or infinite direct products, and thus under all limits and colimits. The intersection of $\mathcal D$ with $\pi_0 A\Mod$ is by definition $\mathcal C$, giving the result.
\end{proof}

Now we can characterize analytic ring structures.

\begin{proposition}\label{prop:characterizeanalyticstructure} Let $A$ be a condensed animated associative ring. A full sub-$\infty$-category $\mathcal D\subset \mathcal D_{\geq 0}(A)$ is of the form $\mathcal D_{\geq 0}(A,\mathcal M)$ for a necessarily unique analytic ring structure $(A,\mathcal M)$ on $A$ if and only if satisfies the following conditions:
\begin{enumerate}
\item The sub-$\infty$-category $\mathcal D\subset \mathcal D_{\geq 0}(A)$ is stable under all limits and colimits.
\item The sub-$\infty$-category $\mathcal D\subset \mathcal D_{\geq 0}(A)$ is stable under $\intHom_{\mathcal D_{\geq 0}(\Cond(\Ab))}(\mathbb Z[S],-)$ for any extremally disconnected profinite set $S$.
\item The inclusion $\mathcal D\subset \mathcal D_{\geq 0}(A)$ admits a left adjoint.
\end{enumerate}
\end{proposition}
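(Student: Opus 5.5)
The plan is to prove the two directions separately, with uniqueness coming from the left adjoint.

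\emph{Necessity.} Suppose $\mathcal D=\mathcal D_{\geq 0}(A,\mathcal M)$ for an analytic ring structure. The proposition following the definition of $\mathcal D_{\geq 0}(A,\mathcal M)$ already gives condition (1), stability under all limits and colimits, and condition (3), existence of the left adjoint $-\otimes_A(A,\mathcal M)$. For condition (2), I will take $C\in\mathcal D$ and a test object $\mathcal M[T]$, and use the adjunction
\[
\intHom(\mathcal M[T],\intHom_{\mathbb Z}(\mathbb Z[S],C))=\intHom_{\mathbb Z}(\mathbb Z[S],\intHom(\mathcal M[T],C)).
\]
Since $C\in\mathcal D$, the inner Hom may be computed with $A[T]$ in place of $\mathcal M[T]$. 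Applying the adjunction again, the whole expression becomes $\intHom(A[T],\intHom_{\mathbb Z}(\mathbb Z[S],C))$. These identifications are natural, so the defining map is an equivalence.

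\emph{Sufficiency.} Given $\mathcal D$ satisfying (1)--(3), let $L$ be the left adjoint and set $\mathcal M[S]:=L(A[S])$, with $S\to A[S]\to\mathcal M[S]$ as the Dirac map. $L$ preserves finite direct sums, so $\mathcal M$ is a theory of measures. The key step is to show that $\mathcal D=\{C : \intHom(\mathcal M[S],C)\simeq\intHom(A[S],C)\ \forall S\}$ with equivalences of \emph{internal} Homs, not merely Homs.
\begin{enumerate}
\item[(a)] For $C\in\mathcal D$, adjunction gives an equivalence of mapping anima $\Hom(\mathcal M[S],C)\simeq\Hom(A[S],C)$. To upgrade this to internal Homs, I evaluate on an extremally disconnected $T$, which identifies both sides with Homs into $\intHom_{\mathbb Z}(\mathbb Z[T],C)$. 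That object lies in $\mathcal D$ by (2), and the underlying-Hom statement applied to it gives the internal one.
\item[(b)] Conversely, if $C$ satisfies the internal-Hom condition, then $\Hom(L(A[S]),C)=\Hom(A[S],C)$ for all $S$. Since the $A[S]$ generate $\mathcal D_{\geq 0}(A)$ under sifted colimits and $L$ preserves colimits, I get $\Hom(LX,C)=\Hom(X,C)$ for all $X$. Taking $X=C$ should exhibit $C$ as a retract of $LC\in\mathcal D$, hence $C\in\mathcal D$; alternatively, the unit $C\to LC$ is an equivalence by Yoneda.
\end{enumerate}

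The analytic ring axiom then follows: every sifted colimit of $\mathcal M[S_i]$ lies in $\mathcal D$ by (1), so (a) applies to it. Uniqueness holds because $\mathcal M[S]$ is forced to equal $L(A[S])$, as remarked after the earlier proposition.

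I expect the main obstacle to be step (b), and in particular verifying that the putative $\mathcal D_{\geq 0}(A,\mathcal M)$ built from $\mathcal M[S]=L(A[S])$ coincides with $\mathcal D$. The point to pin down there is that the left adjoint $L$ is idempotent and lands in $\mathcal D$. This is automatic for a left adjoint to a full inclusion, so the remaining work is only to be careful with naturality of the maps $S\to\mathcal M[S]$. Step (a)'s use of condition (2) is routine but is the only place where (2) enters.
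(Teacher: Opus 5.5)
Your proposal is correct and follows the paper's argument: you define $\mathcal M[S]$ as the image of $A[S]$ under the left adjoint, get the equivalence of mapping anima by adjunction, upgrade it to condensed anima using condition (2), and use condition (1) to see that sifted colimits of the $\mathcal M[S]$ lie in $\mathcal D$. Your extra verifications are details the paper leaves implicit, and they go through as you describe: condition (2) in the forward direction, and the inclusion $\mathcal D_{\geq 0}(A,\mathcal M)\subset\mathcal D$ via the unit $C\to LC$ being an equivalence (which uses (1) to know that $L$ followed by the inclusion preserves colimits).
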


We remark that by the adjoint functor theorem, the existence of the left adjoint is automatic modulo set-theoretic issues.

\begin{proof} This is clear in the forward direction. Conversely, we can define $\mathcal M[S]$ as the image of $A[S]$ under the left adjoint (guaranteed by (3)). Then we formally know that for all $C\in \mathcal D$,
\[
\Hom_{\mathcal D_{\geq 0}(A)}(\mathcal M[S],C)\to \Hom_{\mathcal D_{\geq 0}(A)}(A[S],C)
\]
is an isomorphism of anima. By condition (2), we actually see that it is an isomorphism of condensed anima. As by (1), $\mathcal D$ contains all sifted colimits of $\mathcal M[S]$'s, we see that this defines an analytic ring structure.
\end{proof}

As one application, one can generalize the concept of induced analytic ring structures to the associative case.\footnote{We thank Longke Tang for very helpful discussions on this point.}

\begin{proposition} Let $(A,\mathcal M)$ be an analytic animated associative ring, and let $f: A\to B$ be a map of condensed animated associative rings. Let $\mathcal D\subset \mathcal D_{\geq 0}(B)$ be the full subcategory of all objects that lie in $\mathcal D_{\geq 0}(A,\mathcal M)\subset \mathcal D_{\geq 0}(A)$ when regarded as $A$-module via restriction of scalars along $f$. Then $\mathcal D$ defines an analytic ring structure $(B,\mathcal N)$ on $B$.
\end{proposition}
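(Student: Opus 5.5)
The approach is to verify the three conditions of Proposition~\ref{prop:characterizeanalyticstructure} for $\mathcal D\subset \mathcal D_{\geq 0}(B)$. Throughout, write $f_\ast:\mathcal D_{\geq 0}(B)\to\mathcal D_{\geq 0}(A)$ for restriction of scalars, so that $\mathcal D=f_\ast^{-1}(\mathcal D_{\geq 0}(A,\mathcal M))$.

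\textbf{Condition (1).} The functor $f_\ast$ commutes with all limits and colimits, since both are computed on underlying condensed animated abelian groups. The subcategory $\mathcal D_{\geq 0}(A,\mathcal M)\subset\mathcal D_{\geq 0}(A)$ is stable under all limits and colimits by the proposition on analytic animated associative rings. Hence the preimage $\mathcal D$ is stable under all limits and colimits as well.

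\textbf{Condition (2).} For an extremally disconnected $S$, the internal Hom $\intHom_{\mathcal D_{\geq 0}(\Cond(\Ab))}(\mathbb Z[S],-)$ is computed on underlying objects. It therefore commutes with $f_\ast$, and the forward direction of Proposition~\ref{prop:characterizeanalyticstructure} applied to $(A,\mathcal M)$ shows that $\mathcal D_{\geq 0}(A,\mathcal M)$ is stable under it. So $\mathcal D$ is stable under this operation too.

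\textbf{Condition (3).} This is the main obstacle: we need a left adjoint to $\mathcal D\subset\mathcal D_{\geq 0}(B)$. Invoking the adjoint functor theorem, which the remark after Proposition~\ref{prop:characterizeanalyticstructure} allows modulo set-theoretic issues, would be unsatisfying, so I would construct it explicitly, following the pattern of the pushout proposition. Let $L_A=-\otimes_A(A,\mathcal M)$. I would look for an endofunctor $L$ of $\mathcal D_{\geq 0}(B)$ with $f_\ast L\simeq L_A f_\ast$. The natural candidate is $L(N)=N\otimes_{B}(B\otimes_A (A,\mathcal M)\otimes_A B)$, but in the associative setting this does not obviously exist.

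Instead I would use the kernel. The kernel of $L_A$ is generated under colimits by the cones $K_S$ of $A[S]\to\mathcal M[S]$. An object $N\in\mathcal D_{\geq 0}(B)$ lies in $\mathcal D$ exactly when it is right orthogonal to these, namely $\intHom_A(K_S,f_\ast N)=0$ for all $S$. By adjunction this is equivalent to $\intHom_B(B\otimes_A K_S,N)=0$, using condition (2) to pass to internal Homs. So $\mathcal D$ is the right orthogonal of the set of objects $\{B\otimes_A K_S[i]\}_{S,i}$; note that $B\otimes_A K_S$ is a left $B$-module, which is what we need.

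A right orthogonal of a \emph{set} of objects in the presentable $\infty$-category $\mathcal D_{\geq 0}(B)$ (handling the set-theoretic issue by working $\kappa$-condensed and letting $\kappa$ vary) is a localization, and small-object or Bousfield localization gives the left adjoint. Equivalently, the left adjoint can be built as a transfinite iteration of cofibers of maps from these generators. The hard part is making sure this orthogonality description is exact, in particular that negative shifts are not needed; I would check this against the argument in Proposition~\ref{prop:characterizeanalyticstructure}. Once conditions (1)--(3) hold, that proposition yields the analytic ring structure $(B,\mathcal N)$.
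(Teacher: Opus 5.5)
You handle conditions (1) and (2) correctly, and they match what the paper calls ``clear''. The gap is in condition (3), at exactly the point you left open. Negative shifts \emph{are} needed, and the orthogonality description you propose is false inside $\mathcal D_{\geq 0}(B)$.

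For connective $X$, the mapping spectrum $\mathrm{map}(K_S,X)$ can have nonzero $\pi_{-1}=\Hom_{\mathcal D(A)}(K_S,X[1])$. Contractibility of $\intHom_{\mathcal D_{\geq 0}}(K_S[i],X)$ for all $i\geq 0$ only shows that $\Hom(\mathcal M[S],X)\to X(S)$ is an isomorphism on $\pi_j$ for $j\geq 1$ and injective on $\pi_0$.

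Here is a concrete counterexample. Take $A=B=\mathbb Z$ with the analytic ring structure $\mathcal M[S]=\mathbb Q[S]$, whose complete objects are the connective complexes of condensed $\mathbb Q$-vector spaces. Then $K_S=\mathbb Z[S]\otimes\mathbb Q/\mathbb Z$ and $K_S\otimes\mathbb Z[T]=K_{S\times T}$. The object $X=\mathbb Z$ is right orthogonal to all $K_S[i]$ with $i\geq 0$, because
$\Hom(K_{S\times T},\mathbb Z)=\Hom(\mathbb Q/\mathbb Z,C(S\times T,\mathbb Z))=0$.
Yet $\mathbb Z$ is not rational. The failure is seen only by $\Ext^1(\mathbb Q/\mathbb Z,\mathbb Z)\neq 0$, that is, by $K_S[-1]$, which is not an object of $\mathcal D_{\geq 0}(B)$.

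If you pass to the stable category $\mathcal D(B)$ to allow all shifts, a Bousfield localization exists. You would then still owe a proof that it preserves connective objects, and you do not give one. Also, Bousfield localization and the small object argument are the adjoint functor theorem in another form. So they do not give the explicit construction you said you wanted, and your ``let $\kappa$ vary'' leaves the compatibility of the left adjoints across cardinals unaddressed.

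One repair within your framework is to localize at the \emph{morphisms} $B[S\times T]\to B\otimes_A(\mathcal M[S]\otimes\mathbb Z[T])$ rather than at their cofibers. By the adjunction $\Hom_B(B\otimes_A Y,N)\simeq\Hom_A(Y,f_\ast N)$, the objects of $\mathcal D_{\geq 0}(B)$ that are local for these morphisms are exactly those of $\mathcal D$. Local objects for a set of morphisms always form a reflective subcategory of a presentable $\infty$-category, though this still carries the set-theoretic caveat above.

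The paper avoids all of this by writing the left adjoint down explicitly:
\begin{itemize}
\item Let $F(N)$ be the pushout of $N\leftarrow B\otimes_A N\to B\otimes_A(N\otimes_A(A,\mathcal M))$.
\item The same adjunction, together with the universal property of $A$-completion, gives $\Hom_B(F(N),N')\simeq\Hom_B(N,N')$ for all $N'\in\mathcal D$.
\item After restriction to $A$, the map $N\to F(N)$ factors through the complete module $N\otimes_A(A,\mathcal M)$.
\item Hence $L(N)=\colim_n F^n(N)$ restricts, by interleaving, to a sequential colimit of complete $A$-modules. So $L(N)$ lies in $\mathcal D$, and it is the required left adjoint without any appeal to adjoint functor theorems.
\end{itemize}
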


Unfortunately, the free complete modules $\mathcal N[S]$ are in general subtle to describe. We also warn the reader that if one tries to apply this proposition to $B=\mathcal M[\ast]$ to form the completion of $(A,\mathcal M)$, it is not clear that the forgetful functor $\mathcal D_{\geq 0}(B,\mathcal N)\to \mathcal D_{\geq 0}(A,\mathcal M)$ is an equivalence; we believe this is false in general. Thus, completion is not wellbehaved in the associative case.

\begin{proof} We have to check the conditions of Proposition~\ref{prop:characterizeanalyticstructure}. Parts (1) and (2) are clear, and for part (3) it remains to describe the completion functor. Let $N\in \mathcal D_{\geq 0}(B)$. One can forget to $\mathcal D_{\geq 0}(A)$ and form the completion there; base extending to $B$, we get a pushout diagram
\[
N\leftarrow B\otimes_A N\to B\otimes_A (N\otimes_A (A,\mathcal M)).
\]
Let $F(N)\in \mathcal D_{\geq 0}(B)$ denote the pushout, so $F$ is an endofunctor of $\mathcal D_{\geq 0}(B)$ with a map $\mathrm{id}\to F$. Moreover, for any $N'\in \mathcal D$, the induced map
\[
\mathrm{Hom}_{\mathcal D_{\geq 0}(B)}(F(N),N')\to \mathrm{Hom}_{\mathcal D_{\geq 0}(B)}(N,N')
\]
is an isomorphism. Also, in $A$-modules, the map $N\to F(N)$ factors over a complete module, namely $N\otimes_A (A,\mathcal M)$. It follows that $L(N)=\mathrm{colim}_n F^n(N)$ defines the desired completion functor.
\end{proof}

As another application, one can show that analytic ring structures are independent of the animated structure.

\begin{proposition}\label{prop:pi0invariance} Let $f: A\to B$ be map of condensed animated associative rings such that $\pi_0 A\to \pi_0 B$ is an isomorphism.

Then there is a bijective correspondence between analytic ring structures $(A,\mathcal M)$ on $A$ and analytic ring structures $(B,\mathcal N)$ on $B$, subject to the condition that they yield the same complete objects in the abelian heart $\pi_0 A\Mod\cong \pi_0 B\Mod$. In the forward direction, this is given by taking the induced analytic ring structure.
\end{proposition}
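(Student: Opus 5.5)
The plan is to work entirely through Proposition~\ref{prop:characterizeanalyticstructure} and Proposition~\ref{prop:derivedvsabelian}. Together they say that an analytic ring structure on $A$ amounts to a full subcategory $\mathcal C\subset \pi_0 A\Mod$ that satisfies three conditions:
\begin{enumerate}
\item it is stable under all limits, colimits and extensions;
\item it is stable under $\intHom(\mathbb Z[S],-)$ for extremally disconnected $S$;
\item the corresponding $\mathcal D_A\subset\mathcal D_{\geq 0}(A)$, consisting of all $C$ with $H_i(C)\in\mathcal C$, admits a left adjoint.
\end{enumerate}
Condition (2) can be checked on the heart, because $\intHom(\mathbb Z[S],-)$ is exact on condensed abelian groups (as $S$ is extremally disconnected) and so commutes with the $H_i$. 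Conditions (1) and (2) therefore depend only on $\mathcal C$ inside $\pi_0A\Mod\cong\pi_0B\Mod$. The whole statement thus reduces to one claim: for a fixed $\mathcal C$ satisfying (1) and (2), the inclusion $\mathcal D_A\subset\mathcal D_{\geq 0}(A)$ has a left adjoint if and only if $\mathcal D_B\subset\mathcal D_{\geq 0}(B)$ does.

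For the forward direction, where $\mathcal D_A$ has a left adjoint, I take the induced structure described in the proposition preceding this one. There $\mathcal D_B$ is defined as the set of $N$ whose restriction to $A$ lies in $\mathcal D_A$. Since $H_i$ is unchanged by restriction of scalars and the hearts agree, this is exactly the set of $N$ with all $H_i(N)\in\mathcal C$. The earlier proposition already constructs the left adjoint, via $L(N)=\colim_n F^n(N)$. Hence the induced structure has the same complete objects in the heart, which gives the forward map and shows it is compatible with the claimed description.

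For the converse, where $\mathcal D_B$ has a left adjoint, I need a left adjoint for $\mathcal D_A$. My approach is to avoid the adjoint functor theorem and build the adjoint by Postnikov induction, using that the hearts coincide. Given $M\in\mathcal D_{\geq 0}(A)$, first form $\pi_0$-level completion: the left adjoint of $\mathcal D_B$ restricted to hearts gives a left adjoint $\pi_0 A\Mod\to\mathcal C$, since a left adjoint preserves connectivity and its $H_0$ gives the reflection onto the heart. Then proceed by induction on truncations $\tau_{\leq n}M$. I would use that $\tau_{\leq n}A\to\tau_{\leq n}B$ is a square-zero-type tower with the same $\pi_0$, so that modules over $\tau_{\leq n}A$ are built from $\pi_0$-modules by extensions. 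Here $\mathcal D_A$ is closed under extensions and limits, and completion should be constructed level by level. Concretely, I would show that $M\mapsto (B\otimes_A M)\otimes_B(B,\mathcal N)$, restricted to $A$, together with its fibre, lets one iterate an approximation $M\to G(M)$ in which $\mathrm{cofib}$ becomes more and more connected relative to $\mathcal C$. The desired adjoint would then be $\colim_n G^n$, mirroring the $F^n$ construction above.

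The main obstacle is this converse step. Completion of a $B$-module does not obviously yield an $A$-module map with the right universal property, because $B\otimes_A M$ is not $M$. The saving point should be that $\mathrm{fib}(A\to B)$ is $1$-connective, so $\mathrm{fib}(M\to B\otimes_A M)$ is one degree more connective than $M$. On the heart level the reflection is already correct, so each iteration improves connectivity by one, and convergence follows from Postnikov completeness of $\mathcal D_{\geq 0}(A)$, using that $\mathcal D_A$ is closed under limits. If this direct argument gets stuck, the fallback is the adjoint functor theorem: $\mathcal D_A$ is closed under limits and, being closed under all colimits, under $\kappa$-filtered colimits. Presentability of $\mathcal D_A$ (accessibility) then remains to be checked, and I would deduce it by comparing with $\mathcal D_B$, again using the identical heart.
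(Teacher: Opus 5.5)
Your reduction is the right one. By Propositions~\ref{prop:derivedvsabelian} and~\ref{prop:characterizeanalyticstructure}, everything comes down to showing that, for a fixed $\mathcal C$, the inclusion $\mathcal D_A\subset\mathcal D_{\geq 0}(A)$ admits a left adjoint if and only if $\mathcal D_B\subset\mathcal D_{\geq 0}(B)$ does. Your forward direction via the induced structure is fine.

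\textbf{The converse is not proved.} Producing the left adjoint on $\mathcal D_A$ is the whole content of the proposition, and the proposal does not supply it. $G$ is never defined, no universal property is verified, and you switch between a colimit $\colim_n G^n$ and Postnikov convergence, which is a limit.

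The mechanism you name is also wrong as stated. In general $\mathrm{fib}(A\to B)$ is only $0$-connective, with $\pi_0=\mathrm{coker}(\pi_1A\to\pi_1B)$; only the cofibre is $1$-connective. It becomes $1$-connective only after reducing to $B=\pi_0A$ by $2$-out-of-$3$, which the paper does first. Even there, this connectivity is not what drives the argument.

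\textbf{The missing idea.} The universal property only needs to be tested, one Postnikov layer at a time, against the objects $X[i+1]$ with $X\in\mathcal C$, and these are restricted from $B$. After reducing to $B=\pi_0A$, the paper represents $M\mapsto M(S)$ on $\mathcal D_A$ inductively:
\begin{enumerate}
\item Suppose an $i$-truncated $\mathcal M_i[S]\in\mathcal D_A$ represents it on $i$-truncated objects, starting from $\mathcal M_0[S]=\pi_0\mathcal N[S]$.
\item Set $N_i=\mathrm{cofib}(A[S]\to\mathcal M_i[S])$ and $N_i'=N_i\otimes_A(B,\mathcal N)$.
\item Since $X[i+1]$ is a complete $B$-module, $\Hom_A(N_i,X[i+1])=\Hom_B(B\otimes_AN_i,X[i+1])=\Hom(N_i',X[i+1])$.
\item The inductive hypothesis makes this discrete for all $X\in\mathcal C$. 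That forces $N_i'$ to be $(i+1)$-connective, so maps from $N_i'$ into $X[i+1]$ agree over $A$ and over $B$.
\item Take $\mathcal M_{i+1}[S]$ to be the fibre of $\mathcal M_i[S]\to N_i'$. Comparing the resulting sequences of mapping anima gives representability on $(i+1)$-truncated objects.
\end{enumerate}

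As for your fallback: the paper does remark that the left adjoint exists by the adjoint functor theorem, but only modulo set-theoretic issues. Condensed modules do not form a presentable category, which is why the proof builds the adjoint by hand. You leave accessibility unchecked.

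\textbf{A smaller error in condition (2).} $\intHom(\mathbb Z[S],-)$ is not exact on $\Cond(\Ab)$ for extremally disconnected $S$. Indeed $\intHom(\mathbb Z[S],M)(T)=M(S\times T)$, and $S\times T$ is not extremally disconnected. So the higher $\underline{\Ext}^k(\mathbb Z[S],X)$ need not vanish, and $\intHom(\mathbb Z[S],-)$ does not commute with the $H_i$.

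Your conclusion that condition (2) depends only on $\mathcal C$ is still true, but the argument is the paper's. Use Postnikov limits, the fibre sequences $\tau_{\leq n}C\to\tau_{\leq n-1}C\to H_n(C)[n+1]$, and closure of $\mathcal D_A$ under limits to reduce to $C=X[j]$ with $X\in\mathcal C$ and all $j\geq 0$. All shifts are needed, precisely to capture the higher $\underline{\Ext}$'s. These objects and their internal Homs are restrictions of $\pi_0$-modules, so the condition is the same over $A$ and over $B$.
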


\begin{remark} Everything applies, with identical proofs, to general $\mathbb E_1$-algebras in condensed connective spectra. For example one can define a solid analytic ring structure on $\mathbb S$, and for $0<r<1$ an $r$-liquid analytic ring structure on $\mathbb S[T^{-1}]$ (whose completion is some condensed $\mathbb E_\infty$-ring $\mathbb S((T))_{>r}$ that seems subtle to define directly).
\end{remark}

\begin{proof} By 2-out-of-3, we can reduce to the case where $f$ is the projection $A\to \pi_0 A$.

The hearts of $\mathcal D_{\geq 0}(A)$ and $\mathcal D_{\geq 0}(\pi_0 A)$ agree. The corresponding abelian categories of complete modules are identified under taking induced analytic ring structures. To see that this defines the desired bijection, we need to see that for any analytic ring structure $(\pi_0 A,\mathcal M')$ on $\pi_0 A$ with corresponding heart $\mathcal C\subset \pi_0 A\Mod$, one can define an analytic ring structure $(A,\mathcal M)$ such that $C\in \mathcal D_{\geq 0}(A,\mathcal M)$ if and only if all $H_i(C)\in \mathcal C$. By Proposition~\ref{prop:derivedvsabelian}, this defines some full sub-$\infty$-category $\mathcal D\subset \mathcal D_{\geq 0}(A)$ stable under all limits and colimits. We need to check conditions (2) and (3) of Proposition~\ref{prop:characterizeanalyticstructure}. For condition (2), we can assume (by Postnikov limits and filtrations) that $C=X[j]$ for some $X\in \mathcal C$ and $j\geq 0$, in which case $C$ comes from a $\pi_0 A$-module, and the result follows from that case.

For condition (3), we need to see that the functor $M\mapsto M(S)$ on $\mathcal D$ is representable for all extremally disconnected profinite sets $S$. We prove by induction on $i$ that there is some $i$-truncated $\mathcal M_i[S]\in \mathcal D$ with a map $A[S]\to \mathcal M_i[S]$ such that
\[
\Hom_{\mathcal D}(\mathcal M_i[S],M)\to M(S)
\]
is an isomorphism of anima for all $i$-truncated $M\in \mathcal D$. For $i=0$, we can take $\mathcal M_0[S]=\pi_0 \mathcal M'[S]$. Given $\mathcal M_i[S]$, let $N_i$ be the cofiber of $A[S]\to \mathcal M_i[S]$ in $\mathcal D_{\geq 0}(A)$ and let
\[
N_i'=N_i\otimes_A (\pi_0 A,\mathcal M').
\]
As the forgetful functor $\mathcal D_{\geq 0}(\pi_0 A,\mathcal M')\to \mathcal D_{\geq 0}(A)$ lands in $\mathcal D$, we can regard $N_i'$ as an object of $\mathcal D$. Then we let $\mathcal M_{i+1}[S]$ be the fibre of $\mathcal M_i[S]\to N_i\to N_i'$, and we note that $A[S]\to \mathcal M_i[S]$ factors naturally over $\mathcal M_{i+1}[S]$.

To see that
\[
\Hom_{\mathcal D}(\mathcal M_{i+1}[S],M)\to M(S)
\]
is an isomorphism of anima for all $i+1$-truncated $M\in \mathcal D$, it suffices to check for $M=X[i+1]$ with $X\in \mathcal C$. Note that for such $M$, we have the fibre sequence
\[
\Hom_{\mathcal D_{\geq 0}(A)}(N_i,M)\to \Hom_{\mathcal D}(\mathcal M_i[S],M)\to M(S)
\]
where the right-most term is concentrated in degree $i+1$, while the middle term is isomorphic to $M(S)$ after applying $\tau_{\geq 1}$, by induction. It follows that $\Hom_{\mathcal D_{\geq 0}(A)}(N_i,M)$ sits in degree $0$. This can also be written as
\[
\Hom_{\mathcal D_{\geq 0}(\pi_0 A)}(N_i\otimes_A \pi_0 A,M)
\]
which again for all $M=X[i+1]$ with $X\in \mathcal C$ sits in degree $0$. It follows that $N_i'=N_i\otimes_A (\pi_0 A,\mathcal M')$ lies in $\mathcal D_{\geq i+1}(\pi_0 A,\mathcal M')$, and we still have
\[
\Hom_{\mathcal D_{\geq 0}(\pi_0 A)}(N_i',M) = \Hom_{\mathcal D_{\geq 0}(\pi_0 A)}(N_i\otimes_A \pi_0 A,M).
\]
On the other hand, the map
\[
\Hom_{\mathcal D_{\geq 0}(\pi_0 A)}(N_i',M)\to \Hom_{\mathcal D_{\geq 0}(A)}(N_i',M)
\]
is also an isomorphism, as $N_i'\in \mathcal D_{\geq i+1}$ and $M$ is $i+1$-truncated.

In summary, we see that the natural map
\[
\Hom_{\mathcal D_{\geq 0}(A)}(N_i,M)\leftarrow \Hom_{\mathcal D}(N_i',M)
\]
is an isomorphism, where both sides are concentrated in degree $0$. We defined $\mathcal M_{i+1}[S]$ as the fiber of $\mathcal M_i[S]\to N_i'$. Recall that $N_i'$ was in $\mathcal D_{\geq i+1}$, in particular its $\pi_0$ vanishes. This implies that we get a cofiber sequence
\[
\Hom_{\mathcal D}(N_i',M)\to \Hom_{\mathcal D}(\mathcal M_i[S],M)\to \Hom_{\mathcal D}(\mathcal M_{i+1}[S],M)
\]
and comparing with
\[
\Hom_{\mathcal D_{\geq 0}(A)}(N_i,M)\to \Hom_{\mathcal D}(\mathcal M_i[S],M)\to M(S)
\]
gives the result.
\end{proof}

One can moreover prove invariance under nilpotent thickenings.

\begin{proposition}\label{prop:nilinvariance} Let $f: A\to B$ be a map of condensed animated associative rings such that $\pi_0 A\to \pi_0 B$ is surjective with nilpotent kernel $I\subset A$. Then analytic ring structures $(A,\mathcal M)$ on $A$ are in bijection with analytic ring structures $(B,\mathcal N)$ on $B$, via taking induced analytic ring structures.
\end{proposition}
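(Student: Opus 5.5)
By Proposition~\ref{prop:pi0invariance}, both $A$ and $B$ may be replaced by their $\pi_0$'s: analytic ring structures on $A$ correspond bijectively to those on $\pi_0 A$ via induction, and likewise for $B$. Induction is transitive (a composite of restrictions of scalars), so the square of inductions commutes. We are therefore reduced to a surjection of discrete condensed rings $A\to B=A/I$ with $I^n=0$. Filtering by powers of $I$, i.e.\ $A\to A/I^{n-1}\to\cdots\to A/I$, and using transitivity again, it suffices to treat square-zero $I$. The proof is then two steps: injectivity of induction, and surjectivity.

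\emph{Injectivity.} Let $(A,\mathcal M)$ be an analytic ring structure, and let $\mathcal C\subset A\Mod$ be its heart, which is closed under limits, colimits and extensions. The induced heart on $B$ is $\mathcal C_B=\{X\in B\Mod : X|_A\in\mathcal C\}$. I claim $\mathcal C$ is recovered as the class of $A$-modules $M$ with $M/IM$ and $IM$ in $\mathcal C_B$. Indeed, both of these are $B$-modules because $I^2=0$. If $M\in\mathcal C$, then $M/IM=\mathrm{coker}(I\otimes_A M\to M)$. Here $I\otimes_A M$ is the completion-compatible tensor product (the ideal $I$ is itself a kernel of a map in $\mathcal C$ after completion). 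I would argue instead that $IM$ is the image of $\bigoplus_{S\to I} M$ under multiplication, via maps from $\mathcal M[S]\otimes M$, so it lies in $\mathcal C$ since $\mathcal C$ is closed under colimits and images. Conversely, $M$ is an extension of $M/IM$ by $IM$, and $\mathcal C$ is closed under extensions. So $\mathcal C$ is determined by $\mathcal C_B$, and by Proposition~\ref{prop:derivedvsabelian} the analytic structure is determined too.

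\emph{Surjectivity.} Given $(B,\mathcal N)$ with heart $\mathcal C_B$, define $\mathcal C$ as the class of $A$-modules $M$ admitting a finite filtration whose graded pieces, which are $B$-modules, lie in $\mathcal C_B$. Equivalently, $M\in\mathcal C$ iff $M/IM$ and $IM$ lie in $\mathcal C_B$; this equivalence needs proof. Let $\mathcal D\subset\mathcal D_{\geq 0}(A)$ consist of the $C$ whose $H_i$ all lie in $\mathcal C$. By Proposition~\ref{prop:characterizeanalyticstructure} I then need three things.
\begin{enumerate}
\item Closure of $\mathcal C$ under limits, colimits and extensions. Rather than working with $\mathcal C$ directly, I would handle this at the derived level: $C\in\mathcal D$ iff $C\otimes^L_A B$ and $I\otimes^L_A C$ restrict into $\mathcal D_{\geq 0}(B,\mathcal N)$. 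Since $C$ sits in a fibre sequence $I\otimes^L_A C\to C\to B\otimes^L_A C$, this description is evidently stable under limits and colimits (the tensor products commute with colimits, and with limits via Postnikov towers). The main obstacle is showing that this derived description agrees with the $H_i$-description.
\item Closure under $\intHom(\mathbb Z[S],-)$. This is clear from the derived description, since $\intHom(\mathbb Z[S],-)$ commutes with the fibre sequence.
\item A left adjoint. I would construct it by iteration, as in the proof of the associative induced-structure proposition: set $F(N)=$ the pushout of $N\leftarrow B\otimes_A N\to (B\otimes_A N)\otimes_B(B,\mathcal N)$, and take a colimit of the iterates. Nilpotence of $I$ should make this stabilize after finitely many steps. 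Alternatively, invoke the adjoint functor theorem.
\end{enumerate}
Finally, the induced structure on $B$ recovers $\mathcal C_B$, since for a $B$-module $X$ the filtration is trivial. This completes the proposed argument.
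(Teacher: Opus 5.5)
Your reductions (to $\pi_0$ via Proposition~\ref{prop:pi0invariance}, then to $I^2=0$) are the paper's. Everything after that rests on $IM$, and this is where the argument breaks. It is not true that $IM\in\mathcal C_A$ for $M\in\mathcal C_A$. Maps out of $\mathcal M[S]\otimes M$ only see the image of the \emph{completed} multiplication map, and that image can be strictly larger than $IM$.

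For a counterexample, let $S=\mathbb N\cup\{\infty\}$ and let $A=\mathbb Z\oplus\mathbb Z[S]$ be the trivial square-zero extension, with $I=\mathbb Z[S]$ and $B=\mathbb Z$. Give $A$ the analytic structure induced from $\mathbb Z_\solid$, so that $\mathcal C_A$ consists of the $A$-modules with solid underlying group and $\mathcal C_B=\Solid$. Let $\phi\colon\mathbb Z[S]\to\mathbb Z_p$ be induced by $n\mapsto p^n$, $\infty\mapsto 0$. Put $M=\mathbb Z\oplus\mathbb Z_p$ with $(a,\iota)\cdot(x,y)=(ax,ay+x\phi(\iota))$.
\begin{itemize}
\item $M$ is solid, and it is an extension of $A$-modules of $\mathbb Z$ by $\mathbb Z_p$, both lying in $\mathcal C_B$.
\item However, $IM=0\oplus\phi(\mathbb Z[S])$ has underlying group $\mathbb Z$.
\item Any solid subgroup of $\mathbb Z_p$ containing $\phi(S)$ contains the image of $\mathbb Z[S]^\solid$, which is all of $\mathbb Z_p$. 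So $IM$ is not solid.
\end{itemize}
Hence your injectivity argument fails. The equivalence you flag as needing proof is actually false: the class $\{M: M/IM,\ IM\in\mathcal C_B\}$ is not closed under extensions. The class to use is the extension closure of $\mathcal C_B$, which is the paper's second characterization. By this example, the paper's own phrasing via $M/IM$ and $IM$ should be read with $M[I]$ and $M/M[I]$ in their place.

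The right canonical submodule is the $I$-torsion $M[I]=\ker(M\to\intHom(I,M))$. At least for commutative $A$:
\begin{itemize}
\item $M[I]$ is a kernel of a map between complete modules, by condition (2) of Proposition~\ref{prop:characterizeanalyticstructure}, so it is complete;
\item it contains $IM$, since $I^2=0$;
\item $M/M[I]$ is killed by $I$.
\end{itemize}
This shows $\mathcal C_A$ is the extension closure of $\mathcal C_A\cap B\Mod$, which is the injectivity you need.

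For surjectivity, your derived substitute for $\mathcal D$ (all $C$ with $B\otimes^L_A C$ and $I\otimes^L_A C$ in $\mathcal D_{\geq 0}(B,\mathcal N)$) is the wrong category. For $X\in\mathcal C_B$ one has $H_0(I\otimes^L_A X)=I\otimes_B X$, an uncompleted tensor product. In the example, $I\otimes_A B=\mathbb Z[S]$ is not solid, so this class does not even contain $B$. The ``main obstacle'' you isolate is therefore a false statement, not a missing lemma.

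Your left adjoint is also ill-formed. For an $A$-module $N$ there is no map $B\otimes_A N\to N$. In the induced-structure proposition, $N$ was a module over the larger ring, so the action map existed; here the roles are reversed. You also give no reason why the iteration would stabilize.

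The paper proceeds as follows:
\begin{itemize}
\item It lets $\mathcal D$ consist of the $C$ with all $H_i(C)$ in the extension closure of $\mathcal C_B$, using Proposition~\ref{prop:derivedvsabelian}.
\item It checks condition (2) of Proposition~\ref{prop:characterizeanalyticstructure} on $X[j]$, by filtering $X$ by two objects of $\mathcal C_B$.
\item It checks condition (3) by building representing objects $\mathcal M_i[S]$ inductively on truncation, as in the proof of Proposition~\ref{prop:pi0invariance}. At each level it first handles $i$-truncated objects whose $H_i$ is killed by $I$, where the $(B,\mathcal N)$-completion is available, and then all $i$-truncated objects.
\end{itemize}
Falling back on the adjoint functor theorem is legitimate only modulo set-theoretic issues, as remarked after Proposition~\ref{prop:characterizeanalyticstructure}.
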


\begin{proof} By Proposition~\ref{prop:pi0invariance}, we can assume that $A$ and $B$ are $0$-truncated. We may also assume that $I^2=0$ by induction. In the forward direction, the functor is given by taking an induced analytic ring structure. Let us define the inverse functor, so take an analytic ring structure $(B,\mathcal N)$ on $B$, corresponding to some abelian category $\mathcal C_B\subset B\Mod$. Let $\mathcal C\subset A\Mod$ be the subcategory of all $M\in A\Mod$ such that $M/IM$ and $IM$, which lie naturally in $B\Mod$, are in $\mathcal C_B$. This contains $\mathcal C_B$ and is stable under all limits, colimits, and extensions; it can also be characterized as the category generated by $\mathcal C_B$ under extensions. By Proposition~\ref{prop:derivedvsabelian}, it corresponds to some full sub-$\infty$-category $\mathcal D\subset \mathcal D_{\geq 0}(A)$. We need to check the criteria of Proposition~\ref{prop:characterizeanalyticstructure}. Condition (2) can again be checked for $C=X[j]$ for some $X\in \mathcal C$ and $j\geq 0$, where it follows by filtering in terms of two objects from $\mathcal C_B$.

It remains to check condition (3). This follows from the argument in the proof of Proposition~\ref{prop:pi0invariance} with a slightly refined induction. Namely, besides the class of $i$-truncated objects of $\mathcal D$, we also consider the class of $i$-truncated objects $C\in \mathcal D$ such that $H_i(C)$ is killed by $I$. Then we start the induction with this restricted class of $0$-truncated objects, then go to all $0$-truncated objects, then to the restricted class of $1$-truncated objects, to all $1$-truncated objects, etc.~.
\end{proof}

\newpage

\section*{Appendix to Lecture XII: Frobenius}

Fix a prime $p$. Consider a condensed animated commutative $\mathbb F_p$-algebra $A$, equipped as an associative ring with an analytic ring structure $(A,\mathcal M)$. One may wonder whether the Frobenius $\phi: A\to A$ induces a map of analytic rings $(A,\mathcal M)\to (A,\mathcal M)$. This condition is stable under all colimits. The goal of this appendix is to show that it is true in some important cases. We denote by $C_p$ the cyclic group of order $p$.

\begin{proposition}\label{prop:frobexists} Extend the functor $S\mapsto \mathcal M[S]$ to all profinite sets by using simplicial resolutions by extremally disconnected sets. Assume the following condition:
\begin{assumption} For all profinite sets $S$ with $C_p$-action and fixed points $S_0 = S^{C_p}$, the natural map
\[
\mathcal M[S_0]^{tC_p}\to \mathcal M[S]^{tC_p}
\]
is an isomorphism.
\end{assumption}

Then $\phi$ induces a map of analytic rings $(A,\mathcal M)\to (A,\mathcal M)$.

Moreover, the assumption is satisfied in the following two situations:
\begin{enumerate}
\item if $(A,\mathcal M)$ is an algebra over $\mathbb F_{p,\solid}$;
\item if for all profinite sets $S$, $\mathcal M[S]$ is $m$-truncated for some $m$.
\end{enumerate}
\end{proposition}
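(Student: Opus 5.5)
The plan is to express the Frobenius through the Tate construction. For a condensed animated commutative $\mathbb F_p$-algebra $A$ and $M\in\mathcal D_{\geq 0}(A)$, the $p$-fold tensor power $M^{\otimes p}$ (over $A$) carries a $C_p$-action. The Tate diagonal $M\to (M^{\otimes p})^{tC_p}$ is $\phi$-semilinear. For $M=A$ it recovers the Frobenius $A\to A^{tC_p}\to$ (composed with the map to the $\pi_0$-part), i.e.\ it is the usual factorization of $\phi$ through the Tate construction with trivial action.

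To show that $\phi$ is a map of analytic rings, I will verify the criterion recalled after the definition of maps (\cite[Proposition 7.14]{Condensed}). Concretely, the restriction $\phi_*\mathcal M[T]$ of each $\mathcal M[T]$ along $\phi$ should lie in $\mathcal D_{\geq 0}(A,\mathcal M)$. Equivalently, $A[S]\otimes_{A,\phi}A\to \mathcal M[S]\otimes_{A,\phi}A$ should become an isomorphism after $\mathcal M$-completion. The key construction is a natural map
\[
\mathcal M[S]\to (\mathcal M[S]^{\otimes p})^{tC_p}\to \mathcal M[S^p]^{tC_p}\simeq \mathcal M[S]^{tC_p},
\]
where the last step uses the assumption for $S^p$ with its cyclic permutation action. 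Its fixed points are the diagonal $S$, on which $C_p$ acts trivially.

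Since $\mathcal M[S]^{tC_p}$ is built from $\mathcal M[S]$ by limits and colimits (Tate is a cofiber of norm, i.e.\ a colimit of limits), it lies in $\mathcal D_{\geq 0}(A,\mathcal M)$ or its stable version, using stability under limits and colimits. For trivial action, the $A$-module structure on $\mathcal M[S]^{tC_p}$ is the Frobenius-twisted one via the Tate diagonal $A\to A^{tC_p}$. This produces a $\phi$-semilinear map from $\mathcal M[S]$ into a complete object extending $S\to\mathcal M[S]$, giving the required factorization.

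The main obstacle is arranging these identifications coherently. In particular, the linear Frobenius map should land in $\phi_*\mathcal M[S]$ itself rather than in the Tate object. To handle this, I would use that for trivial action on an $\mathbb F_p$-module $N$, one has $N^{tC_p}\simeq N((u))$-type periodicity. The degree-zero projection $N^{tC_p}\to N$ recovers the Frobenius twist, and I would check this on generators $A[S]$, where $\mathcal M=$ trivial and the classical Tate diagonal description applies.

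For the two cases, the approach splits as follows.

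In case (2), $\mathcal M[S]$ is bounded. For a bounded $C_p$-module of the form $\mathcal M[S]$, with $S=S_0\sqcup(S\setminus S_0)$, the Tate construction of the induced part vanishes. I would stratify $S$ by the closed subset $S_0$: the cofiber of $\mathcal M[S_0]\to\mathcal M[S]$ is built, via a resolution of $S\setminus S_0$ by free $C_p$-profinite sets, from induced modules $\mathcal M[C_p\times T]=\mathrm{Ind}\,\mathcal M[T]$. Tate kills induced objects, and commutes with the geometric realization only under boundedness, which is exactly why truncatedness is needed.

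In case (1), over $\mathbb F_{p,\solid}$, I would reduce to the solid case, where $\mathcal M[S]=\mathbb F_p[S]^\solid\otimes\cdots$ is a limit of finite free modules. For finite $S$, the claim is direct, since $\mathbb F_p[S]=\mathbb F_p[S_0]\oplus$ free $C_p$-modules. The limit is handled because $\mathbb F_p[S]^\solid$ is an inverse limit along surjections with compatible $C_p$-equivariant splittings. Tate commutes with this limit since the terms are uniformly bounded (discrete).
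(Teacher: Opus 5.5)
Your construction of the Frobenius is the paper's: Tate diagonal, multiplication into $\mathcal M[S^p]^{tC_p}$, the assumption for $S^p$ with its cyclic action, then passage to $(\pi_0\mathcal M[S])^{tC_p}$ and $\pi_0$, using $\widehat H^0(C_p,\pi_0\mathcal M[S])=\pi_0\mathcal M[S]$. Two things in this half need fixing.

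First, the completeness of $\mathcal M[S]^{tC_p}$ you invoke is for its untwisted $A$-module structure. The map from $\mathcal M[S]$ is linear only along the Tate-valued Frobenius $A\to A^{tC_p}$, so that sentence proves nothing. What must be fed into \cite[Proposition 7.14]{Condensed} is the resulting $\phi$-semilinear endomorphism of $\pi_0\mathcal M[S]$.

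Second, that criterion also requires these endomorphisms to commute with the maps $\tilde f$ on $\pi_0\mathcal M[T]$ induced by $t\mapsto f(t)[t]$ for $f\colon T\to A$. These are not induced by maps of profinite sets, so functoriality in $S$ does not cover them, and you never check this. The paper checks it using three facts: naturality of the Tate diagonal along $\tilde f$; that multiplication $\mathcal M[T]^{\otimes p}\to\mathcal M[T^p]$ intertwines $\tilde f^{\otimes p}$ with the map $\tilde f_p$ induced by $(t_1,\ldots,t_p)\mapsto\prod_i f(t_i)$; and how the diagonal $T\to T^p$ interacts with these maps.

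The genuine gaps are in verifying the assumption.

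In case (2), the principle that the Tate construction kills induced objects and commutes with geometric realizations under boundedness is false. The \v{C}ech nerve of $C_p\to\ast$ gives a simplicial object $\mathbb Z[C_p^{\bullet+1}]$ of induced modules whose realization is $\mathbb Z$ with trivial action. This is discrete, yet $\pi_0\mathbb Z^{tC_p}=\mathbb Z/p\neq 0$. Since $\mathcal D(\mathbb Z[C_p])$ is generated under colimits by induced objects, no argument of this shape can work. What controls Tate is boundedness of the homotopy orbits. Since $C_p$ acts freely on $S/S_0$ away from the base point, the cofibre $X=\overline{\mathcal M}[S/S_0]$ of $\mathcal M[S_0]\to\mathcal M[S]$ has $X_{hC_p}=\overline{\mathcal M}[\overline S]$, where $\overline S=(S/S_0)/C_p$ is profinite. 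Moreover $X^{tC_p}$ is the limit of the shifts $\overline{\mathcal M}[\overline S][-2n]$ along periodicity maps $\beta$. Your hypothesis applied to $\overline S$ makes $\overline{\mathcal M}[\overline S]$ truncated, so the terms become ever more coconnective and the limit vanishes. In the counterexample, $X_{hC_p}=\mathbb Z[BC_p]$ is unbounded.

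In case (1), you only treat $\mathbb F_{p,\solid}$ itself. For a general $(A,\mathcal M)$ over it, $\mathcal M[S]$ is the image of $\mathbb F_p[S]^\solid$ under the colimit-preserving functor $-\otimes_{\mathbb F_{p,\solid}}(A,\mathcal M)$, which is not $\lim_i\mathcal M[S_i]$ in general. Nothing makes the Tate construction commute with this functor, so Tate-acyclicity of the universal cofibre does not transfer. You need a universal statement that survives base change. The paper's is that $\beta$ is base-changed from $\mathbb F_{p,\solid}$. There it is a map $P[-2]\to P$ with $P$ projective, hence zero, so the tower has zero transition maps and zero limit. Alternatively, you could upgrade your compatible splittings to show that $\mathbb F_p[S]^\solid/\mathbb F_p[S_0]^\solid\cong\mathbb F_p[C_p]\otimes\prod_I\mathbb F_p$ is induced. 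Its dual is a filtered colimit of finite free $\mathbb F_p[C_p]$-modules, hence flat, hence free since $\mathbb F_p[C_p]$ is local artinian. Inducedness, unlike Tate-acyclicity, is preserved by every additive functor.
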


By (the version for animated rings of) \cite[Proposition 7.14]{Condensed}, it is enough to construct functorial $\phi$-linear maps
\[
\phi_S: \pi_0 \mathcal M[S]\to \pi_0 \mathcal M[S]
\]
for all extremally disconnected sets $S$, subject to the condition that for any map $f: T\to A$ from an extremally disconnected profinite set, letting $\tilde{f}: \mathcal M[T]\to \mathcal M[T]$ be the induced extension of $t\mapsto f(t)[t]$, the diagram
\[\xymatrix{
\pi_0 \mathcal M[T]\ar[r]^{\phi_T}\ar[d]^{\pi_0 \tilde{f}} & \pi_0 \mathcal M[T]\ar[d]^{\pi_0 \tilde{f}}\\
\pi_0 \mathcal M[T]\ar[r]^{\phi_T} & \pi_0 \mathcal M[T]
}\]
commutes.

To do this, recall that for any abelian group $M$, there is a natural linear map
\[
M\to \check{H}^0(C_p,M\otimes\ldots\otimes M) = \mathrm{cofib}(\mathrm{Nm}: (M\otimes\ldots\otimes M)_{C_p}\to (M\otimes\ldots\otimes)^{C_p}),
\]
where $M\otimes\ldots\otimes M$ (with $p$ tensor factors) is equipped with the $C_p$-action permuting the tensor factors cyclically, and we take the $0$-th Tate cohomology of the $C_p$-action, i.e.~the cofiber of the norm map from the coinvariants to the invariants. This map is induced by the (non-additive) map
\[
M\to (M\otimes\ldots\otimes M)^{C_p}: m\mapsto m\otimes\ldots\otimes m,
\]
noting that it becomes additive after passing to the cofiber of the norm map.

Applying this to the condensed abelian group $\pi_0 \mathcal M[S]$, we get a natural map
\[
\pi_0 \mathcal M[S]\to \check{H}^0(C_p,\pi_0 \mathcal M[S]\otimes\ldots\otimes \pi_0 \mathcal M[S])
\]
that we can compose with the $C_p$-equivariant multiplication map
\[
\pi_0 \mathcal M[S]\otimes\ldots\otimes \pi_0 \mathcal M[S]\to \pi_0 \mathcal M[S^p]
\]
to get a functorial map
\[
\pi_0 \mathcal M[S]\to \check{H}^0(C_p,\pi_0 \mathcal M[S^p]).
\]

Now note that there is a map $\pi_0 \mathcal M[S]\to \pi_0 \mathcal M[S^p]$ induced by the diagonal inclusion $S\hookrightarrow S^p$. We would like to use that the map $\check{H}^0(C_p,\pi_0 \mathcal M[S])\to \check{H}^0(C_p,\pi_0 \mathcal M[S^p])$ is an isomorphism. As $C_p$ acts trivially on the $p$-torsion module $\pi_0\mathcal M[S]$, this would give the desired map
\[
\pi_0 \mathcal M[S]\to \check{H}^0(C_p,\pi_0 \mathcal M[S]) = \pi_0 \mathcal M[S].
\]

But we have passed to $\pi_0$ too early; we need to stay derived for a little longer to get the passage from $\mathcal M[S^p]$ to $\mathcal M[S]$. For this, recall that one can define Tate cohomology in general for a spectrum $X$ with $C_p$-action, as
\[
X^{tC_p} =\mathrm{cofib}(\mathrm{Nm}: X_{hC_p}\to X^{hC_p})
\]
the cofibre of the norm map from homotopy orbits to homotopy fixed points, cf.~e.g.~\cite[Chapter I]{NikolausScholze}. The map $M\to \check{H}^0(C_p,M\otimes\ldots\otimes M)$ generalizes to the following. Let $X$ be any spectrum in the sense of stable homotopy theory, and consider $X\otimes\ldots\otimes X$, the $p$-fold tensor product, with the cyclic $C_p$-action. Then there is a unique functorial lax symmetric monoidal map
\[
X\to (X\otimes\ldots\otimes X)^{tC_p},
\]
the so-called Tate diagonal, cf.~again \cite{NikolausScholze}. (This map exists only for spectra, not in $\mathcal D(\Ab)$.)

Now following the above, we get a functorial map
\[
\mathcal M[S]\to (\mathcal M[S]\otimes\ldots\otimes \mathcal M[S])^{tC_p}\to \mathcal M[S^p]^{tC_p}.
\]
Again, we have the map $\mathcal M[S]\to \mathcal M[S^p]$ induced by the diagonal $S\to S^p$, and now the assumption guarantees that this induces an isomorphism after applying $-^{tC_p}$.

In total, we have produced a functorial map $\mathcal M[S]\to \mathcal M[S]^{tC_p}$. Now is the time to pass to $\pi_0$: First, on the target, where we get a natural map to $(\pi_0 \mathcal M[S])^{tC_p}$. In the resulting map
\[
\mathcal M[S]\to (\pi_0 \mathcal M[S])^{tC_p}
\]
we take $\pi_0$, which gives the desired functorial map
\[
\pi_0 \mathcal M[S]\to \pi_0 \mathcal M[S].
\]
We need to see that this is linear over $\phi: \pi_0 A\to \pi_0 A$. But $\mathcal M[S]\to \mathcal M[S^p]^{tC_p}\cong \mathcal M[S]^{tC_p}$ is linear over the Tate-valued Frobenius $A\to A^{tC_p}$ by construction, and the Tate-valued Frobenius induces the usual Frobenius on $\pi_0$.

It remains to see that for any map $f: T\to A$ from some extremally disconnected $T$, inducing a map $\tilde{f}: \mathcal M[T]\to \mathcal M[T]$ via $t\mapsto f(t)[t]$, the diagram
\[\xymatrix{
\pi_0 \mathcal M[T]\ar[r]^{\phi_T}\ar[d]^{\pi_0 \tilde{f}} & \pi_0 \mathcal M[T]\ar[d]^{\pi_0 \tilde{f}}\\
\pi_0 \mathcal M[T]\ar[r]^{\phi_T} & \pi_0 \mathcal M[T]
}\]
commutes. But the whole construction is equivariant for such diagonal multiplications. More precisely, it suffices to see that before taking $\pi_0$, the diagram
\[\xymatrix{
\mathcal M[T]\ar[r]\ar[d]^{\tilde{f}} & \mathcal M[T]^{tC_p}\ar[d]^{\tilde{f}^{tC_p}}\\
\mathcal M[T]\ar[r] & \mathcal M[T]^{tC_p}
}\]
commutes. This in turn reduces to showing that the two diagrams
\[\xymatrix{
\mathcal M[T]\ar[r]\ar[d]^{\tilde{f}} & \mathcal M[T^p]^{tC_p}\ar[d]^{\tilde{f}_p^{tC_p}}\\
\mathcal M[T]\ar[r] & \mathcal M[T^p]^{tC_p}
}\]
and
\[\xymatrix{
\mathcal M[T]^{tC_p}\ar[r]\ar[d]^{\tilde{f}^{tC_p}} & \mathcal M[T^p]^{tC_p}\ar[d]^{\tilde{f}_p^{tC_p}}\\
\mathcal M[T]^{tC_p}\ar[r] & \mathcal M[T^p]^{tC_p}
}\]
commute, where $\tilde{f}_p: \mathcal M[T^p]\to \mathcal M[T^p]$ is induced by the map $T^p\to A$ that sends $(t_1,\ldots,t_p)$ to $\prod_{i=1}^p f(t_i)$. But the commutativity of the first diagram follows from functoriality of the Tate construction, while the commutativity of the second diagram is clear.

It remains to show that the assumption is satisfied if $A$ is an algebra over $\mathbb F_{p,\solid}$, or if all $\mathcal M[S]$ are truncated. For this, we note that we can write the cofiber of
\[
\mathcal M[S_0]^{tC_p}\to \mathcal M[S]^{tC_p}
\]
as the limit of
\[
\ldots\to \overline{\mathcal M}[\overline{S}][-2n]\xrightarrow{\beta}\ldots\xrightarrow{\beta} \overline{\mathcal M}[\overline{S}][-2]\xrightarrow{\beta}\overline{\mathcal M}[\overline{S}]
\]
along a certain map $\beta: \overline{\mathcal M}[\overline{S}][-2]\to \overline{\mathcal M}[\overline{S}]$, where $\overline{S}$ is the quotient of the pointed profinite set $S/S_0$ by the free (in pointed profinite sets) $C_p$-action, and $\overline{\mathcal M}$ is the extension of $\mathcal M$ to pointed profinite sets (taking $(T,\ast)$ to $\mathcal M[T]/\mathcal M[\ast]$). This comes from a general formula for $X^{tC_p}$ as a limit of $X_{hC_p}[-2n]$ when $X\in\mathcal D(\Ab)$.

Moreover, by functoriality of the whole situation, the map $\beta$ base changes. In the case of $\mathbb F_{p,\solid}$, it is equal to $0$ as then $\mathcal M[S]$ is projective for all profinite sets $S$; thus, it is also $0$ for any $(A,\mathcal M)$ over $\mathbb F_{p,\solid}$, and hence the limit above vanishes.

On the other hand, if $\overline{\mathcal M}[\overline{S}]$ is truncated, then the limit vanishes as the terms become more and more coconnective.
\newpage

\section*{Appendix to Lecture XII: Normalizations of analytic animated rings}

The goal of this appendix is to prove of the following result.

\begin{proposition} Let $(A,\mathcal M)$ be an analytic animated commutative ring. Consider the $\infty$-category of condensed animated commutative $A$-algebras $B$ such that $B\in\mathcal D_{\geq 0}(A,\mathcal M)\subset \mathcal D_{\geq 0}(A)$. This has an initial object, whose underlying object in $\mathcal D_{\geq 0}(A)$ is $\mathcal M[\ast]$.
\end{proposition}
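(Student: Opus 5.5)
The plan is to realize $\mathcal M[\ast]$ as the value on $A$ of a left adjoint to the forgetful functor from complete $A$-algebras to all condensed animated commutative $A$-algebras. For this I will show that the completion functor $L=-\otimes_A (A,\mathcal M)$ on $\mathcal D_{\geq 0}(A)$ is compatible with the monad $\mathrm{LSym}_A$ whose algebras are condensed animated commutative $A$-algebras. Concretely, the key claim is that $L$ is symmetric monoidal (already known) and that the derived symmetric powers $\mathrm{LSym}^n_A$ descend to $\mathcal D_{\geq 0}(A,\mathcal M)$. The latter means that $L(\mathrm{LSym}^n_A(f))$ is an equivalence whenever $L(f)$ is. Once the monad $\mathrm{LSym}_A$ descends to a monad $\mathrm{LSym}_{(A,\mathcal M)}=L\circ \mathrm{LSym}_A$ on $\mathcal D_{\geq 0}(A,\mathcal M)$, I can argue formally. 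The $\infty$-category of algebras over the descended monad is equivalent to the full subcategory of $A$-algebras whose underlying module is complete, and the left adjoint sends $A=\mathrm{LSym}_A(0)$ to $L(A)=\mathcal M[\ast]$. Hence $\mathcal M[\ast]$ is initial among complete $A$-algebras.

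For the descent of $\mathrm{LSym}^n$, I will use that the kernel of $L$ is generated under colimits by cones $K_S$ of $A[S]\to\mathcal M[S]$, and that it is a $\otimes$-ideal, as shown via \cite[Theorem I.3.6]{NikolausScholze} in the proof of the proposition on $\mathcal D_{\geq 0}(A,\mathcal M)$. Since $\mathrm{LSym}^n$ commutes with sifted colimits, it suffices to show the following: for $M\to M'$ with cone in the kernel, $\mathrm{LSym}^n(M)\to\mathrm{LSym}^n(M')$ has cone in the kernel. I will filter $\mathrm{LSym}^n(M\oplus K)$, or rather treat cofibre sequences via the standard filtration, with graded pieces $\mathrm{LSym}^i(M)\otimes\mathrm{LSym}^{n-i}(K)$. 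This reduces the claim to showing that $\mathrm{LSym}^j(K)$ lies in the kernel for $j\geq1$ and $K$ in the kernel. The $\otimes$-ideal property gives that $K^{\otimes j}$ lies in the kernel. Rationally, $\mathrm{LSym}^j$ is a retract of $K^{\otimes j}$ via $(K^{\otimes j})_{h\Sigma_j}$, so the only issue is $p$-torsion. Using the filtration by Sylow subgroups, and inducting on $j$, I reduce to $j=p$ and to the cofibre of the norm map $(K^{\otimes p})_{hC_p}\to (K^{\otimes p})^{hC_p}$. This cofibre is the Tate construction, and it is where the Tate-valued Frobenius appears.

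The main obstacle is exactly this $p$-primary, $j=p$ step. I intend to use the Frobenius hypothesis built into the definition of analytic animated commutative rings, namely that $\phi_p:(A,\mathcal M)\to(A/^{\mathbb L}p,\mathcal M/^{\mathbb L}p)$ is a map of analytic rings. The graded pieces of $\mathrm{LSym}^p(K)$ differ from $(K^{\otimes p})_{h\Sigma_p}$ by the Frobenius-twisted term $\phi_p^\ast(K/^{\mathbb L}p)$, which arises from the map $K\to (K^{\otimes p})^{tC_p}$ and its derived-algebraic analogue. The Frobenius hypothesis says precisely that restriction along $\phi_p$ preserves completeness, hence that base change along $\phi_p$ preserves the kernel. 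This places the Frobenius term in the kernel. Combining this with the $\otimes$-ideal property for the remaining terms, and with the induction on $j$, completes the descent.

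Finally, I will verify that the left adjoint on algebras is computed on underlying modules by $L$ when applied to $A$. This holds because $L$ of an algebra carries an induced algebra structure through the descended monad, and for $A$ itself the result is $L(A)=\mathcal M[S]$ at $S=\ast$.
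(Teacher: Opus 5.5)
Your architecture is the paper's. You descend the monad $\mathrm{Sym}^\bullet$ by showing each $\mathrm{Sym}^n$ preserves $L$-equivalences, and use the filtration on $\mathrm{Sym}^n$ of an extension to reduce to $\mathrm{Sym}^j(K)\in\mathcal K$ for $K$ in the kernel $\mathcal K$ of $L=-\otimes_A(A,\mathcal M)$. The Frobenius axiom then absorbs a Frobenius-twisted $p$-primary correction, and the concluding formal step is the same.

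The difference is how the nonlinear functor $\mathrm{Sym}^j$ gets linearized. The paper inducts on $j$ to show that $Q\mapsto \mathrm{Sym}^jQ\otimes_A(A,\mathcal M)$ is exact on $\mathcal K$. Hence it equals $\colim_n\mathrm{Sym}^j(Q[n])[-n]\otimes_A(A,\mathcal M)$, a first Goodwillie derivative, which by Dold--Puppe vanishes unless $j=p^m$ and is then filtered by $\mathrm{Frob}_p^m$-twists of $Q/p$. You instead compare with $(K^{\otimes j})_{h\Sigma_j}$, which lies in $\mathcal K$ simply because $\mathcal K$ is closed under colimits (so your rational-retract remark is unnecessary), and reduce to $j=p$ one prime at a time. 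This costs some bookkeeping but buys something real: you only need $\phi_p$ itself, never its iterates, and only the classical computation of $H_*(\Sigma_p;\mathbb Z_{(p)})$ rather than Dold--Puppe.

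Two steps must be stated differently for this to be a proof.

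\emph{The reduction to $j=p$.} This is not a ``filtration by Sylow subgroups'' but a transfer. Write $j=\sum_i a_ip^i$ and let $H=\prod_i(\Sigma_p\wr\cdots\wr\Sigma_p)^{a_i}$, with $i$-fold wreath products; it has index prime to $p$ in $\Sigma_j$. The projection $(V^{\otimes j})_H\to(V^{\otimes j})_{\Sigma_j}$ and the transfer back compose to multiplication by $[\Sigma_j:H]$. So, $p$-locally and naturally on finite free $V$, $\mathrm{Sym}^jV$ is a retract of $(V^{\otimes j})_H$, which is a tensor product of iterates of $\mathrm{Sym}^p$ applied to $V$. This animates.

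\emph{The case $j=p$.} This is the more important point. Your description of $C(K)=\mathrm{cofib}((K^{\otimes p})_{h\Sigma_p}\to\mathrm{Sym}^pK)$ via twists $K\otimes_{A,\phi_p}A/^{\mathbb L}p$ is only visible on free modules: it comes from the diagonal orbits, with $a\in A$ acting by $a^p$. The objects of $\mathcal K$ you need it for are far from free. What transports it is that $C$ is $p$-locally additive. The cross-effects of the two functors are
\[
\bigoplus_{0<i<p}\mathrm{Sym}^iK\otimes\mathrm{Sym}^{p-i}K'
\quad\text{and}\quad
\bigoplus_{0<i<p}(K^{\otimes i})_{h\Sigma_i}\otimes(K'^{\otimes p-i})_{h\Sigma_{p-i}},
\]
and these agree $p$-locally since $i!(p-i)!$ is prime to $p$. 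Being additive and sifted-colimit preserving, $C_{(p)}$ preserves all colimits and is determined by its values on free modules. This gives a filtration with graded pieces $(K\otimes_{A,\phi_p}A/^{\mathbb L}p)[2k(p-1)]$, $k\geq 1$. The filtration converges for $\mathcal K$-membership because the shifts tend to $+\infty$ and $L$ preserves connectivity.

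This additivity argument is the counterpart of the paper's exactness-on-$\mathcal K$ step and is the real content; the Tate diagonal can only motivate it. Do not actually pass through $(K^{\otimes p})^{tC_p}$: homotopy fixed points are a limit and need not preserve $\mathcal K$.
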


We use that the $\infty$-category of condensed animated commutative $A$-algebras $B$ is monadic over the $\mathcal D_{\geq 0}(A)$, as follows from the $\infty$-categorical version of the Barr--Beck theorem \cite[Theorem 4.7.0.3]{LurieHA}. Here, the relevant left adjoint functor can be defined by animation. More precisely, consider the category of pairs of maps of commutative rings $A\to B$, with its forgetful functor to the category of pairs $(A,M)$ of a commutative ring $A$ and an $A$-module $M$. This admits a left adjoint, sending $M$ to the free commutative $A$-algebra
\[
M\mapsto A\oplus M\oplus \mathrm{Sym}^2 M\oplus \ldots \oplus \mathrm{Sym}^n M\oplus \ldots .
\]
Animating this adjunction defines an adjunction between maps of animated commutative rings $A\to B$ and the $\infty$-category of pairs of an animated commutative ring $A$ with an animated $A$-module $M$. (See \cite[Section 25.2.2, Construction 25.2.2.6]{LurieSAG} for a more careful discussion.) We can then also pass to condensed objects, and to the fibre over the given condensed animated commutative ring $A$. This shows that the relevant monad $T$ is given by a monad structure on the functor
\[
M\mapsto \mathrm{Sym}^\bullet M = A\oplus M\oplus \mathrm{Sym}^2 M\oplus \ldots \oplus \mathrm{Sym}^n M\oplus \ldots
\]
on $\mathcal D_{\geq 0}(A)$.

In particular, we note that if $A$ is a condensed animated commutative ring, then by animating $\mathrm{Sym}^n: (A,M)\mapsto (A,\mathrm{Sym}^n M)$, passing to condensed objects, and the fibre over the condensed animated commutative ring $A$, there is a natural functor $\mathrm{Sym}^n$ on $\mathcal D_{\geq 0}(A)$. We need the following lemma.

\begin{lemma} Let $(A,\mathcal M)$ be an analytic animated commutative ring. Let $M\to N$ be a map in $\mathcal D_{\geq 0}(A)$ that becomes an isomorphism after $-\otimes_A (A,\mathcal M)$. Then $\mathrm{Sym}^n M\to \mathrm{Sym}^n N$ becomes an isomorphism after $-\otimes_A (A,\mathcal M)$ for all $n\geq 0$.
\end{lemma}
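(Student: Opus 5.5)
We must show that if $M\to N$ is an $\mathcal M$-equivalence, then so is $\mathrm{Sym}^n M\to \mathrm{Sym}^n N$. Here ``$\mathcal M$-equivalence'' means the map becomes an isomorphism after $-\otimes_A(A,\mathcal M)$. The natural strategy is to reduce to the case where $N$ is a free module $A[S]$ and $M\to N$ is close to $A[S]\to\mathcal M[S]$, using the filtration of $\mathrm{Sym}^n$ of a cofiber by tensor-power pieces.

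\textbf{Step 1: Sym of a cofiber sequence.} First observe that the class of $\mathcal M$-equivalences is the class of maps whose cofiber $K$ lies in the kernel $\mathcal K$ of $-\otimes_A(A,\mathcal M)$. This kernel is a localizing subcategory of $\mathcal D_{\geq 0}(A)$, and by the proposition on the symmetric monoidal structure it is a $\otimes$-ideal for $\otimes_A$. Given a cofiber sequence $M\to N\to K$, the animated functor $\mathrm{Sym}^n$ gives $\mathrm{Sym}^n N$ a finite filtration with $\mathrm{Sym}^n M$ as bottom piece. The graded pieces are $\mathrm{Sym}^{n-j}M\otimes_A \mathrm{Sym}^j(K)$-type terms, or more precisely terms built from $\mathrm{Sym}^{n-j}M\otimes_A (\text{derived }j\text{-th symmetric power of }K)$. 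This is the standard Koszul-type filtration, obtained by animating it from the case of free modules with a split injection. So the cofiber of $\mathrm{Sym}^nM\to\mathrm{Sym}^nN$ has a finite filtration whose graded pieces are $\mathrm{Sym}^{n-j}M\otimes_A \mathrm{Sym}^j K$ for $j\ge1$. Since $\mathcal K$ is a $\otimes$-ideal, it then suffices to show: if $K\in\mathcal K$, then $\mathrm{Sym}^jK\in\mathcal K$ for all $j\geq 1$.

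\textbf{Step 2: reduce to $K=\mathrm{cofib}(A[S]\to\mathcal M[S])$.} The kernel $\mathcal K$ is generated under colimits by the cones $K_S=\mathrm{cofib}(A[S]\to \mathcal M[S])$. Because $\mathrm{Sym}^j$ commutes with sifted colimits, this reduction is straightforward for sifted colimits. For finite coproducts and cofibers, we use $\mathrm{Sym}^j(K\oplus K')=\bigoplus \mathrm{Sym}^a K\otimes\mathrm{Sym}^b K'$, together with the filtration of Step 1 applied to cofibers, and again the $\otimes$-ideal property. We also have to handle shifts/suspensions, since colimit generation in a prestable setting involves $\Sigma$. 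For this we use décalage: $\mathrm{Sym}^j(\Sigma X)\simeq\Sigma^j\mathrm{Sym}^j_{\mathrm{alt}}(X)$, together with the analogous filtration statements for the other derived functors $\mathrm{L}\Lambda^j$ and $\Gamma^j$. The cleanest route is to prove the claim simultaneously for all of these functors. I therefore reduce to checking $\mathrm{Sym}^j K_S\in\mathcal K$, and similarly for $\Lambda^j$ and $\Gamma^j$. Applying Step 1 to $A[S]\to\mathcal M[S]$, this is equivalent to showing that $\mathrm{Sym}^j A[S]\to \mathrm{Sym}^j\mathcal M[S]$ is an $\mathcal M$-equivalence.

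\textbf{Step 3 (main obstacle).} Now $\mathrm{Sym}^j A[S]=A[S^j]_{h\Sigma_j}$, the animated version of $A[S^j/\Sigma_j]$. On the other side, $\mathcal M[S]^{\otimes j}$ is $\mathcal M$-equivalent to $A[S^j]$, because $\otimes_A(A,\mathcal M)$ is symmetric monoidal, and this equivalence is $\Sigma_j$-equivariant. The difficulty is that $\mathrm{Sym}^j$ is not simply the homotopy orbits of $\otimes^j$: the strict quotient differs from $(-)_{h\Sigma_j}$, and this difference is exactly where Frobenius/Tate phenomena enter. I would handle this by filtering via the norm/Tate comparison, i.e.\ by comparing $\mathrm{Sym}^j$ with $(\otimes^j)_{h\Sigma_j}$ and its isotropy-separation pieces indexed by subgroups. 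Homotopy orbits preserve $\mathcal M$-equivalences, since they are colimits. The remaining pieces involve Tate-type constructions for $p$-subgroups, and these are controlled by the Frobenius condition built into the definition of analytic animated commutative rings. That condition says $\phi_p:(A,\mathcal M)\to(A/^{\mathbb L}p,\mathcal M/^{\mathbb L}p)$ is a map of analytic rings, which should guarantee that the Frobenius-twisted pieces $M\mapsto \phi_p^*(M/p)$ send $\mathcal M$-equivalences to $\mathcal M$-equivalences. I expect this step, identifying the non-homotopy-orbit part of $\mathrm{Sym}^j$ with Frobenius-twisted terms and invoking the Frobenius hypothesis, to be the hard core of the argument. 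I would try the case $j=p$ first, then induct on $j$ using the standard filtrations of $\mathrm{Sym}^j$.
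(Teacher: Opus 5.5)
Your Step~1 is exactly how the paper begins. The filtration of $\mathrm{Sym}^nN$ with graded pieces $\mathrm{Sym}^{n-i}M\otimes\mathrm{Sym}^iQ$, together with the $\otimes$-ideal property, reduces everything to showing $\mathrm{Sym}^iQ\in\mathcal K$ whenever $Q\in\mathcal K$.

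Step~2 is also fine once you set up induction on $j$. Closure of $\{K:\mathrm{Sym}^jK\in\mathcal K\}$ under cofibres, including suspensions via $K\to 0\to\Sigma K$, follows from the same filtration because $\mathcal K$ is closed under cofibres. So no d\'ecalage and no $\Lambda^j,\Gamma^j$ are needed.

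The genuine gap is Step~3, which is a hope rather than an argument. The Frobenius hypothesis controls only a \emph{linear} functor, namely base change along $\phi_p: A\to A/^{\mathbb L}p$. You give no mechanism that turns $\mathrm{Sym}^j$, or the discrepancy $\mathrm{cofib}\bigl((-)^{\otimes j}_{h\Sigma_j}\to\mathrm{Sym}^j\bigr)$, into such linear pieces. That discrepancy is itself a non-additive functor. Even on free modules it is governed by the group homology of the stabilizers of the diagonal strata of $S^j$; these are Young subgroups, so torsion appears at all primes $\leq j$. You would then have to evaluate it on the non-free object $\mathcal M[S]$, where nothing explicit is available. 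Note also that $\mathrm{Sym}^jA[S]=A[S^j/\Sigma_j]$ with the strict quotient, since $A[S]$ is projective; it is not $A[S^j]_{h\Sigma_j}$. In this sense your Step~2 steers you the wrong way: it moves the problem onto $A[S]\to\mathcal M[S]$, objects outside $\mathcal K$, whereas the usable structure lives on $\mathcal K$ itself.

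The missing idea is linearization on $\mathcal K$. Induct on $i$. For a cofibre sequence $Q'\to Q\to Q''$ in $\mathcal K$, the cross terms $\mathrm{Sym}^{i-k}Q'\otimes\mathrm{Sym}^kQ''$ with $0<k<i$ lie in $\mathcal K$ by induction and the ideal property. Hence $F(Q)=\mathrm{Sym}^iQ\otimes_A(A,\mathcal M)$ is an exact functor on $\mathcal K$, and in particular $F(Q[n])\simeq F(Q)[n]$.

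Passing to the colimit over $n$ identifies $F(Q)$ with the completion of the first Goodwillie derivative $\colim_n\mathrm{Sym}^i(Q[n])[-n]$, an exact functor of $Q$. By Dold--Puppe this derivative vanishes unless $i=p^m$. In that case it has a complete filtration with graded pieces $Q\mapsto(Q\otimes_{A,\mathrm{Frob}_p^m}A/p)\otimes_{\mathbb F_p}V_j[j]$. These lie in $\mathcal K$ precisely because of the Frobenius condition in the definition of analytic animated commutative rings.

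This route avoids any comparison of $\mathrm{Sym}^j$ with homotopy orbits, any isotropy separation, and any computation with $\mathrm{Sym}^j\mathcal M[S]$.
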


\begin{proof} Let $Q$ be the cofibre of $M\to N$. By \cite[Construction 25.2.5.4]{LurieSAG}, $\mathrm{Sym}^n N$ admits a filtration whose graded pieces are
\[
\mathrm{Sym}^n M,\mathrm{Sym}^{n-1} M\otimes Q,\ldots,\mathrm{Sym}^{n-i} M\otimes \mathrm{Sym}^i Q,\ldots,\mathrm{Sym}^n Q.
\]
(This can be proved by animating the corresponding statement for usual (finite projective) modules.) Thus, it suffices to see that for all $Q$ in the kernel $K$ of $-\otimes_A (A,\mathcal M)$, also $\mathrm{Sym}^i Q$ lies in $K$ for all $i>0$.

We argue by induction on $i$. Then we see by induction, using the above filtration, that the composite functor $F: Q\mapsto \mathrm{Sym}^i Q\otimes_A (A,\mathcal M)$ from $K$ to $\mathcal D_{\geq 0}(A,\mathcal M)$ commutes with cofibres. In particular, $F(Q[n])\cong F(Q)[n]$ for all $n$, or
\[
\mathrm{Sym}^i Q\otimes_A (A,\mathcal M)\cong \mathrm{Sym}^i(Q[n])[-n]\otimes_A (A,\mathcal M)
\]
for all $n$. Passing to the colimit over $n$, we note that the functor $Q\mapsto \colim_n \mathrm{Sym}^i(Q[n])[-n]$ is an exact endofunctor of $\mathcal D_{\geq 0}(A)$. By Dold--Puppe, \cite[Korollar 12.7]{DoldPuppe}, this functor vanishes unless $i$ is power $p^m$ of a prime $p$, and in the latter case every homology group is killed by $p$. (This is the first ``Goodwillie derivative of $\mathrm{Sym}^i$''.) More precisely, consider first the case $A=\mathbb Z$. Then one gets a colimit-preserving endofunctor
\[
\mathcal D_{\geq 0}(\mathrm{Ab})\to \mathcal D_{\geq 0}(\mathrm{Ab});
\]
this is given by tensoring with an object $R\in \mathcal D_{\geq 0}(\mathbb Z\otimes_{\mathbb S}\mathbb Z)$. Any homotopy group of $R$ is killed by $p$, so by using the Postnikov filtration of $R$, one gets a complete filtration of the Goodwillie derivative of $\mathrm{Sym}^i$, $i=p^m$, where each graded piece is given by a functor of the form
\[
Q\mapsto Q/p\otimes_{\mathbb F_p} V_j[j]
\]
for some $\mathbb F_p$-vector space $V_j=\pi_j R$. By base change, the same statement is true more generally for polynomial algebras $A=\mathbb Z[T_1,\ldots,T_k]$, except that now the associated graded is really given by an $A$-bimodule, and we know that as a left $A$-module it is given by $A/p\otimes_{\mathbb F_p} V_j[j]$. To understand the right $A$-action, we note that for any (usual) $A$-module $M$, multiplication by $T_\ell: M\to M$ induces on $\mathrm{Sym}^i_A M$ the map that is multiplication by $T_\ell^i$, and hence the same is true after animation and passing to the Goodwillie derivative. This ensures that the right $A$-action is the composite of $\mathrm{Frob}_p^m: A\to A/p$ and the left $A$-action (which factors over $A/p$).\footnote{This argument also yields a proof of the result of Dold--Puppe used above.}

Animating this filtration (and passing to condensed objects) then yields a complete filtration of the Goodwillie derivative of $\mathrm{Sym}_A^{p^m}$ over any (condensed) animated commutative ring $A$, whose graded pieces are given by
\[
Q\mapsto (Q\otimes_{A,\mathrm{Frob}_p^m} A/p)\otimes_{\mathbb F_p} V_j[j].
\]
Now the result follows from the definition of analytic animated commutative rings.
\end{proof}

In particular, for any map $M\to N$ in $\mathcal D_{\geq 0}(A)$ that is sent to an isomorphism in $\mathcal D_{\geq 0}(A,\mathcal M)$ also the induced map $\mathrm{Sym}^\bullet M\to \mathrm{Sym}^\bullet N$ is sent to an isomorphism in $\mathcal D_{\geq 0}(A,\mathcal M)$. This implies that $M\mapsto \mathrm{Sym}^\bullet M$ descends to a monad on $\mathcal D_{\geq 0}(A,\mathcal M)$ whose algebras are those condensed animated commutative $A$-algebras whose underlying object lies in $\mathcal D_{\geq 0}(A,\mathcal M)$. Moreover, $-\otimes_A (A,\mathcal M)$ defines a left adjoint to the forgetful functor on algebras for these respective monads, which applied to the unit gives the desired result.

\newpage

\section{Lecture XIII: Analytic spaces}

Finally, we give one possible definition of analytic spaces. In this lecture, we use the term analytic ring to refer to objects of $\AnRing$, i.e.~complete analytic animated commutative rings $(A,\mathcal M)$.\footnote{One can consider a variant using analytic connective $\mathbb E_\infty$-rings instead.} As this theory is an abandoned stepping stone towards analytic stacks in \cite{AnalyticStacks}, we will be lax about the details; in particular, we will ignore set-theoretic issues.

Consider the $\infty$-category of functors from $\AnRing$ to anima, i.e.~the $\infty$-category of presheaves of anima on $\AnRing^{\mathrm{op}}$. By the Yoneda lemma, this admits $\AnRing^{\mathrm{op}}$ as a full subcategory, via the functor
\[
(A,\mathcal M)\mapsto \AnSpec(A,\mathcal M) : (B,\mathcal N)\mapsto \Hom_\AnRing((A,\mathcal M),(B,\mathcal N)).
\]
We endow $\AnRing^{\mathrm{op}}$ with the Grothendieck topology generated by the families of maps
\[
\{\AnSpec(A_i,\mathcal M_i)\to \AnSpec(A,\mathcal M)\}
\]
whenever $(A,M)\to (A_i,\mathcal M_i)$ is a finite family of steady localizations such that
\[
\mathcal D(A,\mathcal M)\to \prod_i \mathcal D(A_i,\mathcal M_i)
\]
is conservative.\footnote{Warning: This Grothendieck topology is incomparable to the $!$-topology used in \cite{AnalyticStacks}: On the one hand, $!$-covers are much more general, but on the other hand covers in the present sense are not guaranteed to be $!$-covers.} We note that pullbacks of such finite families of maps are of the same form.

\begin{proposition} For any analytic ring $(A,\mathcal M)$, the functor $\AnSpec(A,\mathcal M)$ is a sheaf on $\AnRing^{\mathrm{op}}$.
\end{proposition}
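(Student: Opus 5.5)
We must show that $\AnSpec(A,\mathcal M)$ satisfies descent for the covers generating the topology. Since the topology is generated by the families $\{\AnSpec(B_i,\mathcal N_i)\to \AnSpec(B,\mathcal N)\}$, and these are stable under pullback, it suffices to check the \v{C}ech condition for each such family. That is, for a finite family of steady localizations $(B,\mathcal N)\to (B_i,\mathcal N_i)$ with $\mathcal D(B,\mathcal N)\to\prod_i\mathcal D(B_i,\mathcal N_i)$ conservative, we must show
\[
\Hom_{\AnRing}((A,\mathcal M),(B,\mathcal N))\to \lim_{J\in\mathcal C_I}\Hom_{\AnRing}((A,\mathcal M),(B_J,\mathcal N_J))
\]
is an equivalence. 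Here $(B_J,\mathcal N_J)$ is the iterated pushout as in Proposition~\ref{prop:descent}. Since the $f_i$ are localizations, the self-pushouts $(B_i,\mathcal N_i)\otimes_{(B,\mathcal N)}(B_i,\mathcal N_i)$ collapse to $(B_i,\mathcal N_i)$ (exercise after the definition of localization). Hence the \v{C}ech nerve limit is equivalent to the limit over nonempty subsets $J$, so it is enough to work with $\mathcal C_I$.

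The plan has two parts: first identify the limit on the level of rings, then on the level of analytic structures. For rings, apply Proposition~\ref{prop:descent} to $M=(B,\mathcal N)$, the unit of $\mathcal D_{\geq 0}(B,\mathcal N)$. This gives $B\simeq \lim_J B_J$ in $\mathcal D(B,\mathcal N)$, since completeness gives $B=\mathcal N[\ast]$ and $B_J=\mathcal N_J[\ast]=B\otimes_{(B,\mathcal N)}(B_J,\mathcal N_J)$, using steadiness for the iterated pushouts. This finite limit of connective objects is computed in $\mathcal D(\Cond(\Ab))$. Because the forgetful functor from condensed animated commutative rings to $\mathcal D_{\geq 0}$ preserves limits, it follows that $B=\lim_J B_J$ as condensed animated commutative rings, provided the limit in $\mathcal D$ is connective, which it is since it equals $B$. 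Consequently, maps of condensed animated rings $A\to B$ are the same as compatible families $A\to B_J$.

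Next, maps in $\AnRing$ from $(A,\mathcal M)$ form a full subspace of ring maps $A\to B$: such a map is a ring map with the property that the forgetful functor sends $\mathcal D_{\geq 0}(B,\mathcal N)$ into $\mathcal D_{\geq 0}(A,\mathcal M)$. So it remains to show that if each $A\to B_J$, and in particular each $A\to B_i$, has this property, then so does $A\to B$. Take $N\in\mathcal D_{\geq 0}(B,\mathcal N)$. By Proposition~\ref{prop:descent}, $N\simeq\lim_J N\otimes_{(B,\mathcal N)}(B_J,\mathcal N_J)$ is a finite limit of objects of $\mathcal D_{\geq 0}(B_J,\mathcal N_J)$. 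Each of these lies in $\mathcal D_{\geq 0}(A,\mathcal M)$ when regarded as an $A$-module. Since $\mathcal D(A,\mathcal M)\subset\mathcal D(A)$ is stable under limits and the limit is connective, $N\in\mathcal D_{\geq 0}(A,\mathcal M)$.

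I expect the main obstacle to be the identification of $(B_J,\mathcal N_J)$ and its underlying ring: one needs the pushout in $\AnRing$ along steady maps to have underlying ring $B_J=\mathcal N_J[\ast]$ given by the formula $\mathcal M_C[\ast]\otimes_{(A,\mathcal M_A)}(B,\mathcal M_B)$, and this must be compatible with the module-level description used in Proposition~\ref{prop:descent}. The passage from a limit of modules to a limit of animated commutative rings also requires care, but it is formal.
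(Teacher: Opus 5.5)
Your proposal is correct and is essentially the paper's argument: the paper's proof consists only of the remark that the statement follows directly from Proposition~\ref{prop:descent}, and you unpack this correctly, applying descent to the unit to get $B\simeq\lim_J B_J$ as condensed animated commutative rings, and descent for general $N\in\mathcal D_{\geq 0}(B,\mathcal N)$ to see that the analytic-map condition descends. The obstacle you anticipate does not arise: $B\otimes_{(B,\mathcal N)}(B_J,\mathcal N_J)=\mathcal N_J[\ast]=B_J$ already follows from completeness and from the fact that base change sends $\mathcal N[S]$ to $\mathcal N_J[S]$, so no appeal to steadiness is needed at that point.
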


\begin{proof} This follows directly from Proposition~\ref{prop:descent}.
\end{proof}

If $f: \mathcal F\to \mathcal G$ is a map of functors from $\AnRing$ to anima, we write $\mathcal F\subset \mathcal G$ if for all analytic rings $(A,\mathcal M)$, all fibres of 
\[
\mathcal F(A,\mathcal M)\to \mathcal G(A,\mathcal M)
\]
are either empty or contractible.

\begin{definition} Let $(A,\mathcal M)$ be an analytic ring. A steady subspace of $\AnSpec(A,\mathcal M)$ is a subfunctor $U\subset \AnSpec(A,\mathcal M)$ such that the natural map
\[
\colim_{\AnSpec(A',\mathcal M')\subset U} \AnSpec(A',\mathcal M')\to U
\]
is an isomorphism, where the colimit runs over all steady localizations $(A,\mathcal M)\to (A',\mathcal M')$ for which $\AnSpec(A',\mathcal M')\subset \AnSpec(A,\mathcal M)$ factors over $U$.
\end{definition}

\begin{remark} As the colimit is taken over a non-filtered index category, one might worry that it may produce undesired outcomes. This is not so: Compute the colimit first as presheaves. Then for all analytic rings $(B,\mathcal N)$ with a fixed map to $U$, the fibre product computes the colimit of a point over all $\AnSpec(A',\mathcal M')$ contained in $U$ and such that $\AnSpec(B,\mathcal N)\to U$ factors over $\AnSpec(A',\mathcal M')$. If nonempty, the index category is cofiltered (as with any two steady localizations, it contains their tensor product), so the geometric realization of the index category is contractible, and hence this colimit is itself a point or empty. Now note that if $\mathcal G$ is a sheaf of anima and $\mathcal F\subset \mathcal G$ is a presheaf of anima (the inclusion meaning that all fibres are empty or contractible), then the sheafification $\mathcal F'$ of $\mathcal F$ still has the property that $\mathcal F'\to \mathcal G$ has empty or contractible fibres; it is the subsheaf of $\mathcal G$ of all sections that locally lift to $\mathcal F$. To see this, note that sheafification commutes with taking fibres, so it is enough to recall that the sheafification of a presheaf whose values are empty or a point has values which still are empty or a point.

This argument shows that $U\subset \AnSpec(A,\mathcal M)$ is a steady subspace if and only if there is some collection of steady localizations $(A,\mathcal M)\to (A_i,\mathcal M_i)_i$ such that $U$ is the image of
\[
\bigsqcup_i \AnSpec(A_i,\mathcal M_i)\to \AnSpec(A,\mathcal M).
\]
\end{remark}

One verifies immediately that pullbacks of steady subspaces are steady, and conversely $U\subset \AnSpec(A,\mathcal M)$ is a steady subspace if and only the pullbacks $U_i\subset \AnSpec(A_i,\mathcal M_i)$ are steady subspaces for all terms in a finite cover of $(A,\mathcal M)$ by steady localizations $(A_i,\mathcal M_i)$. Thus, the following definition has reasonable properties.

\begin{definition} An inclusion $\mathcal F\subset \mathcal G$ of sheaves of anima on $\AnRing$ is a steady subspace if for all analytic rings $(A,\mathcal M)$ the fibre product
\[
\mathcal F\times_{\mathcal G} \AnSpec(A,\mathcal M)\to \AnSpec(A,\mathcal M)
\]
is a steady subspace.
\end{definition}

Finally, we can define analytic spaces.

\begin{definition} An analytic space is a sheaf of anima $X$ on $\AnRing$ such that the map
\[
\colim_{\AnSpec(A,\mathcal M)\subset X} \AnSpec(A,\mathcal M)\to X
\]
is an isomorphism, where the colimit runs over all affine steady subspaces $\AnSpec(A,\mathcal M)\subset X$.
\end{definition}

Again, one might worry that the colimit does not behave well as the index category is not well-behaved. However, we have the following result, saying that we could equivalently define analytic spaces $X$ as those sheaves that can be covered by steady subspaces of the form $\AnSpec(A,\mathcal M)\subset X$.

\begin{proposition} If $X$ is a sheaf of anima on $\AnRing$ such that the map
\[
\colim_{\AnSpec(A,\mathcal M)\subset X} \AnSpec(A,\mathcal M)\to X
\]
is surjective, where as above the colimit runs over all affine steady subspaces $\AnSpec(A,\mathcal M)\subset X$, then $X$ is an analytic space, i.e.~the map is an isomorphism.

In particular, any steady subspace of an analytic space is again an analytic space.
\end{proposition}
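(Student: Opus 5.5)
To prove the map $\colim_{\AnSpec(A,\mathcal M)\subset X}\AnSpec(A,\mathcal M)\to X$ is an isomorphism, I would test it after pullback along an arbitrary map $\AnSpec(B,\mathcal N)\to X$. Colimits in sheaves of anima are universal, so pullback commutes with the colimit. It then suffices to show that for each such point the induced map
\[
\colim_{\AnSpec(A,\mathcal M)\subset X} \bigl(\AnSpec(A,\mathcal M)\times_X \AnSpec(B,\mathcal N)\bigr)\to \AnSpec(B,\mathcal N)
\]
is an isomorphism. Each term $U_A:=\AnSpec(A,\mathcal M)\times_X\AnSpec(B,\mathcal N)$ is, by definition of a steady subspace of $X$, a steady subspace of $\AnSpec(B,\mathcal N)$. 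Hence it is a subfunctor, i.e.\ has empty or contractible fibres over $\AnSpec(B,\mathcal N)$.

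The key step is to mirror the argument in the remark following the definition of steady subspaces. Compute the colimit first as presheaves and fix a further point $\AnSpec(C,\mathcal P)\to \AnSpec(B,\mathcal N)$. The fibre of the presheaf colimit is the classifying anima of the full subcategory $I_C$ of the index category, consisting of those affine steady $\AnSpec(A,\mathcal M)\subset X$ through which the composite $\AnSpec(C,\mathcal P)\to X$ factors. Here I use that each $U_A$ has point-or-empty fibres, so the fibre of the colimit is the colimit over the indexing category of points and empties. I would show that $I_C$, if nonempty, is cofiltered.

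Given two objects $\AnSpec(A_1,\mathcal M_1)$ and $\AnSpec(A_2,\mathcal M_2)$ in $I_C$, their intersection $\AnSpec(A_1,\mathcal M_1)\times_X\AnSpec(A_2,\mathcal M_2)$ is a steady subspace of $\AnSpec(A_1,\mathcal M_1)$. By the remark it is a union of images of steady localizations $\AnSpec(A',\mathcal M')$. The point $\AnSpec(C,\mathcal P)$ factors through this union only locally, so after passing to a steady cover of $(C,\mathcal P)$ it factors through one such $\AnSpec(A',\mathcal M')$. Steady localizations compose, so $\AnSpec(A',\mathcal M')\to X$ is again an affine steady subspace, and it lies under both given objects. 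Since everything is a subobject, morphisms are unique when they exist, so equalizers are automatic.

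The main obstacle is exactly this ``locally on $(C,\mathcal P)$'' issue: cofilteredness holds only after refining $C$. I would handle it by sheafifying, as in the remark. After sheafification, a presheaf with point-or-empty fibres over the sheaf $\AnSpec(B,\mathcal N)$ becomes the subsheaf of sections that lift locally. The cofilteredness argument then shows that the fibres are contractible whenever they are nonempty, locally on $C$. Surjectivity of the colimit onto $X$, which is the hypothesis, provides local nonemptiness. So the sheafified colimit is both a subsheaf and surjective, hence an isomorphism.

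For the final claim, let $Y\subset X$ be a steady subspace with $X$ an analytic space. Cover $X$ by affine steady subspaces $\AnSpec(A,\mathcal M)$. Each $Y\times_X\AnSpec(A,\mathcal M)$ is a steady subspace of $\AnSpec(A,\mathcal M)$, hence covered by steady localizations $\AnSpec(A',\mathcal M')$. These are affine steady subspaces of $Y$, using that composites of steady subspace inclusions are steady, which is checked after pullback to affines. They cover $Y$, so the first part applies.
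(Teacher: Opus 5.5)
Your setup is fine: pulling back to $\AnSpec(B,\mathcal N)$ and identifying the fibre of the presheaf colimit at a $C$-point with $|I_C|$ are correct, and so is your treatment of the last sentence of the statement. The gap is at the point you flag yourself, and your proposed fix does not close it. The paper's remark on sheafification only says that a presheaf whose fibres over a sheaf are \emph{already} empty or contractible sheafifies to a subsheaf. Your presheaf colimit does not have this property. $I_C$ is a poset that is cofiltered only after refining $(C,\mathcal P)$, and $|I_C|$ can genuinely fail to be contractible. For example, take two affine charts of a non-separated $X$ whose overlap is a non-affine steady subspace, and a $C$-point of the overlap that factors through no affine inside it; the two charts then have no common lower bound in $I_C$.

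What your local argument actually proves is weaker: every class in $\pi_n|I_C|$, including $\pi_0$, dies after a finite steady cover. Equivalently, the homotopy sheaves of the map from the sheafified colimit to $\AnSpec(B,\mathcal N)$ vanish. That is an $\infty$-connectivity statement. Nothing guarantees that sheaves of anima on $\AnRing^{\mathrm{op}}$ form a hypercomplete $\infty$-topos, so it does not imply that the map is a monomorphism, hence not that it is an equivalence.

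The missing idea is to make the index category honestly cofiltered, not just locally so. The paper first treats quasi-affine $X$, i.e.\ $X\subset\tilde X=\AnSpec(\tilde A,\tilde{\mathcal M})$. Then $\AnSpec(A_1,\mathcal M_1)\times_X\AnSpec(A_2,\mathcal M_2)=\AnSpec(A_1,\mathcal M_1)\times_{\tilde X}\AnSpec(A_2,\mathcal M_2)$ is $\AnSpec$ of the pushout in $\AnRing$, hence again an affine steady subspace of $X$. So every $I_C$ is closed under binary meets, the presheaf fibres are empty or contractible, and the remark plus surjectivity apply on the nose.

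For general $X$, one runs the same argument with the poset of quasi-affine steady subspaces $Y\subset X$. This poset is closed under intersection: $Y_1\cap Y_2$ injects into an affine, and it is an analytic space by the quasi-affine case. One then identifies $\colim_{\AnSpec(A,\mathcal M)\subset X}\AnSpec(A,\mathcal M)$ with $\colim_{Y\subset X}Y$ via the colimit over pairs $\AnSpec(A,\mathcal M)\subset Y\subset X$. Replacing your ``local cofilteredness plus sheafification'' step with this two-step reduction repairs the proof.
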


\begin{proof} Assume first that $X$ admits an injection into some affine $\tilde{X} = \AnSpec(\widetilde{A},\widetilde{\mathcal M})$; let us call such $X$ quasi-affine for the rest of this proof. In that case, fibres products over $X$ can also be computed as fibre products over $\tilde{X}$. For any $(B,\mathcal N)$ mapping to $X$, the fibre of the displayed map is computed by the colimit over a point over the index category of all $\AnSpec(A,\mathcal M)\subset X$ over which $\AnSpec(B,\mathcal N)\to X$ factors, which will be cofiltered in this case. Thus, this colimit is either empty or a point, which gives the result in this case.

At this point, we already see that any steady subspace of some quasi-affine $X$ is again an analytic space (and clearly quasi-affine). Now let $X$ be general and consider the colimit
\[
\colim_{Y\subset X} Y\to X
\]
where the colimit runs over all quasi-affine analytic spaces $Y$ that are steady subspaces of $X$. By the same argument as in the first paragraph, this map is an isomorphism. Finally, the map
\[
\colim_{\AnSpec(A,\mathcal M)\subset X} \AnSpec(A,\mathcal M)\to \colim_{Y\subset X} Y
\]
is an isomorphism, by comparing both with
\[
\colim_{\AnSpec(A,\mathcal M)\subset Y\subset X} \AnSpec(A,\mathcal M):
\]
This is equal to the first, as the set of possible $Y$ is cofiltered, and it is equal to the second as each $Y$ can be written as $\colim_{\AnSpec(A,\mathcal M)\subset Y} \AnSpec(A,\mathcal M)$.
\end{proof}

By Proposition~\ref{prop:descent}, we can define an $\infty$-category of quasicoherent sheaves on $X$.

\begin{definition} Let $X$ be an analytic space. The $\infty$-category of quasicoherent sheaves on $X$ is
\[
\mathcal D(X) := \lim_{\AnSpec(A,\mathcal M)\subset X} \mathcal D(A,\mathcal M).
\]
\end{definition}

Equivalently, the association $(A,\mathcal M)\mapsto \mathcal D(A,\mathcal M)$ defines a sheaf of $\infty$-categories on $\AnRing^{\mathrm{op}}$, and $\mathcal D(X)$ is the value of this sheaf on $X$. In particular:

\begin{proposition} Let $X$ be an analytic space and let $U_i\subset X$, $i\in I$, be steady subspaces that cover $X$ (i.e.~$\bigsqcup_i U_i\to X$ is a surjection). Let $C_I$ be the category of finite nonempty subsets of $I$, and consider the functor taking $J\in C_I$ to $U_J=\bigcap_{i\in J} U_i$. Then
\[
\mathcal D(X)\to \lim_{J\in C_I} \mathcal D(U_J)
\]
is an equivalence of $\infty$-categories.

If $X=\AnSpec(A,\mathcal M)$ is affine, then $\mathcal D(X)\cong \mathcal D(A,\mathcal M)$.
\end{proposition}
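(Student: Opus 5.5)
The plan is to reduce both assertions to descent for affines, Proposition~\ref{prop:descent}, together with the fact that $(A,\mathcal M)\mapsto \mathcal D(A,\mathcal M)$ is a sheaf of $\infty$-categories for the Grothendieck topology on $\AnRing^{\mathrm{op}}$ generated by finite conservative families of steady localizations.

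First I would make this sheaf property precise. Proposition~\ref{prop:descent} gives descent for a single finite cover by steady localizations. Since steady localizations are stable under base change and composition, these covers generate the topology in the usual way. Descent for iterated covers and for their pullbacks then follows formally, so the right Kan extension $X\mapsto \lim_{\AnSpec(A,\mathcal M)\to X}\mathcal D(A,\mathcal M)$ along all maps from affines defines a limit-preserving functor on sheaves.

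Second, I would check that the definition of $\mathcal D(X)$, which uses only affine steady subspaces, agrees with this right Kan extension. For an analytic space we have $X=\colim_{\AnSpec(A,\mathcal M)\subset X}\AnSpec(A,\mathcal M)$ as sheaves, and the extended functor sends colimits of sheaves to limits. Hence its value on $X$ is $\lim_{\AnSpec(A,\mathcal M)\subset X}\mathcal D(A,\mathcal M)$. The affine case follows at once: if $X=\AnSpec(A,\mathcal M)$, then $X$ itself is a terminal object of the index category, so the limit is $\mathcal D(A,\mathcal M)$. Since the value of the sheaf on a representable is its defining value, this is also consistent with the extended functor.

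Third, I would prove the covering statement. The $U_J$ are steady subspaces, hence analytic spaces. Because $\bigsqcup_i U_i\to X$ is a surjection of sheaves and each $U_i\subset X$ is a subobject, the Čech nerve of this cover is computed by the finite intersections $U_J$, since all fibre products over $X$ of monomorphisms are intersections. Surjective maps of sheaves are effective epimorphisms, so $X\simeq\colim_{J\in C_I}U_J$ in sheaves. Here I use that the Čech nerve indexed by ordered tuples has the same colimit as the diagram indexed by finite nonempty subsets, because the tuple diagram is cofinal over the subset diagram for monomorphisms, whose self-intersections are idempotent. Applying the limit-preserving extension of $\mathcal D$ to this colimit yields $\mathcal D(X)\simeq\lim_{J\in C_I}\mathcal D(U_J)$.

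The main obstacle is the first step: turning descent for finite covers into the statement that $\mathcal D$ is a genuine sheaf on the site, and that its right Kan extension sends all colimits of sheaves to limits. Set-theoretic issues, which the text explicitly ignores, make infinite covers harmless. So the substantive point is to verify that descent along steady localizations is stable under base change and refinement. For this I would cite base change for steady maps, i.e. the tautological reformulation of steadiness.
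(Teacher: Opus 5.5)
Your proposal is correct and follows the paper's own justification, which is only sketched in the text. Proposition~\ref{prop:descent} makes $(A,\mathcal M)\mapsto \mathcal D(A,\mathcal M)$ a sheaf of $\infty$-categories on $\AnRing^{\mathrm{op}}$; here one uses, as in your third step, that localizations are monomorphisms, so the \v{C}ech nerves reduce to the $C_I$-indexed diagrams. Then $\mathcal D(X)$ is the value of this sheaf on $X=\colim_{\AnSpec(A,\mathcal M)\subset X}\AnSpec(A,\mathcal M)$, and descent for the cover follows from $X\simeq \colim_{J\in C_I} U_J$, with the affine case given by the terminal object of the index poset.
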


We see that there is a well-defined $\infty$-category of analytic spaces, defined very analogously to schemes, and it comes with an $\infty$-category of quasicoherent sheaves, satisfying descent.

It is high time for some examples. First, we need some examples of steady localizations. All of them will be produced by the following criterion.

\begin{proposition}\label{prop:proringatinfinity} Let $f: (A,\mathcal M)\to (B,\mathcal N)$ be a map of analytic rings. Assume that for any extremally disconnected profinite set $S$ and any $M\in \mathcal D_{\geq 0}(A,\mathcal M)$, the natural map
\[
(\intHom_{\mathcal D(A)}(A[S],M)\otimes_{(A,\mathcal M)} (B,\mathcal N))(\ast)\to (M\otimes_{(A,\mathcal M)} (B,\mathcal N))(S)
\]
in $\mathcal D(\Ab)$ is an equivalence. Then $f$ is steady.
\end{proposition}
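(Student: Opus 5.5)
The plan is to verify the definition of steadiness directly. So let $g\colon (A,\mathcal M)\to (C,\mathcal M_C)$ be any map in $\AnRing$ and let $N\in \mathcal D_{\geq 0}(C,\mathcal M_C)$. Put
\[
\Phi(N):=N\otimes_{(A,\mathcal M)}(B,\mathcal N),
\]
computed by regarding $N$ as an object of $\mathcal D_{\geq 0}(A,\mathcal M)$; this is possible because $g$ is a map of analytic rings. Then $\Phi(N)$ is naturally a $C\otimes_A B$-module, and we must show that its restriction to $C$ lies in $\mathcal D_{\geq 0}(C,\mathcal M_C)$. By the definition of $\mathcal D_{\geq 0}(C,\mathcal M_C)$, this amounts to showing that
\[
\intHom_C(Q,\Phi(N))=0, \qquad Q=\mathrm{cofib}(C[S]\to \mathcal M_C[S]),
\]
for every extremally disconnected $S$. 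It is enough to prove the vanishing after evaluation at $\ast$, as long as we do it for all $S$, because $S\times T$ is again extremally disconnected.

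\textbf{Step 1: reformulate the hypothesis.} For a $C$-module $N$ one has $\intHom_{\mathcal D(A)}(A[S],N)=\intHom_C(C[S],N)$. This is again an object of $\mathcal D(C,\mathcal M_C)\subset \mathcal D(A,\mathcal M)$, by condition (2) of Proposition~\ref{prop:characterizeanalyticstructure}. The hypothesis then says that the natural map
\[
\Phi\bigl(\intHom_C(C[T],N)\bigr)(\ast)\longrightarrow \Phi(N)(T)=\Hom_C\bigl(C[T],\Phi(N)\bigr)
\]
is an equivalence for every extremally disconnected $T$, naturally in $C[T]$. I then want to extend this to
\[
\Hom_C(P,\Phi(N))\simeq \Phi\bigl(\intHom_C(P,N)\bigr)(\ast)
\]
for more general connective $C$-modules $P$. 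The idea is to resolve $P$ simplicially by free modules $C[T_i]$ and pass to the limit over the resulting cosimplicial diagram.

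\textbf{Step 2: apply this to $P=Q$.} Since $N$ is $\mathcal M_C$-complete, $\intHom_C(Q,N)=0$. Hence $\Phi(\intHom_C(Q,N))=0$, and Step 1 gives $\Hom_C(Q,\Phi(N))=0$, as desired. Running the same argument with $S$ replaced by $S\times T$ upgrades this to the internal $\intHom$. The statement for general $N\in\mathcal D(C,\mathcal M_C)$ then follows by shifting.

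\textbf{Main obstacle.} The hard step is the extension in Step 1. A resolution $|C[T_\bullet]|\simeq Q$ turns $\intHom_C(Q,N)$ into a totalization $\lim_\Delta \intHom_C(C[T_n],N)$, and $\Phi=-\otimes_{(A,\mathcal M)}(B,\mathcal N)$ need not commute with totalizations. To handle this, I would first reduce to $N$ truncated, using Postnikov towers: $\mathcal D(C,\mathcal M_C)$ is closed under limits, so it is enough to treat each $\tau_{\leq n}N$ and then pass to the limit. The passage to the limit must also commute with $\Phi$. I would try to justify this with the hypothesis itself, since it only involves $\Phi$ evaluated on objects of the form $\intHom(A[S],-)$, which are limits along $S$. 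Once $N$ is truncated and we are looking at a fixed homotopical degree, the totalization only involves a finite skeleton, and $\Phi$, being exact on stable categories, commutes with finite limits. Controlling the degrees in this finite-skeleton argument, which requires some bounded amplitude for $\Phi$ on truncated objects, is the point I expect to need the most care.
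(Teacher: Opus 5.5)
\textbf{There is a genuine gap.} The whole proof rests on the extension in your Step 1, and you leave that step open.

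To get $\Hom_C(Q,\Phi(N))\simeq\Phi(\intHom_C(Q,N))(\ast)$, note that both sides send the cofibre sequence $C[S]\to\mathcal M_C[S]\to Q$ to fibre sequences, and the hypothesis already covers $C[S]$. So the claim is equivalent to the same statement for $P=\mathcal M_C[S]$. Through a free resolution $C[T_\bullet]\to\mathcal M_C[S]$, that statement is exactly the assertion that $\Phi=-\otimes_{(A,\mathcal M)}(B,\mathcal N)$ commutes with the totalization of $\intHom_C(C[T_\bullet],N)$.

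You have no means to prove this. $\Phi$ is only a left adjoint, and nothing in the hypothesis bounds its amplitude, so your finite-skeleton argument has no input. The Postnikov reduction is harmless: right $t$-exactness of $\Phi$ gives $\tau_{\leq n}\Phi(N)=\tau_{\leq n}\Phi(\tau_{\leq n}N)$. The totalization step is the real obstruction. As written, the proposal does not prove the statement.

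\textbf{The missing idea: you never need to leave free modules.} You already noted that the hypothesis gives $\Hom_C(C[T],\Phi(N))\simeq\Phi(\intHom_C(C[T],N))(\ast)$, naturally in $C[T]$. The argument then runs as follows.
\begin{enumerate}
\item Because $N$ is $\mathcal M_C$-complete, $\intHom_C(C[T],N)=\intHom_C(\mathcal M_C[T],N)$.
\item Hence the right-hand side equals $F(\mathcal M_C[T])$, where $F\colon \mathcal M_C[T]\mapsto \Phi(\intHom_C(\mathcal M_C[T],N))(\ast)$ is a contravariant functor on the full subcategory spanned by the $\mathcal M_C[T]$. It takes finite sums to products.
\item The $\mathcal M_C[T]$ are compact projective generators of $\mathcal D_{\geq 0}(C,\mathcal M_C)$, so $F$ is represented by some $X\in\mathcal D_{\geq 0}(C,\mathcal M_C)$.
\item Then $\Hom_C(C[T],X)=\Hom(\mathcal M_C[T],X)=F(\mathcal M_C[T])\simeq\Hom_C(C[T],\Phi(N))$, naturally in $C[T]$.
\item A connective $C$-module is determined by its product-preserving presheaf on the free modules $C[T]$. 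Therefore $X\simeq\Phi(N)$, so $\Phi(N)$ is $\mathcal M_C$-complete.
\end{enumerate}
This is the paper's argument. It uses the hypothesis only on free modules, which is exactly where the hypothesis applies, and so it avoids resolutions and totalizations entirely.
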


\begin{proof} Consider any analytic ring $(C,\mathcal M_C)$ over $(A,\mathcal M)$ and let $M\in \mathcal D_{\geq 0}(C,\mathcal M_C)$. Then
\[
(M\otimes_{(A,\mathcal M)} (B,\mathcal N))(S) = (\intHom_{\mathcal D(A)}(A[S],M)\otimes_{(A,\mathcal M)} (B,\mathcal N))(\ast) = (\intHom_{\mathcal D(C,\mathcal M_C)}(\mathcal M_C[S],M)\otimes_{(A,\mathcal M)} (B,\mathcal N))(\ast).
\]
Varying $S$, the right-hand side of this formula defines a functor from the $\infty$-category of $C$-modules isomorphic to some $\mathcal M_C[S]$ towards anima, commuting with finite products, and thus an object of $\mathcal D_{\geq 0}(C,\mathcal M_C)$, that necessarily agrees with $M\otimes_{(A,\mathcal M)} (B,\mathcal N)$.
\end{proof}

An interesting question is whether an induced analytic ring structure is steady. This leads to the following class of modules. Here and in the following, we write
\[
P^\vee = \intHom_{\mathcal D(A)}(P,A)\in \mathcal D(A)
\]
for $P\in \mathcal D(A)$.

\begin{definition}\label{def:nuclear} Let $(A,\mathcal M)$ be an analytic ring. An object $M\in \mathcal D(A,\mathcal M)$ is nuclear if for all extremally disconnected profinite sets $S$, the natural map
\[
(A[S]^\vee\otimes_{(A,\mathcal M)} M)(\ast)\to M(S)
\]
in $\mathcal D(\Ab)$ is an isomorphism.
\end{definition}

Let us characterize this class of modules.\footnote{The general yoga of nuclear objects and trace-class maps in symmetric monoidal categories appears now in many places in the literature: We discussed it in more detail in \cite{Complex}, and it has also been used in \cite{Efimov} and related works.} For the characterization, the following class of maps is critical.

\begin{definition}\label{def:traceclass} Let $(A,\mathcal M)$ be an analytic ring. A map $f: P\to Q$ between compact objects of $\mathcal D(A,\mathcal M)$ is trace-class if there is some map
\[
g: A\to P^\vee\otimes_{(A,\mathcal M)} Q
\]
such that $f$ is the composite
\[
P\xrightarrow{1\otimes g} P\otimes_{(A,\mathcal M)} P^\vee\otimes_{(A,\mathcal M)} Q\to Q
\]
where the second map contracts the first two factors.
\end{definition}

\begin{definition}\label{def:basicnuclear} Let $(A,\mathcal M)$ be an analytic ring. An object $M\in \mathcal D(A,\mathcal M)$ is basic nuclear if it can be written as a sequential colimit
\[
M=\colim(P_0\to P_1\to \ldots)
\]
of compact $P_i\in \mathcal D(A,\mathcal M)$ along trace-class maps.
\end{definition}

\begin{proposition}\label{prop:nuclear} Let $(A,\mathcal M)$ be an analytic ring and let $M\in \mathcal D(A,\mathcal M)$. Then $M$ is nuclear if and only if it can be written as a filtered colimit of basic nuclear objects. The class of basic nuclear objects is stable under all countable colimits, and the class of nuclear objects is stable under all colimits.
\end{proposition}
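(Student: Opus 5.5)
The plan is to prove the statement in four steps, in this order: (a) basic nuclear objects are nuclear, (b) nuclearity is stable under all colimits, which gives the ``if'' direction, (c) every nuclear object is a filtered colimit of basic nuclear objects, and (d) basic nuclear objects are stable under countable colimits. Throughout, write $\otimes$ for $\otimes_{(A,\mathcal M)}$.

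\textbf{Stability under colimits.} Both sides of the map in Definition~\ref{def:nuclear} commute with all colimits in $M$. The source is a tensor product followed by evaluation at $\ast$, which commutes with colimits because $A[\ast]$, i.e.\ $\mathcal M[\ast]$, is compact. The target is $M(S)$, which commutes with colimits because $\mathcal M[S]$ is compact projective. So nuclear objects are closed under colimits, and it suffices to prove that basic nuclear objects are nuclear.

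\textbf{Basic nuclear implies nuclear.} Write $M=\colim P_i$ with trace-class transition maps $f_i: P_i\to P_{i+1}$, given by classes $g_i: A\to P_i^\vee\otimes P_{i+1}$. The map $P_i(S)=\Hom(\mathcal M[S],P_i)\to P_{i+1}(S)$ should factor through $(A[S]^\vee\otimes P_{i+1})(\ast)$. Concretely, send $\phi:\mathcal M[S]\to P_i$ to $(\phi^\vee\otimes 1)\circ g_i: A\to \mathcal M[S]^\vee\otimes P_{i+1}$, using that $\mathcal M[S]^\vee=A[S]^\vee$ because $\mathcal M[S]$ is the completion of $A[S]$. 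One then checks two things: composing this with the comparison map recovers $f_i\circ\phi$, and the comparison map followed by the factorization recovers the transition map on the other side. This gives an ind-isomorphism between the systems $(A[S]^\vee\otimes P_i)(\ast)$ and $P_i(S)$, so their colimits agree.

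\textbf{Nuclear implies filtered colimit of basic nuclear.} Write $M$ as a filtered colimit of compact objects, for instance via the canonical diagram of compact $P\to M$. For each compact $P$ with a map $P\to M$, choose a resolution of $P$ by finitely many $\mathcal M[S]$ and reduce to $P=\mathcal M[S]$. By nuclearity, the map $\mathcal M[S]\to M$, viewed as an element of $M(S)$, lifts to $(A[S]^\vee\otimes M)(\ast)=\colim_Q (A[S]^\vee\otimes Q)(\ast)$. Hence it factors as $\mathcal M[S]\to Q\to M$ with $\mathcal M[S]\to Q$ trace-class. Iterating this produces, for each compact $P\to M$, a sequence of trace-class maps whose colimit is a basic nuclear object mapping to $M$ through which $P$ factors.

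I expect the main obstacle to be arranging these basic nuclear objects into a \emph{filtered} system whose colimit is $M$. Two problems arise: the trace-class factorization for a general compact $P$ (a finite colimit of $\mathcal M[S]$'s, not a single one) is not canonical, and the resulting system must be filtered. The first thing I would try is to take the category of all basic nuclear $N\to M$ and prove it is filtered, using step (d) below: finite colimits of basic nuclear objects are basic nuclear, so cocones exist, and the cofinality argument just given shows that every compact $P\to M$ factors through some $N$. Coequalizing two parallel maps $N\rightrightarrows N'$ again uses closure under countable (in particular finite) colimits.

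\textbf{Countable colimits of basic nuclear objects.} To see that basic nuclear objects are closed under countable colimits, use that trace-class maps form a two-sided ideal: a composite of any map with a trace-class map is trace-class. They are also stable under finite direct sums and cofibers of maps between sequences, after reindexing so that the maps of sequences are levelwise. A countable colimit can then be rewritten as a sequential colimit of finite colimits of the $P_i$, using a diagonal reindexing to keep the transition maps trace-class.
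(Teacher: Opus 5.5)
You correctly show that basic nuclear objects are nuclear, that nuclear objects are closed under colimits, and that a single compact $P\to M$ factors through a basic nuclear object over $M$; this matches the paper. The gap is the conclusion $M\simeq\colim_{\mathcal K_{/M}}N$, where $\mathcal K_{/M}$ is the category of basic nuclear objects $N$ with a map to $M$.

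Filteredness together with ``every compact $P\to M$ factors through some $N$'' only shows that $\colim_{\mathcal K_{/M}}N\to M$ is surjective on $\pi_0\Hom(P,-)$ for compact $P$. Equivalently, the map from $M$ to the cone $C$ of this morphism is phantom, and phantom maps need not vanish. You still need injectivity: a map $P\to N$ that becomes null in $M$ must become null after some $N\to N'$ in $\mathcal K_{/M}$. The usual way to get this, replacing $N$ by $\mathrm{cone}(P\to N)$, is unavailable, because compact objects are in general not basic nuclear (e.g.\ $\prod_I\mathbb Z$ over $\mathbb Z_\solid$ for infinite $I$ is not even nuclear).

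Separately, resolving a compact $P$ by $\mathcal M[S]$'s is not a valid reduction for the factorization problem. What you need, and what is true, is $(P^\vee\otimes M)(\ast)\simeq\Hom(P,M)$ for every compact $P$. This follows from the case $P=\mathcal M[S][n]$ by exactness in $P$, using $\mathcal M[S]^\vee=A[S]^\vee$.

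The missing idea is to apply your factorization argument to nuclear objects other than $M$. The paper does this with the cone $C$:
\begin{enumerate}
\item $C$ is nuclear, since nuclear objects are closed under colimits.
\item Since $\mathcal K$ is closed under countable colimits, $\mathcal K_{/M}$ is $\omega_1$-filtered. Basic nuclear objects are $\omega_1$-compact, being sequential colimits of compacts.
\item Hence for basic nuclear $B$ one has $\Hom(B,\colim_{\mathcal K_{/M}}N)\simeq\colim\Hom(B,N)\simeq\Hom(B,M)$, so $C$ receives no nonzero maps from basic nuclear objects.
\item If $C\neq 0$, a nonzero map from a compact $P_0$ extends, by your iteration applied to $C$, to a nonzero map from the basic nuclear object $\colim(P_0\to P_1\to\cdots)$. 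This is a contradiction, so $C=0$.
\end{enumerate}

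Alternatively, you can stay in your finite-colimit framework. Lift $P\to N$ to the nuclear object $F=\mathrm{fib}(N\to M)$. Factor $P\to F$ through a basic nuclear $B\to F$. Then $N\to\mathrm{cone}(B\to N)$ lies in $\mathcal K_{/M}$ and kills $P\to N$.

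Finally, in (d), levelwise trace-class maps of sequences do not obviously induce a trace-class map $Q_i\to Q_{i+1}$ on cones. The two witnesses would have to agree in $P_i^\vee\otimes P'_{i+1}$, but the comparison map to $\Hom(P_i,P'_{i+1})$ need not be injective. The paper instead shows that the two-step maps $Q_i\to Q_{i+2}$ are trace-class. It pushes a witness of $P_i\to P_{i+1}$ forward along $P_{i+1}\to P_{i+2}$ and pulls a witness of $P'_{i+1}\to P'_{i+2}$ back along $P'_i\to P'_{i+1}$. Both images in $P_i^\vee\otimes P'_{i+2}$ are the same contraction, so the witnesses are compatible.
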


\begin{proof} Assume first that $M$ is basic nuclear, so $M$ is a sequential colimit of compact $P_i$ along trace-class maps. Choose maps $A\to P_i^\vee\otimes_A P_{i+1}$ giving rise to these trace-class maps $P_i\to P_{i+1}$. Let $Q=A[S]$. Then nuclearity means that $(Q^\vee\otimes_{(A,\mathcal M)} M)(\ast)\to \Hom_{\mathcal D(A)}(Q,M)$ is an isomorphism (in $\mathcal D(\Ab)$). Both sides commute with colimits in $M$, and there are natural backwards maps
\[
\Hom_{\mathcal D(A)}(Q,P_i)\to (\intHom_{\mathcal D(A)}(Q,P_i)\otimes_{(A,\mathcal M)} P_i^\vee\otimes_{(A,\mathcal M)} P_{i+1})(\ast)\to (Q^\vee\otimes_{(A,\mathcal M)} P_{i+1})(\ast),
\]
showing that $M$ is nuclear.

Next, we show that the class of basic nuclear $M$ is stable under finite colimits; equivalently, under passage to cones. Let $f: M\to M'$ be a map of basic nuclear objects, and write $M$ resp.~$M'$ as a filtered colimit of $P_i$ resp.~$P_i'$ as in the definition. As all $P_i$ are compact, the map $M\to M'$ can be realized via compatible maps $P_i\to P_i'$, up to reindexing the $P_i'$. Let $Q_i$ be the cone of $P_i\to P_i'$, so the cone of $f$ is the sequential colimit of the $Q_i$. We claim that $Q_i\to Q_{i+2}$ is trace-class for all $i$. To see this, we need to find a map $A\to Q_i^\vee\otimes_{(A,\mathcal M)} Q_{i+2}$ defining this map. Note that $Q_i^\vee\otimes_{(A,\mathcal M)} Q_{i+2}$ is the fibre of
\[
(P_i')^\vee\otimes_{(A,\mathcal M)} Q_{i+2}\to P_i^\vee\otimes_{(A,\mathcal M)} Q_{i+2}
\]
and to define a map there, it suffices to exhibit a section of $(P_i')^\vee\otimes_{(A,\mathcal M)} P_{i+2}'$ and a section of $P_i^\vee\otimes_{(A,\mathcal M)} P_{i+2}$ whose images in $P_i^\vee\otimes_{(A,\mathcal M)} P_{i+2}'$ agree. To define this, pick sections
\[
\alpha: A\to P_i^\vee\otimes_{(A,\mathcal M)} P_{i+1}\ ,\ \beta: A\to (P_{i+1}')^\vee\otimes_{(A,\mathcal M)} P_{i+2}'
\]
witnessing that $P_i\to P_{i+1}$ and $P_{i+1}'\to P_{i+2}'$ are trace-class, and take the induced sections of $P_i^\vee\otimes_{(A,\mathcal M)} P_{i+2}$ and $(P_i')^\vee\otimes_{(A,\mathcal M)} P_{i+2}'$.

To see stability of basic nuclear $M$ under all countable colimits, it remains to observe that it is stable under countable direct sums, which is clear from the definition.

It is clear from the definition that the class of all nuclear objects is stable under all colimits. Now let $M$ be a general nuclear object. We can look at the $\omega_1$-filtered diagram of all basic nuclear $M'$ mapping to $M$ (here we use that basic nuclear objects admit countable colimits), and replacing $M$ by the cone of the colimit of this diagram mapping to $M$, we can assume that there are no maps from basic nuclear $M'$'s to $M$. We need to see that then $M=0$. If not, there is some compact $P_0$ with a nonzero map $P_0\to M$. As by nuclearity,
\[
(P_0^\vee\otimes_A M)(\ast) = \Hom(P_0,M),
\]
and both sides commute with filtered colimits as functors of $M$, we can find some compact $P_1\to M$ and a section of $(P_0^\vee\otimes_A P_1)(\ast)$ so that $P_0\to M$ is the composite of $P_1\to M$ with the corresponding trace-class map $P_0\to P_1$. Continuing, we find a nonzero map from some basic nuclear $\colim(P_0\to P_1\to\ldots)$ to $M$, contradiction.
\end{proof}

The proposition implies that nuclear modules have stronger properties than a priori guaranteed by the definition:

\begin{proposition}\label{prop:nuclearsteady} Let $(A,\mathcal M)$ be an analytic ring and let $M\in \mathcal D(A,\mathcal M)$ be nuclear. Then for all $M'\in \mathcal D(A,\mathcal M)$ and extremally disconnected profinite sets $S$, the map
\[
(\intHom_{\mathcal D(A))}(A[S],M')\otimes_{(A,\mathcal M)} M)(\ast)\to (M'\otimes_{(A,\mathcal M)} M)(S)
\]
in $\mathcal D(\Ab)$ is an isomorphism.

In particular, if $A\to B$ is a map of condensed animated commutative rings such that $B\in \mathcal D(A,\mathcal M)$ is nuclear, and $(B,\mathcal N)$ is the induced analytic ring structure on $B$, then $(A,\mathcal M)\to (B,\mathcal N)$ satisfies the criterion from Proposition~\ref{prop:proringatinfinity}, and hence is steady.
\end{proposition}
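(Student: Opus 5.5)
By Proposition~\ref{prop:nuclear}, $M$ is a filtered colimit of basic nuclear objects. Both sides of the map
\[
(\intHom_{\mathcal D(A)}(A[S],M')\otimes_{(A,\mathcal M)} M)(\ast)\to (M'\otimes_{(A,\mathcal M)} M)(S)
\]
commute with filtered colimits in $M$. On the left, $\otimes$ commutes with colimits and evaluation at $\ast$ commutes with filtered colimits. On the right, evaluation at an extremally disconnected $S$ is exact and commutes with filtered colimits. So I reduce to the case where $M$ is basic nuclear, say $M=\colim(P_0\to P_1\to\ldots)$ with compact $P_i$ and trace-class transition maps witnessed by sections $g_i: A\to P_i^\vee\otimes_{(A,\mathcal M)} P_{i+1}$.

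The strategy is the same as in the first paragraph of the proof of Proposition~\ref{prop:nuclear}. I construct a backwards map on each stage that becomes inverse to the given map after passing to the colimit. Concretely, for each $i$ I build the composite
\[
(M'\otimes_{(A,\mathcal M)} P_i)(S)\to (\intHom_{\mathcal D(A)}(A[S],M')\otimes_{(A,\mathcal M)} P_{i+1})(\ast).
\]
First I use the contraction description of trace-class maps to rewrite $M'\otimes P_i\to M'\otimes P_{i+1}$ as a map factoring through $\intHom(P_i^\vee, M')\otimes P_{i+1}$, more precisely through $M'\otimes P_i\otimes P_i^\vee\otimes P_{i+1}\to M'\otimes P_{i+1}$. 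Taking $S$-points and using $g_i$, I get the map $(M'\otimes P_i)(S)\otimes (P_i^\vee\otimes P_{i+1})(\ast)\to \ldots$. The cleaner route is via adjunction: the trace-class map $P_i\to P_{i+1}$ makes $M'\otimes P_i\to M'\otimes P_{i+1}$ factor as
\[
M'\otimes P_i\to \intHom(P_i^\vee,M')\otimes P_{i+1}\quad\text{or}\quad \intHom(P_i^\vee,M'\otimes P_i)\ldots
\]
and I want the precise version: for an $S$-point $x\in(M'\otimes P_i)(S)=\Hom(A[S],M'\otimes P_i)$, tensor with $g_i$ and contract to get an element of $\Hom(A[S]\otimes P_i^\vee,M')\otimes P_{i+1}$ evaluated at $\ast$. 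This element maps to $(\intHom(A[S],M')\otimes P_{i+1})(\ast)$ once I use that $P_i$ is compact, hence dualizable-like enough that $\intHom(A[S],M'\otimes P_i)\otimes P_i^\vee\to \intHom(A[S],M')$ via evaluation.

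Next I check the two triangle identities: each composite agrees with the transition map $P_i\to P_{i+1}$ on the respective side. This is a formal consequence of the definition of the contraction, exactly as for the unit/counit of a duality. Passing to the colimit over $i$ then yields mutually inverse maps. The main obstacle is making the evaluation map $\intHom(A[S],M'\otimes P_i)\otimes P_i^\vee\to\intHom(A[S],M')$ precise when $P_i$ is only compact and not dualizable. I would construct it directly as the adjoint of the evaluation $M'\otimes P_i\otimes P_i^\vee\to M'$ combined with the lax structure $\intHom(A[S],X)\otimes Y\to\intHom(A[S],X\otimes Y)$, and only ever compose it with $g_i$.

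For the second assertion, take $M=B$ and $M'\in\mathcal D_{\geq0}(A,\mathcal M)$. Since $(B,\mathcal N)$ is induced, $M'\otimes_{(A,\mathcal M)}(B,\mathcal N)=M'\otimes_{(A,\mathcal M)}B$. The displayed isomorphism is then literally the criterion of Proposition~\ref{prop:proringatinfinity}, so $f$ is steady.
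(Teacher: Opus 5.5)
Your proposal is correct and is essentially the paper's proof. You reduce via Proposition~\ref{prop:nuclear} to basic nuclear $M=\colim P_i$, build backwards maps $(M'\otimes P_i)(S)\to(\intHom_{\mathcal D(A)}(A[S],M')\otimes P_{i+1})(\ast)$ by tensoring with the witness $g_i$, applying the lax map $\intHom(A[S],X)\otimes Y\to\intHom(A[S],X\otimes Y)$ and contracting, then pass to the colimit, and the second assertion follows directly. The ``obstacle'' you flag is not one, since the contraction $P_i\otimes_{(A,\mathcal M)}P_i^\vee\to A$ is just the counit of $P_i^\vee=\intHom_{\mathcal D(A)}(P_i,A)$ and needs no dualizability; your explicit check of the two triangle identities only spells out what the paper leaves implicit.
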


\begin{proof} As both sides commute with colimits in $M$, we can assume that $M$ is basic nuclear, and write it as the sequential colimit of compact $P_i$ along trace-class maps $P_i\to P_{i+1}$. In that case, there are backwards maps
\[\begin{aligned}
(M'\otimes_{(A,\mathcal M)} P_i)(S)&=\intHom_{\mathcal D(A)}(A[S],M'\otimes_{(A,\mathcal M)} P_i)(\ast)\\
&\to (\intHom_{\mathcal D(A)}(A[S],M'\otimes_{(A,\mathcal M)} P_i)\otimes_{(A,\mathcal M)} P_i^\vee\otimes_{(A,\mathcal M)} P_{i+1})(\ast)\\
&\to (\intHom_{\mathcal D(A)}(A[S],M'\otimes_{(A,\mathcal M)} P_i\otimes_{(A,\mathcal M)} P_i^\vee)\otimes_{(A,\mathcal M)} P_{i+1})(\ast)\\
&\to (\intHom_{\mathcal D(A))}(A[S],M')\otimes_{(A,\mathcal M)} P_{i+1})(\ast)
\end{aligned}
\]
and so passing to the filtered colimit over the $P_i$ gives the desired equivalence.

The final sentence follows directly.
\end{proof}

We will now verify this condition in the examples listed two lectures ago.

\begin{example} The following are steady localizations of analytic rings.
\begin{enumerate}
\item If $A\to B=A[S^{-1}]$ is a usual localization of (discrete) rings, with trivial analytic ring structure, then $A\to B$ is a steady localization. Indeed, $B$, like any discrete module, is nuclear over $A$.
\item The map $(\mathbb Z[T],\mathbb Z)_\solid\to (\mathbb Z[T],\mathbb Z[T])_\solid$, corresponding to localization to the subset $\{|T|\leq 1\}$ of the adic space $\Spa(\mathbb Z[T],\mathbb Z)$. Recall that in this case (cf.~\cite[Lecture VII]{Condensed}),
\[
M\otimes_{(\mathbb Z[T],\mathbb Z)_\solid} (\mathbb Z[T],\mathbb Z[T])_\solid = \intHom_{\mathcal D((\mathbb Z[T],\mathbb Z)_\solid)}((\mathbb Z((T^{-1}))/\mathbb Z[T])[-1],M).
\]
Then
\[
(\intHom_{\mathcal D(\mathbb Z[T])}(\mathbb Z[T][S],M)\otimes_{(\mathbb Z[T],\mathbb Z)_\solid} (\mathbb Z[T],\mathbb Z[T])_\solid)(\ast) = \Hom_{\mathcal D(\mathbb Z[T])}((\mathbb Z((T^{-1}))/\mathbb Z[T])[-1]\otimes_{\mathbb Z}\mathbb Z_\solid[S],M)
\]
while also
\[
(M\otimes_{(\mathbb Z[T],\mathbb Z)_\solid} (\mathbb Z[T],\mathbb Z[T])_\solid)(S) = \Hom_{\mathcal D((\mathbb Z[T],\mathbb Z)_\solid)}((\mathbb Z((T^{-1}))/\mathbb Z[T])[-1]\otimes_{\mathbb Z}\mathbb Z_\solid[S],M),
\]
giving the desired result.
\item Any rational subset of an adic space is of the form $U=\{|f_i|\leq |g|\neq 0\}$ for finitely many elements $f_1,\ldots,f_n,g\in A$ (satisfying some condition). In the present situation of analytic rings, we can realize this localization by first inverting $g$, and then asking the condition $|\frac{f_i}g|\leq 1$. Inverting $g$ is handled by a base change of (1), while asking $|\frac{f_i}g|\leq 1$ is handled by a base change of (2).
\item If $D_1\subset D_2$ is an inclusion of closed discs in $\mathbb C$, then $\mathcal O(D_2)\to \mathcal O(D_1)$ (endowed with the analytic ring structure induced from the $p$-liquid structure on $\mathbb C$ for some chosen $0<p\leq 1$) is a steady localization. This follows from nuclearity of $\mathcal O(D_2)\to \mathcal O(D_1)$.
\end{enumerate}
\end{example}

Regarding (3), let us at least recall how general Huber pairs fit into the present framework.

\begin{proposition} There is a fully faithful functor $(A,A^+)\mapsto (A,A^+)_\solid$ from the category of Huber pairs $(A,A^+)$ to the $\infty$-category $\AnRing$, defined as follows. For a finitely generated $\mathbb Z$-algebra $R$, let $R_\solid$ be the analytic ring structure on $R$ given by
\[
R_\solid[S] = \varprojlim_i R[S_i]
\]
for a profinite set $S=\varprojlim_i S_i$. For a general discrete $\mathbb Z$-algebra $R$, let $R_\solid = \varinjlim_{R'\to R} R'_\solid$, where the colimit is over all finitely generated $\mathbb Z$-algebras $R'$ mapping to $R$. Finally,
\[
(A,A^+)_\solid = \underline{A}\otimes_{A_{\mathrm{disc}}^+} (A^+_{\mathrm{disc}})_\solid
\]
is an induced analytic ring structure.
\end{proposition}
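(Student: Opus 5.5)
The statement has three layers: the analytic ring structure $R_\solid$ on a finitely generated $\mathbb Z$-algebra, the passage to general discrete $R$ by colimits, and the induced structure on $\underline{A}$. I would prove these in order, then full faithfulness.

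\textbf{Step 1: finitely generated $R$.} First I show that $S\mapsto R_\solid[S]=\varprojlim_i R[S_i]$ is an analytic ring; this is the content of \cite[Lecture VIII]{Condensed}. The plan is to reduce to $\mathbb Z[T_1,\ldots,T_n]$ by writing $R$ as a quotient. For $\mathbb Z[T_1,\ldots,T_n]$ I would use the identification $\mathbb Z[T]_\solid[S]=\mathbb Z[T]\otimes\mathbb Z_\solid[S]$ completed appropriately, together with the computation $M\otimes_{\mathbb Z_\solid}\mathbb Z[T]_\solid=R\intHom(\mathbb Z((T^{-1}))/\mathbb Z[T],M)[1]$ already recalled in the last example. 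This gives the kernel condition of Definition~\ref{def:analyticanimated}. Completeness holds since $R_\solid[\ast]=R$.

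\textbf{Step 2: general discrete $R$ and Huber pairs.} The index category of finitely generated $R'\to R$ is filtered, hence sifted. The proposition on colimits in $\AnRing$ then says the colimit is computed termwise on $\mathcal M[S]$ and is again a complete analytic ring. It is discrete because filtered colimits are exact. For a Huber pair I apply Proposition~\ref{prop:inducedanalytic} to the map of condensed rings $A^+_{\mathrm{disc}}\to\underline A$ and then complete; the completion exists by the third appendix to Lecture XII, since the Frobenius condition holds by Proposition~\ref{prop:frobexists}(2), all $\mathcal M[S]$ being discrete in degree $0$ after solidification. Functoriality comes from the fact that a map of Huber pairs makes $\underline A\to\underline B$ continuous with $A^+\to B^+$. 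The induced $\mathcal D_{\geq0}$ consists of modules that are solid over $A^+_{\mathrm{disc}}$, and the criterion after Definition~\ref{def:analyticanimated} for maps of analytic rings is then checked on $\pi_0$.

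\textbf{Step 3: full faithfulness, the main obstacle.} A map $(A,A^+)_\solid\to(B,B^+)_\solid$ in $\AnRing$ is a map of condensed animated rings $\underline A\to\underline B$ such that complete $B$-modules are complete over $(A,A^+)$. The source is discrete in degree $0$, so such a map is the same as a map of condensed rings.
\begin{itemize}
\item Since $\underline A$ and $\underline B$ are condensed sets of first-countable (hence compactly generated) spaces, Proposition~\ref{prop:condtop}(2) identifies such maps with continuous ring homomorphisms $A\to B$.
\item It remains to show that the completeness condition is equivalent to $A^+\to B^+$. One direction is immediate. For the converse, take $a\in A^+$ and consider its image $b\in B$. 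Solidity over $\mathbb Z[T]$ with $T\mapsto a$ forces the map $\mathbb Z[T]\to B$, $T\mapsto b$, to extend over $\mathbb Z[T]_\solid$. Concretely, this means $R\intHom(\mathbb Z((T^{-1}))/\mathbb Z[T],\underline B)=0$.
\item I expect this vanishing to be equivalent to $b$ being power-bounded and integral over $B^+$, that is, $b\in B^+$. I would test it against the element $\sum b^{-n}T^{-n}$-type maps. The point to pin down is that a non-bounded $b$ yields a nonzero map from $\mathbb Z((T^{-1}))/\mathbb Z[T]$.
\item For the part of $B^+$ beyond power-boundedness I would use that $B^+$ is integrally closed, together with the fact that $B_\solid$-structures only see the integral closure.
\end{itemize}
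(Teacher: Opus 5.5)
\textbf{Gap in Step~3 (the converse direction).} The condition you extract is the wrong one. A $(\mathbb Z[T],\mathbb Z)_\solid$-module $M$ is $\mathbb Z[T]_\solid$-complete iff $R\intHom_{\mathbb Z[T]}(\mathbb Z((T^{-1})),M)=0$. Equivalently, the natural map $M\to R\intHom_{\mathbb Z[T]}(\mathbb Z((T^{-1}))/\mathbb Z[T],M)[1]$ is an isomorphism. So for every nonzero complete $M$ one has $R\intHom(\mathbb Z((T^{-1}))/\mathbb Z[T],M)\simeq M[-1]\neq 0$. Your vanishing condition says instead that $\underline B$ is \emph{killed} by $\mathbb Z[T]_\solid$-completion; already $B=\mathbb Z$, $b=0$ violates it.

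More seriously, even the corrected condition, tested only on the single module $\underline B$, cannot detect $b\notin B^+$. Take the discrete Huber pair $(B,B^+)=(\mathbb Z[X],\mathbb Z)$ and $b=X$. Every discrete $\mathbb Z[T]$-module is $\mathbb Z[T]_\solid$-complete, and every element of $B$ is power-bounded, yet $X\notin\mathbb Z$. What fails is integrality over $B^+$. This is only visible on non-discrete complete modules such as $(B,B^+)_\solid[\mathbb N\cup\{\infty\}]$: the sequence $(1,X,X^2,\ldots)$ does not lie in $\mathbb Z[X]\otimes\prod_{\mathbb N}\mathbb Z$. Your bullets about ``$\sum b^{-n}T^{-n}$-type maps'' and power-boundedness therefore aim at the wrong property. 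The remark that solid structures ``only see the integral closure'' is the easy direction, not the one you need.

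\textbf{The missing argument (the paper's).}
\begin{enumerate}
\item After reducing to $(A,A^+)=(\mathbb Z[T],\mathbb Z[T])$, the map is a map of analytic rings iff the idempotent algebra $\mathbb Z((T^{-1}))\otimes_{(\mathbb Z[T],\mathbb Z)_\solid}(B,B^+)_\solid$ vanishes. This tests all complete modules at once.
\item Vanishing means $1=0$, and $(B,B^+)_\solid=\varinjlim_R (B,R)_\solid$ over finitely generated $R\subset B^+$. Hence it already vanishes for a single such $R$.
\item Use the resolution $0\to\prod_{\mathbb N}\mathbb Z\otimes\mathbb Z[T]\xrightarrow{1-\mathrm{shift}\otimes T}\prod_{\mathbb N}\mathbb Z\otimes\mathbb Z[T]\to\mathbb Z((T^{-1}))\to 0$. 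Vanishing then says that $1-\mathrm{shift}\otimes b$ is bijective on $\prod_{\mathbb N}R\otimes_{R_\solid}\underline B$, so $(b^i)_i$ comes from this module.
\item Choose a ring of definition $B_0\supseteq R$ and an ideal of definition $I$. Elements of $\prod_{\mathbb N}R\otimes_{R_\solid}\underline B$ give sequences in $B$ whose images in each $B/I^n$ span a finitely generated $R$-module.
\item Noetherianity of $R$ then gives a monic relation for $b$ modulo $I$. So $b$ is integral over $R+I\subset B^+$, hence $b\in B^+$.
\end{enumerate}

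\textbf{Two smaller points in Step~2.}
\begin{itemize}
\item Step~3 uses that the underlying ring of $(A,A^+)_\solid$ is $\underline A$, so instead of ``and then complete'' you must check that $\underline A$ is already $(A^+_{\mathrm{disc}})_\solid$-complete. For each finitely generated $R\subset A^+$, pick a ring of definition $A_0\supseteq R$. Then $\underline A$ is an extension of the discrete $R$-module $A/A_0$ by $\varprojlim_n A_0/I^n$, hence $R_\solid$-complete.
\item Invoking Proposition~\ref{prop:frobexists}(2) presupposes Andreychev's $0$-truncatedness theorem. The Frobenius condition is simply inherited by induced analytic ring structures.
\end{itemize}
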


\begin{remark} A priori, this induced analytic ring structure might be animated. However, Andreychev has proved that $(A,A^+)_\solid[S]$ is always $0$-truncated, \cite[Section 3.3]{Andreychev}.

The analytic ring $(A,A^+)_\solid$ is always complete; this follows from $\underline{A}$ being a module over $(A^+_{\mathrm{disc}})_\solid$, which can be deduced by writing it as the limit of $A/I^n$ where $I\subset A_0\subset A$ is an ideal of definition that is also an $R$-module, where $R\subset A^+$ is any chosen finitely generated subalgebra.
\end{remark}

\begin{remark} The second part of a Huber pair is an open and integrally closed subring $A^+\subset A$ consisting of powerbounded elements. One may wonder how these precise conditions arise. One answer is the following: Consider all possible complete analytic ring structures on $\underline{A}$ admitting a map from $\mathbb Z_\solid$ and that can be obtained via filtered colimits from pushouts of the localization $(\mathbb Z[T],\mathbb Z)_\solid\to (\mathbb Z[T],\mathbb Z[T])_\solid$. Then this class is canonically in bijection with the open and integrally closed subrings $A^+\subset A$ contained in the powerbounded elements. Given $A^+\subset A$, one takes the pushout of $(\mathbb Z[T],\mathbb Z)_\solid\to (\mathbb Z[T],\mathbb Z[T])_\solid$ along any map $\mathbb Z[T]\to A$ sending $T$ to an element of $A^+$.
\end{remark}

\begin{proof} That the functor is well-defined follows from the results of \cite{Condensed} and the result on induced analytic ring structures. For fully faithfulness, let $(A,A^+)$ and $(B,B^+)$ be Huber pairs. By definition of maps of analytic rings, maps from $(A,A^+)_\solid$ to $(B,B^+)_\solid$ embed into maps from $\underline{A}$ to $\underline{B}$. As $A$ and $B$ are metrizable, these agree with maps from $A$ to $B$. We need to see that a map $A\to B$ sends $A^+$ into $B^+$ if and only if it induces a map of analytic rings $(A,A^+)_\solid\to (B,B^+)_\solid$. This is clear in the forward direction.

Thus, assume that $f: A\to B$ is a map of Huber rings that induces a map of analytic rings $(A,A^+)_\solid\to (B,B^+)_\solid$, but does not send $A^+$ into $B^+$. Take any $g\in A^+$ such that $f(g)\not\in B^+$. We get a map $(\mathbb Z[T],\mathbb Z[T])\to (A,A^+)$ sending $T$ to $g$; precomposing with this map, we may assume that $(A,A^+)=(\mathbb Z[T],\mathbb Z[T])$. Note that the localization from $(\mathbb Z[T],\mathbb Z)_\solid$ to $(\mathbb Z[T],\mathbb Z[T])_\solid$ is obtained by killing the algebra object $\mathbb Z((T))^{-1}$. Thus, our assumption is that
\[
\mathbb Z((T^{-1}))\otimes_{(\mathbb Z[T],\mathbb Z)_\solid} (B,B^+)_\solid = 0.
\]
Asking whether an algebra is zero is just asking whether $1=0$ in this algebra. If this happens in a filtered colimit, it happens at a finite stage. Thus, we can then find some finitely generated subalgebra $R\subset B^+$ such that
\[
\mathbb Z((T^{-1}))\otimes_{(\mathbb Z[T],\mathbb Z)_\solid} (B,R)_\solid = 0.
\]
Using the resolution
\[
0\to \prod_{\mathbb N} \mathbb Z\otimes_{\mathbb Z} \mathbb Z[T]\xrightarrow{1-\mathrm{shift}\otimes T} \prod_{\mathbb N} \mathbb Z\otimes_{\mathbb Z} \mathbb Z[T]\to \mathbb Z((T^{-1}))\to 0,
\]
this means that the map
\[
\prod_{\mathbb N} R\otimes_{R_\solid} \underline{B}\xrightarrow{1-\mathrm{shift}\otimes g} \prod_{\mathbb N} R\otimes_{R_\solid} \underline{B}
\]
is an isomorphism. We need to see that this implies that $g$ is integral over $R$.

In other words, we now start with a finitely generated $\mathbb Z$-algebra $R$, some Huber $R$-algebra $B$, and an element $g\in B$. Choose a ring of definition $B_0\subset B$ containing $R$ and some ideal of definition $I\subset B_0$. Then in particular $\underline{B} = \lim_n B/I^n$ as condensed $R$-modules. We want to show that if
\[
\prod_{\mathbb N} R\otimes_{R_\solid} \underline{B}\xrightarrow{1-\mathrm{shift}\otimes g} \prod_{\mathbb N} R\otimes_{R_\solid} \underline{B}
\]
is an isomorphism, then $g$ is integral over $R+I$ (which is a subring of $B$ and contained in $B^+$). For any $n$,
\[
\prod_{\mathbb N} R\otimes_R B/I^n = \varinjlim_{M\subset B/I^n} \prod_{\mathbb N} R\otimes_R M = \varinjlim_{M\subset B/I^n} \prod_{\mathbb N} M\subset \prod_{\mathbb N} R/I^n
\]
where the colimit runs over finitely generated $R$-submodules $M$ of $B/I^n$. We see that the inverse limit over all $n$ maps to $\prod_{\mathbb N} B$, and the image is contained in sequences $(b_0,b_1,\ldots)$ such that for all $n$, the submodule of $B/I^n$ generated by all $b_i$ is finitely generated.

If the map is surjective, then in particular $1$ is in the image, which means that we can find such a sequence $(b_0,b_1,\ldots)$ in $B$ satisfying $b_0=1$, $b_1-gb_0=0$, $b_2-gb_1=0$, etc., i.e.~$b_i=g^i$. This sequence needs to have the property that the $R$-submodule of $B/I$ generated by all $g^i$ is finitely generated. But this means that there is some relation
\[
g^n + r_{n-1} g^{n-1} + \ldots + r_0 = 0\in B/I,
\]
which as desired shows that $g$ is integral over $R+I$.
\end{proof}

In the context of adic spaces, one can show that nuclearity is closely related to adic maps; for example, if $A\to B$ is a map of Tate-Huber rings (i.e.~Huber rings admitting a topologically nilpotent unit), then $B$ is always a nuclear $A$-module, for any choice of $A^+$. On the other hand, if $A\to B$ is not adic, then $B$ is never nuclear as $A$-module, and in fact one can show that for any choices of $A^+$ mapping to $B^+$, the map $(A,A^+)_\solid\to (B,B^+)_\solid$ will not be steady.

\newpage

\section{Lecture XIV: Varia}

In this final lecture, we briefly outline a few more ideas. We do not give proofs here; however, they are not especially difficult. (The hard work was in obtaining the examples of the solid analytic ring structure on $\mathbb Z$ last semester, and the liquid analytic ring structures on $\mathbb R$ this semester.)

First, one may wonder whether one can define analytic spaces more in the spirit of locally ringed topological spaces. This is possible. If $X$ is an analytic space, then we can consider the site $|X|$ of steady subspaces $U\subset X$ of $X$. This forms a locale. If $X=\AnSpec(A,\mathcal M)$ is affine, then a basis is given by the affine steady subspaces, which are quasicompact and quasiseparated. Thus, Deligne's theorem on the existence of points implies that, if there is only a set (instead of a proper class) of affine steady subspaces of $X$, then $|X|$ can be identified with the site of open subsets of a spectral space. For general (non-affine) $X$, it would then be the case that $|X|$ is a locally spectral space.

Moreover, $|X|$ carries a sheaf $\mathcal O_X$ of condensed animated commutative rings (obtained by sheafifying its value on affine subspaces), as well as a sheaf $\mathcal M_X[S]$ of condensed animated $\mathcal O_X$-modules, for every extremally disconnected profinite set $S$ (functorial in $S$, taking finite disjoint unions to finite direct sums). We want to define what ``locally ringed'' means, i.e.~we want an analogue of the condition that the stalks of $\mathcal O_X$ are local.

\begin{definition} A locally analytically ringed locale is a triple $(X,\mathcal O_X,\mathcal M_X)$ consisting of a locale $X$, a sheaf of condensed animated commutative rings $\mathcal O_X$ on $X$, and a functor $\mathcal M_X: S\mapsto \mathcal M_X[S]$ from extremally disconnected profinite sets $S$ to condensed animated $\mathcal O_X$-modules, taking finite disjoint unions to finite direct sums, subject to the following conditions:
\begin{enumerate}
\item (``analytically ringed'') there is a basis of subsets $U\subset X$ for which $(\mathcal O_X(U),\mathcal M_X(U))$ is an analytic ring and such that on this basis, the restriction maps are maps of analytic rings;
\item (``stalks are local'') for every $U\subset X$ with a map $(A,\mathcal M)\to (\mathcal O_X(U),\mathcal M_X(U))$ from some analytic ring $(A,\mathcal M)$, and every steady cover
\[
\bigsqcup_i \AnSpec(A_i,\mathcal M_i)\to \AnSpec(A,\mathcal M)\,
\]
there is a cover of $U$ by $U_i\subset U$ such that $(A,\mathcal M)\to (\mathcal O_X(U_i),\mathcal M_X(U_i))$ factors over $(A_i,\mathcal M_i)$.
\end{enumerate}

A map $f: (Y,\mathcal O_Y,\mathcal M_Y)\to (X,\mathcal O_X,\mathcal M_X)$ of locally analytically ringed locales is a map $f: Y\to X$ of locales together with a map $f^\ast\mathcal O_X\to \mathcal O_Y$ of sheaves of condensed animated commutative rings, with the following properties.
\begin{enumerate}
\item (``analytically ringed map'') If on basic affine opens $V\subset Y$ maps into $U\subset X$, then $(\mathcal O_X(U),\mathcal M_X(U))\to (\mathcal O_Y(V),\mathcal M_Y(V))$ is a map of analytic rings.
\item (``maps on local rings local'') Moreover, in the same situation, if there is some steady localization
\[
(A,\mathcal M)=(\mathcal O_X(U),\mathcal M_X(U))\to (A',\mathcal M')
\]
over which $(A,\mathcal M)\to (\mathcal O_Y(V),\mathcal M_Y(V))$ factors, then one can find $U'\subset U$ containing the image of $V$ such that $(A,\mathcal M)\to (\mathcal O_X(U'),\mathcal M_X(U'))$ factors over $(A',\mathcal M')$.
\end{enumerate}
\end{definition}

It is now not hard to prove the following proposition.

\begin{proposition} The functor $X\mapsto (|X|,\mathcal O_X,\mathcal M_X)$ from analytic spaces to locally analytically ringed locales is well-defined and fully faithful. An object lies in the essential image if and only if it is locally isomorphic to the locally analytically ringed locale associated to some analytic ring $(A,\mathcal M)$.
\end{proposition}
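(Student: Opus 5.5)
The functor is well-defined: it is the construction described just before the definition. Full faithfulness will be proved by reducing to the affine case via descent on both sides. An analytic space is the colimit of its affine steady subspaces, and a locally analytically ringed locale can be glued along opens. So it suffices to compute maps out of $\AnSpec(B,\mathcal N)$ into a general analytic space $X$ and compare them with maps of locally analytically ringed locales.

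The key step is the affine-target case. I will show that maps of locally analytically ringed locales $(Y,\mathcal O_Y,\mathcal M_Y)\to (|\AnSpec(A,\mathcal M)|,\mathcal O,\mathcal M)$ are equivalent to maps of analytic rings $(A,\mathcal M)\to(\mathcal O_Y(Y),\mathcal M_Y(Y))$, in the sense of analytic ring maps into the global sections. This is the analogue of the classical statement that $\Spec$ is right adjoint to global sections on locally ringed spaces.

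Given a ring map, the induced map of locales is constructed on the basis. A steady localization $(A,\mathcal M)\to(A',\mathcal M')$ is sent to the open of $Y$ which is the union of all opens $V$ over which $(A,\mathcal M)\to(\mathcal O_Y(V),\mathcal M_Y(V))$ factors through $(A',\mathcal M')$. Such a factorization is unique because localizations are idempotent: $(A',\mathcal M')\otimes_{(A,\mathcal M)}(A',\mathcal M')\simeq(A',\mathcal M')$, so maps out of $(A',\mathcal M')$ form a subspace of maps out of $(A,\mathcal M)$.

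One then checks that this assignment preserves finite intersections (tensor products of localizations) and covers. Preservation of covers is precisely the axiom ``stalks are local'', which is why that axiom is built into the definition. The axiom ``maps on local rings local'' ensures that the resulting map of locales is the unique one compatible with the sheaf map. Conversely, any map of locally analytically ringed locales restricts on global sections to a ring map, and locality shows that the two constructions are inverse.

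For a general analytic space $X$, I will cover it by affine steady subspaces. The map $|X|\to$ (sheaves) is continuous, and steady subspaces of $X$ correspond exactly to opens of $|X|$ by construction of the site. Full faithfulness then glues: maps from $Y$ to $X$ in locales are determined by compatible maps from the preimages of the affine opens, and the same holds for analytic spaces because they are colimits of their affine steady subspaces.

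For the essential image, a locale that is locally isomorphic to the locale of some $\AnSpec(A_i,\mathcal M_i)$ defines a functor on $\AnRing$ via
\[
(B,\mathcal N)\mapsto \Hom(|\AnSpec(B,\mathcal N)|,-).
\]
This functor is a sheaf by descent along steady covers. It is covered by the steady subspaces $\AnSpec(A_i,\mathcal M_i)$, because opens of the model locales are steady subspaces. By the preceding proposition it is therefore an analytic space, and full faithfulness identifies its associated locale with the given one.

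The main obstacle is the affine adjunction. There are two difficulties. First, the locale $|\AnSpec(A,\mathcal M)|$ has to be described concretely enough, via the basis of steady localizations, that continuity can be checked on it. Second, one must verify that the opens defined by ``factorization through $(A',\mathcal M')$'' really satisfy sheaf-compatibility with $\mathcal O_Y$ and $\mathcal M_Y$. This second point uses that being a complete module for $(A',\mathcal M')$ is a property and not extra structure, which holds because the forgetful functor of a localization is fully faithful.
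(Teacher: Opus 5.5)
The paper does not prove this proposition; it only says it is ``not hard''. So there is no written argument to compare against. Your strategy is the natural one, and the two locality axioms are tailored to exactly this argument. It is an affine adjunction identifying maps $(Y,\mathcal O_Y,\mathcal M_Y)\to|\AnSpec(A,\mathcal M)|$ with maps $A\to\mathcal O_Y(Y)$ that are analytic on basic opens. ``Stalks are local'' gives preservation of covers, ``maps on local rings local'' pins down the map of locales, and idempotence of localizations makes factorizations unique. The outline is sound, but several steps are unjustified or justified by the wrong reason.

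Well-definedness is not just ``the construction''. You must check that for an affine steady subspace $U=\AnSpec(A,\mathcal M)\subset X$ the sheafified $\mathcal O_X(U)$ and $\mathcal M_X(U)[S]$ are $A$ and $\mathcal M[S]$. This follows from Proposition~\ref{prop:descent} applied to $A$ and $\mathcal M[S]$, once you note that $U$ is quasicompact in $|X|$: a covering of $U$ by steady subspaces is surjective as a map of sheaves, hence refined by a finite steady cover. ``Stalks are local'' is then base change of steady covers, and induced maps are local by taking $U'=\AnSpec(A',\mathcal M')$.

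In the affine adjunction, the second difficulty you flag is not resolved by full faithfulness of the forgetful functor. Suppose a basic $V\subset f^{-1}(\AnSpec(A',\mathcal M'))$ factors through $(A',\mathcal M')$ only on basic $W\subset V$. Then the map $A'\to\mathcal O_Y(V)$ comes from $\mathcal O_Y(V)=\lim\mathcal O_Y(W)$. It is analytic because $\mathcal M_Y(V)[S]=\lim\mathcal M_Y(W)[S]$ and $\mathcal D_{\geq 0}(A',\mathcal M')\subset\mathcal D_{\geq 0}(A')$ is closed under limits.

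Two steps need a different argument. First, for non-affine targets, the fact that $X$ is the colimit of its affine steady subspaces controls maps \emph{out of} $X$, not into it. Instead, compare the sheaves $B\mapsto X(B)$ and $B\mapsto\Hom(|\AnSpec(B,\mathcal N)|,|X|)$ on $\AnRing^{\op}$, the second being a sheaf by gluing along open covers.
\begin{itemize}
\item For each affine steady subspace $U\subset X$, the comparison map pulls the subsheaf of maps landing in $|U|$ back to $U$, since $|g|^{-1}(|U|)=|U\times_X\AnSpec(B,\mathcal N)|$ by construction.
\item On these subsheaves it is an equivalence by the affine case.
\item These subsheaves cover the right-hand side: the opens $h^{-1}(|U|)$ are steady subspaces covering $|\AnSpec(B,\mathcal N)|$, so $h$ lands in some $|U|$ on a finite steady cover.
\end{itemize}
Hence the comparison map is an equivalence.

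Second, in the essential-image part, full faithfulness cannot identify $|F|$ with $\mathcal Y$, because $\mathcal Y$ is not yet known to be in the image. You must write down the tautological map
\[
|F|=\colim_{\AnSpec(C,\mathcal P)\subset F}|\AnSpec(C,\mathcal P)|\to\mathcal Y
\]
and check that it is an isomorphism over each $\mathcal Y_i\cong|\AnSpec(A_i,\mathcal M_i)|$. This follows from your observation that $\AnSpec(C,\mathcal P)\times_F\AnSpec(A_i,\mathcal M_i)$ is the steady subspace $g_C^{-1}(\mathcal Y_i)$. That observation is itself an application of the affine case, which identifies maps $|\AnSpec(C,\mathcal P)|\to\mathcal Y_i$ with $\Hom_{\AnRing}((A_i,\mathcal M_i),(C,\mathcal P))$.
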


Using this language, we can explain how to embed complex-analytic spaces into analytic spaces over $\mathbb C$ equipped with its $p$-liquid analytic ring structure $(\mathbb C,\mathcal M_{<p})$ (for any fixed $0<p\leq 1$).

\begin{proposition} Let $(X,\mathcal O_X)$ be a complex-analytic space; so $X$ is a topological space and $\mathcal O_X$ is a sheaf of condensed $\mathbb C$-algebras, locally Zariski closed in an open $n$-dimensional complex ball. Consider the functor taking any analytic space $Y$ over $(\mathbb C,\mathcal M_{<p})$ to maps of locales $f: |Y|\to X$ together with a map of sheaves of condensed animated $\mathbb C$-algebras $f^\ast \mathcal O_X\to \mathcal O_Y$. This functor is representable by an analytic space $X_p^{\mathrm{an}}$ over $\AnSpec (\mathbb C,\mathcal M_{<p})$.
\end{proposition}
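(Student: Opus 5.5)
The plan is to build $X_p^{\mathrm{an}}$ by gluing affine pieces, using the description of analytic spaces as locally analytically ringed locales (the preceding proposition). Since the functor in question is a sheaf on the site of analytic spaces over $(\mathbb C,\mathcal M_{<p})$, and maps of locales and of sheaves of rings glue along open covers of $X$, representability is local on $X$. So the first step is to reduce to the case where $X$ is a Zariski closed subset $Z=V(f_1,\ldots,f_m)$ of an open polydisc $D\subset\mathbb C^n$. We may also assume that the $f_j$ are defined on a slightly larger polydisc. By a further cover, it is enough to show that both the ambient open polydisc and closed subspaces of it are representable.

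For the open polydisc $D$, write it as an increasing union of closed polydiscs $\overline{D}_k$. For each $k$ let $\mathcal O(\overline{D}_k)$ be the overconvergent functions, a condensed $\mathbb C$-algebra which by Lecture X is a colimit of spaces of the form $W_{\mathbb C}$. I give it the analytic ring structure induced from $(\mathbb C,\mathcal M_{<p})$ (Proposition~\ref{prop:inducedanalytic}). As in the example of the last lecture, nuclearity together with Proposition~\ref{prop:nuclearsteady} shows that $\mathcal O(\overline{D}_{k+1})\to\mathcal O(\overline{D}_k)$ is a steady localization. Being a localization means $\mathcal O(\overline{D}_{k})\otimes\mathcal O(\overline{D}_{k})=\mathcal O(\overline{D}_k)$, which follows from the tensor product computation $\mathcal O(D_1)\otimes^{L\mathrm{liq}}\mathcal O(D_2)=\mathcal O(D_1\times D_2)$ and the corollary on disjoint discs, applied to the diagonal. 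Similarly, for every closed polydisc $P\subset \overline D_k$, the map $\mathcal O(\overline D_k)\to\mathcal O(P)$ is a steady localization. Finitely many such $P$ covering $\overline D_k$ give a conservative family, by a Čech/Koszul argument again resting on the disjoint-disc vanishing. I then set $X^{\mathrm{an}}_p=\colim_k \AnSpec\mathcal O(\overline D_k)$, which is an increasing union of steady subspaces. For Zariski closed $Z$, I take $\mathcal O(\overline D_k)/^{\mathbb L}(f_1,\ldots,f_m)$ with the induced structure. This is an induced base change, and steadiness and the covers are preserved by base change.

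Next I identify the functor of points. A map $Y\to \AnSpec\mathcal O(\overline D_k)$ is a map $\mathcal O(\overline D_k)\to\mathcal O(Y)$ of analytic rings. I need to produce from it a map of locales $|Y|\to \overline D_k$, i.e.\ for each open $U\subset\overline D_k$ a steady subspace of $Y$. I take the preimage of the steady subspace $\colim_{P\subset U}\AnSpec\mathcal O(P)$; the ``stalks are local'' condition makes this compatible with covers. The map $f^\ast\mathcal O_X\to\mathcal O_Y$ comes from the compatible maps $\mathcal O(P)\to\mathcal O_Y(f^{-1}P)$. Conversely, given $(f,f^\ast\mathcal O_X\to\mathcal O_Y)$, I would recover a map of analytic rings $\mathcal O(\overline D_k)\to\mathcal O_Y(f^{-1}\overline D_k)$. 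This is automatically a map of analytic rings because the structure on $\mathcal O(\overline{D}_k)$ is induced from $\mathbb C$.

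The main obstacle is this converse direction. I must show that a map of sheaves $f^\ast\mathcal O_X\to\mathcal O_Y$ on $|Y|$ forces $\mathcal O_Y$ over $f^{-1}(P)$ to receive $\mathcal O(P)$ compatibly, rather than only receiving germs on opens. Concretely, this requires the comparison $\mathcal O(P)=\colim_{U\supset P}\mathcal O_X(U)$ of overconvergent functions with germs. It also requires that the locale map is determined by the ring map: the preimage of $\{z\in P\}$ must equal the locus where the functions become $\mathcal O(P)$-valued. I would attack this through the conservativity of the covers $\{\mathcal O(P_i)\}$ together with the vanishing $\mathcal O(P)\otimes\mathcal O(P')=0$ for disjoint polydiscs, which forces the preimages of disjoint closed polydiscs to be disjoint.
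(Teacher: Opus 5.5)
Your construction of $X_p^{\mathrm{an}}$ agrees with what the paper gives. The paper offers no proof beyond saying that, for the open disc, $X_p^{\mathrm{an}}$ is the increasing union of the $\AnSpec\mathcal O(D_r)$ with the induced structure, steady by nuclearity. The gap is in the step that makes $X_p^{\mathrm{an}}$ see the topology of $X$, and the reason you give for it cannot work.

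First, as stated the covering claim is vacuous. A closed polydisc $P\subsetneq\overline D_k$ meets the distinguished boundary of $\overline D_k$ in a null set, because a disc strictly inside a disc touches the boundary circle in at most one point. So every finite cover of $\overline D_k$ by such $P$ contains $\overline D_k$ itself. Moreover, intersections of discs are not discs. You need a class $\mathcal K$ of compacta closed under finite intersections (boxes, or compact Stein sets), together with $\mathcal O(K)\otimes_{\mathcal O(\overline D_k)}\mathcal O(L)\simeq\mathcal O(K\cap L)$ for $K,L\in\mathcal K$. The paper only treats discs, their products, and disjoint discs.

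Second, and more seriously, conservativity cannot come from the vanishing for disjoint discs, which controls intersections, never unions. For idempotent $A$-algebras $B_1,\ldots,B_N$, if $M\otimes_AB_j=0$ for all $j$ then $M\simeq M\otimes_A\bigotimes_j\mathrm{fib}(A\to B_j)$. Hence conservativity is equivalent to $\bigotimes_j\mathrm{fib}(A\to B_j)=0$, i.e.\ to $A\simeq\lim_{\emptyset\neq J}\bigotimes_{j\in J}B_j$. With $B_J=\mathcal O(K_J)$ this is \v{C}ech acyclicity of overconvergent holomorphic functions for finite closed covers. That is a Cousin-type theorem; in the basic case it is the Laurent splitting $\mathcal O(\{a'\le|z|\le a\})=\mathcal O(\{|z|\le a\})+\mathcal O(\{a'\le|z|\le r\})$. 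It must be proved as an exact sequence of condensed vector spaces. Without it you cannot show that $U\mapsto U^{\mathrm{an}}:=\bigcup_{K\subset U}\AnSpec\mathcal O(K)$ preserves unions. Then you have no map of locales $|X_p^{\mathrm{an}}|\to X$, hence no natural transformation from $\Hom(-,X_p^{\mathrm{an}})$ to the functor at all. The ``stalks are local'' condition on $Y$ does not replace this, since it only refines covers already known to be steady covers.

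Your proposed attack on the converse does work once this input is available, so carry it out rather than leave it as an obstacle. Let $g\colon Y\to X_p^{\mathrm{an}}$ be built from $(f,\phi)$. Then $f^{-1}(\mathrm{int}\,K)\subset g^{-1}(\AnSpec\mathcal O(K))$, because $\mathcal O(K)=\colim_{V\supset K}\mathcal O_X(V)$. Also $g^{-1}(\AnSpec\mathcal O(K))\subset f^{-1}(V)$ for every open $V\supset K$. To see this, note that $f$ preserves the cover $X=V\cup\bigcup_{L\cap K=\emptyset}\mathrm{int}\,L$. Over $f^{-1}(\mathrm{int}\,L)\cap g^{-1}(\AnSpec\mathcal O(K))$ the structure sheaf receives compatible maps from $\mathcal O(K)$ and $\mathcal O(L)$, hence from $\mathcal O(K)\otimes_{\mathcal O(\overline D_k)}\mathcal O(L)=0$. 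Together these give $g^{-1}(U^{\mathrm{an}})=f^{-1}(U)$.

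Finally, for Zariski-closed $X$ the derived quotient $\mathcal O(\overline D_k)/^{\mathbb L}(f_1,\ldots,f_m)$ represents the wrong functor. Here $\mathcal O_X=\mathcal O_D/(f_1,\ldots,f_m)$ is a classical quotient, and maps of animated algebras out of the two differ unless the $f_j$ form a regular sequence. You must use the classical quotient. Identifying its base change along $\mathcal O(\overline D_k)\to\mathcal O(K)$ with overconvergent sections of $\mathcal O_X$ near $K$ then needs flatness (Tor-vanishing) of these restriction maps, another analytic input missing from your proposal.
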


Let us give an example: Let $X=\{z\mid |z|<1\}$ be the open unit disc in $\mathbb C$. Let $D_r = \{z\mid |z|\leq r\}\subset X$ be the closed discs of radius $r<1$. Let $\mathcal O(D_r)$ be the condensed $\mathbb C$-algebra of overconvergent holomorphic functions on $D_r$. This is a nuclear algebra over $(\mathbb C,\mathcal M_{<p})$. Endow $\mathcal O(D_r)$ with the analytic ring structure induced from $(\mathbb C,\mathcal M_{<p})$. Then $X_p^{\mathrm{an}}$ is the increasing union of $\AnSpec \mathcal O(D_r)$ over all $r<1$.

In fact, similar results hold true for real-analytic, smooth, or topological manifolds: If $(X,\mathcal O_X)$ is one such, then the functor taking an analytic space $Y$ over $(\mathbb R,\mathcal M_{<p})$ to maps of locales $f: |Y|\to X$ together with a map of sheaves of condensed animated $\mathbb R$-algebras $f^\ast \mathcal O_X\to \mathcal O_Y$ is representable by an analytic space $X_p^{\mathrm{an}}$ over $\AnSpec(\mathbb R,\mathcal M_{<p})$. The explicit description of the functor is similar, using the affine analytic spaces associated with overconvergent real-analytic, smooth, or continuous functions on a closed ball.

\begin{warning} In the topological case, the functor does not land in steady analytic spaces over $\AnSpec(\mathbb R,\mathcal M_{<p})$, and the functor does not commute with products (it does in the other cases). The issue is that for example $C(S^1,\mathbb R)\otimes_{(\mathbb R,\mathcal M_{<p})}^L C(S^1,\mathbb R)$ is not given by $C((S^1)^2,\mathbb R)$ (and non-steadyness comes from non-nuclearity of $C(S^1,\mathbb R)$).
\end{warning}

We see that analytic spaces over $\AnSpec(\mathbb R,\mathcal M_{<p})$ are able to handle all flavours of real or complex manifolds simultaneously. In fact, nothing is holding one back from even mixing them, taking products of real-analytic manifolds with topological manifolds, etc.~. Even more, one can also mix with algebraic varieties:

\begin{proposition} For any analytic ring $(A,\mathcal M)$, there is a faithful functor $X\mapsto X^{\mathrm{an}}$ from schemes $X$ over the (animated) ring $A(\ast)$ to analytic spaces over $\AnSpec(A,\mathcal M)$. In fact, $X^{\mathrm{an}}$ represents the functor taking an analytic space $Y$ over $\AnSpec(A,\mathcal M)$ to maps $f: |Y|\to X$ together with a map $f^\ast \mathcal O_X\to \mathcal O_Y$ of sheaves of animated $A(\ast)$-algebras that induces local maps on local rings. Moreover, there is a fully faithful functor $\mathcal D_{\mathrm{qc}}(X)\hookrightarrow \mathcal D(X^{\mathrm{an}})$.
\end{proposition}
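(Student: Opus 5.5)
We construct $X^{\mathrm{an}}$ by gluing and handle $\mathcal D_{\mathrm{qc}}$ by descent. Write $R=A(\ast)$. This is an animated ring, and the natural map $R\to A$ is a map of condensed animated commutative rings ($R$ is discrete, or more precisely the underlying anima of $A$ regarded as a condensed object).

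\textbf{Affine case.} For $X=\Spec B$ with $B$ an animated $R$-algebra, set
\[
X^{\mathrm{an}}:=\AnSpec\bigl(B\otimes_R(A,\mathcal M)\bigr),
\]
where $B\otimes_R(A,\mathcal M)$ is the completion of the induced analytic ring structure on $B\otimes_R A$ (Proposition~\ref{prop:inducedanalytic} together with the normalization appendix). Its universal property in $\AnRing$ is immediate: a map $B\otimes_R(A,\mathcal M)\to(C,\mathcal N)$ over $(A,\mathcal M)$ is the same as a map of animated rings $B\to C(\ast)$ over $R$. The reason is that completeness is a property, and $B\otimes_R A$ with the induced structure is the pushout of $(A,\mathcal M)\leftarrow R\to B$ with trivial structures on $R$ and $B$.

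\textbf{Zariski localizations.} Next we check that Zariski localizations go to steady localizations. For $B\to B[f^{-1}]$, the module $B[f^{-1}]$ is discrete, hence nuclear over $B$ with its trivial analytic structure, so this map is steady by example (1) after Proposition~\ref{prop:nuclearsteady}. Steady localizations are stable under base change, so $B\otimes_R(A,\mathcal M)\to B[f^{-1}]\otimes_R(A,\mathcal M)$ is a steady localization. Moreover, a Zariski cover $\{B\to B[f_i^{-1}]\}$ has conservative base change on $\mathcal D(B)$, and conservativity persists after $\otimes(A,\mathcal M)$: the objects $f_i$ generate the unit ideal, so any $M$ killed by all localizations is killed by a Koszul-type argument inside $\mathcal D(B\otimes_R(A,\mathcal M))$.

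\textbf{Gluing.} For general $X$, write $X$ as the colimit of its affine opens. Define $X^{\mathrm{an}}$ as the corresponding colimit of sheaves. The previous step makes each $(\Spec B)^{\mathrm{an}}\subset X^{\mathrm{an}}$ a steady subspace, so $X^{\mathrm{an}}$ is an analytic space by the proposition characterizing analytic spaces via covers by steady affine subspaces. The stated universal property (maps $|Y|\to X$ with local maps $f^\ast\mathcal O_X\to\mathcal O_Y$) follows from the affine case together with the locally analytically ringed locale description. The point is that locality of the map on stalks is exactly the condition that the preimage of $\{f\neq0\}$ is the steady subspace where $f$ is invertible, as in the usual proof that $\Hom(Y,\Spec B)=\Hom(B,\mathcal O(Y))$ for locally ringed $Y$.

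\textbf{Faithfulness.} This follows since $X\mapsto X^{\mathrm{an}}$ is compatible with Zariski covers, and on affines $\Hom(B',B)\to\Hom(B'\otimes(A,\mathcal M),B\otimes(A,\mathcal M))$ is injective on $\pi_0$ after composing with $B\to B\otimes_R A\to\mathcal M_B[\ast]$. This composite is the step that needs care, because completion could a priori kill information; if it does, I would restrict to the faithfulness statement as phrased, i.e.\ on underlying points over $R$.

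\textbf{Quasicoherent sheaves.} For $\mathcal D_{\mathrm{qc}}(X)\hookrightarrow\mathcal D(X^{\mathrm{an}})$, use $M\mapsto M\otimes_B(B\otimes_R(A,\mathcal M))$ on affines. Since the functor on the source is compatible with steady base change, it glues by Proposition~\ref{prop:descent} and Zariski descent.

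\textbf{Main obstacle.} Full faithfulness of this functor is where I expect the difficulty. It fails naively: for $M=B$ one needs $\Hom_B(B,N)=\Hom(B\otimes(A,\mathcal M),N\otimes(A,\mathcal M))$, and the right side is $(N\otimes_R(A,\mathcal M))(\ast)$, which is generally bigger than $N$. I would therefore reinterpret the functor as the fully faithful inclusion of discrete-type modules via the forgetful direction, namely as the adjoint right-hand functor. I expect this identification to be the main point to settle.
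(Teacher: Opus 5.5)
Your construction of $X^{\mathrm{an}}$ is fine: the pushout $B\otimes_R(A,\mathcal M)$ on affines, Zariski covers becoming steady covers, the gluing, and the universal property via the usual locally-ringed argument. The gap is a single computation that both faithfulness and the $\mathcal D_{\mathrm{qc}}$ claim rest on. You leave it open in the faithfulness step and then draw the wrong conclusion from it in your last paragraph.

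Here is the computation. Take any $B$-module $N$, viewed as a discrete condensed object. Then $N\otimes_R A$ is a colimit of copies of $A$. Since $(A,\mathcal M)$ is complete, $A\in\mathcal D(A,\mathcal M)$, and $\mathcal D(A,\mathcal M)\subset\mathcal D(A)$ is closed under colimits, so $N\otimes_R A$ is already complete. Since $\ast$ is extremally disconnected, evaluation at $\ast$ commutes with colimits, so $(N\otimes_R A)(\ast)\simeq N\otimes_R A(\ast)=N$.

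Take $N=B$. The induced structure on $B\otimes_R A$ is already complete, and $(B\otimes_R(A,\mathcal M))(\ast)\simeq B$, so completion neither kills nor adds anything. By the universal property you wrote down, this gives
\[
\Hom\bigl((\Spec B')^{\mathrm{an}},(\Spec B)^{\mathrm{an}}\bigr)\simeq\Hom_R(B,B'),
\]
so the functor is even fully faithful on affines.

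For general schemes, ``compatible with Zariski covers'' is not enough: you must also recover the underlying map of spaces. Test against $(\Spec\kappa(x))^{\mathrm{an}}$. It is nonempty because $(\kappa(x)\otimes_R(A,\mathcal M))(\ast)=\kappa(x)\neq 0$. Hence $\phi^{\mathrm{an}}\simeq\psi^{\mathrm{an}}$ forces $\phi(x)$ and $\psi(x)$ to lie in the same affine opens, so they coincide. The affine statement, applied on small affine opens, then finishes. Your fallback of reinterpreting faithfulness is neither needed nor a proof.

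For $\mathcal D_{\mathrm{qc}}$, the same computation shows that your assertion that $(N\otimes_R(A,\mathcal M))(\ast)$ is ``generally bigger than $N$'' is false: it is exactly $N$. Consider the base change functor $F\colon\mathcal D(B)\to\mathcal D(B\otimes_R(A,\mathcal M))$, $M\mapsto M\otimes_R A$, which needs no further completion. It has right adjoint $N'\mapsto N'(\ast)$, and the unit $M\to(M\otimes_R A)(\ast)\simeq M$ is an equivalence. So $F$ is fully faithful.

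$F$ commutes with base change along the steady localizations coming from Zariski localizations. Passing to the limit over affine opens then gives the fully faithful functor $\mathcal D_{\mathrm{qc}}(X)\hookrightarrow\mathcal D(X^{\mathrm{an}})$, using Zariski descent on the source and Proposition~\ref{prop:descent} on the target.

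Your proposed replacement by ``the adjoint right-hand functor'' goes in the wrong direction, $\mathcal D(X^{\mathrm{an}})\to\mathcal D_{\mathrm{qc}}(X)$. It is evaluation at $\ast$, which forgets the condensed structure and is not fully faithful. So, as written, your proposal does not establish either the faithfulness or the $\mathcal D_{\mathrm{qc}}$ part of the statement.
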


\begin{warning} In a previous version, we claimed that this functor is fully faithful. This may not be the case, for interesting reasons in fact.
\end{warning}

\begin{remark} The analytification of $\mathbb A^1_{\mathbb C}$ in this sense is $\AnSpec(\mathbb C[T],\mathcal M_{<p}[T])$, taking the usual ring of polynomials over $\mathbb C$ with the condensed structure and the analytic structure induced from $(\mathbb C,\mathcal M_{<p})$. This is not what is usually regarded as the analytification of $\mathbb A^1_{\mathbb C}$, which would be the increasing union of open balls of radius $r$ around the origin, letting $r\to \infty$. In the present situation, this is still a steady subspace of $\AnSpec(\mathbb C[T],\mathcal M_{<p}[T])$. The latter is however larger, and contains points ``infinitesimally close to infinity''.

We see that the present framework of analytic spaces makes it possible to have analytic spaces that are globally algebraic but locally analytic -- in the sense that the global functions of $\AnSpec(\mathbb C[T],\mathcal M_{<p}[T])$ are just the polynomials $\mathbb C[T]$, but locally one gets certain algebras of convergent functions.
\end{remark}

Let us now turn to adic spaces.

\begin{proposition} Let $(A,A^+)$ be a Huber pair and $X=\AnSpec((A,A^+)_\solid)$. The underlying locale $|X|$ maps to the topological space $\Spa(A,A^+)$ defined by Huber. Moreover, for any rational subspace $U\subset \Spa(A,A^+)$ with preimage $U'\subset X$, the map
\[
(A,A^+)_\solid\to (\mathcal O_X(U'),\mathcal M_X(U'))
\]
extends uniquely to an isomorphism
\[
(\mathcal O_{\Spa(A,A^+)}(U),\mathcal O_{\Spa(A,A^+)}^+(U))_\solid\cong (\mathcal O_X(U'),\mathcal M_X(U'))
\]
in the following cases:
\begin{enumerate}
\item if $A$ is discrete;
\item if $A$ admits a noetherian ring of definition $A_0$ over which it is finitely generated;
\item if $A$ is Tate and sheafy.
\end{enumerate}
\end{proposition}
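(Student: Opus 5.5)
The plan has three parts: construct the map $|X|\to\Spa(A,A^+)$, identify the analytic ring on each rational subset, and treat the three cases separately.

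\textbf{The map of locales.} We construct it through the rational subsets. Every rational subset $U=\{|f_i|\leq |g|\neq 0\}$ gives, as in the example of steady localizations above, a steady localization
\[
(A,A^+)_\solid\to (A,A^+)_\solid[g^{-1}]\otimes\ldots,
\]
obtained by first inverting $g$ (a base change of a discrete localization) and then imposing $|f_i/g|\leq 1$ (a base change of $(\mathbb Z[T],\mathbb Z)_\solid\to(\mathbb Z[T],\mathbb Z[T])_\solid$). Let $U'\subset X$ be the steady subspace spanned by $\AnSpec$ of this localization. We then check three things:
\begin{itemize}
\item intersections of rational subsets go to tensor products of localizations, hence to intersections of the $U'$;
\item $\Spa$ itself goes to $X$;
\item a rational cover $U=\bigcup U_j$ induces a cover $U'=\bigcup U_j'$.
\end{itemize}
The cover condition is the one with content. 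By Proposition~\ref{prop:descent} and the definition of the topology, it amounts to conservativity of $\mathcal D(\mathcal O(U'))\to\prod_j\mathcal D(\mathcal O(U_j'))$. We reduce this to the standard covers $\{|f_j|\leq|f_k|\}$ (Laurent and rational covers generated by $f_1,\ldots,f_n$ with unit ideal). For Laurent covers $\{|f|\leq1\},\{|f|\geq1\}$, conservativity is the computation from \cite{Condensed} for the cover of $\AnSpec(\mathbb Z[T],\mathbb Z)_\solid$ by $\{|T|\leq1\}$ and $\{|T^{-1}|\leq 1\}$, pulled back along $T\mapsto f$; the idempotent algebras involved are $\mathbb Z((T^{-1}))$ and its counterpart. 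Since $\{U'\}$ is a basis for nothing finer in general, we only get a map of locales $|X|\to\Spa(A,A^+)$. This is all the statement asks for.

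\textbf{Identifying the rings.} Here $\mathcal O_X(U')$ is by construction the completed analytic ring of the localization. For the Laurent step we must compare
\[
B\langle T\rangle/^{\mathbb L}(T-f)\quad\text{with}\quad \mathcal O_{\Spa}(U).
\]
Concretely, the solid localization at $|f|\leq1$ is $-\otimes^{\mathbb L}_{(B[T],\mathbb Z)_\solid}$ with $(T-f)$ killed, and it computes $B\langle T\rangle/^{\mathbb L}(T-f)$. This uses the fact that solidification of $B[T]$ relative to $\mathbb Z[T]$ gives $B\langle T\rangle$; for $B$ complete this follows from $\underline B=\lim B/I^n$. Inverting $g$ is directly $B[g^{-1}]$, completed. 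In each case we therefore need three facts:
\begin{itemize}
\item $T-f$ is a nonzerodivisor on $A\langle T\rangle$;
\item $(T-f)$ is a closed ideal;
\item the quotient is the Huber completion $\mathcal O(U)$ with $\mathcal O^+(U)$ the integral closure.
\end{itemize}
The $^+$-part then matches via the remark characterizing $A^+$ through pushouts of $\{|T|\leq1\}$ and the full faithfulness proposition. Uniqueness of the extension is automatic, since maps of analytic rings out of $(\mathcal O(U),\mathcal O^+(U))_\solid$ embed in ring maps and $A$ is dense.

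\textbf{The three cases.} Case (1), $A$ discrete: $A\langle T\rangle=A[T]$ and all the quotients are honest, so nothing remains. Case (2), noetherian ring of definition: $A_0\langle T\rangle$ is noetherian and $I$-adically complete. Closedness of ideals then follows from Artin--Rees, and the Tor vanishing of $T-f$ follows from flatness of completion, as in Huber's proof of sheafiness. Case (3), Tate and sheafy: this is \cite[Lemma 2.4.10]{KedlayaLiu2}, quoted above, which gives exactly the nonzerodivisor and closedness statements. I expect case (3) to be the main obstacle, and more generally the verification that the derived quotient has no higher homotopy and coincides with Huber's completed quotient. Before invoking Kedlaya--Liu, I would check that their statement is about the condensed (not merely topological) quotient; quasiseparatedness of the closed quotient makes this automatic.
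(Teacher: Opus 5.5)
Your identification of the rings (the localization at $|f|\le 1$ computed as $A\langle T\rangle/^{\mathbb L}(T-f)$, which agrees with Huber's ring once $T-f$ is a nonzerodivisor generating a closed ideal, as supplied by Huber in case (2) and by Kedlaya--Liu in case (3)) is what the paper has in mind. Your assignment $U\mapsto U'$ also agrees with the paper's map. The gap is in the step you yourself single out as the one with content: that covers by rational subsets become conservative families.

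\emph{Laurent covers are not enough.} You reduce to standard covers but then only treat Laurent covers. Outside the Tate case, standard covers are not refined by Laurent covers, and this fails already in case (1). On $\Spa(\mathbb Z[x],\mathbb Z[x])$ the standard cover $\{|x|\le|1-x|\neq 0\}$, $\{|1-x|\le|x|\neq0\}$ is nontrivial: the $p$-adic points $x=p$ and $x=1-p$ each lie in only one member. On the other hand, every Laurent cover contains $\{|h_i|\le 1\ \forall i\}=\Spa(\mathbb Z[x],\mathbb Z[x])$ as a member. This is repairable: also use the Zariski covers $A\to A[f_k^{-1}]$ (for $(f_1,\ldots,f_n)=A$), and then Laurent covers in the ratios $f_j/f_k$.

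\emph{Covers of proper rational subsets and well-definedness.} This is the more serious problem. Consider covers of a proper rational subset $V$, and the independence of $U'$ from the presentation $(f_1,\ldots,f_n;g)$, which you need for $U\mapsto U'$ to be well defined at all. Here Huber's refinement lemma yields standard covers by elements of $\mathcal O(V)$, not of $A$. For an arbitrary, possibly non-sheafy, Huber pair, $\mathcal O_X(V')$ is not $\mathcal O(V)$. You would therefore have to approximate by elements of $A[g^{-1}]$ and prove that the resulting analytic localizations coincide with the ones defined from $A$. This is exactly where the open-ideal condition must enter, and the proposal does not address it. Since the existence of $|X|\to\Spa(A,A^+)$ is asserted for every Huber pair, this is a real hole.

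\emph{The missing idea.} The paper's sketch does not map to $\Spa$ directly.
\begin{enumerate}
\item Declare $a\in A$ to be $\le 1$ on an affine steady subspace if $\mathbb Z((T^{-1}))\otimes_{\mathbb Z[T]}(\mathcal O_X,\mathcal M_X)$ vanishes there ($T\mapsto a$), and extend this to local fractions.
\item The relations needed for this to define a map of locales $|X|\to\Spv(A)$ involve only elements of $A$. They are universal covers pulled back from discrete polynomial rings: Zariski covers, and $\{|T|\le1\}$, $\{|T|\ge1\}$ on $(\mathbb Z[T],\mathbb Z)_\solid$.
\item The map lands in $\Spv(A,A^+)$, and one composes with Huber's retraction $r\colon\Spv(A,A^+)\to\Spa(A,A^+)$.
\end{enumerate}
Since $r^{-1}(R(\tfrac{f_1,\ldots,f_n}{g}))=\{|f_i|\le|g|\neq0\}\subset\Spv(A,A^+)$ whenever $(f_1,\ldots,f_n)$ is open, the preimage of $U$ is exactly your localization ``invert $g$, impose $|f_i/g|\le 1$''. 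Well-definedness and preservation of covers then come for free, because the topology of $A$ enters only through $r$.

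\emph{Two smaller points.}
\begin{enumerate}
\item The analytic step ``invert $g$'' on its own is the locus $\{|g|\neq 0\}$, not a Huber completion; it need not be quasicompact in $\Spa$ (cf.\ the paper's $\mathbb Q_p\langle T\rangle[T^{-1}]$). Only the composite, $(A\langle T_1,\ldots,T_n\rangle/^{\mathbb L}(gT_i-f_i))[g^{-1}]$, is to be compared with $\mathcal O(U)$.
\item For uniqueness, it is $A[g^{-1}]$ rather than $A$ that is dense in $\mathcal O(U)$. Alternatively, simply use that localizations are epimorphisms in $\AnRing$.
\end{enumerate}
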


\begin{remark} Let us quickly indicate the construction of the map $|X|\to \Spa(A,A^+)$. An element $a\in A$ is defined to be $\leq 1$ on some affine subspace $U\subset X$ if and only if the $(\mathcal O_X(U),\mathcal M_X(U))$-module $\mathbb Z((T^{-1}))\otimes_{\mathbb Z[T]} (\mathcal O_X(U),\mathcal M_X(U))$ vanishes, where $T$ is mapped to $a$. Extending this definition to local fractions $\frac ab$, this defines a valuation on $A$ locally on $X$. This valuation is however a priori not continuous. Let $\Spv(A,A^+)$ be the spectral space of all (not necessarily continuous) valuations $|\cdot|$ on $A$ with $|A^+|\leq 1$ and $|A^{\circ\circ}|<1$. Then $\Spa(A,A^+)\subset \Spv(A,A^+)$ and there is a retraction, cf.~\cite[Proposition 2.6, Theorem 3.1]{HuberContVal}. Now the construction above gives a natural map $|X|\to \Spv(A,A^+)$, functorial in $(A,A^+)$. Composing with the retraction $\Spv(A,A^+)\to \Spa(A,A^+)$ gives the desired map $|X|\to \Spa(A,A^+)$.

We warn the reader that the retraction $\Spv(A,A^+)\to \Spa(A,A^+)$ is not natural in $(A,A^+)$ for non-adic maps $(A,A^+)\to (B,B^+)$ (and hence neither is $|X|\to \Spa(A,A^+)$). For example, pulling back
\[
\Spa(\mathbb Z[T^{\pm 1}],\mathbb Z)\subset \Spa(\mathbb Z[T],\mathbb Z)
\]
to $\Spa(\mathbb Q_p\langle T\rangle,\mathbb Z_p\langle T\rangle)$, one gets a punctured unit disc, which is not quasicompact anymore. In the context of analytic spaces, the pullback would be given by an analytic ring structure on $\mathbb Q_p\langle T\rangle[T^{-1}]$, which is not a Huber ring anymore. Relatedly, there are points of the analytic space $\AnSpec((\mathbb Q_p\langle T\rangle,\mathbb Z_p\langle T\rangle)_\solid)$ that are ``infinitesimally close to the origin''. These will map in $\Spa(\mathbb Q_p\langle T\rangle,\mathbb Z_p\langle T\rangle)$ to the origin, while their image in $\AnSpec((\mathbb Z[T],\mathbb Z)_\solid)$ will not map to the origin of $\Spa(\mathbb Z[T],\mathbb Z)$.

A different way to understand the situation is to regard only $\Spv(A,A^+)$ and the map $|X|\to \Spv(A,A^+)$ as fundamental, and then observe that Huber was only able to define the structure on those rational subsets of $\Spv(A,A^+)$ that arise via pullback from $\Spa(A,A^+)$, as only on those rational subsets one (usually) gets analytic rings corresponding to Huber pairs again.
\end{remark}

The proposition says that in cases (1) -- (3), Huber has defined the ``correct'' structure sheaf. From the proposition, one sees that there is a functor from adic spaces glued out of $\Spa(A,A^+)$'s with $A$ satisfying one of the given conditions, to analytic spaces.

One can also use the results of these lectures to recover (and slightly generalize) the results on gluing of vector bundles (or coherent sheaves), cf.~e.g.~\cite{KedlayaLiu2}. For this, note that from Proposition~\ref{prop:descent} one formally gets descent for dualizable objects. Now we have the following result.

\begin{theorem}[{\cite{Andreychev}}] Let $(A,A^+)$ be a Huber pair. Then the dualizable objects in $\mathcal D((A,A^+)_\solid)$ are equivalent to the perfect complexes of $A(\ast)$-modules.

In general, for any analytic ring $(A,\mathcal M)$, the category of dualizable objects in $\mathcal D(A,\mathcal M)$ is generated (under cones and retracts) by objects of the form $\mathrm{cone}(1-f: P\to P)$ where $P\in \mathcal D(A,\mathcal M)$ is compact and $f: P\to P$ is a trace-class endomorphism.
\end{theorem}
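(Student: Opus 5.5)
The plan has two parts: the general statement, then the specialization to Huber pairs.

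\emph{General statement, first direction.} Let $M\in\mathcal D(A,\mathcal M)$ be dualizable. The unit $A=\mathcal M[\ast]$ is compact, and $\mathcal D(A,\mathcal M)$ is compactly generated by the $\mathcal M[S]$. So I write $M=\colim_i P_i$ as a filtered colimit of compact objects. The coevaluation $A\to M^\vee\otimes_{(A,\mathcal M)} M=\colim_i M^\vee\otimes P_i$ then factors through some $M^\vee\otimes P_i$. This yields a map $r\colon M\to P_i$ whose composite with $P_i\to M$ is $\mathrm{id}_M$. Hence $M$ is a retract of the compact object $P=P_i$, with inclusion $i\colon M\to P$. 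Moreover $\mathrm{id}_M$ is trace-class, being witnessed by the coevaluation. It follows that the idempotent $e=i\circ r\colon P\to P$ is trace-class, because precomposing or postcomposing a trace-class map with any map keeps it trace-class.

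Next, $1-e$ is the complementary idempotent. Its cone therefore splits as
\[
\mathrm{cone}(1-e\colon P\to P)\cong M\oplus M[1],
\]
which exhibits $M$ as a retract of an object of the required form $\mathrm{cone}(1-f)$ with $f$ trace-class.

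\emph{General statement, converse.} Let $f\colon P\to P$ be trace-class, witnessed by $g\colon A\to P^\vee\otimes P$. For every $N$ there is a natural map $\varphi\colon P^\vee\otimes N\to\intHom(P,N)$. Using $g$, I construct a backwards map $\psi\colon\intHom(P,N)\to P^\vee\otimes N$ with $\psi\varphi=f^\ast$ and $\varphi\psi=f^\ast$; this is the same mechanism as in the proof of Proposition~\ref{prop:nuclear}. Passing to fibres of $1-f^\ast$, the endomorphism $f^\ast$ becomes homotopic to the identity, so $\varphi$ becomes an equivalence there. Hence $\intHom(\mathrm{cone}(1-f),N)\simeq \mathrm{cone}(1-f)^\vee\otimes N$ naturally in $N$, which gives dualizability. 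Dualizable objects are closed under cones and retracts, so the generation statement follows.

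\emph{Huber pairs.} Perfect complexes are clearly dualizable. For the converse, by the above it suffices to show that $\mathrm{cone}(1-f)$ is perfect for every compact $P$ and every trace-class $f\colon P\to P$. I would first reduce to $P=(A,A^+)_\solid[S]$; cones and retracts pass through this reduction by a five-lemma-type argument. I would then analyse $P^\vee\otimes P$ via the explicit description of solid duals, in which $A[S]^\vee$ is a module of continuous functions. The goal is to show that any map $A\to P^\vee\otimes P$ factors through $Q^\vee\otimes P$ for a finite free $A$-module $Q$ mapping to $P$, up to a term on which $1-f$ is invertible. Concretely, I would use a ring of definition $A_0$ and an ideal of definition $I$. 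Modulo powers of $I$, a continuous function should be approximated by a finite-rank map, leaving a topologically nilpotent remainder $f'$. On that remainder, $1-f'$ is inverted by the geometric series $\sum_n f'^n$, which converges because of solid completeness. Then $\mathrm{cone}(1-f)$ is the cone of a map of finite free modules, hence perfect.

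I expect this factorization of trace-class maps to be the main obstacle. It will need the finiteness results for $A_0$ used in Andreychev's work, and a careful treatment of the case where $A$ is not Tate; I would try the Tate case first.
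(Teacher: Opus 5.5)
Your argument for the general assertion is essentially right. A dualizable $M$ is compact, $\mathrm{id}_M$ is trace-class via the coevaluation, and $\mathrm{cone}(1-\mathrm{id}_M)=\mathrm{cone}(0)\simeq M\oplus M[1]$ already exhibits $M$ as a retract of an object of the required form. The detour through $e$ is unnecessary, and as written it does not typecheck, since your $i$ and $r$ both go $M\to P$; you mean $e=s\circ\pi$ with $s\colon M\to P$ and $\pi\colon P\to M$. For the converse, pushing $\psi$ to the fibres of $1-f^\ast$ needs $\psi$ to commute coherently with $f^\ast$, which you have not arranged. It is cleaner to set $D_P(N)=\mathrm{cofib}(\varphi\colon P^\vee\otimes N\to\intHom(P,N))$, which is exact in $P$. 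Use your identities $\varphi\psi=f^\ast$ and $\psi\varphi=f^\vee\otimes 1$ only in the homotopy category. They show that $D_P(f)$ vanishes after precomposing with $\intHom(P,N)\to D_P(N)$, and also after postcomposing with the boundary map $D_P(N)\to P^\vee\otimes N[1]$. Hence $D_P(f)^2=0$, so $1-D_P(f)$ is invertible and $D_{\mathrm{cone}(1-f)}(N)=\mathrm{fib}(1-D_P(f))=0$.

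The genuine gap is the first assertion, about Huber pairs. This is where all the content lies (the paper gives no proof and only cites Andreychev), and your proposal does not prove it.
\begin{enumerate}
\item The reduction to $P=(A,A^+)_\solid[S]$ does not work as stated. A general compact $P$ is only a retract of a finite cell complex built from such objects, and a trace-class endomorphism of that complex need not respect the cell filtration. So $\mathrm{cone}(1-f)$ does not split along the cells and no five-lemma argument is available. Retracts, by contrast, are harmless: $\mathrm{cone}(1-f)\simeq\mathrm{cone}(1-sf\pi)$.
\item The decisive claim, that a trace-class endomorphism is a finite-rank map plus some $f'$ with $1-f'$ invertible, is only stated as a goal. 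Proving it requires actually computing $(P^\vee\otimes_{(A,A^+)_\solid}P)(\ast)$ as a completed tensor product, i.e.\ sums $\sum_j\lambda_j\otimes v_j$ with $v_j\to 0$ in an $I$-adically complete lattice $P_0$. That needs the structure of $(A,A^+)_\solid[S]$, including its $0$-truncatedness, and control of derived solid tensor products against $I$-adic limits. None of this is formal; it is exactly Andreychev's input.
\item The geometric series $\sum_n f'^n$ converges only if $f'(P_0)\subset IP_0$ for such a lattice. You have to arrange this, including when $A$ is not of the form $A_0[1/\varpi]$.
\item To finish you need $\mathrm{cone}(1_P-ba)\simeq\mathrm{cone}(1_Q-ab)$ for $a\colon P\to Q$, $b\colon Q\to P$. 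Your phrase ``factors through $Q^\vee\otimes P$ for $Q$ mapping to $P$'' has the variance backwards; what you want is the finite-rank part written as $ba$ with $Q$ finite free. You also need full faithfulness of $\mathrm{Perf}(A(\ast))\to\mathcal D((A,A^+)_\solid)$, which follows from $\underline A(\ast)=A$.
\end{enumerate}
For discrete $A$ there is a shorter route that avoids trace-class analysis altogether. Dualizable objects are nuclear. Nuclear objects of $\mathcal D((A,A^+)_\solid)$ are discrete, because $A[S]^\vee=C(S,A)$ is a discrete free module. Compact discrete objects are perfect. The non-discrete case, however, genuinely needs the missing input above.
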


For a similar result in complex geometry, see \cite[Section VI.1]{ScholzeRealLLC}. In general, one sees that dualizable objects come from ``Fredholm operators'': Namely, if $f:P\to P$ is a trace-class map, then $\mathrm{cone}(1-f: P\to P)$ is always a dualizable object. Here $1-f$ can be understood as a Fredholm operator, and hence the previous theorem is related to classical functional-analytic properties of Fredholm operators.

Let us end these lectures by giving one somewhat funny example of an analytic ring; it is essentially the Novikov ring appearing in symplectic geometry.

For any $0<r<1$, consider the ring
\[
\mathbb Z((T^{\mathbb R}))_r = \{\sum_{x\in \mathbb R} a_x T^x\mid \sum_x |a_x| r^x<\infty\},\footnote{In particular, at most countably many $a_x$ are nonzero.}
\]
equipped with the following condensed ring structure. Write
\[
\mathbb Z((T^{\mathbb R}))_r = \bigcup_{c>0} \mathbb Z((T^{\mathbb R}))_{r,\leq c}
\]
where
\[
\mathbb Z((T^{\mathbb R}))_{r,\leq c} = \{\sum_{x\in \mathbb R} a_x T^x\mid \sum_x |a_x| r^x\leq c\},
\]
which we give the structure of a compact Hausdorff space by writing it as a quotient of the compact Hausdorff space
\[
\{(x_1^+,x_1^-,x_2^+,x_2^-,\ldots)\mid x_i^+,x_i^-\in [(\log(c)-\log(i))/\log r,\infty], \sum_i (r^{x_i^+} + r^{x_i^-})\leq c\}
\]
via
\[
(x_1^+,x_1^-,x_2^+,x_2^-,\ldots)\mapsto \sum_i (T^{x_i^+} - T^{x_i^-}).
\]
In particular, the map $\mathbb R\cup \{\infty\}\to \mathbb Z((T^{\mathbb R}))_r: x\mapsto T^x$ defines a map of condensed sets, and hence $0=T^\infty$ is connected to $1=T^0$ in $\mathbb Z((T^{\mathbb R}))_r$. We note that $\mathbb Z((T^{\mathbb R}))_r$ has a rather peculiar condensed structure, in that a null-sequence $x_0,x_1,\ldots$ need not go to $0$ in the $\leq c$-sense: For example, the sequence
\[
1-T,1-T^{1/2},\ldots,1-T^{1/n},\ldots
\]
converges to $1-T^0 = 1-1=0$, so is a null-sequence, but none of the terms lies in $\mathbb Z((T^{\mathbb R}))_{r,\leq 1}$.

Again, we can define similarly for any finite set $S$ a subset
\[
\mathbb Z((T^{\mathbb R}))[S]_{r,\leq c}\subset \mathbb Z((T^{\mathbb R}))[S]_r,
\]
and then define for a profinite set $S=\varprojlim_i S_i$
\[
\mathcal M(S,\mathbb Z((T^{\mathbb R}))_r)_{\leq c} = \varprojlim_i \mathbb Z((T^{\mathbb R}))[S_i]_{r,\leq c},
\]
and
\[
\mathcal M(S,\mathbb Z((T^{\mathbb R}))_r) = \bigcup_{c>0} \mathcal M(S,\mathbb Z((T^{\mathbb R}))_r)_{\leq c}.
\]
Finally, we also define
\[
\mathbb Z((T^{\mathbb R}))_{>r} = \bigcup_{r'>r} \mathbb Z((T^{\mathbb R}))_{r'}
\]
and the same for a profinite set $S$.

Note that there is an action of the condensed semigroup $\mathbb R_{\geq 1}$ on $\mathbb Z((T^{\mathbb R}))_{>r}$ via $T\mapsto T^t$ for $t\in \mathbb R_{\geq 1}$. Restricted to a prime $p\in \mathbb R_{\geq 1}$, this action defines a Frobenius lift, so $\mathbb Z((T^{\mathbb R}))_{>r}$ is a novel kind of $\lambda$-ring where the individual Frobenius lifts combine into an action of $\mathbb R_{\geq 1}$. Note that the $\mathbb R_{\geq 1}$-action actually also induces canonical isomorphisms between $\mathbb Z((T^{\mathbb R}))_{>r}$ for different values of $r$, so this ring is actually independent of the choice of $r$ (up to canonical isomorphism).

\begin{theorem} This construction defines an analytic ring structure on $\mathbb Z((T^{\mathbb R}))_{>r}$. In fact, it is induced from the analytic ring structure on $\mathbb Z((T))_{>r}$ via base change along $\mathbb Z[T]\to \mathbb Z[T^{\mathbb R}]$.
\end{theorem}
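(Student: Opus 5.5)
The main claim to prove is the second one: the analytic ring structure on $\mathbb Z((T^{\mathbb R}))_{>r}$ is induced from $(\mathbb Z[T^{-1}],\mathcal M(-,\mathbb Z((T))_{>r}))$. Analyticity then comes for free from Proposition~\ref{prop:inducedanalytic}. Write $B=\mathbb Z[T^{\mathbb R}]$, regarded as an algebra over $\mathbb Z[T]$ (or $\mathbb Z[T^{-1}]$ after inverting). The plan is to compute $B[S]\otimes_{\mathbb Z[T^{-1}]}(\mathbb Z[T^{-1}],\mathcal M_{>r})$ explicitly and show that it equals $\mathcal M(S,\mathbb Z((T^{\mathbb R}))_{>r})$, concentrated in degree $0$. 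The first step is to decompose $B$ as a $\mathbb Z[T^{\pm1}]$-module. It is free on the monomials $T^x$ with $x\in[0,1)$, so algebraically $B\cong\bigoplus_{x\in[0,1)}\mathbb Z[T^{\pm 1}]\cdot T^x$.

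This decomposition is only discrete, and it ignores the condensed structure in which $T^x\to T^0$ as $x\to 0$. The intended description of $\mathcal M(S,\mathbb Z((T^{\mathbb R}))_r)$ also involves the compact Hausdorff parametrizations by $(x_i^\pm)\in[\ldots,\infty]$. So I would instead present things via the compact Hausdorff space $[0,1]$, or rather via $\mathbb R\cup\{\infty\}$ truncated by the norm. The key intermediate step is to identify $\mathbb Z((T^{\mathbb R}))_{>r}$ with the liquidification of $\mathbb Z[T^{-1}][[0,1]]$ modulo the relation $[1]=T\cdot[0]$.

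For this I would use Proposition~\ref{prop:computeliquidification}(1). It says that for compact Hausdorff $K=[0,1]$ the derived liquidification of $\mathbb Z[T^{-1}][K]$ is $\mathcal M(K,\mathbb Z((T))_{>r})$, in degree $0$. Via $(t,\sum a_nT^n)\mapsto\sum a_nT^{n+t}$ this maps to $\mathbb Z((T^{\mathbb R}))_{>r}$, with kernel generated by the relation $[1]-T[0]$. That relation is the image of a map $\mathcal M(\ast,\ldots)\to\mathcal M(K,\ldots)$, so the cokernel is again liquid and the derived statement survives. The point to verify here is compatibility of the norms. A bound $\sum|a_n|r^n\le c$ together with $t\in[0,1]$ gives $\sum|a_n|r^{n+t}\le c$. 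Conversely, the quoted parametrization $\sum_i(r^{x_i^+}+r^{x_i^-})\le c$ lifts to such data, up to the constant $r^{-1}$ coming from rounding $x$ down to an integer plus a fractional part. This mirrors the constant-chasing in the proof of Theorem~\ref{thm:Ainflike}. The same argument with $S\times K$ in place of $K$ identifies $B[S]\otimes(\mathbb Z[T^{-1}],\mathcal M)$ with $\mathcal M(S,\mathbb Z((T^{\mathbb R}))_{>r})$.

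There is a remaining gap: $B$ is not $\mathbb Z[T^{-1}][K]/([1]-T[0])$ as a condensed module. The latter is a quotient of $\mathbb Z[T^{\pm1}]$-span of the continuous family $T^t$. I would handle this by showing that the natural map from this condensed module to the discrete $\mathbb Z[T^{\mathbb R}]$ becomes an isomorphism after liquidification. The two differ by the ``continuity'' relations, and these are killed because liquid modules see limits of $T^t$. Equivalently, I would argue that the analytic structure induced on $B$ agrees with the one induced on the condensed version, since the relevant complete modules coincide.

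I expect this comparison to be the main obstacle: pinning down exactly which condensed $\mathbb Z[T^{\mathbb R}]$ we base-change along, and checking that the derived tensor product stays in degree $0$. Here I would use $\mathbb Z[T]\to\mathbb Z[T^{\mathbb R}]$ being a filtered colimit of the flat (indeed free) maps $\mathbb Z[T^{1/n}]$, together with density of $\mathbb Z[\tfrac1n]$ in $\mathbb R$ inside the compact pieces.
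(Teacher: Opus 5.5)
The paper states this theorem without proof, so the comparison here is with the argument your proposal would need. Your second and third paragraphs contain the right computation. Your last two paragraphs, however, rest on reading $\mathbb Z[T^{\mathbb R}]$ as the \emph{discrete} group ring. With that reading the statement is false, so your ``remaining gap'' cannot be closed:
\begin{itemize}
\item Liquidification commutes with colimits and sends $\mathbb Z[T^{\pm1}]$ to $\mathbb Z((T))_{>r}$. So it sends $\bigoplus_{x\in[0,1)}\mathbb Z[T^{\pm1}]T^x$ to $\bigoplus_{x\in[0,1)}\mathbb Z((T))_{>r}T^x$. This misses $\sum_{n\geq 1}T^{n+1/n}\in\mathbb Z((T^{\mathbb R}))_{>r}$. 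Also, $x\mapsto T^x$ is not continuous into it, since a map from $\mathbb N\cup\{\infty\}$ hits only finitely many summands.
\item There is no map from $\mathbb Z[T^{\pm1}][[0,1]]/([1]-T[0])$ to the discrete ring extending $t\mapsto T^t$: a continuous map from $[0,1]$ to a discrete set is constant. The identity of $\mathbb R$ is continuous only from $\mathbb R^\delta$ to $\mathbb R$.
\item A left adjoint cannot impose continuity relations that are absent in the source.
\item The union of the $\frac1n\mathbb Z$ is $\mathbb Q$, not $\mathbb R$. Base change along the discrete rings $\mathbb Z[T^{1/n}]$ produces only $\bigcup_n\mathbb Z((T^{1/n}))_{>r}$, and density arguments need a condensed structure these rings lack.
\end{itemize}

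The fix is to take $B=\mathbb Z[\underline{\mathbb R}]$, the condensed group ring. The construction forces this reading, since it makes $\mathbb R\cup\{\infty\}\to\mathbb Z((T^{\mathbb R}))_r$ continuous. With this $B$ the obstacle disappears. The map $\mathbb Z\times[0,1]\to\underline{\mathbb R}$, $(n,t)\mapsto n+t$, is a surjection of condensed sets. Its self-fibre product is the diagonal together with the isolated points $((n,1),(n+1,0))$ and their transposes. Hence $\underline{\mathbb R}$ is the coequalizer of $\mathbb Z\rightrightarrows\mathbb Z\times[0,1]$.

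Since $\mathbb Z[-]$ preserves colimits and injections, you get an exact sequence $0\to\mathbb Z[T^{\pm1}][S]\to\mathbb Z[T^{\pm1}][[0,1]\times S]\to B[S]\to 0$, with first map $[s]\mapsto[(1,s)]-T[(0,s)]$. So $B$ is \emph{exactly} your quotient, and no comparison step is needed. What remains is essentially your third paragraph.

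By Proposition~\ref{prop:computeliquidification}(1), and since $T^{-1}$ becomes invertible, derived liquidification turns this into a map $\mathcal M(S,\mathbb Z((T))_{>r})\to\mathcal M([0,1]\times S,\mathbb Z((T))_{>r})$. Two checks remain.
\begin{itemize}
\item You must show this map is injective; this is what puts the answer in degree $0$, and you did not mention it. It holds on underlying points because $\{0\}\times S$ and $\{1\}\times S$ are disjoint, and everything is quasiseparated.
\item You must identify the cokernel with $\mathcal M(S,\mathbb Z((T^{\mathbb R}))_{>r})$, compatibly with the compact pieces. The condition $\sum_x|a_x|r^x\leq c$ allows only finitely many exponents in each $[n,n+1)$, with $\sum_{x\in[n,n+1)}|a_x|\leq cr^{-n-1}$. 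This gives your constant $r^{-1}$. For profinite $S$, reduce to finite $S$ and use that cofiltered limits of surjections of compact Hausdorff spaces are surjective.
\end{itemize}
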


Analytic geometry over $\mathbb Z((T^{\mathbb R}))_{>r}$ is then some funny crossover between arithmetic geometry and complex or real-analytic geometry, allowing actions of connected groups like $\mathbb R_{\geq 1}$ in characteristic $p$; we believe it is unlike anything that has been studied.

What does the corresponding analytic space look like? It maps at least to the Berkovich spectrum of $\mathbb Z$:

\begin{proposition} There is a natural map from $|\AnSpec(\mathbb Z((T))_{>r})|$ to the Berkovich spectrum $M(\mathbb Z)$ of $\mathbb Z$, of all multiplicative norms $|\cdot|: \mathbb Z\to \mathbb R_{\geq 0}$. Restricting to $|\AnSpec(\mathbb Z((T^{\mathbb R}))_{>r})|$, the map
\[
|\AnSpec(\mathbb Z((T^{\mathbb R}))_{>r})|\to M(\mathbb Z)
\]
is equivariant for the action of $\mathbb R_{\geq 1}$ (acting via rescaling the norm on $M(\mathbb Z)$).
\end{proposition}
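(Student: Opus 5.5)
The plan is to produce the map in two steps. First map $|\AnSpec(\mathbb Z((T))_{>r})|$ to a space of (a priori nonarchimedean or archimedean) multiplicative seminorms on $\mathbb Z((T))_{>r}$ itself. Then restrict along $\mathbb Z\to \mathbb Z((T))_{>r}$, after normalizing so that $|T|$ is fixed. The natural normalization is $|T|=r$ on the boundary, but scaling considerations suggest the cleaner choice of recording the pair consisting of a norm on $\mathbb Z$ together with the value $|T|$. By Theorem~\ref{thm:principalideal}, the maximal ideals of $\mathbb Z((T))_{>r}$ are:
\begin{enumerate}
\item archimedean points $x\in\mathbb C$ with $0<|x|\leq r$;
\item the $p$-adic points, namely $(p)$ and topologically nilpotent units in $\overline{\mathbb Q}_p$.
\end{enumerate}
Each such point gives a multiplicative norm on $\mathbb Z$: the archimedean absolute value raised to the power $\log|x|/\log r$ in the first case (so that $|T|=r$ after rescaling), and the $p$-adic or trivial norm in the second. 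I would make this uniform via the recipe in the remark on $|X|\to\Spv(A,A^+)$: for $n\in\mathbb Z$ and a real $s$, declare $|n|\leq |T|^{s}$ on an affine steady $U$ if a suitable localization, built from the liquid version of the algebra killing $\{|T^{-s}n|\leq 1\}$, vanishes over $\mathcal O_X(U)$. For integer or rational $s$ this is an honest condition. Extending to real $s$ by infima defines on each stalk a function $n\mapsto \log|n|/\log|T|$, and hence a point of $M(\mathbb Z)$ after fixing $|T|=r$.

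The key steps, in order, are as follows. First, show that the locus $\{|n|\leq |T|^{a/b}\}$, i.e.\ $\{|n^b|\leq |T^a|\}$, is a steady subspace: it is defined by base change of the localization $(\mathbb Z[X],\mathbb Z)\to(\mathbb Z[X],\mathbb Z[X])$ with $X=n^bT^{-a}$, now computed liquidly over $\mathbb Z((T))_{>r}$, and it is steady by the nuclearity criterion, Proposition~\ref{prop:nuclearsteady}. Second, show multiplicativity and the triangle inequality, or its archimedean weakening, on stalks. These reduce to statements about pushouts of such localizations, checked on the points listed in Theorem~\ref{thm:principalideal}, and by locality of stalks (condition (2) of locally analytically ringed locales) the inequalities propagate. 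Third, check continuity of the resulting map $|X|\to M(\mathbb Z)$ for the topology generated by $\{|n|<t\}$ and $\{|n|>t\}$. Preimages of these are unions of the steady subspaces above, hence open in the locale.

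For the equivariance, $t\in\mathbb R_{\geq1}$ acts by $T\mapsto T^t$, so the inequality $|n|\leq|T|^s$ is pulled back to $|n|\leq|T|^{ts}$. With $|T|$ normalized to $r$, the associated norm $r^{\log|n|/\log|T|}$ becomes its $1/t$ or $t$ power depending on convention, which is exactly rescaling the norm on $M(\mathbb Z)$. This is formal once the map is defined by the ``$\leq$'' conditions, because $\mathbb Z((T^{\mathbb R}))_{>r}$ is induced from $\mathbb Z((T))_{>r}$ by the preceding theorem, so the same conditions make sense with real exponents.

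The main obstacle is the second step: verifying that the function obtained on stalks is genuinely a multiplicative (semi)norm rather than merely a valuation-like order function. At archimedean points the triangle inequality is not a formal consequence of the localization calculus. What I would try first is to compare with the points of Theorem~\ref{thm:principalideal}. Any stalk in which the inequality fails would, by the vanishing criterion, force a nonzero module over a localization whose maximal ideals are all excluded by the explicit description of primes, a contradiction.
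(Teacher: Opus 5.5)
\emph{The paper gives no proof of this proposition} (Lecture XIV is explicitly proof-free), so your plan has to stand on its own. Its shape is sensible: cut out loci $\{|n|\le|T|^s\}$ by steady localizations, read off $n\mapsto r^{\log|n|/\log|T|}$ at each point, and get equivariance from $T\mapsto T^t$. \textbf{However, the step that produces the loci fails as written.}

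There is no map of analytic rings from $\mathbb Z_\solid$ to the liquid $\mathbb Z((T))_{>r}$, because its quotient $\mathbb R$ (cf.\ Theorem~\ref{thm:Ainflike}) is not solid. So the solid localization $(\mathbb Z[X],\mathbb Z)_\solid\to(\mathbb Z[X],\mathbb Z[X])_\solid$ cannot be base changed.

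The liquid version of the vanishing test from the remark on $|X|\to\Spv(A,A^+)$ also degenerates. At an archimedean point $(\mathbb R,\mathcal M_{<p})$, the liquidification of $\prod_{\mathbb N}\mathbb Z$ is $\prod_{\mathbb N}\mathbb R$, since the quotient $\prod_{\mathbb N}\mathbb R/\mathbb Z$ is compact and its liquidification vanishes. Hence $\mathbb Z((X^{-1}))$ becomes the ring of all formal Laurent series $\mathbb R((X^{-1}))$, in which $X-a$ is invertible for every $a$. Thus $\mathbb Z((X^{-1}))\otimes^L_{\mathbb Z[X]}(\mathbb R,\mathcal M_{<p})=0$ for every value of $X$, and your test would give, e.g., $|1|\le r^s$ for all $s$.

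Also, Proposition~\ref{prop:nuclearsteady} concerns induced structures on nuclear algebras, and the solid localization is not of that form; the paper gets its steadiness from Proposition~\ref{prop:proringatinfinity}.

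What you need instead are overconvergent liquid disc algebras over $\mathbb Z((T))_{>r}$, the analogue of $\mathcal O(D_2)\to\mathcal O(D_1)$ in Lecture XIII. For $h=n^bT^a$, take power series in $X$ converging on a disc of radius $r^{-\epsilon}$ for some $\epsilon>0$ (in the normalization $|T|=r$), modulo $X-h$, and do likewise for $\{|h|\ge 1\}$. These are nuclear, hence steady, but you must still prove they are idempotent.

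Even with these loci, two inputs are missing.

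First, you need a covering statement. To get a well-defined real number at every point, and to make the preimage of $\{|n|>t\}$ open (i.e.\ to get a map of locales at all), you need the following for $s<s'$:
\begin{itemize}
\item $\{|h|\le s'\}$ and $\{|h|\ge s\}$ cover $\AnSpec(\mathbb Z((T))_{>r})$, which is a genuine conservativity statement;
\item $\{|h|\le s\}\cap\{|h|\ge s'\}=\emptyset$.
\end{itemize}
Your plan only ever uses the ``$\le$'' loci.

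Second, your proposed verification of multiplicativity and the triangle inequality through the primes of Theorem~\ref{thm:principalideal} cannot work. Points of the locale are not maximal ideals of the underlying ring, and a nonzero analytic $\mathbb Z((T))_{>r}$-algebra may have kernel $(0)$, so no contradiction arises. Indeed, any such map has closed image (quasicompact source, Hausdorff target). The image contains $|\cdot|_\infty^p$ for all $p\in(0,1]$, hence also the trivial norm, which none of those primes induces.

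The inequalities should instead come from containments of steady subspaces, obtained from norm estimates in the Banach rings $\mathbb Z((T))_{r'}$:
\begin{itemize}
\item $\{|g|\le s\}\cap\{|h|\le t\}\subset\{|gh|\le st\}$, together with its ``$\ge$'' analogue;
\item $\{|g|\le 1\}\cap\{|h|\le 1\}\subset\{|T^u(g+h)|\le 1\}$ once $2r^u<1$, which yields a quasi-triangle inequality $|g+h|\le C\max(|g|,|h|)$;
\item $\{|n^bT^a|\le 1\}$ is everything when $|n|^br^a<1$, which gives $|n|\le|n|_\infty$.
\end{itemize}
Artin's lemma on quasi-absolute values plus Ostrowski then place the normalized function in $M(\mathbb Z)$.

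Smaller slips:
\begin{itemize}
\item At $x\in\mathbb C$ the exponent is $\log r/\log|x|$, not its inverse; it is $\le 1$ exactly because $|x|\le r$.
\item The prime $(p)$ gives the seminorm with kernel $p\mathbb Z$, not the trivial norm.
\item $t$ must act by $|\cdot|\mapsto|\cdot|^{1/t}$, since $t$-th powers do not preserve $M(\mathbb Z)$.
\end{itemize}
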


In this picture, the various theories $(\mathbb R,\mathcal M_{<p})$ map to the point $p\in (0,1]$ on the half-line corresponding to real valuations. In particular, taking the limit for $p\to 0$ makes one enter the arithmetic part of the Berkovich space $M(\mathbb Z)$, so we see that the analytic ring structures on $\mathbb R$ are inextricably linked to arithmetic, which arises in the limit $p\to 0$.

\begin{figure}
\centering

\begin{tikzpicture}[auto]
\draw (0.4,0.1) node {$\mathbb Q$};
\filldraw (0,0) circle [radius=0.1];
\draw (-3.3,-2) node {$\mathbb F_2$};
\filldraw (-3,-2) circle [radius=0.1];
\draw (-2.3,-3) node {$\mathbb F_3$};
\filldraw (-2,-3) circle [radius=0.1];
\draw (-0.3,-3.5) node {$\mathbb F_5$};
\filldraw (0,-3.5) circle [radius=0.1];
\draw (3.3,-2) node {$\mathbb F_p$};
\filldraw (3,-2) circle [radius=0.1];
\draw (0,0) -- (0,2);
\draw (0,0) -- (-3,-2);
\draw (0,0) -- (-2,-3);
\draw (0,0) -- (0,-3.5);
\draw[dashed] (0,0) -- (2,-3);
\draw (0,0) -- (3,-2);
\draw (-2,-1) node {$\mathbb Q_2$};
\draw (-1.4,-1.5) node {$\mathbb Q_3$};
\draw (-0.3,-2) node {$\mathbb Q_5$};
\draw (2,-1) node {$\mathbb Q_p$};
\draw (-1.1,1) node {$(\mathbb R,\mathcal M_{<0.5})$};
\filldraw (0,1) circle [radius=0.1];
\draw (-1,2) node {$(\mathbb R,\mathcal M_{<1})$};
\filldraw (0,2) circle [radius=0.1];
\draw (-1.1,0.4) node {$(\mathbb R,\mathcal M_{<0.2})$};
\filldraw (0,0.4) circle [radius=0.1];
\end{tikzpicture}

\caption{The Berkovich space $M(\mathbb Z)$}
\end{figure}

\bibliographystyle{amsalpha}

\bibliography{Analytic}

\end{document}